\documentclass[12pt,twoside]{book}
\usepackage{graphics,color}      
\usepackage{verbatim}
\usepackage{amsthm}
\usepackage{setspace}
\usepackage{fancyhdr}
\usepackage{amsmath}  
\usepackage{html}
\usepackage{hyperref}
\usepackage{latexsym}

\setlength{\baselineskip}{16.0pt}    

\setlength{\parskip}{3pt plus 2pt}
\setlength{\parindent}{20pt}
\setlength{\oddsidemargin}{0.5cm}
\setlength{\evensidemargin}{0.5cm}
\setlength{\marginparsep}{0.75cm}
\setlength{\marginparwidth}{2.5cm}
\setlength{\marginparpush}{1.0cm}
\setlength{\textwidth}{150mm}

\newcommand{\norm}[1]{\Vert #1 \Vert}
\newcommand{\Norm}[1]{\vert #1 \vert}

\newtheorem{prop}{Proposition}
\newtheorem{thm}{Theorem}[section]
\newtheorem{cor}[thm]{Corollary}
\newtheorem{lem}[thm]{Lemma}

\newtheorem*{conj}{Conjecture}
\newtheorem*{quest}{Question}
\theoremstyle{definition}
\newtheorem{dfn}{Definition}
\theoremstyle{remark}
\newtheorem*{rmk}{Remark}
\theoremstyle{remark}


\begin{document}


\titlepage
\pagenumbering{alph}    
\pagestyle{empty}
\begin{center}
\LARGE{A treatise on information geometry}\\
\end{center}
\vspace{2cm}
\begin{center}  
\end{center}
\begin{center} \includegraphics{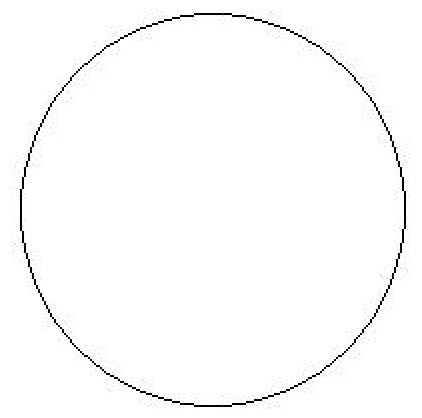} \end{center}
\vspace{5cm}
\begin{center}
Chris Goddard, The University of Melbourne\\
\end{center}

\newpage
\frontmatter
\pagenumbering{roman}   
\setcounter{page}{1}
\pagestyle{plain}
\tableofcontents
\newpage
\pagestyle{fancy}
\lhead{}
\rhead{}
\renewcommand{\chaptermark}[1]{\markboth{#1}{}}
\renewcommand{\sectionmark}[1]{\markright{#1}}
\chead[\rightmark]{\leftmark}

\chapter{Preface}

During my time at the University of Melbourne I looked at a number of different topics.  In a rough semblance of order, these were: Characteristic Classes, Pseudo-Riemannian Geometry, Comparison Geometry (together with the diameter sphere problem), and Geometric Measure Theory (with a focus towards generalising a particular theorem in minimal surface theory).  I will discuss each of these in turn, with the exception of comparison geometry, since that is to be the topic of my PhD thesis.

In July 2006 I attended the AMSI winter school at the University of Queensland, and was most fortunate to be given an extremely accessible account of geometric evolution equations, and in particular, the Ricci Flow by Ben Andrews of the ANU.  In the same year I also studied in some detail the theory of Information Geometry, and its application to what I call the theory of Physical Manifolds via an information measure called the Fisher information.  I constructed a mathematics which I think could best be described as statistical geometry for this purpose.  This, and various generalisations, forms the focus of the latter part of this manuscript.

The general premise of statistical geometry is fairly simple.  Take a differential $n$ manifold, $M$.  Consider the natural space of inner products on $R^{n}$, $A$.  Associate to each point in $M$ a distribution $f(m) : A \rightarrow R^{+}$ which assigns in some sense a weighting to each potential geodesic direction from $m$, possibly favouring certain directions over others.  In particular, we would like $\int_{A}f(m,a) = 1$ for all $m \in M$.  There is a natural statistical derivative $\nabla_{f}$ induced by $f$.

We might use this to force $f$ to satisfy an additional conservation condition, which essentially translates to conservation of probabilistic flux - that $\Delta_{f}f = 0$.  This was in fact roughly the original approach I took towards the matter.  However it turns out that this is roughly equivalent to criticality of the Fisher information.  In particular, for the special case of a \emph{sharp} or Cartan-Riemannian manifold, where $f(m,a) = \delta(\sigma(m) - a)$ for some metric $\sigma$, we have roughly that $\Delta_{f}f = R_{\sigma}$, and also that the Fisher information is $\int_{M}R_{\sigma}$; in particular this is critical when $R_{\sigma} = 0$.

Originally I also made some attempt to make concrete various ideas that do not fit in any of the above categories.  In particular, I wanted to somehow make rigorous the idea of curvature on sets of fractal dimension.  Early in 2008 I made some progress with respect to these ideas, which became part of a project I came to call turbulent geometry.

Following my preliminary investigations into turbulent geometry I began to focus on the idea of building statistical spaces on top of statistical spaces.  In particular this, and various hybrid models including also turbulence and ideas from statistics, have application to number theory and more practically towards constructing theoretical frameworks for condensed matter physics.  One of the more exciting developments here, in my opinion, was my discovery of the \emph{correspondence principle}, which in its simplest incarnation essentially states that there is a 1-1 correspondence between completely general statistical manifolds and sharp turbulent geometries of scalar type.  This allows a physical interpretation of the former in a way that has considerable aesthetic appeal, as of a Cartan-Riemannian manifold wherein the points are of  generalised measure.

However, with a few minor initial exceptions, much of the theory I develop in the later sections of this book still lacks the degree of rigour which its scope would otherwise dictate.  An indication of the level of detail that is required is present in my proof of both Stoke's theorem and the Cramer-Rao inequality for statistical manifolds.  Unfortunately my generalisations of these theorems to the rather broader classes of geometrical objects I deal with further along the path of my researches are slightly rushed, and probably do require some inspection.

Indeed the main part of the dissertation I had in mind for this disclaimer is the chapter on turbulent geometry (chapter 9).  I have made a good faith attempt to prove many results in this chapter, but I fear that many of my "proofs" here are wanting in precision.  However I have decided to retain this work because of its potential promise to solve many extremely difficult and interesting problems.  These include what I call the Lorentz problem (why the preferred geometrical structure of the universe should be Lorentzian), indications of how to construct a geometric theory of condensed matter physics, and indications of how to develop a powerful and predictive theory for entanglement physics.  Others still are increased understanding of turbulence in fluid flow, the theory of purely formal structures of fractal measure (such as the Mandelbrot set), and questions about the structure of the prime numbers.


\chapter{Acknowledgements}

First and foremost, I would like to acknowledge my PhD supervisor, Professor Hyam Rubinstein.  Not only did he manage to continue to find me things to examine and learn, but his boundless patience with me when I mentioned various ideas of mine in incomplete form, and his tireless capacity for impressing upon me the need for rigour was something which I have greatly appreciated.  I would also like to thank my parents; my father, for his interest in my research, for listening to me, and helping guide my thought process to meaningful results, and my mother, for being a polite listener.

Thanks to the popular scientific magazine the New Scientist, I would be otherwise unaware of Frieden's book on the subject of Fisher information, which was reviewed a number of years back.  I should also thank my father once more for pointing out the review to me, and impressing upon me the potential importance of the work.  

I should also thank the New Scientist for bringing to my attention the notion of scale free physics, an idea advocated first by Howard Georgi in \cite{[Ge]}, though, taking a similar role as Fisher to Frieden, the original instigators were Banks and Zaks in their 1982 paper \cite{[BZ]}.  This motivated much of my interest in investigating (smooth) fractional geometry and its connection to turbulence in early 2008.

Additionally, I owe a debt of gratitude to Ben Andrews for a passing dinnertime conversation during the 2006 IAGSM winter school in Brisbane in which he mentioned the work of Amari, Nagaoka, Murray and Rice on Information Geometry, without which I would not have been able to make rigorous my justification for the variational principle that I invoke later on.  There are various incarnations of this principle, but the first steps along the path of extension and generalisation of the result known as the Cramer-Rao inequality would have been impossible without reference to the work of other information geometers.  I might add that this principle underlies Roy Frieden's work.

Of course, Frieden's work would not have been possible, or at least would certainly have been much more difficult, without various ideas, not at least that of the Fisher information, which are attributed to Ronald Fisher (1890 - 1962).

Finally an additional word of thanks to Professor Frieden who suggested during an email exchange in late December 2008 that one of the best tools for proving the optimality of the Fisher information over the space of positive functionals for a given geometric model is the Cencov inequality, which is closely related to the Cencov uniqueness theorem.  This is still something that I need to more fully investigate.

\chapter{Organisation and attribution of work}

The organisation of this manuscript will be as follows.  In the first chapter I survey J. Milnor's book on characteristic classes.  Chapter two is also a survey, this time of Riemannian geometry.  No particular source has been used here, rather it being a synthesis of a number of different works.  Towards the end of the chapter some original results are presented.

In chapter three I survey Leon Simon's and Frank Morgan's books on geometric measure theory, as well as incorporating some notes from a course on the same that Marty Ross gave back in 2006.

Chapter four is a survey mainly of the work in Gilbarg and Tr\"udinger's book "Elliptic Partial Differential Equations", together with an extensive commentary on techniques from Shatah and Struwe's book on geometric wave equations.

Following this in chapter five I give the notes rewritten almost verbatim from a course that Ben Andrews gave at the AMSI winter school of 2006, together with a few minor personal modifications.  I also cover a few lectures given by Gerard Huisken and one given by Nick Sheridan.  The common thread to all of these materials is that of geometric evolution equations and the Hamilton-Perelman approach to the solution of the geometrisation conjecture, which is the topic of this chapter.  I also draw upon J. Morgan and Gang Tian's detailed and extensive work on the same subject.

Chapter six is the first truly original component of this work, and introduces the notion of statistical geometry.  Chapter seven draws heavily upon Roy Frieden's program described in his book "Physics from Fisher Information" and attempts to rigorise it.  Chapter eight is the application of the techniques developed in the previous two chapters to the derivation of the equations of geometrodynamics and also the equations of the standard model.  This chapter is also more or less completely original.

Chapter nine is an extension of the ideas in the preceeding three chapters to the idea of a turbulent manifold.  The ideas presented in this and the last chapter are almost entirely due to the author.

\mainmatter
\pagenumbering{arabic}  
\setcounter{page}{1}
\chapter{Characteristic Classes}


The idea of a characteristic class is to construct some cohomological framework on a manifold and look at the things that generate it, which are known as \emph{characteristic classes}.  These things are used as invariants to differentiate between different manifolds.  The most well known examples of such things are the Stiefel-Whitney classes, Euler classes, Chern classes, and Pontrjagin classes, pertaining to $Z_{2}$, integer, complex and quaternionic cohomology respectively.  I shall discuss these all in turn, and give a broad outline of their construction, following \cite{[MS]}.

A core element of the construction of all these various types of characteristic classes lies in understanding fibre bundles, in particular fibre bundles with $GL(n)$ as structure group and vector spaces as fibres, i.e. vector bundles.  A good general reference on fibre bundles can be found in \cite{[S]}.

The main reason that we are interested in characteristic class theory is that an extremely important application of this method is the ability to distinguish between different differentiable structures on a topological manifold.  In other words, it is possible to show that there are different ways of doing calculus on a given topology.  John Milnor, together with collaborator Michel Kervaire, used this theory to great effect in classifying the various allowable structures on $n$-spheres in the 50s, which has inspired much of the recent modern interest.   More recently, a particularly spectacular success was Donaldson's proof that there are uncountably many different differentiable structures imposable upon $R^{4}$, for instance.

In particular the fact that there may be various different ways of doing calculus on a given topological space was in fact not known until Milnor's seminal paper, "On Manifolds Homeomorphic to the 7-sphere".  This was a major advance and demonstrated the counterintuitive notion that exotic forms of calculus do and can exist on manifolds.  However, the method of characteristic classes is not totally constructive, and only serves as a crude indicator of how to distinguish between different differentiable structures.  It will be one of the primary purposes and aims of this dissertation to build a few examples of atypical geometric structures on manifolds, and to discuss, where possible, the associated insights into physics.

For now, however, I will sketch the preliminary ideas (and motivation) behind the development of characteristic classes.

\section{Characteristic Classes as pullbacks}

Before providing an axiomatic formulation of the concept of characteristic class, I will give the historical motivation for their development.  I will follow Chern \cite{[Ch]} closely here.

First of all, the \emph{Grassman manifold} $H(n,N)$ is the space of all $n$-dimensional linear spaces through a point in $R^{n + N}$. This can also be thought of as the classes of the rotation group $SO(n + N)$ modulo the rotation group $SO(n)$ of rotations of the base and modulo the rotation group $SO(N)$ of rotations of the fibre, ie $H(n,N) \cong SO(N) \backslash SO(n + N) /SO(n)$.

We may now state two theorems of great importance in the theory of sphere bundles:

\begin{thm} (Whitney imbedding theorem for sphere bundles, \cite{[Wh]}).  Every sphere bundle whose spheres are of dimension $n-1$ is equivalent to the bundle induced by mapping the base space $M$ into the manifold $H(n,N)$, provided $N \geq dim (M) + 1$.
\end{thm}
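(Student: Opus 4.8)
The plan is to realize an arbitrary $(n-1)$-sphere bundle over $M$ as a pullback of a universal bundle over the Grassmannian $H(n,N)$, using a partition of unity to construct the classifying map. First I would recall that an $(n-1)$-sphere bundle arises as the unit sphere bundle of a rank-$n$ vector bundle $E \to M$ with structure group $SO(n)$, so it suffices to exhibit $E$ as the pullback of the tautological rank-$n$ bundle $\gamma^{n} \to H(n,N)$ under a suitable map $g : M \to H(n,N)$. The Grassmannian here is the space of $n$-planes in $R^{n+N}$, equipped with its canonical rank-$n$ bundle whose fibre over a plane is that plane itself.

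The key construction is the following. Since $M$ is (as a manifold, hence) paracompact, choose a finite or locally finite trivializing cover $\{U_{\alpha}\}$ for $E$ with local trivializations $\phi_{\alpha} : E|_{U_{\alpha}} \to U_{\alpha} \times R^{n}$, and a subordinate partition of unity $\{\rho_{\alpha}\}$. For each $\alpha$ define $f_{\alpha} : E \to R^{n}$ by $f_{\alpha}(v) = \rho_{\alpha}(\pi(v)) \cdot (\mathrm{pr}_{2}\circ\phi_{\alpha})(v)$, extended by zero outside $\pi^{-1}(U_{\alpha})$. Assembling these gives a bundle map $F = (f_{\alpha})_{\alpha} : E \to R^{nk}$ (where $k$ is the number of patches) which is fibrewise \emph{linear and injective}: injectivity holds because at any point some $\rho_{\alpha}$ is nonzero, and there $f_{\alpha}$ is a linear isomorphism onto $R^{n}$. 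Sending each $m \in M$ to the $n$-plane $F(E_{m}) \subset R^{nk}$ then defines the classifying map $g : M \to H(n, nk - n)$, and by construction $F$ identifies $E_{m}$ with the fibre of $\gamma^{n}$ over $g(m)$, i.e. $E \cong g^{*}\gamma^{n}$. Passing to unit spheres gives the asserted equivalence of sphere bundles. The dimension bound $N \geq \dim M + 1$ is then obtained by a general position / transversality argument: a generic projection $R^{nk} \to R^{n+N}$ restricts to a fibrewise injection on the image of $F$ as soon as $N$ is large compared to $\dim M$, and $\dim M + 1$ is the sharp threshold because the relevant bad set (planes meeting a fixed complementary subspace nontrivially) has codimension computable in the Grassmannian.

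The steps, in order, are: (i) reduce from sphere bundles to vector bundles with $SO(n)$ structure group; (ii) build the fibrewise-injective bundle map $F$ into a large Euclidean space via a partition of unity; (iii) read off the classifying map $g$ into a large Grassmannian and verify $E \cong g^{*}\gamma^{n}$; (iv) cut the fibre dimension down to $N$ by a generic linear projection, checking that fibrewise injectivity survives precisely when $N \geq \dim M + 1$.

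The main obstacle I expect is step (iv): the bookkeeping in the general-position argument that establishes the \emph{sharp} bound $N \geq \dim M + 1$, rather than some cruder inequality. One must stratify the set of linear projections that fail to stay injective on $F(E)$, estimate the codimension of the locus of "bad" planes inside $H(n,N)$, and invoke Sard's theorem to conclude that a generic projection avoids it exactly in the stated range; compactness of $M$ (or at least a careful exhaustion argument in the noncompact case) is what makes this work. Steps (i)–(iii) are essentially formal once the partition of unity is in hand.
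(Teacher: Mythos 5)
The paper does not actually prove this theorem: it is stated as a citation of Whitney's 1937 paper, with no argument given, as part of a survey section leading into the axiomatic treatment of Stiefel--Whitney classes. So there is no ``paper proof'' to compare against.

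That said, your proposal is the standard classification argument, essentially the one in \S 5 of Milnor--Stasheff (the very book being surveyed), and it is correct in outline. Steps (i)--(iii) --- reducing to the associated rank-$n$ vector bundle, building a fibrewise-injective bundle map into a large trivial bundle via a partition of unity, and reading off the classifying map into the Grassmannian of $n$-planes --- are exactly right. Two small remarks. First, for an arbitrary (not necessarily orientable) $(n-1)$-sphere bundle the relevant structure group is $O(n)$ rather than $SO(n)$; the construction is unchanged, but $H(n,N)$ should then be read as the unoriented Grassmannian of $n$-planes in $R^{n+N}$, which is how the paper introduces it. Second, in step (iv) the clean way to organize the dimension count is to projectivize: the fibrewise image $F(E)\setminus\{0\}$ projectivizes to a set of dimension at most $\dim M + n - 1$ in $RP^{nk-1}$, and a generic projective subspace of dimension $nk - n - N - 1$ (the projectivized kernel of a candidate projection $R^{nk}\to R^{n+N}$) misses it precisely when $(\dim M + n - 1) + (nk - n - N - 1) < nk - 1$, i.e.\ $N \geq \dim M + 1$, which is the stated bound. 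Your phrasing about ``planes meeting a fixed complementary subspace'' inside the Grassmannian slightly misdescribes where the genericity is applied --- it is the choice of projection, not a stratification of $H(n,N)$ itself --- but the underlying transversality idea and the resulting inequality are correct.
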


\begin{thm} (Steenrod, \cite{[St]}).  Two sphere bundles are equivalent iff the mappings of $M$ into $H(n,N)$ are homotopic. \end{thm}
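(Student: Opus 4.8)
The plan is to establish the two implications separately, using the tautological $(n-1)$-sphere bundle $\gamma = \gamma(n,N)$ over $H(n,N)$ together with two standard homotopy-theoretic facts about bundles over a manifold (hence paracompact) base: the \emph{bundle homotopy theorem} --- a bundle over $M \times [0,1]$ restricts to equivalent bundles over $M \times \{0\}$ and $M \times \{1\}$ --- and the high connectivity of the relevant Stiefel manifolds.

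For the direction ``homotopic $\Rightarrow$ equivalent'': suppose $f_{0}, f_{1} : M \to H(n,N)$ are the inducing maps of the two sphere bundles (which exist by Theorem 1), and suppose they are joined by a homotopy $F : M \times [0,1] \to H(n,N)$. Pull $\gamma$ back along $F$ to obtain a sphere bundle $F^{*}\gamma$ over $M \times [0,1]$. By the bundle homotopy theorem its restrictions to the two ends are equivalent; but those restrictions are $f_{0}^{*}\gamma$ and $f_{1}^{*}\gamma$, which by naturality of the induced-bundle operation are precisely the two original sphere bundles. Hence they are equivalent. Note that this half uses only paracompactness of $M$ and not the dimension restriction.

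For the direction ``equivalent $\Rightarrow$ homotopic'': let $\xi_{0} \cong \xi_{1}$ be the two equivalent sphere bundles over $M$, and fix an equivalence $\phi : \xi_{0} \to \xi_{1}$ covering $\mathrm{id}_{M}$. By Theorem 1 there are bundle maps $h_{i} : \xi_{i} \to \gamma$ covering maps $f_{i} : M \to H(n,N)$. Then $g_{0} := h_{0}$ and $g_{1} := h_{1} \circ \phi$ are two bundle maps out of the \emph{single} bundle $\xi_{0}$, covering $f_{0}$ and $f_{1}$ respectively. It therefore suffices to show that any two bundle maps from a fixed $(n-1)$-sphere bundle over an $m$-dimensional base into $\gamma(n,N)$ are homotopic through bundle maps, provided $N \geq m+1$; projecting such a homotopy to base spaces then yields $f_{0} \simeq f_{1}$. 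I would prove this last assertion by obstruction theory: bundle maps $\xi_{0} \to \gamma(n,N)$ correspond to sections of a fibre bundle over $M$ whose fibre is homotopy equivalent to the Stiefel manifold $V_{n}(R^{n+N})$, which is $(N-1)$-connected; the obstructions to deforming one section into another lie in the groups $H^{k}(M ; \pi_{k}(V_{n}(R^{n+N})))$ with $1 \leq k \leq m$, and these all vanish since $N - 1 \geq m$. Hence the two sections, and so $g_{0}$ and $g_{1}$, are homotopic through bundle maps.

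The main obstacle is this second implication, and within it the precise connectivity bookkeeping: one must run the covering-homotopy / obstruction argument carefully enough that exactly the hypothesis $N \geq \dim M + 1$ inherited from Theorem 1 suffices, with no loss of a further dimension --- the usual pitfall, since $M \times [0,1]$ has dimension $\dim M + 1$, so it is cleanest to argue via sections over $M$ itself rather than via bundles over the cylinder. One should also confirm at the outset that ``equivalence of sphere bundles'' is meant in the structure-group-preserving sense, so that the needed bundle maps into the tautological bundle $\gamma$ are genuinely available; this is exactly what Theorem 1 supplies.
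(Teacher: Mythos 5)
The paper does not actually prove this theorem: it is stated as ``(Steenrod, \cite{[St]})'' and cited without argument, being part of a survey of the motivation behind characteristic classes, so there is no in-text proof to measure yours against.

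Taken on its own terms, your outline is the standard classification argument and it is sound. The easy direction via pulling back $\gamma$ along the homotopy cylinder and invoking the bundle-homotopy theorem is exactly right, and you correctly note that it needs only paracompactness. For the hard direction, the reduction to showing that two bundle maps out of a fixed sphere bundle into $\gamma(n,N)$ are homotopic through bundle maps is the right move, as is reinterpreting such bundle maps as sections of an associated bundle over $M$ whose fibre is (homotopy equivalent to) the Stiefel manifold $V_{n}(R^{n+N})$. Your connectivity bookkeeping also checks out: for $1 \leq k \leq m = \dim M$ the hypothesis $N \geq m+1$ gives $k \leq N-1$, so $\pi_{k}(V_{n}(R^{n+N})) = 0$, and for $k > m$ the cohomology of $M$ vanishes; hence every difference obstruction in $H^{k}(M;\pi_{k})$ is zero. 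Your remark that arguing via sections over $M$ itself, rather than via bundles over $M \times [0,1]$, is what keeps you from losing a dimension is precisely the pitfall one must avoid. Two small points worth making explicit if you were to write this up in full: (i) the local coefficient system on $\pi_{k}$ of the fibre could in principle be twisted, but this causes no trouble here since the fibre is $(N-1)$-connected, hence simply connected once $N \geq 2$, which is forced by $N \geq m+1$ whenever $m \geq 1$ (and the case $m = 0$ is trivial); and (ii) one should spell out that a bundle map $\xi \to \gamma$ covering $f$ is the same data as an equivalence $\xi \cong f^{*}\gamma$, so that Theorem 1 really does supply the maps $h_{i}$ your argument begins with.
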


Consider now the cohomology ring $K(H(n,N))$ of $H(n,N)$ relative to some coefficient ring $R$.  Then the class of homotopic mappings of $M$ into $H(n,N)$, ie, the equivalence classes of sphere bundles, induce a ring homomorphism of  $K(H(n,N))$ into $K(M)$.  The image $C(M,R)$ of this map in $K(M)$ is called the \emph{characteristic ring} of sphere bundles over $M$.  So, in a sense, we have "pulled back" classes in $H(n,N)$ to generate classes in $M$.  Finally, a cohomology class of $C(M,R)$ is called a \emph{characteristic cohomology class}.

Of course, we may want to understand characteristic classes for other bundles than just sphere bundles!  In particular, we are interested in \emph{vector bundles}, which are the primary focus of this chapter.  But these classes of bundles are more or less the same, since an $n$-plane bundle can easily be mapped to an $n-1$ sphere bundle via a map call it $g$.  Since cohomology functors reverse direction, we then clearly have an induced map $g^{*} : C(M,R) \rightarrow \bar{C}(M,R)$, where $\bar{C}$ is the characteristic ring of $n$-plane bundles over $M$ with respect to the ring $R$.

\section{Standard Classes and their Construction}

\subsection{The Stiefel Whitney Classes}

The definition of Stiefel Whitney classes can be put on an axiomatic footing presuming we know their existence.  Proving their existence is actually the most difficult part; once we know that they exist and what their properties are it is (relatively) easy to do calculations.  The following is directly from \cite{[MS]}.

\emph{AXIOM 1}. To each vector bundle $\chi$ there corresponds a sequence of cohomology classes

$w_{i}(\chi) \in H^{i}(B(\chi);Z/2), i \in {0,1,2,...}$

called the Stiefel Whitney classes of $\chi$.  The class $w_{0}(\chi)$ is equal to the unit element

$1 \in H^{0}(B(\chi);Z/2)$ and $w_{i}(\chi)$ is zero for $i \geq n$ if $\chi$ is an $n$-plane bundle.

(an n-plane bundle here basically means a bundle with an $n$-dimensional vector space for fibres).

For all intents and purposes, $B(\chi)$ will often be the tangent bundle of a manifold $M$ (i.e. $M$ is the base space, with fibres at each point corresponding to tangent spaces).  In this case we often refer to the above axiom/definition as defining the Stiefel-Whitney classes of $M$.

\emph{AXIOM 2}. (Naturality) If $f: B(\chi) \rightarrow B(\eta)$ is covered by a bundle map from $\chi$ to $\eta$, then

$w_{i}(\chi) = f^{*}w_{i}(\eta)$.

A bundle map from $\chi$ to $\eta$ is a continuous function $g: E(\chi) \rightarrow E(\eta)$ which carries each vector space $F_{b}(\chi)$ isomorphically onto one of the vector spaces $F_{b'}(\eta)$.

\emph{AXIOM 3}. (The Whitney Product Theorem) If $\chi$ and $\eta$ are vector bundles over the same base space, then

$w_{k}(\chi \oplus \eta) = \sum_{i=0}^{k}w_{i}(\chi) \cup w_{k-i}(\eta)$

Here $\cup$ is the standard cup product operation in cohomology, and $\chi \oplus \eta$ is the Whitney sum of $\chi$ and $\eta$, which I shall now proceed to define.

First of all, it is necessary to define the notion of an induced bundle.  Let $\chi$ be a vector bundle with projection $\pi : E \rightarrow B$ and $B_{1}$ an arbitrary topological space.  Given any map $f: B_{1} \rightarrow B$ define the induced bundle $f^{*}\chi$ over $B_{1}$ as follows.  The total space $E_{1}$ of $f^{*}\chi$ is the subset $E_{1} \subset B_{1} \times E$ consisting of all pairs $(b,e)$ with $f(b) = \pi(e)$.  The projection map $\pi_{1} : E_{1} \rightarrow B_{1}$ is defined by $\pi_{1}(b,e) = b$.  Then, if we define $\hat{f} : E_{1} \rightarrow E$, $\hat{f} : (b,e) \mapsto e$ then we have $\pi \circ \hat{f} = f \circ \pi_{1}$.

Now take two bundles $\chi_{1}$, $\chi_{2}$ over the same base space $B$.  Let $d : B \rightarrow B \times B$ denote the diagonal embedding.  Then the bundle $d^{*}(\chi_{1} \times \chi_{2})$ is referred to as the Whitney sum of $\chi_{1}$ and $\chi_{2}$.

Finally, we have

\emph{AXIOM 4}.  For the line bundle $\gamma^{1}_{1}$ over the circle $P^{1}$, the Stiefel-Whitney class $w_{1}(\gamma_{1}^{1})$ is non-zero.

These 4 axioms completely characterise Stiefel Whitney classes.


\subsection{The Euler Class}

Let $E_{0}$ be the set of all nonzero elements in the total space $E$ of a oriented n-plane bundle $\chi$.

We have the following important theorem:

\begin{thm}  The group $H^{i}(E,E_{0})$ is zero for $i < n$, and $H^{n}(E, E_{0})$ contains a unique class $u$ such that for each fibre $F = \pi^{-1}(b)$ the restriction

\begin{center}
$u | (F,F_{0}) \in H^{n}(F,F_{0})$
\end{center}

is the unique non-zero class in $H^{n}(F,F_{0})$.  Furthermore the correspondence $x \mapsto x \cup u$ defines an isomorphism $H^{k}(E) \rightarrow H^{k + n}(E, E_{0})$ for every $k$.  (We call $u$ the fundamental cohomology class.)
\end{thm}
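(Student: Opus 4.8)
The plan is to prove this statement --- it is the \emph{Thom isomorphism theorem} --- by reducing first to the case of a trivial bundle and then globalising by an induction with Mayer--Vietoris over a trivialising open cover of the base.

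First I would dispose of the case where the base is a single point, so that $E = R^n$ and $E_0 = R^n \setminus \{0\}$. The long exact sequence of the pair, combined with the deformation retractions of $E$ onto the origin and of $E_0$ onto $S^{n-1}$, identifies $H^i(E,E_0)$ with $\tilde H^{i-1}(S^{n-1})$; this is zero for $i < n$ and infinite cyclic for $i = n$, and the chosen orientation of the fibre distinguishes a generator $u_0$. Next, for a bundle trivial over all of $B$, i.e.\ with $(E,E_0) \cong (B \times R^n,\ B \times (R^n \setminus \{0\}))$, I would apply the relative K\"unneth theorem. Since $H^\ast(R^n, R^n\setminus\{0\})$ is free and supported in degree $n$ there is no Tor term, so $H^k(E,E_0) \cong H^{k-n}(B) \otimes H^n(R^n, R^n\setminus\{0\}) \cong H^{k-n}(B)$. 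This simultaneously gives the vanishing below degree $n$, produces the class $u = 1 \times u_0$ with the correct restriction to each fibre, and identifies $x \mapsto x \cup u$ with the K\"unneth isomorphism.

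The heart of the proof is then an induction using Mayer--Vietoris. Suppose $B = U \cup V$ with the theorem already known over $U$, $V$ and $U \cap V$. From the relative Mayer--Vietoris sequence of the pairs $(E|_U, E_0|_U)$ and $(E|_V, E_0|_V)$ one reads off that $H^i(E,E_0)$ is squeezed between vanishing groups for $i<n$, hence is zero there. In degree $n$ the same sequence gives an injection $H^n(E,E_0) \hookrightarrow H^n(E|_U,E_0|_U) \oplus H^n(E|_V,E_0|_V)$ whose image is the kernel of the map into $H^n(E|_{U\cap V}, E_0|_{U\cap V})$; by uniqueness over $U \cap V$ the local fundamental classes $u_U$ and $u_V$ have the same restriction there, so $(u_U, u_V)$ lifts to a unique class $u \in H^n(E,E_0)$, which visibly restricts to $u_0$ on every fibre, and uniqueness of $u$ over $B$ follows from the same injectivity together with uniqueness over the pieces. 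Finally, cup product with this $u$ defines a map from the ordinary Mayer--Vietoris sequence of $E = E|_U \cup E|_V$, shifted up in degree by $n$, into the relative Mayer--Vietoris sequence; it is an isomorphism on every term indexed by $U$, $V$ or $U \cap V$ by hypothesis, so the five lemma yields the isomorphism $H^k(E) \to H^{k+n}(E,E_0)$ over $B$. For a base with a finite trivialising cover one now inducts on the number of open sets; for a general paracompact base one passes to the appropriate limit over finite-type pieces, the classes $u$ being compatible under restriction by uniqueness and hence patching to a global fundamental class.

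I expect the main obstacle to be the bookkeeping internal to the Mayer--Vietoris step: one has to verify that cup product with the fundamental class commutes, up to the expected sign, with the connecting homomorphisms of the two long exact sequences, so that the five lemma genuinely applies; and, secondarily, that the limiting argument needed to remove the finite-type hypothesis on $B$ is legitimate. What makes both points go through is the naturality of the Thom class under bundle pullback, which lets the local constructions be compared with the global one --- but keeping track of the degree shift and the signs is where the real care is needed.
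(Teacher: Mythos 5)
The paper does not actually prove this theorem: it is stated as background (with the subsequent isomorphism $H^n(E;\mathcal Z) \cong H^n(B;\mathcal Z)$ explicitly described as following from a ``highly nontrivial theorem that I will not prove''), and the reader is referred to \cite{[MS]}. So there is no proof in the paper to compare against. Your outline is, however, essentially the proof given in Milnor and Stasheff: the single-fibre case from the long exact sequence of the pair, the trivial-bundle case from the relative K\"unneth theorem (which needs no Tor correction here because $H^{*}(R^{n}, R^{n}\setminus\{0\})$ is free and concentrated in degree $n$), Mayer--Vietoris bootstrapping over a finite trivialising cover, and a limiting step for the general base. That matches their argument for Theorem 10.2 / 9.1 almost verbatim, and your identification of the two genuine pressure points is accurate. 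For the first, the key identity is $\delta(x \smile u) = (\delta x) \smile u$, which holds because $u$ (unlike $x$) is defined on all of $E$, so the sign issue you worry about does not in fact arise. For the second, Milnor and Stasheff sidestep the potential $\varprojlim^{1}$ obstruction by exploiting that singular chains have compact support, so that $H^{*}(E, E_0)$ is computed from restrictions to compact (hence finitely-trivialisable) pieces of $B$; you should make that mechanism explicit rather than leaving ``the appropriate limit'' unspecified, since for cohomology an arbitrary inverse limit would not behave well.
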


In order to define the Euler class, we consider the inclusion $(E, \emptyset) \subset (E,E_{0})$.  This gives rise to a restriction homomorphism $H^{*}(E,E_{0};Z) \rightarrow H^{*}(E;Z)$, denoted by $y \mapsto y | E$.  Applying this to the fundamental class $u \in H^{n}(E,E_{0};Z)$ we obtain a new class $u | E \in H^{n}(E;Z)$.  But we have that $H^{n}(E;Z)$ is canonically isomorphic to $H^{n}(B;Z)$.  This isomorphism follows from a highly nontrivial theorem that I will not prove.

The Euler class of an oriented n-plane bundle $\chi$ is then defined to be the cohomology class $e(\chi) \in H^{n}(B;Z)$ which corresponds to $u | E$ under the canonical isomorphism $\pi* : H^{n}(B;Z) \rightarrow H^{n}(E;Z)$.


There is in fact a connection between the Euler class of $TM$ and the Euler characteristic of $M$, where $M$ is a manifold and $TM$ the corresponding tangent bundle.

First we need to define the notion of Kronecker index.

For $M$ a closed, possibly disconnected, smooth $n$-manifold, there is a unique fundamental homology class

\begin{center}
$\mu(M) \in H_{n}(M ;\mathcal{Z}/2)$
\end{center}

For any cohomology class $\nu \in H^{n}(M ; \mathcal{Z}/2)$, we define the Kronecker index as $<\nu, \mu_{M}> = \nu(\mu_{M}) \in \mathcal{Z}/2$.

\begin{thm} If $M$ is a smooth compact oriented manifold, then the Kronecker index $<e(\tau_{M}),\mu>$, using rational or integer coefficients, is equal to the Euler characteristic $\chi(M)$. Similarly, for a non-oriented manifold, the Stiefel-Whitney number $<w_{n}(\tau_{M}),\mu> = w_{n}[M]$ is congruent to $\chi(M)$ modulo $2$.
\end{thm}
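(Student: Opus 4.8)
The plan is to recognise the Euler number $\langle e(\tau_M),\mu\rangle$ as the self-intersection number of the diagonal $\Delta\subset M\times M$, and then to evaluate that self-intersection number cohomologically, where it comes out as $\sum_k(-1)^k\dim H^k(M)=\chi(M)$. First I would record the geometric input. Let $d\colon M\hookrightarrow M\times M$, $m\mapsto(m,m)$, be the diagonal embedding, with image $\Delta\cong M$. At a point $(m,m)$ one has $T_{(m,m)}(M\times M)=T_mM\oplus T_mM$ while $T_{(m,m)}\Delta$ is the diagonal subspace, so the normal bundle $\nu_\Delta$ of $\Delta$ is canonically isomorphic to $\tau_M$ (for instance via $(u,v)\mapsto v-u$). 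By the tubular neighbourhood theorem a neighbourhood of $\Delta$ in $M\times M$ is diffeomorphic to the total space of $\nu_\Delta\cong\tau_M$, with $\Delta$ corresponding to the zero section. I then invoke the standard principle that for a closed submanifold $N$ of a closed oriented manifold with $\dim N=\mathrm{codim}\,N$, the self-intersection number $N\cdot N$ equals $\langle e(\nu_N),\mu_N\rangle$: one pushes $N$ off itself along a section of $\nu_N$ transverse to the zero section and counts the intersection points with sign, which is exactly the prescription that computes $e(\nu_N)$ from the fundamental class $u$ of the theorem above (pull back $u$, restrict via $(E,\emptyset)\subset(E,E_0)$, transport under $H^n(E)\cong H^n(B)$). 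Hence $\Delta\cdot\Delta=\langle e(\tau_M),\mu\rangle$.

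Next I would compute $\Delta\cdot\Delta$ over $Q$. Choose a homogeneous basis $\{a_i\}$ of $H^*(M;Q)$ and let $\{b_i\}$ be the Poincar\'e-dual basis, so that $\langle a_i\cup b_j,\mu\rangle=\delta_{ij}$ and $|a_i|+|b_i|=n$. By the K\"unneth theorem the Poincar\'e dual of the class of $\Delta$ in $H^n(M\times M;Q)$ is the diagonal class $\sum_i(-1)^{|a_i|}\,a_i\times b_i$. Since $\Delta\cdot\Delta=\langle\mathrm{PD}[\Delta],[\Delta]\rangle=\langle d^*\mathrm{PD}[\Delta],\mu\rangle$ and $d^*(a\times b)=a\cup b$, this yields $\Delta\cdot\Delta=\sum_i(-1)^{|a_i|}\langle a_i\cup b_i,\mu\rangle=\sum_i(-1)^{|a_i|}=\sum_k(-1)^k\dim_Q H^k(M;Q)=\chi(M)$. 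Because $\langle e(\tau_M),\mu\rangle$ is an integer coinciding with its image in $Q$, this simultaneously establishes the statement with integer coefficients.

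For the unoriented case $\tau_M$ need not be orientable, so there is no integral Euler class; but with $Z/2$ coefficients the Thom class construction needs no orientation, and the resulting class in $H^n(B;Z/2)$ is precisely $w_n(\tau_M)$ --- the standard identification of the top Stiefel--Whitney class with the mod $2$ Euler class. Since every closed $M$ carries a $Z/2$ fundamental class $\mu$, the entire argument above runs verbatim over $Z/2$, giving $\langle w_n(\tau_M),\mu\rangle$ equal to the mod $2$ self-intersection number of $\Delta$, which by the same K\"unneth computation equals $\sum_k\dim_{Z/2}H^k(M;Z/2)\pmod 2$. As $\chi(M)$ may also be computed from $Z/2$ Betti numbers, $\sum_k(-1)^k\dim_{Z/2}H^k\equiv\sum_k\dim_{Z/2}H^k\pmod 2$, so $w_n[M]\equiv\chi(M)\pmod 2$.

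The step I expect to be the main obstacle is the first reduction: rigorously identifying the self-intersection number of a submanifold with the Euler number of its normal bundle. This rests on the tubular neighbourhood theorem and, for the integral statement, on tracking orientations carefully enough that the signed count of displaced intersection points genuinely agrees with the pairing of the Euler class against $\mu$ in the sense of the Thom isomorphism of the theorem above; handling the non-transverse or degenerate cases is what demands care. A cleaner-looking but ultimately equivalent route avoids the diagonal: take a vector field $v$ on $M$ with nondegenerate zeros, view it as a section of $\tau_M$ transverse to the zero section so that its signed zero count is $\langle e(\tau_M),\mu\rangle$ directly from the definition of $e$, and then quote the Poincar\'e--Hopf theorem to identify that count with $\chi(M)$ --- but this merely relocates the difficulty into Poincar\'e--Hopf, which is not among the results established above.
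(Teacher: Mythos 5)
Your proof is correct, and it is precisely the diagonal-class argument that Milnor and Stasheff give for this result (their Theorem 11.11 identifying the Poincar\'e dual of the diagonal with $\sum_i(-1)^{|a_i|}a_i\times b_i$, followed by Corollary 11.12 pulling back along $d$ and pairing with $\mu$). The paper under review states the theorem without proof as part of its survey of Milnor--Stasheff, so there is no independent argument to compare against; your write-up, including the reduction of the self-intersection number to the Euler number of the normal bundle $\nu_\Delta\cong\tau_M$ and the $\mathbb{Z}/2$ variant via the mod~$2$ Thom class, reconstructs the standard proof faithfully.
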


\subsection{Chern Classes}

To construct Chern classes, one uses, instead of real vector bundles, complex vector bundles.  So let $\omega$ be a complex n-plane bundle.  Construct a new canonical (n-1)-plane bundle $\omega_{0}$ over the deleted total space $E_{0}$.  A point in $E_{0}$ is specified by a fibre $F$ of $\omega$ together with a non-zero vector $v$ in that fibre.  Suppose a Hermitian metric has already been specified on $\omega$.  Then the fiber of $\omega_{0}$ over $v$ is defined to be the orthogonal complement of $v$ in $F$.  These vector spaces can be considered as fibers of a new vector bundle $\omega_{0}$ over $E_{0}$.

Any real oriented $2n$-plane bundle possesses an exact Gysin sequence

\begin{center}
$... \rightarrow H^{i-2n}(B) \rightarrow^{\cup e} H^{i}(B) \rightarrow^{\pi^{*}_{0}} H^{i}(E_{0}) \rightarrow H^{i-2n+1}(B) \rightarrow ...$
\end{center}

with integer coefficients.  Here $\pi^{*}_{0}$ is the restriction of the projection map of $\omega$, $\pi$, to $E_{0}$.

The groups $H^{i-2n}(B)$ and $H^{i-2n+1}(B)$ are zero for $i < 2n - 1$, from which it follows that $\pi^{*}_{0} : H^{i}(B) \rightarrow H^{i}(E_{0})$ is an isomorphism.

Now define the Chern classes $c_{i}(\omega) \in H^{2i}(B;Z)$ by induction on the complex dimension $n$ of $\omega$.  Define the top Chern class $c_{n}(\omega)$ to coincide with the Euler class $e(\omega_{R})$.  For $i < n$ set

\begin{center}
$c_{i}(\omega) = \pi^{*-1}_{0}c_{i}(\omega_{0})$
\end{center}

Suppose $T$ is the total space of $\omega_{0}$.  Then what we are really doing here is computing the Chern classes of $\omega_{0}$ which lie in $E_{0}$ (the base space of $\omega_{0}$), and then defining the Chern classes of $\omega$ by pulling these back to $B$ via the map $\pi^{*-1}_{0}$.

$\pi^{*}_{0} : H^{2i}(B) \rightarrow H^{2i}(E_{0})$ is an isomorphism for $i < n$, so this is ok.  For $i > n$ the class $c_{i}(\omega)$ is defined to be zero.


There is in fact a link between Chern classes and Stiefel Whitney classes; Chern classes are more or less the even Stiefel Whitney classes of a bundle of even dimension, without the reduction to $Z_{2}$ coefficients.

\subsection{Pontrjagin Classes}

I conclude my summary of (some of) the ideas in [MS] with a construction of the Pontrjagin classes for a bundle.  The $i$-th Pontrjagin class of a real n-plane bundle $\chi$

\begin{center}
$p_{i}(\chi) \in H^{4i}(B;Z)$ .
\end{center}

is defined to be the integral cohomology class $(-1)^{i}c_{2i}(\chi \otimes \mathcal{C})$

Here $\chi \otimes \mathcal{C}$ is the so called complexification of $\chi$, where we take the tensor product of each fiber $V$ with $\mathcal{C}$, the complex numbers.



\section{Generalisations}

A direction of further investigation might be to examine the world of characteristic classes for general fibre bundles.  In other words, dropping the restriction that the structure group be the general linear group.  I am quite interested to see what the analogies of the above classes might be in the general setting, if in fact there are any.

\section{Generalised Invariants and application to Exotic Differentiable Structures}

\subsection{Definition}

The motivating question here is:

\begin{quest}
Given a set with a particular topological structure, ie a particular homeomorphism class, is there a systematic way towards classifying all possible differentiable structures that this set admits?
\end{quest}

There are of course many topologies that are somewhat troublesome to work with, and which may not even admit differentiable structures at all.  So we make the following assumption:

There is, about each point in our set $X$, an open set $U$ and a function $f$ such that the map $f : U \rightarrow R^{n}$ is a homeomorphism.  We call $n$ the \emph{dimension} of our topology.  If two such sets $U$ and $V$ overlap, we require the transition function from $U$ to $V$ to be a homeomorphism.

Note that there will always be at least one way of placing a differentiable structure on such a manifold (by taking finer charts in the original topology, and requiring transition maps to be differentiable).

If there is a map $g : X \rightarrow R^{m}$ such that $g$ is a bilinear mapping and $g(X)$ is a differentiable submanifold of $R^{m}$, we say that $X$ inherits the differentiable structure from $R^{m}$ from its embedding via $g$.  There may well be more than one way to embed $X$ in $R^{m}$, of course, which is kind of the whole point.

So-

What we would like to do is produce a class of invariants that, for a given differentiable manifold, will specify its diffeomorphism class uniquely, given of course that we know all of the invariants.  So consider homotopy classes of maps from a space $Y$ into our space $X$.  For example, the homotopy groups $\pi_{n}(X)$ arise in this way, for $Y_{n} = S^{n}$.

Now, let $Y$ be an arbitrary differentiable submanifold of infinite dimensional euclidean space.  We can represent such spaces in general by associating a symmetric bilinear (possibly degenerate) form $g(x)$ to each point $x$ of $R^{\infty}$ such that in a local chart the induced functions $g_{ij}(x)$ are smooth; in other words we are thinking of them as (pseudo)-Riemannian submanifolds of our infinite dimensional space.  Now, since for any space $X^{n}$ there is an embedding $k : X^{n} \rightarrow R^{2n+1}$, it seems reasonable to restrict to pseudo-Riemannian submanifolds of $R^{2n+1}$ and expect the following result to hold:

\begin{conj} Suppose we know, for all symmetric bilinear forms $g$ on $R^{2n+1}$, the homotopy groups $G(g,0,X)$ corresponding to the homotopy classes of continuous maps $f : (R^{2n+1},g) \rightarrow X$.  Then we know the homeomorphism class of $X$.
\end{conj}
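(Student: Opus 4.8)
The plan is to split the argument into a homotopy-theoretic reconstruction followed by a geometric-topology upgrade. In the first stage I would show that the family $\{G(g,0,X)\}_g$, as $g$ ranges over all symmetric bilinear forms on $R^{2n+1}$, determines the weak homotopy type of $X$ — equivalently, since a topological $n$-manifold is an ANR having the homotopy type of a CW complex, its genuine homotopy type. In the second stage I would feed this homotopy type, together with the geometric input already recorded (that $X^n$ sits inside $R^{2n+1}$ as a submanifold), into surgery theory to pin down the homeomorphism type.

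For the first stage the point is to extract honest homotopy groups from the apparently inert datum of maps out of $R^{2n+1}$. A nondegenerate $g$ of signature $(p,2n+1-p)$ has unit pseudo-sphere $\{v : g(v,v)=1\}\cong S^{p-1}\times R^{2n+1-p}$, which is homotopy equivalent to $S^{p-1}$; reading $(R^{2n+1},g)$ through this shell, $G(g,0,X)$ recovers $\pi_{p-1}(X,x_{0})$ for $1\le p\le 2n+1$, i.e. $\pi_{k}(X)$ for $0\le k\le 2n$, with the $\pi_{1}$-action supplied by the basepoint ``$0$''. Degenerate forms, whose radical has positive dimension, should be arranged to yield relative and fibred versions of this; and applying the functor $G(g,0,-)$ not only to $X$ but to the Postnikov sections $P_{m}X$ (formed as quotients of $X$ killing the higher homotopy) lets one read off, by obstruction theory, the vanishing and twisting of the lifting obstructions, hence the $k$-invariants $k_{m+1}\in H^{m+2}(P_{m}X;\pi_{m+1}X)$. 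Iterating up the tower reconstructs the weak homotopy type. I would expect to have to take some care with the homotopy groups above dimension $2n$, but for a compact manifold these are controlled once the cohomology of $X$ (itself recoverable from map-sets into $X$) and finiteness are known.

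For the second stage, take a regular neighbourhood $N$ of $X$ in $R^{2n+1}$ with retraction $r\colon N\to X$; the normal microbundle and the Spivak normal fibration of $X$ are then homotopy invariants of the pair, hence determined by the first stage. For $n\ge 5$ the topological surgery exact sequence \[ \cdots\to L_{n+1}(Z[\pi_{1}X])\to \mathcal{S}^{\mathrm{TOP}}(X)\to [X,G/\mathrm{TOP}]\to L_{n}(Z[\pi_{1}X]) \] shows that the homeomorphism type is fixed by the homotopy type together with the normal invariant in $[X,G/\mathrm{TOP}]$ modulo the action of the $L$-groups; dimensions $n\le 3$ are handled by geometrisation and $n=4$ is either excluded or treated via Freedman's theorem when $\pi_{1}X$ is suitably tame. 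So everything reduces to recovering the normal invariant — the lift of the Spivak fibration to $G/\mathrm{TOP}$ — from the map-data, and this is precisely the step I expect to be the main obstacle. The spaces $G/\mathrm{TOP}$ and the relevant classifying spaces never occur as explicit targets in $G(g,0,X)$ (only $X$ itself does), so one must manufacture them — presumably from the diagonal $X\to X\times X$, self-maps of $X$, and the Eilenberg--MacLane spaces already visible in the Postnikov analysis — and then show the set of $G/\mathrm{TOP}$-lifts is in natural bijection with a computable subset of the map-sets. Whether this is possible without extra hypotheses on $\pi_{1}(X)$ (a Borel/Novikov-type assumption collapsing the surgery ambiguity) is unclear to me; indeed, as literally stated the conclusion fails for lens spaces, so at a minimum ``$G(g,0,X)$'' must be read as retaining the actual map-sets and their compositions, not merely the isomorphism types of the groups.
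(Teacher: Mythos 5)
The paper presents this statement as an unproven conjecture with no argument supplied, so there is no paper proof to compare against; I will evaluate your proposal on its own terms. Your most useful contribution is an interpretive one the paper leaves implicit: since $R^{2n+1}$ is contractible, ``homotopy classes of maps $f\colon (R^{2n+1},g)\to X$'' is vacuous until the form $g$ is made to carry topological content, and reading $(R^{2n+1},g)$ through its unit pseudo-sphere $\{v:g(v,v)=1\}\simeq S^{p-1}$ (for $g$ of signature $(p,2n+1-p)$) is exactly the device that gives $G(g,0,X)$ a plausible meaning as $\pi_{p-1}(X)$ for $1\le p\le 2n+1$. The paper needed this but does not give it.

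On the substance, however, you have identified rather than closed the fatal gaps, and your diagnosis is correct. A list of isomorphism types of homotopy groups does not determine homotopy type: $L(5,1)$ and $L(5,2)$ have identical homotopy groups in every degree but are not homotopy equivalent, so stage one already fails unless the full Postnikov tower (the $k$-invariants and the $\pi_1$-action on the higher $\pi_k$) is retained, and these cannot be read off from the abstract groups alone. Nor does homotopy type determine homeomorphism type: $L(7,1)$ and $L(7,2)$ are homotopy equivalent but not homeomorphic, so stage two would genuinely have to resolve the ambiguity in the surgery exact sequence, which requires the normal invariant in $[X,G/\mathrm{TOP}]$ and the $L_{n+1}(\mathbf{Z}[\pi_1 X])$-action. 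As you note yourself, nothing in the hypothesis produces maps into $G/\mathrm{TOP}$ or any avatar of it; and absent a further restriction (a Borel-type rigidity hypothesis on $\pi_1 X$, or explicit embedding data beyond what survives passage to homotopy classes), the structure set $\mathcal{S}^{\mathrm{TOP}}(X)$ can be nontrivial and your argument has no mechanism to distinguish its elements. As literally stated the conjecture is false, and your sketch correctly locates where and why; it cannot be upgraded to a proof without strengthening the hypothesis.
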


Now suppose we know the homeomorphism class of $X$, and want to know how many possible diffeomorphism classes we have to choose from.  This leads us to make the following generalisation: instead of considering merely continuous maps $f$ from $(R^{2n+1},g)$ to $X$, require only that the maps be $\alpha$-H\"older continuous, that is:

\begin{center}
$lim_{x \rightarrow x_{0}}\frac{\vert f(x) - f(x_{0})\vert}{\vert x - x_{0} \vert^{\alpha}}$
\end{center}

is well defined, for every $x_{0} \in R^{2n+1}$.

We let $\alpha$ vary between $0$ and $1$.  Clearly $\alpha = 0$ corresponds to continuous maps and $\alpha = 1$ corresponds to differentiable maps.  We expect there to be greater variety in the latter than the former, since in general we will have more obstructions to being able to find a homotopy between disparate mappings if we restrict to differentiable functions.  This leads naturally to the following additional

\emph{Conjecture}: Let $0 \leq \alpha \leq \beta \leq 1$.  Then

\begin{center}
$G(g,\alpha,X) \subset G(g,\beta,X)$
\end{center}

for every bilinear form $g$ on $R^{2n+1}$, where $G(g,a,X)$ is the homotopy group corresponding to $g$, $X$, and $a$.  In fact, we might expect something even stronger than this:

\begin{center}
$G(g,\alpha,X) \unlhd G(g,\beta,X)$
\end{center}

\begin{cor}  It is possible to take the quotient of the space of our invariants corresponding to differentiable maps (diffeomorphism classes) by the space of our invariants corresponding to continuous maps (homeomorphism classes).
\end{cor}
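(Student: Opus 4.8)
The plan is to deduce the corollary directly from the stronger of the two conjectures stated immediately above, namely that $G(g,\alpha,X) \unlhd G(g,\beta,X)$ whenever $0 \leq \alpha \leq \beta \leq 1$. Setting $\alpha = 0$ and $\beta = 1$ yields, for every symmetric bilinear form $g$ on $R^{2n+1}$, a normal inclusion $G(g,0,X) \unlhd G(g,1,X)$ of the group of homeomorphism-type invariants inside the group of diffeomorphism-type invariants, so that the quotient group $Q(g,X) := G(g,1,X)/G(g,0,X)$ is well-defined. This quotient is exactly what is meant by dividing the invariants of differentiable maps by the invariants of continuous maps, and one expects it to measure precisely how many diffeomorphism classes sit over the given homeomorphism class of $X$ --- the Milnor--Kervaire phenomenon in the language of this section.

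First I would make the group operation on $G(g,\alpha,X)$ explicit: an $\alpha$-H\"older map $f : (R^{2n+1},g) \to X$ can be precomposed with a pinch map collapsing a small geodesic sphere in $(R^{2n+1},g)$, and the usual concatenation of homotopy classes then passes to the H\"older setting once one checks that concatenation, after a Lipschitz reparametrisation of the collar, preserves the H\"older modulus. With this structure in hand, the tautological map $G(g,0,X) \to G(g,1,X)$ --- coming from the fact that a differentiable ($1$-H\"older) map is in particular continuous ($0$-H\"older) --- is a group homomorphism, and the conjecture asserts that its image is normal.

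Next I would check the dependence on $g$. A bi-Lipschitz diffeomorphism $(R^{2n+1},g') \to (R^{2n+1},g)$ induces compatible maps $G(g,\alpha,X) \to G(g',\alpha,X)$ for all $\alpha$ by precomposition, hence a map of quotients $Q(g,X) \to Q(g',X)$; assembling these exhibits $g \mapsto Q(g,X)$ as a functor on the category of such pairs. Since a bi-Lipschitz map carries continuous maps to continuous maps and differentiable maps to differentiable maps, these morphisms automatically respect the two subgroups, so the pointwise quotient is legitimate and the total space of invariants $\{G(g,1,X)\}_{g}$ admits $\{G(g,0,X)\}_{g}$ as a normal sub-object. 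That is the content of the corollary.

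The main obstacle I anticipate is not the formal algebra of quotients, which is immediate once normality is granted fibrewise, but pinning down the group structure on $G(g,\alpha,X)$ for $0 < \alpha < 1$ in the first place: one must verify that concatenation of H\"older homotopy classes is well-defined, associative up to homotopy, and independent of basepoint and collar, so that $G(g,0,X)$ genuinely sits as a subgroup and not merely as a subset of $G(g,1,X)$. This is where the analytic machinery of Chapter 4 --- H\"older spaces and their stability under composition and reparametrisation --- would be needed; granted that input together with the conjectured normality, the corollary itself follows in a single line.
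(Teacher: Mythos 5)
Your core move is the same as the paper's (the paper gives no argument beyond the implicit one): grant the stronger conjecture $G(g,\alpha,X) \unlhd G(g,\beta,X)$, specialise to $\alpha = 0$, $\beta = 1$, and invoke the elementary fact that a normal subgroup admits a quotient group. The additional material on functoriality in $g$ and on the collar-pinching construction of the group law goes beyond anything the paper says, but it is sensible scaffolding and does not change the logical route.

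One point is worth correcting. You write that the map $G(g,0,X) \to G(g,1,X)$ ``comes from the fact that a differentiable ($1$-H\"older) map is in particular continuous ($0$-H\"older).'' That fact gives the \emph{opposite} arrow: the inclusion of $C^{1}$ maps into $C^{0}$ maps induces a forgetful homomorphism $G(g,1,X) \to G(g,0,X)$ (a $C^{1}$-homotopy is in particular a $C^{0}$-homotopy, so the class is well-defined downstream). To obtain an inclusion in the direction the paper asserts, $G(g,0,X) \hookrightarrow G(g,1,X)$, one cannot use the tautological forgetful functor; one would need something like a splitting of the forgetful map, or a smoothing-and-approximation theorem identifying a canonical $C^{1}$ representative in each $C^{0}$ class. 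Since your argument ultimately treats the conjecture as a black box, this does not break the deduction of the corollary, but the parenthetical justification for the map's existence is not correct as stated and should be removed or replaced.
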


So for our example, after we have gone through this process, we will know precisely how many ways there are to place differentiable structures on $X$ given that we know its homeomorphism class.  The quotient space may often be a point- in which case there will be only one way.

Of course, this may not tell us \emph{how} to constuct these diffeomorphism classes- but it may be a good first step towards such a goal.





\subsection{Geometric Interpretation of Generalised Invariants}

There is a natural way to think of the groups $G(g,1,X)$.

\emph{Claim}: Suppose $R_{ij}(g,f)$ is the induced Ricci curvature of the image of $(R^{2n+1},g)$ in $X$ via the differentiable map $f$.  Let $f_{*}g$ be the push forward of $g$.  Take the limit of the Ricci Flow in $X$,

\begin{center}
$\frac{\partial}{\partial t}f_{*}g(t) = - Ric(f_{*}g(t))$, $Ric(f_{*}g(0)) = Ric(g,f)$
\end{center}

Then the limits of the Ricci flow under this construction, counted with multiplicity ie as varifolds, correspond to the elements of the group $G(g,1,X)$, i.e. the limit of the flow above corresponds to the homotopy class of the map $f$ from $N = (R^{2n+1},g)$ to $X$.  If a flow may be perturbed from one limit to another, ie if we modify the image via a diffeomorphism and restart the flow we get a different limit, then we identify the two limits.

In other words, what we are doing, if we consider the Ricci flow as $\sim_{1}$, and the perturbation identification as $\sim_{2}$, is claiming that

\begin{center}
$G(g,1,X) \cong (\{$all maps from $N$ to $X \}/\sim_{1})/\sim_{2}$
\end{center}

Intuitively one should think of the fundamental group and the torus, for instance, and consider the action of this flow as causing the image $f(S^{1})$ to pinch around topological obstructions in the limit.

This is slightly messy; evidently we might want to remove the later consideration about perturbation identification.  So instead first volume normalised Ricci flow the manifold $X$ with the marking $f(N)$ within the ambient space $R^{2n+1}$; then flow $f(N)$ within the ambient space $X$.  Then the worst thing that can happen is for the Ricci flow limits to degenerate; for instance, consider the smooth torus.  Then any circle around its waist is a valid limit of the Ricci flow of $f(S^{1})$.  So we get a one dimensional family of solutions which smoothly vary one into the other.  This leads us to the following

\emph{Claim}: If we perform this modified procedure, then the Ricci flow limits will correspond to the elements of the group $G(g,1,X)$, with the implicit assumption that we identify Ricci flow limits that can be deformed smoothly from one to the other.

In fact we might expect:

\emph{Claim}: Degeneracy of limits will correspond to an underlying symmetry of the ambient space $X$.  (For example, for the torus this is circular symmetry).


\chapter{Pseudo-Riemannian Geometry}


\section{Overview}

Out of general interest, I acquired a very good book on Pseudo (or Semi) Riemannian Geometry \cite{[O]} and made a study of it towards the middle of 2005.  The essential difference between pseudo and standard Riemannian geometry, as is explained quite quickly, is the relaxing of the condition that the metric $g$ on the tangent bundle to a manifold be a symmetric positive definite bilinear form to being merely a \emph{nondegenerate} symmetric bilinear form with fixed index (dimension of the space of negative eigenvalues of $g$).  One recalls that nondegeneracy of a bilinear form means that its matrix in a given representation is invertible.

Many of the results that apply to Riemannian manifolds follow through for semi-Riemannian manifolds- though by no means all.  One also gets an interesting interplay between timelike, spacelike, and lightlike vectors (vectors where $g(v,v)$ is negative, positive, or zero respectively).  The 0 vector is defined to be spacelike.

A particularly important case of a semi-riemannian manifold is the case where the manifold has a metric with index one.  These are referred to as Lorentz manifolds, and are the core ingredient in the physical model known as general relativity.

Riemannian geometry can be generalised to what I shall call \emph{Riemann-Cartan} geometry.  For observe that if instead $g$ is an antisymmetric bilinear form, that the fundamental theorem of Riemannian geometry still goes through, and we get a unique connection, known as the \emph{Cartan connection}, corresponding to that metric $g$.  We may then conclude by linearity of the proof of this statement for both symmetric and antisymmetric bilinear forms that we may drop the assumption of any form of symmetry, and consider instead a merely nondegenerate form $g$.  Then, by splitting it into its symmetric and antisymmetric components, we establish the existence of unique connections corresponding to these parts; by linearity we establish the existence of a unique connection corresponding to $g$.

Everything else then carries through for Riemann-Cartan geometry.  The curvature tensor still remains a symmetric tensor, since it is induced by the second geometric derivative.  We still have geodesics, parallel transport, and injectivity radii- the entire language remains the same.

\section{The fundamental theorem of Riemannian geometry}

\subsection{The metric tensor}

In standard euclidean geometry $R^{n}$, in particular, vector calculus, we have the notion of the "dot product" $\cdot : R^{n} \times R^{n} \rightarrow R$ which is a generalisation in some sense of the angle between two vectors.  In particular, $a \cdot b := \norm{a}\norm{b}cos(\theta(a,b)) = \sum_{i}a_{i}b_{i}$.  This is an example in fact of an "inner product" on $R^{n}$.  There are others.  For instance we might define $a \bar{\cdot} b = 2a_{1}b_{1} + a_{2}b_{2} + a_{3}b_{3} + ... + a_{n}b_{n}$.  For this new inner product, we have an "induced angle" $\bar{\theta}(a,b)$, such that $a \bar{\cdot} b = \norm{a}\norm{b}cos(\bar{\theta}(a,b))$, where now $\norm{a}^{2} = a \bar{\cdot} a$.  So in a certain sense the geometry of our space, which is described with distances and angles, can be made to depend wholly on the choice of inner product.

Now consider a differentiable manifold $M$.  We generalise the idea of inner products on $R^{n}$ to inner products on $TM$.  In particular, we define a \emph{Riemannian metric} on $M$ to be a symmetric positive definite bilinear form $g : TM \times TM \rightarrow R$ such that $g_{ij} := g(x_{i},x_{j})$ are smooth functions for each and every chart $x_{i}$ for some element $U$ of an atlas for $M$.

\subsection{The euclidean covariant derivative}

In euclidean space, we may have the idea of a \emph{vector field}.  A vector field is a map $V : R^{n} \rightarrow R^{n}$ which assigns to each point a vector is a smoothly varying fashion.  We may take derivatives of vector fields in a standard sort of way.  In particular, in much that same sort of way one can take the directional derivative with respect to a function, $f$, one can take a directional derivative with respect to a vector field.

Recall that $\nabla_{v}f$, the directional derivative of $f$ in direction $v$, is $\nabla f \cdot v$.  We define in a similar sort of sense directional derivatives for a vector field $X$; write $X = (X_{1},...,X_{n})$ for functions $X_{i}$.  Then $\nabla_{v}X := (\nabla_{v}X_{1},...,\nabla_{v}X_{n})$.

This leads us to the notion of the covariant derivative for euclidean space.  Let $v = W(x)$ now just be an element of a vector field $W$.  Observe first of all that $W(f) := \nabla_{W}f = \nabla f \cdot W$ will no longer be constant but variable with position.  Then we define $\nabla_{W}X$ to be the vector field $(\nabla_{W}X_{1},...,\nabla_{W}X_{n})$.

\subsection{Axiomatic extension to arbitrary metrics}

In much the same way we extended the idea of inner product from euclidean space to Riemannian metrics on differentiable manifolds, we may axiomatically extend the idea of covariant derivative $\nabla$ from euclidean space to Riemannian manifolds.

First of all, observe that the Euclidean covariant derivative has the following properties:

\begin{itemize}
\item[(i)] $\nabla_{fX + gY}Z = f\nabla_{X}Z + g\nabla_{Y}Z$
\item[(ii)] $\nabla_{X}(Y + Z) = \nabla_{X}Y + \nabla_{X}Z$
\item[(iii)] $\nabla_{X}(fY) = f\nabla_{X}Y + X(f)Y$
\end{itemize}

In addition, we have that it is compatible with the Euclidean metric, ie if we write $D = exp^{-1} \circ d$ where $exp : R^{n} \rightarrow R^{n}$ is the identity for euclidean space, then

\begin{center} $\frac{d}{dt}g(V,W) = g(\frac{DV}{dt},W) + g(V,\frac{DW}{dt})$ \end{center}

Finally we have that it is symmetric, that is

\begin{center} $\nabla_{X}Y - \nabla_{Y}X = [X,Y]$ \end{center}

Then, it is simplicity to generalise from here.  If an operator $\nabla$ satisfies the first three conditions it is said to be an \emph{affine} connection. If in addition it satisfies the fourth and fifth where now $g$ is to be a Riemannian metric, we will call it a Riemannian, or Levi-Civita connection associated to that metric.  In fact, it turns out that there is a one to one correspondence between these two objects, which I will establish shortly.

\subsection{Uniqueness of the Levi-Civita connection}

In the previous section I described how one can abstract the idea of a connection to a Riemannian manifold starting from the properties of the standard connection on Euclidean space.  I will now prove that, up to choice of metric, such connections are unique.

\begin{thm} ("The fundamental theorem of Riemannian geometry").  Given a Riemannian manifold $M$, there exists a unique Riemannian connection associated to it.

\begin{proof} If we assume existence, then certainly

\begin{center} $X(Y,Z) = g(\nabla_{X}Y,Z) + g(Y,\nabla_{X}Z)$ \\
$Y(Z,X) = g(\nabla_{Y}Z,X) + g(Z,\nabla_{Y}X)$ \\
$Z(X,Y) = g(\nabla_{Z}X,Y) + g(X,\nabla_{Z}Y)$ \end{center}

Adding the first two of these statements together then subtracting the third gives

\begin{center} $Xg(Y,Z) + Yg(Z,X) - Zg(X,Y) = g([X,Z],Y) + g([Y,Z],X) + g([X,Y],Z) + 2(Z,\nabla_{Y}X)$ \end{center}

which is an expression that can easily be rearranged to give $\nabla$ uniquely in terms of the metric.  To demonstrate existence, define $\nabla$ to satisfy the above expression.  Then it is easy to show that this definition satisfies the axioms.
\end{proof}
\end{thm}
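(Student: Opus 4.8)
The plan is to prove the fundamental theorem of Riemannian geometry in two halves, uniqueness and existence, using the standard Koszul formula argument. For uniqueness, I would assume a Riemannian connection $\nabla$ exists and use metric compatibility (property iv) to expand the three cyclic terms $Xg(Y,Z)$, $Yg(Z,X)$, and $Zg(X,Y)$ as sums of inner products of covariant derivatives. Then I would take the combination (first plus second minus third) and repeatedly apply the symmetry property (v), namely $\nabla_XY-\nabla_YX=[X,Y]$, to collapse the six covariant-derivative terms down to a single occurrence of one of them, say $g(\nabla_XY,Z)$. This yields the Koszul formula
\[
2g(\nabla_XY,Z)=Xg(Y,Z)+Yg(Z,X)-Zg(X,Y)-g(X,[Y,Z])+g(Y,[Z,X])+g(Z,[X,Y]),
\]
and since the right-hand side makes no reference to $\nabla$ and $g$ is nondegenerate, the vector field $\nabla_XY$ is determined: if $\nabla$ and $\tilde\nabla$ both satisfy the axioms then $g(\nabla_XY-\tilde\nabla_XY,Z)=0$ for all $Z$, forcing $\nabla_XY=\tilde\nabla_XY$. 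This is essentially the rearrangement alluded to in the statement; I would just be careful to track signs and to note that the bracket terms arise precisely from swapping the order of covariant derivatives.

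For existence, I would turn the Koszul formula around and \emph{define} $\nabla_XY$ to be the unique vector field satisfying the displayed identity for all $Z$ — this is well defined exactly because $g$ is nondegenerate, so the assignment $Z\mapsto(\text{RHS})$ is a one-form that corresponds to a unique vector field. Then I would verify in turn that this $\nabla$ satisfies properties (i) through (v). Properties (ii), the additivity in the lower slot of (i), and the Leibniz rule (iii) follow by substituting $fX$, $Y+Z$, $fY$ etc.\ into the right-hand side and using that $X(fg)=X(f)g+fX(g)$ together with the identity $[fX,Y]=f[X,Y]-Y(f)X$; the extra terms produced by differentiating $f$ cancel against the extra bracket terms. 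Symmetry (v) comes from writing the Koszul formula for $g(\nabla_XY,Z)$ and for $g(\nabla_YX,Z)$ and subtracting: most terms cancel and one is left with $2g(\nabla_XY-\nabla_YX,Z)=2g([X,Y],Z)$. Metric compatibility (iv) comes from adding the Koszul expressions for $g(\nabla_XY,Z)$ and $g(\nabla_XZ,Y)$, where after cancellation the sum reduces to $Xg(Y,Z)$.

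The main obstacle, and the one genuinely fiddly point, is the tensoriality/well-definedness check — confirming that the formula actually defines a connection rather than merely a bilinear gadget, i.e.\ that $\nabla_{fX}Y=f\nabla_XY$ (property i) holds exactly, with no leftover $X(f)$ term. This is where the precise form of the bracket contributions matters: the terms $-g(X,[Y,Z])$ and $g(Z,[X,Y])$ each pick up a $Y(f)$ or $Z(f)$ factor under $X\mapsto fX$, and these must be shown to cancel the $Y(f)g(X,Z)$-type terms coming from $Yg(fZ,X)$ and the like. I expect the bookkeeping here to be the only place where care is really needed; everything else is a routine substitution. I would therefore present the Koszul formula explicitly as the organizing identity and then remark that properties (i)--(v) each follow from a short direct computation, spelling out only the tensoriality verification in full.
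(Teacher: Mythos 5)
Your proposal is the standard Koszul-formula argument, which is exactly the approach the paper takes: assume existence, expand the three cyclic metric-compatibility identities, form the combination (first plus second minus third), use torsion-freeness to collapse the covariant-derivative terms into a single $g(\nabla_X Y, Z)$ plus bracket terms, and then read the resulting identity backwards as a definition for the existence half. Your version is more careful than the paper's about spelling out the verification of the connection axioms (especially tensoriality in the lower slot), but the underlying route is identical.
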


\begin{rmk} For Riemann-Cartan manifolds, the Levi-Civita uniqueness theorem still holds, but with the following minor modification. Instead of considering $Xg(Y,Z) - Yg(X,Z) + Zg(X,Y)$, it is necessary to consider the sum $(Xg(Y,Z) - Xg(Z,Y)) - (Yg(X,Z) - Yg(Z,X)) + (Zg(X,Y) - Zg(Y,X))$, due to the lack of symmetry.  One should then get an expression wherein all terms in the connection save one can be made to cancel, and uniqueness follows.  \end{rmk}

\section{Geodesics and the geodesic equation}

\subsection{Definition and variational formulation}

A geodesic in differential geometry is defined to be the shortest path between two points on a Riemannian manifold.  In this sense it is a generalisation of the notion of straight line in Euclidean space.  In particular, we are interested in measuring the length of a path.  In euclidean space, the distance from $a$ to $b$ along a path $\gamma : [0,1] \rightarrow R^{n}$ will be $\int_{0}^{1}\norm{\gamma '(t)}dt$; the integral of the velocity along the parametrised curve.

Similarly, in Riemannian geometry, we define the length to be

\begin{center} $L(\gamma) = \int_{0}^{1}g(\gamma '(t),\gamma '(t))^{1/2}dt$ \end{center}

where now one takes the norm with respect to the inner product $g$ on the manifold in question.

In order for the path to be a shortest path, we require that the first variation of the length vanish:

\begin{center} $\delta L(\gamma) = 0$ \end{center}

which is equivalent to requiring that

\begin{center} $\nabla_{\gamma '}\gamma ' = 0$ \end{center}

This above expression is known as the \emph{geodesic equation}.  It can be written in coordinates as

\begin{center} $\frac{d^{2}x_{k}}{dt^{2}} + \Gamma^{k}_{ij}\frac{dx_{i}}{dt}\frac{dx_{j}}{dt} = 0$ \end{center}

where $\Gamma^{k}_{ij} := (\nabla_{X_{i}}X_{j})_{k}$ are the \emph{Christoffel symbols} associated to the metric $g$.

A couple of examples of geodesics for instance are straight lines in Euclidean space or great circles on the sphere.

\subsection{The cut and conjugate loci}

The cut locus of a point $p$ in a space $M$ is defined to be the set of points $C(p)$ such that their preimage is the boundary of the critical ball about $0$ in $T_{p}M$.  The conjugate locus of the same point $p$ is the set of points $\bar{C}(p)$ such that the exponential map fails to be a diffeomorphism on their preimage in $T_{p}M$.

\section{Curvature}

The curvature is essentially the first correction term, or first obstruction from a Riemannian metric being locally flat.  In other words, provided $\norm{x} = o(\delta)$,

\begin{center} $g_{ij}(\frac{x}{\delta}) = \delta_{ij}(0) +  \delta^{2}R_{ijkl}(0)\frac{x_{k}}{\delta}\frac{x_{l}}{\delta} + o(\delta^{2})$ \end{center}

It can be thought of alternatively as the "geometric acceleration".  For instance a sphere has positive curvature, whereas a saddle has negative curvature.  Similarly, it is possible, given bounds on curvature, to deduce how quickly geodesics will converge or diverge.

\subsection{The curvature tensor and its symmetries}

It turns out that the curvature tensor can be more precisely described as

\begin{center} $R_{ijkl} := g(R(X_{i},X_{j})X_{k},X_{l})$ \end{center}

where

\begin{center} $R(U,V) :=  \nabla_{U}\nabla_{V} - \nabla_{V}\nabla_{U} - \nabla_{[U,V]}$ \end{center}

It can be checked that it has the properties of a tensor, ie linear in each argument.  Furthermore, it can be shown to have the following symmetries:

\begin{itemize}
\item[(i)] $R(u,v) = -R(v,u)$
\item[(ii)] $g(R(u,v)w,z) = -g(R(u,v)z,w)$
\item[(iii)] $R(u,v)w + R(v,w)u + R(w,u)v = 0$
\item[(iv)] $\nabla_{u}R(v,w) + \nabla_{v}R(w,u) + \nabla_{w}R(u,v) = 0$
\end{itemize}

The Ricci curvature is defined to be $R_{il} := R_{ijkl}g^{jk}$, the contraction of the Riemann curvature with respect to the second and third indices.

The scalar curvature is the contraction of the Ricci curvature, $R := R_{il}g^{il}$.

\subsection{Interpretation of the Ricci and Scalar curvature}

Consider a geodesic $\gamma$.  Define a metric tube $\gamma_{\epsilon}$ about $\gamma$ as

\begin{center}
$\gamma_{\epsilon} = \{x \vert d(x,\gamma) \leq \epsilon \}$
\end{center}

Then

\begin{center}
$Vol(\gamma_{\epsilon}) = \epsilon^{n-1}\norm{S^{n-2}}L[\gamma] - c(n)\epsilon^{n+1}\int_{\gamma}R(\gamma ',\gamma ')ds$
\end{center}

for $\epsilon << 1$.  This gives us a geometrical interpretation of the Ricci curvature.

Similarly,

\begin{center}
$Vol(B_{r}(x)) = r^{n}\norm{Vol B_{1}^{R^{n}}} - c(n)r^{n+2}R(x)$
\end{center}

for $r << 1$, giving us a geometrical idea of what the scalar curvature does.

\subsection{Some basic comparison results}

As I mentioned before, curvature bounds, and knowledge of the curvature in general, can be used to conclude various things about the topology of the space.  For instance, the celebrated Gauss-Bonnet formula states that the integral of the scalar curvature of a closed surface is $2\pi$ times the Euler characteristic of that surface.  So we are extracting purely topological information out from what we put in.  Higher dimensional analogues of this result are possible, but I shall neither sketch nor prove them here.

Another result of importance is the Toponogov comparison theorem, which essentially states that if one takes geodesic triangles with two sides of fixed length and angle in two different spaces, and the first has smaller curvature than the second, then the second will have a shorter third side.  An immediate consequence of this is that geodesics will  converge more rapidly in spaces of higher curvature.

Knowledge of curvature also allows one to bound certain things like the injectivity radius of a space from above. The injectivity radius of a point $p$ is the maximum number $r$ such that the exponential map is injective on all balls of radius $r$ in $T_{p}M$;  the injectivity radius of a manifold is the infinum over all such points.

\section{Stoke's Theorem}

In this section I will give two different types of generalisations of Stoke's theorem, which will prove relevant once I have finished with the preliminaries and begin to develop the information geometry in chapter 7.  The first of these I developed a number of years ago, back in 2002-3.  I did not derive a careful proof of this first result however until late 2007.


\begin{thm}
Let $N^{k+1}, M^{n+1}$ be differential manifolds.  Let $U^{k+1}(m) \subset N$, $V^{n+1} \subset M$ be compact subsets of $N$, $M$, where $U^{k+1}(m)$ is a family of subsets of $N$ specified by $m \in V$ such that the function $m \mapsto U^{k+1}(m)$ is smooth.  Let $\omega \in \Omega^{k}(N) \otimes \Omega^{n}(M)$.  Let $d_{N}$ be the exterior derivative or de Rham operator on $\Omega(N)$; let $d_{M}$ be likewise w.r.t. $\Omega(M)$.  Then we have

\begin{equation}
\label{Stokes}
\int_{V}\int_{U(m), m \in V}d_{N}d_{M}\omega = \int_{\partial V}\int_{\partial U(m), m \in V}\omega
\end{equation}
\end{thm}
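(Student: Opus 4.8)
Establish the "double Stokes" identity \eqref{Stokes} for a form $\omega \in \Omega^k(N)\otimes\Omega^n(M)$ integrated over a smoothly varying family of compact regions $U^{k+1}(m)\subset N$ indexed by $m\in V^{n+1}\subset M$.

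The plan is to reduce everything to the ordinary Stokes theorem applied twice, once on $N$ and once on $M$, with the main analytic content being the legitimacy of differentiating under the inner integral. First I would decompose $\omega$ into a finite sum of decomposable terms $\omega = \sum_\alpha \eta_\alpha\wedge\theta_\alpha$ with $\eta_\alpha\in\Omega^k(N)$ and $\theta_\alpha\in\Omega^n(M)$ (this is possible on a partition of unity, and both sides of \eqref{Stokes} are linear in $\omega$, so it suffices to treat a single term $\eta\wedge\theta$). For such a term $d_N d_M(\eta\wedge\theta) = \pm\, d_N\eta\wedge d_M\theta$, where the sign is a fixed function of $k$ and $n$ coming from the graded commutation of $d_N$ and $d_M$ (they anticommute as odd operators on the bigraded complex $\Omega(N)\otimes\Omega(M)$); I would fix a sign convention at the outset so this is consistent.

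Next I would handle the $N$-integration. For each fixed $m\in V$, since $U^{k+1}(m)$ is a compact $(k+1)$-dimensional region in $N$ with boundary $\partial U^{k+1}(m)$, ordinary Stokes on $N$ gives
\begin{equation}
\int_{U(m)} d_N\eta = \int_{\partial U(m)} \eta.
\end{equation}
Both sides are functions of $m$, and they are smooth in $m$ because $m\mapsto U(m)$ is smooth by hypothesis (one pulls back by the defining diffeomorphisms of the family to a fixed reference region and differentiates the integrand, which is smooth). Call this function $F(m) := \int_{U(m)} d_N\eta = \int_{\partial U(m)}\eta$. Then the left side of \eqref{Stokes} becomes $\pm\int_V F(m)\, d_M\theta$ and, after the $N$-Stokes step, equals $\pm\int_V \big(\int_{\partial U(m)}\eta\big)\, d_M\theta$; similarly I would want the right side of \eqref{Stokes} to read $\pm\int_{\partial V}\big(\int_{\partial U(m)}\eta\big)\theta$. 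So the remaining task is to show
\begin{equation}
\int_V F\, d_M\theta = \int_{\partial V} F\,\theta,
\end{equation}
which, since $d_M(F\theta) = dF\wedge\theta + F\, d_M\theta$ and the pure Stokes theorem on $M$ gives $\int_V d_M(F\theta) = \int_{\partial V} F\theta$, amounts to showing the spurious term $\int_V dF\wedge\theta$ vanishes. Here is the subtlety: in general it need not vanish, so the statement as written is implicitly using that $\theta$ is an $n$-form on the $(n+1)$-manifold $M$ and $dF\wedge\theta$ is an $(n+1)$-form, so one genuinely needs $dF\wedge\theta$ to integrate to zero over $V$. The clean way to see this — and I expect this to be the crux — is to argue that $F$ is not merely smooth but \emph{locally constant}, equivalently $dF\equiv 0$: because $d_N\eta$ is exact in the $N$-variable and $U(m)$ varies smoothly, the variation of $\int_{U(m)} d_N\eta$ under an infinitesimal deformation of the region is, by the transport/Reynolds formula, $\int_{\partial U(m)} \iota_X d_N\eta + \int_{U(m)} d_N(\text{something})$, and an application of Cartan's formula $\mathcal{L}_X = d\iota_X + \iota_X d$ together with $d_N d_N\eta = 0$ collapses this to $\int_{\partial U(m)} \iota_X d_N\eta$; one then needs this boundary term to vanish, which holds when the deformation is itself generated by a family of diffeomorphisms whose restriction to $\partial U(m)$ is accounted for — i.e. when the hypothesis "smooth family" is interpreted as "$U(m)$ traced out by an ambient isotopy." Under that reading $F$ is constant in $m$, $dF = 0$, and the identity follows.

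The step I expect to be the main obstacle is precisely this last one: making rigorous sense of "the family $m\mapsto U(m)$ is smooth" so that differentiation under the integral sign is valid and the boundary-variation term controls $dF$. I would formalize it by positing an ambient smooth map $\Phi : V\times U_0 \to N$ (with $U_0$ a fixed model region) such that $\Phi(m,\cdot)$ is a diffeomorphism onto $U(m)$ for each $m$; then the chain rule and the change-of-variables formula convert the inner integrals on the $m$-varying domains into integrals over the fixed $U_0$ and $\partial U_0$, after which differentiating in $m$ is routine and the Cartan-calculus computation above goes through verbatim. Once $dF=0$ is in hand, the two outer applications of the classical Stokes theorem (on $N$ for the inner integral and on $M$ for the outer integral) assemble immediately into \eqref{Stokes}, with bookkeeping of the single overall sign $(-1)^{?}$ absorbed into the convention for the product orientation on $U(m)\times V$.
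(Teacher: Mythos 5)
You correctly reduce the problem, after decomposing $\omega$ into a finite sum of decomposable terms $\eta\wedge\theta$ and applying ordinary Stokes on $N$, to the claim $\int_V dF\wedge\theta = 0$, where $F(m) = \int_{U(m)} d_N\eta = \int_{\partial U(m)}\eta$. Isolating that term is genuinely the crux, and it is sharper than anything in the paper's own treatment. But the step where you argue $dF = 0$ is where the proposal fails. The transport formula gives $dF(X) = \int_{\partial U(m)}\iota_X\, d_N\eta$ for a generating vector field $X$ of the family, and this boundary flux does \emph{not} vanish for a generic ambient isotopy; formalizing the smoothness hypothesis as a map $\Phi : V\times U_0 \to N$ buys you smooth dependence of $F$ on $m$, not constancy. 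In fact the identity \eqref{Stokes} is false as stated. Take $N = M = R$, $k = n = 0$, $\eta(x) = x$ on a neighbourhood of $[0,1]$ (cut off to compact support), $\theta\equiv 1$ on a neighbourhood of $V$, $U(m) = [0,m]$, and $V = [1/2,1]$. Then $d_M\theta\equiv 0$ over $V$, so the left side of \eqref{Stokes} is zero, while the right side is $[\eta(1)-\eta(0)]-[\eta(1/2)-\eta(0)] = 1/2$. The discrepancy is exactly the term $\int_V dF\wedge\theta$ you flag; it vanishes for every $\theta$ only if $F$ is constant on $V$, which no reading of ``smooth family'' will deliver short of requiring $\partial U(m)$ to be independent of $m$, or $\eta$ to be closed.

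For comparison, the paper's proof takes a related but less careful decomposition route: it passes to local coordinates, writes $f(x,y)$ as a doubly infinite eigenfunction series $\sum_{i,j} a_i(x)b_j(y)$ rather than your finite partition-of-unity reduction, and then asserts that applying Stokes twice to each separable term and resumming gives the result. The remark preceding the proof identifies the $m$-dependence of $U(m)$ as the entire nontrivial content of the theorem, yet the proof never returns to it: it tacitly treats $U(m)$ as fixed when passing the inner integral through the outer Stokes application, which is precisely the step your $\int_V dF\wedge\theta$ term quantifies. So your proposal locates the genuine obstruction that the paper's argument elides; the defect is only the unsupported assertion that the obstruction vanishes.
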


\begin{rmk}
The nontrivial nature of this result is that $\omega$ is allowed to have the form

\begin{center}
$f(x,y)dx_{1}...dx_{k}dy_{1}...dy_{n}$
\end{center}

in local coordinates, though it is possible that in all the examples I have been working with the function $f(x,y)$ could be expressed in a power series expansion $\sum_{i = 0}^{\infty}\sum_{j = 0}^{\infty} a_{i}(x)b_{j}(y)$.  But it is still not obvious how to prove the above because we are dealing with a \underline{smooth family} of sets in $N$ specified by a single set in $M$.
\end{rmk}

\begin{rmk} The general idea of this result, or at least a generalisation, will prove useful later when I am looking into mathematical turbulence in chapter 9.
\end{rmk}

\begin{proof} Without loss of generality, assume that $\omega = f(x,y)dxdy$ in local coordinates as in the remark above.  Make the further assumption that $f$ and all its derivatives vanish at infinity.  Then we may split $f$ as a doubly infinite sum of a set of suitable eigenfunctions as in the remark above.  The result then follows by applying the standard form of Stoke's theorem twice for each term in the sum, and then bringing everything back together.
\end{proof}

The second type of generalisation of Stoke's theorem is as follows:

\begin{thm}  Let $M$ be a $n$-manifold.  Let $A$ be diffeomorphic to the space of inner products on $R^{n}$.  Consider a family of differential operators $d_{(M ; a)}$ on $M$ indexed smoothly by $A$.  Let $\omega(m,a)$ be a family of smooth $(n - 1)$ forms on $M$ indexed smoothly by $A$.  Then we have that

\begin{center} $\int_{M}\int_{A}d_{(M ; a)}\omega(m,a) = \int_{\partial M}\int_{A}\omega(m,a)$ \end{center}
\end{thm}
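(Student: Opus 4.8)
The plan is to reduce the statement to the ordinary Stokes theorem on $M$ by treating the integration over $A$ as a parameter integration that commutes with the differential operators $d_{(M;a)}$. First I would fix the auxiliary parameter space $A$; since $A$ is diffeomorphic to the space of inner products on $\mathbb{R}^n$, it is a smooth manifold (indeed an open convex cone in the space of symmetric matrices), and in particular we may choose a fixed smooth measure $d\mu_A$ on it with respect to which the integrals $\int_A(\cdot)\,d\mu_A$ are taken. The key observation is that for each fixed $a \in A$, the ordinary Stokes theorem (as recalled in Section 2.6, or rather its standard form) gives
\begin{equation}
\int_M d_{(M;a)}\omega(m,a) = \int_{\partial M}\omega(m,a),
\end{equation}
provided $d_{(M;a)}$ genuinely behaves like the de Rham differential — i.e. provided each $d_{(M;a)}$ is a first-order antiderivation satisfying $d_{(M;a)}^2 = 0$ and the graded Leibniz rule, so that the fundamental "boundary" identity holds termwise. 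I would state this as the standing hypothesis on the family $d_{(M;a)}$ (the paper's phrasing "family of differential operators" should be read in this sense, since otherwise no Stokes-type identity can hold).

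Granting the pointwise-in-$a$ Stokes identity, the remaining step is to integrate both sides over $A$ against $d\mu_A$ and exchange the order of integration. On the left, I would invoke Fubini's theorem to write $\int_M\int_A d_{(M;a)}\omega(m,a) = \int_A\left(\int_M d_{(M;a)}\omega(m,a)\right)d\mu_A$; on the right similarly $\int_{\partial M}\int_A\omega(m,a) = \int_A\left(\int_{\partial M}\omega(m,a)\right)d\mu_A$. Applying the fixed-$a$ identity inside the $A$-integral immediately equates the two, which is the claim. To make Fubini legitimate one needs an integrability hypothesis: it suffices that $\omega(m,a)$ and $d_{(M;a)}\omega(m,a)$ depend smoothly on $(m,a)$ and that $M$ is compact — or, if $M$ is noncompact, that $\omega$ and its $d_{(M;a)}$-image decay suitably in $m$, and that the $a$-dependence is, say, compactly supported or rapidly decaying on $A$, mirroring the decay assumptions used in the proof of Theorem 2.5.1 above.

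The main obstacle — and the point that genuinely needs care — is the legitimacy of differentiating under the parameter and the commutation $\int_A d_{(M;a)} = d_{(M;a)}\!\!\restriction$-termwise: one must verify that the operator $d_{(M;a)}$, which acts in the $M$-variables, can be pulled through the $A$-integral. This is where smoothness of the family $a \mapsto d_{(M;a)}$ in $A$ and uniform control (in $a$) of the coefficients of $d_{(M;a)}$ on compact subsets of $M$ are essential; with those in hand, a standard dominated-convergence / difference-quotient argument justifies moving $d_{(M;a)}$ inside $\int_A$. I expect the proof to be short once these hypotheses are made explicit, but the statement as written is slightly too terse — the hidden content is precisely the requirement that each $d_{(M;a)}$ be a bona fide exterior-derivative-type operator and that the $(m,a)$-dependence be jointly smooth with adequate decay, so that Fubini and differentiation under the integral sign both apply.
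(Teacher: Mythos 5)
Your proof takes a genuinely different route from the paper's, and the difference exposes a hidden hypothesis that you and the paper are filling in in incompatible ways. You read ``family of differential operators $d_{(M;a)}$'' as meaning: for each fixed $a \in A$, a bona fide exterior-derivative-type antiderivation on $M$. Under that reading your argument is clean and correct — fix $a$, apply ordinary Stokes, integrate over $A$, justify Fubini and differentiation under the integral — and there is nothing to add. But the paper's own proof reveals that this is \emph{not} what $d_{(M;a)}$ is meant to be. The paper writes $d_{(M;a)}(m) = \int_{b\in A} \exp_a(m)\circ d_M \circ \exp_b^{-1}(m)\,db$, i.e.\ a ``statistical derivative'': a $b$-averaged conjugation of the ordinary de Rham operator by the exponential maps of the various metrics indexed by $A$. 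Such an operator does \emph{not} satisfy a pointwise-in-$a$ Stokes identity, so your step (1) fails at the very first move for the operators the paper actually cares about.

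The paper's proof instead expands $d_{(M;a)}\omega$ under the double integral, applies ordinary Stokes once (with respect to $d_M$ inside the integrand), and then relies on two additional structural hypotheses to collapse the resulting expression: conservation of probabilistic flux ($d_M \exp_a(m)=0$, which kills an extra boundary term) and normalization ($\int_A \exp_a \exp_b^{-1}\,db = \mathrm{Id}_M$, which collapses the inner $A$-integral). Neither of these appears in your proposal, because under your hypothesis they are not needed; but they are precisely ``the hidden content'' the paper's remark afterwards flags, and they are exactly what makes the statistical Stokes theorem non-trivial rather than a Fubini exercise. In short: your proposal proves a correct but weaker theorem (Stokes for a parametrized family of honest de Rham operators), while the paper is after the non-obvious case where the family is a statistical average and the identity only survives because of extra conservation and normalization constraints on the signal function. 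To match the paper's intent you would need to replace your standing hypothesis on $d_{(M;a)}$ with these two conditions and rework the calculation from the expanded form.
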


\begin{rmk} We can in fact view $A$ as a distribution of sorts over each point of $M$, so in fact this result has a statistical flavour.  Additionally, we are taking only one derivative, and not two.  It will turn out that this sort of result is core to the development of the theory of information for statistical manifolds, which is the subject matter of chapters 6 through 8.
\end{rmk}

\begin{proof} A bit of care is required here since we are dealing with statistical derivatives.  First observe that we may write $\omega(m,a) = F(m,a)dmda$, and $d_{(M;a)}(m) = \int_{b \in A}exp_{a}(m) \circ d_{M} \circ exp^{-1}_{b}(m)$ where we have a 1-1 correspondence between $a \in A$ and inner products at $m \in M$.  

Then 

\begin{center} $\int_{M}\int_{A}d_{(M ; a)}(m)\omega(m,a) = \int_{M}\int_{A}\int_{A}exp_{a}d_{M}(exp^{-1}_{b}(m)F(m,a))dbdadm$ \end{center}

By Stoke's theorem in the standard case this reduces to

\begin{center} $\int_{\partial M}\int_{A}\int_{A}exp_{a}exp^{-1}_{b}F(m,a)dbdadm$ \end{center}

since $d_{M}exp_{a}(m) = 0$ via conservation of probabilistic flux.

This finally can be simplified, after observing that since $d_{(M;a)}$ is a statistical derivative we must have

\begin{center} $\int_{A}exp_{a}exp^{-1}_{b}db = Id_{M}$ \end{center}

to

\begin{center} $\int_{\partial M}\int_{A}F(m,a)dadm$ \end{center}

\end{proof}

\begin{rmk} So essentially the theorem more or less works, but we have to make sure that our family of differential operators is chosen in such a way to make "physical sense".  In other words, the probabilistic flux must be conserved, and also we need to have that our distribution is normalised in probabilistic weight to one.  Otherwise we run into trouble.  The appropriate tools to tackle these issues will be developed in some greater detail and along slightly different lines in chapter 6.
\end{rmk}













\section{K\"ahler geometry and Convexity Theory}

\subsection{The notion of convexity in determining stability}


There is a well known theorem in minimal surface theory that gives conditions under which a surface is in fact area minimising.

\begin{thm} (folk lore).  If one has a minimal graph over a compact, convex domain, then it is in fact a length minimising graph, where by length I mean length in the general n-dimensional sense.
\end{thm}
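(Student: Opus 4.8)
The plan is to use the classical calibration method. A \emph{minimal graph} over a domain $\Omega \subset \mathbb{R}^n$ is the graph $\Sigma = \{(x, u(x)) : x \in \Omega\} \subset \mathbb{R}^{n+1}$ of a function $u$ satisfying the minimal surface equation $\operatorname{div}\!\left( \frac{\nabla u}{\sqrt{1 + |\nabla u|^2}} \right) = 0$. The key observation is that the vector field on $\Omega \times \mathbb{R}$ given (at the point $(x,t)$, independent of $t$) by $\xi = \frac{1}{\sqrt{1+|\nabla u(x)|^2}}(-\nabla u(x), 1)$ extends to a smooth unit vector field with $\operatorname{div} \xi = 0$ (this is exactly the minimal surface equation), and the associated $n$-form $\omega = \iota_\xi \, d\mathrm{vol}_{\mathbb{R}^{n+1}}$ is a calibration: it is closed (since $\operatorname{div}\xi = 0$), it has comass one (since $|\xi| = 1$), and it restricts to the volume form on $\Sigma$ (since $\xi$ is the upward unit normal to $\Sigma$).

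The steps, in order, would be: (1) verify that $\xi$ as defined above is well-defined and smooth on a neighbourhood of $\overline{\Omega} \times \mathbb{R}$ — here is where I use that $u$ is a solution of the minimal surface equation on the \emph{compact} domain (so $u$ and $\nabla u$ are bounded and the construction is uniform); (2) check the three calibration properties $d\omega = 0$, $\|\omega\| \le 1$, and $\omega|_{T\Sigma} = d\mathrm{vol}_\Sigma$; (3) take any competitor hypersurface $\Sigma'$ with $\partial \Sigma' = \partial \Sigma$, so that $\Sigma - \Sigma'$ bounds a region $W$ in the cylinder $\overline{\Omega} \times \mathbb{R}$, and apply Stokes' theorem (as recalled in the Stoke's Theorem section above): $\mathrm{Vol}(\Sigma) = \int_\Sigma \omega = \int_{\Sigma'} \omega + \int_W d\omega = \int_{\Sigma'} \omega \le \mathrm{Vol}(\Sigma')$, where the last inequality is the comass bound. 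This gives area-minimisation of $\Sigma$ among all competitors with the same boundary.

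The role of convexity of $\Omega$ is the subtle point, and I expect it to be the main obstacle to a fully rigorous write-up. Convexity is what guarantees that an arbitrary competitor $\Sigma'$ spanning $\partial\Sigma$ can be taken to lie inside the solid cylinder over $\overline{\Omega}$ — or, equivalently, that nearest-point projection onto $\overline{\Omega}\times\mathbb{R}$ is distance-nonincreasing, so replacing $\Sigma'$ by its projection does not increase area and produces a valid competitor inside the region where $\omega$ is defined. Without convexity the calibration form $\omega$ need not extend past the cylinder and the comparison breaks down. So the careful part is: (a) showing that retracting a competitor onto the convex cylinder is area-nonincreasing and boundary-preserving, and (b) handling the regularity/measure-theoretic bookkeeping if one wants the statement for competitors that are merely rectifiable currents rather than smooth graphs. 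For the smooth version stated here, step (a) together with the calibration argument above suffices.
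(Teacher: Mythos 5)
Your calibration argument is correct and is the standard proof of this result. The paper itself does not supply a proof — it labels the theorem ``folk lore'' and immediately afterwards introduces calibration theory (the definition of a calibration form, Harvey--Lawson, Wirtinger) precisely as the relevant toolset — so your approach is exactly the one the paper has in mind but does not write out. You have also correctly isolated the one genuinely delicate point: the calibration $\omega = \iota_\xi\, d\mathrm{vol}$ only lives on the cylinder $\overline{\Omega}\times\mathbb{R}$, and convexity of $\Omega$ is what makes nearest-point retraction onto the cylinder $1$-Lipschitz, hence area-nonincreasing, so that an arbitrary competitor with the same boundary can be pushed into the cylinder before comparison.
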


This leads one to ask the natural questions:

If one has a statistical structure in a suitable sense, what are the conditions on a substructure for it to be a volume minimising solution?

Is it possible to have a weaker condition than convexity on a domain for it to be necessarily minimising?

It turns out that the theory of calibrations is uniquely suited to answering the second of these two questions.  As for the first, that will have to wait until later.  I give here the main results and ideas about calibrations without proof.  The main reference here is the paper of Harvey and Lawson on the matter \cite{[HL]}, though I mainly made use of a review article by Ivanov \cite{[Iv]}.

\begin{dfn} Let $\phi$ be a differential form of degree $k$ on $R^{n}$.  Suppose that $d\phi = 0$ and $\phi(\zeta) \leq vol(\zeta)$ for any oriented $k$-plane $\zeta$, where $vol$ is the volume form in $R^{n}$.  Then we will say that $\phi$ is a $n$-Euclidean calibration form.  \end{dfn}

The utility of such forms is that they can be used to prove the existence of stable globally minimising submanifolds:

\begin{thm}  If $X^{k}$ is a $k$-surface, and $\phi$ a $n$-Euclidean calibration form such that $\phi_{X}$ (the restriction of $\phi$ to $X$) is the volume form on $X$, then $X$ is globally minimal in $R^{n}$. \end{thm}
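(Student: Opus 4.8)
We are given a $k$-surface $X^k \subset \mathbb{R}^n$ and an $n$-Euclidean calibration form $\phi$ (so $d\phi = 0$ and $\phi(\zeta) \le \mathrm{vol}(\zeta)$ on every oriented $k$-plane $\zeta$) with the property that $\phi|_X$ is the volume form on $X$. We must show $X$ is globally minimal in $\mathbb{R}^n$, i.e. any competitor surface $X'$ with $\partial X' = \partial X$ (homologous to $X$) has $\mathrm{vol}(X') \ge \mathrm{vol}(X)$.

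\textbf{The plan.} The proof is the classical calibration argument of Harvey--Lawson, and I would carry it out in four short steps. First, I would set up the comparison: let $X'$ be any competitor with the same boundary, so that $X - X' = \partial S$ for some $(k+1)$-chain $S$ (this uses that $X$ and $X'$ are homologous rel boundary; in $\mathbb{R}^n$ one can always fill the difference since $H_k(\mathbb{R}^n) = 0$). Second, I would compute $\mathrm{vol}(X) = \int_X \phi|_X = \int_X \phi$, using the hypothesis that $\phi$ restricts to the volume form on $X$. Third, I would apply Stokes' theorem to $S$: since $d\phi = 0$,
\begin{equation}
\int_X \phi - \int_{X'} \phi = \int_{\partial S} \phi = \int_S d\phi = 0,
\end{equation}
hence $\int_X \phi = \int_{X'} \phi$. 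Fourth, I would invoke the calibration inequality pointwise on $X'$: at each point, the tangent plane $T_pX'$ is an oriented $k$-plane, so $\phi(T_pX') \le \mathrm{vol}(T_pX')$, and integrating gives $\int_{X'}\phi \le \mathrm{vol}(X')$. Chaining these, $\mathrm{vol}(X) = \int_X\phi = \int_{X'}\phi \le \mathrm{vol}(X')$, which is exactly global minimality.

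\textbf{The main obstacle.} The analytic content is entirely concentrated in making Step 1 and Step 3 rigorous for the class of competitors one wants to allow. If one restricts to smooth compact submanifolds-with-boundary that are smoothly homologous to $X$, the filling chain $S$ is smooth and Stokes' theorem is the textbook version, so there is nothing to do. The real work — which I would only sketch — is extending the argument to rectifiable currents or integral varifolds, where "$\int_{X'}\phi$" must be interpreted as the pairing of the current with the form, Stokes becomes $\partial S(\phi) = S(d\phi)$ in the sense of currents, and the pointwise inequality $\phi(\vec{X'}) \le 1$ uses the approximate tangent plane that exists almost everywhere by rectifiability. I would remark that in that generality the mass-minimising property follows verbatim, and that this is precisely why calibrations are such a powerful tool for producing stable minimisers. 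I would also note the companion fact, immediate from the equality case, that any other surface calibrated by the same $\phi$ is likewise minimal and has the same volume — which motivates the two questions about statistical substructures raised just before the theorem.
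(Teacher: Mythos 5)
The paper does not actually prove this theorem; just before the statement the author writes that he will ``give here the main results and ideas about calibrations without proof,'' deferring to Harvey--Lawson \cite{[HL]} and Ivanov \cite{[Iv]}. Your argument is the standard Harvey--Lawson calibration proof — compare via $\mathrm{vol}(X)=\int_X\phi$, equate $\int_X\phi=\int_{X'}\phi$ by closedness and Stokes on a filling chain, then bound $\int_{X'}\phi\le\mathrm{vol}(X')$ by the pointwise comass inequality — and it is correct, with the remark about rectifiable currents appropriately flagging where the real technical work lies in the general setting.
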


Of course, the notion can be extended to any smooth Riemannian manifold $M^{n}$.  In this case, we just say that $\phi$ is a calibration form.  The theorem goes through for this generalisation as well. 


\subsection{K\"ahler manifolds}

I shall now proceed to define carefully the idea of a K\"ahler manifold and develop the theory of these objects to the point where I am able to justify the theorem above.  In doing this I will follow  the treatment given by Lawson in [L2] very closely.

\begin{dfn}
A \emph{$n$-complex manifold} is a manifold $M$ that is locally diffeomorphic to $\mathcal{C}^{n}$, and which has biholomorphic transition functions (so that the differentials of these functions are everywhere complex linear).  In particular, this implies that there is a map $J_{p} : T_{p}M \rightarrow T_{p}M$ such that

\begin{equation}
\label{compstructure}
J_{p}^{2} = -1
\end{equation}

So if we have local coordinates $(z^{1},...,z^{m}) = (x^{1} + iy^{1},...,x^{m} + iy^{m})$ where the $z^{i}$ are complex and the $x^{i}, y^{i}$ are real then $J(\partial/\partial x^{i}) = \partial / \partial y^{i}$ and $J(\partial/\partial y^{i}) = - \partial/\partial x^{i}$.
\end{dfn}

We also have that

\begin{equation}
\label{torsionKahler}
T_{X,Y} = [JX,JY] - J[JX,Y] - J[X,JY] - [X,Y] = 0
\end{equation}

as can be easily checked.  There is a deep theorem of Newlander and Nirenberg that states that any $C^{\infty}$ manifold that admits a tensor field $J$ of type $(1,1)$ satisfying the above equations can be made into a complex manifold with an appropriate atlas of charts.

We would of course like to address the problem of what a natural choice of metric on a complex manifold should be.  When all is said and done, we require the following restrictions on the choice of metric:

\begin{equation}
\label{restrictionA}
g(JX,JY) = g(X,Y)
\end{equation}
for all $p \in M$ and all $X,Y \in T_{p}M$.  In other words we want $J$ to be an isometry, so the metric $g$ must be Hermitian.

\begin{equation}
\label{restrictionB}
(\nabla_{X}J)(Y) = \nabla_{X}(JY) - J(\nabla_{X}Y) = 0
\end{equation}
for all tangent vector fields $X$ and $Y$ on $M$, where $\nabla$ is the riemannian connection.  This is equivalent to requiring that $J$ be globally parallel to the riemannian connection.

\begin{dfn}
If a complex manifold $M$ satisfies these two equations, we say that $M$ is a K\"ahler manifold.
\end{dfn}

It is useful to extend a hermitian metric $g$ on a complex manifold $M$ to a complex valued, "sesquilinear" form $h$ defined in the following way:

$h(X,Y) = g(X,Y) + i\omega(X,Y)$

where

$\omega(X,Y) = g(X,JY)$

for all $X,Y \in T_{p}M$.

The condition of being K\"ahlerian imposes a strong restriction on $\omega$:

\subsection{Relation to minimal surface theory}



In minimal surface theory there is a well known result.

\begin{thm} All minimal graphs $f: \mathcal{R}^{2n} \rightarrow \mathcal{R}^{k}$ over domains of even dimension can be lifted to minimal graphs $\hat{f}: \mathcal{C}^{n} \rightarrow \mathcal{C}^{k}$. \end{thm}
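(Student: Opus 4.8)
The plan is to read ``lifting $f$ to a minimal graph $\hat f : \mathcal{C}^{n} \to \mathcal{C}^{k}$'' as producing a holomorphic map $\hat f$ whose real part recovers $f$; once $\hat f$ is in hand, minimality of its graph costs nothing, since a holomorphic graph is a complex submanifold and complex submanifolds are calibrated by powers of the K\"ahler form, so the calibration theorem of the previous section applies verbatim. Everything therefore reduces to manufacturing $\hat f$, that is, to showing that the components of a minimal graph over an even-dimensional domain are pluriharmonic.

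First I would upgrade regularity. Minimality of the graph $\Gamma_{f} = \{(x,f(x)) : x \in \Omega\}$, $\Omega \subset \mathcal{R}^{2n}$, is equivalent to $f$ solving the minimal surface system, the Euler--Lagrange equations of the area functional; this is a quasilinear elliptic system with real-analytic coefficients, so the interior Schauder theory of Chapter 4, bootstrapped and then combined with analytic elliptic regularity, shows $f$ is real-analytic on the interior of $\Omega$. Second -- the heart of the matter -- I would identify $\mathcal{R}^{2n}$ with $\mathcal{C}^{n}$ by pairing coordinates, $z^{j} = x^{2j-1} + i x^{2j}$, and argue that each component $f^{\alpha}$ is pluriharmonic, i.e.\ $\partial\bar\partial f^{\alpha} = 0$ in these complex coordinates. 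Granting this, $f^{\alpha} = \mathrm{Re}\, g^{\alpha}$ for a holomorphic $g^{\alpha}$ on a neighbourhood $\hat\Omega \subset \mathcal{C}^{n}$ of $\Omega$, and $\hat f := (g^{1},\dots,g^{k}) : \hat\Omega \to \mathcal{C}^{k}$ is the desired holomorphic lift, with $\mathrm{Re}\,\hat f = f$.

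Third, minimality of $\hat f$ is then automatic. Its graph $\Gamma_{\hat f} = \{(z,\hat f(z)) : z \in \hat\Omega\}$ is a complex $n$-dimensional submanifold of $\mathcal{C}^{n+k} \cong \mathcal{R}^{2(n+k)}$, and by the Wirtinger inequality -- exactly the statement, developed in this subsection, that the powers of the K\"ahler form $\omega$ on a K\"ahler manifold calibrate -- the closed form $\phi = \omega^{n}/n!$ is a Euclidean calibration form in the sense of the definition above whose restriction to any complex $n$-submanifold is the Riemannian volume form. In particular $\phi|_{\Gamma_{\hat f}}$ is the volume form on $\Gamma_{\hat f}$, so by the calibration theorem $\Gamma_{\hat f}$ is globally minimal; hence $\hat f$ is a minimal graph lying over $f$.

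The hard part is Step two, the pluriharmonicity of the components of $f$. Real-analyticity alone yields only a holomorphic extension in $2n$ independent complex variables and does not collapse that to the $n$ variables $z^{j}$; for the latter one must feed in the minimal surface system itself, not merely its regularity consequence. The plan here is to rewrite the vanishing-mean-curvature condition in the coordinates $z^{j}$ and show the $(1,1)$-part of the Hessian of each component is forced into the kernel of an injective operator built from the K\"ahler data just introduced (the parallelism of $J$, the closedness of $\omega$), hence vanishes; the even-dimensionality of the domain is exactly what makes this complex bookkeeping possible. I expect this to be delicate, and it is the step most likely to require an extra hypothesis on the codimension $k$ or on the structure of $f$: already in the lowest case $n = k = 1$, classical examples such as Scherk's surface are minimal graphs whose defining function is not harmonic, so some refinement of the statement, or a restriction of its scope, should be anticipated here.
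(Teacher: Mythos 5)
The paper never actually proves this theorem. Immediately after stating it and the second theorem (that complex submanifolds of K\"ahler manifolds solve the Plateau problem), the text says ``I will now proceed to prove the second theorem,'' and does so via the lemma that K\"ahlerianity is equivalent to $d\omega = 0$, the lemma on complex submanifolds inheriting the K\"ahler structure, and Wirtinger's inequality. The first theorem --- the one you were asked to prove --- is left as an unproved assertion. So there is no paper proof to compare yours against.

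That said, your own analysis is the more valuable thing here, because you have correctly identified that the statement is false under any natural reading of ``lift.'' Your Step three is fine and is exactly the machinery the paper develops: a holomorphic graph is a complex submanifold, $\omega^{n}/n!$ calibrates it by Wirtinger, done. The problem is, as you say, Step two. Minimality of the graph of $f : \Omega \subset \mathcal{R}^{2n} \to \mathcal{R}^{k}$ is a \emph{second-order quasilinear} condition (the minimal surface system), and it does not force pluriharmonicity, nor even harmonicity, of the components of $f$. Your Scherk counterexample is exactly right: for $n = k = 1$, $f(x,y) = \log(\cos y/\cos x)$ is a minimal graph over a square in $\mathcal{R}^{2}$ with $f_{xx} + f_{yy} = \sec^{2}x - \sec^{2}y \neq 0$, so $f$ is not the real part of a holomorphic function of $z = x + iy$, and no holomorphic $\hat f$ with $\mathrm{Re}\,\hat f = f$ exists. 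The true theorem in this circle of ideas (Harvey--Lawson, and going back to Wirtinger and Federer) runs the \emph{opposite} direction: graphs of holomorphic maps $\mathcal{C}^{n} \to \mathcal{C}^{k}$ are not merely minimal but homologically area-minimizing. The converse statement --- that every minimal graph over an even-dimensional domain is holomorphic, or lifts to something holomorphic --- fails already in the simplest case, and your instinct to ``anticipate a refinement of the statement or a restriction of its scope'' is exactly the right response.

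One small technical caution beyond what you wrote: even if one could somehow arrange harmonicity of the $f^{\alpha}$ (which is already too much to ask), pluriharmonicity is strictly stronger for $n \geq 2$, so the gap widens, not narrows, as $n$ grows. There is no algebraic trick of the kind you tentatively sketch in Step two (``force the $(1,1)$-Hessian into the kernel of an injective operator'') that extracts pluriharmonicity from the minimal surface system alone; it genuinely requires imposing that the graph is already a complex submanifold, i.e.\ assuming essentially what you are trying to prove. The theorem, as written, should be regarded as a misstatement of the true (and useful) implication in the other direction, which the paper's second theorem does capture correctly.
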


More generally, we have the following result:

\begin{thm} If the ambient space $M$ is K\"ahler, then all complex submanifolds U with (possibly empty) boundary B are solutions of the plateau problem with boundary B. \end{thm}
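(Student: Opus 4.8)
The plan is to use the theory of calibrations developed above, exhibiting the Kähler form $\omega$ (or rather its appropriate power) as a calibration on $M$ and showing that complex submanifolds are exactly the submanifolds calibrated by it. By the calibration theorem stated earlier (a $k$-surface $X$ on which a calibration form restricts to the volume form is globally minimal), this immediately forces complex submanifolds to be volume-minimizing, hence to solve the Plateau problem with the given boundary. The heart of the argument is therefore the \emph{Wirtinger inequality}: for a Kähler manifold of complex dimension $n$ with Kähler form $\omega$, the form $\frac{1}{k!}\omega^{k}$ has comass one, i.e. $\frac{1}{k!}\omega^{k}(\zeta) \leq \mathrm{vol}(\zeta)$ for every oriented real $2k$-plane $\zeta$, with equality precisely when $\zeta$ is a complex subspace (invariant under $J$).

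First I would verify that $\frac{1}{k!}\omega^{k}$ is closed: since $M$ is Kähler, $d\omega = 0$ (this follows from equation~(\ref{restrictionB}), that $J$ is parallel, together with $\omega(X,Y) = g(X,JY)$ and the torsion-freeness of $\nabla$), and exterior differentiation is a derivation, so $d(\omega^{k}) = 0$. Next I would prove the Wirtinger inequality pointwise: fixing $p \in M$ and a $2k$-plane $\zeta \subset T_{p}M$, one chooses a $g$-orthonormal basis of $\zeta$ adapted so that $J$ is put into a normal form relative to $\zeta$ (the "canonical angles" between $\zeta$ and $J\zeta$), expands $\frac{1}{k!}\omega^{k}$ evaluated on this basis, and bounds the resulting sum of products of cosines of the canonical angles by $1$, with equality exactly when all angles vanish, which is the condition that $\zeta$ be $J$-invariant. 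Then, given a complex submanifold $U$, at each point its tangent space is a complex subspace, so $\frac{1}{k!}\omega^{k}$ restricts to the Riemannian volume form of $U$; thus $U$ is calibrated, and the calibration theorem gives that $U$ is globally minimal among competitors with the same boundary $B$, i.e. $U$ solves the Plateau problem for $B$.

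The main obstacle is the linear-algebraic core — the Wirtinger inequality itself — which requires the careful choice of adapted basis and the combinatorial bookkeeping in expanding $\omega^{k}$; this is where all the real content lies, the rest being formal consequences of the calibration machinery already assumed. A secondary point needing care is the passage from "calibrated" to "solves the Plateau problem": one must note that the comparison is with submanifolds homologous to $U$ rel $B$, and that Stokes' theorem (in the form stated earlier in this chapter) is what converts the closedness of the calibration into the global minimality estimate; I would remark on this but not belabor it, since it is exactly the argument behind the calibration theorem quoted above.
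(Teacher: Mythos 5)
Your proposal is correct and takes essentially the same route as the paper: both establish closedness of the Kähler form (the paper's lemma that $d\omega = 0$ iff $g$ is Kählerian), invoke the Wirtinger inequality to see that $\omega^{m}/m!$ calibrates complex $2m$-dimensional submanifolds, and then appeal to the calibration machinery to conclude global minimality and hence solvability of the Plateau problem with the given boundary. The only difference is one of exposition: the paper leaves the calibration step implicit after stating Wirtinger's inequality, whereas you spell it out, including the role of Stokes' theorem and the homology-rel-boundary comparison class.
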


This is reason enough to be relatively excited about using K\"ahler geometry.  Evidently we would like to use this somehow in relation to ideas of generalised length.

However, note that this theorem is limited in that it does not deal with the plateau problem for submanifolds of a K\"ahler manifolds of odd real dimension.  This concern cannot actually be resolved, and is in fact where the hope that somehow K\"ahler manifolds are natural objects to look at hits a stumbling block.   The truth of the matter is that one is adding a certain amount of structure and hence getting strong results for one's efforts; but the assumptions on structure cannot be extended to completely general spaces.

I will now proceed to prove the second theorem.

\begin{lem} If $M$ is a complex manifold with a hermitian metric $g$, then $g$ is Kahlerian iff $d\omega = 0$.

\begin{proof} Fix a point $p \in M$.  Let $X_{1},X_{2},X_{3}$ be tangent vectors to $p$ and extend them to local fields in $M$ such that $(\nabla_{X_{i}}X_{j})_{p} = 0$ for $i,j = 1,2,3$ (this can be achieved by parallel transporting these vectors along the geodesics that they generate).  So then $[X_{i},X_{j}]_{p} = 0$ and hence $d\omega_{p}(X_{1},X_{2},X_{3}) = X_{1}\omega(X_{2},X_{3}) - X_{2}\omega(X_{1},X_{3}) + X_{3}\omega(X_{1},X_{2})$.  Then since $\omega(X,Y) = g(X,JY)$ we have that

\begin{center}
$d\omega_{p}(X_{1},X_{2},X_{3}) = g(X_{2},(\nabla_{X_{1}}J)X_{3}) - g(X_{1},(\nabla_{X_{2}}J)X_{3}) + g(X_{1},(\nabla_{X_{3}}J)X_{2})$
\end{center}

Then if $g$ is K\"ahlerian, $(\nabla_{X}J)Y = 0$ for all $X,Y \in T_{p}M$ so this expression is zero.

To prove the converse, observe first that $(\nabla_{X}J)J = -J(\nabla_{X}J)$ as $J^{2} = -1$, and $\nabla_{JX} = J\nabla_{X}$ since the integrability tensor is $0$.  Then

\begin{center}
$d\omega_{p}(X_{1},JX_{2},JX_{3}) = g(JX_{2},-J(\nabla_{X_{1}}J)X_{3}) - g(X_{1},-(\nabla_{X_{2}}J)X_{3}) - g(X_{1},-(\nabla_{X_{3}}J)X_{2})$
\end{center}

So $d\omega(X_{1},X_{2},X_{3}) - d\omega(X_{1},JX_{2},JX_{3}) = 2g(X_{2},(\nabla_{X_{1}}J)X_{3})$, from which it easily follows that the condition that $d\omega = 0$ implies the K\"ahlerian condition $(\nabla_{X}J)Y = 0$ for all $X,Y$.
\end{proof}
\end{lem}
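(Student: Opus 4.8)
The plan is to unwind the word ``K\"ahlerian'' to the single differential condition \eqref{restrictionB}, $\nabla J = 0$: the other defining requirement \eqref{restrictionA}, that $J$ be a $g$-isometry, is already contained in the hypothesis that $g$ is Hermitian (and it is exactly what makes $\omega(X,Y) = g(X,JY)$ skew, hence a genuine $2$-form). So the lemma reduces to showing $\nabla J = 0 \iff d\omega = 0$, with $\nabla$ the Levi-Civita connection of $g$. I would carry this out by a pointwise computation. Fix $p \in M$ and extend tangent vectors at $p$ to local fields that are parallel at $p$, so that all Lie brackets vanish at $p$; then $d\omega_p(X,Y,Z) = (\nabla_X\omega)(Y,Z) - (\nabla_Y\omega)(X,Z) + (\nabla_Z\omega)(X,Y)$. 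Since $\nabla g = 0$ one has $(\nabla_U\omega)(V,W) = g(V,(\nabla_U J)W)$ for \emph{every} $U$, so, abbreviating $A_U := \nabla_U J$, $d\omega_p(X,Y,Z) = g(Y,A_X Z) - g(X,A_Y Z) + g(X, A_Z Y)$. (This covariant expression for $d\omega$ is valid without the normal frame, being a consequence of torsion-freeness, which is what I will actually use when substituting $J$-rotated vectors.)

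The forward implication is then immediate: if $M$ is K\"ahler then $A_U \equiv 0$, so $d\omega = 0$; indeed $\omega$ is $\nabla$-parallel, hence closed (and coclosed).

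For the converse, suppose $d\omega = 0$. I would first record the algebraic identities that $A$ satisfies on a Hermitian manifold: differentiating \eqref{compstructure} gives $A_U J = - J A_U$, and since $\nabla_U\omega$ is a $2$-form, $A_U$ is skew with respect to $g$. Next I bring in integrability: rewriting the Nijenhuis-type tensor \eqref{torsionKahler} via $[U,V] = \nabla_U V - \nabla_V U$ expresses $T_{X,Y}$ purely in terms of $A$, namely $T_{X,Y} = A_{JX}Y - A_{JY}X + J A_Y X - J A_X Y$, so that $T \equiv 0$ on a complex manifold becomes a linear relation among the $A_{JX}Y$ and $J A_X Y$ terms (equivalently, $J T_{X,Y}$ equals the ``swap-antisymmetrisation'' of $A_X Y + J A_{JX}Y$). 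Feeding $A_U J = -JA_U$, the Hermitian property \eqref{restrictionA}, and this integrability relation into the cyclic formula for $d\omega_p$, the substitution $(Y,Z)\mapsto(JY,JZ)$ collapses almost everything and leaves the identity $d\omega(X,Y,Z) - d\omega(X, JY, JZ) = 2\,g(Y, A_X Z)$ (one may also package this as a ``master formula'' writing $g(A_X Y, Z)$ as a fixed linear combination of values of $d\omega$ and of $T$). With $d\omega = 0$ and $T = 0$ the right-hand side vanishes for all $X,Y,Z$, hence $A_X = \nabla_X J = 0$, the K\"ahler condition.

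The main obstacle is the bookkeeping in this converse step: one must be careful that the only input beyond $d\omega = 0$ is the vanishing of the Nijenhuis tensor --- the tempting shortcut ``$\nabla_{JX} = J\nabla_X$ since the integrability tensor is $0$'' is not literally a valid identity (it fails the Leibniz test, and in any case that identity would be equivalent to $\nabla J = 0$ itself), so it has to be replaced by the genuine consequence of $T = 0$ described above --- and one must track signs honestly through the two $J$-rotations so that the cancellations are real rather than apparent. Doing the whole argument pointwise in a $\nabla$-normal frame keeps it tensorial and reduces it to finite-dimensional linear algebra, which is the safest way to control those signs.
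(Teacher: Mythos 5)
Your proposal follows the paper's strategy exactly --- pointwise computation in a $\nabla$-normal frame, the cyclic formula $d\omega_p(X,Y,Z) = g(Y,A_X Z) - g(X,A_Y Z) + g(X,A_Z Y)$ with $A_U = \nabla_U J$, and the ``master identity'' $d\omega(X,Y,Z) - d\omega(X,JY,JZ) = 2g(Y,A_X Z)$ obtained by the $J$-rotation substitution --- so the forward direction and the overall shape of the converse are the same as in the text.

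Where you genuinely improve on the paper is in diagnosing the step the paper glosses over. The paper's converse invokes ``$\nabla_{JX} = J\nabla_X$ since the integrability tensor is $0$,'' and you are right to flag this: as an operator identity it fails the Leibniz test ($\nabla_{JX}(fY)$ produces $(JX)(f)\,Y$ while $J\nabla_X(fY)$ produces $X(f)\,JY$), and if taken pointwise on parallel fields it is literally equivalent to $\nabla J = 0$, i.e.\ to the very conclusion being sought. Your replacement is the correct one: expand the Nijenhuis-type tensor \eqref{torsionKahler} via $[U,V] = \nabla_U V - \nabla_V U$, observe that the bare covariant-derivative terms cancel and one is left with $T_{X,Y} = A_{JX}Y - A_{JY}X + JA_Y X - JA_X Y$, and then use $T \equiv 0$ as the \emph{only} input beyond $d\omega = 0$. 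Together with $A_U J = -JA_U$ and $g$-skewness of $A_U$ (both of which you correctly record as formal consequences of $J^2 = -1$ and $\nabla g = 0$), this supplies exactly the relation needed to make the $(Y,Z)\mapsto(JY,JZ)$ substitution collapse to the master identity. The argument as you have restructured it is both correct and more honest about its hypotheses than the version printed in the paper; this is the form in which the identity appears in standard references (the Kobayashi--Nomizu formula $2g(A_X Y, Z) = d\omega(X,Y,Z) - d\omega(X,JY,JZ) + g(N(Y,Z),JX)$, with the Nijenhuis term set to zero by integrability).
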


We now proceed to define another natural notion.

\begin{dfn}
Suppose $M$ is a complex manifold.  Then by a complex submanifold we mean an immersion $\theta : K \rightarrow M$ of a complex manifold $K$ such that the representations of $\theta$ in complex coordinates are holomorphic.
\end{dfn}

\begin{lem} Every complex submanifold of a K\"ahler manifold is K\"ahlerian in the induced metric.

\begin{proof} We need only prove things locally, so consider a small embedded complex submanifold $K \subset M$.  Then if $X,Y \in TK$, we have

\begin{center}
$\nabla_{X}(JY) = (\overline{\nabla}_{X}JY)^{T} = (J\overline{\nabla}_{X}Y)^{T} = J(\overline{\nabla}_{X}Y)^{T} = J\nabla_{X}Y$
\end{center}

So $K$ is K\"ahlerian.
\end{proof}
\end{lem}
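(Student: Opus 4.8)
The plan is to leverage the preceding lemma, which identifies the K\"ahler condition with closedness of the fundamental two-form; this turns the problem into a pullback computation rather than a connection computation. Throughout I may work locally, since being K\"ahlerian is a local property, so I replace the immersion $\theta$ of the definition by an embedding $\iota : K \hookrightarrow M$ of a complex submanifold, equip $K$ with the induced metric $g_{K} = \iota^{*}g$, and let $J_{K}$ be the complex structure coming from $K$'s own complex-manifold structure.

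First I would verify that $(K, g_{K}, J_{K})$ is a Hermitian complex manifold, so that the preceding lemma applies to it. The crucial observation is that, because the coordinate representations of $\theta$ are holomorphic, the differential $d\iota$ intertwines the complex structures, $d\iota \circ J_{K} = J \circ d\iota$. Combining this with the ambient Hermitian identity $g(JX,JY) = g(X,Y)$ gives $g_{K}(J_{K}X,J_{K}Y) = g(d\iota J_{K}X, d\iota J_{K}Y) = g(Jd\iota X, Jd\iota Y) = g(d\iota X, d\iota Y) = g_{K}(X,Y)$, so $g_{K}$ is Hermitian on $K$.

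Next I would compare fundamental forms. Writing $\omega$ for the fundamental form of $(M,g,J)$ and $\omega_{K}$ for that of $(K,g_{K},J_{K})$, and using $\omega(X,Y) = g(X,JY)$ together with the intertwining relation once more,
\[
(\iota^{*}\omega)(X,Y) = \omega(d\iota X, d\iota Y) = g(d\iota X, Jd\iota Y) = g(d\iota X, d\iota J_{K}Y) = g_{K}(X, J_{K}Y) = \omega_{K}(X,Y),
\]
so $\omega_{K} = \iota^{*}\omega$. Since $M$ is K\"ahler, the preceding lemma gives $d\omega = 0$, and because exterior differentiation commutes with pullback, $d\omega_{K} = d(\iota^{*}\omega) = \iota^{*}(d\omega) = 0$. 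Applying the converse half of the preceding lemma to $K$ then shows $g_{K}$ is K\"ahlerian, which is the claim.

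The only genuinely delicate point is the intertwining relation $d\iota \circ J_{K} = J \circ d\iota$, i.e. that the complex structure making $K$ a complex manifold is exactly the restriction of the ambient $J$; everything else is formal, and this is precisely what holomorphicity of the coordinate representations of $\theta$ encodes. An alternative route avoids the preceding lemma: use the Gauss formula $\overline{\nabla}_{X}Y = \nabla^{K}_{X}Y + \mathrm{II}(X,Y)$ relating the ambient connection to the induced one, observe that both $TK$ and (by the Hermitian property) its normal bundle are $J$-invariant so orthogonal projection onto $TK$ commutes with $J$, and compute $\nabla^{K}_{X}(J_{K}Y) = (\overline{\nabla}_{X}(JY))^{T} = (J\overline{\nabla}_{X}Y)^{T} = J(\overline{\nabla}_{X}Y)^{T} = J\nabla^{K}_{X}Y$, i.e. $(\nabla^{K}_{X}J_{K})Y = 0$. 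In this second approach the main obstacle is exactly the $J$-invariance of the normal bundle, which makes the tangential projection commute with $J$.
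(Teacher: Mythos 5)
Your main argument is correct and takes a genuinely different route from the paper. The paper's proof is the connection computation you relegate to an alternative: it invokes the Gauss formula, uses that tangential projection $(\cdot)^{T}$ commutes with $J$ (because $TK$ and hence its orthogonal complement are $J$-invariant), and concludes $(\nabla^{K}_{X}J_{K})Y = 0$ directly. Your primary route instead pushes the problem through the preceding lemma, $g$ K\"ahlerian $\iff d\omega = 0$: you check $g_{K}$ is Hermitian, show $\omega_{K} = \iota^{*}\omega$, and then $d\omega_{K} = \iota^{*}d\omega = 0$ is automatic since pullback commutes with $d$. Both are sound, and the delicate point you flag in each — the intertwining $d\iota \circ J_{K} = J \circ d\iota$ in the pullback route, and $J$-invariance of the normal bundle in the Gauss-formula route — is really the same fact wearing two hats. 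What the pullback route buys is that the only hard analytic input ($d\omega = 0$) is already packaged in the prior lemma, so the submanifold statement becomes pure naturality; what the paper's route buys is that it gives the vanishing of $\nabla J$ directly, which is the form of the K\"ahler condition one actually wants for the Wirtinger-inequality argument that follows, without a round trip through the $d\omega$ characterization.
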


\begin{thm}  Every complex submanifold of a K\"ahler manifold is minimal. \end{thm}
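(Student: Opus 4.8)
The plan is to show that the second fundamental form of a complex submanifold $K$ of a K\"ahler manifold $M$ vanishes when traced, i.e.\ that the mean curvature vector is zero, which is precisely the condition for $K$ to be minimal. The essential input is the previous lemma: for $X, Y$ tangent to $K$, we have $\overline{\nabla}_X(JY) = J\,\overline{\nabla}_X Y$ (since $M$ is K\"ahler, $J$ is parallel with respect to the ambient connection $\overline{\nabla}$), and moreover $JTK = TK$ because $K$ is a complex submanifold, so $J$ preserves both the tangent and (by the Hermitian condition \eqref{restrictionA}) the normal bundle of $K$.

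First I would recall the Gauss formula $\overline{\nabla}_X Y = \nabla_X Y + \mathrm{II}(X,Y)$, where $\nabla$ is the induced connection on $K$ and $\mathrm{II}(X,Y)$ is the (normal-valued, symmetric) second fundamental form. Applying $J$ and using that $J$ preserves the tangential/normal splitting gives $J\,\overline{\nabla}_X Y = J\nabla_X Y + J\,\mathrm{II}(X,Y)$ with $J\nabla_X Y$ tangent and $J\,\mathrm{II}(X,Y)$ normal. On the other hand, since $JY \in TK$, the Gauss formula also gives $\overline{\nabla}_X(JY) = \nabla_X(JY) + \mathrm{II}(X,JY)$. Combining these with the K\"ahler identity $\overline{\nabla}_X(JY) = J\,\overline{\nabla}_X Y$ and comparing normal components yields the key relation
\begin{equation}
\mathrm{II}(X,JY) = J\,\mathrm{II}(X,Y).
\end{equation}
By symmetry of $\mathrm{II}$ this also reads $\mathrm{II}(JX,Y) = J\,\mathrm{II}(X,Y)$, hence $\mathrm{II}(JX,JY) = -\mathrm{II}(X,Y)$.

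Next I would compute the mean curvature by tracing over an adapted orthonormal frame. Choose a local orthonormal frame of $TK$ of the form $e_1, Je_1, e_2, Je_2, \ldots, e_n, Je_n$ (possible since $g$ is Hermitian and $J^2 = -1$). Then
\begin{equation}
H = \sum_{i=1}^{n}\bigl(\mathrm{II}(e_i,e_i) + \mathrm{II}(Je_i,Je_i)\bigr) = \sum_{i=1}^{n}\bigl(\mathrm{II}(e_i,e_i) - \mathrm{II}(e_i,e_i)\bigr) = 0,
\end{equation}
using $\mathrm{II}(JX,JX) = -\mathrm{II}(X,X)$. Since the mean curvature vector vanishes identically, $K$ is a minimal submanifold, and as this was a purely local argument it holds globally. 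I do not expect a genuine obstacle here; the only point requiring a little care is the bookkeeping of tangential versus normal projections and confirming that $J$ genuinely preserves the normal bundle — this follows from \eqref{restrictionA}, since if $\nu$ is normal and $X$ tangent then $g(J\nu, X) = g(J\nu, -J^2 X) = -g(\nu, JX) = 0$ because $JX \in TK$. One caveat worth noting: minimality here means zero mean curvature (a critical point of volume), not \emph{a priori} global volume-minimisation; the latter would require the calibration argument via the K\"ahler form $\omega$, which is the content of the theorem that motivates this section.
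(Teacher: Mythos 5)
Your proof is correct, and it takes a genuinely different route from the paper's. The paper deduces minimality as a corollary of a stronger fact: invoking Wirtinger's inequality (the lemma immediately following the theorem statement), it argues that $\omega^m/m!$ is a calibration whose restriction to $K$ equals the volume form, so $K$ minimizes volume among all homologous $2m$-dimensional competitors — i.e.\ it solves the Plateau problem — and is therefore \emph{a fortiori} minimal. Your argument instead works purely locally with the second fundamental form: from $\nabla J = 0$ and the $J$-invariance of both the tangent and normal bundles you derive $\mathrm{II}(JX,JY) = -\mathrm{II}(X,Y)$, and tracing over a $J$-adapted frame $\{e_i, Je_i\}$ kills the mean curvature outright. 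Each approach buys something the other does not. The calibration route proves more (global homological volume-minimization, a much stronger conclusion than stationarity) but relies on the machinery of Wirtinger's inequality and the compactness/boundary hypotheses needed to compare against homologous competitors. Your route is entirely elementary and local, needs nothing beyond the preceding lemma ($\nabla J = 0$ on $TK$) and the Gauss formula, and establishes the vanishing of $H$ pointwise without any compactness assumption. Your closing caveat — that you have shown criticality rather than volume-minimisation, the latter being the content of the calibration argument — is exactly right, and in fact identifies precisely the gap between your proof and the one the paper actually gives.
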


\begin{lem} (Wirtinger's Inequality).  Suppose $M$ is a K\"ahler manifold and let $K \subset M$ be any 2m dimensional oriented real submanifold.  Suppose $dV_{p}$ is the volume form of the induced metric on $K$ at $p$.  Then the restriction of $\omega^{m} = \omega \wedge ... \wedge \omega$, the $m^{th}$ power of the K\"ahler form of $M$ to $T_{p}K$ satisfies $\omega^{m}/m! \leq dV_{p}$, with equality iff $T_{p}K$ is a complex subspace of $T_{p}M$. \end{lem}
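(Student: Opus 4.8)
The plan is to reduce the inequality to a purely linear-algebraic statement at a single point $p$, and then prove that statement by choosing a good orthonormal basis for $T_pK$. Since both $\omega^m/m!$ and $dV_p$ are $2m$-forms on the $2m$-dimensional space $T_pK$, their ratio is a single real number, so the claim is equivalent to the bound $\omega^m(e_1,\dots,e_{2m})/m! \leq 1$ for every oriented $g$-orthonormal basis $(e_1,\dots,e_{2m})$ of $T_pK$, with equality characterising when $T_pK$ is $J$-invariant. First I would fix $p$ and work entirely inside the Hermitian inner product space $(T_pM, g, J)$; by restriction, $T_pK$ is a $2m$-dimensional real subspace carrying the restricted inner product, and $\omega(X,Y) = g(X,JY)$ restricts to a $2$-form on it (though $J$ need not preserve $T_pK$).

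The key step is a normal-form lemma for a $2$-form on an inner product space: given the skew-symmetric bilinear form $\omega|_{T_pK}$, there is a $g$-orthonormal basis $f_1,g_1,\dots,f_m,g_m$ of $T_pK$ in which $\omega = \sum_{j=1}^m \lambda_j\, f_j^\flat \wedge g_j^\flat$ for some reals $\lambda_j \geq 0$ (this is the standard skew-normal form, obtained by diagonalising the antisymmetric matrix of $\omega$ by an orthogonal transformation). In this basis $\omega^m/m! = \lambda_1\lambda_2\cdots\lambda_m \, f_1^\flat\wedge g_1^\flat\wedge\cdots\wedge f_m^\flat\wedge g_m^\flat$, so the value of $\omega^m/m!$ on the oriented orthonormal frame is $\pm\prod_j\lambda_j$, and it suffices to show each $\lambda_j \leq 1$. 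This follows because $\lambda_j = \omega(f_j,g_j) = g(f_j, Jg_j) \leq \norm{f_j}\,\norm{Jg_j} = 1$ by Cauchy--Schwarz together with the fact that $J$ is a $g$-isometry (restriction~\eqref{restrictionA}), using $\norm{f_j}=\norm{g_j}=1$. Hence $\prod_j \lambda_j \leq 1$, giving $\omega^m/m! \leq dV_p$.

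For the equality case, $\prod_j\lambda_j = 1$ with all $\lambda_j \in [0,1]$ forces every $\lambda_j = 1$, and $\lambda_j = 1$ in the Cauchy--Schwarz step forces $Jg_j = f_j$ (the vectors must be positively parallel and of unit length). Then each plane $\mathrm{span}(f_j,g_j)$ is $J$-invariant, so $T_pK = \bigoplus_j \mathrm{span}(f_j,g_j)$ is a complex subspace of $T_pM$. Conversely, if $T_pK$ is $J$-invariant one can pick a unitary basis adapted to it and compute directly that $\omega^m/m!$ equals the volume form, so equality holds. The one point requiring a little care — and the main obstacle — is justifying the simultaneous orthogonal normal form for $\omega|_{T_pK}$ relative to the restricted metric; I would handle this by passing to the skew-symmetric endomorphism $A$ of $T_pK$ defined by $g(AX,Y) = \omega(X,Y)$, noting $A$ is normal, and invoking the spectral theorem for real normal (skew-symmetric) operators to get the block-diagonal form, which yields exactly the basis above with $\lambda_j$ the moduli of the eigenvalues of $A$.
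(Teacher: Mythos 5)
The paper actually leaves Wirtinger's inequality unproved: the proof environment that immediately follows the lemma is explicitly labelled ``(of Theorem)'' and sketches only the calibration argument for minimality of complex submanifolds, taking the lemma as an input. So there is no in-paper proof to compare against; your argument stands on its own as the missing ingredient.

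Your proof is correct and is the standard one for Wirtinger (it is essentially the argument in Lawson's \emph{Lectures on Minimal Submanifolds} and in Harvey--Lawson). You correctly reduce to a pointwise linear-algebra statement, obtain the skew normal form $\omega|_{T_pK} = \sum_j \lambda_j\, f_j^\flat\wedge g_j^\flat$ with $\lambda_j\ge 0$ by block-diagonalising the associated skew-symmetric endomorphism $A$ (which is well-defined since $g$ restricts to a genuine inner product on $T_pK$), and then bound each $\lambda_j = g(f_j,Jg_j)$ by Cauchy--Schwarz together with $J$ being a $g$-isometry. The equality analysis is also right: all $\lambda_j = 1$ forces $Jg_j = f_j$, whence $T_pK$ is $J$-invariant; conversely on a complex subspace a unitary frame gives equality. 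One cosmetic point worth making explicit is that the converse requires $T_pK$ to carry the orientation induced by $J$ (otherwise $\omega^m/m! = -dV_p$); this is implicit in the phrase ``oriented real submanifold'' as usually understood but deserves a sentence. No gap beyond that; the spectral-theorem step you flag is exactly the right thing to invoke and closes the argument.
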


\begin{proof} (of Theorem).  If $M$ is any K\"ahler manifold and $\theta : K \rightarrow M$ any complex submanifold with $K$ compact possibly with non-empty boundary with real dimension $2m$.  Then the volume of $K$ in the induced metric is less than or equal to the volume of any other $2m$-dimensional submanifold homologous to $K$ in $M$.  In other words, $K$ is the solution of a plateau problem in $M$. \end{proof}


\subsection{Further thoughts}

One natural question to ask is:

Can we go the other way, ie, in certain cases, can we find for any solution of a plateau problem a complex submanifold corresponding to it?  Also, can the solutions of plateau problems in odd real dimensions in K\"ahler manifolds always be lifted to corresponding solutions of higher dimensional plateau problems in complex coordinates? (Obviously lifts may not be unique.  The key question here is whether such a lift will always exist.)

One would hope that the answer to these question would be yes, as this would be tremendously compelling support for the use of K\"ahlerian geometry to describe physical situations.  But in general the answer is certainly no.  For instance, simply take the boundary of a Plateau problem in such a way that it cannot possibly be matched to a complex submanifold of the ambient K\"ahler space.

However, it may be possible for very special K\"ahler manifolds to find a correspondence.  For instance, considering K\"ahler manifolds of complex dimension two with index one.  Then the sort of result we would like to prove would be:

\begin{conj} Let M be a K\"ahler manifold of complex dimension two.  Then all submanifolds of index one (ie causally convex submanifolds) must necessarily be complex submanifolds.  Hence all causally convex solutions to the Plateau problem in real dimension 2, 3 or 4 will be complex submanifolds. \end{conj}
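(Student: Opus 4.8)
The plan is to reduce the statement to a pointwise question on the tangent spaces of $K$, settle that by linear algebra in $\mathcal{C}^2$, and then globalise using the parallelism of $J$ that characterises the K\"ahler condition. First I would fix $p$ in a causally convex submanifold $K \subseteq M$ and work in $T_pM \cong \mathcal{C}^2$, equipped with $J_p$ and the (pseudo-)Hermitian inner product $g_p$. Because causally convex sets are swept out by their causal geodesics, the strategy is to show that a causal geodesic leaving $p$ in a direction tangent to $K$ cannot be deformed within the complex line that direction spans without escaping $K$; since the K\"ahler condition makes $J$ parallel, and hence compatible with the metric and with geodesics, this should force the causal structure induced on $T_pK$ to be $J_p$-compatible --- concretely, that the set of null and timelike directions in $T_pK$ is carried into itself by $J_p$ and that $T_pK$ is $g_p$-non-degenerate.

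The heart of the matter is then the following pointwise lemma: a real subspace $W \subseteq \mathcal{C}^2$ of real dimension $2$, $3$ or $4$ that is $g_p$-non-degenerate and whose causal directions are $J_p$-invariant is itself $J_p$-invariant, or else cannot occur. For real dimension $4$ this is immediate, and for real dimension $3$ a direct count --- a $3$-plane meeting a complex line in a real line but not containing it is degenerate or has a $J_p$-mixed causal cone --- should rule the case out entirely, so the conjecture is vacuous there. The real dimension $2$ case is where the algebra is most delicate: one must show that a non-degenerate real $2$-plane whose light directions are $J_p$-invariant is a complex line, and in particular that the ``split'' plane spanned by the real axes of the two coordinates --- the obvious potential counterexample --- is excluded by the $J_p$-invariance of its null cone. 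This is precisely the step that mere index cannot supply and where causal convexity must do the work.

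Granting the pointwise lemma at every point, I would invoke the K\"ahler parallelism $(\nabla_X J)Y = 0$ to conclude that the tangent distribution of $K$ is parallel, that the almost-complex structure it inherits is integrable, and hence that $K$ is a genuine complex submanifold in the sense already defined (compare the lemma that every complex submanifold of a K\"ahler manifold is K\"ahlerian in the induced metric). The Plateau conclusion in real dimensions $2$ and $4$ then follows directly from the theorem, proved above, that every complex submanifold of a K\"ahler manifold is minimal, sharpened to volume-minimising in its homology class by Wirtinger's inequality; in real dimension $3$ it holds vacuously.

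The main obstacle is the first step together with the precise content of ``causally convex submanifold'': the whole argument depends on wringing from that hypothesis the $J_p$-invariance of the tangent light cones, and since the index-one real $2$-planes in $\mathcal{C}^2$ are genuinely not all complex, the conjecture is false without such a mechanism. I expect that making this rigorous will require either restricting the algebraic type of the ambient (pseudo-)Hermitian form or imposing an extra boundary hypothesis on $K$, and isolating the weakest workable such hypothesis is, in my view, the real mathematical content of the result.
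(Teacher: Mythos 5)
You have correctly diagnosed something more important than a gap in your own proof: there is nothing in the paper to compare against. The statement you were asked to prove is explicitly labelled a \emph{conjecture} in the K\"ahler chapter, and the paper offers no argument for it at all --- only the one-sentence heuristic that ``there actually is a correspondence between minimal graphs and complex subdomains of the complex plane, so it seems reasonable that we might be able to get the same result with a locally $\mathcal{C}^{2}$ manifold if we impose some sort of causality requirement.'' That is an appeal to plausibility, not a proof, and the paper goes on to other topics immediately afterwards.

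Your analysis is therefore the more substantive of the two, and the obstacle you isolate is the correct one. The phrase ``index one (ie causally convex submanifolds)'' is being used loosely, and the statement implicitly places the ambient K\"ahler metric in the pseudo-Hermitian setting (otherwise, with a positive-definite $g$, no tangent plane has index one and the claim is vacuous). In $\mathcal{C}^2$ with such a pseudo-Hermitian form, there are plenty of real $2$-planes carrying an induced Lorentzian metric that are not $J$-invariant --- your ``split plane spanned by the real axes of the two coordinates'' is a clean example --- so the pointwise linear algebra genuinely fails without additional input, exactly as you say. Whatever extra structure is needed to force $J$-invariance (a hypothesis on the null cones, on the boundary of $K$, or on the algebraic type of the ambient metric) is not supplied by the conjecture, and you are right that identifying the weakest such hypothesis would be the real content of any correct version of the result. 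In short: your proposal does not have a gap so much as it correctly locates the gap in the conjecture itself, which the paper acknowledges only implicitly by declining to prove it.
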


The reason we might expect this to be true is because there actually is a correspondence between minimal graphs and complex subdomains of the complex plane, so it seems reasonable that we might be able to get the same result with a locally $\mathcal{C}^{2}$ manifold if we impose some sort of causality requirement.

In particular, we might expect a subset of solutions to certain physical systems of even dimension to be K\"ahler, since they might be realisable as solutions to the square roots of the relevant differential operator.

\chapter{A Survey of Geometric Measure Theory}

Geometric Measure Theory is an extraordinarily rich subject, initiated by Federer (see his famous unreadable treatise, \cite{[F]}), and still very much being developed by such people as Leon Simon and Frank Morgan.  My main references here are Simon's book \cite{[Si]}, Morgan's introductory text \cite{[M]}, and notes that I took from a series of lectures given by Marty Ross in the first half of 2006.

\section{Introduction and Motivation}

Geometric measure theory is essentially an extension of differential geometry designed to deal with possible convergence problems for variational problems.  For instance, one might be interested in finding a family of surfaces converging to say a minimal surface defined by some boundary in a higher dimensional domain.  This is known as the \emph{Plateau Problem}.  The essential nature of the game here is:

(i) Take some sort of geometric variational problem (like the Plateau problem)

(ii) Convergence may not be well defined for the usual class of "nice" manifolds.  So in order to get nice convergence, extend solutions to the variational problem to a class containing the "nice" manifolds but also containing uglier beasts.  (To gain something you must first lose something).  The uglier beasts would be rectifiable sets, rather than differential manifolds, for instance (the nature and construction of these will be explained shortly).

(iii) Identify conditions on the problem (ambient space, boundary conditions) that are sufficient to imply regularity of the solution to the variational problem, i.e. prove that the solution is not an ugly beast, but is one of the nice guys.  Results that assist with this process are things like the Allard regularity theorem and its cousins.

\subsection{The Plateau Problem as the Prototypical Example}

The Plateau Problem is actually the problem that historically the technology of geometric measure theory was developed for.  The statement of the problem, in its simplest, ungeneralised form, is quite simple:

\emph{Given a closed boundary curve in euclidean three space, what is the surface of minimal area spanning this curve?}

The main advantage of considering this problem is that it can be visualised very easily- it is clear, given some particular loop of curve, what will be bad candidate surfaces, for instance, and what will be good ones.  It is also clear that we should be able to "improve" upon an initial approximation to the best surface by deforming to an ideal, "minimal" surface smoothly.  There are problems with this approach, however- which is the whole point of geometric measure theory- but let us first assume that we can do this.

A good first step towards determining whether a surface is minimal is to find the necessary criterion for it to be stationary in area- for it is obvious that if one were to perturb such a surface very slightly in any way, such that we get a family of surfaces, the first derivative of the area of such a family evaluated at the ideal surface should be zero.  This approach leads one to what is known as the \emph{minimal surface equation}.  I should now proceed to demonstrate how the minimal surface equation is derived according to standard variational techniques, for the case in which a surface can be realised as a graph.


If the surface can be realised as a graph, then there is some $f : D \subset R^{n} \rightarrow R$ such that the area $A(f)$ is given by the expression

\begin{center} $A(f) = \int_{D}\sqrt{1 + (\nabla f)^{2}}dA$ \end{center}

Let $g_{t} = f + t\eta$ be an arbitary variation of $f$, i.e. such that $\eta\vert_{\partial D} = 0$.  Then if $f$ is a minimal graph we must have that

\begin{center} $\frac{dA(g_{t})}{dt}\vert_{t = 0} = 0$ \end{center}

(This is a necessary condition for a minimiser).

So since $A(t) = \int_{D}\sqrt{1 + (\nabla f + t \nabla \eta)^{2}}dA$,

\begin{center} $A'(t) = \frac{d}{dt}\int_{D}\sqrt{1 + (\nabla f + t \nabla \eta)^{2}}dA$ \end{center}

By a result from measure theory, we may exchange the integral sign and the derivative and get

\begin{align} A'(t) &= \int_{D}\frac{\partial}{\partial t}\sqrt{1 + (\nabla f + t\nabla \eta)^{2}}dA \nonumber \\
&= \int_{D}\frac{2(\nabla f + t\nabla \eta) \cdot \nabla \eta \frac{1}{2}}{\sqrt{1 + (\nabla f + t \nabla \eta)^{2}}}dA \end{align}

Then for any $\eta$,

\begin{center} $0 = A'(0) = \int_{D}\frac{\nabla f \cdot \nabla \eta}{\sqrt{1 + (\nabla f)^{2}}}dA$ \end{center}

Integrating by parts we get that

\begin{center} $[\eta \frac{\nabla f \cdot n}{\sqrt{1 + (\nabla f)^{2}}}]\vert_{\partial D} - \int_{D}\nabla \cdot \frac{\nabla f}{\sqrt{1 + (\nabla f)^{2}}}\eta dA = 0$ \end{center}

The first term vanishes since $\eta \vert_{\partial D} = 0$.  Since $\eta$ is otherwise arbitrary, we then conclude by the fundamental lemma of the calculus of variations that

\begin{center} $\nabla \cdot \frac{\nabla f}{\sqrt{1 + (\nabla f)^{2}}} = 0$ \end{center}

For a proof of the fundamental lemma of the calculus of variations, suppose $h : [x_{0},x_{1}] \rightarrow R$ is continuous and suppose $\int_{x_{0}}^{x_{1}} h \eta = 0$ for every $C^{1}$ $\eta : [x_{0},x_{1}] \rightarrow R$ with $\eta(x_{0}) = 0 = \eta(x_{1})$.  Then I claim $h = 0$ (if we merely assume $\int h \eta = 0$ for every measurable $\eta$, we may only conclude $h = 0 a.e.$).

The proof is by contradiction.  For suppose $h(a) \neq 0$.  Without loss of generality, suppose in particular that $h(a) > 0$.  But then by continuity it is possible to find an $\eta$ such that $\int_{x_{0}}^{x_{1}} h \eta > 0$, which is impossible.

\subsection{Possible problems with the standard variational approach}

As mentioned before, there are a couple of technical issues that might compromise the above naive approach.  I will describe them here.

The main problem is the issue of convergence.  For it is possible to find a family of surfaces that gets arbitrarily close to the minimal area, but which becomes pathological in the limit- for instance, one may get space filling curves- very horrible indeed!


Another problem that may occur might be for instance the possibility that in perturbing an initial smooth surface to try to find an ideal one, one may produce self intersections in a new surface- and hence it will no longer be a manifold.


\subsection{Geometric Measure Theory comes to the rescue}

Geometric measure theory deals with these issues by slightly weakening the family of things that one is working with- to surfaces that are smooth almost everywhere, and that have integer multiplicity.  In particular, one introduces the concept of \emph{rectifiability}.  It turns out that the space filling component of the pathological family of surfaces previously described is \emph{unrectifiable}; whereas the "good bit" is rectifiable.  There is a structure theorem about the objects one works with that in fact shows that there is a unique decomposition into these two separate pieces.  Finally, if one has "nice" conditions, such as a smooth embedded boundary curve and a smooth ambient space, we may conclude, using a result known as the Allard Regularity theorem, that the rectifiable bit is actually the smooth surface we are looking for.  Moreover, we are able to conclude that it satisfies the minimal surface equation, and so our "naive" analysis is justified.

I should remark that the Plateau Problem is not the only problem that is amenable to these methods.  Any variational problem can be treated in an analogous manner, and benefits to an equal degree from the tools of this theory.  It is merely that the Plateau Problem is easy to understand in a visual manner, and so makes a very good example.  Since we may in general be considering much broader variational problems, it makes sense to extend from merely considering integer multiplicities and consider \emph{any} value for the multiplicity at any point on the surface; this is where the idea of \emph{density} comes into its own.

\section{Sobolev Spaces}

Before moving on to the particulars of geometric measure theory, it is necessary first to understand some concepts and definitions from analysis.  In particular it is necessary to understand the so called Sobolev spaces $W^{k,p}(\Omega)$ and $W^{k,p}_{loc}(\Omega)$.

I shall follow \cite{[GT]} in my treatment of the motivation for these objects.

\subsection{Preliminary Results}

Definition. A Hilbert space is a set $\mathcal{H}$ together with a bilinear operation (.,.): $\mathcal{H} \times \mathcal{H} \rightarrow \mathcal{C}$, satisfying the following relations:

(i) $(x,y) = (y,x)$ for all $x,y \in \mathcal{H}$,

(ii) $(ax + by,z) = a(x,z) + b(y,z)$ for all $a,b \in \mathcal{R}$,$x,y,z \in \mathcal{H}$, and

(iii) $(x,x) \geq 0$ for all $x \in \mathcal{H}$, with equality iff $x = 0$.

We call this operation an \emph{inner product} on $\mathcal{H}$.  For $\mathcal{H}$ to be a Hilbert space we require that it must be \emph{complete} with respect to this inner product.

Let $\mathcal{H}$ be a Hilbert space, and $x,y \in \mathcal{H}$. Then we have the following inequalities:

The Schwarz Inequality.

\begin{equation}
|(x,y)| \leq \norm{x}\norm{y}
\label{Schwarz}
\end{equation}

The Triangle inequality.

\begin{equation}
\norm{x + y} \leq \norm{x} + \norm{y}
\end{equation}

Parallelogram law.

\begin{equation}
\norm{x + y}^{2} + \norm{x - y}^{2} = 2(\norm{x}^{2} + \norm{y}^{2})
\end{equation}

We also have:

\begin{thm} (Pythagoras' Theorem). If $(x,y) = 0$ (i.e. $x \perp y$), then $\norm{x + y}^{2} = \norm{x} + \norm{y}$.

\begin{proof} Very easy. \end{proof} \end{thm}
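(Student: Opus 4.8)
The plan is to expand the squared norm directly using the defining bilinearity and symmetry of the inner product. Since $\norm{x+y}^{2}$ is by definition $(x+y,\,x+y)$, the first step is to apply property (ii) (linearity in the first slot) to split this as $(x,\,x+y) + (y,\,x+y)$, and then apply property (i) together with (ii) again to expand each term, obtaining $(x,x) + (x,y) + (y,x) + (y,y)$.

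Next I would invoke symmetry (property (i)) to note $(y,x) = (x,y)$, so the cross terms combine to $2(x,y)$. By hypothesis $x \perp y$ means $(x,y) = 0$, so the cross terms vanish, leaving $(x,x) + (y,y) = \norm{x}^{2} + \norm{y}^{2}$. This establishes $\norm{x+y}^{2} = \norm{x}^{2} + \norm{y}^{2}$, which is the intended identity (the statement as displayed appears to omit the squares on the right-hand side, and I would correct this in passing).

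There is essentially no obstacle here: the only subtlety worth flagging is that the argument uses nothing beyond the algebraic axioms (i)--(iii), not completeness, so it holds in any inner product space, not merely a Hilbert space. The author's one-line proof ("Very easy") is justified, and the cleanest exposition is simply to display the three-line expansion above.
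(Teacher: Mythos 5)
Your expansion is correct and is exactly the computation the paper is gesturing at with ``Very easy''; the paper gives no further detail, so there is nothing to diverge from. You are also right that the displayed statement has a typo and should read $\norm{x+y}^{2} = \norm{x}^{2} + \norm{y}^{2}$, and that the argument needs only the inner-product axioms, not completeness.
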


Now I shall prove the Schwarz inequality.

Let $y \neq 0$ first.  Then, choose $\alpha \in \mathcal{C}$ such that $x - \alpha y \perp y$.  So $(x - \alpha y, y) = 0$.  Hence $(x,y) - \alpha \norm{y}^{2} = 0$, or $\alpha = \frac{(x,y)}{\norm{y}^{2}}$.  By Pythagoras' theorem for $x - \alpha y$ and $y$,

\begin{align}
\norm{x}^{2} &= \norm{(x - \alpha y) + \alpha y}^{2} \nonumber \\
&= \norm{x - \alpha y}^{2} + \norm{\alpha y}^{2} \nonumber \\
&\geq |\alpha|^{2}\norm{y}^{2} = \frac{|(x,y)|^{2}}{\norm{y}^{4}}\norm{y}^{2} \nonumber \\
&= \frac{|(x,y)|^{2}}{\norm{y}^{2}}
\end{align}

From which the Schwarz inequality can easily be extracted.  The triangle inequality follows easily from the Schwarz inequality.

The following theorem is a very important one, and gets used throughout analysis.  We shall need it in what is to follow.

\begin{thm} (The Riesz Representation Theorem). For every bounded linear functional $F$ on a Hilbert space $\mathcal{H}$, there is a uniquely determined element $f \in \mathcal{H}$ such that $F(x) = (x,f)$ for all $x \in \mathcal{H}$ and $\norm{F} = \norm{f}$.

\begin{proof} Let $\mathcal{N} = {x | F(x) = 0}$ be the null space of $F$.  If $\mathcal{N} = \mathcal{H}$, the result follows trivially by taking $f = 0$.  Otherwise, since $\mathcal{N}$ is a closed subspace of $\mathcal{H}$, there exists an element $z \neq 0$ in $\mathcal{H}$ such that $(x,z) = 0$ for all $x \in \mathcal{N}$.  Then $F(z) \neq 0$ and moreover for any $x \in \mathcal{H}$,

\begin{center}
$F(x - \frac{F(x)}{F(z)}z) = F(x) - \frac{F(x)}{F(z)}F(z) = 0$
\end{center}

so $x - \frac{F(x)}{F(z)}z \in \mathcal{N}$.  Hence by our choice of $z$

\begin{center}
$(x - \frac{F(x)}{F(z)}z,z) = 0$
\end{center}

or in other words

\begin{center}
$(x,z) = \frac{F(x)}{F(z)}\norm{z}^{2}$
\end{center}

Hence $F(x) = (f,x)$ where $f = zF(z)/\norm{z}^{2}$, showing existence.  To prove uniqueness suppose $F(x) = (g,x)$.  Then in particular $0 = F(x) - F(x) = (f - g,x)$ for every $x \in \mathcal{H}$.  In particular $(f - g,f - g) = \norm{f - g} = 0$.  But this is true iff $f - g = 0$ by inner product axiom (iii). So $f = g$.

To see that $\norm{F} = \norm{f}$, we have, by the Schwarz inequality,

\begin{center}
$\norm{F} = sup_{x \neq 0}\frac{|(x,f)|}{\norm{x}} \leq sup_{x \neq 0}\frac{\norm{x}\norm{f}}{\norm{x}} = \norm{f}$,
\end{center}

and also that

\begin{center}
$\norm{f}^{2} = (f,f) = F(f) \leq \norm{F}\norm{f}$
\end{center}

proving equality of the norms. \end{proof} \end{thm}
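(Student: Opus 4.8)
The plan is to realize $f$ concretely as a scalar multiple of any nonzero vector orthogonal to the kernel of $F$, so that the entire proof reduces to (a) showing such a vector exists and (b) a short computation with the inner product. First I would dispose of the degenerate case: if $F$ is identically zero, then $f = 0$ works and $\norm{F} = 0 = \norm{f}$. So assume $F \not\equiv 0$ and set $\mathcal{N} = \{x \in \mathcal{H} : F(x) = 0\}$. Since $F$ is bounded it is continuous, so $\mathcal{N}$ is a closed linear subspace of $\mathcal{H}$, and it is a \emph{proper} subspace precisely because $F \not\equiv 0$.

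The heart of the argument --- and the step I expect to be the main obstacle --- is to produce a nonzero $z \in \mathcal{H}$ with $(x,z) = 0$ for every $x \in \mathcal{N}$; equivalently, to know that a proper closed subspace of $\mathcal{H}$ has nontrivial orthogonal complement. This is exactly where completeness of $\mathcal{H}$ must be used (the statement is false in an incomplete inner product space). I would fix any $w \notin \mathcal{N}$ and let $z_{0}$ be the point of $\mathcal{N}$ nearest to $w$; existence and uniqueness of this nearest point follow from the parallelogram law applied to a minimizing sequence, which the law forces to be Cauchy, together with closedness of $\mathcal{N}$ so the limit lies in $\mathcal{N}$. Then $z := w - z_{0}$ is nonzero, and perturbing $z_{0}$ within $\mathcal{N}$ and using that it minimizes the distance to $w$ shows $(n, z) = 0$ for all $n \in \mathcal{N}$. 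Note $F(z) \neq 0$: otherwise $z \in \mathcal{N}$, whence $z \perp z$ and $\norm{z}^{2} = 0$, contradicting $z \neq 0$.

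With $z$ in hand the remainder is formal. For arbitrary $x \in \mathcal{H}$ the vector $x - \frac{F(x)}{F(z)} z$ lies in $\mathcal{N}$, hence is orthogonal to $z$; expanding $(x - \frac{F(x)}{F(z)} z, z) = 0$ gives $(x,z) = \frac{F(x)}{F(z)} \norm{z}^{2}$, so that $F(x) = (x, f)$ with $f = \frac{F(z)}{\norm{z}^{2}} z$ (a conjugate is inserted on $F(z)$ if one adopts a conjugate-linear slot convention). For uniqueness, if also $F(x) = (x,g)$ for all $x$, then $(x, f - g) = 0$ for all $x$; taking $x = f - g$ and invoking positive-definiteness of the inner product forces $f = g$. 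Finally $\norm{F} = \norm{f}$ drops out of two applications of the Schwarz inequality \eqref{Schwarz}: from $|F(x)| = |(x,f)| \le \norm{x}\,\norm{f}$ we get $\norm{F} \le \norm{f}$, while $\norm{f}^{2} = (f,f) = F(f) \le \norm{F}\,\norm{f}$ gives the reverse. The only genuinely nontrivial ingredient is the nearest-point / orthogonal-complement lemma; everything after that is algebra with the inner product and the Schwarz inequality already proved above.
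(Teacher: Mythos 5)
Your proof is correct and follows essentially the same route as the paper's: construct $f$ as a scalar multiple of a nonzero vector $z$ orthogonal to $\mathcal{N} = \ker F$, verify the representation by the orthogonality computation, and obtain uniqueness and $\norm{F} = \norm{f}$ via positive-definiteness and two applications of the Schwarz inequality. The one difference is that you supply the nearest-point/parallelogram-law argument justifying the existence of $z$, a step the paper merely asserts; this is a welcome addition rather than a different approach.
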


The $L^{p}(\Omega)$ and $L^{p}_{loc}(\Omega)$ spaces consist of $p$-integrable functions over $\Omega$ and locally $p$-integrable functions over $\Omega$ respectively.

For $\psi$ to be $p$-integrable means that $\int_{\Omega}\psi^{p}$ is well defined and is finite.

For $\psi$ to be locally $p$-integrate means that for any $K \subset \subset \Omega$ then $\int_{K}\psi^{p}$ is well defined and is finite.

The $L^{p}(\Omega)$ norm is defined on $p$-integrable functions on $\Omega$ by

\begin{equation}
\norm{u}_{p;\Omega} = (\int_{\Omega}|u|^{p})^{1/p}
\end{equation}

A multi index $\alpha$ is a string $(\alpha_{1},...,\alpha_{n})$, and is used as shorthand for multi-variable differentiation, e.g. $D^{\alpha}f = (\frac{\partial^{\alpha_{1}} f}{\partial^{\alpha_{1}}x_{1}})...(\frac{\partial^{\alpha_{n}}f}{\partial^{\alpha_{n}}x_{n}})$.

\subsection{Motivation}

Let $\Omega$ be a closed domain, and $\phi \in C^{1}_{0}(\Omega)$ (singly differentiable functions on $\Omega$ with compact support).  By the divergence theorem a $C^{2}(\Omega)$ solution of $\Delta u = f$ satisfies

\begin{equation}
\int_{\Omega}Du.D\phi dx = -\int_{\Omega}f\phi dx
\end{equation}

since the above is equivalent to $\int_{\Omega}D_{i}(\phi D_{i}u)dx = 0$, which, by the divergence theorem, is equivalent to the statement $\int_{\partial \Omega}\phi Du.v ds = 0$ where $v$ is a unit normal to $\partial \Omega$.  Since $\Omega$ is closed, this statement makes sense.

Now the bilinear form

\begin{equation}
(u,\phi) = \int_{\Omega}Du.D\phi dx
\label{SoboIP}
\end{equation}

is an inner product on $C^{1}_{0}(\Omega)$.  Hence we can complete $C^{1}_{0}(\Omega)$ with respect to the metric induced by this equation to get a Hilbert space, which we notate as $W^{1,2}_{0}(\Omega)$.  This is an example of a so called Sobolev space.

The linear functional $F$ defined by $F(\phi) = -\int_{\Omega}f\phi dx$ may be extended, through appropriate choice of $f$, to a bounded linear functional on $W^{1,2}_{0}(\Omega)$.  Then by the Riesz representation theorem there exists an element $u \in W^{1,2}_{0}(\Omega)$ such that $(u,\phi) = F(\phi)$ for all $\phi \in C^{1}_{0}(\Omega)$.  Hence a \emph{generalised solution} to the Dirichlet problem, $\Delta u = f$, $u = 0$ on $\partial \Omega$, has been found.  So we have reduced the classical problem to a question of whether this solution we have found in $W^{1,2}_{0}(\Omega)$ is in fact in $C^{1}_{0}(\Omega)$, in other words, a regularity problem.

\subsection{The Sobolev Spaces \texorpdfstring{$W^{k,p}(\Omega)$, $W^{k,p}_{0}(\Omega)$ and $W^{k,p}_{loc}(\Omega)$}{}}

Let $u \in L^{1}_{loc}(\Omega)$.  Given a multi-index $\alpha$, $v \in L^{1}_{loc}(\Omega)$ is called the $\alpha^{th}$ \emph{weak derivative} of $u$ if

\begin{equation}
\int_{\Omega}\phi u dx = (-1)^{|\alpha|}\int_{\Omega}uD^{\alpha}\phi dx
\end{equation}

for all $\phi \in C_{0}^{\infty}(\Omega)$.  We write $v = D^{\alpha}u$, and observe that this object is defined up to sets of Lebesgue measure zero.

We then define the Sobolev space

\begin{equation}
W^{k,p}(\Omega) = L^{p}(\Omega) \cap \{u : D^{\alpha}u \in L^{p}(\Omega), |\alpha| \leq k \}
\end{equation}

We equip $W^{k,p}(\Omega)$ with the norm

\begin{equation}
\norm{u}_{k,p;\Omega} = (\int_{\Omega}\sum_{|\alpha| \leq k}|D^{\alpha}u|^{p}dx)^{1/p}
\end{equation}

which is easily seen to be equivalent to

\begin{equation}
\sum_{|\alpha| \leq k}\norm{D^{\alpha}u}_{p;\Omega}
\end{equation}

We define the local Sobolev spaces in the obvious manner:

\begin{equation}
W^{k,p}_{loc}(\Omega) = L^{p}_{loc}(\Omega) \cap \{u : D^{\alpha}u \in L^{p}(\Omega), |\alpha| \leq k\}
\end{equation}

Finally, we define the spaces $W^{k,p}_{0}(\Omega) = W^{k,p}(\Omega) \cap L^{p}_{0}(\Omega)$ where $L^{p}_{0}$ is the space of $p$-integrable functions on $\Omega$ with compact support.  In other words, $W^{k,p}_{0}$ is the set of $(k,p)$-Sobolev functions with compact support.

\subsection{Approximation of Sobolev functions by smooth functions}

\subsection{The Sobolev Inequality}

I follow \cite{[GT]} in this section.

We have the following results regarding comparability of different norms.  The first result relates the $L^{p}$ norms of a Sobolev function and its gradient.  The second relates the $L^{p^{*}}$ norm of a Sobolev function with its $(k,p)$-Sobolev norm.  Here $p^{*} = np/(n-kp)$.

\begin{thm} Let $\Omega \subset \mathcal{R}^{n}$, $n > 1$ be an open domain.  There is a constant $C = C(n,p)$ such that if $n > p$, $p \geq 1$, and $u \in W^{1,p}_{0}(\Omega)$ then

\begin{center}
$\norm{u}_{np/(n-p);\Omega} \leq C\norm{Du}_{p;\Omega}$
\end{center}

Furthermore, if $p > n$ and $\Omega$ is bounded, then $u \in C(\overline{\Omega})$ and

\begin{center}
$sup_{\Omega}|u| \leq C|\Omega|^{1/n - 1/p}\norm{Du}_{p;\Omega}$
\end{center}
\end{thm}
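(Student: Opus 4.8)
The plan is to prove the first (Gagliardo–Nirenberg–Sobolev) inequality directly for $u \in C_0^1(\Omega)$ and then extend to $W^{1,p}_0(\Omega)$ by the density of $C_0^\infty$ in that space; the borderline case $p>n$ (Morrey's estimate) will be treated separately. For the case $p=1$ first: extend $u$ by zero to all of $\mathcal{R}^n$, and for each coordinate direction $i$ write $|u(x)| \le \int_{-\infty}^{\infty} |D_i u(x_1,\dots,t,\dots,x_n)|\,dt$. Raising to the power $1/(n-1)$, taking the product over $i=1,\dots,n$, and integrating successively in $x_1,\dots,x_n$ while applying the generalized Hölder inequality (with $n-1$ factors, each exponent $n-1$) at each stage, yields $\norm{u}_{n/(n-1)} \le \prod_i \bigl(\int_{\mathcal{R}^n}|D_i u|\bigr)^{1/n} \le \norm{Du}_1$. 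This is the key computational step and is the one place where care is genuinely needed — the iterated Hölder bookkeeping must be done in the right order so that each newly integrated variable is handled before moving on.

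For general $1 \le p < n$, apply the $p=1$ case not to $u$ but to $v = |u|^\gamma$ with $\gamma = p(n-1)/(n-p) > 1$ chosen so that the resulting exponent on the left matches $np/(n-p)$. Since $|Dv| = \gamma |u|^{\gamma-1}|Du|$, Hölder's inequality with exponents $p$ and $p/(p-1)$ on the right-hand side gives
\begin{equation}
\norm{u}_{np/(n-p)}^{\gamma} \le \gamma\, \norm{u}_{(\gamma-1)p/(p-1)}^{\gamma-1}\,\norm{Du}_p,
\end{equation}
and by the choice of $\gamma$ one checks $(\gamma-1)p/(p-1) = np/(n-p)$, so the norm on the right is the same as on the left and can be divided out (it is finite for $u \in C_0^1$), leaving $\norm{u}_{np/(n-p)} \le C(n,p)\norm{Du}_p$ with $C = \gamma = p(n-1)/(n-p)$. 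Then, given $u \in W^{1,p}_0(\Omega)$, take $u_m \in C_0^\infty(\Omega)$ with $u_m \to u$ in $W^{1,p}$; the inequality for $u_m$ shows $(u_m)$ is Cauchy in $L^{np/(n-p)}$, so the limit lies there and the inequality passes to $u$.

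For the case $p > n$ with $\Omega$ bounded, the argument is different: one estimates the oscillation of $u$ over balls. Starting from the representation of $u(x) - \bar u_{B}$ (the average over a ball $B$) as an integral of $Du$ against the Newtonian-type kernel $|x-y|^{1-n}$, Hölder's inequality with exponent $p$ — using that $|x-y|^{(1-n)p/(p-1)}$ is integrable over a bounded set precisely when $p>n$ — gives a pointwise bound of the form $|u(x)-\bar u_B| \le C\, r^{1-n/p}\norm{Du}_{p}$, and in particular $u$ is (Hölder) continuous on $\overline\Omega$; combining with a bound on $|\bar u_B|$ and choosing $B$ comparable to $\Omega$ yields $\sup_\Omega |u| \le C |\Omega|^{1/n-1/p}\norm{Du}_{p;\Omega}$. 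The main obstacle overall is getting the constants and exponents in the iterated-Hölder step of the $p=1$ case exactly right, since every subsequent case is bootstrapped from it; the $p>n$ case, while conceptually distinct, is a routine application of the potential estimate and Hölder's inequality once the kernel's integrability threshold is identified.
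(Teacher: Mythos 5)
Your proof is correct and is precisely the standard Gagliardo--Nirenberg--Sobolev / Morrey argument found in Gilbarg and Trudinger, which the paper itself does not reproduce — it merely states the theorem and cites \cite{[GT]}. The bookkeeping in your $p>1$ bootstrap checks out: with $\gamma = p(n-1)/(n-p)$ one has $\gamma - 1 = n(p-1)/(n-p)$ and hence $(\gamma-1)p/(p-1) = np/(n-p)$ exactly as claimed, so the norm on the right matches and can be absorbed.
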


\begin{thm} Let $\Omega \subset \mathcal{R}^{n}$ be an open set.  There is a constant $C = C(n,k,p)$ such that if $kp < n, p \geq 1$, and $u \in W_{0}^{k,p}(\Omega)$, then

\begin{center}
$\norm{u}_{p^{*};\Omega} \leq C\norm{u}_{k,p;\Omega}$.
\end{center}

If $kp > n$, then $u \in C(\overline{\Omega}$ and

\begin{center}
$sup_{\Omega}|u| \leq C|K|^{1/p'}\{ \sum_{|\alpha| = 0}^{k-1}(diam K)^{|\alpha|}\frac{1}{\alpha!}\norm{D^{\alpha}u}_{p;K}$\\
 $+ (diam(K))^{k}\frac{(k - 1)!}(k - \frac{n}{p})^{-1}\norm{D^{k}u}_{p;K} \}$
\end{center}

where $K =$ spt $u$ and $C = C(k,p,n)$.
\end{thm}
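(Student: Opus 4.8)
The plan is to reduce both halves of the statement to the one--derivative Sobolev inequality of the preceding theorem (which is precisely the case $k=1$), handling the $kp<n$ half by iteration and the $kp>n$ half by an averaged Taylor expansion. First I would reduce to $u\in C^{\infty}_{0}(\Omega)$: this space is dense in $W^{k,p}_{0}(\Omega)$ (mollify a compactly supported $W^{k,p}$ function), and for $u\in W^{k,p}_{0}(\Omega)$ with $u_{j}\to u$ in the $(k,p)$--norm one has, when $kp<n$, that $(u_{j})$ is Cauchy in $L^{p^{*}}$ by the inequality applied to $u_{j}-u_{l}$, so its $L^{p^{*}}$--limit coincides with $u$ and $\norm{u}_{p^{*};\Omega}=\lim_{j}\norm{u_{j}}_{p^{*};\Omega}\le C\lim_{j}\norm{u_{j}}_{k,p;\Omega}$; when $kp>n$ the same estimate applied to $u_{j}-u_{l}$ shows $(u_{j})$ is uniformly Cauchy, so $u$ has a continuous representative and the bound passes to the limit. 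Thus it suffices to prove both inequalities for smooth compactly supported $u$.

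\emph{The case $kp<n$.} Put $1/p_{j}=1/p-j/n$, so $p_{0}=p$ and $p_{k}=p^{*}$; since $(j+1)p\le kp<n$ for $0\le j\le k-1$, every $p_{j}$ with $j\le k-1$ satisfies $p_{j}<n$, so the $n>p$ half of the preceding theorem applies at each exponent $p_{j}$ and no intermediate exponent ever reaches the critical value $n$. I would run the iteration with the bookkeeping quantity $N_{j}:=\sum_{|\alpha|\le k-j}\norm{D^{\alpha}u}_{p_{j};\Omega}$. For $|\alpha|\le k-j-1$ the function $D^{\alpha}u$ has compact support, and $D^{\alpha}u$ together with its first derivatives all appear among the summands of $N_{j}$, so $D^{\alpha}u\in W^{1,p_{j}}_{0}(\Omega)$ and the $k=1$ inequality gives $\norm{D^{\alpha}u}_{p_{j+1};\Omega}\le C\sum_{|\beta|=1}\norm{D^{\alpha+\beta}u}_{p_{j};\Omega}$; summing over such $\alpha$ yields $N_{j+1}\le C N_{j}$. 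Chaining from $j=0$ (where $N_{0}$ is equivalent to $\norm{u}_{k,p;\Omega}$) to $j=k$ (where $N_{k}=\norm{u}_{p^{*};\Omega}$) gives the asserted inequality with $C=C(n,k,p)$, and no boundedness of $\Omega$ is needed.

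\emph{The case $kp>n$.} Here I would argue directly on $K=\operatorname{spt}u$. Following \cite{[GT]}, one establishes for smooth $u$ an averaged Taylor representation of the form
\[
u(x)=\frac{1}{|K|}\int_{K}\sum_{|\alpha|\le k-1}\frac{(-1)^{|\alpha|}}{\alpha!}(x-y)^{\alpha}D^{\alpha}u(y)\,dy+R_{k}(x),\qquad x\in K,
\]
where the remainder $R_{k}(x)$ is an integral of the top--order derivatives $D^{\alpha}u$, $|\alpha|=k$, against a kernel comparable to $|x-y|^{k-n}$. Bounding $|(x-y)^{\alpha}|\le(\operatorname{diam}K)^{|\alpha|}$ and applying H\"older's inequality to each term of the finite sum produces the contribution $C|K|^{1/p'}\sum_{|\alpha|\le k-1}(\operatorname{diam}K)^{|\alpha|}\frac{1}{\alpha!}\norm{D^{\alpha}u}_{p;K}$; applying H\"older to $R_{k}$, the factor $\big(\int_{K}|x-y|^{(k-n)p'}\,dy\big)^{1/p'}$ is finite precisely because $(k-n)p'>-n$, i.e.\ because $kp>n$, and its value carries the critical constant $(k-n/p)^{-1}$ together with a power of $\operatorname{diam}K$, assembling the remaining term $C|K|^{1/p'}(\operatorname{diam}K)^{k}\frac{(k-1)!}{k-n/p}\norm{D^{k}u}_{p;K}$. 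Adding the two contributions and taking the supremum over $x$ gives the stated bound; local integrability of these kernels also shows each such $u$ is continuous, which with the first paragraph yields $u\in C(\overline{\Omega})$.

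\textbf{Main obstacle.} Since the genuinely analytic input --- the $k=1$ Sobolev inequality, in both its $p<n$ and $p>n$ forms --- is already available, what remains is bookkeeping, but the $kp>n$ half is the delicate part: one must set up the averaged Taylor identity with the correct polynomial weights and the correctly normalized remainder kernel, and then carry out the H\"older estimates carefully enough that the output is \emph{exactly} the asserted combination of the $\norm{D^{\alpha}u}_{p;K}$ with weights $(\operatorname{diam}K)^{|\alpha|}/\alpha!$ plus the borderline term carrying $(k-n/p)^{-1}$ (whose blow--up as $kp\downarrow n$ is the whole point). A minor additional annoyance is that, if one instead tries to get continuity in the $kp>n$ case by iterating the $k=1$ embedding rather than by the Taylor argument, a critical exponent $jp=n$ can appear midway through the iteration and must be stepped over using the limiting embedding $W^{1,n}_{0}\hookrightarrow L^{q}$ for all finite $q$.
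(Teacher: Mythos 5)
The paper does not prove this theorem: it appears in the survey of Sobolev spaces as a quoted result, with the subsection opening ``I follow \cite{[GT]} in this section'' and no argument supplied. So there is no in-paper proof to compare your outline against.

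That said, your outline is the standard Gilbarg--Tr\"udinger argument that the citation points to, and it is sound in substance. The iteration through the exponent chain $1/p_{j} = 1/p - j/n$ is exactly the right move for $kp<n$, your bookkeeping quantity $N_{j}$ telescopes cleanly, and the check that $p_{j}<n$ for $j\le k-1$ (equivalently $kp<n$) is precisely what lets each step invoke the $k=1$ inequality without ever hitting the critical exponent. For $kp>n$ the averaged Taylor/potential representation followed by H\"older on each term is the right mechanism; the condition $(k-n)p'>-n \Leftrightarrow kp>n$ is indeed where the hypothesis enters and where the factor $(k-n/p)^{-1}$ originates. The one loose end in your write-up is the density reduction for the supercritical half: to extract a uniformly Cauchy sequence from the displayed estimate you need the approximating smooth $u_{j}$ to have supports inside a fixed bounded set, which you arrange by mollifying at sufficiently small scale so that $\operatorname{spt} u_{j}$ stays in a small neighbourhood of $K = \operatorname{spt} u$; and for the right-hand side to be finite $K$ must itself be bounded, which is implicit in the statement.
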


\section{Further preliminaries from analysis}

\subsection{Lebesgue Measure}

$m$-dimensional Lebesgue Measure $L^{m}$ is defined on $R^{m}$ for sets $A$ as

\begin{center} $L^{m}(A) = inf \{ \Sigma_{j = 1}^{\infty}v(P_{j}) : A \subset \cup_{j = 1}^{\infty} \}$ \end{center}

It is what is known as an "outer measure", that is,

$L^{m} : P(R^{m}) \rightarrow [0,\infty]$ where $P(R^{m})$ is the power set of $R^{m}$, and satisfies the following axioms:

\begin{itemize}
\item[(i)] $L^{m}(\phi) = 0$
\item[(ii)] $A \subset B$ implies $L^{m}(A) \leq L^{m}(B)$ 
\item[(iii)] $L^{m}(\cup_{j = 1}^{\infty}A_{j}) \leq \sum_{j = 1}^{\infty}L^{m}(A_{j})$
\end{itemize}

and it also satisfies the property that $L^{m}($box$) = $Vol$($box$)$.

Lebesgue measure is a very important measure, because we have that all Borel sets of $R^{m}$ are $L^{m}$ measurable.  This follows from Caratheodory's Criterion, which states that if one has a measure $\mu$ on a metric space $X$, then if $\mu(A \cup B) = \mu(A) + \mu(B)$ whenever $d(A,B) > 0$ then $\mu$ is a Borel measure, ie all Borel sets in $X$ are $\mu$-measurable.

I remind the reader that a Borel set is an element of the Borel-algebra corresponding to a particular topology, which is the intersection of all $\sigma$-algebras containing the closed sets in that topology.  A $\sigma$-algebra is a set of sets, which is closed under complements, intersections, unions, and which contains the empty set and the whole space.  This is related to the concept of measurability, and hence to integration, since the set of all $\mu$-measurable sets in a metric space $X$ is a $\sigma$-algebra (a $\mu$-measurable set $A \subset X$ satisfies $\mu(B) = \mu(B \cap A) + \mu(B - A)$ for any $B$).

\begin{proof} (of the Caratheodory Criterion). See J. Koliha's notes \cite{[Ko]}. \end{proof}

\begin{rmk}  Lebesgue measure is Borel regular, that is, for any $A$, $L^{m}(A) = L^{m}(B)$ some some $B \supset A$ Borel.  To see this, let $B_{j} = \cup_{k}P_{jk}$ and $L(B_{j}) \leq L(A) + \frac{1}{j}$.  Set $B = \cap_{j}B_{j}$. \end{rmk}

\begin{rmk} $L^{m}$ is Radon, that is, any $A$ can be approximated by closed sets on the inside, and open sets on the outside.  Equivalently, this means that $L^{m}$ is Borel regular \emph{and} $L^{m}(A) < \infty$ for $A$ bounded. \end{rmk}

\subsection{Hausdorff Measure}

$n$-dimensional Hausdorff measure, $H^{n}$, on $R^{m}$, $n \geq 0$ is also a map from the power set of $R^{m}$ to $[0,\infty]$, and also satisfies the axioms (i), (ii) and (iii) of an outer measure.  Furthermore, $H^{n}(M) = n$-volume of $M$ by any classical method.

We define

\begin{center} $H_{\delta}^{n}(A) = inf \{ \sum_{j = 1}^{\infty}\omega_{n}(\frac{ diam \bar{B}_{j}}{2})^{n} : diam B_{j} \leq \delta, A \subset \cup_{j = 1}^{\infty}B_{j} \}$, \end{center}

where $\omega_{n}$ is the $n$-volume of the $n$-ball.  $n$-dimensional Hausdorff measure is then defined as

\begin{center} $H^{n}(A) = lim_{\delta \rightarrow 0^{+}}H^{n}_{\delta}(A)$ \end{center}

Note that $H^{n}_{\delta}$ is an (outer) measure, and, furthermore, that $H^{n}$ is a Borel regular measure.  To see the latter, let $A$, $B$ be two sets with $d = d(A,B) > 0$.  Take $\delta < d$.  Then $H^{n}_{\delta}(A \cup B) = H_{\delta}^{n}(A) + H_{\delta}^{n}(B)$.  So $H^{n}$ is Borel.  To see that it is in fact Borel regular, take $\delta_{j} = \frac{1}{j}$ and a family of covers $\{ T_{jk}\}$ such that $H^{n}_{\frac{1}{j}}(\cup_{k} \bar{T}_{jk}) = H^{n}(A) + \frac{1}{j}$.

However, $H^{n}$ is \emph{not} Radon if $n < m$.  For example, $H^{1}($unit $2$-disc$) = \infty$.

Other interesting facts include that

\begin{itemize}
\item[(i)] $H^{n}$ is an isometry, that is, $H^{n}(\rho(A)) = H^{n}(A)$ if $\rho : R^{m} \rightarrow R^{m}$ is an isometry.
\item[(ii)] $H^{0}$ is the counting measure ($\omega_{0} = 1$).
\end{itemize}

\emph{Note}.  $H^{n}$ makes sense in any metric space $X$, and for any $n \geq 0$, not necessarily a natural number.

\begin{rmk}  Hausdorff measure may otherwise seem quite arbitrary, but the fact that it is invariant under rigid motions of $R^{m}$ means that oddly rotated sets can be measured quite easily using it, whereas they would be frightful to compute using Lebesgue measure.  In fact, it turns out that we may compute the Lebesgue measure of horridly rotated sets using Hausdorff measure, due to the following extreme useful and deep theorem:
\end{rmk}

\begin{thm} $H^{n} = L^{n}$ on $R^{n}$.

\begin{proof} See Morgan. \end{proof} \end{thm}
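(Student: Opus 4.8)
The plan is to establish the two inequalities $L^n(A) \le H^n(A)$ and $H^n(A) \le L^n(A)$ separately for an arbitrary $A \subseteq \mathbb{R}^n$; together these give the theorem. Both $H^n$ and $L^n$ are outer measures on $P(\mathbb{R}^n)$ satisfying axioms (i)--(iii), and $L^n$ is Radon, hence outer regular in the sense $L^n(A) = \inf\{L^n(U) : U \supseteq A \text{ open}\}$. So it is enough to compare the two measures on open sets and then propagate the comparison by outer regularity.

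First I would treat the direction $H^n(A) \le L^n(A)$. The key sub-step is that every $L^n$-null set $Z$ is also $H^n$-null: given $\epsilon > 0$, cover $Z$ by boxes of total volume $< \epsilon$, subdivide them into pieces of diameter $\le \delta$, and note that a box $P$ of side $s$ satisfies $\omega_n(\mathrm{diam}\,P/2)^n = \omega_n(s\sqrt n/2)^n = c_n L^n(P)$ with $c_n = \omega_n n^{n/2}2^{-n}$; summing gives $H^n_\delta(Z) \le c_n\epsilon$, hence $H^n(Z)=0$. Now for an open set $U$, invoke the Vitali covering theorem to extract countably many pairwise disjoint closed balls $\overline{B}_j \subseteq U$ of radius $< \delta/2$ with $L^n\big(U \setminus \bigcup_j B_j\big) = 0$. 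Using that this leftover is $H^n$-null and that $\omega_n r_j^n = L^n(B_j)$,
\[
H^n_\delta(U) \le \sum_j \omega_n\Big(\frac{\mathrm{diam}\,\overline{B}_j}{2}\Big)^{\!n} = \sum_j L^n(B_j) \le L^n(U).
\]
Letting $\delta \to 0$ gives $H^n(U) \le L^n(U)$, and monotonicity of $H^n$ together with outer regularity of $L^n$ extends this to every $A$.

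For the reverse direction, take any admissible cover $A \subseteq \bigcup_j B_j$ with $\mathrm{diam}\,B_j \le \delta$. Monotonicity and countable subadditivity of $L^n$ give $L^n(A) \le \sum_j L^n(\overline{B}_j)$, and here I would invoke the \emph{isodiametric inequality}: for every bounded $S \subseteq \mathbb{R}^n$, $L^n(S) \le \omega_n(\mathrm{diam}\,S/2)^n$. Applying this to each $\overline{B}_j$ and taking the infimum over covers yields $L^n(A) \le H^n_\delta(A) \le H^n(A)$.

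The main obstacle is the isodiametric inequality, which is genuinely delicate: enclosing $S$ in a ball of radius $\mathrm{diam}\,S$ gives only the far weaker bound $\omega_n(\mathrm{diam}\,S)^n$, and a set of diameter $d$ need not lie in \emph{any} ball of radius $d/2$ (e.g.\ an equilateral triangle). The proof I would give reduces to compact $S$ (pass to $\overline{S}$) and uses Steiner symmetrization: symmetrizing a compact set about a hyperplane preserves $L^n$-measure and does not increase the diameter, and it preserves symmetry already achieved in other coordinate directions. Symmetrizing successively about the $n$ coordinate hyperplanes produces $S^{*}$ with $L^n(S^{*}) = L^n(S)$, $\mathrm{diam}\,S^{*} \le \mathrm{diam}\,S$, and $S^{*}$ symmetric under every reflection $x_i \mapsto -x_i$; hence for $x \in S^{*}$ also $-x \in S^{*}$, so $2|x| \le \mathrm{diam}\,S^{*}$ and $S^{*}$ is contained in the closed ball of radius $\mathrm{diam}\,S/2$ about the origin, giving $L^n(S) = L^n(S^{*}) \le \omega_n(\mathrm{diam}\,S/2)^n$. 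The only points needing care are the measurability and semicontinuity properties of Steiner symmetrization, which are standard and which I would cite (Federer, or Morgan) rather than reprove. Combining the two inequalities finishes the proof.
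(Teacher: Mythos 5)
Your proof is correct and is essentially the argument the paper defers to: the paper's "proof" is simply the citation \emph{See Morgan}, and what you have written out — the two one-sided inequalities, with the easy direction via covering by boxes plus Vitali on open sets, and the hard direction reduced to the isodiametric inequality proved by Steiner symmetrization — is precisely the standard treatment in Morgan's (and Federer's) text. Nothing further to add; the sketch is complete modulo the routine measurability facts about Steiner symmetrization, which you correctly flag as citable.
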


Finally, it is sometimes useful to have some idea of the dimension of a set, particularly if it behaves in a slightly pathological way.  Hausdorff measure proves to be quite useful in understanding at least this level of behaviour of quite unruly beasts indeed.

\begin{dfn} (Hausdorff Dimension).  The Hausdorff dimension of a set $A$ is defined to be the supremum of numbers $s$ such that $H^{s}(A) = \infty$, or equivalently, the infimum of numbers $s$ such that $H^{s}(A) = 0$.
\end{dfn}

\subsection{Densities}

Let $A \subset R^{n}$, $a \in R^{n}$, $m \leq n$. We define the $m$-dimensional density $\Theta^{m}(A, a)$ of $A$ at $a$ by

\begin{equation}
\label{density}
\Theta^{m}(A, a) = lim_{r \rightarrow 0}\frac{\mathcal{H}^{m}(A \cap B^{n}(a, r))}{\alpha_{m}r^{m}}
\end{equation}

where $\alpha_{m}$ is the size of the ball of radius one in $R^{m}$.

Define $\Theta^{m}(\mu, a)$ where $\mu$ is a measure of $R^{n}$ by

\begin{equation}
\label{gendensity}
\Theta^{m}(\mu, a) = lim_{r \rightarrow 0}\frac{\mu(B^{n}(a, r))}{\alpha_{m}r^{m}}
\end{equation}

\subsection{Lipschitz and BV Functions}

\begin{dfn} A Lipschitz function, $f : R^{m} \rightarrow R^{n}$ is a function satifying the criterion:

\begin{center}
$|f(x) - f(y)| \leq C|x - y|$
\end{center}

for some constant $C$.  The minimal such $C$ is referred to as Lip $f$.
\end{dfn}

We have the following very important result about Lipschitz functions:

\begin{thm} (Rademacher's Theorem). A Lipschitz function $f : R^{m} \rightarrow R^{n}$ is differentiable almost everywhere. \end{thm}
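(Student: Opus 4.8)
The plan is to run the classical argument: reduce to a scalar target, produce directional derivatives almost everywhere by one-dimensional considerations plus Fubini, identify them with the gradient via a distributional argument, and finally upgrade directional differentiability to full differentiability by exploiting the uniform Lipschitz control across directions. First I would reduce to $n=1$: $f=(f_1,\dots,f_n)$ is differentiable at a point iff each $f_i\colon R^m\to R$ is, and each $f_i$ is Lipschitz with constant at most that of $f$. So assume $f\colon R^m\to R$ with Lipschitz constant $L$.

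For a fixed unit vector $v$, write $R^m\cong H\times R$ where $H\perp v$, so that $s\mapsto f(x_0+sv)$ is Lipschitz on $R$ for each $x_0\in H$, hence absolutely continuous, hence differentiable for a.e.\ $s$ by the one-dimensional Lebesgue differentiation theorem. The set where the directional derivative $D_vf(x)=\lim_{t\to0}\big(f(x+tv)-f(x)\big)/t$ fails to exist is measurable (it is cut out by countably many $\limsup/\liminf$ conditions, the sups being attainable over rational $t$ by continuity of $f$), so Fubini's theorem gives $D_vf(x)$ a.e.\ on $R^m$, with $|D_vf|\le L$ wherever defined. Applying this to $v=e_1,\dots,e_m$ defines $\nabla f(x)=(D_{e_1}f(x),\dots,D_{e_m}f(x))$ almost everywhere. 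Next, for each fixed unit $v$ and each $\varphi\in C_0^\infty(R^m)$, the difference quotients are bounded by $L$ and converge a.e.\ to $D_vf$, so dominated convergence plus a change of variables gives $\int D_vf\,\varphi=-\int f\,D_v\varphi=-\sum_i v_i\int f\,D_{e_i}\varphi=\sum_i v_i\int D_{e_i}f\,\varphi$; thus $\int(D_vf-\nabla f\cdot v)\varphi=0$ for all test $\varphi$, and the fundamental lemma of the calculus of variations yields $D_vf=\nabla f\cdot v$ a.e.

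Now fix a countable dense set $\{v_k\}\subset S^{m-1}$ and let $A$ be the full-measure set of $x$ at which $\nabla f(x)$ exists and $D_{v_k}f(x)=\nabla f(x)\cdot v_k$ for all $k$; note $|\nabla f(x)|=\sup_k \nabla f(x)\cdot v_k=\sup_k D_{v_k}f(x)\le L$ on $A$. Fix $x\in A$ and set $Q(v,t)=\big(f(x+tv)-f(x)\big)/t-\nabla f(x)\cdot v$ for $v\in S^{m-1}$, $t\ne0$; directly from the Lipschitz bound, $|Q(v,t)-Q(v',t)|\le 2L\,|v-v'|$, so the family $\{Q(\cdot,t)\}_t$ is equi-Lipschitz in $v$. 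Given $\varepsilon>0$, cover $S^{m-1}$ by finitely many balls of radius $\varepsilon/(4L)$ about points $v_{k_1},\dots,v_{k_N}$; since $Q(v_{k_j},t)\to0$ as $t\to0$ for each $j$, there is $\delta>0$ with $|Q(v_{k_j},t)|<\varepsilon/2$ for $|t|<\delta$, and equi-Lipschitzness then forces $|Q(v,t)|<\varepsilon$ for every $v\in S^{m-1}$ and $|t|<\delta$. Writing a small $h\ne0$ as $h=tv$ with $t=|h|$, $v=h/|h|$, this is exactly $\big(f(x+h)-f(x)-\nabla f(x)\cdot h\big)/|h|\to0$, so $f$ is differentiable at every point of $A$.

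I expect the main obstacle to be this final compactness step — extracting convergence of the difference quotients that is \emph{uniform} over all directions from convergence along a countable dense set — together with the measurability bookkeeping required to apply Fubini at the start; the distributional identification $D_vf=\nabla f\cdot v$ is routine once the dominated-convergence estimate is in place.
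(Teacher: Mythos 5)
Your proposal is correct and is the standard classical proof of Rademacher's theorem; the paper itself does not reproduce a proof but simply defers to Morgan's text, which gives essentially the same three-part argument you outline (Fubini for directional derivatives, distributional identification with the gradient, then upgrade to full differentiability via a countable dense set of directions and the uniform Lipschitz estimate). The compactness step you flag as the potential obstacle is in fact handled exactly as you describe: the bound $|Q(v,t)-Q(v',t)|\le 2L|v-v'|$ makes the difference quotients equi-Lipschitz in the direction variable, so pointwise convergence on a finite $\varepsilon/(4L)$-net promotes to uniform convergence on $S^{m-1}$, and the measurability issue for the Fubini application is settled by taking limsup/liminf over rational increments. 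One very minor remark: the identification $D_v f=\nabla f\cdot v$ a.e.\ requires the $L^1_{loc}$ form of the du Bois-Reymond lemma rather than the continuous-function version of the fundamental lemma of the calculus of variations as stated earlier in this chapter, but this strengthening is standard and the dominated-convergence bound you give is exactly what makes it applicable. No gaps.
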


Later we will be interested in looking at sets that can be realised locally as graphs of Lipschitz functions.  So from this theorem it follows that such objects are smooth except on a set of measure zero, ie they are smooth manifolds except on a set of measure zero.

\begin{proof} A proof of Rademacher's theorem can be found in Morgan's book \cite{[M]}. \end{proof}


There are other results:

\begin{thm} (Whitney's Theorem). Let $f : R^{m} \rightarrow R^{n}$ be Lipschitz.  Then for any $\epsilon > 0$  there is a $C^{1}$ $g : R^{m} \rightarrow R^{n}$ such that $L^{m}(\{ x : f \neq g \}) < \epsilon $, where $L^{m}$ is the $m$ dimensional Lebesgue measure. \end{thm}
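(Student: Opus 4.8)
The plan is to combine Rademacher's theorem with a Lusin/Egorov-type argument and the classical order-one Whitney extension theorem. First I would reduce to the scalar case $n=1$ by treating each component of $f$ separately: if every $f^i$ agrees with a $C^1$ function off a set of measure $<\epsilon/n$, then $f$ agrees with the resulting $C^1$ map off the union of these sets, of measure $<\epsilon$. So assume $f:R^m\to R$ is Lipschitz with constant $L$.

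By Rademacher's theorem the set $D$ where $f$ is differentiable has full measure, and $x\mapsto Df(x)$ is a bounded measurable map on $D$. For $x\in D$ and each positive integer $k$, differentiability gives $\rho(x,k)>0$ with $|f(y)-f(x)-Df(x)\cdot(y-x)|\le \tfrac1k|y-x|$ whenever $|y-x|<\rho(x,k)$. Setting $E_{j,k}=\{x\in D:\rho(x,k)\ge 1/j\}$ we get an increasing family (measurable by a routine argument using continuity of $f$) with $\bigcup_j E_{j,k}=D$, so on any ball $B(0,N)$ the measure of $B(0,N)\setminus E_{j,k}$ tends to $0$ as $j\to\infty$; this is the Egorov step that upgrades pointwise differentiability to a uniform first-order Taylor estimate. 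Intersecting with a Lusin set on which $Df$ is continuous, and carrying this out annulus by annulus over an exhaustion $R^m=\bigcup_N B(0,N)$ so as to control a set of infinite measure, I would extract a closed set $F\subseteq R^m$ with $L^m(R^m\setminus F)<\epsilon$ such that $Df|_F$ is continuous and, for every compact $K\subseteq F$ and every $\epsilon'>0$, there is $\delta>0$ with
\[
|f(y)-f(x)-Df(x)\cdot(y-x)|\le \epsilon'|y-x|\qquad\text{whenever } x\in K,\ |y-x|<\delta .
\]
In particular this holds for $x,y\in K$, which is exactly the hypothesis of the order-one Whitney extension theorem for the pair $(f|_F,Df|_F)$ on the closed set $F$.

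Applying the Whitney extension theorem then produces $g\in C^1(R^m)$ with $g=f$ and $Dg=Df$ on $F$. Since $g=f$ on $F$, we have $\{x:f(x)\ne g(x)\}\subseteq R^m\setminus F$, so $L^m(\{f\ne g\})<\epsilon$, as required.

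I expect the main obstacle to be the extraction of the closed set $F$ on which the first-order Taylor expansion is uniform: one must interleave three separate almost-everywhere facts — differentiability (Rademacher), continuity of $Df$ (Lusin), and the modulus estimate (Egorov applied to the sets $E_{j,k}$) — and patch them across a countable exhaustion of the non-finite-measure space $R^m$ while keeping the total discarded measure below $\epsilon$ and preserving the local-uniformity form of the Whitney hypothesis. The Whitney extension theorem itself is standard (a dyadic partition-of-unity construction near $F$) and can be cited; the machinery needed is already present in \cite{[M]}.
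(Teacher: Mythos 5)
The paper does not give a proof of Whitney's theorem at all --- it is stated without a proof environment, in a run of background results from geometric measure theory, with the implicit understanding that the reader consult the references (Morgan or Simon). So there is no paper argument to compare yours against. Your outline is the standard proof and is essentially correct: Rademacher for a.e.\ differentiability, Egorov on the sets $E_{j,k}$ to make the first-order Taylor estimate uniform, Lusin to make $Df$ continuous, and then the $C^1$ Whitney extension theorem from the resulting closed set. The one point you flag as "the main obstacle" --- producing a single \emph{closed} set $F$ with small complementary measure across an exhaustion of $R^m$ --- does work, but it is worth being explicit why: if $C_N$ is a compact subset of the annulus $\overline{B(0,N)}\setminus B(0,N-1)$ with $L^m\bigl((\overline{B(0,N)}\setminus B(0,N-1))\setminus C_N\bigr)<\epsilon/2^N$, then $F=\bigcup_N C_N$ is closed because it is locally finite (any bounded set meets only finitely many $C_N$), and any compact $K\subseteq F$ lies in $\bigcup_{N\le M}C_N$ for some $M$, so the Whitney modulus on $K$ is just the minimum of the finitely many moduli $\delta_1,\dots,\delta_M$. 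With that observation spelled out, your sketch closes cleanly and is a valid proof of the stated theorem.
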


\begin{thm} Lipschitz functions are weakly differentiable, that is, for any Lipschitz $f$ there exists a $g$ which is the weak derivative of $f$, ie., such that

\begin{center} $\int \eta D_{i}g = - \int f D_{i}\eta$ \end{center}

for any arbitrary $\eta$. \end{thm}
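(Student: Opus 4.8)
The plan is to identify the weak partial derivative of $f$ with the classical partial derivative, which exists almost everywhere by Rademacher's Theorem (proved above), and then to pass from the classical identity to the weak one by means of difference quotients. Since a map into $R^{n}$ is weakly differentiable exactly when each of its $n$ components is, and the weak derivative is then taken componentwise, we may assume $n=1$; fix a coordinate direction $e_{i}$ and write $g:=\partial_{i}f$ for the classical $i$-th partial derivative of $f$, which is defined at every point where $f$ is differentiable.

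First I would assemble two ingredients. By Rademacher's Theorem $f$ is differentiable at Lebesgue-almost every point, and at each such point the Lipschitz bound gives $|g(x)|=|\partial_{i}f(x)|\leq \mathrm{Lip}\,f$; hence $g\in L^{\infty}(R^{m})\subset L^{1}_{loc}(R^{m})$ and is a legitimate candidate for the weak derivative. Secondly, evaluating the definition of differentiability on the increment $he_{i}$ shows that the difference quotients $f_{h}(x):=h^{-1}\bigl(f(x+he_{i})-f(x)\bigr)$ satisfy $f_{h}(x)\to g(x)$ as $h\to 0$ at every point of differentiability of $f$ --- hence $f_{h}\to g$ almost everywhere --- while $|f_{h}|\leq \mathrm{Lip}\,f$ everywhere.

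Next I would fix an arbitrary test function $\eta\in C^{\infty}_{0}(R^{m})$ and perform the discrete integration by parts that is valid for any locally integrable function: translating the variable in one of the two resulting integrals gives
\[
\int f_{h}\,\eta\,dx \;=\; \int f(x)\,\frac{\eta(x-he_{i})-\eta(x)}{h}\,dx .
\]
Letting $h\to 0$: on the right, $h^{-1}\bigl(\eta(x-he_{i})-\eta(x)\bigr)\to-\partial_{i}\eta(x)$ uniformly in $x$ (as $\eta$ is $C^{2}$ with compact support), all integrands being supported in one fixed compact set on which $f$ is bounded, so the right-hand side converges to $-\int f\,\partial_{i}\eta\,dx$. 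On the left, the integrands are supported in the support of $\eta$ and, for $|h|<1$, dominated there by a fixed integrable function (a constant multiple of the indicator of that compact set); since $f_{h}\to g$ almost everywhere, the Dominated Convergence Theorem yields $\int f_{h}\,\eta\,dx\to\int g\,\eta\,dx$. Equating the two limits gives $\int g\,\eta\,dx=-\int f\,\partial_{i}\eta\,dx$ for every $\eta$, which is precisely the statement that $g$ is the $i$-th weak derivative of $f$.

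The only step requiring genuine care --- and where a naive argument would fail --- is the passage to the limit in $\int f_{h}\,\eta\,dx$: this needs both the pointwise almost-everywhere convergence of the difference quotients, which is exactly what Rademacher's theorem delivers and which is not obvious by softer means, and a uniform integrable majorant, which is supplied by the Lipschitz bound. If one preferred not to invoke Rademacher at all, one could instead note that $\{f_{h}\}$ is bounded in $L^{2}_{loc}(R^{m})$, extract a subsequence converging weakly in $L^{2}_{loc}$ to some $g$, and pass to the limit in the displayed identity using weak convergence against $\eta$; this still produces a weak derivative, though without identifying it with the pointwise gradient.
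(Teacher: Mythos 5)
The paper states this theorem without supplying a proof, so there is nothing to compare against directly; I can only assess your argument on its own terms. It is correct, and it is the standard one: Rademacher gives the a.e.\ classical partial derivative $g=\partial_{i}f$, the Lipschitz constant gives the uniform bound $|f_{h}|\leq\mathrm{Lip}\,f$ on the difference quotients and hence an integrable majorant, the discrete integration-by-parts identity
\[
\int f_{h}\,\eta\,dx=\int f(x)\,\frac{\eta(x-he_{i})-\eta(x)}{h}\,dx
\]
is exact for every $h$, and the two sides converge by dominated convergence and by uniform convergence of $h^{-1}(\eta(\cdot-he_{i})-\eta)$ respectively. Your closing remark is also a fair point: the weak-$L^{2}_{loc}$ compactness route gives existence of a weak derivative without Rademacher, but at the cost of not identifying it with the pointwise gradient, and Rademacher's theorem is already quoted in this section of the paper, so leaning on it is the natural choice here. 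One minor notational observation, directed at the source rather than at you: the theorem as printed writes $\int \eta\,D_{i}g=-\int f\,D_{i}\eta$, with a spurious $D_{i}$ on the left; the identity you prove, $\int g\,\eta=-\int f\,\partial_{i}\eta$ with $g=\partial_{i}f$, is the intended and correct one.
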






\begin{dfn} BV functions, otherwise known as functions of bounded variation, are Lipschitz functions whose weak derivative is a finite Radon measure.  That is, for any open set $\Omega \subset R^{n}$, we say $u \in L^{1}(\Omega)$ has bounded variation if $\exists$ a finite Radon measure $Du$ such that

\begin{center} $\int_{\Omega}u$ $div \phi = - \int_{\Omega}\phi \cdot Du$, \end{center}

for all $\phi \in C^{1}_{c}(\Omega,R^{n})$. We then write $u \in BV(\Omega)$.
\end{dfn}

Alternatively, we have the following equivalent statement:

\begin{dfn} A Lipschitz $u$ is said to be in $BV_{loc}(\Omega)$ if for each $W \subset \subset \Omega$ there is a constant $c(W) < \infty$ such that

\begin{center} $\int_{W}u$ $div \phi \leq c(W)sup\norm{\phi}$ \end{center}

for all vector functions $\phi = (\phi_{1},...,\phi_{n})$, $\phi_{i} \in C^{\infty}_{c}(W)$.
\end{dfn}

\begin{prop}  If $u \in BV_{loc}(\Omega)$, then for any $U \subset \subset \Omega$, there is a measure $\mu$ such that

\begin{center} $\int_{U}u$ $div \phi = \int_{U}\phi \cdot \mu$ \end{center}

for any $\phi \in C^{\infty}_{c}(U,R^{n})$.

\begin{proof} (See Simon \cite{[Si]} p.35) \end{proof} \end{prop}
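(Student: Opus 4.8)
The plan is to recognize this statement as a direct instance of the Riesz representation theorem for vector-valued Radon measures. Fix an intermediate open set $W$ with $U \subset\subset W \subset\subset \Omega$, and define the linear functional $L : C^{\infty}_{c}(U;R^{n}) \to R$ by $L(\phi) = \int_{U} u\, div\, \phi$. The defining estimate for $BV_{loc}(\Omega)$, applied with this $W$, gives at once that $|L(\phi)| \leq c(W)\sup\norm{\phi}$ for all such $\phi$, so $L$ is bounded for the uniform norm with a constant that does not depend on the support of $\phi$.

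Next I would extend $L$ to all continuous compactly supported vector fields. Given $\phi \in C^{0}_{c}(U;R^{n})$ with support in a compact set $K \subset U$, mollification produces $\phi_{\epsilon} = \phi * \rho_{\epsilon} \in C^{\infty}_{c}(U;R^{n})$, supported in a slightly larger compact subset of $U$ for $\epsilon$ small, with $\phi_{\epsilon} \to \phi$ uniformly. Since $L$ is Lipschitz for the sup norm, $\{L(\phi_{\epsilon})\}$ is Cauchy and its limit depends only on $\phi$; this defines an extension of $L$ to $C^{0}_{c}(U;R^{n})$ (indeed to $C_{0}(U;R^{n})$) satisfying the same bound $|L(\phi)| \leq c(W)\sup\norm{\phi}$.

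I would then invoke the Riesz representation theorem in the form valid for bounded functionals on spaces of continuous functions: there is a finite positive Radon measure, call it $\norm{\mu}$, on $U$, together with a $\norm{\mu}$-measurable unit vector field $\nu$, such that $L(\phi) = \int_{U} \phi \cdot \nu\, d\norm{\mu}$ for every continuous compactly supported $\phi$. Setting $\mu = \nu\,\norm{\mu}$ then yields exactly $\int_{U} u\, div\, \phi = \int_{U} \phi \cdot \mu$ for all $\phi \in C^{\infty}_{c}(U;R^{n})$, as claimed. Equivalently, one may split $L$ into its $n$ coordinate functionals $\phi \mapsto L((0,\dots,\phi_{i},\dots,0))$, write each as a difference of two positive functionals, apply the classical scalar Riesz--Markov theorem to each, and assemble the resulting $2n$ positive measures into the single vector measure $\mu$.

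The only point that really needs care is not deep: one must carry out the extension from $C^{\infty}_{c}$ to $C^{0}_{c}$ with proper control of supports, so that the extended functional is genuinely defined on compactly supported continuous functions and the representing measure has finite total mass on $U$, and one must cite the signed/vector-measure version of the representation theorem rather than the version for positive functionals. A clean treatment in precisely the form needed here appears in Simon's book \cite{[Si]}.
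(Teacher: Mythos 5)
Your argument is correct and is essentially the one the paper defers to: the paper offers no proof of its own, only the citation to Simon's book, and the argument there is exactly what you have written — bound the functional $\phi \mapsto \int_{U} u\,\mathrm{div}\,\phi$ by the $BV_{loc}$ estimate, extend by density from $C^{\infty}_{c}$ to $C^{0}_{c}$, and invoke the vector-valued Riesz representation theorem (which the paper itself records a few paragraphs earlier in its remarks on Radon measures). No gap.
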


Yet another way of expressing the same thing:

Define the variation of $u$ in $\Omega$ as $V(u,\Omega) = sup \{ \int_{\Omega}u$ $div \phi : \phi \in C^{1}_{c}(\Omega,R^{n}), \norm{\phi}_{L^{\infty}(\Omega)} \leq 1 \}$.  Then $BV(\Omega) = \{ u \in L^{1}(\Omega) : V(u,\Omega)$ is finite$\}$.

These things are important mainly because of the following compactness theorem, which is, in some sense, the fundamental theorem of geometric measure theory:

\begin{thm} (Compactness Theorem for BV functions).  If $\{u_{k}\}$ is a sequence of $BV_{loc}(U)$ functions satisfying

\begin{equation}
\label{BV compactness}
sup_{k \geq 1}(\norm{u_{k}}_{L^{1}(W)} + \int_{W}\norm{Du_{k}}) < \infty \end{equation}

for each $W \subset \subset U$, then there is a subsequence $\{u_{k'}\} \subset \{u_{k}\}$ and a $BV_{loc}$ function $u$ such that $u_{k'} \rightarrow u$ in $L^{1}_{loc}(U)$ and

\begin{equation}
\label{BV convergence}
\int_{W}\norm{Du} \leq lim inf \int_{W}\norm{Du_{k'}}
\end{equation}

for all $W \subset \subset U$.
\end{thm}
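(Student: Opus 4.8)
The plan is to prove the Compactness Theorem for $BV$ functions by a diagonal-extraction argument built on top of two standard ingredients: mollification of $BV_{loc}$ functions to obtain smooth approximants with controlled $L^1$-norm and total variation, and the Rellich--Kondrachov compactness of the inclusion $W^{1,1}(W')\hookrightarrow L^1(W')$ on bounded domains with nice boundary. First I would exhaust $U$ by an increasing sequence of open sets $W_j \subset\subset W_{j+1} \subset\subset U$ with each $W_j$ (say) a finite union of balls, so that $U = \bigcup_j W_j$. Fixing $j$, I mollify: set $u_k^\varepsilon = u_k * \rho_\varepsilon$ on a slightly smaller set, and use the standard facts that $\int_{W}\|D u_k^\varepsilon\| \le \int_{W'}\|Du_k\|$ for $W\subset\subset W'$ and $\|u_k^\varepsilon\|_{L^1(W)}\le \|u_k\|_{L^1(W')}$, together with $u_k^\varepsilon \to u_k$ in $L^1$ as $\varepsilon\to 0$, uniformly in $k$ by the hypothesis \eqref{BV compactness}. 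This reduces the problem to compactness of a bounded family in $W^{1,1}$, where Rellich applies to yield an $L^1(W_j)$-convergent subsequence.

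The key steps, in order, are: (1) fix the exhaustion $\{W_j\}$; (2) on $W_1$, combine uniform mollification estimates with Rellich--Kondrachov to extract a subsequence of $\{u_k\}$ converging in $L^1(W_1)$; (3) pass to a further subsequence converging in $L^1(W_2)$, and iterate; (4) take the diagonal subsequence $\{u_{k'}\}$, which then converges in $L^1(W_j)$ for every $j$, hence in $L^1_{loc}(U)$, to a limit $u$; (5) show $u\in BV_{loc}(U)$ and prove the lower-semicontinuity estimate \eqref{BV convergence}. For step (5), lower semicontinuity of the total variation is the cleanest part: for any $\phi\in C^1_c(W,\mathbb{R}^n)$ with $\|\phi\|_{L^\infty}\le 1$ one has $\int_W u\,\mathrm{div}\,\phi = \lim_{k'}\int_W u_{k'}\,\mathrm{div}\,\phi \le \liminf_{k'}\int_W \|D u_{k'}\|$ since each $\int_W u_{k'}\,\mathrm{div}\,\phi \le \int_W\|Du_{k'}\|$; taking the supremum over such $\phi$ gives $\int_W\|Du\| \le \liminf_{k'}\int_W\|Du_{k'}\|$, which in particular shows the limit measure is finite on each $W\subset\subset U$, so $u\in BV_{loc}(U)$.

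The main obstacle I expect is step (2)--(3): making the mollification estimates genuinely uniform in $k$ so that Rellich can be invoked on a single fixed domain. One must be careful that the mollification radius $\varepsilon$ needed to bring $u_k^\varepsilon$ within $\eta$ of $u_k$ in $L^1(W_j)$ can be chosen independently of $k$; this is exactly where the uniform bound $\sup_k \int_{W_{j+1}}\|D u_k\| <\infty$ is used, via the estimate $\|u_k * \rho_\varepsilon - u_k\|_{L^1(W_j)} \le \varepsilon \int_{W_{j+1}}\|D u_k\|$ valid for small $\varepsilon$. Granting that, a standard $3\eta$-argument (approximate $u_k$ by $u_k^\varepsilon$, extract a convergent subsequence of the smooth family, and transfer back) closes the compactness claim on each $W_j$, and the diagonal argument and lower-semicontinuity estimate are then routine. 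I would cite Simon \cite{[Si]} for the mollification inequalities and for the Rellich-type compactness statement in the $BV$ setting, filling in the $\eta$-argument and the diagonalization explicitly.
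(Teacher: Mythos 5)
Your proposal follows essentially the same route as the paper's proof: reduce via mollification to compactness of a uniformly bounded family of smooth functions in $W^{1,1}$, then invoke a Rellich--Kondrachov type compactness theorem on compactly contained subdomains. You are more explicit than the paper about the diagonalization over an exhaustion $\{W_j\}$ and about the lower-semicontinuity estimate \eqref{BV convergence} via the variational characterization of total variation, but the underlying strategy is identical (the paper cites Gilbarg--Tr\"udinger Theorem 7.22 for the same Rellich-type step you attribute to Simon).
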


\begin{proof} I shall prove this result later. \end{proof}

\emph{Remarks}.
\begin{itemize}
\item[(i)] The finiteness condition above is necessary in the sense that in order to get a compactness result of this nature, we need the sequence of functions and their derivatives to be bounded in every bounded domain.  If we wanted to establish $L^{2}_{loc}(U)$ convergence, we would need all second derivatives to be bounded too.
\item[(ii)] The inequality following the finiteness condition is essentially a kind of Fatou's lemma for first order derivatives.
\end{itemize}

Having got this result, the game we now play is the following- we want to construct a family of $BV_{loc}$ functions to locally represent our surface, for example (in the case we are considering the Plateau problem).  It then follows from this theorem that this family will have a convergent subsequence.  So we want to do calculus of variations with $BV_{loc}$ functions.  The difficulties with this will lie of course in the fact that we may have corners and other pathological behaviours on sets of measure zero.

\subsection{Jacobians and the Area Formula}

In dealing with maps from one rectifiable set to another, it becomes a matter of interest as to how to compute the Jacobian of such a map, as this knowledge proves integral to establishing the area and coarea formulae (to follow).  If $f : R^{m} \rightarrow R^{n}$ is differentiable at a point $p$, the $k$ dimensional Jacobian of $f$ at $a$, $J_{k}f(a)$, is defined to be the maximal $k$-dimensional volume of the image under $Df(a)$ of a unit $k$-cube. 

If rank$(Df(a)) < k$ then $J_{k}f(a) = 0$.  Otherwise, if rank $(Df(a)) \geq k$, then $J_{k}f(a)^{2}$ is merely the sum of the squares of the determinants of the $k x k$ submatrices of $Df(a)$.  If $k = m$ or $n$, $J_{k}f(a)^{2} = det(Df^{T}(a)Df(a))$.

The definition of $J_{k}f$ has been arranged so as to make the following theorem easy to state.

\begin{thm} (The Area Formula).  Suppose $f : R^{m} \rightarrow R^{n}$ is Lipschitz for $m \leq n$.  Then

\begin{center}
$\int_{A}J_{m}f(x)d\mathcal{L}^{m}x = \int_{R^{n}}N(f\vert A,y)d\mathcal{H}^{m}y$
\end{center}

and

\begin{center}
$\int_{R^{m}}u(x)J_{m}f(x)d\mathcal{L}^{m}x = \int_{R^{n}}\sum_{x \in f^{-1}(y)}u(x)d\mathcal{H}^{m}y$
\end{center}

where $N(f\vert A, y)$ is the cardinality of the set $\{x \in A : f(x) = y \}$ and $u$ is any $\mathcal{H}^{m}$ integrable function.

\begin{proof} I will follow \cite{[M]}.  Suppose rank $Df = m$.  Let $\{s_{i}\}$ be a countable dense set of affine maps of $R^{m}$ onto $m$-dimensional planes in $R^{n}$ (recall an affine map is a linear transformation followed by a translation).  Suppose $E$ is a piece of $A$ such that for each $a \in E$ the affine functions $f(a) + Df(a)(x-a)$ and $s_{i}(x)$ are approximately equal.

Then det$s_{i}$ is approximately $J_{m}f$ on $E$, and $f$ is injective on $E$ since all affine maps are injective.

Since $f$ is differentiable, we can locally approximate $f$ by $f(a_{j}) + Df(a_{j})(x-a_{j})$ in some small neighbourhood of $a_{j}$ which we can call $E_{j}$ for a dense subset $\{a_{j}\}$ of $A$.  Since the set $\{s_{i}\}$ is dense there is some element of this set, call it $s_{j}$ after possible relabelling, such that $s_{j}$ is arbitrarily close to the local approximation to $f$ in $E_{j}$.  Then evidently these neighbourhoods cover $A$.

So for each piece $E_{i}$,

\begin{align}
\mathcal{H}^{m}(f(E_{i})) &\approx \mathcal{H}^{m}(s_{i}(E_{i}) \nonumber \\
&= \mathcal{L}^{m}(s_{i}(E_{i})) \nonumber \\
&= \int_{E_{i}}det(s_{i})d\mathcal{L}^{m} \nonumber \\
&\approx \int_{E_{i}}J_{m}f d\mathcal{L}^{m}
\end{align}

Summing over the sets $E_{i}$ gives

$\int_{R^{n}}(\text{number of sets }  E_{i}  \text{ intersecting } f^{-1}\{y\})\mathcal{H}^{m}y \approx \int_{A}J_{m}fd\mathcal{L}^{m}$.

which in the limit for a sequence of covers $E_{i,j}$ such that as $j \rightarrow \infty$, $diam(E_{i,j}) \rightarrow 0 \forall i$ uniformly, we get

$\int_{R^{n}}N(f|A,y)d\mathcal{H}^{m}y = \int_{A}J_{m}fd\mathcal{L}^{m}$, as required.

What if rank$(Df) < m$?  Then $\int_{A}J_{m}f$ is zero.  Evidently we would like to show that $\int_{R^{n}}N(f|A,y)d\mathcal{H}^{m}y$ is also zero.

Consider the function $g : R^{m} \rightarrow R^{n+m}, x \mapsto (f(x), \epsilon x)$.

Then $J_{m}g \leq \epsilon(Lip f + \epsilon)^{m-1}$.  This follows from the fact that in diagonal form $Df$ has maximal entries $Lip f,...,Lip f$ along the diagonal repeated rank$(Df) = p < m$ times followed by a string of zeroes.  So $Dg$ will have maximal entries $Lip f + \epsilon,...,Lip f + \epsilon,\epsilon$ along the diagonal since rank$(Dg) = m$.  Then $J_{m}g$ will be, maximally, the product of these entries, that is, $\epsilon(Lip f + \epsilon)^{m-1}$, as required.

Since $Dg$ has rank $m$,

$\mathcal{H}^{m}(f(A)) \leq \mathcal{H}^{m}(g(A)) = \int_{A}J_{m}g \leq \epsilon(Lip f + \epsilon)^{m-1}\mathcal{L}^{m}(A)$

Hence we have that $\mathcal{H}^{m}(f(A)) = 0$, since $\epsilon = 0$ corresponds to the map $f$.  From this it quickly follows that $\int_{R^{n}}N(f|A,y)d\mathcal{H}^{m}y$ is zero, and we are done.

The second part of the theorem follows by approximating $u$ by simple functions.
\end{proof} \end{thm}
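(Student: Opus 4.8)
\emph{Proof proposal.} The plan is to follow the classical route: prove the formula first for injective linear maps by linear algebra, then dispose of the set where $Df$ is rank-deficient, and then reduce the general Lipschitz case to the linear one by cutting the domain into countably many pieces on which $f$ is nearly affine. Throughout I would use Rademacher's theorem (so that $f$ is differentiable $\mathcal{L}^{m}$-a.e.) and the identity $\mathcal{H}^{m} = \mathcal{L}^{m}$ on $R^{m}$, both recorded above. For an injective linear $L : R^{m} \to R^{n}$ and measurable $E$, I would first establish $\mathcal{H}^{m}(L(E)) = J_{m}L \cdot \mathcal{L}^{m}(E)$ with $J_{m}L = (\det(L^{T}L))^{1/2}$: using the polar decomposition $L = O \circ S$, with $O$ a linear isometry of $R^{m}$ onto an $m$-plane in $R^{n}$ and $S$ symmetric positive definite, one gets $\mathcal{H}^{m}(L(E)) = \mathcal{H}^{m}(S(E)) = \mathcal{L}^{m}(S(E)) = |\det S|\,\mathcal{L}^{m}(E)$ by isometry-invariance of $\mathcal{H}^{m}$, the identity $\mathcal{H}^{m} = \mathcal{L}^{m}$ on the target plane, and the linear change of variables for Lebesgue measure; finally $|\det S| = (\det(L^{T}L))^{1/2}$.

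Next I would handle the set $\{x : J_{m}f(x) = 0\}$, on which the left-hand integrand vanishes, by showing its image is $\mathcal{H}^{m}$-null. The device is the perturbation $g_{\epsilon}(x) = (f(x), \epsilon x) : R^{m} \to R^{n+m}$, which has rank $m$ everywhere and, by diagonalising $Df$, satisfies $J_{m}g_{\epsilon} \leq \epsilon(\mathrm{Lip}\,f + \epsilon)^{m-1}$. Applying the rank-$m$ conclusion to $g_{\epsilon}$ on a bounded piece $A$ gives $\mathcal{H}^{m}(f(A)) \leq \mathcal{H}^{m}(g_{\epsilon}(A)) = \int_{A} J_{m}g_{\epsilon}\, d\mathcal{L}^{m} \leq \epsilon(\mathrm{Lip}\,f + \epsilon)^{m-1}\mathcal{L}^{m}(A)$, and letting $\epsilon \to 0$ forces $\mathcal{H}^{m}(f(A)) = 0$, hence $\int_{R^{n}} N(f|A,y)\, d\mathcal{H}^{m}y = 0$ as well. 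One must order the argument so that the rank-$m$ case is already in hand when it is invoked here.

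For the main body, fix $t > 1$ and choose a countable dense family $\{s_{i}\}$ of affine maps $R^{m} \to R^{n}$. Since $f$ is differentiable $\mathcal{L}^{m}$-a.e. on the rank-$m$ set, a Lusin/Egorov-type covering argument lets me partition that set (up to an $\mathcal{L}^{m}$-null set) into countably many disjoint measurable pieces $E_{i}$ on each of which $f|_{E_{i}}$ is injective, $t^{-1}|s_{i}(x)-s_{i}(y)| \leq |f(x)-f(y)| \leq t|s_{i}(x)-s_{i}(y)|$, and $t^{-1}J_{m}s_{i} \leq J_{m}f \leq tJ_{m}s_{i}$. Combining the bi-Lipschitz comparison with the linear case applied to $s_{i}$ then gives $t^{-m}\int_{E_{i}} J_{m}f\, d\mathcal{L}^{m} \leq \mathcal{H}^{m}(f(E_{i})) \leq t^{m}\int_{E_{i}} J_{m}f\, d\mathcal{L}^{m}$; summing over $i$ while counting how many $E_{i}$ meet $f^{-1}\{y\}$ yields $t^{-m}\int_{A} J_{m}f \leq \int_{R^{n}} N(f|A,y)\, d\mathcal{H}^{m}y \leq t^{m}\int_{A} J_{m}f$, and letting $t \downarrow 1$ through successively finer decompositions gives the equality $\int_{A} J_{m}f\,d\mathcal{L}^{m} = \int_{R^{n}} N(f|A,y)\,d\mathcal{H}^{m}y$. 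The weighted statement then follows by writing $u$ as an increasing limit of simple functions $\sum_{j} c_{j}\mathbf{1}_{A_{j}}$, applying the unweighted identity with $A = A_{j}$, passing to the limit by monotone convergence, and splitting a general $\mathcal{H}^{m}$-integrable $u$ into $u^{+}$ and $u^{-}$.

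The step I expect to be the genuine obstacle is the almost-affine decomposition: arranging the $E_{i}$ to be genuinely disjoint, to exhaust the rank-$m$ set up to an $\mathcal{L}^{m}$-null set, and to carry the two-sided estimates \emph{uniformly} on each piece, together with verifying that refining $t \downarrow 1$ preserves these properties and that the null sets discarded along the way contribute nothing to $\int_{R^{n}} N(f|A,y)\,d\mathcal{H}^{m}y$ — this last point uses that a Lipschitz $f$ sends $\mathcal{L}^{m}$-null sets to $\mathcal{H}^{m}$-null sets, itself a by-product of the rank-deficient estimate above. By comparison, the linear case, the perturbation estimate, and the final approximation of $u$ are routine once $\mathcal{H}^{m} = \mathcal{L}^{m}$ and Rademacher's theorem are available.
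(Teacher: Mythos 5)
Your proposal follows essentially the same route as the paper (and as Morgan, whom the paper cites): a countable dense family of affine maps yielding an almost-affine decomposition for the rank-$m$ part, the $g_{\epsilon}(x)=(f(x),\epsilon x)$ perturbation to dispose of the rank-deficient set, and simple-function approximation for the weighted form. The only substantive differences are ones of rigour rather than strategy — you make the linear case explicit via polar decomposition and replace the paper's informal ``$\approx$'' with a genuine two-sided $t$-estimate and a $t\downarrow 1$ limit, which is how the argument is properly closed.
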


\subsection{The Co-Area Formula}

The following result is very closely related to that of the previous section.  It essentially covers the other case, where we are dealing with maps from higher dimensional spaces into lower dimensional ones, rather than vice versa.

First I remind readers of a well known result from analysis, Fubini's Theorem.

\begin{thm} (Fubini's Theorem, Special Case).  Suppose $\mu$ is Hausdorff measure on $R^{n}$ and $\nu$ is Hausdorff measure on $R^{p}$.  Then if $f \in L^{1}(\mu \otimes \nu)$,

\begin{center}
$\int_{R^{n} \times R^{p}}f(s,t)d(\mu \otimes \nu) = \int_{R^{n}}(\int_{R^{p}}f(s,t)d\nu(t))d\mu(s) = \int_{R^{p}}(\int_{R^{n}}f(s,t)d\mu(s))d\nu(t)$.
\end{center}

\begin{proof}  Refer to [Ko], section $16.2$, for a proof of this result. \end{proof} \end{thm}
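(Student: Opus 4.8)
The plan is to follow the standard approximation ladder, exploiting the fact established earlier in this chapter that $H^{n} = L^{n}$ on $R^{n}$, so that $\mu$ and $\nu$ are nothing but Lebesgue measure and in particular are complete and $\sigma$-finite. First I would reduce to the case of a non-negative measurable $f$ (the Tonelli half of the statement), since the $L^{1}$ conclusion then follows by writing $f = f^{+} - f^{-}$ and subtracting the two resulting identities; the hypothesis $\int |f| \, d(\mu \otimes \nu) < \infty$ guarantees that every quantity in sight is finite, so that no $\infty - \infty$ can occur.

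For the non-negative case I would build the identity up from indicator functions. Step one: if $f = \chi_{A \times B}$ with $A \subset R^{n}$ and $B \subset R^{p}$ measurable, then all three integrals equal $\mu(A)\nu(B)$ by direct computation. Step two: let $\mathcal{D}$ be the collection of sets $E$ in the product $\sigma$-algebra such that, for $\mu$-a.e. $s$, the section $E_{s} = \{ t : (s,t) \in E \}$ is $\nu$-measurable, the map $s \mapsto \nu(E_{s})$ is $\mu$-measurable, and $\int \nu(E_{s}) \, d\mu(s) = (\mu \otimes \nu)(E)$, together with the symmetric statement obtained by interchanging the roles of the two factors. Working inside a fixed rectangle of finite measure so that all quantities are finite, one checks that $\mathcal{D}$ is a $\lambda$-system containing the $\pi$-system of measurable rectangles, whence the Dynkin $\pi$-$\lambda$ theorem gives that $\mathcal{D}$ is the whole product $\sigma$-algebra; exhausting $R^{n} \times R^{p}$ by an increasing sequence of such rectangles and passing to the limit removes the finiteness restriction. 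Step three: by linearity the identity holds for non-negative simple functions. Step four: for general non-negative measurable $f$ choose simple $f_{k} \uparrow f$ and apply the monotone convergence theorem to each of the three integrals, and once more inside the iterated integral, noting that $s \mapsto \int f_{k}(s,t) \, d\nu(t)$ is an increasing sequence of measurable functions and hence has measurable limit $s \mapsto \int f(s,t) \, d\nu(t)$.

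The main obstacle is the measurability bookkeeping in step two: verifying that $s \mapsto \nu(E_{s})$ is genuinely $\mu$-measurable and that $\mathcal{D}$ is closed under the operations defining a $\lambda$-system when the underlying measures are only $\sigma$-finite rather than finite. The remedy, as indicated, is to prove everything first for $E$ contained in a rectangle of finite $(\mu \otimes \nu)$-measure, where monotone and dominated convergence apply without fuss, and then take an increasing union; completeness of Lebesgue measure is used so that the $\mu$-null sets of bad values of $s$ do no harm. Once this point is settled, the reductions to simple and then to non-negative measurable functions, and finally the splitting into positive and negative parts, are entirely routine; a fully detailed account may be found in \cite{[Ko]}, section 16.2.
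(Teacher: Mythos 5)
Your argument is correct and is the standard Dynkin $\pi$-$\lambda$ / monotone-class proof of Fubini--Tonelli, with the sensible preliminary observation that $H^{n} = L^{n}$ on $R^{n}$ (proved earlier in the chapter) reduces the statement to ordinary Lebesgue product-measure theory. The paper itself offers no proof, delegating entirely to the cited reference, so your proposal simply supplies the standard details the paper leaves implicit; there is no divergence in approach to compare.
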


\begin{thm} (The Coarea Formula).  Suppose $f : R^{m} \rightarrow R^{n}$ is Lipschitz, with $m > n$.  Then if $A$ is a $\mathcal{L}^{m}$ measurable set, then

$\int_{A}J_{n}f(x)d\mathcal{L}^{m}x = \int_{R^{n}}\mathcal{H}^{m-n}(A \cap f^{-1}(y))d\mathcal{L}^{n}y$.

\begin{proof} Again following \cite{[M]}. If $f$ is orthogonal projection, then $J_{n}f = 1$ and the coarea formula reduces to the special case of Fubini's theorem above.

More generally, suppose $J_{n}f \neq 0$.  We may subdivide $A$ as in the proof of the area formula, and hence may assume $f$ is linear.  Then $f = L \circ P$, where $P$ is projection onto the orthogonal complement $V$ of $ker f$ and $L$ is some nonsingular linear map from $V$ to $R^{n}$.  Then

\begin{align}
\int_{A}J_{n}fd\mathcal{L}^{m} &= |det(L)|\mathcal{H}^{m}(A) \nonumber \\
 \intertext{since the map $f$ is linear and so the stretch factor from $A$ to $f(A)$ will be given by $|det(L)|$.}
&= |det(L)|\int_{P(A)}\mathcal{H}^{m-n}(P^{-1}\{y\})d\mathcal{L}^{n}y \nonumber \\
\intertext{(by definition of what it means to be a projection),}
&= \int_{L \circ P(A)}\mathcal{H}^{m-n}((L\circ P)^{-1}\{y\})d\mathcal{L}^{n}y
\intertext{since $V$ is $n$-dimensional and $L$ is a nonsingular map.} \nonumber
\end{align}

But this is precisely the result we wanted.  The general case for $f$ nonlinear comes through easily as a consequence. \end{proof} \end{thm}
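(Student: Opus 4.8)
The plan is to mimic the structure of the Area Formula proof above: first settle the case where $f$ is a linear map of maximal rank using the special Fubini theorem, then dispose of the degenerate set $\{J_n f = 0\}$ by a perturbation trick, and finally recover the nonlinear statement by a covering-and-limiting argument that replaces $f$ locally by affine maps. Throughout I invoke Rademacher's theorem, so that $f$ is differentiable $\mathcal{L}^m$-almost everywhere and the affine approximations make sense.

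\emph{The linear case.} Suppose $f = L$ is linear with $\operatorname{rank} L = n$. Let $V = (\ker L)^{\perp}$, an $n$-plane, let $P : R^m \to V$ be orthogonal projection, and let $L_0 = L|_V$, a linear isomorphism onto $R^n$, so that $f = L_0 \circ P$ and $J_n f = |\det L_0|$ is constant. The fibres $f^{-1}(y)$ are the affine translates of $\ker L$, hence coincide with the fibres of $P$ up to the relabelling $L_0$, and each carries $(m-n)$-dimensional Hausdorff measure. Applying the special Fubini theorem to $P$ and performing the linear change of variables $L_0$ on $V$ gives
\[
\int_A J_n f \, d\mathcal{L}^m \;=\; |\det L_0|\,\mathcal{L}^m(A) \;=\; \int_{R^n} \mathcal{H}^{m-n}\!\bigl(A \cap f^{-1}(y)\bigr)\, d\mathcal{L}^n y ,
\]
which is the desired identity in this case; Fubini simultaneously delivers $\mathcal{L}^n$-measurability of $y \mapsto \mathcal{H}^{m-n}(A \cap f^{-1}(y))$ here.

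\emph{The degenerate case.} If $\operatorname{rank} Df < n$ on $A$ then the left side is $0$, and one must show the slices are $\mathcal{H}^{m-n}$-null for $\mathcal{L}^n$-a.e.\ $y$. Borrowing the device from the Area Formula, set $g_\varepsilon(x) = (f(x), \varepsilon x) \in R^n \times R^m$; its singular values are those of $Df$ padded with copies of $\varepsilon$, so $J_n g_\varepsilon \le \varepsilon(\operatorname{Lip} f + \varepsilon)^{n-1}$, while $g_\varepsilon$ has rank $n$ for $\varepsilon > 0$. Applying the already-established maximal-rank inequality to $g_\varepsilon$, comparing the slices of $g_\varepsilon$ with those of $f$, and letting $\varepsilon \downarrow 0$ forces $\int_{R^n} \mathcal{H}^{m-n}(A \cap f^{-1}(y))\, d\mathcal{L}^n y = 0$.

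\emph{The general case and the main obstacle.} On $\{J_n f > 0\}$, decompose (up to a null set) into countably many measurable pieces $E_i$ on each of which $f$ agrees to within a multiplicative factor $1 \pm \delta$ with an affine map drawn from a fixed countable dense family, exactly as in the Area Formula. On each $E_i$ the linear case yields $\int_{E_i} J_n f\, d\mathcal{L}^m = \int_{R^n}\mathcal{H}^{m-n}(E_i \cap f^{-1}(y))\, d\mathcal{L}^n y$ up to an error governed by $\delta$; summing over $i$, refining so that $\operatorname{diam} E_i \to 0$ and $\delta \to 0$, and adding the (vanishing) contribution of $\{J_n f = 0\}$ gives the theorem. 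The delicate step, the one that is not mere bookkeeping, is precisely this summation: one must check that $\mathcal{H}^{m-n}$ of the slice $E_i \cap f^{-1}(y)$ is perturbed by only a factor $1 + o(1)$ when $f$ is replaced by its affine approximant on $E_i$, and that the near-disjoint slice measures reassemble to $\mathcal{H}^{m-n}(A \cap f^{-1}(y))$ for $\mathcal{L}^n$-a.e.\ $y$ without over- or under-counting. Doing this cleanly requires care about the measurability of $y \mapsto \mathcal{H}^{m-n}(A \cap f^{-1}(y))$ (obtained by passing to Borel hulls) and about the exceptional set of $y$ where slices of the approximants fail to converge to slices of $f$; by contrast the linear-algebra facts about Jacobians and the $g_\varepsilon$ trick are comparatively routine.
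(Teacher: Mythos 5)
Your proposal follows essentially the same route as the paper: the decomposition $f = L_0 \circ P$ reducing the linear case to Fubini's theorem for projections, subdivision into pieces where $f$ is approximately affine as in the Area Formula, and reassembly. You go further than the paper in two respects, both to your credit: you explicitly treat the degenerate set $\{J_n f = 0\}$ by adapting the $g_\varepsilon = (f, \varepsilon\,\mathrm{id})$ trick (the paper omits this case entirely for the Coarea Formula, despite using the trick for the Area Formula), and you are honest that the summation and measurability bookkeeping in the nonlinear case is the genuinely delicate step, where the paper simply asserts it "comes through easily as a consequence."
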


\section{Key Constructions}

\subsection{Tangent Cones}

Define the $tangent$ $cone$ of $E$ at $a$ consisting of the $tangent$ $vectors$ of $E$ at $a$:

\begin{equation}
\label{TangentCone}
Tan(E, a) = \{r \in R: r \geq 0\}[\bigcap_{\epsilon > 0}Closure\{ \frac{x - a}{|x - a|} : x \in E, 0 < |x - a| < \epsilon \}]
\end{equation}

Intuitively speaking, the tangent cone to $a \in E$ is essentially what the rectifiable set $E$ looks like near $a$ to a first order (linear) approximation.  If $E$ is an $n$-dimensional manifold, the first order approximation will always be an $n$-dimensional plane.  But in rectifiable sets the tangent cone may very well look like a cone, or any number of more exotic structures.  Draw an $(n+1)$-dimensional disc surrounding $a$ by viewing $E$ near $a$ as embedded in $R^{n+1}$ for some sensible coordinate system.  Then the tangent cone at $a$ can be thought of as a subset of this $(n+1)$-disc, and in fact is uniquely determined by its intersection with the boundary of the disc, or the $n$-sphere.

\subsection{Rectifiable Sets and a Structure Theorem}

Recall that the natural measure for $m$-dimensional sets on $R^{n}$, is the Hausdorff measure, $\mathcal{H}^{m}$.

Borel subsets $B$ of $R^{n}$ are called $(\mathcal{H}^{m}, m)$ rectifiable if $B$ is a countable union of Lipschitz images of bounded subsets of $R^{m}$, with $\mathcal{H}(B) < \infty$.  I shall make this more precise later.  It is possible to associate to any rectifiable set $B$ a canonical tangent plane $T_{b}B$ for each $b \in B$ (also to be described in more detail later).  (For our treatment, sets will always be taken to have integer dimension, even though geometric measure theory allows the study of more pathological sets.) 

\begin{thm} (Structure Theorem). Let $E$ be an arbitrary subset of $R^{n}$ with $\mathcal{H}^{m}(E) < \infty$. Then $E$ can be decomposed as the union of two disjoint sets $E = A \cup B$ with $A$ $(\mathcal{H}^{m}, m)$ rectifiable and $B$ purely unrectifiable. \end{thm}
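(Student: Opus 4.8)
The plan is a maximality (exhaustion) argument that uses only the finiteness $\mathcal{H}^{m}(E) < \infty$, the closure of the class of $(\mathcal{H}^{m},m)$-rectifiable sets under countable unions, and the regularity properties of $\mathcal{H}^{m}$ recorded above. First I would set
\[
s \;=\; \sup\bigl\{\,\mathcal{H}^{m}(R) \;:\; R \subseteq E,\ R\ \text{is}\ (\mathcal{H}^{m},m)\text{-rectifiable}\,\bigr\}.
\]
Every competitor has $\mathcal{H}^{m}(R) \leq \mathcal{H}^{m}(E) < \infty$, so $0 \leq s < \infty$, and the family is nonempty (it contains $\emptyset$).

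Next I would choose rectifiable sets $R_{k} \subseteq E$ with $\mathcal{H}^{m}(R_{k}) \to s$; by inner regularity of $\mathcal{H}^{m}$ on the finite-measure set $R_{k}$ we may in fact take each $R_{k}$ to be $\sigma$-compact, hence Borel. Put $A := \bigcup_{k} R_{k}$. A countable union of Lipschitz images of bounded subsets of $R^{m}$ is again one such union, and $\mathcal{H}^{m}(A) \leq \mathcal{H}^{m}(E) < \infty$, so $A$ is $(\mathcal{H}^{m},m)$-rectifiable (and Borel, hence $\mathcal{H}^{m}$-measurable). Since $A \supseteq R_{k}$ for every $k$ we get $\mathcal{H}^{m}(A) \geq s$, while $A$ is itself a rectifiable subset of $E$, so $\mathcal{H}^{m}(A) \leq s$; hence $\mathcal{H}^{m}(A) = s$. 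Define $B := E \setminus A$, so that $E = A \cup B$ is a disjoint decomposition with $A$ rectifiable.

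It remains to show $B$ is purely unrectifiable, i.e.\ $\mathcal{H}^{m}(B \cap R) = 0$ for every $(\mathcal{H}^{m},m)$-rectifiable $R \subseteq R^{n}$. Suppose not; then $C := B \cap R$ has $\mathcal{H}^{m}(C) > 0$. A subset of a rectifiable set is rectifiable (restrict each defining Lipschitz map to the preimage of the subset), so $C$ is rectifiable, and $C \subseteq B$ is disjoint from $A$. Then $A \cup C$ is a rectifiable subset of $E$, and since $A$ is $\mathcal{H}^{m}$-measurable and $A \cap C = \emptyset$,
\[
\mathcal{H}^{m}(A \cup C) \;=\; \mathcal{H}^{m}(A) + \mathcal{H}^{m}(C) \;=\; s + \mathcal{H}^{m}(C) \;>\; s,
\]
contradicting the definition of $s$. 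Hence $B$ contains no rectifiable subset of positive $\mathcal{H}^{m}$-measure, so $B$ is purely unrectifiable. For a genuinely arbitrary (possibly non-measurable) $E$, one first replaces $E$ by a Borel hull $\widehat{E} \supseteq E$ with $\mathcal{H}^{m}(\widehat{E}) = \mathcal{H}^{m}(E)$ (Borel regularity of $\mathcal{H}^{m}$), applies the above to $\widehat{E} = A_{0} \cup B_{0}$, and takes $A := E \cap A_{0}$, $B := E \cap B_{0}$, noting that subsets of rectifiable sets are rectifiable and subsets of purely unrectifiable sets are purely unrectifiable.

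Finally, a remark on difficulty: there is essentially none here. This decomposition is formal; the substantive content of structure theory --- the \emph{Besicovitch--Federer projection theorem}, that a purely $m$-unrectifiable set of finite $\mathcal{H}^{m}$-measure has $\mathcal{L}^{m}$-null orthogonal projection onto almost every $m$-dimensional plane --- is much deeper and is not needed for the statement above. The only points in the argument that require any care are the attainment of the supremum $s$ (which rests on $\mathcal{H}^{m}(E) < \infty$ together with closure of rectifiability under countable unions) and the $\mathcal{H}^{m}$-measurability of $A$ (needed so that $\mathcal{H}^{m}$ is additive on $A$ and $C$), and both are handled by the regularity properties of $\mathcal{H}^{m}$ established in the preceding sections.
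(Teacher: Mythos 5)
Your proof is correct, and it takes a genuinely different route from the paper's: the paper does not actually prove this theorem. It states it, and then offers an informal justification by appealing to what it calls ``Structure Theorem (2)'' --- the tangent-cone characterisation of rectifiability and pure unrectifiability --- which is itself stated without proof, with a remark that the tangent-cone decomposition agrees with the rectifiable/unrectifiable one ``up to sets of $\mathcal{H}^{n}$ measure zero.'' Your exhaustion argument is the standard rigorous proof and is strictly more elementary: it uses only finiteness of $\mathcal{H}^{m}(E)$, closure of rectifiability under countable unions and subsets, and measurability of $A$ so that $\mathcal{H}^{m}$ is additive over $A \sqcup C$. The tangent-cone route the paper gestures at requires much heavier machinery (a.e.\ existence/nonexistence of approximate tangent planes, which is itself of Besicovitch--Federer depth) and, even granted that input, still needs the ``up to null sets'' discrepancy reconciled --- so it buys geometric intuition but not economy. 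One small remark: under the paper's own convention that rectifiable sets are Borel by definition, your $\sigma$-compactness reduction and the final Borel-hull paragraph are both automatic (the $R_k$ are already Borel, and $E$ itself never needs to be measurable since $\mathcal{H}^m$ is an outer measure); neither step is wrong, but neither is needed. Your closing observation is also exactly right and worth stressing: the decomposition $E = A \cup B$ is essentially formal, and the nontrivial content of ``structure theory'' in the Besicovitch--Federer sense --- null projections of purely unrectifiable sets onto a.e.\ $m$-plane --- is a separate and much deeper result that this statement does not touch.
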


We can interpret this theorem as stating that any set $E$ in $R^{n}$ can be split uniquely into a curvelike bit (a bit where calculus makes sense) and an uncurvelike bit (where things get pathological).  This follows from the closely related theorem:

\begin{thm} (Structure Theorem (2)).  $n$-dimensional rectifiable sets admit a tangent cone $\mathcal{H}^{n}$-almost everywhere, and $n$-dimensional unrectifiable sets admit a tangent cone $\mathcal{H}^{n}$-almost nowhere.  Hence any arbitrary subset $E$ of $R^{n}$ splits uniquely into two disjoint sets $\hat{A} \cup \hat{B}$ with all points in $A$ admitting a tangent cone and all points in $B$ not admitting a tangent cone.  \end{thm}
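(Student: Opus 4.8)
The plan is to separate the statement into the two pointwise assertions -- rectifiable sets carry a tangent cone $\mathcal{H}^{n}$-almost everywhere, purely unrectifiable sets carry one $\mathcal{H}^{n}$-almost nowhere -- prove each on its own, and then read off the decomposition directly from the Structure Theorem already established.

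First I would dispose of the rectifiable case. If $E$ is $(\mathcal{H}^{n},n)$-rectifiable, then by definition $E = Z \cup \bigcup_{j} f_{j}(A_{j})$ with $\mathcal{H}^{n}(Z) = 0$, each $A_{j} \subset R^{n}$ bounded and each $f_{j}$ Lipschitz. By Whitney's Theorem each $f_{j}$ agrees with a $C^{1}$ map off a Lebesgue-null exceptional set, and since $J_{n}f_{j}$ is bounded the Area Formula pushes such null sets forward to $\mathcal{H}^{n}$-null sets; after a countable refinement and discarding an $\mathcal{H}^{n}$-null piece we may therefore assume $E$ is a disjoint union of subsets of $C^{1}$ $n$-graphs. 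Such a graph has a genuine tangent $n$-plane at each of its points (Rademacher's Theorem supplies the differentiability used to pass from the Lipschitz pieces to the $C^{1}$ pieces in the first place), and at a point lying on several pieces a Lebesgue-differentiation argument at density points shows the planes coincide $\mathcal{H}^{n}$-a.e. This plane is visibly the tangent cone $Tan(E,a)$ in the sense of \eqref{TangentCone}, so $E$ admits a tangent cone $\mathcal{H}^{n}$-almost everywhere.

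The purely unrectifiable case is where the real work lies, and is the main obstacle: it is the hard half of the Besicovitch--Federer structure theory. Suppose $B$ is purely unrectifiable with $\mathcal{H}^{n}(B) < \infty$ and, for contradiction, that the subset $B' = \{ b \in B : Tan(B,b) \text{ exists} \}$ has $\mathcal{H}^{n}(B') > 0$. One first argues, using finiteness of $\mathcal{H}^{n}$ and a covering/density estimate, that at $\mathcal{H}^{n}$-density points of $B'$ the tangent cone is forced to be an honest $n$-plane (a cone of the right dimension with no missing directions). Then one invokes the Besicovitch--Federer projection theorem: a set carrying an $n$-plane tangent at $\mathcal{H}^{n}$-a.e. point of a positive-measure piece projects to positive $\mathcal{L}^{n}$-measure in that plane's direction, whereas a purely unrectifiable set of finite $\mathcal{H}^{n}$-measure projects to $\mathcal{L}^{n}$-measure zero in almost every $n$-plane direction; these are incompatible, so $\mathcal{H}^{n}(B') = 0$. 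I would cite \cite{[F]} or \cite{[Si]} for the projection theorem rather than reproving it, since its proof is precisely the technically heaviest ingredient of geometric measure theory.

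Finally, for the decomposition: apply the Structure Theorem to write $E = A \cup B$ with $A$ $(\mathcal{H}^{n},n)$-rectifiable and $B$ purely unrectifiable. Set $\hat{A} = \{ x \in E : Tan(E,x) \text{ exists} \}$ and $\hat{B} = E \setminus \hat{A}$; this split is literally unique because it is defined pointwise. The content of the theorem is the identification $\mathcal{H}^{n}(A \setminus \hat{A}) = \mathcal{H}^{n}(B \cap \hat{A}) = 0$, which follows from the two assertions above once one notes that $Tan(E,x)$ and $Tan(A,x)$ agree at $\mathcal{H}^{n}$-a.e. $x \in A$ (the set $B$ has $\mathcal{H}^{n}$-density zero there) and symmetrically for $B$. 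Thus the tangent-cone dichotomy recovers the rectifiable/unrectifiable dichotomy up to $\mathcal{H}^{n}$-null sets, which is exactly the claim.
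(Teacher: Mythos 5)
The paper itself supplies no proof of Structure Theorem (2): it is stated bare, with only the informal remark that $\hat{A}$ and $\hat{B}$ ``are more or less the same as $A$ and $B$ up to sets of $\mathcal{H}^{n}$ measure zero.'' Your sketch therefore stands alone, and its architecture is the right one: reduce the rectifiable half to $C^{1}$ graphs via Whitney plus the Area Formula, and recognise the unrectifiable half as the hard Besicovitch--Federer direction, for which the projection theorem (purely unrectifiable sets of finite $\mathcal{H}^{n}$-measure project to $\mathcal{L}^{n}$-null sets in almost every $n$-plane direction) is precisely the right black box. You are also correct to read off the decomposition from Structure Theorem (1) together with a density argument.

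There is, however, a genuine gap in the rectifiable half, and it concerns the definition of $Tan(E,a)$ given in \eqref{TangentCone}. That cone is the \emph{contingent cone}: it sees every point of $E$ near $a$, weighted by nothing. Your argument reduces $E$, modulo an $\mathcal{H}^{n}$-null set, to a countable union $\bigcup_{j}G_{j}$ of subsets of $C^{1}$ graphs, and you then claim the tangent plane of the relevant graph ``is visibly the tangent cone $Tan(E,a)$.'' That inference is false in general, for two related reasons. First, at a density point $a$ of $G_{1}$ the other pieces $G_{k}$, $k\neq 1$, can have density zero at $a$ and yet still have points accumulating on $a$ from directions transverse to $T_{a}\Gamma_{1}$, and those directions survive into $Tan(E,a)$. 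Second, even the discarded $\mathcal{H}^{n}$-null piece of $E$ can contribute: a countable dyadic lattice of points floating at heights $1/n$ above a line segment is $\mathcal{H}^{1}$-null, yet it forces a vertical direction into the contingent cone at \emph{every} point of the segment, so the union is $1$-rectifiable but has a contingent cone strictly larger than a line everywhere. The Lebesgue-differentiation observation you invoke shows that overlapping pieces carry the same tangent plane, but it does not suppress contributions from nearby points of other (or null) pieces. The standard repair is to replace the contingent cone by the \emph{approximate} tangent plane (as in Simon, Chapter~3), which explicitly ignores cones of small measure, or equivalently to restate the theorem for a suitably chosen $\mathcal{H}^{n}$-equivalent representative of $E$. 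This imprecision is inherited from the paper's own definition, so it is not peculiar to your write-up, but the step ``hence $Tan(E,a)$ is an $n$-plane'' needs to be either reformulated in the approximate sense or accompanied by the additional hypothesis that $E$ has been cleaned up to its measure-theoretic core.

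The unrectifiable half and the decomposition step are otherwise sound as sketches, and you are right not to attempt to reprove the projection theorem; it is the single heaviest tool cited in this chapter and is best taken as given from \cite{[F]} or \cite{[Si]}.
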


\begin{rmk}  Note that $\hat{A}$ and $\hat{B}$ are more or less the same as $A$ and $B$, respectively, up to sets of $\mathcal{H}^{n}$ measure zero. \end{rmk}

\subsection{Currents}

The main building blocks of GMT are the rectifiable currents, $m$-dimensional oriented surfaces.  They are oriented rectifiable sets with integer multiplicities, finite area, and compact support.  One presumes an orientation can be defined by associating a basis of $m$ formlike objects to each tangent space in $B$.  Then, if it is possible to choose a nowhere vanishing $m$-formlike object across all of $B$ we could say that it is orientable.

Multiplicity is related to how one interprets integration over such a set $B$.  The way integration works is to use the standard measure $\mu$ for each point, then modify the measure pointwise by the map $\mu(p) \mapsto multiplicity(p)\mu(p)$.  To have integer multiplicity means that pointwise $multiplicity(p) \in \mathcal{Z}$.  An example of this is illustrated by taking the real plane $\mathcal{R}^{2}$ and folding it in half.  Then the fold line has multiplicity one and all other points in the resulting half plane have multiplicity two.

Anyway, the utility of rectifiable currents lies in a result from general measure theory that states that one can integrate a smooth differential form $\phi$ over such a set $S$, and hence view $S$ as a current, or a linear functional on differential forms,

\begin{center}
$\phi \mapsto \int_{S}\phi$
\end{center}

One also has a boundary operator from $m$ to $m-1$-dimensional currents

\begin{center}
$(\partial S)(\phi) = S(d\phi)$
\end{center}

though $\partial S$ will not necessarily be rectifiable even if $S$ is.

One might wonder if rectifiable currents which satisfy the problem of least area possess any geometric significance.  The Allard Regularity theorem tells us that for $m \leq 6$, an $m$-dimensional area-minimising rectifiable current in $R^{m+1}$ is a smooth embedded manifold.  In fact this is a best possible estimate, since there are examples in the literature of area minimising rectifiable currents which admit codimension seven singularities.  This is related to the theorem of Frankel and Lawson, and my interest in extending it.

\begin{rmk} For a more careful definition of these objects, consider any  oriented $m$-dimensional rectifiable set $S$.  Let $\bar{S}(x)$ denote the element of the tangent cone at $x$.  Then for any differential $m$-form $\phi$, define

\begin{center} $S(\phi) = \int_{S} \mu(x) \bar{S}(x) \cdot \phi d\mathcal{H}^{m}$ \end{center}

where $\mu(x)$ is an associated multiplicity to the point $x$.  $S$ can then be naturally thought of as a rectifiable current (provided $S$ also has compact support). \end{rmk}

\subsection{The Rectifiability Theorem}


\begin{dfn} A current $T$ is rectifiable if $T = \tau(M,\theta,\chi)$ where $M$ is countably $n$-rectifiable, $H^{n}$-measurable, $\theta$ is a positive locally $H^{n}$-integrable function on $M$, and $\chi(x)$ orients the tangent cone $T_{x}M$ of $M$ for $H^{n}$-a.e. $x \in M$. \end{dfn}

This theorem essentially gives a condition for a current to be a rectifiable current.

\begin{thm} (32.1 \cite{[Si]}). Suppose $T \in D_{n}(U)$ is such that $M_{W}(T) < \infty$ and $M_{W}(\partial T) < \infty$ for all $W \subset \subset U$, and $\theta^{*n}(\mu_{T},x) > 0$ for $\mu_{T}$-a.e. $x \in U$. Then $T$ is rectifiable. \end{thm}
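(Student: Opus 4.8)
The plan is to argue by induction on the dimension $n$. First I would invoke the representation theorem for currents of locally finite mass: since $M_W(T) < \infty$ for all $W \subset \subset U$, there is a Radon measure $\mu_T$ on $U$ and a $\mu_T$-measurable $n$-vectorfield $\vec T$ with $|\vec T| = 1$ $\mu_T$-a.e.\ such that $T(\omega) = \int_U \langle \omega, \vec T\rangle \, d\mu_T$. Proving that $T$ is rectifiable then amounts to three things: that $\mu_T$ is carried by a countably $n$-rectifiable, $\mathcal{H}^n$-measurable set $M$; that $\mu_T = \theta \, \mathcal{H}^n|_M$ with $\theta$ positive and locally $\mathcal{H}^n$-integrable; and that $\vec T$ coincides $\mathcal{H}^n$-a.e.\ with a unit simple $n$-vector orienting the approximate tangent plane $T_x M$.

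For the base case $n = 0$ the boundary hypothesis is vacuous and $T$ is simply a locally finite signed measure; the hypothesis $\theta^{*0}(\mu_T, x) > 0$ for $\mu_T$-a.e.\ $x$ says exactly that $\mu_T(\{x\}) = \lim_{r \to 0} \mu_T(B_r(x)) > 0$ for $\mu_T$-a.e.\ $x$, i.e.\ $\mu_T$ is purely atomic, so $T = \sum_i \theta_i \, \delta_{x_i}$ with $\sum_i |\theta_i| < \infty$ locally, which is rectifiable. For the inductive step, assuming the theorem in dimension $n - 1$, I would slice $T$ by a linear function $u$ (a coordinate of an orthogonal projection), obtaining $(n-1)$-currents $\langle T, u, t\rangle$ for $t \in R$. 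This is precisely where the two finiteness hypotheses enter: slicing theory gives, for a.e.\ $t$, that $\langle T, u, t\rangle \in D_{n-1}(U)$ with $\int M_W(\langle T, u, t\rangle) \, dt \leq \mathrm{Lip}(u) \, M_{W'}(T) < \infty$, and, since $\partial \langle T, u, t\rangle = -\langle \partial T, u, t\rangle$, also $\int M_W(\partial \langle T, u, t\rangle) \, dt \leq C \, M_{W'}(\partial T) < \infty$; a covering argument transfers positivity of the lower density, so that $\theta^{*(n-1)}(\mu_{\langle T, u, t\rangle}, \cdot) > 0$ a.e.\ for a.e.\ $t$. By the inductive hypothesis, almost every slice is then rectifiable.

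It remains to reconstruct rectifiability of $T$ from that of its slices. Writing $\mathrm{spt} \, \mu_T = A \cup B$ with $A$ countably $n$-rectifiable and $B$ purely unrectifiable (the Structure Theorem), I would show $\mathcal{H}^n(B) = 0$: by the characterization of purely unrectifiable sets, $B$ projects to an $\mathcal{H}^n$-null set under almost every orthogonal projection, so $\mu_T|_B$ is invisible to almost every slicing; but if $\mathcal{H}^n(B) > 0$ then positivity of the lower density on $B$ together with the coarea estimate for $\int M_W(\langle T, u, t\rangle) \, dt$ would force $B$ to register in the slices, a contradiction. Hence $\mu_T = \theta \, \mathcal{H}^n|_A$ with $A$ rectifiable. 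Finally a blow-up at $\mathcal{H}^n$-a.e.\ point of $A$ --- rescalings of $T$ subconverge with controlled mass, and the limiting current has zero boundary mass because $M(\partial T) < \infty$ --- yields a constant-coefficient plane current $\theta(x) \, [T_x A]$, which identifies $\vec T(x)$ with $\pm$ the orienting $n$-vector of $T_x A$ and shows that $\theta$ is locally $\mathcal{H}^n$-integrable; thus $T = \tau(A, \theta, \vec T)$ is rectifiable.

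I expect the reconstruction step to be the main obstacle: deducing rectifiability of $T$ from rectifiability of enough of its slices genuinely requires the full strength of the Structure Theorem (the Besicovitch--Federer projection theorem), together with careful bookkeeping to verify that both finiteness hypotheses and the positive-lower-density hypothesis are inherited by almost every slice. The boundary-mass hypothesis is indispensable exactly here, since without it one can construct currents of finite mass with positive lower density $\mathcal{H}^n$-a.e.\ that fail to be rectifiable.
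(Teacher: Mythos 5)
The paper itself does not prove this theorem: it states it as Theorem 32.1 of Simon's lectures and moves on, so there is no proof in the paper to compare your proposal against. Judged on its own terms, your sketch assembles the right ingredients for the proof in the cited reference --- the integral representation $T(\omega)=\int_U\langle\omega,\vec{T}\rangle\,d\mu_T$, slicing estimates, the Besicovitch--Federer structure theorem, and a blow-up --- and you correctly single out the reconstruction step (eliminating the purely unrectifiable part) as the crux and the place where the boundary-mass hypothesis enters essentially.

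That said, three of the steps are asserted rather than established, and each conceals a genuine difficulty. First, the structure theorem applies only to sets on which $\mathcal{H}^n$ is ($\sigma$-)finite, so before writing $\mathrm{spt}\,\mu_T=A\cup B$ you must show $\mu_T$ is carried by such a set; this does follow from the lower-density hypothesis via the density comparison lemma ($\mathcal{H}^n(E_k\cap W)\leq c\,k\,M_W(T)$ for $E_k=\{\theta^{*n}(\mu_T,\cdot)\geq 1/k\}$), but you do not say so. Second, the claim that "a covering argument transfers positivity of the lower density" to the slices is not justified by the coarea estimate you quote, which controls the integrated slice mass $\int M_W(\langle T,u,t\rangle)\,dt$ but says nothing directly about the pointwise $(n-1)$-densities of the slice measures; passing density bounds to almost every slice is a nontrivial lemma in its own right. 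Third, finiteness of $M_W(\partial T)$ does not by itself make the boundary of the blow-up vanish; one needs, separately, that $\mu_{\partial T}$ has zero $(n-1)$-density at $\mu_T$-a.e.\ point, and even then the existence of a well-defined blow-up limit for a general current of locally finite mass and boundary mass requires care. None of these is a wrong turn, but each is a step whose absence leaves the argument well short of a proof.
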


\section{The Compactness Theorem for Integral Currents}


This theorem is a core result of geometric measure theory; it allows us to sensibly define convergence in certain special classes of geometric measure theoretic objects.

\begin{thm} (27.3 \cite{[Si]}).  If $\{T_{j}\} \subset D_{n}(U)$ is a sequence of integer multiplicity currents with

\begin{center} $sup_{j \geq 1}(M_{W}(T_{j}) + M_{W}(\partial T_{j})) < \infty$ $\forall W \subset \subset U$, \end{center}

then there is an integer multiplicity $T \in D_{n}(U)$ and a subsequence $\{T_{j'}\}$ such that $T_{j'} \rightarrow T$ in $U$. \end{thm}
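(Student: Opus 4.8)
The plan is to run the classical three-stage argument: extract a weakly convergent subsequence by soft duality, identify the limit as a current of locally finite mass and locally finite boundary mass, and then \emph{upgrade} this limit to an integer-multiplicity current using the Rectifiability Theorem (stated above as 32.1) together with a slicing argument. The only hypotheses used in the first two stages are the mass bounds; integrality of the $T_j$ is what makes the last stage go through.

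\textbf{Stage 1: weak compactness.} Fix an exhaustion $W_1 \subset\subset W_2 \subset\subset \cdots$ of $U$. On each $W_k$ the space of smooth compactly supported $n$-forms is separable and $D_n(W_k)$ is its dual, and the hypothesis $\sup_j M_{W_k}(T_j) < \infty$ says the $T_j$ are bounded in the dual norm. By the Banach--Alaoglu theorem and a diagonal argument over $k$ I extract a subsequence $\{T_{j'}\}$ with $T_{j'} \to T$ weakly in $D_n(U)$ for some current $T$. Since $\partial$ is weakly continuous and $\sup_j M_{W_k}(\partial T_j) < \infty$, also $\partial T_{j'} \to \partial T$ weakly.

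\textbf{Stage 2: the limit is rectifiable.} Mass is lower semicontinuous under weak convergence, so $M_W(T) \le \liminf_{j'} M_W(T_{j'}) < \infty$ and likewise $M_W(\partial T) < \infty$ for every $W \subset\subset U$; hence $T$ is represented by integration against a Radon measure $\mu_T$ with $\mu_{\partial T}$ also locally finite. To apply the Rectifiability Theorem I must check $\theta^{*n}(\mu_T, x) > 0$ for $\mu_T$-a.e. $x$. This is where the Deformation Theorem is invoked: applied to the $T_{j'}$ and their boundaries on a cubical grid of side $\rho$ near a point $x \in \mathrm{spt}\,\mu_T$, it produces the interior/isoperimetric estimate forcing any integer-multiplicity current whose support meets the inner half-ball to carry mass at least $c(n)\rho^n$ in $B_\rho(x)$; passing to the weak limit gives $\mu_T(B_\rho(x)) \ge c(n)\rho^n$ for all small $\rho$ and all such $x$, so $\theta^{*n}(\mu_T, x) \ge c(n) > 0$ on $\mathrm{spt}\,\mu_T$, which is $\mu_T$-a.e. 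By Theorem 32.1, $T = \tau(M,\theta,\chi)$ is rectifiable, with $M$ countably $n$-rectifiable and $\theta$ positive and locally $\mathcal{H}^n$-integrable.

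\textbf{Stage 3: the density is integer-valued.} For a generic smooth $f : U \to \mathcal{R}^n$ and $\mathcal{L}^n$-a.e.\ $y$, the slices $\langle T_{j'}, f, y\rangle$ are $0$-dimensional integer-multiplicity currents --- finite sums $\sum_i k_i \delta_{p_i}$ with $k_i \in \mathcal{Z}$ --- whose total variation is controlled in $L^1_{loc}$ in $y$ by $M(T_{j'})$ and $\mathrm{Lip}\,f$. Slicing is compatible with weak limits, so along a further subsequence $\langle T_{j'}, f, y\rangle \to \langle T, f, y\rangle$ for a.e.\ $y$; and a weak limit of $0$-currents of the form $\sum k_i \delta_{p_i}$, $k_i \in \mathcal{Z}$, with bounded total variation is again of that form, by discreteness of $\mathcal{Z}$. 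Hence $\langle T, f, y\rangle$ is integer-multiplicity for a.e.\ $y$, and the coarea/slicing identity then forces $\theta(x) \in \mathcal{Z}$ for $\mathcal{H}^n$-a.e.\ $x \in M$. Applying the same circle of ideas to $\partial T$ (legitimate because $M_W(\partial T) < \infty$ from Stage 2) shows $\partial T$ is integer-multiplicity as well; therefore $T$ is an integer-multiplicity current in $D_n(U)$ and $T_{j'} \to T$ in $U$, as claimed.

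\textbf{Main obstacle.} Stages 1 and 3 are essentially soft --- duality, lower semicontinuity, and bookkeeping with slices --- so the analytic heart is the density lower bound $\mu_T(B_\rho(x)) \ge c(n)\rho^n$ in Stage 2. That estimate genuinely requires the Deformation Theorem and cannot be obtained from the mass bounds alone; recovering integrality of $\theta$ via the behaviour of slices under weak limits is a secondary, more technical, difficulty.
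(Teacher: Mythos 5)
Your proposal is a reasonable sketch of the standard Federer--Fleming/Simon argument for the full compactness theorem, but the paper does not actually prove Theorem 27.3 at this level of generality. The text explicitly declines to do so -- ``the technical nature of it would disguise the main ideas'' -- and instead reduces to the codimension-one special case: it identifies the compactness theorem for $BV_{\mathrm{loc}}$ functions as the relevant model case (viewing a codimension-one current locally as the graph of a $BV$ function), proves that by mollification, the Arzel\`a--Ascoli circle of ideas, and an appeal to Gilbarg--Tr\"udinger's Theorem 7.22 for the totally-bounded-in-$L^1_{\mathrm{loc}}$ step, and then relies on the reader to extrapolate. Your route is strictly more general and gives the statement as stated; what you lose is that the heavy machinery (Deformation Theorem, Rectifiability Theorem 32.1, slicing) is used as a black box, whereas the paper's route, while proving less, is self-contained modulo the mollification lemma and the GT compactness result, and makes the ``compactness = precompactness in $L^1_{\mathrm{loc}}$ plus lower semicontinuity of mass'' mechanism visible.

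One caution about your Stage 2: the density lower bound $\mu_T(B_\rho(x)) \ge c(n)\rho^n$ for $x \in \operatorname{spt}\mu_T$ does not drop out directly from applying the Deformation Theorem to the $T_{j'}$ and passing to the limit, because (i) $x \in \operatorname{spt}\mu_T$ does not by itself put $x$ on or near $\operatorname{spt} T_{j'}$, and (ii) the isoperimetric-type lower mass bound for a fixed integer-multiplicity current requires controlling the boundary mass inside $B_\rho(x)$, which must be arranged by choosing generic radii and working on annuli. In Simon's actual proof the rectifiability and integrality of the limit are obtained together, by an induction on dimension through slicing (the $0$-dimensional case being elementary), rather than by first proving a density bound, then rectifiability via 32.1, then integrality of $\theta$ as a separate step. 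Your three-stage decomposition is a legitimate alternative organization (and closer to Federer's original), but Stage 2 as you have written it is the step that needs the most repair before it would pass as a proof rather than a plan.
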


The aim of this section will be to prove a limited version of this theorem.  The main reason I will not state a proof of the theorem in its full generality is essentially because the technical nature of it would disguise the main ideas underlying the establishment of this result.  Instead, I will in fact focus on providing a proof of the compactness theorem for BV functions stated earlier.  This has several advantages:

\begin{itemize}
\item[(i)] The compactness theorem for BV functions is in fact a special case (the codimension one case) of the above theorem, since a codimension one current can be viewed locally as a graph of a $BV_{loc}$ function,

\item[(ii)] Although this theorem is less general, it is considerably easier to prove and is a good model of the general case,

and

\item[(iii)] The theorem is readily extendible to the codimension $k$ case.
\end{itemize}

I will follow this proof with a focus on proving its main implication for the Plateau problem (and more general variational problems), namely that weak minimising solutions will exist to it.  

\begin{rmk} The above compactness theorem can be extended to a sequence of geometric measure theoretic objects, like currents (but not) of varying (not necessarily integer), but bounded multiplicities.  These beasts are known as \emph{varifolds}.  This allows one to apply this technology to such problems as variation of the scalar curvature on a surface, etc.  The proof of this result will be given in a later section. \end{rmk}

\subsection{Two important theorems from analysis}

\emph{Definition}.  A set of functions $F$ mapping from a space $X$ to a space $Y$ is said to be equicontinuous if for every $f \in F$, every $\epsilon > 0$, and every $x_{0} \in X$ there exists a $\delta$ such that if $\norm{x - x_{0}} < \delta$ then $\norm{f(x) - f(x_{0})} < \epsilon$.

Equicontinuity is stronger than uniform continuity, which in turn is stronger than continuity.

\emph{Definition}. A set of functions $F \subset C(X,Y)$ is called pointwise relatively compact if for all $x \in X$, the set $\{f(x) : f \in F \}$ is compact in $Y$.

I may now state the result:

\begin{thm} (Arzela-Ascoli). Suppose $S$ is a compact metric space.  Then a non-empty set $A \subset C(S)$ is totally bounded iff $A$ is bounded relative to the norm in $C(S)$ and equicontinuous.

\begin{proof} (lifted from J. Koliha's notes on analysis \cite{[Ko]}, Theorem 5.5.2).  Suppose $A$ is bounded and equicontinuous.  Then, for any fixed $\epsilon > 0$ there is a ball $B(a,\delta(a))$ for each $a \in S$ such that

\begin{center} $f \in A$ and $x \in B(a,\delta(a))$ implies $\Norm{f(x) - f(a)} < \frac{1}{3}\epsilon$ \end{center}

Since $S$ is compact, by the Heine Borel lemma there exist balls $B(x_{k},\delta(x_{k}))$ for $k = 1,..., N$ some finite $N$ that cover $S$.

Now define a mapping $P : A \rightarrow R^{N}$ setting $P(f) = (f(x_{1}),...,f(x_{N}))$.  I claim that $P(A)$ is bounded in $R^{N}$.  To see this, observe that since $A$ is bounded by hypothesis that there is a $c > 0$ such that

\begin{center} $\norm{f} \leq c$ for all $f \in A$ \end{center}

Hence, for each $f \in A$,

\begin{center} $\norm{P(f)}_{\infty} = max_{k}\Norm{f(x_{k})} \leq \norm{f} \leq c$ \end{center}

But bounded sets in $R^{N}$ are totally bounded, since $R^{N}$ is a complete metric space (once again we are using the Heine-Borel lemma).  So let $P(f_{1}),...,P(f_{p})$ be a $\frac{1}{3}\epsilon$-net for $P(A)$.  Then I claim that $f_{1},...,f_{p}$ is an $\epsilon$-net for $A$.

Suppose $f \in A$.  Then certainly $P(f) \in P(A)$, and there is a $f_{j}$ such that $\norm{P(f) - P(f_{j})}_{\infty} < \frac{1}{3}\epsilon$.  Recall also that for any $x$ there is $x_{k}$ with  $x \in B(x_{k},\delta_{k})$.  So

\begin{align}
\Norm{f(x) - f_{j}(x)} &\leq \Norm{f(x) - f(x_{k})} + \Norm{f(x_{k}) - f_{j}(x_{k})} + \Norm{f_{j}(x_{k}) - f_{j}(x)} \nonumber \\
&\leq \frac{1}{3}\epsilon + \norm{P(f) - P(f_{j})}_{\infty} + \frac{1}{3}\epsilon \nonumber \\
&< \frac{1}{3}\epsilon + \frac{1}{3}\epsilon + \frac{1}{3}\epsilon = \epsilon.
\end{align}

This is true for any $x \in S$, so $\norm{f - f_{j}} < \epsilon$ i.e. $A$ is totally bounded.

For the converse, suppose $A$ is totally bounded.  Fix $\epsilon > 0$.  Then there is an $\epsilon$-net $f_{1},...,f_{N}$ such that for any $f \in A$ there is an $f_{i}$ such that $\norm{f - f_{i}} < \epsilon$.  Since $S$ is compact, the functions $\Norm{f_{i}}$ are bounded from above by constants $C_{i}$.  Let $C = max_{i}C_{i}$.  Then

\begin{center} $\norm{f}_{\infty} \leq \norm{f - f_{i}}_{\infty} + \norm{f_{i}}_{\infty} < \epsilon + C < 2C$ \end{center}

for small enough $\epsilon$.  So we conclude that $A$ is bounded.

It remains to show that $A$ is equicontinous, that is, that for all $f \in A$ and all $x,y \in S$ there is a $\delta$ such that if $\Norm{x - y} < \delta$ then $\Norm{f(x) - f(y)} < \epsilon$.

Observe now that since $S$ is compact we may choose a $\delta$ small enough that this condition does hold for a finite number of functions, eg our $\epsilon$-net $f_{1},...,f_{N}$.  So then

\begin{center} $\Norm{f_{k}(x) - f_{k}(y)} < \epsilon$ for all $x,y \in S$ such that $\Norm{x - y} < \delta$ \end{center}

Finally

\begin{align}
\Norm{f(x) - f(y)} &\leq \Norm{f(x) - f_{k}(x)} + \Norm{f_{k}(x) - f_{k}(y)} + \Norm{f_{k}(y) - f(y)} \nonumber \\
&\leq \epsilon + \epsilon + \epsilon = 3\epsilon
\end{align}

so $A$ is equicontinuous as required.  This completes the proof.
\end{proof} \end{thm}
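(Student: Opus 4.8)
The plan is to prove the two implications of the equivalence separately, in each case using the $\frac{1}{3}\epsilon$ device together with the compactness of $S$. First I would treat the forward direction: assume $A$ is bounded and equicontinuous, and fix $\epsilon > 0$. Using equicontinuity I would produce, for each $a \in S$, a radius $\delta(a) > 0$ such that $\Norm{f(x) - f(a)} < \frac{1}{3}\epsilon$ whenever $x \in B(a,\delta(a))$ and $f \in A$ — the crucial point being that one and the same $\delta(a)$ works for every member of $A$. The balls $\{B(a,\delta(a)) : a \in S\}$ cover $S$, so by the Heine-Borel lemma I may extract a finite subcover $B(x_{1},\delta(x_{1})),\dots,B(x_{N},\delta(x_{N}))$.

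The key idea is then to collapse the infinite-dimensional problem to a finite-dimensional one via the evaluation map $P : A \rightarrow R^{N}$, $P(f) = (f(x_{1}),\dots,f(x_{N}))$. Boundedness of $A$ forces $P(A)$ to be bounded in $R^{N}$, hence totally bounded (bounded subsets of $R^{N}$ are totally bounded, again by Heine-Borel), so I can pick a finite $\frac{1}{3}\epsilon$-net $P(f_{1}),\dots,P(f_{p})$ for $P(A)$. I would then check that $f_{1},\dots,f_{p}$ is an $\epsilon$-net for $A$: given $f \in A$, choose $f_{j}$ with $\norm{P(f) - P(f_{j})}_{\infty} < \frac{1}{3}\epsilon$; for an arbitrary $x \in S$ pick $x_{k}$ with $x \in B(x_{k},\delta(x_{k}))$, insert $f(x_{k})$ and $f_{j}(x_{k})$, and bound each of the three resulting terms by $\frac{1}{3}\epsilon$ (the two outer ones by equicontinuity, the middle one by the net property). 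This yields $\norm{f - f_{j}} < \epsilon$, so $A$ is totally bounded.

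For the converse I would assume $A$ is totally bounded. Boundedness is immediate: a finite $1$-net $f_{1},\dots,f_{N}$ consists of continuous functions on the compact space $S$, hence each is bounded by some $C_{i}$, and then $\norm{f} \leq \norm{f - f_{i}} + \norm{f_{i}} < 1 + \max_{i} C_{i}$ for every $f \in A$. For equicontinuity I would take a finite $\frac{1}{3}\epsilon$-net $f_{1},\dots,f_{N}$; each $f_{i}$ is uniformly continuous on the compact metric space $S$, so there is a common $\delta > 0$ with $\Norm{f_{i}(x) - f_{i}(y)} < \frac{1}{3}\epsilon$ whenever $d(x,y) < \delta$, and one more $\frac{1}{3}\epsilon$-splitting through the nearest net element $f_{k}$ upgrades this to $\Norm{f(x) - f(y)} < \epsilon$ for all $f \in A$.

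The only genuinely delicate point — and the step I would be most careful about — is the passage in the forward direction from the pointwise statement of equicontinuity, where the radius $\delta(a)$ depends on the point $a$, to a usable finite list of radii; this is exactly where compactness of $S$ does the real work, via the finite subcover of the balls $B(a,\delta(a))$, while the uniformity over $A$ is supplied by equicontinuity itself. Everything else is routine bookkeeping with the triangle inequality and the elementary fact that bounded sets in $R^{N}$ are totally bounded.
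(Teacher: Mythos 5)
Your proposal is correct and follows essentially the same route as the paper's proof: the forward direction via the evaluation map $P:A\rightarrow R^{N}$ on a finite subcover of equicontinuity balls with the $\frac{1}{3}\epsilon$ triangle-inequality splitting, and the converse via a finite net together with uniform continuity of each net member. The only differences are cosmetic tidyings (a $1$-net for boundedness, a $\frac{1}{3}\epsilon$-net so the equicontinuity estimate lands on $\epsilon$ rather than $3\epsilon$), not a different argument.
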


The following corollary is of crucial importance in our development, and is often also called the Arzela-Ascoli theorem:

\begin{cor} Suppose $S$ is a compact metric space.  Then a nonempty set $A \subset C(S)$ is compact iff $A$ is bounded, closed and equicontinuous. \end{cor}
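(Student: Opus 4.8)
The plan is to reduce the corollary to the theorem just proved, together with two standard facts about complete metric spaces. First I would recall that since $S$ is compact, the space $C(S)$ equipped with the supremum norm is a complete metric space; this is the usual argument that a uniform limit of continuous functions on a compact domain is again continuous, so Cauchy sequences in $C(S)$ converge in $C(S)$. Second, I would invoke the general metric-space characterisation of compactness: a subset of a complete metric space is compact if and only if it is closed and totally bounded. One direction of this is valid in any metric space (a compact set is closed and totally bounded); the other uses that a closed subset of a complete space is itself complete, and that a complete, totally bounded metric space is compact (e.g.\ by a diagonal/sequential-compactness argument, or one may simply cite \cite{[Ko]}).

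With these in hand the corollary is almost immediate. Let $A \subset C(S)$ be nonempty. If $A$ is compact, then it is closed and totally bounded in $C(S)$; by the Arzela--Ascoli theorem above, total boundedness of $A$ is equivalent to $A$ being bounded and equicontinuous, so $A$ is bounded, closed and equicontinuous. Conversely, suppose $A$ is bounded, closed and equicontinuous. By Arzela--Ascoli, $A$ is totally bounded; and being a closed subset of the complete space $C(S)$, it is complete. A complete, totally bounded set is compact, so $A$ is compact. This proves the equivalence in both directions.

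I do not anticipate a genuine obstacle here, since essentially all the work is already carried out in the preceding theorem. The only point needing a word of care is the equivalence ``compact $\iff$ closed and totally bounded'' in a complete metric space, which I would either prove in a line via sequential compactness or cite; and one should note that ``closed'' here means closed in $C(S)$, which is exactly what is available since $A$ is regarded as a subset of $C(S)$ throughout. Thus the corollary follows directly from Arzela--Ascoli and the completeness of $C(S)$.
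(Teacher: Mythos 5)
Your proposal is correct, and it is exactly the standard deduction: the paper actually states this corollary without proof, treating it as an immediate consequence of the preceding total-boundedness theorem. Your reduction via the completeness of $C(S)$ and the fact that, in a complete metric space, compact $\iff$ closed and totally bounded supplies precisely the missing glue, and there is nothing to quibble with.
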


This corollary is important in any application where one wants to show existence of a convergent subsequence of a set of functions.  Hence it has obvious applications in geometric measure theory, and, more generally, in existence theory for PDE.

A more general version of the theorem also holds:

\begin{thm} (Ascoli-Arzela). Let $X$ and $Y$ be metric spaces, $X$ compact.  Then a subset $F$ of $C(X,Y)$ is compact iff it is equicontinuous, pointwise relatively compact and closed. \end{thm}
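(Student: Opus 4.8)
The plan is to prove the two implications separately, working in $C(X,Y)$ equipped with the uniform distance $\rho(f,g) = \sup_{x \in X} d_Y(f(x),g(x))$, which is finite because $X$ is compact and $f,g$ are continuous. Throughout I would use that a compact metric space is separable. I would \emph{not} assume $Y$ complete: the role of the pointwise-relative-compactness hypothesis is precisely to supply the missing completeness locally, so that Cauchy sequences of values actually converge. For the easy direction, suppose $F$ is compact. Then $F$ is closed, since compact subsets of a metric space are closed. Each evaluation map $e_x : C(X,Y) \to Y$, $f \mapsto f(x)$, is $1$-Lipschitz hence continuous, so $e_x(F) = \{f(x) : f \in F\}$ is compact (hence has compact closure) in $Y$, giving pointwise relative compactness. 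For equicontinuity at $x_0$ with $\epsilon > 0$, cover $F$ by finitely many $\rho$-balls $B(f_i,\epsilon/3)$, choose $\delta_i$ with $d_X(x,x_0) < \delta_i \Rightarrow d_Y(f_i(x),f_i(x_0)) < \epsilon/3$ using continuity of each $f_i$, and set $\delta = \min_i \delta_i$; a three-term triangle estimate through the nearest $f_i$ then yields $d_Y(f(x),f(x_0)) < \epsilon$ for every $f \in F$ whenever $d_X(x,x_0) < \delta$.

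For the converse, assume $F$ is equicontinuous, pointwise relatively compact, and closed, and show $F$ is sequentially compact (equivalent to compact in a metric space). First I would upgrade equicontinuity to \emph{uniform} equicontinuity on $X$: for fixed $\epsilon$ the pointwise moduli $\delta(x)$ give an open cover $\{B(x,\delta(x)/2)\}$ of $X$; pass to a finite subcover and take the minimum of the corresponding radii. Next, fix a sequence $(f_n) \subset F$ and a countable dense set $D = \{x_1,x_2,\dots\} \subset X$. Using pointwise relative compactness, the values $(f_n(x_1))$ lie in a compact set, so a subsequence converges; iterating and diagonalising, extract a single subsequence $(g_k)$ for which $(g_k(x_j))_k$ converges in $Y$ for every $j$. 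I would then show $(g_k)$ is uniformly Cauchy: given $\epsilon$, take $\delta$ from uniform equicontinuity, cover $X$ by finitely many $\delta$-balls centred at points of $D$, pick $K$ so that the finitely many sequences $(g_k$ evaluated at those centres$)$ are within $\epsilon/3$ for all $k,l \ge K$, and conclude $\rho(g_k,g_l) < \epsilon$ by the usual three-term estimate.

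Finally, for each $x$ the sequence $(g_k(x))$ is Cauchy in the compact — hence complete — set $\overline{\{f(x) : f \in F\}}$, so it converges to some $g(x) \in Y$; uniform Cauchyness together with this pointwise limit forces $g_k \to g$ uniformly, so $g$ is continuous, i.e. $g \in C(X,Y)$, and closedness of $F$ gives $g \in F$. Hence every sequence in $F$ has a subsequence converging in $F$, and $F$ is compact. I expect the main obstacle to be the bookkeeping in the diagonal-plus-equicontinuity step — correctly combining convergence on the dense set with uniform equicontinuity to produce genuine \emph{uniform} convergence — together with the subtlety that the non-completeness of $Y$ must be absorbed by the pointwise-relative-compactness hypothesis rather than assumed away; the Arzela--Ascoli theorem for $C(S)$ proved earlier in the excerpt is essentially this argument in the special case $Y = \mathcal{R}$, and can be invoked as a template.
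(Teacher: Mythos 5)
Your proof is correct and complete. What it is worth noting is that the paper does not actually prove this general statement: it only states it and remarks that the earlier $C(S)$ proof (the theorem labelled "Arzela-Ascoli" with $A \subset C(S)$) "is not too hard" to extend. That earlier proof is a total-boundedness/$\epsilon$-net argument: one evaluates at finitely many points to get a map $P : A \to \mathcal{R}^N$, uses Heine--Borel to conclude $P(A)$ is totally bounded, and transfers a finite $\epsilon$-net back to $A$ via the three-term estimate. To extend that route to a general metric target $Y$ one must replace the Heine--Borel step by the pointwise-relative-compactness hypothesis, which makes $\prod_i \overline{\{f(x_i) : f \in F\}}$ compact (hence totally bounded), and then proceed identically; one also has to add an appeal to completeness of the orbit closures to pass from total boundedness to compactness. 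You instead gave the diagonal-subsequence proof: separability of $X$, pointwise relative compactness, and a diagonal extraction give a subsequence converging on a dense set; uniform equicontinuity (which you correctly derived from pointwise equicontinuity plus compactness of $X$, taking $\delta(x)/2$-balls and a finite subcover) upgrades this to uniform Cauchy; completeness of the pointwise orbit closures and closedness of $F$ then place the uniform limit in $F$. Both arguments hinge on the same $\epsilon/3$ triangle estimate, and both absorb the possible incompleteness of $Y$ into the pointwise-relative-compactness hypothesis exactly as you observe, so the two approaches are genuinely different at the level of mechanism (net-counting versus subsequence extraction) but essentially equal in content. The only small inaccuracy in your write-up is the closing remark that the paper's $C(S)$ proof "is essentially this argument" — it is actually the $\epsilon$-net version, not the diagonal one — but that is an observation about the paper rather than a gap in your proof.
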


\begin{rmk}  Note that the theorem I proved earlier was essentially for the case $Y = R^{n}$, but extending the proof to the general case is not too hard. \end{rmk}

I now remind the reader of a theorem I mentioned earlier when I was talking about Sobolev spaces:

\begin{thm} (The Riesz Representation Theorem). For every bounded linear functional $F$ on a Hilbert space $\mathcal{H}$, there is a uniquely determined element $f \in \mathcal{H}$ such that $F(x) = (x,f)$ for all $x \in \mathcal{H}$ and $\norm{F} = \norm{f}$. \end{thm}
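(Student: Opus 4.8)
The plan is to reprise the orthogonal-complement argument already used when this theorem first appeared in the treatment of Sobolev spaces. First I would dispose of the trivial case: if the null space $\mathcal{N} = \{x \in \mathcal{H} : F(x) = 0\}$ equals all of $\mathcal{H}$, then $F \equiv 0$ and $f = 0$ works, with $\norm{F} = \norm{f} = 0$.

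Otherwise $\mathcal{N} \neq \mathcal{H}$. The key structural fact I would invoke is that $\mathcal{N}$, being the kernel of a bounded (hence continuous) linear functional, is a \emph{closed} subspace of $\mathcal{H}$, so that $\mathcal{H}$ splits as $\mathcal{N} \oplus \mathcal{N}^{\perp}$ and there is some $z \neq 0$ with $(x,z) = 0$ for all $x \in \mathcal{N}$. With such a $z$ fixed (note $F(z) \neq 0$, since $z \notin \mathcal{N}$), I observe that for any $x \in \mathcal{H}$ the vector $x - \tfrac{F(x)}{F(z)}z$ lies in $\mathcal{N}$ and is therefore orthogonal to $z$; expanding the relation $\left(x - \tfrac{F(x)}{F(z)}z,\, z\right) = 0$ yields $(x,z) = \tfrac{F(x)}{F(z)}\norm{z}^{2}$, so that $F(x) = (x,f)$ with $f = zF(z)/\norm{z}^{2}$. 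This establishes existence.

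Uniqueness is immediate: if $(x,f) = (x,g)$ for every $x$, then $(x, f-g) = 0$ for every $x$, and taking $x = f - g$ forces $\norm{f-g} = 0$, i.e. $f = g$, by inner-product axiom (iii). For the norm identity, the Schwarz inequality (\ref{Schwarz}) gives $\norm{F} = \sup_{x \neq 0} |(x,f)|/\norm{x} \le \norm{f}$, while $\norm{f}^{2} = (f,f) = F(f) \le \norm{F}\,\norm{f}$ gives the reverse inequality; hence $\norm{F} = \norm{f}$.

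The one genuinely nontrivial ingredient is the orthogonal decomposition $\mathcal{H} = \mathcal{N} \oplus \mathcal{N}^{\perp}$ for the closed subspace $\mathcal{N}$ — equivalently, the existence of a nonzero vector orthogonal to $\mathcal{N}$ — and this is the step I expect to be the main obstacle. It is precisely here that the completeness of $\mathcal{H}$ is used: one obtains it from the closest-point projection onto the closed convex set $\mathcal{N}$, proving via the parallelogram law that a distance-minimizing sequence is Cauchy and hence convergent in $\mathcal{H}$, and then checking that the error vector is orthogonal to $\mathcal{N}$. Every other step is routine manipulation with the inner product, so if one is willing to take this projection theorem as known, the argument above is essentially complete.
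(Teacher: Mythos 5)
Your proposal is correct and follows essentially the same orthogonal-complement argument as the paper: dispose of the $\mathcal{N} = \mathcal{H}$ case, find $z \perp \mathcal{N}$, note $x - \tfrac{F(x)}{F(z)}z \in \mathcal{N}$, solve for $(x,z)$, and then prove uniqueness and the norm identity exactly as you do. The only difference is that you flag explicitly that the existence of the orthogonal vector $z$ rests on completeness via the closest-point projection onto the closed subspace $\mathcal{N}$, a step the paper takes for granted.
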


This result allows one, in certain circumstances, to show certain notions are well defined.  It will have particular power in the next section, in which I shall make certain comments on Radon measures.

\subsection{Remarks on Radon Measures}

Recall that a measure $\mu$ on a space $X$ is Radon if it is Borel regular and if it is finite on compact subsets.  The importance of Radon measures comes mainly from the following fact:

\begin{thm}  There is a one to one correspondence between $\mu$-measurable functions $\nu : X \rightarrow H$ satisfying $\norm{\nu} = 1$ $\mu$ a.e. and linear functionals $L : K(X,H)$ where $H$ is any Hilbert space and $K(X,H)$ is the space of continuous functions from $X$ to $H$, provided that the functionals $L$ satisfy the finiteness condition

\begin{center} $sup \{ L(f) : f \in K(X,H), \norm{f} \leq 1, spt (f) \subset P\} < \infty$, for each compact $P \subset X$. \end{center} \end{thm}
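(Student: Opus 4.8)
The plan is to prove this as the Hilbert-space-valued Riesz representation theorem: one direction is a routine estimate, the other is the genuine construction, and bijectivity follows once we observe that $L$ determines $\mu$ intrinsically as a total variation and then determines $\nu$ up to a $\mu$-null set.

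First I would dispose of the easy direction. Given a Radon measure $\mu$ on $X$ and a $\mu$-measurable $\nu:X\to H$ with $\norm{\nu}=1$ $\mu$-a.e., define $L(f)=\int_X\langle f,\nu\rangle\,d\mu$ for $f\in K(X,H)$. Linearity is immediate, and if $\mathrm{spt}\,f\subset P$ with $P$ compact then $|L(f)|\le\int_P\norm{f}\,\norm{\nu}\,d\mu=\int_P\norm{f}\,d\mu\le\mu(P)\sup\norm{f}<\infty$, since a Radon measure is finite on compacta; this is exactly the finiteness condition in the statement.

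For the reverse direction I would first build the measure. For open $V\subset X$ put $\mu(V)=\sup\{L(f):f\in K(X,H),\ \mathrm{spt}\,f\subset V,\ \norm{f}\le1\}$ and for arbitrary $A\subset X$ put $\mu(A)=\inf\{\mu(V):A\subset V\text{ open}\}$. Using partitions of unity subordinate to covers one checks that $\mu$ is a monotone, countably subadditive outer measure; that two sets a positive distance apart can be separated by test functions supported in disjoint neighbourhoods, so by Carath\'eodory's criterion every Borel set is $\mu$-measurable; and that $\mu$ is Borel regular and finite on compacta, i.e. Radon --- here the hypothesised finiteness of $L$ is exactly what gives $\mu(P)<\infty$. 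Next comes the key inequality $|L(f)|\le\int_X\norm{f}\,d\mu$ for all $f\in K(X,H)$; granting this, $L$ extends to a bounded linear functional of norm $\le1$ on the closure of $K(X,H)$ in $L^1(\mu;H)$, hence to all of $L^1(\mu;H)$, and since $H$ is a Hilbert space its dual is $L^\infty(\mu;H)$, so there is $\nu\in L^\infty(\mu;H)$ with $\norm{\nu}_\infty\le1$ and $L(f)=\int_X\langle f,\nu\rangle\,d\mu$. Finally, if $\norm{\nu}\le1-\varepsilon$ on a Borel set $A$ with $\mu(A)>0$, choose an open $V\supset A$ with $\mu(V)<\mu(A)+\delta$ and $f$ with $\mathrm{spt}\,f\subset V$, $\norm f\le1$, $L(f)>\mu(V)-\delta$; but $L(f)\le\mu(V\setminus A)+(1-\varepsilon)\mu(A)=\mu(V)-\varepsilon\mu(A)$, a contradiction for $\delta$ small, so $\norm{\nu}=1$ $\mu$-a.e., and redefining $\nu$ on a null set makes $\norm{\nu}\equiv1$.

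For the one-to-one claim, note that $\mu$ is recovered intrinsically from $L$ by the displayed supremum formula, so two representing pairs must share the same $\mu$; and once $\mu$ is fixed, $\nu$ is determined $\mu$-a.e. by $L(f)=\int\langle f,\nu\rangle\,d\mu$ together with density of $K(X,H)$ in $L^1(\mu;H)$, while conversely distinct pairs $(\mu,\nu)$ clearly produce distinct functionals by testing against suitable $f$. The main obstacle is the construction step: verifying Borel regularity and Carath\'eodory measurability of the outer measure associated to $L$, together with the duality $L^1(\mu;H)^*\cong L^\infty(\mu;H)$, which is where one genuinely needs $H$ to be Hilbert (so that $H^*\cong H$ and the scalar Radon--Nikod\'ym theorem applies) rather than an arbitrary Banach space.
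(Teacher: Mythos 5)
Your proof is correct and fills in the argument the paper merely gestures at: the paper's own ``proof'' is a single sentence citing the Riesz representation theorem, whereas you actually construct the variation measure $\mu$ from $L$ via the supremum over test functions, establish the master estimate $|L(f)|\leq\int\norm{f}\,d\mu$, and then represent $L$ by a density $\nu$. This is exactly the Federer--Simon route, which is the relevant one here since the paper is surveying Simon's GMT notes.

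One place to be slightly careful is the appeal to the duality $L^{1}(\mu;H)^{*}\cong L^{\infty}(\mu;H)$: that identification requires $\mu$ to be $\sigma$-finite (Radon on a $\sigma$-compact space is fine) and rests on $H$ having the Radon--Nikod\'ym property, and for nonseparable $H$ the representative you extract is only essentially separably valued. A route that sidesteps vector-valued duality entirely, and is what Simon actually does, is to apply the \emph{scalar} Riesz theorem to $\phi\mapsto L(\phi e)$ for each unit vector $e$ (or a countable dense set thereof), obtain signed Radon measures $\mu_{e}$ with $|\mu_{e}|\leq\mu$, take Radon--Nikod\'ym derivatives $\nu_{e}=d\mu_{e}/d\mu$, and assemble these into $\nu$ componentwise. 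Either route then gives $\norm{\nu}\leq1$ a.e., and your open-set-plus-$\varepsilon$ contradiction argument correctly upgrades this to $\norm{\nu}=1$ a.e. Your observation that $\mu$ is recovered intrinsically from $L$ via the supremum formula is also the right way to see the correspondence is one-to-one, though making it fully rigorous requires a short Lusin-type approximation argument showing that, for $L$ built from $(\mu,\nu)$, the supremum actually attains $\mu(V)$.
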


\begin{proof} The correspondence is essentially achieved via use of the Riesz representation theorem.  Let $<.,.>$ be the inner product on $H$.  Then, for any $L$, there is a $\nu$, and vice versa, such that

\begin{center} $L(f) = \int_{X}<f,\nu>d\mu$. \end{center}

\end{proof}

\begin{cor} As a consequence of the above, we can identify the Radon measures on $X$ with the non-negative linear functionals on $K(X,R)$.  By abuse of notation, define from now on $\mu : K(X,R) \rightarrow R$ as $\mu(f) = \int_{X}fd\mu$. \end{cor}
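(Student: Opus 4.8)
The plan is to specialise the preceding theorem to the Hilbert space $H = R$ and then cut the signed coefficient function $\nu$ down to the constant $1$ using non-negativity. So let $L$ be a non-negative linear functional on $K(X,R)$. The first task is to check that $L$ automatically satisfies the local finiteness hypothesis of the previous theorem. Given a compact $P \subset X$, a standard Urysohn-type construction yields $g \in K(X,R)$ with $0 \leq g \leq 1$ and $g \equiv 1$ on a neighbourhood of $P$; then whenever $\norm{f} \leq 1$ and $\mathrm{spt}\,f \subset P$ we have $-g \leq f \leq g$ pointwise, so linearity and non-negativity give $|L(f)| \leq L(g)$, a finite constant depending only on $P$. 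Thus the hypotheses of the theorem are met.

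Applying the theorem (with the Hilbert space $R$, whose inner product is ordinary multiplication) produces a Radon measure $\mu$ on $X$ and a $\mu$-measurable $\nu : X \to R$ with $|\nu| = 1$ $\mu$-a.e.\ such that $L(f) = \int_X f\nu\,d\mu$ for all $f \in K(X,R)$. The claim is that $\nu = 1$ $\mu$-a.e. If not, the Borel set $E = \{\nu = -1\}$ has positive measure, so by inner regularity there is a compact $K \subset E$ with $\mu(K) > 0$; by outer regularity choose open sets $U_1 \supset U_2 \supset \cdots \supset K$ with $U_1$ of compact closure and $\mu(U_j) \to \mu(K)$, and Urysohn functions $f_j \in K(X,R)$ with $1_K \leq f_j \leq 1_{U_j}$. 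Then $f_j \to 1_K$ $\mu$-a.e., all $f_j$ are dominated by $1_{\overline{U_1}}$ (which is $\mu$-integrable since $\mu$ is finite on compacts), and dominated convergence gives $L(f_j) = \int_X f_j\nu\,d\mu \to \int_K \nu\,d\mu = -\mu(K) < 0$, contradicting $L(f_j) \geq 0$. Hence $L(f) = \int_X f\,d\mu$, the asserted formula.

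It remains to confirm that $\mu \mapsto (f \mapsto \int_X f\,d\mu)$ is a genuine bijection onto the non-negative linear functionals satisfying the finiteness condition. Surjectivity is what was just proved. For injectivity, if two Radon measures $\mu_1,\mu_2$ satisfy $\int_X f\,d\mu_1 = \int_X f\,d\mu_2$ for all $f \in K(X,R)$, then approximating $1_U$ from below by an increasing sequence of such $f$ and using monotone convergence gives $\mu_1(U) = \mu_2(U)$ for every open $U$, whence $\mu_1 = \mu_2$ on all Borel sets by Borel regularity. Conversely, any Radon measure plainly defines a non-negative linear functional, and the finiteness condition holds precisely because $\mu$ is finite on compacts. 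This justifies writing $\mu(f) := \int_X f\,d\mu$.

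The step I expect to be the main obstacle is forcing $\nu \equiv 1$, together with the regularity bookkeeping it requires: one must manufacture continuous compactly supported functions approximating $1_K$ in a \emph{dominated} fashion, which quietly invokes both the inner regularity on positive-measure Borel sets and the outer regularity on open sets that are bundled into the word ``Radon'', and one must check that the dominating function is integrable against the (a priori only locally finite) measure $\mu$. None of the individual approximation arguments is deep, but arranging them to line up is where the care lies.
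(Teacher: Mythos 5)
Your proof is correct, and since the paper states this corollary without giving any argument at all, what you have done is supply exactly the missing deduction from the preceding representation theorem: specialise to the Hilbert space $H = R$, verify the local-finiteness hypothesis via a Urysohn function, and use non-negativity of $L$ plus the inner/outer regularity bundled into "Radon" to force the sign density $\nu$ to equal $1$ a.e.\ rather than merely satisfy $|\nu| = 1$. This is the natural route and surely the one the author has in mind.

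Two small points worth being aware of, neither a gap. First, your Urysohn constructions and the approximation of $1_U$ by an increasing sequence in $K(X,R)$ quietly assume that $X$ is locally compact Hausdorff; the paper never states this, but the entire Riesz--Radon framework it is invoking requires it, so you are not smuggling in anything extra. Second, in the step where you pick a compact $K \subset E = \{\nu = -1\}$ of positive measure, $\nu$ is only $\mu$-measurable rather than Borel, so strictly one should first intersect $E$ with a set of finite measure before invoking inner regularity by compacts; this is routine and does not affect the argument. Your closing remark correctly identifies the forcing of $\nu \equiv 1$ as the only step with real content — the rest is bookkeeping that the paper declines to record.
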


The following theorem is important for the development to follow.

\begin{thm} Let $\{ \mu_{k} \}$ be a sequence of Radon measures on a space $X$.  Suppose $sup_{k}\mu_{k}(U) < \infty$ for every set $U \subset X$ with compact closure.  Then we may conclude that there is a subsequence $\{ \mu_{k'} \}$ which converges to a Radon measure $\mu$ on $X$ in the sense that

\begin{center} $lim \mu_{k'}(f) = \mu(f)$ for each $f \in K(X,R)$; \end{center}

in other words, the associated functionals converge.
\end{thm}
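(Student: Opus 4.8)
The plan is to combine a diagonal subsequence extraction with the Riesz correspondence already in hand (the Corollary identifying Radon measures with non-negative linear functionals on $K(X,\mathbb{R})$ satisfying the local finiteness condition). I assume throughout, as is implicit in the setup, that $X$ is locally compact and $\sigma$-compact; fix an exhaustion $K_{1} \subset K_{2} \subset \cdots$ by compact sets with $K_{j} \subset \mathrm{int}\,K_{j+1}$ and $\bigcup_{j} K_{j} = X$. Since each space $\{ f \in K(X,\mathbb{R}) : \mathrm{spt}\,f \subset K_{j+1} \}$ is separable in the uniform norm, pick a countable uniformly dense subset $\mathcal{D}_{j}$ and set $\mathcal{D} = \bigcup_{j}\mathcal{D}_{j} = \{f_{1},f_{2},\dots\}$, a countable family.

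First I would fix $f \in \mathcal{D}$ with $\mathrm{spt}\,f \subset K_{j}$ and note $|\mu_{k}(f)| = \bigl|\int_{X} f\,d\mu_{k}\bigr| \le \|f\|_{\infty}\,\mu_{k}(K_{j}) \le \|f\|_{\infty}\sup_{k}\mu_{k}(U)$ for any precompact open $U \supset K_{j}$, which is finite by hypothesis; so $(\mu_{k}(f))_{k}$ is a bounded real sequence. Applying Bolzano--Weierstrass successively to $f_{1},f_{2},\dots$ and diagonalising in the usual way produces one subsequence $\{\mu_{k'}\}$ for which $\lim_{k'}\mu_{k'}(f_{i})$ exists for every $i$. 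Next I would upgrade this to all of $K(X,\mathbb{R})$: given $f \in K(X,\mathbb{R})$ with $\mathrm{spt}\,f \subset K_{j}$ and $\epsilon > 0$, choose $g \in \mathcal{D}_{j}$ with $\|f-g\|_{\infty} < \epsilon$ and $\mathrm{spt}\,g \subset K_{j+1}$; writing $M_{j} := \sup_{k}\mu_{k}(U_{j}) < \infty$ for a fixed precompact open $U_{j} \supset K_{j+1}$, we get $|\mu_{k'}(f) - \mu_{k'}(g)| \le \|f-g\|_{\infty}M_{j} < \epsilon M_{j}$ \emph{uniformly in} $k'$. A standard $\epsilon/3$ argument then shows $(\mu_{k'}(f))_{k'}$ is Cauchy, hence convergent; define $L(f) := \lim_{k'}\mu_{k'}(f)$.

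It remains to identify $L$ with a Radon measure. Linearity of $L$ is inherited from the $\mu_{k'}$ and from limits; non-negativity ($L(f) \ge 0$ when $f \ge 0$) holds because each $\mu_{k'}(f) \ge 0$; and for any compact $P \subset X$, say $P \subset K_{j+1}$, and any $f$ with $\|f\|_{\infty} \le 1$ and $\mathrm{spt}\,f \subset P$ we have $|L(f)| \le M_{j}$, so $L$ satisfies the local finiteness hypothesis of the Corollary. Applying that Corollary (equivalently, the Riesz correspondence of the preceding theorem) yields a Radon measure $\mu$ on $X$ with $\mu(f) = L(f) = \lim_{k'}\mu_{k'}(f)$ for every $f \in K(X,\mathbb{R})$, which is exactly the asserted conclusion.

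The main obstacle is the third step, the passage from convergence on the countable set $\mathcal{D}$ to convergence on all of $K(X,\mathbb{R})$: smallness of $\|f-g\|_{\infty}$ alone does \emph{not} control $|\mu_{k'}(f-g)|$ uniformly in $k'$, so one must simultaneously confine the support of the approximant $g$ to a fixed compact set on which the total masses $\mu_{k'}$ are uniformly bounded. This is precisely why the exhaustion $\{K_{j}\}$ and the stratified dense family $\mathcal{D} = \bigcup_{j}\mathcal{D}_{j}$ are built in from the outset; once that bookkeeping is arranged the argument is the routine diagonal-extraction-plus-Riesz scheme. (If one prefers not to assume $\sigma$-compactness of $X$, the same proof runs on an increasing sequence of precompact open sets exhausting the union of the supports actually occurring, and the resulting local limit functionals are glued together using the uniqueness part of the Riesz correspondence.)
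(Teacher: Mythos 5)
The paper states this theorem without proof, so there is no ``paper's own proof'' to compare against; I will evaluate your argument on its own terms.

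Your proof is correct, and it is the standard argument (essentially what one finds in Simon's \emph{Lectures on Geometric Measure Theory}, which is the paper's source for this material): diagonal extraction over a countable dense family of test functions, followed by an $\epsilon/3$ upgrade to all of $K(X,\mathbb{R})$, followed by the Riesz correspondence to identify the limit functional with a Radon measure. You have also correctly isolated the one genuinely delicate point: pointwise convergence on a uniformly dense family does not by itself propagate, because $\sup_{k}\mu_{k}(X)$ may be infinite; one must ensure the approximant $g$ shares a fixed compact support window with $f$ so that the uniform local mass bound $M_{j}$ controls $|\mu_{k'}(f-g)|$. Your stratified family $\mathcal{D} = \bigcup_{j}\mathcal{D}_{j}$ built from an exhaustion of $X$ is exactly the right device for this. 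Two minor bookkeeping remarks, neither of which affects correctness: (i) you implicitly use that each $K_{j+1}$ is metrizable (or at least that $C(K_{j+1})$ is separable) when you pick the countable dense sets $\mathcal{D}_{j}$; this is harmless since the paper is working with Radon measures on Euclidean or metric spaces throughout, but it is worth saying aloud, as you partly do with the $\sigma$-compactness caveat. (ii) In the bound $|\mu_{k}(f)| \le \|f\|_{\infty}\,\mu_{k}(K_{j})$ it would be cleaner to write $|\mu_{k}(f)| \le \int_{K_{j}} |f|\,d\mu_{k} \le \|f\|_{\infty}\,\mu_{k}(K_{j})$, but the inequality you want is obviously true. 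The final appeal to the paper's Corollary (the Riesz identification of non-negative, locally finite linear functionals on $K(X,\mathbb{R})$ with Radon measures) closes the argument cleanly.
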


\subsection{Mollification}

\begin{dfn}  A symmetric mollifier is a function $\phi \in C^{\infty}_{c}(R^{n})$, $\phi \geq 0$, with support$(\phi) \subset B_{1}(0)$, such that $\int_{R^{n}}\phi = 1$ and $\phi(x) = \phi(-x)$. \end{dfn}

Now let $u$ be a $L^{1}_{loc}$ function.  We define $\phi_{\sigma}(x) = \sigma^{-n}\phi(\frac{x}{\sigma})$, where $\phi$ is a symmetric mollifier.  Then $u^{\sigma} = \phi_{\sigma} \circ u$ are the corresponding mollified functions.

The key idea of mollifying a function is to construct a sequence of nicer functions converging to it (in particular, smooth functions converging to something which is not guaranteed to be smooth), and conclude properties of the limit from results for the mollified entities.  This has powerful applications, because it is possible to do analysis on mollified functions (differentiation, integration, etc) but not necessarily on the more pathological entities we are interested in.

Observe that we may equivalently define the mollification of $u$ as

\begin{center} $u_{h}(x) = h^{-n}\int_{U}\phi(\frac{x - y}{h})u(y)dy$ \end{center}

for an appropriate symmetric mollifier $\phi$.  Recall also H\"older's inequality:

\begin{center} $\int_{U}uvdx \leq \norm{u}_{p}\norm{v}_{q}$ \end{center}

where $\frac{1}{p} + \frac{1}{q} = 1$.

An appropriate symmetric mollifier $\phi$ might well be $\phi(x) = cexp(\frac{1}{\Norm{x}^{2} - 1})$ for $\Norm{x} \leq 1$ and zero otherwise, for a choice of a constant $c$ such that $\int \phi dx = 1$.

The following result is mentioned in Leon Simon's book, and also in Gilbarg and Trudinger's text.  It is a standard result about mollification.  First, however, we will need an auxiliary lemma, which is essentially lemma 7.1 in \cite{[GT]}:

\begin{lem} (Auxiliary Lemma). Suppose $u \in C^{0}(U)$.  Then $u_{h}$ converges to $u$ uniformly on any domain $\Omega \subset \subset U$. \end{lem}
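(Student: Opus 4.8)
The plan is to reduce the statement to the uniform continuity of $u$ on a fixed compact set slightly larger than $\Omega$, after rewriting the mollification as an average over the unit ball. First I would change variables in the defining integral: with $z = (x-y)/h$ one gets
\[
u_h(x) = \int_{B_1(0)} \phi(z)\, u(x - hz)\, dz ,
\]
valid for every $x$ with $\mathrm{dist}(x,\partial U) > h$, since then the ball $\{y : |x-y|\le h\}$ on which $\phi((x-y)/h)$ is supported lies inside $U$. Because $\phi\ge 0$ and $\int_{B_1(0)}\phi = 1$, subtracting the trivial identity $u(x) = \int_{B_1(0)}\phi(z)u(x)\,dz$ gives the key representation
\[
u_h(x) - u(x) = \int_{B_1(0)} \phi(z)\,\bigl[\, u(x - hz) - u(x)\,\bigr]\, dz .
\]

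Next I would interpose a domain $\Omega'$ with $\Omega \subset \subset \Omega' \subset \subset U$ and set $d = \mathrm{dist}(\Omega,\partial\Omega') > 0$. For $h < d$, $x \in \Omega$ and $|z|\le 1$, the translated point $x - hz$ lies in the compact set $\overline{\Omega'}$, on which $u$ is uniformly continuous. So, given $\varepsilon > 0$, I would choose $\delta \in (0,d)$ with $|u(a) - u(b)| < \varepsilon$ whenever $a,b \in \overline{\Omega'}$ and $|a - b| < \delta$. Then for any $h < \delta$ and any $x \in \Omega$ we have $|(x - hz) - x| = h|z| \le h < \delta$, so the representation above together with $\int_{B_1(0)}\phi = 1$ yields
\[
\sup_{x\in\Omega}\,|u_h(x) - u(x)| \;\le\; \sup_{x\in\Omega}\int_{B_1(0)} \phi(z)\,|u(x-hz) - u(x)|\, dz \;\le\; \varepsilon .
\]
As $\varepsilon > 0$ was arbitrary, this gives $u_h \to u$ uniformly on $\Omega$.

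There is no genuine analytic obstacle here: the only thing requiring care is the bookkeeping with the nested domains, namely ensuring that for all sufficiently small $h$ the points $x - hz$ (for $x\in\Omega$, $|z|\le 1$) stay inside one fixed compact subset $\overline{\Omega'}$ of $U$ on which uniform continuity of $u$ is available. The choice of $\Omega'$ and the constant $d$ above is exactly what arranges this, and everything else is the elementary estimate displayed above.
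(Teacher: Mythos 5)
Your proof is correct and follows essentially the same path as the paper's: rewrite $u_h(x)$ as $\int_{B_1(0)}\phi(z)u(x-hz)\,dz$, compare with $u(x)=\int\phi(z)u(x)\,dz$, and invoke uniform continuity of $u$ on a compact set slightly larger than $\Omega$. Your use of a fixed intermediate domain $\Omega'$ rather than the $h$-dependent $B_h(\Omega)$ of the paper is a slightly cleaner way to organize the uniform-continuity step, but it is the same argument.
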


\begin{proof} Since we can define a $\phi$ such that

\begin{align} u_{h}(x) &= h^{-n}\int_{\Norm{x - y} \leq h}\phi(\frac{x - y}{h})u(y)dy \nonumber \\
&= \int_{\Norm{z} \leq 1}\phi(z)u(x - hz)dz
\end{align}

it follows that if $\Omega \subset \subset U$ and $2h < dist(\Omega, \partial U)$ then

\begin{align} sup_{\Omega}\Norm{u - u_{h}} &\leq sup_{x \in \Omega}\int_{\Norm{z} \leq 1}\phi(z)\Norm{u(x) - u(x - hz)}dz \nonumber \\
&\leq sup_{x \in \Omega}sup_{\Norm{z} \leq 1}\Norm{u(x) - u(x - hz)} \end{align}

But since $u$ is uniformly continuous over the set

\begin{center} $B_{h}(\Omega) = \{ x : dist(x, \Omega) < h \}$ \end{center}

as $B_{h}(\Omega)$ is compact and $u$ is continuous, it follows from the above inequality that $u_{h}$ tends to $u$ uniformly as $h$ tends to zero.
\end{proof}

\begin{lem}  Suppose  $u \in BV_{loc}(U)$.  Then the mollification of $u$, $u^{\sigma}$, converges in the limit as $\sigma \rightarrow 0$ to $u$ in $L^{1}_{loc}(U)$.  Also $\norm{Du^{\sigma}} \rightarrow \norm{Du}$ in the sense of Radon measures in $U$, as mentioned in the previous theorem. \end{lem}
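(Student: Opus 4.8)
I would treat the two assertions separately. For the $L^{1}_{loc}$ statement, fix $\Omega\subset\subset U$, choose an intermediate open set $\Omega'$ with $\Omega\subset\subset\Omega'\subset\subset U$, and given $\varepsilon>0$ pick $v\in C^{0}(\Omega')$ with $\norm{u-v}_{L^{1}(\Omega')}<\varepsilon$ (possible since $u\in L^{1}_{loc}(U)$). For $\sigma$ small enough that the $\sigma$-neighbourhood of $\Omega$ lies in $\Omega'$, write on $\Omega$
\[
u^{\sigma}-u=(u-v)^{\sigma}+(v^{\sigma}-v)+(v-u).
\]
The middle term $\to 0$ uniformly on $\Omega$ by the Auxiliary Lemma, hence in $L^{1}(\Omega)$; the first term obeys $\norm{(u-v)^{\sigma}}_{L^{1}(\Omega)}\le\norm{u-v}_{L^{1}(\Omega')}<\varepsilon$ by Fubini and $\int\phi_{\sigma}=1$; the last term has $L^{1}(\Omega)$-norm $<\varepsilon$. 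Sending $\sigma\to0$ and then $\varepsilon\to0$ gives $u^{\sigma}\to u$ in $L^{1}(\Omega)$, hence in $L^{1}_{loc}(U)$.

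For the variation measures, the crucial step is the commutation identity
\[
\nabla u^{\sigma}(x)=\int_{U}\phi_{\sigma}(x-y)\,d(Du)(y),\qquad x\in U_{\sigma}:=\{x\in U:\mathrm{dist}(x,\partial U)>\sigma\},
\]
which I would get by differentiating under the integral in $u^{\sigma}(x)=\int\phi_{\sigma}(x-y)u(y)\,dy$, noting $\phi_{\sigma}(x-\cdot)\in C^{\infty}_{c}(U)$ for $x\in U_{\sigma}$, and using the defining property of $Du$ as a vector-valued Radon measure finite on compact subsets. Granting this, $\Norm{\nabla u^{\sigma}(x)}\le\int\phi_{\sigma}(x-y)\,d\norm{Du}(y)$ pointwise, so for nonnegative $f\in C_{c}(U)$ and $\sigma$ small, Fubini gives
\[
\int_{U}f\,d\norm{Du^{\sigma}}=\int_{U}f(x)\,\Norm{\nabla u^{\sigma}(x)}\,dx\le\int_{U}(\phi_{\sigma}*f)\,d\norm{Du}\;\longrightarrow\;\int_{U}f\,d\norm{Du},
\]
since $\phi_{\sigma}*f\to f$ uniformly and $\norm{Du}$ is finite on a neighbourhood of $\mathrm{spt}\,f$; hence $\limsup_{\sigma}\int f\,d\norm{Du^{\sigma}}\le\int f\,d\norm{Du}$. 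For the reverse inequality I would invoke the variational characterisation $\int_{U}f\,d\norm{Du}=\sup\{\int_{U}u\,\mathrm{div}\,g\,dx:g\in C^{1}_{c}(U,R^{n}),\,\Norm{g}\le f\}$: for each admissible $g$, the first part gives $\int_{U}u\,\mathrm{div}\,g=\lim_{\sigma}\bigl(-\int_{U}\nabla u^{\sigma}\cdot g\,dx\bigr)$, and since $-\int\nabla u^{\sigma}\cdot g\le\int f\,\Norm{\nabla u^{\sigma}}=\int f\,d\norm{Du^{\sigma}}$ for every $\sigma$, the left-hand limit is $\le\liminf_{\sigma}\int f\,d\norm{Du^{\sigma}}$; taking the supremum over $g$ yields $\int f\,d\norm{Du}\le\liminf_{\sigma}\int f\,d\norm{Du^{\sigma}}$. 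Thus $\int f\,d\norm{Du^{\sigma}}\to\int f\,d\norm{Du}$ for all $f\ge0$ in $C_{c}(U)$, and decomposing a general $f\in K(U,R)$ as $f^{+}-f^{-}$ gives convergence in the sense of Radon measures. (The lower bound is also just the weighted form of the lower semicontinuity \eqref{BV convergence} applied to $u^{\sigma}\to u$.)

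The step I expect to be the main obstacle is making the commutation identity for $\nabla u^{\sigma}$ rigorous: one must justify differentiation under the integral sign and an integration by parts while $Du$ is only a vector measure, so that the naive formula $\nabla u^{\sigma}=\phi_{\sigma}*\nabla u$ (valid when $\nabla u\in L^{1}$) has to be reinterpreted. Everything else is a repackaging of routine mollification estimates and the lower semicontinuity of total variation already available from \eqref{BV convergence}.
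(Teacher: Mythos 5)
Your proposal is correct and follows essentially the same strategy as the paper: for $L^{1}_{loc}$ convergence, approximate $u$ by a continuous function, control the mollified error by $\int\phi_{\sigma}=1$, and invoke the Auxiliary Lemma for the continuous part; for the variation measures, establish the lower bound from the variational (sup-over-vector-fields) characterisation of $\norm{Du}$ and the upper bound from the commutation of mollification with $Du$ together with $\phi_{\sigma}*f\to f$ uniformly. The only cosmetic difference is that you phrase the upper bound via the pointwise identity $\nabla u^{\sigma}=\phi_{\sigma}*Du$ and Fubini, while the paper keeps everything in the dual picture, manipulating $-\int u\,\mathrm{div}(\phi_{\sigma}\star fg)$ — these are the same estimate written from opposite sides.
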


\begin{proof} The following proof of the first part is lifted from Gilbarg and Tr\"udinger \cite{[GT]}, lemma 7.2.

Now, by H\"older's inequality, we have that

\begin{align} \Norm{u_{h}(x)}^{p} &= \Norm{\int_{\Norm{z} \leq 1}\phi(z)u(x - hz)dz}^{p} \nonumber \\
&\leq \int_{\Norm{z} \leq 1}\Norm{\phi(z)^{p-1}}\phi(z)\Norm{u(x - hz)}^{p}dz \nonumber \\
&\leq \int_{\Norm{z} \leq 1}\phi(z)^{p-1}dz \int_{\Norm{z} \leq 1}\phi(z)\Norm{u(x - hz)}^{p}dz \nonumber \\
&\leq \int_{\Norm{z} \leq 1}\phi(z)\Norm{u(x - hz)}^{p}dz \end{align}

In particular, if $\Omega \subset \subset U$ and $2h < dist(\Omega, \partial U)$ then

\begin{align} \int_{\Omega}\Norm{u_{h}}^{p}dx &\leq \int_{\Omega}\int_{\Norm{z} \leq 1}\phi(z)\Norm{u(x - hz)}^{p}dzdx \nonumber \\
&= \int_{\Norm{z} \leq 1}\phi(z)dz \int_{\Omega}\Norm{u(x - hz)}^{p}dx \nonumber \\
&\leq \int_{B_{h}(\Omega)}\Norm{u(x)}^{p}dx \end{align}

where $B_{h}(\Omega) = \{ x \in U : dist(x,\Omega) < h \}$.  The last inequality follows from H\"older's inequality and since $x - hz$ is in $B_{h}(\Omega)$ for all $h < \frac{1}{2}dist(\Omega, \partial U)$ and $\Norm{z} \leq 1$, provided $x \in \Omega$.  Observe certainly $\Omega \subset B_{h}(\Omega)$.

Then it now follows that

\begin{center} $\norm{u_{h}}_{L^{p}(\Omega)} \leq \norm{u}_{L^{p}(B_{h}(\Omega))}$ \end{center}

Now, given $\epsilon > 0$, choose a $C^{0}(U)$ function $w$ satisfying

\begin{center} $\norm{u - w}_{L^{p}(B_{h}(\Omega))} \leq \epsilon$ \end{center}

(we may do this because $C^{0}(U)$ is dense in $C^{p}(U)$ for all $p > 0$). I now invoke our auxiliary lemma to find a $h$ such that

\begin{center} $\norm{w - w_{h}}_{L^{p}(\Omega)} \leq \epsilon$ \end{center}

Then applying our previous estimate to the difference $u - w$, we finally obtain that

\begin{align} \norm{u - u_{h}}_{L^{p}(\Omega)} &\leq \norm{u - w}_{L^{p}(\Omega)} + \norm{w - w_{h}}_{L^{p}(\Omega)} + \norm{w_{h} - u_{h}}_{L^{p}(\Omega)} \nonumber \\
&\leq 2\epsilon + \norm{u - w}_{L^{p}(B_{h}(\Omega))} \leq 3\epsilon \end{align}

for a small enough $h$.  It follows that $u_{h}$ converges to $u$ in $L^{p}_{loc}(U)$, and, in particular, that $u_{h}$ converges to $u$ in $L^{1}_{loc}(U)$.

I follow Simon's book \cite{[Si]}, Lemma 6.2, for the proof of the second part. We wish to show

\begin{center} $lim_{\sigma \rightarrow 0}\int f\Norm{Du^{\sigma}} = \int f\Norm{Du}$ \end{center}

I claim it is easy to demonstrate that

\begin{center} $\int f\norm{Du} \leq lim inf_{\sigma \rightarrow 0} \int f\norm{Du^{\sigma}}$ \end{center}

since by definition

\begin{center} $\int_{W}\norm{Du} = - sup_{g}\int_{W} u div(g)$ \end{center}

where $g$ is a bump function (ie no larger in modulus than one, smooth, and contained in the support of $W$).

But then $\int_{W}f\norm{Du} = - sup_{g}\int_{W} u div(fg) \leq - lim inf _{\sigma \rightarrow 0}\int_{W} u div(fg_{\sigma}) =: lim inf_{\sigma \rightarrow 0}\int_{W}f\norm{Du^{\sigma}}$.

So it remains to show that

\begin{center} $limsup_{\sigma \rightarrow 0} \int f \norm{Du^{\sigma}} \leq \int f \norm{Du}$ \end{center}

Observe first of all that

\begin{center} $\int f \norm{Du^{\sigma}} = sup_{g}\int fg \cdot grad (u^{\sigma})$ \end{center}

with $g$ a bump function.  Also note that if $g$ is fixed, and for $\sigma < dist(spt(f), \partial U)$, we have

\begin{align}
\int_{U}fg \cdot grad(u^{\sigma}) &= - \int u^{\sigma}div(fg) \nonumber \\
&= - \int \phi_{\sigma} \star (u)div(fg) \nonumber \\
&= -\int u(\phi_{\sigma} \star div(fg)) \nonumber \\
&= - \int u div(\phi_{\sigma} \star fg)
\end{align}

But by definition of $\norm{Du}$, we have that the above is nothing other than $\leq \int_{W_{\sigma}}(f + \epsilon(\sigma))\norm{Du}$.  Here $\epsilon(\sigma) \rightarrow 0$ for $\sigma \rightarrow 0$, and $W = spt(f)$, $W_{\sigma} = \{ x \in U : dist(x,W) < \sigma \}$.  This is because

\begin{center} $\norm{\phi_{\sigma} \star fg} = f\norm{(\phi_{\sigma} \star g_{1},...,\phi_{\sigma} \star g_{n})} \leq \phi_{\sigma} \star f$ \end{center}

But it is clear that $\phi_{\sigma} \star f \rightarrow f$ uniformly in $W_{\hat{\sigma}}$ as $\sigma \rightarrow 0$, where $\hat{\sigma} < dist(W,\partial U)$.  So this proves the second part.
\end{proof}

\subsection{Proof of The Theorem, and Existence of (Weak) Solutions to the Plateau Problem}

\begin{proof} (of the Compactness Theorem for BV functions).  By the aforementioned lemma, to show $u_{k'} \rightarrow u$ in $L^{1}_{loc}(U)$ for some subsequence $\{u_{k'}\}$, it is sufficient to show that the sets

\begin{center} $\{u \in C^{\infty}(U) : \int_{W}(\norm{u} + \norm{Du})dL^{n} \leq c(W) \}$ \end{center}

where $W \subset \subset U$ are precompact in $L^{1}_{loc}(U)$ (remember, a set is precompact, or totally bounded, if for any number $E > 0$ there is a covering of that set by finitely many sets of maximal diameter $E$).  This is because certainly if $u$ belongs to the above class, its associated Radon measure $Du$ is bounded locally in $L^{\infty}_{loc}(U)$.  But if these sets are totally bounded then it follows that $Du$ is bounded also in $L^{1}_{loc}(U)$ by finiteness and compactness.

Then the result follows from Gilbarg and Tr\"udinger's book \cite{[GT]}, Theorem 7.22, since we can then conclude these sets are totally bounded as required. \end{proof}

\emph{Existence of Weak Solutions to the Plateau Problem}. The above theorem guarantees that given a sequence of surfaces realised as graphs of BV functions there will be a convergent subsequence.  But certainly if we perform calculus of variations then we will naturally get a sequence of smooth surfaces, with the limit satisfying the minimal surface equation.  But a sequence can only have one limit; hence this limit must be the solution given to us via the BV function compactness theorem.  However we are not guaranteed that it will be smooth; we only know in general that it will be the graph of some BV function.  Showing that what we have is in some sense smooth will be the focus of the next section.

There is in fact a generalisation of all this to varifolds, which are essentially currents with a measure $\theta$, sort of like metric-measure spaces in the sense of Gromov, but more pathological.  This generalisation, the compactness theorem for such objects, is known as the Allard regularity theorem.  However I will not mention this here, since the statement and proof of it are very technical and little instructive value would be gained from a study.  More on varifolds in the next section.

\section{Varifolds}

\subsection{Introduction}

Note that I have so far only treated the compactness and regularity theorems for the case of objects with \emph{integral} density.  In more general variational problems, where we might have some sort of natural Lagrangian that varies from point to point in our manifold, for instance, it is necessary to consider different, non-constant densities or equivalently different measures.  In this section I mention a few results that equip us with the means to deal with these additional complexities.  The most important theorems are the approximation and deformation theorems, which lead to the more general compactness theorem for rectifiable varifolds.  The constancy theorem is of considerable theoretical utility, and was used by Jon Pitts extensively in his thesis.  The boundary regularity theorem as stated here really only applies to integral varifolds, but it is also true more generally.

Varifolds are essentially a generalisation of the concept of currents.  The primary reason we are interested in these things is in order to extend our existence and regularity results to general variational problems; what we have developed so far is limited in scope to geometric variational problems on sets of integer multiplicity, which limits us to examining things like the Plateau Problem.  In fact, in later parts of this thesis, I will examine the problem of varying the \emph{Fisher Information} of a space, which is definitely not integer valued!

For a more precise definition, let $\Omega$ be a subset of $R^{n}$.  Then a general $m$-dimensional varifold on $\Omega$ is essentially a Radon measure on $\Omega \times G(n,m)$, where $G(n,m)$ is the set of all $m$-dimensional subspaces of $n$-dimensional Euclidean space.  However, for our purposes, we will mainly be interested in rectifiable varifolds.  Such objects may essentially be represented as a graph with multiplicity, i.e., as a pair $(V,f)$ where the mass of a set $U \subset V$ is defined as

\begin{center} $m(U) = \int_{U}fdL^{m}$ \end{center}

Another way of writing this is to define a new measure (which is not necessarily positive definite) such that $d\mu = fdL^{m}$ and then

\begin{center} $m(U) = \int_{U}d\mu$ \end{center}

In other words, varifolds lend themselves naturally to the investigation of problems other than that of minimising or extremising area, or are essentially a generalisation from considering the mass of a set induced via the Lebesgue measure to that induced by more general measures.

\subsection{The Constancy Theorem}

This result provides us with some control on the density of a varifold (or current) through a certain region.  It is hence quite useful to prove regularity results (remember, to show regularity usually amounts to proving $\norm{V}B(m,a) \leq \omega_{n}a^{n}(1 + \epsilon)$ for small $\epsilon$).  In practice this theorem is usually used to show that the hypotheses of the Allard Regularity Theorem are satisfied.

\begin{thm} (26.27, \cite{[Si]}). If $U$ is open in $R^{n}$, if $U$ is connected, if $T \in D_{n}(U)$ and if $\partial T = 0$, then $\exists$ $c$ a constant such that $T = c[[U]]$. \end{thm}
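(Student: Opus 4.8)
The plan is to reduce the statement to the elementary fact that a distribution on a connected open subset of $R^{n}$ whose first-order distributional derivatives all vanish must be a constant. First I would observe that, since $T \in D_{n}(U)$ with $U \subseteq R^{n}$ open, $T$ is completely determined by its action on $n$-forms, and every compactly supported smooth $n$-form on $U$ can be written uniquely as $\omega = f\, dx_{1} \wedge \cdots \wedge dx_{n}$ with $f \in C_{c}^{\infty}(U)$. Hence $T$ corresponds to a distribution $S$ on $U$, defined by $S(f) := T(f\, dx_{1} \wedge \cdots \wedge dx_{n})$, and the correspondence $T \leftrightarrow S$ is a bijection between $D_{n}(U)$ and the space of distributions on $U$.

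Next I would unwind the hypothesis $\partial T = 0$. For each index $i$ and each $g \in C_{c}^{\infty}(U)$, apply $\partial T$ to the $(n-1)$-form $\phi_{i} = g\, dx_{1} \wedge \cdots \wedge \widehat{dx_{i}} \wedge \cdots \wedge dx_{n}$. A direct computation gives $d\phi_{i} = (-1)^{i-1}\,(\partial g / \partial x_{i})\, dx_{1} \wedge \cdots \wedge dx_{n}$, so the identity $(\partial T)(\phi_{i}) = T(d\phi_{i}) = 0$ becomes $S(\partial g / \partial x_{i}) = 0$. Since $i$ and $g$ were arbitrary, the distributional gradient of $S$ vanishes identically on $U$.

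Then I would appeal to the standard fact that such an $S$ is constant. The clean way is mollification: fix a symmetric mollifier and set $S^{\sigma} := S \star \phi_{\sigma}$, which is a genuine smooth function on $U_{\sigma} := \{x \in U : \mathrm{dist}(x, \partial U) > \sigma\}$ with $\nabla S^{\sigma} \equiv 0$, hence locally constant, hence constant on each connected component of $U_{\sigma}$. Using the connectedness of $U$ (so that the components of $U_{\sigma}$ are nested and exhaust $U$) one deduces that the common value is independent of $\sigma$, and letting $\sigma \to 0$ gives $S = c$ for a single constant $c$. Finally $S = c$ means precisely $T(f\, dx_{1} \wedge \cdots \wedge dx_{n}) = c \int_{U} f\, dL^{n}$ for all $f$, which is the assertion $T = c\,[[U]]$, with $[[U]]$ denoting integration over $U$ with the standard orientation.

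The main obstacle is the last step, passing from ``all distributional derivatives vanish'' to ``globally constant'': one must control the mollification near $\partial U$ and use the connectedness hypothesis to glue the locally constant values on the components of $U_{\sigma}$ into a single value valid on all of $U$ --- without connectedness the theorem is false, so this is exactly where that hypothesis is consumed. Everything preceding it is essentially bookkeeping with the exterior derivative, and the only other point needing care is verifying that $T \mapsto S$ is an honest bijection, so that the conclusion $T = c\,[[U]]$ holds as an equality of currents and not merely on a subclass of test forms.
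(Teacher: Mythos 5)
Your proof is correct, and it is the standard argument: convert the $n$-current on the open set $U\subseteq R^n$ to a distribution $S$, observe that $\partial T=0$ is equivalent to all distributional partial derivatives of $S$ vanishing, mollify to conclude $S$ is locally constant, and use connectedness of $U$ to upgrade to globally constant. Note, however, that the paper itself gives no proof of this theorem --- it only states it and defers to Simon's lecture notes (26.27 in \cite{[Si]}), adding a one-line interpretive remark; so there is no competing argument in the paper to compare against, and in fact your argument is essentially the one Simon gives in the cited reference. The only place where you are slightly glib is the gluing step: the components of $U_{\sigma}$ need not be nested in any literal sense as $\sigma$ varies, and $U_{\sigma}$ may even have infinitely many components; the clean statement is that for any fixed pair of points of $U$ joined by a compact path $\gamma\subset U$, both points lie in the same component of $U_{\sigma}$ once $\sigma<\mathrm{dist}(\gamma,\partial U)$, which combined with the $\sigma$-independence of the limiting value on any fixed ball gives the global constant. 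That is a minor bookkeeping repair, not a gap in the idea.
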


What this is essentially saying is that the density of a rectifiable current $T$ is constant provided its boundary $\partial T$ is zero.  Recall that $\partial T(\phi)$ is defined to be $T(d\phi)$.

\subsection{The Approximation and Deformation Theorems}

(The relevant references here are 30.2 of Simon's Book \cite{[Si]}, 7.1 of Morgan's Book \cite{[M]}, and 4.2.20 of Federer's treatise, \cite{[F]}.)

Following from the Deformation Theorem, the Approximation Theorem is a key result in the theory of currents.  It essentially states that any integral current $T$ can be approximated by a slight deformation of a polyhedral chain $P$.  Recall that a polyhedral chain is essentially a collection of simplices.

\begin{thm} Given an integral current $T \in D_{m}(R^{n})$ and $\epsilon > 0$, there exists a $m$-dimensional polyhedral chain $P$ in $R^{n}$, with support within a distance $\epsilon$ of the support of $T$, and a $C^{1}$ diffeomorphism $f$ of $R^{n}$ such that

\begin{center} $f_{\#}T = P + E$ \end{center}

with $M(E) \leq \epsilon$, $M(\partial E) \leq \epsilon$, $Lip(f) \leq 1 + \epsilon$, $Lip(f^{-1}) \leq 1 + \epsilon$, $\norm{f(x) - x} \leq \epsilon$, and $f =$ identity whenever $dist(x, spt(T)) \geq \epsilon$. \end{thm}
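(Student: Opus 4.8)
The plan is to derive this as a corollary of the Deformation Theorem referred to above, together with the Constancy Theorem (26.27) already stated in this chapter. Recall that the Deformation Theorem provides, for each grid size $\rho>0$, a decomposition
\[
T=(\psi_{\rho})_{\#}T+\partial R_{\rho}+S_{\rho},
\]
where $\psi_{\rho}$ is a Lipschitz map moving every point by at most $c(n)\rho$, the current $(\psi_{\rho})_{\#}T$ is supported in the $m$-skeleton of the standard cubical grid of side $\rho$, one has $M(R_{\rho})\le c(n)\rho\,M(T)$, $M(S_{\rho})\le c(n)\rho\,M(\partial T)$, $M(\partial(\psi_{\rho})_{\#}T)\le c(n)M(\partial T)$, and the supports of all three pieces lie within $c(n)\rho$ of $\mathrm{spt}(T)$; running the same statement on $\partial T$ keeps $\partial S_{\rho}$ and $\partial(\psi_{\rho})_{\#}T$ supported near $\mathrm{spt}(\partial T)\subset\mathrm{spt}(T)$. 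First I would fix $\rho$ so small that $c(n)\rho<\epsilon/2$ and so that the masses $M(R_{\rho})$ and $M(S_{\rho})$ fall below a threshold to be pinned down at the end.

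Second, I would show that $(\psi_{\rho})_{\#}T$ is in fact a genuine integer-multiplicity polyhedral chain, not merely a skeletal current. Being the pushforward of an integral current by a Lipschitz map it is of integer multiplicity, and on the interior of each open $m$-dimensional face $Q$ of the grid its boundary vanishes, since $\partial(\psi_{\rho})_{\#}T=(\psi_{\rho})_{\#}(\partial T)$ is carried by the $(m-1)$-skeleton, which does not meet $\mathrm{int}\,Q$. By the Constancy Theorem applied on the connected open set $\mathrm{int}\,Q$, the current restricted there equals a constant multiple of $[[Q]]$, and integrality forces that constant to be an integer. Summing over faces produces a polyhedral chain $P_{0}:=(\psi_{\rho})_{\#}T$ with $M(\partial P_{0})\le c(n)M(\partial T)$ and support within $c(n)\rho$ of $\mathrm{spt}(T)$.

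Third comes the step that really needs work, and it is where the diffeomorphism $f$ enters. With $\rho$ fixed we have $T=P_{0}+E_{0}$ with $E_{0}=\partial R_{\rho}+S_{\rho}$ of small flat norm, but not of small mass — the term $\partial R_{\rho}$ may be large in $M$. The remedy is to not hold $T$ fixed: one perturbs $T$ by a diffeomorphism close to the identity so that the flat-small correction is turned into a mass-small one. Concretely I would build $f$ by mollifying the piecewise-affine collapsing map $\psi_{\rho}$ (and the analogous maps at a fixed finite number of successively finer scales used to re-approximate $R_{\rho}$, whose polyhedral parts are absorbed into $P_{0}$) into a $C^{1}$ map, then composing with a cut-off so that $f=\mathrm{id}$ outside the $\epsilon$-neighbourhood of $\mathrm{spt}(T)$; taking the mollification parameters small keeps $Lip(f)\le1+\epsilon$, $Lip(f^{-1})\le1+\epsilon$ and $\norm{f(x)-x}\le\epsilon$. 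Re-running the Deformation estimates for $f_{\#}T$ and for $\partial(f_{\#}T)=f_{\#}(\partial T)$ and collecting the resulting geometric series of error masses then yields the polyhedral $P$ and residual $E:=f_{\#}T-P$ with $M(E)\le\epsilon$ and $M(\partial E)\le\epsilon$.

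The main obstacle is precisely this third step: upgrading the flat-norm closeness that the Deformation Theorem hands over essentially for free to genuine mass closeness of both $E$ and $\partial E$, while simultaneously producing an honest $C^{1}$ diffeomorphism meeting the sharp bounds $Lip(f),Lip(f^{-1})\le1+\epsilon$, $\norm{f-\mathrm{id}}\le\epsilon$, and localised to the prescribed neighbourhood of $\mathrm{spt}(T)$. Everything else — the deformation decomposition, the face-by-face use of the Constancy Theorem, and the bookkeeping of the error masses — is routine once this is in place; I would refer to 30.2 of Simon \cite{[Si]}, 7.1 of Morgan \cite{[M]}, and 4.2.20 of Federer \cite{[F]} for the explicit constants.
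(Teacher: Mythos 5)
Your steps 1 and 2 are sound in spirit (the integer-multiplicity plus Constancy Theorem argument for why $(\psi_\rho)_\#T$ is an honest polyhedral chain is the standard one), but step 3 — which you rightly flag as the crux — has a fatal flaw, not merely a missing estimate. The retraction map $\psi_\rho$ supplied by the Deformation Theorem is nowhere near a diffeomorphism: it collapses full-dimensional neighbourhoods onto the $m$-skeleton of the grid, so it is drastically non-injective, and its Lipschitz constant is a dimensional constant $c(n)$ that is comfortably larger than $1$. Mollifying such a map does not make it injective, and it certainly does not bring its Lipschitz constant down to $1+\epsilon$; there is no sense in which a smoothed $\psi_\rho$ is a $C^1$ diffeomorphism, let alone one satisfying $Lip(f),\,Lip(f^{-1})\le 1+\epsilon$ and $\norm{f-\mathrm{id}}\le\epsilon$. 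So the mechanism you propose for producing $f$ does not produce anything of the right kind, and the claimed upgrade from flat-norm smallness to mass smallness never happens.

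The diffeomorphism in the actual proof (Federer 4.2.20, Simon's Chapter 6 on the Deformation and Approximation Theorems) comes from a source you do not invoke at all: the $C^{1}$ rectifiability structure of integral currents. Since the carrier of $T$ is $\mathcal{H}^{m}$-rectifiable, it can be covered, up to arbitrarily small $\mathcal{H}^{m}$-measure, by finitely many pieces of embedded $C^{1}$ $m$-submanifolds, and near a density point each piece is a $C^{1}$ graph with arbitrarily small slope over its approximate tangent $m$-plane. The map $f$ is then built by flattening each such graph onto its tangent plane via a compactly supported diffeomorphism $C^{1}$-close to the identity; this is exactly where the bounds $Lip(f),Lip(f^{-1})\le1+\epsilon$ and $\norm{f-\mathrm{id}}\le\epsilon$ come from. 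After flattening, the bulk of $f_{\#}T$ lies in a finite union of affine $m$-planes, where mass-close polyhedral approximation is elementary, and the small-mass remainder together with the boundary are cleaned up by the Deformation Theorem as in your steps 1 and 2. The missing idea is therefore the appeal to the $C^{1}$ structure theorem for rectifiable sets; without it, the Deformation Theorem alone cannot manufacture the diffeomorphism the statement requires.
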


\begin{rmk} Note we have an error term, $E$, which this theorem states we have precise control over (the theorem says we can make the mass of the error as small as we like by choosing an appropriate $f$ and $P$).  This error plays a similar role in related regularity results that the tilt excess plays in the proof of the Allard Regularity Theorem. \end{rmk}

\subsection{The Boundary Rectifiability Theorem}

This result essentially speaks for itself.

\begin{thm} Suppose $T$ is an integer multiplicity current in $D_{n}(U)$ with $M_{W}(\partial T) < \infty$ for all $W \subset \subset U$. Then $\partial T$ is an integer multiplicity current. \end{thm}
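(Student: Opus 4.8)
The plan is to deduce this from the Approximation Theorem together with the Compactness Theorem for integer multiplicity currents (27.3), both stated above. The guiding observation is that $\partial T$ is an $(n-1)$-dimensional current that already has locally finite mass by hypothesis and satisfies $\partial(\partial T)=0$; so it will suffice to exhibit $\partial T$ as a weak limit of integer multiplicity currents whose masses, and whose (trivially vanishing) boundary masses, are locally uniformly bounded, and then invoke 27.3.

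First I would reduce to a compactly supported situation. Fix $W\subset\subset W'\subset\subset U$. Using a slicing argument, cut $T$ off along a suitable level set of the distance function to $W$: because $M_{W'}(T)<\infty$ and $M_{W'}(\partial T)<\infty$, the slicing (coarea) inequality lets one choose the level so that the truncated current $T'$ is a compactly supported integer multiplicity current in $R^{n}$, agreeing with $T$ on a neighbourhood of $\overline{W}$, and with $M(\partial T')<\infty$. In particular $\partial T'=\partial T$ when tested against forms supported in $W$.

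Next, apply the Approximation Theorem to $T'$: for each $\epsilon>0$ there are a polyhedral chain $P_{\epsilon}$ and a $C^{1}$ diffeomorphism $f_{\epsilon}$ of $R^{n}$ with $(f_{\epsilon})_{\#}T'=P_{\epsilon}+E_{\epsilon}$, where $M(E_{\epsilon})\le\epsilon$, $M(\partial E_{\epsilon})\le\epsilon$, $Lip(f_{\epsilon})\le 1+\epsilon$, $Lip(f_{\epsilon}^{-1})\le 1+\epsilon$, and $\norm{f_{\epsilon}(x)-x}\le\epsilon$. Taking boundaries, $(f_{\epsilon})_{\#}(\partial T')=\partial P_{\epsilon}+\partial E_{\epsilon}$, so $\partial P_{\epsilon}=(f_{\epsilon})_{\#}(\partial T')-\partial E_{\epsilon}$. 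Now $\partial P_{\epsilon}$ is a polyhedral chain, hence an integer multiplicity current; $\partial(\partial P_{\epsilon})=0$; and on the fixed set $W$ one has $M_{W}(\partial P_{\epsilon})\le(1+\epsilon)^{n-1}M(\partial T')+\epsilon$, bounded uniformly in $\epsilon$. By the Compactness Theorem 27.3 some subsequence $\partial P_{\epsilon_{j}}$ converges on $W$ to an integer multiplicity current $S$. But since $f_{\epsilon_{j}}\to\mathrm{id}$ uniformly with Lipschitz constants tending to $1$, we get $(f_{\epsilon_{j}})_{\#}(\partial T')\to\partial T'$ and $\partial E_{\epsilon_{j}}\to 0$ weakly, whence $\partial P_{\epsilon_{j}}\to\partial T'=\partial T$ on $W$. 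Therefore $S=\partial T$ on $W$, so $\partial T$ is integer multiplicity there; letting $W$ exhaust $U$ finishes the proof.

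The main obstacle is the localisation/slicing step rather than the limiting argument: the Approximation Theorem is a statement about compactly supported currents in $R^{n}$, so one must first truncate $T$ to a compactly supported piece \emph{without} destroying finiteness of the boundary mass, and this is precisely where the hypothesis $M_{W}(\partial T)<\infty$ is consumed — the slice must be chosen so that the newly created boundary is itself rectifiable with finite mass and integer multiplicity. Once that is arranged, everything else is a routine application of the two quoted theorems and weak continuity of pushforward under diffeomorphisms converging to the identity.
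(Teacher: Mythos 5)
The paper states this theorem without proof --- the preceding sentence reads ``This result essentially speaks for itself'' and the section then moves on --- so there is no argument in the text to compare against. Your proof is the standard one (essentially Simon's Theorem 30.3): truncate $T$ by a good slice to reduce to a compactly supported integer multiplicity current $T'$ with $M(T')+M(\partial T')<\infty$; invoke the Approximation Theorem to write $(f_{\epsilon})_{\#}T'=P_{\epsilon}+E_{\epsilon}$ with polyhedral $P_{\epsilon}$; note $\partial P_{\epsilon}=(f_{\epsilon})_{\#}\partial T'-\partial E_{\epsilon}$ are integer multiplicity, boundaryless, with locally bounded mass; pass to a weak limit by 27.3; and identify the limit with $\partial T$ because $M(\partial E_{\epsilon})\to 0$ and $(f_{\epsilon})_{\#}\to\mathrm{id}$. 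This is correct, and the localisation step you highlight as the crux is indeed the non-routine part.

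One caveat worth making explicit: the paper's statement of the Approximation Theorem begins ``Given an integral current $T$\dots'', and in Federer's strict usage ``integral current'' already presupposes that $\partial T$ is rectifiable with finite mass --- applying it literally in that sense would be circular here. The argument genuinely rests on the weaker hypothesis in Simon's formulation (Theorem 30.2 of \cite{[Si]}), which requires only that $T$ be integer multiplicity with compact support and $M(T)+M(\partial T)<\infty$, without any a priori rectifiability of $\partial T$; that is the version you are implicitly using and the one that should be cited.
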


\section{A couple of applications}

I have already mentioned the more obvious application of the theory to the Plateau problem.  However I should remark that it is possible to show that certain closed curves in three space do not generate nice smooth surfaces, but rather smooth structures with a stratification.  One can actually see this with soap films, for instance.

But for the purposes of this section I will avoid further discussion of this and focus instead on a result due to Frankel and Lawson, and an existence result due to Jon Pitts.  There are of course the other obvious applications to establishing regularity of solutions to other natural variational problems, and determining when singularities are formed similar to soap films given particular data.  From the example above, such singularities may well exist, even if the integrand and geometry are smooth.

\subsection{The Frankel-Lawson Result}

My original motivation for exploring the jungle of GMT was to understand the first and second order behaviour of variations about singularities in varifolds- this was towards the aim of extending a well known result of Lawson and Frankel, which I shall now proceed to reveal and prove, following \cite{[F1]} and \cite{[F2]} most closely.

\begin{thm} (Main Theorem).  Let $M_{n + 1}$ be a complete, connected manifold with positive Ricci curvature.  Let $V_{n}$ and $W_{n}$ be immersed minimal hypersurfaces of $M_{n + 1}$, each immersed as a closed subset, and let $V_{n}$ be compact.  Then $V_{n}$ and $W_{n}$ must intersect. \end{thm}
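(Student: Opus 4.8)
The plan is to argue by contradiction, using the second variation of arc length in the spirit of Frankel's original synthetic argument. Suppose $V_n \cap W_n = \emptyset$. Since $M_{n+1}$ is complete, $V_n$ is compact, and $W_n$ is a closed subset of $M_{n+1}$, the distance $\ell := d(V_n, W_n)$ is strictly positive and is attained by a unit-speed minimising geodesic $\gamma : [0,\ell] \to M_{n+1}$ with $\gamma(0) = p \in V_n$, $\gamma(\ell) = q \in W_n$: one takes a minimising sequence of paths joining the hypersurfaces, notes their lengths are bounded so the endpoints stay in a bounded region, and uses compactness of $V_n$ together with Hopf--Rinow (closed bounded subsets of a complete manifold are compact) to extract convergent endpoints and a limiting minimising geodesic. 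The first variation formula for length with endpoints free on $V_n$ and $W_n$ forces $\gamma$ to meet both orthogonally, so $\gamma'(0)\perp T_pV_n$ and $\gamma'(\ell)\perp T_qW_n$; since $V_n$ and $W_n$ are hypersurfaces this means $T_pV_n = \gamma'(0)^{\perp}$ and $T_qW_n = \gamma'(\ell)^{\perp}$ inside $T_pM$ and $T_qM$.

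Next I would set up the competing variations. Choose an orthonormal basis $e_1,\dots,e_n$ of $T_pV_n$ and parallel-transport it along $\gamma$ to obtain parallel fields $E_1,\dots,E_n$ along $\gamma$; since $\gamma'$ is parallel and orthogonal to each $e_i$ at $t=0$, each $E_i(t)\perp\gamma'(t)$ for all $t$, so $E_1(\ell),\dots,E_n(\ell)$ is an orthonormal basis of $\gamma'(\ell)^{\perp}=T_qW_n$ and $\{E_1(t),\dots,E_n(t),\gamma'(t)\}$ is an orthonormal frame along $\gamma$. For each $i$, let $\gamma_s^{(i)}$ be a variation of $\gamma$ whose initial endpoint runs along the $V_n$-geodesic from $p$ with initial velocity $e_i$, whose terminal endpoint runs along the $W_n$-geodesic from $q$ with initial velocity $E_i(\ell)$, and whose variation field along $\gamma$ is $E_i$. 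The standard second variation computation then gives
\[
\left.\frac{d^2}{ds^2}\right|_{s=0} L\!\left(\gamma_s^{(i)}\right) = -\int_0^\ell g\!\left(R(E_i,\gamma')\gamma',E_i\right)dt \;+\; g\!\left(\mathrm{II}_{W_n}(E_i(\ell),E_i(\ell)),\gamma'(\ell)\right) \;-\; g\!\left(\mathrm{II}_{V_n}(e_i,e_i),\gamma'(0)\right),
\]
the interior kinetic term vanishing because $E_i$ is parallel, and $\frac{d}{ds}\big|_{0} L(\gamma_s^{(i)}) = 0$ because $E_i\perp\gamma'$ at both endpoints.

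Finally I would sum over $i=1,\dots,n$. The two boundary terms become $g(\vec{H}_{W_n},\gamma'(\ell)) - g(\vec{H}_{V_n},\gamma'(0))$, which vanishes since $V_n$ and $W_n$ are minimal; and $\sum_{i} g(R(E_i,\gamma')\gamma',E_i) = Ric(\gamma',\gamma')$ because $\{E_1,\dots,E_n,\gamma'\}$ is an orthonormal frame. Hence
\[
\sum_{i=1}^{n}\left.\frac{d^2}{ds^2}\right|_{s=0} L\!\left(\gamma_s^{(i)}\right) = -\int_0^\ell Ric(\gamma',\gamma')\,dt < 0
\]
by the strict positivity of the Ricci curvature, so for some $i$ the variation $\gamma_s^{(i)}$ has vanishing first derivative and strictly negative second derivative of length while remaining a path from $V_n$ to $W_n$; this contradicts the minimality of $\gamma$. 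Therefore $V_n$ and $W_n$ must intersect.

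I expect the main obstacle to be entirely in the bookkeeping rather than in any idea: getting the signs of the two boundary terms in the second variation formula exactly right, checking carefully that the endpoint curves really do stay on the merely immersed hypersurfaces $V_n$ and $W_n$ (handled by passing to the immersed picture on small neighbourhoods of $p$ and $q$), and verifying that ``immersed as a closed subset'' together with completeness of $M_{n+1}$ genuinely delivers the realising minimising geodesic. It is worth flagging exactly where each hypothesis enters: compactness of $V_n$ and closedness of $W_n$ give existence of $\gamma$, minimality of both hypersurfaces annihilates the boundary terms, and positivity of the Ricci curvature makes the summed second variation strictly negative.
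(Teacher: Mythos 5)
Your proof is correct and follows essentially the same argument as the paper's: parallel transport an orthonormal frame of $T_pV_n$ along the minimising geodesic $\gamma$, apply the second variation of arc length with free endpoints on the two hypersurfaces, observe that minimality kills the boundary (second fundamental form) terms on summing over the frame, and use positive Ricci curvature to produce a variation strictly decreasing length. If anything, you are a bit more careful than the paper's exposition in two places: you explicitly justify the existence of a realising minimising geodesic (via Hopf--Rinow, compactness of $V_n$, and closedness of $W_n$), whereas the paper simply posits $\gamma$; and you flag the immersed-versus-embedded issue and the hypersurface codimension-one fact needed to conclude $E_i(\ell)\in T_qW_n$, both of which the paper's proof treats implicitly by quietly assuming "embedded."
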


\begin{proof}  Suppose $V_{n}$ and $W_{n}$ are embedded submanifolds, and that they do not intersect.  Suppose $\gamma$ is a geodesic from $p_{0} \in V$ to $q_{0} \in W$ that realises the minimum distance $l$ from $V$ to $W$.  Clearly it must strike $V$ and $W$ orthogonally, otherwise it can easily be shortened.  Any unit vector $X'$ tangent to $V$ at $p_{0}$ parallel translates along $\gamma$ to a vector field $X$ along $\gamma$ tangent to $W$ at $q_{0}$.  This vector field gives rise to a variation of $\gamma$ allowing end points to vary.

\begin{lem} (Key Lemma). The first variation of arc length is 0, and the second variation of arc length is

\begin{center}
$L''_{X}(0) = B_{W}(X) - B_{V}(X) - \int_{0}^{l} K( X, T )ds$
\end{center}

where $T = \frac{\partial}{\partial t}$ and $X = \frac{\partial}{\partial \alpha}$ are coordinate vector fields defined on a ribbon $0 \leq t \leq l$, $-\epsilon \leq \alpha \leq \epsilon$.  At $\alpha = 0$, $T$ is the unit tangent field to $\gamma$, and the curve $\alpha = 0$ corresponds to the geodesic $\gamma$ running from $p_{0}$ to $q_{0}$.  The curve $t = 0$ is a curve in $V$; the curve $t = l$ is a curve in $W$.  $B_{V}(X)$ (similarly $B_{W}(X)$) represents the second fundamental form of $V$ (respectively $W$) at $p_{0}$ (respectively $q_{0}$) evaluated on $X(p_{0})$ (resp. $X(q_{0})$).
\end{lem}

\begin{proof} (of Key Lemma).

\begin{center}
$L'(\alpha) = \int_{0}^{l}\frac{\partial}{\partial\alpha}g(T,T)^{1/2}dt = \int_{0}^{l}\nabla_{X}g(T,T)^{1/2}dt$
\end{center}

hence

\begin{equation}
\label{firstvar}
L'(\alpha) = \int_{0}^{l}\frac{g(\nabla_{X}T,T)}{g(T,T)^{1/2}}dt
\end{equation}

Since $g(T,T) = 1$ along $\alpha = 0$ we get

\begin{center}
$L'(0) = \int_{0}^{l}g(\nabla_{X}T,T)dt = \int_{0}^{l}g(\nabla_{T}X,T)dt = 0$
\end{center}

since $\nabla_{T}X = 0$ as $X$ is parallel displaced along $\gamma$.  I.e. the first variation of arc length is 0.

Continuing from our first variational equation we get

\begin{center}
$L''(\alpha) = \int_{0}^{l}\nabla_{X}(\frac{g(\nabla_{T}X,T)}{g(T,T)^{1/2}})dt$
\end{center}

which becomes

\begin{center}
$L''(0) = \int_{0}^{l}\nabla_{X}g(\nabla_{T}X,T)dt - \int_{0}^{l}g(\nabla_{T}X,T)^{2}dt$
\end{center}

The second term vanishes since $\nabla_{T}X = 0$ ($X$ is parallel along $\gamma$).  So we are left with

\begin{center}
$L''(0) = \int_{0}^{l}g(\nabla_{T}\nabla_{X}X,T)dt + \int_{0}^{l}g(R(X,T)X,T)dt$
\end{center}

using the relation $\nabla_{X}\nabla_{T} - \nabla_{T}\nabla_{X} = R(X,T)$ for orthogonal vector fields $X$, $T$.

The first term can be rewritten:

\begin{center}
$g(\nabla_{T}\nabla_{X}X,T) = \nabla_{T}g(\nabla_{X}X,T) - g(\nabla_{X}X,\nabla_{T}T) = \frac{\partial}{\partial t}g(\nabla_{X}X,T)$
\end{center}

Putting this all together, together with the result that

\begin{equation}
K(T,X) = -g(R(X,T)X,T)
\end{equation}

we have that

\begin{center}
$L''_{X}(0) = \frac{d^{2}L}{d\alpha^{2}}(0) = g(\nabla_{X}X,T)_{q_{0}} - g(\nabla_{X}X,T)_{p_{0}} - \int_{0}^{l}K(T,X)dt$
\end{center}

It is clear that the first two terms are the second fundamental forms with respect to W and V respectively evaluated at $q_{0}$ and $p_{0}$.  (Recall that the second fundamental form of an embedding say $f : M \rightarrow \overline{M}$ is $B(X,Y) = \overline{\nabla}_{\overline{X}}\overline{Y} - \nabla_{X}Y$ where $\overline{X}, \overline{Y}$ are extensions of vector fields $X$ and $Y$ locally defined on $M$ to $\overline{M}$ and $\overline{\nabla}$ is the connection on $\overline{M}$.  $\nabla$ is defined by

\begin{center}
$\nabla_{X}Y = (\overline{\nabla}_{\overline{X}}\overline{Y})^{T}$
\end{center}

Then we have that for a codimension one submanifold, say $V$,  that $g(\overline{\nabla}_{\overline{X}}\overline{X},T)|_{V} = g(\overline{\nabla}_{\overline{X}}\overline{X} - \nabla_{X}X,T)|_{V} = g(B(X,X),T)|_{V}$ which is $B_{V}(X)$ by definition.
\end{proof}

Doing this for $n$ orthonormal vectors $X'_{1}, ... , X'_{n}$ spanning $T_{p_{0}}V$ and summing we get

\begin{center}
$\sum_{\alpha = 1}^{n}L''_{X_{\alpha}}(0) = \sum_{\alpha = 1}^{n}B_{W}(X_{\alpha}) - \sum_{\alpha = 1}^{n}B_{V}(X_{\alpha}) - \int_{0}^{l} Ric(T)ds$
\end{center}

where $Ric(T) = \sum_{\alpha = 1}^{n}K(X_{\alpha} , T)$.  Now it is well known that minimal hypersurfaces are characterised by having mean curvature zero, hence $\sum_{\alpha = 1}^{n}B_{W}(X_{\alpha}) = 0 = \sum_{\alpha = 1}^{n}B_{V}(X_{\alpha})$.  Since $Ric(T) > 0$ by our hypothesis, we conclude that for some $\alpha$, $L''_{X_{\alpha}} < 0$, contrary to our assumption that $\gamma$ was of minimal length.  \end{proof}

It is one of my current projects to try to extend this result to the case of minimal submanifolds which admit singularities.  There are examples of such things in the literature.  We would like to study the situation where one has minimal hypercones in an ambient space of positive Ricci curvature.  Three cases arise:

\begin{itemize}
\item[(i)] Where the point of closest approach is between two smooth points,
\item[(ii)] Where the point of closest approach is between a smooth point and a singular point,

and perhaps the most interesting case,

\item[(iii)] Where the point of closest approach is between two singular points.
\end{itemize}

This amounts to trying to prove the following.

\begin{conj}  Let $M_{n + 1}$ be a complete, connected manifold with positive Ricci curvature.  Let $V_{n}$ and $W_{n}$ be immersed minimal currents of $M_{n + 1}$ with multiplicity one, each immersed as a closed subset, and let both $V_{n}$ and $W_{n}$ be compact.  Then $V_{n}$ and $W_{n}$ must intersect. \end{conj}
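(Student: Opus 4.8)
The plan is to argue by contradiction, keeping the skeleton of the smooth Frankel--Lawson argument above but replacing the second-variation computation at the point of closest approach by a \emph{barrier argument} robust to singularities, and invoking the strong maximum principle for stationary integral varifolds (in the form of Ilmanen and B.~White). Suppose $V=V_n$ and $W=W_n$ are disjoint. Since $V$ is compact and $\mathrm{spt}\,W$ is closed, $l:=\mathrm{dist}(\mathrm{spt}\,V,\mathrm{spt}\,W)>0$ is attained along a unit-speed minimising geodesic $\gamma$ from $p_0\in\mathrm{spt}\,V$ to $q_0\in\mathrm{spt}\,W$, and $\gamma$ meets the cut loci of neither $\mathrm{spt}\,V$ nor $\mathrm{spt}\,W$ in its interior.

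\emph{Step 1 (regularity reduction).} By the regularity theory for multiplicity-one codimension-one minimal currents (Allard's theorem, together with dimension reduction, and Wickramasekera's theory in the stable case) the singular sets of $V$ and $W$ have Hausdorff dimension at most $n-7$. Hence for $n\le 6$ both $V$ and $W$ are smooth embedded hypersurfaces near $\gamma$ and the situation is covered by the Main Theorem above; so assume $n\ge 7$, and the only obstruction is that $p_0$ and/or $q_0$ may be singular.

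\emph{Step 2 (one regular foot point).} Suppose $q_0$ is a regular point of $W$. Put $r=\mathrm{dist}(\,\cdot\,,\mathrm{spt}\,W)$; since $\gamma$ avoids the cut locus and $W$ is smooth near $q_0$, the function $r$ is smooth near $p_0$ and $\Sigma:=\{r=l\}$ is a smooth embedded hypersurface through $p_0$. The tube comparison for the shape operator $S$ of the equidistant hypersurfaces with respect to $\nu=\nabla r$, namely $S'+S^2+R_\nu=0$, yields $H'\le -H^2/n-\mathrm{Ric}(\nu,\nu)$ for $H=\mathrm{tr}\,S$; since $H=0$ on $W$ (minimality) and $\mathrm{Ric}>0$, we get $H(l)<0$, i.e. the mean curvature vector of $\Sigma$ at $p_0$ points strictly into the region $\{r>l\}$. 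But $\mathrm{spt}\,V$ lies in $\{r\ge l\}$ and touches $\Sigma$ at $p_0$, so $V$ is a stationary integral varifold sitting on the side into which the mean curvature vector of the strictly mean-convex barrier $\Sigma$ points; the strong maximum principle then forces $\mathrm{spt}\,V=\Sigma$ near $p_0$, which is absurd since a stationary varifold cannot be supported on a strictly mean-convex hypersurface. Thus $q_0$ cannot be regular, and, interchanging the roles of $V$ and $W$ (both are compact), neither can $p_0$.

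\emph{Step 3 (both foot points singular) --- the main obstacle.} What remains is $p_0\in\mathrm{Sing}\,V$ and $q_0\in\mathrm{Sing}\,W$ together, where the barrier $\Sigma$ of Step 2 is no longer smooth near $p_0$. I expect this to be the hard part. One route is to blow up simultaneously at $p_0$ and $q_0$ and reduce to a statement about the tangent cones $C_{p_0}V$, $C_{q_0}W$, which are stationary (indeed stable) minimal cones in $R^{n+1}$ whose vertices sit at distance $l$ apart in complementary half-spaces with parallel bounding hyperplanes; one then wants an infinitesimal non-existence, which should come from the monotonicity formula plus Step 2 applied at \emph{regular} points of the rescaled objects --- dense away from the vertices --- followed by a limiting argument. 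Alternatively one could invoke directly a two-sided strong maximum principle for stationary varifolds touching from opposite sides, which is known for stable hypersurfaces; pinning down the precise form valid for multiplicity-one minimal currents with the regularity supplied by Allard's theorem is the genuinely delicate ingredient, and --- in keeping with this chapter's disclaimer --- the step most in need of careful checking.
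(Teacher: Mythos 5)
Your proposal takes a genuinely different route from the paper's.  The paper proceeds variationally: it examines the first and second variation of arclength along the minimising geodesic $\gamma$ and tries to extend the Frankel--Lawson second-variation contradiction to a singular foot point $q_0$, arguing (in two steps, using an acute/obtuse angle comparison plus the Hopf maximum principle) that the tangent cone at $q_0$ must be a hyperplane so that the usual parallel vector-field calculus goes through, and then passing to a limit of variational vector fields from nearby regular points.  You instead construct the equidistant barrier $\Sigma=\{r=l\}$, deduce strict mean-convexity from the Riccati comparison and $\mathrm{Ric}>0$, and apply a Solomon--White/Ilmanen-type strong maximum principle for stationary integral varifolds.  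Your approach is cleaner and considerably more robust: it never needs to analyse the tangent cone at the touching point, and it automatically handles the case where $p_0$ is singular provided $q_0$ is regular, whereas the paper's argument tacitly forces $p_0$ to be regular by assuming $V$ smooth.  (Incidentally, the paper's tangent-cone argument is problematic as stated: a multiplicity-one stationary cone whose support lies in a hyperplane is that hyperplane, at which point Allard's theorem makes $q_0$ regular after all, so the ``singular $q_0$'' branch collapses.)  Note also that the paper only \emph{proves} the subcase in which one of the two is a smooth submanifold; the full conjecture with both $V$ and $W$ singular is left open in the paper just as in your Step 3.

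Two caveats on your proposal.  First, Step 1's claim that for $n\le 6$ the currents are automatically smooth rests on the $\dim(\mathrm{Sing})\le n-7$ bound, which is a theorem for area-minimising or stable codimension-one currents (Federer dimension reduction via Simons, or Wickramasekera), but \emph{not} for general multiplicity-one stationary integral currents as in the conjecture's hypotheses; without a stability or minimising assumption, Allard only gives an open dense regular set.  This does not damage the logic of Steps 2 and 3 --- the case analysis on whether $p_0,q_0$ are regular remains valid --- but it does mean the low-dimensional reduction in Step 1 is not available under the stated hypotheses.  Second, as you already flag, Step 3 is the genuine gap: when both $p_0$ and $q_0$ are singular, $\Sigma$ is no longer a smooth barrier, and the known strong maximum principles for two stationary varifolds touching from opposite sides (Ilmanen, and later refinements) carry hypotheses (e.g.\ stability, or one side smooth away from a small set) that are not automatic here.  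Your proposed blowup at $p_0$ and $q_0$ together with a limiting argument through nearby regular points is a reasonable direction, but it is unresolved --- and the paper itself does not attempt this case, so your honest identification of the gap is exactly right.
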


\begin{rmk} Note that this is not as general as the Frankel-Lawson result, since we require \emph{both} $V$ and $W$ to be compact. \end{rmk}

\begin{thm}  The conjecture is true for $V_{n}$ an immersed minimal submanifold and $W_{n}$ an immersed minimal current of $M_{n+1}$. \end{thm}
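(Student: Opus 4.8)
The plan is to run Frankel's original argument, but to replace the second‑variation computation at the $W$‑end (which would need $q_0$ to be a regular point and $\gamma$ orthogonal to $W$ there) by the strong maximum principle for stationary varifolds against a smooth barrier. This is precisely what the asymmetry of the hypotheses permits: the barrier is built from the \emph{smooth} minimal hypersurface $V$, while the object it constrains, $W$, is allowed to be an honest current with singularities.

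Suppose for contradiction that $V\cap W=\emptyset$. Since $M$ is complete, $V$ is compact and $W$ is closed, the distance $l:=d(V,W)>0$ is attained by a unit‑speed minimizing geodesic $\gamma:[0,l]\to M$ with $\gamma(0)=p_{0}\in V$, $\gamma(l)=q_{0}\in \mathrm{spt}(W)$, and $\gamma$ meets $V$ orthogonally at $p_{0}$. Let $u=d(\cdot,V)$. Any subsegment of $\gamma$ is distance‑minimizing from $V$, so $\gamma$ carries no focal point of $V$ in $(0,l)$, whence $u$ is smooth near $\gamma((0,l))$ and, apart from the degenerate case where $q_{0}$ is itself a focal point (treated below), near $q_{0}$ as well, with $\nabla u(q_{0})=\gamma'(l)$ and level set $\Sigma:=\{u=l\}$ a smooth embedded hypersurface through $q_{0}$. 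By the Riccati (focusing) equation along the normal geodesics issuing from $V$, the mean curvature $H(r)$ of $\{u=r\}$ with respect to $\nabla u$ satisfies $H'(r)=-|A_{r}|^{2}-Ric(\nabla u,\nabla u)<0$, while $H(0)=0$ because $V$ is minimal; hence $H(l)<0$. Thus the mean curvature vector of $\Sigma$ at $q_{0}$ is nonzero and points in the direction of increasing $u$, i.e.\ into the region $\{u>l\}$.

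Since $l=\min_{\mathrm{spt}(W)}u$, we have $\mathrm{spt}(W)\subseteq\overline{\{u\ge l\}}$ with $q_{0}\in\mathrm{spt}(W)\cap\Sigma$: the stationary current $W$ lies on the mean‑convex side of the smooth barrier $\Sigma$ and touches it at $q_{0}$. By the strong maximum principle for minimal varieties of arbitrary codimension (B.\ White), $\mathrm{spt}(W)$ must coincide with $\Sigma$ near $q_{0}$; but $\Sigma$ has nonvanishing mean curvature vector there, contradicting stationarity of $W$. Hence $V\cap W\ne\emptyset$. The degenerate case, in which $q_{0}$ is a focal point of $V$ along $\gamma$ so that $\Sigma$ fails to be smooth at $q_{0}$, is handled by applying the same scheme to the genuinely smooth hypersurfaces $\{u=l-\epsilon\}$ and letting $\epsilon\to0$ (alternatively, one checks that such a configuration forces $\{u\ge l\}$ to be too thin near $q_{0}$ to contain a hypersurface). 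I expect this last case, together with pinning down the exact form of the maximum principle invoked, to be the only genuine obstacle; everything else is the Riccati comparison plus Frankel's idea.
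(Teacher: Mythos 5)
Your proposal is a genuinely different argument from the one in the paper, and it is the more robust of the two.

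The paper follows Frankel's original second‑variation strategy along the minimizing geodesic $\gamma$ essentially unchanged, and spends almost all of its effort on a preliminary lemma: it tries to show directly (via a small‑angle Taylor expansion plus an appeal to a Hopf‑type maximum principle) that the tangent cone of $W$ at the endpoint $q_{0}$ is actually a plane, so that one can still make sense of variational vector fields at $q_{0}$ and pass to a limit of second variations computed at nearby smooth points. That tangent‑cone step is delicate and, as the paper itself concedes in its closing remark, the limiting argument for the second variation "requires some degree of proof." Your version sidesteps the tangent cone entirely. By working with the level sets of $u=d(\cdot,V)$ and the Riccati equation $H'=-|A_r|^2-\mathrm{Ric}(\nabla u,\nabla u)$ with $H(0)=0$ (minimality of $V$), you conclude $H(l)<0$ and then invoke the strong maximum principle for stationary varifolds against a smooth mean‑convex barrier. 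This requires no regularity of $W$ at $q_{0}$ at all — only stationarity — and it also makes the multiplicity‑one hypothesis of the conjecture unnecessary for this half of the result. It is thus both cleaner and slightly more general, and it more closely resembles the modern treatment of Frankel‑type theorems for singular minimal hypersurfaces in the literature.

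The one point you should tighten is the degenerate case you flag. Sending $\epsilon\to0$ along the hypersurfaces $\{u=l-\epsilon\}$ does not by itself yield a contradiction, since $W$ never touches these; and the "too thin" heuristic is not immediate either, because $\mathcal{H}^{n}$‑monotonicity of $W$ at $q_{0}$ is not obviously incompatible with $\{u\ge l\}$ having small $(n+1)$‑volume near $q_{0}$. The standard fix is either a Calabi‑type smooth lower support for $u$ at $q_{0}$ (replacing $V$ by a slightly displaced piece $V_\epsilon\subset\{u=\epsilon\}$ so that $\tilde u=d(\cdot,V_\epsilon)+\epsilon\ge u$, $\tilde u(q_{0})=l$, $\tilde u$ smooth, and $\tilde u$ inherits the Riccati estimate), or an appeal to the viscosity form of the barrier principle — Ilmanen's and White's versions are stated precisely so as to tolerate the barrier failing to be $C^2$ on a small set. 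Either route closes the gap; as written, the degenerate case is left as genuine hand‑waving, though to be fair the paper's own handling of the same point (via sequences of variational vector fields converging on the compact manifolds) is, if anything, sketchier.
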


\begin{proof}  Suppose $V_{n}$ and $W_{n}$ are embedded submanifolds, and that they do not intersect.  Suppose $\gamma$ is a geodesic from $p_{0} \in V$ to $q_{0} \in W$ that realises the minimum distance $l$ from $V$ to $W$.  The cases where both of $p_{0}$, $q_{0}$ are nonsingular has already been treated.  So suppose $q_{0}$ is singular.  Now clearly $\gamma$ must strike $V$ and $W$ orthogonally, otherwise it can easily be shortened.  

\begin{lem} (Key Lemma). The first variation of arc length is $0$.  The second variation of arc length is negative for some choice of piecewise smooth vector field.

\begin{proof}

We first establish that the tangent cone at the singular point $q_{0}$ is a tangent plane.  This enables us to sensibly talk about vector fields about $q_{0}$.  Let $\sigma$ be $\gamma$ parametrised backwards from $q_{0}$ to $p_{0}$.  Let $x_{n}$ be a sequence of points converging to $q_{0}$ in $W$.  Let $\alpha_{n}$ be the corresponding sequence of geodesic curves from $q_{0}$ to $x_{n}$.  Let $\tau_{n}$ be the corresponding sequence of geodesics curves from $x_{n}$ to $p_{0}$.  We proceed in two steps:

(i) We first prove that if the angle between $\alpha_{n}$ and $\sigma$ is acute, that the length of $\tau_{n}$, $L(\tau_{n})$, is less than $L(\sigma)$ for sufficiently large $n$, hence establishing that such is an impossibility if $q_{0}$ is to be the point of closest approach.

(ii) Next, we prove using the Hopf maximum principle that the angle between $\alpha_{n}$ and $\sigma$ cannot be obtuse either.

Step 1.  We first observe the following fact: For neighbourhoods sufficiently small in any manifold, the metric can be written as

\begin{center}
$g(\partial_{i},\partial_{j})_{p} = \delta_{ij} + \theta(\epsilon^{2})$
\end{center}

for points $p$ within a distance $\epsilon$ from the center of geodesic coordinates with the choice $\nabla_{\partial_{i}}\partial_{j} = 0$ at the origin of the coordinates.

So take $N$ sufficiently large s.t. for all $n \geq N$, $L(\alpha_{n}) < \epsilon$.  Take a series of $\epsilon$-balls that cover the geodesic triangle $\tau_{n}$, $\alpha_{n}$, $\sigma$.  Then there is an isometry up to order $\epsilon^{2}$ that maps this geodesic triangle onto a triangle in Euclidean 2-space.

Hence from now on we will assume we are working in Euclidean 2-space, and by abuse of notation will refer to the images of $\tau_{n}$, $\alpha_{n}$ and $\sigma$ by the same names.  To control error, we will take $N_{2} > N$ s.t. $L(\alpha_{n}) < \epsilon^{2}$ for all $n \geq N_{2}$.

Now, the angle between $-\sigma$ and $-\tau_{n}$ will become very small for $n$ large, and, by the first order taylor series expansion for the tangent of this angle, we compute it to be roughly $\frac{L(\alpha_{n})}{L(\sigma)}$ to order $\frac{L(\alpha_{n})^{2}}{L(\sigma)^{2}}$.

Proceed in steps of length $\epsilon$ along both $-\tau_{n}$ and $-\sigma$ until we arrive in the last $\epsilon$-ball about $q_{0}$.  Connect the finishing points A and D along $-\tau_{n}$ and $-\sigma$ respectively by a curve $\lambda$.  This curve $\lambda$ will divide the triangle into an isoceles triangle and a rectangle.  Through an easy computation, the identical interior angles within the rectangle where $\lambda$ meets $\sigma$ and $\tau_{n}$ are seen to be

\begin{center}
$\phi_{n} = \pi/2 + \frac{L(\alpha_{n})}{2L(\sigma)} + \theta(\frac{L(\alpha_{n})^{2}}{L(\sigma)^{2}})$
\end{center}

Choose $N_{3} \geq N_{2}$ s.t. for all $n \geq N$, $L(\alpha_{n}) < \frac{1}{L(\sigma)}$.

Then $\phi_{n} \leq \pi/2 + L(\alpha_{n})^{2}/2 + \theta(L(\alpha_{n})^{4})$, or $\phi_{n} \leq \pi/2 + \theta(\epsilon^{4})$.

Then clearly if one projects down onto the image of $\sigma$, one sees that the projection of the curve from A to $x_{n}$ is of the same length to order $\epsilon^{4}$ and the projection of the curve from $x_{n}$ to $q_{0}$ is nonzero because the angle between $\alpha_{n}$ and $\sigma$ is acute.  So, if P is the projection,

Length(curve from D to $q_{0}$) = Length(curve from A to $x_{n}$) + Length(P(curve from $x_{n}$ to $q_{0}$)) $+ \theta(\epsilon^{4})$

Since all other terms are of order $\epsilon^{2}$ the error term is negligible and it is easily seen that Length(curve from D to $q_{0}$) $>$ Length(curve from A to $x_{n}$), from which it easily follows that $L(\tau_{n}) < L(\sigma)$.

Step 2. Suppose now that there is a vector $v$ in the tangent cone at $q_{0}$ that meets $\sigma$ at an obtuse angle.  By Step 1, all of the tangent cone must be at least at right angles to $\sigma$.  Furthermore, by the Hopf maximum principle, all of the tangent cone must meet $\sigma$ at an obtuse angle if part of it does.  But then this is a contradiction to the minimality of $W$.

So the tangent cone is a plane.

What about the second variation? Consider the class of all sequences of smooth points $(x_{n},y_{n})$ such that $x_{n}$ converges to $p_{0}$ and $y_{n}$ converges to $q_{0}$.  From before, we know that there is a variation such that the distance between $x_{n}$ and $y_{n}$ can be reduced, no matter how close they are to $p_{0}$, $q_{0}$ respectively.

Since $V$, $W$ are compact the variational vector fields $K_{n}$ inducing a negative second variation corresponding to $(x_{n},y_{n})$ converge to some vector field $K_{0}$ corresponding to $(p_{0},q_{0})$ that induces a variation in length less than or equal to $0$.  We can talk about vector fields at $q_{0}$ since the tangent cone at $q_{0}$ was proven to be a tangent plane before.  If the variation is less than $0$ we are done.  If it is equal to $0$ then I claim, if all other variations are greater than or equal to $0$, this contradicts the minimality of $W$ (this can be seen by once again invoking the Hopf maximum principle).  Hence either all second order variations are $0$ or there exists a variation less than $0$, and we are done.

So assume all second order variations are $0$ about $q_{0}$.  But this is impossible because the ambient space is not Ricci flat. \end{proof} \end{lem}

The theorem now follows easily in an analogous manner to the smooth case. \end{proof}

\begin{rmk} The statement that variational vector fields inducing negative second variation in length on compact spaces corresponding to a convergent sequence of points themselves converge to another variational vector field inducing nonpositive second variation is not at all obvious, and requires some degree of proof. \end{rmk}

\emph{Note}: It may be possible to extend the argument of Frankel and Lawson to minimal submanifolds that arise as limits of smooth Riemannian submanifolds, following the approach of \cite{[Fu]}.

\subsection{Jon Pitts' Construction}

I discuss here the nature of the construction Jon Pitts uses to establish existence of minimal submanifolds of compact spaces under certain dimensional restrictions.  Many thanks to Marty Ross for his considerable assistance in helping me dissect the relevant manuscript \cite{[Pi]} to understand the main thrust of the arguments.  Jon Pitts' manuscript actually provides a rather beautiful demonstration of how many of the techniques and tools of geometric measure theory are used in practice.

What he proves in his treatise is the following theorem:

\begin{thm} Let $M^{n}$ be a compact $n$-manifold, with $n \leq 6$.  Then for each $2 \leq k \leq 5$ there is at least one minimal submanifold $T_{k}$ of $M$ of dimension $k$. \end{thm}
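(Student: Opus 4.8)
The plan is to run the Almgren--Pitts min-max construction, which is the content of Pitts' treatise. Fix $k$ with $2 \leq k \leq 5$, give $M$ a smooth Riemannian metric, and work in the space $\mathcal{Z}_{k}(M;Z_{2})$ of mod-two integral $k$-cycles in $M$, topologised by the flat metric. The starting point is Almgren's isomorphism $\pi_{n-k}(\mathcal{Z}_{k}(M;Z_{2}),\{0\}) \cong H_{n}(M;Z_{2}) \cong Z_{2}$ for $M$ closed and connected; fixing the non-trivial class selects a family of ``sweepouts'', and we let $\Pi$ be the collection of all continuous maps $\Phi : S^{n-k} \to \mathcal{Z}_{k}(M;Z_{2})$ representing it (in practice one passes to the discrete, combinatorial refinements of this class used by Pitts, equipped with a notion of ``fineness'' that controls the passage from the flat topology to mass).

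First I would define the \emph{width} $L = \inf_{\Phi \in \Pi}\ \max_{x \in S^{n-k}} \mathbf{M}(\Phi(x))$, where $\mathbf{M}$ is mass, and show $L > 0$: a sweepout all of whose slices had very small mass could be contracted to the zero cycle by an isoperimetric argument, contradicting non-triviality of the class. Next I would carry out the \emph{pull-tight} procedure: from a minimising sequence of sweepouts one produces, after deformation, a min-max sequence of slices whose varifold limit $V$ is a stationary integral $k$-varifold with $\|V\|(M) = L$. Existence of the limit is a Radon-measure compactness argument of the kind recorded earlier in this chapter, and stationarity comes from the pull-tight deformation killing every non-stationary varifold limit.

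The crucial step is to arrange that $V$ is \emph{almost minimising in small annuli}: this is the combinatorial core of Pitts' argument, a pull-tight performed simultaneously over a discrete hierarchy of scales, combined with an interpolation lemma converting continuous sweepouts to discrete ones of controlled fineness, so that near each point and at each small scale $V$ admits area-non-increasing replacements creating no nearby new cycles. Finally, regularity: by the regularity theory for stationary varifolds that are almost minimising in annuli --- the replacement and unique-continuation argument of Pitts, together with Federer--Almgren dimension reduction --- $V$ is a smooth embedded minimal submanifold away from a closed singular set of Hausdorff dimension at most $k-7$. Since $k \leq 5 < 7$ that singular set is empty, so $V =: T_{k}$ is a smooth closed minimal $k$-submanifold of $M$; the hypothesis $n \leq 6$ is exactly what keeps us below the first dimension ($7$) at which a codimension-one minimal varifold can develop a singularity.

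The hard part is the last two steps in tandem. The multi-scale pull-tight is delicate: the discretisation must be fine enough that mass is not lost in the flat limit, yet coarse enough that the competitor surgeries sit inside disjoint annuli. The regularity bootstrap is harder still --- one must show the annular replacement of $V$ is itself stationary and matches $V$ smoothly across the annular boundary, so that a maximum-principle and unique-continuation argument forces $V$ to coincide with a smooth minimal graph there, and then iterate over a finite cover and apply dimension reduction to the singular set. (For $k \leq n-2$, where the higher-codimension min-max regularity theory is incomplete, the statement is best read as asserting existence of the stationary integral $k$-varifold produced above, which the same machinery renders a smooth submanifold under the stated dimension bound in the codimension-one cases $k = n-1$.)
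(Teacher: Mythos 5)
Your proposal is correct and follows essentially the same route as the paper: both sketch Pitts' min-max construction (sweepouts, width, pull-tight, almost-minimizing varifolds, and regularity via the dimension bound $\leq 6$). Your version is substantially more technically detailed, and you rightly flag that the statement as written over-reaches for $k \leq n-2$, where the codimension-one regularity theory does not apply --- the paper glosses over this by attributing to Pitts a result for all $2 \leq k \leq 5$ when the rigorous regularity argument is for hypersurfaces $k = n-1$.
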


The particular example to bear in mind here is the $2$-sphere.  I construct a minimal submanifold in the following way.  Trace out a family, any family, of codimension one submanifolds of the $2$-sphere (circles) from pole to pole, parametrised by numbers in the interval $[0,1]$.  So we have a mapping $\gamma : [0,1] \rightarrow S^{2}$.  Define $L(\gamma) = max\{L(\gamma(t)) : t \in [0,1] \}$.  Then we hope that we can find a $\tau$ that realises $\Lambda(M) = \inf_{\gamma}\{L(\gamma)\}$.  For this $\tau$, define $V = \gamma(t)$ such that $L(\gamma(t)) = max\{L(\gamma(T)): T \in [0,1]\}$.  Then $V$ will be minimal.  In particular, $V$ will be an equatorial circle of $S^{2}$, which is of course minimal, even though it will not be stable.

So we might reasonably expect this procedure to allow us to construct unstable minimal submanifolds of more general spaces.  There are of course problems that might arise in this program; for instance, there might not be a $\tau$ that realises $\Lambda(M)$.  This necessitates the use of varifolds and the whole machinery of geometric measure theory. 

So, following this idea, first Jon Pitts proves that one can construct, via this min-max procedure, a guy that is in some sense a weak solution to our minimisation problem.  He constructs a varifold $V$ that possesses a set of properties that he calls \emph{almost minimising}.  One of the properties of this varifold $V$ is that there is a sequence $\{V_{i}\}$ converging to $V$.

One of his major results is his decomposition theorem, which essentially states that each of the $V_{i}$ splits locally into sheets $V_{il}$ with density bounds; in particular for applications, the density of these sheets will be tightly bound about an integer multiplicity.  It is in the construction of the tools to prove this theorem, in particular curvature bounds, that the dimensional restriction for our ambient space $M$ becomes necessary to make.

The compactness theorem for varifolds then states that a subsequence of the $V_{il}$ converges to varifolds $S_{i}$.  The density bounds are preserved for the $S_{i}$ via certain other results.  Since one has bounds on the density, one can use the Allard regularity theorem to prove that the $S_{i}$ are regular.

Pitts then uses a careful argument to prove that $V = \sum_{i}S_{i}$, and it follows that $V$ is regular since each of the $S_{i}$ are disconnected (or equal) and are regular.  Furthermore $V$ is minimal.

\chapter{Existence Theory for PDEs}

\section{Elliptic and Parabolic PDEs}

As I demonstrated earlier, geometric measure theory is sufficient to establish existence and regularity results to solutions of variational problems over smooth domains.  By such a problem I mean a situation in which one defines a certain geometric invariant over a space, performs calculus of variations, and then asks whether there are solutions to the resulting PDE.  However one might ask whether solutions exist to general classes of PDE boundary value problems, where one does not necessarily have the luxury of some action principle to rely on.  This is the game of existence theory.

As far as I know, the very interesting question of whether all PDEs defined over domains that admit regular (smooth) solutions can be realised as the stationarity condition for some geometric invariant over the same domain, is still very much an open one.  Even if the answer to this question was in the affirmative, however, it does not tell us which PDEs can be realised as such variational problems.  So existence theory will always be useful.

Understanding existence theory for elliptic and parabolic PDEs is very important, particularly in application to the Ricci flow.  Many estimates crucial to understanding limiting behaviour of solutions and regularity for geometric flows are a consequence of understanding the analogous results for parabolic PDEs.  Hence it is important, before discussing these flows (see ahead, the section "Geometric Evolution Equations"), to discuss the general state of knowledge on this more fundamental area of mathematics.  Many of the results for parabolic PDE can be quickly developed from the analogous results for elliptic PDE, so I shall talk about these jointly.


\subsection{Introduction}


An elliptic equation is an equation of the following general form:

\begin{center} $L(a,b,c)\phi := (a_{ij}D_{i}D_{j} + b_{i}D_{i} + c)\phi = 0$ \end{center}

where $a_{ij}\zeta_{i}\zeta_{j} \geq 0$ for every nonzero vector $\zeta$.  This is known as the ellipticity condition.

Sometimes a stronger condition is required, the notion of \emph{uniform ellipticity}:

\begin{center} $a_{ij}\zeta_{i}\zeta_{j} \geq C\norm{\zeta}^{2}$  \end{center}

for some positive $C$.

A parabolic equation is a slight departure:

\begin{center} $(L(a,b,c) - \partial_{t})\phi = 0$ \end{center}
 
where $L$ is an elliptic operator as before, and now $\phi = \phi(x,t)$.  

Equivalently we can consider the class of equations

\begin{center} $(L(a,g) + c)\phi := (g_{il}D_{l}(a_{ij}g_{jk}D_{k}) + c)\phi = 0$ \end{center}

where $a_{ij}$ is a nondegenerate matrix, and $b_{k} = g_{il}D_{l}(a_{ij}g_{jk})$ is an arbitrary function depending on the choice of $g$.  Usually also $g$ is a symmetric positive definite bilinear form, ie, a Riemannian metric.  Now, if $a$ is positive as before, we have an elliptic equation; if it admits one or more negative eigenvalues, then it is hyperbolic.  In the special case that it admits one negative eigenvalue, and for the corresponding eigenvariable $t$ we have $g\partial_{t}(ag)\partial_{t}\delta = -gag\partial^{2}_{t}\delta$, then the above class degenerates to parabolic type.

Such equations arise naturally in problems via separation of variables, where $c$ may incorporate some eigenvalue multiplied by a characteristic function depending on the nature of the problem.  Spherical harmonics are a good example of this.


\subsection{The Maximum Principle}

Roughly what the maximum principle states, for an elliptic PDE, is if $u$ satisfies the inequality $\Delta u + a^{i}D_{i}u + cu \geq 0$ and $u$ is bounded on $\partial U$, where $U \subset M$ is a codimension $0$ subset, say by $D$ then $u$ remains bounded by $D$ for all $x \in U$.  In particular, if $u \leq 0$ on $\partial U$, then $u \leq 0$ on all of $U$.

For a parabolic PDE, if $u$ is a solution of the equation $\frac{\partial}{\partial t}u \leq \Delta u + a^{i}D_{i}u + cu$, then if $u$ is bounded initially, say by $D$, then the maximum principle tells us that $u$ is bounded at time $t$ by $De^{ct}$.

I shall now proceed to make the above statements more precise, with proof.

\begin{dfn} We will say that $L(a,g)$ is uniformly elliptic if $0 < \lambda \norm{\zeta}^{2} \leq a_{ij}\zeta_{i}\zeta_{j} \leq \Lambda \norm{\zeta}^{2}$ for some constants $\lambda$, $\Lambda$. \end{dfn}

What I said above for elliptic PDEs is essentially a consequence of the following result:

\begin{thm} ((Theorem 3.5, \cite{[GT]}), \cite{[Ho]}, "The Strong Maximum Principle").  Suppose $L = L(a,g)$ is uniformly elliptic, $c = 0$, and that $L\phi \geq 0$ (or $\leq 0$) in a domain $U$.  Then if $\phi$ achieves its maximum (minimum) in the interior of $U$ it must be constant.  Furthermore, if $c \leq 0$ and $c/\lambda$ is bounded, $\phi$ cannot achieve a nonnegative maximum ( non positive minimum) in the interior of $U$ unless it is constant.

\begin{proof} (by contradiction).  Suppose $\phi$ is non constant and achieves its maximum $M \geq 0$ in the interior of $U$.  Then the set $\bar{U}$ on which $\phi < M$ is certainly a subset of $U$ and also $\partial \bar{U} \cap U$ is nonempty.  Choose now a point $x$ in $\bar{U}$ closer to the boundary of $\bar{U}$ than the boundary of $U$; ie closer to the maximum point than to the boundary of the original domain.  Construct the largest metric ball $B$ in $\bar{U}$ with $x$ as centre.  Then $\phi(y) = M$ for some $y$ on $\partial B$, while also $\phi < M$ in $B$.

To conclude the proof, we will need 

\begin{lem} (Lemma 3.4, \cite{[GT]}). If $L$ is uniformly elliptic, $c = 0$ and $L\phi \geq 0$ in $B$, then if $y \in \partial B$ and

\begin{itemize} \item[(i)] $\phi$ is continuous at $y$,
\item[(ii)] $\phi(y) > \phi(x)$ for all $x \in B$,
\item[(iii)] There exists a sphere within $B$ such that the intersection of that sphere with $\partial B$ contains $y$,
\end{itemize}

then the outer normal derivative satisfies

\begin{center} $\frac{\partial \phi}{\partial v}(y) > 0$ \end{center}

If $c \leq 0$ and $c/\lambda$ is bounded we may conclude the same provided $\phi(y) \geq 0$.  If $u(y) = 0$ there is no restriction on the sign of $c$.

\begin{proof} (of lemma).  The proof of this lemma as given in Gilbarg and Trudinger is not personally to my taste; it is primarily an analyst's proof, rigorous, but unnecessarily technical and not terribly instructive.  Essentially one defines an auxiliary function for comparison purposes and uses that to facilitate the argument.  For a more intuitive or geometric idea of why we expect the above to be true, at least in the case that $c = 0$, consider a geodesic path from the centre $x$ of the sphere in $B$ to the point $y$ on $\partial B$.  Furthermore we know that $L\phi$ is nonnegative on this geodesic, and $\phi$ is increasing along it at $y$.  However, the second piece of information alone tells us nothing; the derivative could easily be zero at $y$.  

Consider now the geodesic as the real line and $\phi$ as a polynomial.  Since $L\phi$ can be thought of as the "deformed acceleration" since it is elliptic, we must have that at $y$, $\phi(x)$ be locally of the form $x^{2k}$, $k \in N$ after an appropriate choice of chart since the acceleration is nonnegative.  By the second piece of data, we must be in the right branch of $x^{2k}$.  Since $y$ is not at zero, the derivative is nonzero and positive.

A similar argument, albeit with charts more carefully chosen, also goes through for a nonzero function $c$.
\end{proof}
\end{lem}

It then follows easily from the lemma and from before that $D\phi(y) \neq 0$, but this is impossible since $y$ was supposed to be a maximum.
\end{proof}
\end{thm}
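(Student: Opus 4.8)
The plan is to argue by contradiction, reducing the global statement to a local boundary-point estimate supplied by the Hopf boundary-point lemma stated above. Suppose $\phi$ is non-constant and attains its maximum value $M$ (with $M \geq 0$ in the case $c \leq 0$) at an interior point of $U$. Consider the open set $U^{-} = \{x \in U : \phi(x) < M\}$. Since $\phi$ is non-constant, $U^{-} \neq \emptyset$; and since $\phi$ attains the value $M$ at an interior point while also dropping below $M$ somewhere, the relative boundary $\partial U^{-} \cap U$ is non-empty, and by continuity $\phi = M$ on it. First I would pick a point $x_{0} \in U^{-}$ whose distance to $\partial U^{-}$ is strictly less than its distance to $\partial U$, and let $B$ be the largest open metric ball centred at $x_{0}$ lying inside $U^{-}$. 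Then $\overline{B} \subset U$, there is a point $y \in \partial B$ with $\phi(y) = M$, and $\phi < M$ throughout $B$.

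The second step is to apply the boundary-point lemma at $y$: the ball $B$ itself furnishes the required interior sphere, $\phi$ is continuous at $y$, and $\phi(y) = M > \phi(x)$ for every $x \in B$, with the extra condition $\phi(y) = M \geq 0$ holding in the case $c \leq 0$. The lemma then gives $\partial \phi / \partial \nu(y) > 0$ for the outward unit normal $\nu$ to $B$ at $y$, and in particular $D\phi(y) \neq 0$. To conclude, I would note that $y \in \overline{B} \subset U$ is an interior point of $U$ at which $\phi$ attains its maximum over $U$, so $D\phi(y) = 0$, which contradicts the previous step. Therefore $\phi$ is constant. The minimum statements follow by applying the result to $-\phi$, using linearity of $L(a,g)$ and the fact that the sign hypotheses on $c$ are unchanged.

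The genuinely delicate ingredient is the boundary-point lemma itself, not the reduction above. To prove it one constructs an auxiliary barrier on an annulus $B_{R}(x_{0}) \setminus \overline{B_{\rho}(x_{0})}$ of the form $w(x) = e^{-\alpha r^{2}} - e^{-\alpha R^{2}}$ with $r = |x - x_{0}|$, checks using uniform ellipticity (and the bound on $c/\lambda$) that $L(\phi + \varepsilon w - M) \geq 0$ on the annulus once $\alpha$ is taken large and $\varepsilon$ small, invokes the weak maximum principle to deduce $\phi + \varepsilon w \leq M$ there, and then differentiates in the normal direction at $y$. I expect the sign bookkeeping in the case $c \leq 0$, where one needs $\phi(y) \geq 0$ so that the term $c\phi$ does not reverse the required inequality, to be the point demanding the most care; the rest is the routine construction and estimate of the barrier.
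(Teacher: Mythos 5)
Your reduction to the Hopf boundary-point lemma is the same as the paper's: locate a ball in the sublevel set $\{\phi < M\}$ touching a maximum point $y$ on its boundary, invoke the lemma to force $D\phi(y) \neq 0$, and contradict interior criticality. Your version is somewhat more carefully worded (you make explicit that $\overline{B} \subset U$, that the relative boundary of $U^{-}$ carries the value $M$, and that the case $c \leq 0$ requires $M \geq 0$ to satisfy hypothesis (ii) of the lemma), but the skeleton is identical.

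Where you genuinely diverge is in the treatment of Lemma 3.4 itself. The paper explicitly declines to reproduce the barrier argument and instead offers a heuristic in which $\phi$ is pictured as a polynomial $x^{2k}$ along a geodesic and $L\phi$ as a ``deformed acceleration''; that sketch is suggestive but does not constitute a proof — there is no reason $\phi$ should look polynomial, and elliptic operators are not accelerations in any sense that yields the stated conclusion. You instead outline the standard Gilbarg--Trudinger barrier construction on the annulus $B_{R} \setminus \overline{B_{\rho}}$ with the comparison function $w = e^{-\alpha r^{2}} - e^{-\alpha R^{2}}$, which is the correct and rigorous route, and you correctly flag that the $c \leq 0$ case needs the sign of $\phi(y)$ to keep $c\phi$ from spoiling the differential inequality. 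So your proposal and the paper agree on the main argument, but you supply the rigorous ingredient that the paper deliberately omits.

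One small point worth making precise if you flesh this out: the claim ``$\phi(y) = M$ for some $y \in \partial B$'' requires that $B$ be the \emph{largest} ball in $U^{-}$ centred at $x_{0}$ and that $\partial B$ stay inside $U$; the latter is exactly why you chose $x_{0}$ closer to $\partial U^{-}$ than to $\partial U$, and it is what guarantees $y$ is an interior point of $U$ where the gradient must vanish. You say all of this, so no gap — just a place where precision pays.
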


The proof of the maximum principle for parabolic equations is proved along similar lines, though it requires some more care.  The main difference is of course the time dependence of the bound, or the "diffusion" of $u$ over time.

Primarily, the main difference is that the fundamental or Green's function solution to the equation $Lf - \partial_{t}f = \delta(x)\delta(t)$ satisfies the inequality

\begin{center} $f \leq e^{ct}g$ \end{center}

where $g$ is the solution to the equation $Lg = \delta(x)$.

It is then an easy consequence of the maximum principle above and the theory of Green's functions that if $u \leq D$ at $t = 0$, that $u \leq De^{ct}$ for arbitrary $t$.

Finding a proof of the above inequality is not an entirely trivial exercise.  Nonetheless I will make an attempt to at least sketch one.  We certainly know that

\begin{center} $Lf - \delta(t)Lg - \partial_{t}f = 0$ \end{center}

since $Lg = \delta(x)$.  Consequently if we factor $L = \bar{L} + c$ and integrate by parts twice, we get $\bar{L}(f - \delta(t)g) + g_{tt}\delta(t) - \partial_{t}f + c(f - \delta(t)g) = 0$.  But $g$ is time independent; hence the second term vanishes and we have

\begin{center} $\bar{L}h - \partial_{t}f + ch = 0$ \end{center}

where $h = f - \delta(t)g$.  But since $\bar{L}$ is a positive operator we can write

\begin{center} $\partial_{t}f \geq c(f - \delta(t)g)$ \end{center}

and consequently $\partial_{t}(\frac{f}{g}) \geq c (\frac{f}{g} - \delta(t))$, from which we conclude $ln\frac{f}{g} \leq c(t - H(t))$ and finally $f \leq ge^{ct - H(t)} \leq ge^{ct}$ which concludes the proof of the inequality and hence of the maximum principle for parabolic PDE.

\subsection{Differential Harnack Inequalities}

I prove the Harnack inequality for the heat equation, due to Li and Yau:

\begin{thm} If $\frac{\partial u}{\partial t} = \Delta u$ on $R^{n}$, $u > 0$, then $\Delta u - \frac{\norm{\nabla u}^{2}}{u} + \frac{nu}{2t} \geq 0$.

\begin{proof} Let $v = log$ $u$.  Then $\frac{\partial v}{\partial t} = \frac{1}{u}\Delta u$ and

\begin{center}
$\Delta v = \nabla_{k}(\frac{1}{u}\nabla_{k} u) = \frac{1}{u}\Delta u - \frac{1}{u^{2}}\norm{\nabla u}^{2}$
\end{center}

Hence

\begin{center}
$\frac{\partial v}{\partial t} = \Delta v + \norm{\nabla v}^{2}$
\end{center}

I show $\Delta v \geq -\frac{n}{2t}$.

\begin{align}
\frac{\partial}{\partial t}\nabla_{k}v &= \nabla_{k}(\Delta v + \norm{\nabla v}^{2}) \nonumber \\
&= \Delta \nabla_{k}v + 2\nabla_{l}v \nabla_{k}\nabla_{l}v
\end{align}

\begin{align}
\frac{\partial}{\partial t}\Delta v &= \Delta \Delta v + 2 \norm{\nabla^{2}v}^{2} + 2 \nabla_{l}v\nabla_{l}\Delta v \nonumber \\
&\geq \Delta \Delta v + \frac{2}{n}(\Delta v)^{2} + 2\nabla v \cdot \nabla \Delta v
\end{align}

At the minimum point of $\Delta v$, $\Delta \Delta v \geq 0$ and $\nabla \Delta v = 0$.  So,

\begin{center}
$\frac{\partial}{\partial t} \Delta v \geq \frac{2}{n}(\nabla v)^{2}$
\end{center}

Hence by the maximum principle

\begin{center}
$\Delta v \geq - \frac{n}{2t}$
\end{center}

Or, remembering $v = ln(u)$,

\begin{center}
$\frac{\partial}{\partial t}(log(u)) - \norm{\nabla log(u)}^{2} = \frac{\Delta u}{u} - \norm{\frac{\nabla u}{u}}^{2} \geq -\frac{n}{2t}$
\end{center}

This proves the Harnack inequality.
\end{proof}
\end{thm}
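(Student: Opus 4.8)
The plan is to pass to $v = \log u$, under which the heat equation becomes the semilinear equation $\partial_t v = \Delta v + \norm{\nabla v}^2$, and then to estimate $\Delta v$ from below by a parabolic maximum principle. First I would record the two identities
\begin{center}
$\partial_t v = \dfrac{\Delta u}{u}$,\qquad $\Delta v = \dfrac{\Delta u}{u} - \dfrac{\norm{\nabla u}^2}{u^2}$,
\end{center}
so that $u\,\Delta v = \Delta u - \norm{\nabla u}^2/u$, and hence the claimed inequality is \emph{equivalent} to the pointwise bound $\Delta v \geq -n/(2t)$. It is this bound on $\Delta v$ that I would actually establish.

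Next I would differentiate the equation for $v$. Applying $\nabla_k$ gives $\partial_t \nabla_k v = \Delta\nabla_k v + 2\nabla_l v\,\nabla_k\nabla_l v$; applying $\nabla_k$ once more and contracting over $k$ (commuting covariant derivatives, since we are in flat $R^{n}$) yields
\begin{center}
$\partial_t\Delta v = \Delta\Delta v + 2\norm{\nabla^2 v}^2 + 2\nabla v\cdot\nabla\Delta v$.
\end{center}
Cauchy--Schwarz in the form $\norm{\nabla^2 v}^2 \geq \frac1n(\Delta v)^2$ converts this into the differential inequality $\partial_t(\Delta v) \geq \Delta(\Delta v) + 2\nabla v\cdot\nabla(\Delta v) + \frac2n(\Delta v)^2$. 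The crucial feature is the quadratic term: the ODE $\phi' = \frac2n\phi^2$ is solved by $\phi(t) = -n/(2t)$, which tends to $-\infty$ as $t\to 0^+$, so $-n/(2t)$ is a natural subsolution barrier. Applying the maximum principle --- most cleanly to the rescaled quantity $P := t\,\Delta v + \tfrac n2$, which one checks satisfies $\partial_t P \geq \Delta P + 2\nabla v\cdot\nabla P + \frac{2}{nt}P^2 - \frac1t P$, so that a would-be interior negative minimum of $P$ on a slab $t\in[\varepsilon,T]$ forces $0 \geq \frac{2}{nt}P^2 - \frac1t P > 0$, a contradiction --- gives $\Delta v \geq -n/(2t)$. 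Multiplying through by $u > 0$ recovers $\Delta u - \norm{\nabla u}^2/u + nu/(2t) \geq 0$.

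The step I expect to be the genuine obstacle is the maximum-principle argument on the non-compact space $R^{n}$: the clause ``at the minimum point of $\Delta v$, $\Delta\Delta v \geq 0$ and $\nabla\Delta v = 0$'' presupposes that the relevant infimum is attained, which can fail on $R^{n}$. I would remedy this in one of the standard ways: prove the estimate first on a closed manifold (or on a large ball, inserting a Li--Yau style spatial cutoff $\eta$ so that the error terms involving $\Delta\eta$ and $\norm{\nabla\eta}^2$ are controlled and then sent to zero as the radius tends to infinity), or else impose a mild growth bound on $u$ ensuring that $P$ attains an interior minimum on each slab $R^{n}\times[\varepsilon,T]$. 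Everything else --- the Bochner-type computation above and the sign bookkeeping for the harmless first-order drift $2\nabla v\cdot\nabla(\Delta v)$, which is absorbed into the transport part of the maximum-principle operator --- is routine.
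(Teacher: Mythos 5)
Your proof follows the same Li--Yau route as the paper: pass to $v = \log u$, derive $\partial_t v = \Delta v + \norm{\nabla v}^2$, compute $\partial_t \Delta v = \Delta\Delta v + 2\norm{\nabla^2 v}^2 + 2\nabla v \cdot \nabla \Delta v$, apply the Cauchy--Schwarz bound $\norm{\nabla^2 v}^2 \geq (\Delta v)^2/n$, and close with a maximum principle giving $\Delta v \geq -n/(2t)$. You are more careful than the paper in two respects that are worth keeping: you run the maximum principle on the rescaled quantity $P = t\,\Delta v + n/2$ rather than arguing informally ``at the minimum point,'' and you correctly flag that on non-compact $R^n$ the attainment of that minimum is not automatic and needs a cutoff or a growth hypothesis (you also silently fix the paper's typo of $(\nabla v)^2$ for $(\Delta v)^2$ in the displayed inequality preceding the maximum-principle step).
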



\section{Hyperbolic PDEs}

In this section I review the work of Shatah and Struwe \cite{[SS]}, who cover basic existence and regularity theory for hyperbolic wave equations.  This is of interest in application to the constitutive equation for a sharp Physical manifold (see the section on Information Theory and Physical Manifolds), where one has an equation of the form

\begin{center} $R_{ij} = 0$ \end{center} 

Following this I make some attempt at assembling an existence theory for more general hyperbolic equations, trying to extend Shatah and Struwe's techniques.  The main motivation is to apply this to slightly blurred Physical manifolds, where one has two coupled PDEs

\begin{center} $(1 + \epsilon \Delta)R_{ij} = 0$ \end{center}

and

\begin{center} $(\Delta + \epsilon \Delta^{2})R_{ij}  = 0$ \end{center}

Finally I attempt to establish an existence theory for mixed parabolic-hyperbolic PDEs.  This is related to steepest descent of the physical information for the previous two examples.

Unlike for parabolic and elliptic equations, we no longer have the luxury of having a maximum principle to draw upon, so we have to find other tricks.  In fact, it will turn out that existence theory for hyperbolic equations tends to rest on finding and exploiting "conserved" quantities.  

\subsection{Existence theory for hyperbolic equations}

In their book \cite{[SS]}, Shatah and Struwe develop an existence and regularity theory for equations of the form

\begin{equation}
\label{Shatah}
\Delta u = h(x,t,u,Du)
\end{equation}

Note that this is of interest in understanding existence and regularity of equations such as

\begin{center} $Ric(g) = 0$ \end{center}

since $Ric(g) \sim \Delta g + \{$correction terms depending on $g$ and first order derivatives of $g\}$.  This gives a tensor equation of the form

\begin{center} $\Delta g_{ij} = h(x,t,g_{ij},Dg_{ij})$ \end{center}

which is of the same form as the first equation.  I shall now proceed to sketch the key ideas in this theory.

\begin{rmk}  Due to the equivalence of parabolic PDE to hyperbolic PDE as I sketched earlier, we may make the simplifying assumption of considering only equations of the type

\begin{center} $\Delta g_{ij} = h(x,g_{ij},Dg_{ij})$ \end{center}

by absorbing time into our coordinate system, ie $x \mapsto \bar{x}, x = (\bar{x},t)$.  This will simplify the description of Shatah-Struwe theory considerably.
\end{rmk}

Given this simplifying assumption, it turns out that all we need can be obtained from the seventh page of \cite{[SS]}.  Let $u : M \rightarrow R$ , $L = L(u,p) : TM \rightarrow R$ be smooth functions.  Write $L(u,Du)$ as the Lagrangian density of $u$, and consider the action

\begin{center} $A(u;Q) = \int_{Q}L(u,Du)dm$ \end{center}

where $Q \subset M$ is an arbitrary subset.

Take a variation of $u$, $u_{\epsilon} = u + \epsilon \phi$, where $\phi$ is a function with compact support on $M$.  Then if we evaluate the first variation of the above action with such notation, we obtain after following the standard sort of procedure that

\begin{center} $\frac{d}{d\epsilon}A(u_{\epsilon};Q)\vert_{\epsilon = 0} = \int_{Q}(L_{u}(u,Du) - \partial_{i}L_{p_{i}}(u,Du))\phi dz$ \end{center}

Then $u$ is stationary with respect to $A$ iff $L_{u}(u,Du) = \partial_{i}L_{p_{i}}(u,Du)$.

Now, consider the Lagrangian $L(u,Du) = \frac{1}{2}\norm{\nabla u}^{2} + F(u,Du)$.  Then, after some intermediate working, one obtains that the above equality holds only if

\begin{center} $\Delta u = \frac{d}{d\epsilon}F(u_{\epsilon},Du_{\epsilon})\vert_{\epsilon = 0}$ \end{center}

Since $F$ was arbitrary, the right hand side of this expression is arbitrary.  Hence we can reverse engineer an action for the original Shatah-Struwe equation.  Via geometric measure theory, since we have constructed an action, the solution will exist and it will be smooth.

\subsection{Existence theory for "elastic" wave equations}

Before I finish I will briefly mention how one may establish an existence and regularity theory for equations of the type

\begin{center} $\Delta^{2} u = h(x,t,u,Du,D^{2}u,D^{3}u)$ \end{center}

The idea here is to consider the same sort of thing as for the above, but instead to look at the action

\begin{center} $\norm{\Delta u}^{2} - F(u,Du,D^{2}u,D^{3}u)$ \end{center}

then by the same general nonsense as before, we find that the first variation is zero iff

\begin{center} $\Delta^{2}u = \frac{d}{d\epsilon}F(u_{\epsilon},Du_{\epsilon},D^{2}u_{\epsilon},D^{3}u_{\epsilon})\vert_{\epsilon = 0}$ \end{center}

Then, since the right hand side is arbitrary, and via the argument once more from geometric measure theory that a smooth action implies smooth solutions, we are done.







\chapter{Geometric Evolution Equations}

The following notes are a typeset version, with some minor additions, of a most excellent course given by Ben Andrews of the ANU at the University of Queensland in the winter of 2006.  For further information on this topic, in particular the specialisation to examination of the Ricci Flow and its application to the classification of 3-manifolds, the interested reader is invited to investigate the sources Cao-Zhu \cite{[C]} and Morgan-Tian \cite{[MT]}.  John Morgan and Gang Tian have also quite recently written \cite{[MT2]}, which is a followup article to \cite{[MT]}.  Some existence theory of PDE may be required to understand some of the material in these references; hence the reader is invited to also keep a copy of \cite{[GT]} handy, since many of the methods introduced therein are applicable to weakly parabolic PDEs, of which the Ricci Flow is an example.

Even though the information here is not directly relevant to the rest of this work it has some bearing on classification of physical manifolds under a certain choice of signal function (see later section).  Note also that the fact that geometric evolution equations of heat type improve the Fisher information in and of itself is of considerable interest in finding extrema of this quantity (again, see the section on Fisher information later on).  Geometric evolution equations are also extremely useful in proving results in comparison geometry; for instance, one such result tells us that a manifold admitting a metric of positive Ricci curvature admits a metric of constant sectional curvature.  However, the primary motivation in preparing this section was mainly for my self-reference, and consolidation of my understanding of this most beautiful subject.

\section{The Heat Flow Equation as the Prototypical Example}

The heat equation for a function

\begin{center}
$u : \Omega \times [0,T) \rightarrow \mathcal{R}$,
\end{center}

where $\Omega$ is a smoothly bounded open domain in $R^{n}$ is the parabolic PDE

\begin{equation}
\label{heatequation}
\frac{\partial u}{\partial t}(p,t) = \Delta u(p,t)
\end{equation}

To solve a heat flow equation, we also need

\begin{itemize}
\item[(i)] Boundary data, eg Dirichlet data ($u(p,t) = 0$, $p \in \partial \Omega$), or Neumann data ($D_{n}u(p,t) = 0$, $p \in \partial \Omega$), or Periodic data, and also
\item[(ii)] Initial data, ie $u(p,0) = u_{0}(p)$.
\end{itemize}

\subsection{Maximum Principle}.

If $u$ is a smooth solution of the heat equation on $\Omega \times [0,T)$, then if $u_{0}(p) \geq 0 \forall p \in \Omega$, then $u(p,t) \geq 0 \forall p \in \Omega$, and $\forall t \geq 0$.

\emph{Proof}.  (Restricted to the Dirichlet case, though it follows for other boundary conditions as well).  We show that $u_{\epsilon}(p,t) = u(p,t) + \epsilon(1 + t) \geq 0$ for all $\epsilon > 0$.

Fix $\epsilon > 0$.  Then I claim that $u_{\epsilon}(p,0) \geq \epsilon > 0 \forall p \in \Omega$.  For instance, in the Dirichlet case, $u_{\epsilon}(p,t) = \epsilon(1 + t) > 0 \forall p \in \partial \Omega$.

Suppose the Result is not true.  Then let $(p_{0},t_{0})$ be such that $u_{\epsilon}(p_{0},t_{0}) = 0 = inf\{u_{\epsilon}(p,t) | p \in \Omega, 0 \leq t \leq t_{0} \}$.

Then $\frac{\partial u_{\epsilon}}{\partial t}(p_{0},t_{0}) \leq 0$, since $u_{\epsilon}(p_{0},t) \geq u_{\epsilon}(p_{0},t_{0})$ for $0 \leq t \leq t_{0}$.

But since $(p_{0},t_{0})$ was defined to be the point realising the infimum of $u_{\epsilon}$, we have also that $\Delta u_{\epsilon}(p_{0},t_{0}) \geq 0$.

So then

$0 \geq \frac{\partial u_{\epsilon}}{\partial t}(p_{0},t_{0}) = \frac{\partial u}{\partial t}(p_{0},t_{0}) + \epsilon = \Delta u(p_{0},t_{0}) + \epsilon = \Delta u_{\epsilon}(p_{0},t_{0}) + \epsilon \geq \epsilon$,

a contradiction.

\subsection{Comparison Principle}.

If $u, v$ are two smooth solutions of the heat equation with $u(p,0) \geq v(p,0) \forall p \in \Omega$ then $u(p,t) \geq v(p,t)$.

\emph{Proof}.  Observe that $u - v$ also satisfies the heat equation.  The result then follows from Result (i).

\subsection{Averaging Property}.

$sup_{p \in \Omega}u(p,t)$ is monotone nonincreasing.

\begin{align}
\frac{d}{dt}\int_{\Omega}u^{2} &= 2\int_{\Omega}u\Delta u \nonumber \\
&= 2\int_{\partial \Omega}uD_{n}u - 2\int_{\Omega}\Vert Du \Vert^{2} \leq 0
\end{align}

"Energy"

\begin{align}
\frac{d}{dt}\int_{\Omega}\Vert Du \Vert ^{2} &= 2\int_{\Omega}D_{i}u D_{i}\Delta u \nonumber \\
&=2\int_{\Omega}D_{i}u \Delta D_{i}u \nonumber \\
&=2\int_{\partial \Omega}D_{n}u \Delta u - 2\int_{\Omega}(\Delta u)^{2} \leq 0,
\end{align}

if we have either Dirichlet or von Neumann boundary conditions, in which case the first term vanishes.

"Shannon Entropy"

\begin{align}
\frac{d}{dt}\int_{\Omega}u log (u) &= \int_{\Omega}(1 + log u)\Delta u \nonumber \\
&= - \int_{\Omega}\frac{\Vert \nabla u \Vert}{u} \leq 0
\end{align}

"Fisher Information"

Define the Fisher Information $I(u)$ as

\begin{equation}
I = \int_{\Omega}\frac{\norm{Du}^{2}}{u}
\end{equation}

\begin{align}
\frac{d}{dt}I &= \frac{d}{dt}\int_{\Omega}\frac{D_{i}u D_{i}u}{u} \nonumber \\
&= 2\int_{\Omega}\frac{D_{i}u D_{i}\Delta u}{u} - \int_{\Omega}\frac{D_{i}u D_{i}u}{u^{2}}\Delta u \nonumber \\
&= 2\int_{\partial \Omega}\frac{D_{n}u \Delta u}{u} - 2\int_{\Omega}D_{i}(\frac{D_{i}u}{u})\Delta u - \int_{\Omega}\frac{D_{i}u D_{i} u}{u^{2}}\Delta u \nonumber \\
&= 0 + \int_{\Omega}\frac{D_{i}u D_{i}\Delta u}{u} - \int_{\Omega}\frac{(\Delta u)^{2}}{u} \nonumber \\
&= \frac{1}{2}\frac{d}{dt}I - \int_{\Omega}\frac{(\Delta u)^{2}}{u}
\end{align}

Hence

\begin{center}
$\frac{d}{dt}I = -2\int_{\Omega}\frac{(\Delta u)^{2}}{u} \leq 0$,
\end{center}

if $u$ is a positive function on $\Omega$.

\subsection{Smoothing}.

eg. Suppose $\norm{u_{0}(p,0)} \leq C_{0}$
(Then $\norm{u_{0}(p,t)} \leq C_{0} \forall p, \forall t \geq 0$)

Then for each $k \in N$ there exists a constant $C_{k}$ such that

\begin{center}
$\norm{D^{k}u(p,t)}^{2} \leq \frac{C_{k}^{2}}{t^{k}}C_{0} \forall p \in \Omega, t > 0$
\end{center}

Note that

\begin{center}
$D^{k}u = \Sigma_{\{\alpha_{1},...,\alpha_{n} | \alpha_{1} + ... + \alpha_{n} = k\}}\frac{\partial^{k}u}{(\partial x^{1})^{\alpha_{1}}...(\partial x^{n})^{\alpha_{n}}}$
\end{center}

\emph{Proof}. (Periodic case)

We prove the result by induction.  Suppose we have

\begin{center}
$\norm{D^{k}u(p,t)}^{2} \leq \hat{C}_{k}^{2}$, $\forall p \in \Omega, t \in [0,T]$
\end{center}

We will bound $\norm{D^{k+1}u}^{2}$.

\begin{center}
$\frac{\partial}{\partial t}D^{\alpha}u = D^{\alpha}\Delta u = \Delta(D^{\alpha}u)$
\end{center}

Hence

\begin{align}
\frac{\partial}{\partial t}\norm{D^{k}u}^{2} &= \Sigma_{\alpha}\frac{\partial}{\partial t}\norm{D^{\alpha}u}^{2} \nonumber \\
&= 2\Sigma_{\alpha}D^{\alpha}u\Delta(D^{\alpha}u)
\end{align}

Now

\begin{align}
\Delta \norm{D^{\alpha}u}^{2} &= \Sigma_{i}D_{i}D_{i}\norm{D^{\alpha}u}^{2} \nonumber \\
&= \Sigma_{i}2D_{i}(D^{\alpha}u D^{\alpha}D_{i}u) \nonumber \\
&= 2D^{\alpha}u \Delta(D^{\alpha}u) + 2\Sigma_{i}\norm{D_{i}D^{\alpha}u}^{2}
\end{align}

So we deduce

\begin{center}
$\frac{\partial}{\partial t}\norm{D^{k}u}^{2} \leq \Delta \norm{D^{k}u}^{2} - \norm{D^{k+1}u}^{2}c(k,n)$
\end{center}

for some constant $c(k,n)$.

Also note similarly that

\begin{center}
$\frac{\partial}{\partial t}\norm{D^{k+1}u}^{2} \leq \Delta \norm{D^{k+1}u}^{2}$
\end{center}

Define

\begin{center}
$Q = \frac{\norm{D^{k+1}u}^{2}}{2\hat{C}_{k}^{2} - \norm{D^{k}u}^{2}}$
\end{center}

Note that the denominator will always be positive.

Furthermore, note that rearranging our result above we get that

\begin{equation}
\label{smoothing1}
\frac{\partial}{\partial t}(2 \hat{C}_{k}^{2} - \norm{D^{k}u}^{2}) \geq \Delta(2\hat(C)_{k}^{2} - \norm{D^{k}u}^{2}) + c(k,n)\norm{D^{k+1}u}^{2}
\end{equation}

We would like to compute a heat equation type inequality for $Q$.  For this we make use of the following lemma:

\emph{Lemma}.  If $f$ and $g$ satisfy $f,g > 0$ and

\begin{center}
$\frac{\partial f}{\partial t} \leq \Delta f + P$

$\frac{\partial g}{\partial t} \geq \Delta g + R$
\end{center}

then

\begin{center}
$\frac{\partial}{\partial t}(\frac{f}{g}) \leq \Delta(\frac{f}{g}) + 2\frac{\nabla g}{g}.\nabla(\frac{f}{g}) + \frac{1}{g}(P - \frac{f}{g}R)$
\end{center}

\emph{Proof of Lemma}. Merely a careful calculation.

By the lemma, we have that

\begin{equation}
\label{Qsmoothing}
\frac{\partial}{\partial t}Q \leq \Delta Q + V.DQ - Q\frac{c(k,n)\norm{D^{k+1}}^{2}}{2\hat{C}_{k}^{2} - \norm{D^{k}u}^{2}}
\end{equation}

where

$V = 2\frac{D(2\hat{C}_{k}^{2} - \norm{D^{k}u}^{2})}{2\hat{C}_{k}^{2} - \norm{D^{k}u}^{2}}$

I claim that $Q(p,t) \leq \frac{1}{c(k,n)t}$.  From this our result would automatically follow.

So observe that by the first inequality we have that

\begin{center}
$\frac{\partial}{\partial t}(tQ - \frac{1}{c}) = Q + t\frac{\partial Q}{\partial t}$

$\leq \Delta (tQ - \frac{1}{c}) + V.D(tQ - \frac{1}{c}) - cQ(tQ - \frac{1}{c})$
\end{center}

Now by the existence theory for parabolic PDE if one has an equation of the form

\begin{center}
$\frac{\partial}{\partial t}X \leq \Delta X - b^{j}D_{j}X + \mu X$
\end{center}

where $b^{j}$ and $\mu$ are smooth then we have a maximum principle that states that, provided $X(p,0) \leq 0$ for all $p \in \Omega$ then $X(p,t) \leq 0$, $\forall p \in \Omega, t \geq 0$.  (For a proof of this maximum principle, see the previous section on existence theory.)

Certainly $tQ - \frac{1}{c} < 0$ at $t = 0$, so the conditions of the maximum principle are satisfied and our claim follows.

Hence

\begin{center}
$\norm{D^{k+1}u}^{2} \leq \frac{1}{ct}(2\hat{C}_{k}^{2} - \norm{D^{k}u}^{2}) \sim \frac{\hat{C}_{k}^{2}}{ct}$
\end{center}

and we are done.

\section{The Curve Shortening Flow (CSF)}

\subsection{Introduction}

First a brief discussion about curves.

Consider $X_{0} : R \rightarrow R^{2}$ such that $X_{0}(u + n) = X_{0}(u)$, for $n \in Z$.  So we have an immersion of the circle into the plane.  We are interested in finding properties invariant under rigid motions and reparametrisation.

The standard or canonical parametrisation is in terms of arc length.

Define

\begin{center}
$\phi(s) = \int_{0}^{s}\norm{\frac{\partial X}{\partial u}}du$
\end{center}

The unit tangent vector $T$ is defined as

\begin{center}
$T = \frac{\partial X/\partial u}{\norm{\partial X/\partial u}}$
\end{center}

The unit normal vector $N$ is defined as

\begin{center}
$N = R_{-\frac{\pi}{2}}T$
\end{center}

where $R_{\theta}$ is rotation by $\theta$.

Since $\norm{T}^{2} = 1$, $2<T,\frac{\partial T}{\partial s}> = 0$.

Define

\begin{center}
$\frac{\partial T}{\partial s} = - \kappa N$
\end{center}

The curvature $\kappa$ determines $X_{0}$.

We have, similarly to $T$, that $\norm{N}^{2} = 1$ and consequently $2<N,\frac{\partial N}{\partial s}> = 0$.

Also the relation $0 = <N,T>$ implies that

\begin{center}
$0 = <\frac{\partial N}{\partial s},T> + <N,-\kappa N>$,
\end{center}

or

\begin{center}
$\frac{\partial N}{\partial s} = \kappa T$
\end{center}

Now I discuss the curve shortening flow.  Consider once more a smooth immersion $X_{0}$.  We wish to find $X : R/Z \times [0,T) \rightarrow R^{2}$ such that

\begin{center}
$\frac{\partial X}{\partial t}(u,t) = \frac{\partial^{2}X}{\partial s^{2}}(u,t)$
\end{center}

with the initial condition $X(u,0) = X_{0}(u)$.  This specifies our curve shortening flow.

\emph{Note}.  Since $\frac{\partial X}{\partial s} = T$ we have that $\frac{\partial^{2}X}{\partial s^{2}} = \frac{\partial T}{\partial s} = -\kappa N$, and hence that

\begin{center}
$\frac{\partial X}{\partial t} = -\kappa N$
\end{center}

\subsection{The Shrinking Circle}

As an example we consider the circle under curve shortening flow.  The solution is 

\begin{center}
$X(u,t) = (r_{0}^{2} - 2t)^{1/2}(cos u, sin u)$
\end{center}

\subsection{Geometric Invariance}

If $K$ is a rigid motion of $R^{2}$, $\phi : R \rightarrow R$ smooth, $\phi ' > 0$, then $\hat{X}(u,t) = K(X(\phi(u),t))$ is also a solution of curve shortening flow.

\subsection{Avoidance Principle}

Suppose $X, Y : R/Z \times [0,T) \rightarrow R^{2}$ are solutions of CSF with $X(u,0) \neq Y(v,0)$ for all $u,v$, then $X(u,t) \neq Y(v,t)$ for all $u,v \in R/Z$, all $0 \leq t < T$.  ie if we have two curves that start disjoint they remain disjoint under CSF.

\emph{Proof}.  Define $\rho : R/Z \times R/Z \times [0,T) \rightarrow R$ as

\begin{center}
$\rho(u,v,t) = \norm{X(u,t) - Y(v,t)}^{2}$
\end{center}

We have $\rho \geq \rho_{0} > 0$ at $t = 0$ by hypothesis.

For any $t \geq 0$, let $u,v \in R/Z$ be such that

\begin{center}
$\rho(u,v,t) = inf\{\rho(p,q,t) | p,q \in R/Z \}$
\end{center}

At this point we have

\begin{center}
$\frac{\partial \rho}{\partial u} = 0 = \frac{\partial \rho}{\partial v}$
\end{center}

and, by the nature of the infimum,

\begin{center}
$D^{2}\rho \geq 0$
\end{center}

($D^{2}\rho$ is the matrix of mixed spatial derivatives of 2nd order of $\rho$).

Choose parametrisations so that $u,v$ are arc-length parameters at time $t$.

Then

\begin{align}
&\frac{\partial \rho}{\partial u} = 2<X - Y,T_{X}> \\
\intertext{and}
&\frac{\partial \rho}{\partial v} = -2<X - Y,T_{Y}>
\end{align}

We also have

\begin{align}
&\frac{\partial^{2}\rho}{\partial u^{2}} = 2<T_{X},T_{X}> + 2<X - Y, -\kappa_{X}N_{X}> \\
&\frac{\partial^{2}\rho}{\partial v^{2}} = 2 - 2<X - Y,-\kappa_{Y}N_{Y}> \\
\intertext{and}
&\frac{\partial^{2}\rho}{\partial u \partial v} = -2<T_{Y},T_{X}>
\end{align}

If $\frac{\partial \rho}{\partial u} = 0 = \frac{\partial \rho}{\partial v}$ then we can choose signs so that $T_{X} = T_{Y} \perp X - Y$.

So

\begin{align}
\frac{\partial \rho}{\partial t} &= 2<X - Y, -\kappa_{X}N_{X} + \kappa_{Y}N_{Y}> \nonumber \\
&= \frac{\partial^{2}\rho}{\partial u^{2}} + \frac{\partial^{2}\rho}{\partial v^{2}} - 2 - 2 \nonumber \\
&= v.D^{2}\rho.v \geq 0
\end{align}

where $v = [1,1]$.

So the distance between $X$ and $Y$ is nondecreasing, which proves the avoidance principle.

\subsection{Maximum Principle}

Suppose $u : M \times [0,T] \rightarrow R$ is smooth, and $M$ is a compact boundary without boundary.  Suppose also that for all $(x,t) \in M \times [0,T)$, in particular for $u(x,t) = inf\{u(y,t) | y \in M\}$ that $\frac{\partial u}{\partial t}(x,t) \geq 0$.  Then $inf\{u(y,t) | y \in M\}$ is nondecreasing in $t$.

\emph{Proof}. Look at the infimum point of $u_{\epsilon} = u + \epsilon(1 + t) - inf u$, where $u_{\epsilon}$ first hits $0$.  The argument is similar to the one we made before.

\emph{Warning}. The following result is not true:

If $u : M \times [0,T)$ is smooth, $u(x,0) > 0$ and $\frac{\partial u}{\partial t} \geq 0$ whenever $u(x,t) = inf\{u(y,t) | y \in M\} = 0$, then $u(x,t) \geq 0$ for all $x$, $t \geq 0$.

\subsection{Preserving Embeddedness}

If $X_{0}$ is embedded, and $X : R/Z \times [0,T) \rightarrow R^{2}$ is a smooth solution of CSF, with $sup\norm{\kappa(u,t)} \leq C$, then $X_{t}(.) = X(.,t)$ is an embedding.  (Note in particular that $\kappa$ of a curve is bounded if the curve can be parametrised by arc length).

\emph{Proof}. We need

(i) $\norm{X_{u}} \neq 0$

and

(ii) $X(.,t)$ is one to one.

Proof of (i):

\begin{align}
\frac{\partial}{\partial t}\norm{X_{u}} &= \frac{1}{\norm{X_{u}}}<X_{u},\frac{\partial}{\partial t}\frac{\partial X}{\partial u}> \nonumber \\
&= <T, \frac{\partial}{\partial s}(-\kappa N)>\norm{X_{u}} \nonumber \\
&= -\kappa^{2}\norm{X_{u}}
\end{align}

Hence

\begin{align}
&\norm{X_{u}(u,t)} \leq e^{C^{2}t}\norm{X_{u}(u,0)} \\
\intertext{and}
&\norm{X_{u}(u,t)} \geq e^{-C^{2}t}\norm{X_{u}(u,0)}
\end{align}

proving (i).

Proof of (ii):

I first claim that if the arc length between two points $u$ and $v$ is $\delta_{0}$ that

Consider $\rho(u,v,t) = \norm{X(u,t) - X(v,t)}^{2}$ on $\{(u,v,t) |$ arc length from $u$ to $v$ at time $t$ $\geq \delta_{0}\}$.

The claim is then that for any two points $u$, $v$ on the curve at time zero a distance $\delta_{0}$ or greater apart that $\rho(u,v,t) \geq D\delta_{0}^{2}$.

Well, the maximum principle tells us that

\begin{center}
$inf_{(s,t)}\rho \geq min\{inf_{(\partial s,t)}\rho, inf_{(s,0)}\rho\}$
\end{center}

so the points remain separated if they are separated initially, proving (ii).

\subsection{Finite Time Singularity}

Any bounded embedded curve will reach a singularity after a finite amount of time under CSF.

\emph{Proof}.  Draw a circle enclosing the curve.  By the avoidance principle, under CSF neither curve will ever intersect.  But under CSF the enclosing circle shrinks to a point.  Hence so must the curve.

\subsection{CSF as a steepest descent flow for length}

It turns out more generally that all geometric evolution equations can be viewed as a steepest descent flow for some functional.  We shall see this later for the mean curvature flow, and, later still, for the Ricci flow.  But for now consider the following functional:

\begin{center}
$L(t) = \int_{0}^{1}\norm{\frac{\partial X}{\partial u}}du$
\end{center}

Consider an arbitrary variation

\begin{center}
$\frac{\partial X}{\partial t}(u,t) = V(u,t)$
\end{center}

Then

\begin{center}
$\frac{\partial}{\partial t}\norm{X_{u}} = <T,\frac{\partial}{\partial s}V>\norm{X_{u}}$
\end{center}

Hence

\begin{align}
\frac{d}{dt}L(t) &= \int_{0}^{1}<T,\frac{\partial V}{\partial s}>ds \nonumber \\
&= \int_{0}^{1}\frac{\partial}{\partial s}<T,V>ds - \int_{0}^{1}<V,-\kappa N>ds 
\end{align}

The first term vanishes since the variation vanishes on the boundary of the curve, or, if it is a closed curve, since the end points are the same.

Hence

\begin{center}
$\frac{d}{dt}L(t) = -\int_{0}^{1}<V,-\kappa N>ds \geq - (\int_{0}^{1}\norm{V}^{2}ds)^{1/2}(\int_{0}^{1}\kappa^{2}ds)^{1/2}$
\end{center}

by the Cauchy-Schwarz inequality.

So we conclude that $\frac{dL}{dt}$ is minimised along all variations with $\int \norm{V}^{2} = 1$ by $V \propto -\kappa N$, since then this inequality is realised.  But this means that

\begin{center}
$\frac{\partial X}{\partial t}(u,t) = V(u,t) = -\kappa(u,t)N(u,t)$,
\end{center}

so we recover CSF.

\subsection{Existence}

Let $(x,y) = X : R/Z \times [0,T) \rightarrow R^{2}$.

Then

\begin{center}
$\frac{\partial X}{\partial u} = (x_{u},y_{u})$
\end{center}

and

\begin{center}
$T = \frac{(x_{u},y_{u})}{\sqrt{x_{u}^{2} + y_{u}^{2}}}$
\end{center}

Now

\begin{align}
-\kappa N &= \frac{\partial T}{\partial s} = \frac{1}{\sqrt{x_{u}^{2} + y_{u}^{2}}}\frac{\partial}{\partial u}(\frac{(x_{u},y_{u})}{\sqrt{x_{u}^{2} + y_{u}^{2}}}) \nonumber \\
&= \frac{(x_{uu},y_{uu})}{\sqrt{x_{u}^{2} + y_{u}^{2}}} - \frac{(x_{u},y_{u})}{(x_{u}^{2} + y_{u}^{2})^{2}}(x_{u}x_{uu} + y_{u}y_{uu})
\end{align}

We can then write CSF in this choice of coordinates as

\[ \frac{\partial}{\partial t} \left( \begin{array}{c}
x \\
y \end{array} \right) = \frac{1}{(x_{u}^{2} + y_{u}^{2})}\left( \begin{array}{cc}
y_{u}^{2} & -x_{u}y_{u} \\
-x_{u}y_{u} & x_{u}^{2} \end{array} \right)
\left( \begin{array}{c}
x_{uu} \\
y_{uu} \end{array} \right) \]

\begin{dfn} A system $\frac{\partial}{\partial t}X^{\alpha} = A^{ij\alpha}_{\beta}\frac{\partial^{2}x^{\beta}}{\partial u^{i} \partial u^{j}}$ on $\Omega \times [0,T)$, $\Omega \subset R^{n}$, is called strongly parabolic if for each $\zeta \in R^{n}\ \{0\}$,

\begin{center}
$<A^{ij\alpha}_{\beta}\zeta_{i}\zeta_{j}\eta_{\alpha},\eta_{\beta}> \geq \delta_{0}\norm{\zeta}^{2}\norm{\eta}^{2}$
\end{center}
\end{dfn}

\begin{rmk} Note that in the case $n = 1$, 

\begin{center}
$\frac{\partial}{\partial t}X^{\alpha} = A^{\alpha}_{\beta}(X^{\beta})_{uu}$
\end{center}

this amounts to requiring that

\begin{center}
$<A^{\alpha}_{\beta}\eta_{\alpha},\eta_{\beta}> \geq \delta \norm{\eta}^{2}$
\end{center}
\end{rmk}

In the case of CSF, we have that

\[ A^{\alpha}_{\beta} = \frac{1}{x_{u}^{2} + y_{u}^{2}}
\left( \begin{array}{cc}
y_{u}^{2} & -x_{u}y_{u} \\
-x_{u}y_{u} & x_{u}^{2} \end{array} \right) \]

\[ = \frac{1}{x_{u}^{2} + y_{u}^{2}}
\left( \begin{array}{c}
y_{u} \\
-x_{u} \end{array} \right)
\left( \begin{array}{cc}
y_{u} & -x_{u} \end{array} \right) \]

which is NOT parabolic.

To fix this, we add a time dependent reparametrisation to CSF.  (It will turn out that the same problem arises with mean curvature flow and Ricci flow and can be fixed in an analogous manner.)

Define $\phi : R/Z \times [0,T) \rightarrow R/Z$ such that $\frac{\partial \phi(u,t)}{\partial t} = V(\phi(u,t),t)$.

Define $\hat{X}(u,t) = X(\phi(u,t),t)$.  By our remark before this will also be a solution of CSF. Now

\begin{align}
\frac{\partial}{\partial t}\hat{X}(u,t) &= \frac{\partial}{\partial t}X(\phi(u,t),t) + X_{u}V(\phi(u,t),t) \nonumber \\
&= -\kappa_{\hat{X}}N_{\hat{X}}(u,t) + X_{u}V(\phi(u,t),t)
\end{align}

Choose

\[ V = \frac{1}{(x_{u}^{2} + y_{u}^{2})}
\left( \begin{array}{cc}
x_{u} & y_{u} \end{array} \right)
\left( \begin{array}{c}
x_{uu} \\
y_{uu} \end{array} \right) \]

Then

\[ \frac{\partial}{\partial t}\left( \begin{array}{c}
\hat{x} \\
\hat{y} \end{array} \right) = \frac{\partial}{\partial t}\hat{X} = \frac{1}{\hat{x}_{u}^{2} + \hat{y}_{u}^{2}}\left( \begin{array}{c}
\hat{x}_{uu} \\
\hat{y}_{uu} \end{array} \right) \]

which is strongly parabolic.

\emph{Short Time Existence Result}. For any smooth $X_{0}$, there exists a unique smooth solution of CSF $X : R/Z \times [0,T) \rightarrow R^{2}$ with $X(u,0) = X_{0}(u)$, for some $T > 0$.

\emph{Proof}.  This follows quickly from the reparametrisation above, which means that existence (and uniqueness) follow from the analogous results for the standard heat equation.

\emph{Global Existence Theorem}. For any smooth immersion $X_{0} : R/Z \rightarrow R^{2}$ there exists a unique smooth solution $X : R/Z \times [0,T) \rightarrow R^{2}$ satisfying CSF with $X(u,0) = X_{0}(u)$ on a maximal time interval $T < \infty$, and $sup\{\norm{\kappa(u,t)}, u \in R/Z\} \rightarrow \infty$ as $t \rightarrow T$.

\subsection{Evolution of Curvature}

We deduce the equation for evolution of the curvature (which, by a remark earlier, completely determines the curve) for a solution of CSF.  This is analogous to equivalent results for the Ricci flow, where one can deduce for 3-manifolds the evolution equation for the full Riemann tensor (which completely determines the manifold) for a solution of the Ricci flow.

Starting from

\begin{center}
$\frac{\partial}{\partial t}X = -\kappa N$
\end{center}

we note that

\begin{align}
\frac{\partial}{\partial t}X_{u} &= \frac{\partial}{\partial u}(-\kappa N) = \norm{X_{u}}\frac{\partial}{\partial s}(-\kappa N) \nonumber \\
&= (-\frac{\partial \kappa}{\partial s}N - \kappa^{2}T)\norm{X_{u}}
\end{align}

Now

\begin{align}
\frac{\partial}{\partial t}\norm{X_{u}} &= \frac{\partial}{\partial t}\sqrt{<X_{u},X_{u}>} \nonumber \\
&= \frac{1}{\norm{X_{u}}}<X_{u},\frac{\partial}{\partial t}X_{u}> \nonumber \\
&= <T, \frac{\partial}{\partial t}X_{u}> \nonumber \\
&= -\kappa^{2}\norm{X_{u}}
\end{align}

immediately from our previous computation.

Also note that

\begin{center}
$\frac{\partial}{\partial t}T = \frac{\partial}{\partial t}(\frac{X_{u}}{\norm{X_{u}}}) = -\frac{\partial \kappa}{\partial s}N$
\end{center}

It then quickly follows from $<T,N> = 0$ that

\begin{center}
$\frac{\partial}{\partial t}N = \frac{\partial \kappa}{\partial s}T$
\end{center}

Now, for any function $f$, we have that

\begin{align}
\frac{\partial}{\partial t}\frac{\partial}{\partial s}f &= \frac{\partial}{\partial t}(\norm{X_{u}}^{-1}\partial_{u}f) \nonumber \\
&= \frac{1}{\norm{X_{u}}}\partial_{u}\partial_{t}f - \frac{1}{\norm{X_{u}}^{2}}(-\kappa^{2}\norm{X_{u}})\partial_{u}f \nonumber \\
&= \partial_{s}\partial_{t}f + \kappa^{2}\partial_{s}f
\end{align}

In other words

\begin{center}
$[ \partial_{t}, \partial_{s} ] = \kappa^{2}\partial_{s}$
\end{center}

Then

\begin{align}
\frac{\partial}{\partial t}(\frac{\partial}{\partial s}T) &= \frac{\partial}{\partial s}(\frac{\partial}{\partial t}T) + \kappa^{2}\frac{\partial}{\partial s}T \nonumber \\
&= \partial_{s}(-\kappa_{s}N) - \kappa^{3}N \nonumber \\
&= -\kappa_{ss}N - \kappa_{s}\kappa T - \kappa^{3}N
\end{align}

But note that also

\begin{center}
$\frac{\partial}{\partial t}(\frac{\partial}{\partial s}T) = \frac{\partial}{\partial t}(-\kappa N) = -\kappa_{t}N - \kappa \kappa_{s}T$
\end{center}

So the terms $\kappa_{s}\kappa T$ cancel and we are left with the evolution equation for the curvature for a solution of CSF:

\begin{equation}
\label{curvatureCSF}
\frac{\partial}{\partial t}\kappa = \kappa_{ss} + \kappa^{3}
\end{equation}

\subsection{Bounds on Curvature for CSF}

In the subject of geometric evolution equations, it is often important to be able to control geometric quantities, particularly if we want to understand the nature of limiting solutions.  For the CSF, we have the nice result that if we have a global bound on the curvature $\kappa$, we get bounds on

\begin{center}
$\norm{\frac{\partial^{k}}{\partial s^{k}}\kappa}^{2}$
\end{center}

for all $k$.  I will prove this only for $k = 1$, but the core ideas remain the same.  The key is to use the evolution equation for the curvature we have just derived.

So suppose $\kappa^{2} < \frac{C}{2}$. I claim that we get a bound on $\kappa_{s}$.  

\emph{Proof}.  First note that

\begin{align}
\frac{\partial}{\partial t}\kappa_{s} &= (\kappa_{ss} + \kappa^{3})_{s} + \kappa^{2}\kappa_{s} \nonumber \\
&= \kappa_{sss} + 4\kappa^{2}\kappa_{s}
\end{align}

and hence that

\begin{center}
$\frac{\partial}{\partial t}\kappa_{s}^{2} = (\kappa_{s})^{2}_{ss} - 2\kappa_{ss}^{2} + 8\kappa^{2}\kappa_{s}^{2}$
\end{center}

Now if we denote $g = \frac{\kappa_{s}^{2}}{C^{2} - \kappa^{2}}$, we get that

\begin{align}
\frac{\partial}{\partial t}g &\leq g_{ss} + ag_{s} + 8\kappa^{2}g - 2g^{2} + \frac{2\kappa^{4}\kappa_{s}^{2}}{(C^{2} - \kappa^{2})^{2}} \text{ for some $a$}, \nonumber \\
&\leq g_{ss} + ag_{s} + Cg - 2g^{2} \text{ for some new constant $C$.}
\end{align}

Then, as before, since initially $g$ is a positive bounded function we get, via the maximum principle, that

$g(u,t) \leq D$ for all $u \in \Omega$, $t \geq 0$, for some positive number $D$ or, in other words, that

\begin{center}
$\kappa_{s}^{2} \leq C^{2}D$
\end{center}

which is what we wanted to show.

Also, we want to show that $X$ remains smooth.  The way to do this is to control derivatives of $X$.  Now

\begin{center}
$\frac{\partial}{\partial t}\norm{X_{u}} = -\kappa^{2}\norm{X_{u}}$
\end{center}

this implies that

\begin{center}
$e^{-C^{2}t}\norm{X_{u}(u,0)} \leq \norm{X_{u}(u,t)} \leq e^{C^{2}t}\norm{X_{u}(u,0)}$
\end{center}

ie $X_{u}$ is bounded.

For higher derivatives, note that

\begin{align}
X_{uu} &= <X_{uu},N>N + <X_{uu},T>T \nonumber \\
&= (\frac{\partial}{\partial u}<X_{u},N> - <X_{u},\kappa X_{u}>)N + <X_{uu},T>T \nonumber \\
&= -\kappa \norm{X_{u}}^{2}N + < X_{uu},T>T
\end{align}

So we need to control $<X_{uu},T>$.

More generally, we will find that we will need to control for $k^{th}$ order terms a term of the form

\begin{center}
$\frac{\partial}{\partial t}<\frac{\partial^{k}X}{\partial u^{k}},T> = -\kappa^{2}<\frac{\partial^{k}X}{\partial u^{k}},T> +$ (lower order in $<\frac{\partial^{i}X}{\partial u^{i}}>)(\kappa,\kappa_{s},...)$
\end{center}

So, once again by the maximum principle and induction, $\norm{<\frac{\partial^{k}X}{\partial u^{k}},T>}$ remains bounded if it is bounded initially.  It turns out that this is all we need to conclude:

\begin{thm} If $X$ exists on $R/Z \times [0,T)$, then $\norm{\frac{\partial^{k}X}{\partial u^{k}}}$ is bounded on $R/Z \times [0,T)$ and $\norm{\frac{\partial^{k}}{\partial u^{k}}\frac{\partial^{l}}{\partial t^{l}}X}$ is bounded. \end{thm}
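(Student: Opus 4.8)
The plan is to work under the standing hypothesis of this section, namely that the curvature is globally bounded, $\sup_{R/Z\times[0,T)}\norm{\kappa}\le C<\infty$ (recall $T$ is necessarily finite for a closed curve), and to produce bounds depending only on $C$, $T$ and the smooth initial datum $X_0$. The strategy is a four-stage bootstrap: first bound all $s$-derivatives of $\kappa$; then all $s$-derivatives of $w:=\norm{X_u}$; then combine these with the Frenet relations to bound every $\partial_u^k X$; and finally reduce the mixed derivatives $\partial_u^k\partial_t^l X$ to spatial ones via the flow equation.

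For the first stage, the case $k=1$ is already done above. For general $k$ one differentiates the curvature equation $\partial_t\kappa=\kappa_{ss}+\kappa^3$ repeatedly, using the commutator $[\partial_t,\partial_s]=\kappa^2\partial_s$ established earlier, to get $\partial_t(\partial_s^k\kappa)=\partial_s^{k+2}\kappa+c_k\kappa^2\partial_s^k\kappa+P_k$, where $P_k$ is a polynomial in $\kappa,\partial_s\kappa,\dots,\partial_s^{k-1}\kappa$ and hence, by the inductive hypothesis, already bounded. Setting $g_k=(\partial_s^k\kappa)^2/(A_k-(\partial_s^{k-1}\kappa)^2)$ with $A_k$ larger than $2\sup(\partial_s^{k-1}\kappa)^2$, the same Bernstein-type computation as in the $k=1$ case yields a heat-type inequality $\partial_t g_k\le \partial_s^2 g_k+a\,\partial_s g_k+Cg_k-2g_k^2$, and the maximum principle bounds $g_k$, hence $\partial_s^k\kappa$, on $[0,T)$; the initial bound is finite since $X_0$ is smooth and $R/Z$ is compact. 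For the second stage, from $\partial_t w=-\kappa^2 w$ we have $\partial_t\log w=-\kappa^2$, and combining this with $[\partial_t,\partial_s]=\kappa^2\partial_s$ shows that $p_j:=\partial_s^j\log w$ satisfies, at each fixed $u$, a linear ODE in $t$ of the form $\partial_t p_j=\alpha_j\kappa^2 p_j+Q_j$ with $\alpha_j$ constant and $Q_j$ a polynomial in $\kappa,\partial_s\kappa,\dots,\partial_s^j\kappa$ and $p_0,\dots,p_{j-1}$ (bounded by the first stage and induction on $j$). Since $\kappa^2$ is bounded and $T<\infty$, Gr\"onwall's inequality bounds $p_j$, hence $\partial_s^j w$, using the two-sided exponential estimate on $w$ obtained above.

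With these in hand the third stage is structural: writing $\partial_u=w\,\partial_s$ and $X_u=wT$, one expands $\partial_u^k X=(w\partial_s)^{k-1}(wT)$ by the Leibniz rule into a finite sum of terms, each a product of factors $\partial_s^a w$ and $\partial_s^b(w^{-1})$ with $a,b\le k-1$ times a single factor $\partial_s^c T$ with $c\le k-1$; by the Frenet relations $T_s=-\kappa N$, $N_s=\kappa T$, each $\partial_s^c T$ is a universal polynomial in $\kappa,\partial_s\kappa,\dots,\partial_s^{c-1}\kappa$ times $T$ or $N$. Every factor is then bounded by the first two stages together with the lower bound on $w$, so $\norm{\partial_u^k X}$ is bounded on $R/Z\times[0,T)$. (This is what the preceding text was gesturing at through the quantities $\langle\partial_u^k X,T\rangle$ and $\langle\partial_u^k X,N\rangle$; handling the whole vector by the $(w,\partial_s)$-expansion avoids tracking the two components separately.) Finally, since $u$ and $t$ are independent coordinates, $\partial_u^k\partial_t^l X=\partial_t^l\partial_u^k X$, and rewriting the flow equation as $\partial_t X=w^{-2}X_{uu}-w^{-4}\langle X_u,X_{uu}\rangle X_u$ exhibits $\partial_t X$ as a polynomial in $w^{-1}$ and $X_u,X_{uu}$; iterating $\partial_t$ (using $\partial_t(w^{-1})=\kappa^2 w^{-1}$ and that $\kappa^2$ is itself such a polynomial) shows inductively that $\partial_t^l X$ is a polynomial in $w^{-1}$ and the $\partial_u^j X$ for $j\le 2l$, and applying $\partial_u^k$ keeps it a polynomial in $w^{-1}$ and $\partial_u^j X$ for $j\le 2l+k$ — all bounded.

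The main work, and the step most prone to slips, is the bookkeeping in the first two stages: checking that the "lower-order" terms $P_k$ and $Q_j$ really involve only quantities controlled at earlier stages of the induction, and that the signs in the Bernstein quantities $g_k$ come out so that the destructive $-2g_k^2$ term persists, so that the maximum principle (respectively Gr\"onwall) actually applies. One should also note that these bounds degenerate as $T\to\infty$, which is harmless here since a closed curve under CSF develops a singularity in finite time; the genuine payoff of the theorem is that, combined with the Global Existence Theorem, bounded curvature on $[0,T)$ would force a smooth extension past $T$, so curvature must in fact blow up as $t\to T$.
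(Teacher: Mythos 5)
Your argument is correct, and more careful than the paper's sketch, but it reorganizes the induction differently in the third stage. Both proofs agree on the opening moves: the Bernstein quotient $\kappa_s^2/(C^2-\kappa^2)$ and the parabolic maximum principle to control $\kappa_s$ (and, iterating, all $\partial_s^j\kappa$), plus the two-sided exponential bounds on $w=\norm{X_u}$ from $\partial_t w=-\kappa^2 w$. From there the paper decomposes $\partial_u^k X$ into its normal and tangential parts, treats the normal part as already expressible through $\kappa,\kappa_s,\dots$ and $w$, and bounds the tangential component $\langle\partial_u^k X,T\rangle$ by evolving it in $t$ (the equation has no $\partial_s^2$ term, so this is really an ODE comparison in the spirit of Gr\"onwall, even though the paper phrases it as a maximum-principle step). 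You instead bound $\partial_s^j\log w$ for every $j$ by Gr\"onwall, then reconstruct $\partial_u^k X=(w\partial_s)^{k-1}(wT)$ algebraically via Leibniz and the Frenet relations, so boundedness of every factor gives the conclusion without decomposing into components; this makes the closure of the induction transparent, whereas the paper quietly asserts that the ``lower order'' terms in the evolution of $\langle\partial_u^k X,T\rangle$ are already controlled. Your fourth stage -- expressing $\partial_t X$ as a polynomial in $w^{-1},X_u,X_{uu}$ and iterating to bound $\partial_u^k\partial_t^l X$ -- supplies a piece of the stated theorem that the paper passes over entirely. One cosmetic slip: expanding $(w\partial_s)^{k-1}(wT)$ by Leibniz yields only factors of the form $\partial_s^a w$ and $\partial_s^c T$, not $\partial_s^b(w^{-1})$; the $w^{-1}$ factors enter only when you rewrite the flow equation in stage four. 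This does not affect the argument, since $w^{-1}\le e^{C^2 t}/\inf_u\norm{X_0'(u)}$ is controlled as well, but the bookkeeping is cleaner without it.
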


The Arzela Ascoli Theorem now tells us that $X_{t}(.) = X(.,t) \rightarrow X_{T}(.)$, i.e. the flow converges smoothly to a sensible limit at $t = T$ provided we have bounds on the curvature.  We may then apply short time existence starting from $X_{T}$ to extend the solution further.

To digress briefly, recall the statement of the Arzela-Ascoli theorem:

\begin{thm} (Ascoli-Arzela). Let X be a compact metric space, Y a metric space. Then a subset F of C(X,Y) is compact in the compact-open topology if and only if it is equicontinuous, pointwise relatively compact and closed. \end{thm}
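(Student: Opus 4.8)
The plan is to first reduce to a more familiar setting: since $X$ is compact, the compact-open topology on $C(X,Y)$ coincides with the topology of uniform convergence, which is induced by the metric $\rho(f,g) = \sup_{x \in X} d_Y(f(x),g(x))$; this supremum is finite because $f(X)$ and $g(X)$ are compact, hence bounded, so $\rho$ is well defined even when $Y$ is not complete. Throughout I would work with $(C(X,Y),\rho)$ and prove the two implications separately.

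For the easy direction ($\Rightarrow$), suppose $F$ is compact in $C(X,Y)$. Then $F$ is closed, since $C(X,Y)$ is metric, hence Hausdorff. For pointwise relative compactness, note that for each fixed $x$ the evaluation map $e_x : C(X,Y) \to Y$, $f \mapsto f(x)$, is $1$-Lipschitz for $\rho$ and hence continuous, so $e_x(F) = \{f(x) : f \in F\}$ is compact in $Y$, a fortiori relatively compact. For equicontinuity, use that a compact metric space is totally bounded: given $\epsilon > 0$, choose $f_1,\dots,f_N \in F$ whose $\epsilon/3$-balls cover $F$. Each $f_i$ is uniformly continuous on the compact space $X$, so there is $\delta_i > 0$ with $d_X(x,x') < \delta_i \implies d_Y(f_i(x),f_i(x')) < \epsilon/3$; set $\delta = \min_i \delta_i$. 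Then for any $f \in F$, picking $i$ with $\rho(f,f_i) < \epsilon/3$, the triangle inequality gives $d_Y(f(x),f(x')) < \epsilon$ whenever $d_X(x,x') < \delta$. This is in fact uniform equicontinuity, which is stronger than what is asked.

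For the substantive direction ($\Leftarrow$), suppose $F$ is equicontinuous, pointwise relatively compact and closed; I would show $F$ is sequentially compact, which in a metric space is equivalent to compactness. Let $(f_k)$ be a sequence in $F$. Since $X$ is a compact metric space it is separable; fix a countable dense set $\{x_1,x_2,\dots\} \subset X$. By pointwise relative compactness $(f_k(x_1))_k$ has a convergent subsequence; along it $(f_k(x_2))_k$ has a further convergent subsequence; iterating and passing to the diagonal subsequence $(f_{k_j})_j$ yields a subsequence converging at every $x_n$. The key step is to upgrade this to uniform convergence on all of $X$: given $\epsilon > 0$, equicontinuity provides for each $x$ a radius $r_x > 0$ with $d_Y(f(y),f(x)) < \epsilon/3$ for all $f \in F$ and all $y$ with $d_X(y,x) < r_x$; using density, compactness of $X$ yields finitely many points $x_{n_1},\dots,x_{n_m}$ from $\{x_n\}$ whose $r$-balls cover $X$, and since $(f_{k_j})_j$ converges at each of these finitely many points it is eventually Cauchy there, which together with the equicontinuity estimate makes $(f_{k_j})_j$ uniformly Cauchy on $X$. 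Hence it converges uniformly to some $f \in C(X,Y)$ — a uniform limit of continuous maps into a metric space is continuous — and $f \in F$ because $F$ is closed.

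The main obstacle I expect is getting the topological preliminaries exactly right: the identification of the compact-open topology with the uniform metric topology when the domain is compact, and checking that the sup-metric is legitimate for a possibly incomplete target. Once that is in place, the rest is the classical diagonal-subsequence plus equicontinuity argument, and the only delicate point is arranging the finite cover of $X$ to have centers in the countable dense set, so that the pointwise convergence extracted by the diagonal procedure is actually the data one uses.
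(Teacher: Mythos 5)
Your proof is correct, but it takes a genuinely different route from the paper's. The paper does not actually prove this general statement: it proves an $R^n$ version (phrased as ``totally bounded iff bounded and equicontinuous'' for $A \subset C(S)$, lifted from Koliha) and then states this general theorem with only the remark that ``extending the proof to the general case is not too hard.'' The paper's argument for the nontrivial direction establishes total boundedness directly: using equicontinuity it covers the compact domain by finitely many balls, projects $A$ to $R^N$ via evaluation at the ball centers, invokes Heine--Borel to get total boundedness of the projected set, and pulls back a $\tfrac{1}{3}\epsilon$-net to an $\epsilon$-net for $A$; compactness then follows from total boundedness together with closedness and completeness of the ambient space. That argument is tied to $Y = R^n$ precisely at the Heine--Borel step, and generalizing it would mean replacing ``bounded'' by ``pointwise relatively compact'' and observing that a finite product of relatively compact sets in a metric space is totally bounded. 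Your argument instead establishes sequential compactness via the classical Cantor diagonal extraction over a countable dense subset of $X$, then upgrades pointwise convergence to uniform Cauchyness with equicontinuity, and finishes with closedness; this handles an arbitrary metric target $Y$ immediately, at the cost of invoking separability of $X$ and the equivalence of sequential and topological compactness in metric spaces. Your easy direction (compact implies the three conditions) is essentially the same as the paper's converse for the $R^n$ case. In short: the paper builds an $\epsilon$-net and gets total boundedness for $R^n$, leaving the general $Y$ to the reader; you do the diagonal-subsequence argument and prove the general statement outright, which is arguably the more natural route for the theorem as actually stated.
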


Note that this proves the Global Existence Theorem which I stated earlier, since this process can only stop when the curvature becomes unbounded.

\subsection{An Isoperimetric Estimate}

Consider a closed curve $X_{t}$ with no self intersections in $R^{2}$.  Let $p,q$ be two points on the curve, $d$ be the distance separating them in the plane, $l$ be the distance separating them along the curve, and $L$ the total length of the curve.  In particular, note that if the curve is precisely a circle of radius $r$, and $p$ and $q$ are separated by an angle $\theta$, that $L = 2\pi r$, $l = r\theta$, $\frac{d}{2} = rsin(\frac{\theta}{2})$ and $\theta = \frac{2\pi l}{L}$.  Then we can see that

\begin{center}
$\pi = \frac{L}{d}sin(\frac{\pi l}{2})$
\end{center}

So define $Z(p,q,t) = \frac{L}{d}sin(\frac{\pi l}{2})$ for \emph{any} curve $X_{t}$.

Then $sup_{(p,q)}Z \geq \pi$, with equality realised iff $X_{t}$ is a circle.

\emph{Claim}.  $sup_{(p,q)}Z(p,q,t)$ is nonincreasing under CSF.

(In fact, one can show that $sup_{(p,q)}Z(p,q,t)$ is decreasing under CSF if it is initially $> \pi$.  This of course shows that for curves with no initial self intersections the limiting solution of renormalised CSF is actually a circle.)

\section{Some Geometric Background on Hypersurfaces}

\subsection{Introduction}

A hypersurface $M^{n} \subset R^{n+1}$ is locally the level set of a function - i.e. $\forall x \in M$ there is a smooth $g : U \rightarrow R$, $\norm{Dg} \neq 0$, where $U \subset R^{n+1}$ is an open set st $x \in U$.  Then

\begin{center}
$M \cap U = g^{-1}(0)$
\end{center}

In the compact case, it is possible to choose a global chart, ie. a $g : R^{n+1} \rightarrow R$ such that $g^{-1}(0) = M$, and $x \in M$ implies that $\norm{Dg(x)} \neq 0$.

We have the Implicit Function Theorem which states that there exist local parametrisations, ie, given $x \in M$ there exists an open neighbourhood $U$ and an $X : V \subset R^{n} \rightarrow U$ which is smooth and an embedding.  In other words, $X$ is one to one and $DX(z) : R^{n} \rightarrow R^{n+1}$ is injective.

We can interpret the tangent space to a point $x$ in a hypersurface $M$ as

\begin{center}
$T_{x}M = \{ v \in R^{n+1} | D_{x}g(v) = 0\} = ker Dg(x)$
\end{center}

Or, if $X$ is a local parametrisation such that $x = X(p)$ then

\begin{center}
$T_{x}M = im(DX(p))$
\end{center}

In other words, we get a natural basis for $T_{x}M$ as

\begin{center}
$\{ \frac{\partial X}{\partial p^{1}},...,\frac{\partial X}{\partial p^{n}} \}$
\end{center}

The unit normal to a hypersurface is

\begin{center}
$N = \frac{\nabla g}{\norm{\nabla g}} \perp TM$
\end{center}

The metric induced by the embedding of the hypersurface in $R^{n+1}$ is

\begin{center}
$g_{ij} = \frac{\partial X}{\partial p^{i}} \cdot \frac{\partial X}{\partial p^{j}}$
\end{center}

\subsection{The Second Fundamental Form}

We define the second fundamental form for a hypersurface $M$ as

\begin{center}
$h_{ij} = -\frac{\partial^{2}X}{\partial p^{i}\partial p^{j}} \cdot N$
\end{center}

The second fundamental form plays an analogous role for a hypersurface that the curvature plays for a curve; it completely specifies how the surface $M^{n}$ embeds in $R^{n+1}$.

For example, take $M^{n} = S^{n} \subset R^{n+1}$.  Then $g = \norm{X}^{2} - 1$, and $N = X$.  This implies that $h_{ij} = g_{ij}$.

The Implicit Function Theorem implies that if $X$ and $Y$ are local parametrisations of $M$ near $x \in M$  then $Y = X \circ \phi$ where $\phi : A \subset R^{n} \rightarrow B \subset R^{n}$ is a diffeomorphism.  In particular,

\begin{center}
$\frac{\partial Y}{\partial q^{i}} = \frac{\partial}{\partial q^{i}}(X(\phi(q))) = \frac{\partial X}{\partial p^{j}}\frac{\partial \phi^{j}}{\partial q^{i}}$
\end{center}

Define

\begin{center}
$\Lambda_{i}^{j} = \frac{\partial \phi^{j}}{\partial q^{i}}$
\end{center}

as the Jacobian of $\phi$.

Then observe that

\begin{center}
$g_{ij}^{Y} = \Lambda_{i}^{k}\Lambda_{j}^{l}g_{kl}^{X}$
\end{center}

so the metric transforms like a tensor as we would expect.

More interestingly, observe that

\begin{align}
\frac{\partial^{2}Y}{\partial q^{i} \partial q^{j}} &= \frac{\partial}{\partial q^{i}}(\frac{\partial X}{\partial p^{k}}\frac{\partial \phi^{k}}{\partial q^{j}}) \nonumber \\
&= \frac{\partial^{2}X}{\partial p^{k}\partial p^{l}}\frac{\partial \phi^{k}}{\partial q^{j}}\frac{\partial \phi^{l}}{\partial q^{i}} + \frac{\partial X}{\partial p^{k}}\frac{\partial^{2}\phi^{k}}{\partial q^{j}\partial q^{i}}
\end{align}

Now by definition

\begin{align}
h_{ij}^{Y} &= -\frac{\partial^{2}Y}{\partial q^{i}\partial q^{j}} \cdot N \nonumber \\
&= -\frac{\partial^{2}X}{\partial p^{k}\partial p^{l}} \cdot N \Lambda_{i}^{k}\Lambda_{j}^{l} \nonumber \\
&= h_{kl}^{X}\Lambda_{i}^{k}\Lambda_{j}^{l}
\end{align}

proving that the second fundamental form also transforms like a tensor!

The second fundamental form has a natural interpretation.  Suppose $v \in T_{x}M$.  Then $h(v,v)$ is the curvature of the curve $M \cap \Pi$ in the plane $\Pi$, where $\Pi = span\{v, N\}$.

\subsection{Differentiation on a Hypersurface}

Let $V$ be a tangent vector field to our hypersurface $M$, $V = V^{i}\frac{\partial X}{\partial p^{i}}$.  Then, given a smooth function $f$ we can differentiate $f$ in direction $V$ to get

\begin{center}
$Vf = V^{i}\frac{\partial f}{\partial p^{i}}$
\end{center}

Suppose now $V,W$ are tangent vector fields to $M$.  We compute

\begin{align}
V(WX) &= D_{V}(W^{i}\frac{\partial X}{\partial p^{i}}) \nonumber \\
&= V^{j}\frac{\partial}{\partial p^{j}}(W^{i}\frac{\partial X}{\partial p^{i}}) \nonumber \\
&= V^{j}\frac{\partial W^{i}}{\partial p^{j}}\frac{\partial X}{\partial p^{i}} + V^{j}W^{i}\frac{\partial^{2}X}{\partial p^{j}\partial p^{i}} \nonumber \\
&= (V^{j}\frac{\partial W^{i}}{\partial p^{j}}\frac{\partial X}{\partial p^{i}} + V^{j}W^{i}(\frac{\partial^{2}X}{\partial p^{i}\partial p^{j}})^{tang}) + V^{j}W^{i}(\frac{\partial^{2}X}{\partial p^{i}\partial p^{j}} \cdot N)N
\end{align}

This motivates us to define

\begin{center}
$V(WX) = (\nabla_{V}W)(X) - h(V,W)N$
\end{center}

$\nabla$ is nothing other than the covariant derivative of $M$.

\subsection{The Structure Equation for Hypersurfaces}

Observe that trivially for any $f$ that

\begin{center}
$0 = \frac{\partial^{2}f}{\partial x^{i}\partial x^{j}} - \frac{\partial^{2}f}{\partial x^{j}\partial x^{i}}$
\end{center}

Take $f = X$.  Then

\begin{center}
$0 = (\nabla_{\partial_{i}}\partial_{j} - \nabla_{\partial_{j}}\partial_{i})X - (h_{ij} - h_{ji})N$
\end{center}

Differentiating one more time, we observe that

\begin{align}
0 &= \partial_{i}\partial_{j}(\partial_{k}X) - \partial_{j}\partial_{i}(\partial_{k}X) \nonumber \\
&= \partial_{i}((\nabla_{\partial_{j}}\partial_{k})X - h_{jk}N) - ( i \leftrightarrow j) \nonumber \\
&= (\nabla_{\partial_{i}}(\nabla_{\partial_{j}}\partial_{k}))X - h(\partial_{i},\nabla_{\partial_{j}}\partial_{k})N - (\nabla_{i}h_{jk} + h(\nabla_{\partial_{i}}\partial_{j},\partial_{k}) \nonumber \\
&+ h(\partial_{j}, \nabla_{\partial_{i}}\partial_{k}))N - h_{jk}h_{ip}g^{pq}\partial_{q}X - (i \leftrightarrow j) \nonumber \\
&= (\nabla_{\partial_{i}}(\nabla_{\partial_{j}}\partial_{k}) - \nabla_{\partial_{j}}(\nabla_{\partial_{i}}\partial_{k}) \nonumber \\
&- (h_{jk}h_{ip} - h_{ik}h_{jp})g^{pq}\partial_{q})X + (\nabla_{\partial_{j}}h_{ik} - \nabla_{\partial_{i}}h_{jk})N
\end{align}

(The notation $(i \leftrightarrow j)$ means to repeat the previous expression in the line of calculation but under a reversal of the indices $i$ and $j$).

From examining the tangential and normal components of this we deduce the

\emph{Codazzi Equation}.

\begin{equation}
\nabla_{\partial_{i}}h_{jk} = \nabla_{\partial_{j}}h_{ik}
\end{equation}

and the

\emph{Gauss Equation}.

\begin{equation}
[\nabla_{\partial_{i}},\nabla_{\partial_{j}}]\partial_{k} = (h_{jk}h_{ip} - h_{ik}h_{jp})g^{pq}\partial_{q}
\end{equation}

\subsection{Principal Curvatures}

The eigenvalues of $h$ with respect to $g$ are the principal curvatures.  So, if we choose a basis $e_{1},...,e_{n}$ for $T_{x}M$ such that

\begin{center}
$g(e_{i},e_{j}) = \delta_{ij}$
\end{center}

then

\begin{center}
$h(e_{i},e_{j}) = \kappa_{i}\delta_{ij}$
\end{center}

The Mean Curvature is defined to be

\begin{center}
$H = \sum_{i = 1}^{n}\kappa_{i} = g^{ij}h_{ij}$
\end{center}

The Gauss Curvature is defined to be

\begin{center}
$K = \Pi_{i = 1}^{n}\kappa_{i} = \frac{det(h)}{det(g)}$
\end{center}

\section{Mean Curvature Flow (MCF)}

\subsection{Introduction; MCF as a steepest descent flow of volume}

MCF arises from a steepest descent flow of the volume of a hypersurface.  In this way it is a natural generalisation of the curve shortening flow, which I showed previously arises as the steepest descent flow of the length of an immersed curve in $R^{2}$.

The $n$-dimensional volume of $M$ is given by

\begin{center}
$\vert M \vert = \int \sqrt{det(g)}$
\end{center}

Consider an arbitrary variation

\begin{center}
$X : M_{0} \times [0,T) \rightarrow R^{n+1}$
\end{center}

where $\frac{\partial X}{\partial t} = V = \omega X + fN$.

Recall $g_{ij} = \frac{\partial X}{\partial p^{i}} \cdot \frac{\partial X}{\partial p^{j}}$.

Now

\begin{align}
\frac{\partial}{\partial t}\frac{\partial X}{\partial p^{i}} &= \frac{\partial}{\partial p^{i}}(\omega X + fN) \nonumber \\
&= (\nabla_{\partial_{i}}\omega)X - h(\partial_{i},\omega)N + \frac{\partial f}{\partial p^{i}}N + fh_{i}^{p}\partial_{p}X
\end{align}

where $h_{i}^{p}$ is defined to be $h_{ik}g^{kp}$.

So

\begin{align}
\frac{\partial}{\partial t}g_{ij} &= \frac{\partial}{\partial t}\frac{\partial X}{\partial p^{i}} \cdot \frac{\partial X}{\partial p^{j}} + (i \leftrightarrow j) \nonumber \\
&= g(\nabla_{\partial_{i}}\omega,\partial_{j}) + fh_{i}^{p}g_{pj} + (i \leftrightarrow j) \nonumber \\
&= g(\nabla_{\partial_{i}}\omega,\partial_{j}) + g(\partial_{i},\nabla_{\partial_{j}}\omega) + 2fh_{ij}
\end{align}

Hence

\begin{align}
\frac{\partial}{\partial t}det(g) &= det(g)g^{ij}\frac{\partial}{\partial t}g_{ij} \nonumber \\
&= 2det(g)(\nabla_{\partial_{i}}\omega^{i}) + 2det(g)fH
\end{align}

So

\begin{equation}
\frac{\partial}{\partial t}\sqrt{det(g)} = [(\nabla_{\partial_{i}}\omega^{i}) + fH]\sqrt(det(g))
\end{equation}

So we may finally conclude that

\begin{align}
\frac{d}{dt}\vert M \vert &= \int_{M}(div(\omega) + fH)d\mu(g) \nonumber \\
&= \int_{M}fHd\mu(g) \text{ by the divergence theorem}
\end{align}

It is evident then that steepest descent flow is given by the variation

\begin{center}
$\frac{\partial X}{\partial t} = - kHN$
\end{center}

for some positive nonzero constant $k$.  So choose $k = 1$.  This is the equation of Mean Curvature Flow.

Alternatively, we can think of MCF as a natural heat equation in the sense that if we take the variation

\begin{center}
$\frac{\partial X}{\partial t} = \Delta X$
\end{center}

where $\Delta f = g^{ij}\nabla_{i}\nabla_{j}f$ is the Laplacian of $f$, then

\begin{center}
$\Delta X = g^{ij}(-h_{ij}N) = - HN$
\end{center}

which gives us MCF again.

More precisely, given a compact hypersurface $X_{0}$, we want to find $X : M_{0} \times [0,T) \rightarrow R^{n+1}$ such that

\begin{center}
$\frac{\partial X}{\partial t} = -HN$
\end{center}

and

\begin{center}
$X(z,0) = z$
\end{center}

for all $z \in M_{0}$.

The solution to this initial value problem if it exists is the MCF of $M_{0}$.

\subsection{Existence Results}

\emph{Short Time Existence}.

We can establish short time existence of the MCF in two different ways-

(i) Write $M_{t}$ as a graph over $M_{0}$, i.e. $X(z,t) = z + \rho(z,t)N(z,0)$.  It is clear that we will have short time existence since if we flow along a normal vector field the graph will have no self intersections for small times $t$.

or

(ii) Write

\begin{center}
$\frac{\partial X}{\partial t} = g^{ij}(\frac{\partial^{2}X}{\partial p^{i}\partial p^{j}} - (\nabla_{\partial_{i}}\partial_{j})X)$
\end{center}

(The bracketed term is the normal component of $\frac{\partial^{2}X}{\partial p^{i}\partial p^{j}}$)

Let $\dot{\nabla}$ be the covariant derivative at $t = 0$.

Then

\begin{center}
$\frac{\partial X}{\partial t} = g^{ij}(\dot{\nabla}_{i}\dot{\nabla}_{j}X + (\dot{\nabla}_{i}\partial_{j}  - \nabla_{i}\partial_{j})X)$
\end{center}

Note $V(z,t)X = (\dot{\nabla}_{i}\partial_{j}  - \nabla_{i}\partial_{j})X$ is defined independent of local parametrisation.  Hence we have short term existence.

\begin{thm} (Global Existence Theorem).

If $T$ is the maximal time of existence, then $sup_{M_{t}}\norm{h}^{2} \rightarrow \infty$ as $t \rightarrow T$.  Here $\norm{h}^{2} = g^{ij}g^{kl}h_{ik}h_{jl} = \sum_{i}\kappa_{i}^{2}$.
\end{thm}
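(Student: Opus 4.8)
The plan is to argue by contradiction. Suppose $T < \infty$ is the maximal existence time but $\sup_{M_t}\norm{h}^2$ stays bounded, say $\norm{h}^2 \le C_0$ on $M \times [0,T)$; I will show that the flow then extends smoothly past $T$, contradicting maximality. The whole argument transplants the smoothing estimates established earlier for the heat equation onto the time-dependent geometry of $M_t$, and finishes — exactly as in the curve shortening case — by an Arzela--Ascoli limit followed by restarting short-time existence.

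First I would control the metric. Specialising the variation formula for $\frac{\partial}{\partial t}g_{ij}$ to mean curvature flow ($\omega = 0$, $f = -H$) gives $\frac{\partial}{\partial t}g_{ij} = -2Hh_{ij}$, and the bound on $\norm{h}^2$ bounds both $H$ and the components $h_{ij}$ relative to $g$; integrating this pointwise ODE yields $e^{-Ct}g(0) \le g(t) \le e^{Ct}g(0)$ as bilinear forms, so on $[0,T)$ the metrics stay uniformly equivalent (and the volume form comparable) and cannot degenerate.

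Next, the heart of the matter: the evolution equations for the higher covariant derivatives of the second fundamental form. Combining the MCF equation with the Gauss and Codazzi equations one gets $\frac{\partial}{\partial t}h_{ij} = \Delta h_{ij} + (\text{quadratic/cubic terms in } h)$, and more generally $\frac{\partial}{\partial t}\norm{\nabla^m h}^2 \le \Delta\norm{\nabla^m h}^2 - 2\norm{\nabla^{m+1}h}^2 + P_m$, where $P_m$ is a polynomial in the $\norm{\nabla^k h}$ for $k \le m$, the commutator contributions from $[\partial_t,\nabla]$ being absorbed into $P_m$ via the Gauss equation. Then I would run precisely the maximum-principle bootstrap used for heat-equation smoothing: set $Q_m = \norm{\nabla^{m+1}h}^2 / (2\hat C_m^2 - \norm{\nabla^m h}^2)$, derive for it a heat-type inequality, and apply the parabolic maximum principle from the existence-theory chapter to conclude inductively that $\norm{\nabla^m h}^2 \le C_m$ on $M \times [0,T)$ for every $m$, the hypothesis bound $C_0$ seeding the induction.

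Finally I would convert curvature bounds into bounds on the immersion itself, just as for CSF: after the standard time-dependent reparametrisation making the system strongly parabolic, bound $\norm{\partial_u^\alpha X}$ and $\norm{\partial_u^\alpha \partial_t^l X}$ by induction, using the uniform equivalence of the metrics to pass between coordinate and covariant derivatives. Arzela--Ascoli then gives $X(\cdot,t) \to X(\cdot,T)$ smoothly as $t \to T$ with $M_T$ a smooth compact immersed hypersurface, and short-time existence with initial data $M_T$ extends the flow to $[0,T+\epsilon)$, the desired contradiction. The main obstacle is the third step: obtaining the evolution equation for $\nabla^m h$ in exactly the stated form, since the connection and Laplacian are themselves moving, so the commutators $[\partial_t,\nabla]$ must be computed carefully and shown to feed only lower-order reaction terms into $P_m$; once that algebraic structure is pinned down, the remaining machinery is already available from the heat-equation and CSF discussions.
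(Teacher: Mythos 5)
Your proof is correct and takes essentially the same approach the paper uses (explicitly for the CSF analogue, and by implication for MCF): bound on $\norm{h}^2$ $\Rightarrow$ maximum-principle bootstrap for $\norm{\nabla^m h}^2$ using the quotient trick from the heat-equation smoothing section $\Rightarrow$ bounds on all coordinate derivatives of $X$ $\Rightarrow$ Arzela--Ascoli smooth limit at $t=T$ $\Rightarrow$ restart short-time existence, contradicting maximality. The paper does not actually write out the MCF case, so your supplying the $[\partial_t,\nabla]$ commutator bookkeeping (via Gauss/Codazzi) and the metric-equivalence step is a genuine and welcome filling-in of what the paper only gestures at.
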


\subsection{Induced Evolution of Various Quantities along MCF}

\emph{Evolution of Metric}.

\begin{align}
\frac{\partial}{\partial t}g_{ij} &= \frac{\partial}{\partial t}(\frac{\partial X}{\partial p^{i}} \cdot \frac{\partial X}{\partial p^{j}}) \nonumber \\
&= -2Hh_{ij}
\end{align}

\emph{Evolution of Normal}.

Define

\begin{center}
$\frac{\partial}{\partial t}N = A^{i}\partial_{i}X$
\end{center}

We compute the $A^{i}$.

Now certainly

\begin{center}
$0 = N \cdot T = N \cdot \frac{\partial X}{\partial p^{i}}$
\end{center}

So

\begin{center}
$0 = \frac{\partial N}{\partial t} \cdot \frac{\partial X}{\partial p^{i}} + N \cdot \frac{\partial}{\partial p^{i}}(-HN)$
\end{center}

The first term is nothing other than $A^{i}g_{ji}$.  The second term is clearly $-\frac{\partial H}{\partial p^{i}}$.  So we conclude

\begin{center}
$A^{j}g_{ji} = \frac{\partial H}{\partial p^{i}}$
\end{center}

From which we derive finally the evolution equation for the normal to the hypersurface:

\begin{align}
\frac{\partial N}{\partial t} &= \frac{\partial H}{\partial p^{i}}g^{ij}\partial_{j}X \nonumber \\
&= (\nabla H)X 
\end{align}

\emph{Evolution of $h$ and also the mean curvature $H$}.

Note first that

\begin{align}
\partial_{t}\partial_{i}N &= \partial_{t}(h_{i}^{k}\partial_{k}X) \nonumber \\
&= (\frac{\partial}{\partial t}h_{i}^{k})\partial_{k}X + h_{i}^{k}\partial_{k}(-HN) \nonumber \\
&= \frac{\partial}{\partial t}h_{i}^{k})\partial_{k}X - Hh_{k}^{p}\partial_{p}X
\end{align}

But also

\begin{align}
\partial_{t}\partial_{i}N &= \partial_{i}(\nabla_{k}Hg^{kl}\partial_{l}X) \nonumber \\
&= (\nabla_{i}\nabla_{k}Hg^{kl}\partial_{l}X) - h(\nabla H, \partial_{i})N
\end{align}

So if we equate tangential components of the above expressions, we get the relation

\begin{equation}
\frac{\partial}{\partial t}h_{i}^{j} = \nabla_{i}\nabla_{k}(H)g^{kj} + Hh_{i}^{k}h_{k}^{j}
\end{equation}

which is the evolution equation for $h$.

Consequently

\begin{center}
$\frac{\partial}{\partial t}H = \Delta H + H\norm{h}^{2}$,
\end{center}

the evolution equation for $H$.

\subsection{The Huisken Rescaling Result}

The proof of the following result is due to Huisken.  It is a typical example of how under a geometric flow the limiting behaviour can be very nice.  The way we understand limiting behaviour, furthermore, is done by rescaling our solution- typified by the final statement in the result which follows.

\emph{The Result}.  If $M_{0}$ is a compact convex hypersurface in $R^{n+1}$, $n \geq 2$, then the solution of MCF remains smooth and convex on a maximal interval $[0,T)$.  Furthermore,

\begin{center}
$X(p,t) \rightarrow x_{0} \in R^{n+1}$ as $t \rightarrow T$
\end{center}

and

\begin{center}
$\frac{X(p,t) - x_{0}}{(2n(T - t))^{1/2}} \rightarrow_{C^{\infty}} X_{T}$
\end{center}

where $X_{T}(M_{0}) = S^{n}$.

\begin{rmk}  Note that if $M_{0} = r_{0}z$, where $z \in S^{n}$, then $X(z,t) = r(t)z$ and $N(z,t) = z$.  So

\begin{center}
$h_{i}^{j}\partial_{j}X = \partial_{i}N = \partial_{i}z = \partial_{i}(\frac{X}{r}) = \frac{1}{r}\partial_{i}X$
\end{center}

Hence $h_{i}^{j} = \frac{1}{r}\delta_{i}^{j}$.  Now $\kappa_{1} = \cdots = \kappa_{n} = \frac{1}{r}$, so $H = \frac{n}{r}$.

Then MCF becomes $\frac{\partial r}{\partial t} = -\frac{n}{r}$, which implies that $r(t) = (r_{0}^{2} - 2nt)^{1/2}$.  So if we set $T = \frac{r_{0}^{2}}{2n}$ then $r(t) = (2n(T - t))^{1/2}$.  Hence it is clear to see that a natural rescaling of our solution should feature the quotient of some factor such as this.
\end{rmk}

\subsection{Simon's Identity and an application}

We derive an identity that will prove useful later on.  Note that  

\begin{align}
\nabla_{i}\nabla_{j}H &= \nabla_{i}\nabla_{j}(g^{kl}h_{kl}) \nonumber \\
&= g^{kl}\nabla_{i}\nabla_{j}h_{kl} \text{ (since $\nabla_{i}g_{jk} = 0$)} \nonumber \\
&= g^{kl}\nabla_{i}\nabla_{k}h_{jl} \text{ (by Codazzi)}
\end{align}

This is Simon's Identity.  For an example of its application, we compute

\begin{align}
0 &= (\nabla_{i}\nabla_{k} - \nabla_{k}\nabla_{i})h(\partial_{j},\partial_{l}) \nonumber \\
&= \nabla_{i}(\nabla_{k}h_{jl} + h(\nabla_{k}\partial_{j},\partial_{l}) + h(\partial_{j},\nabla_{k}\partial_{l})) - (i \leftrightarrow k) \nonumber \\
&= \nabla_{i}\nabla_{k}h_{jl} + \nabla h(\nabla_{k}\partial_{j},\partial_{l}) + h(\nabla \partial, \nabla \partial) \nonumber \\
&+ h(\nabla_{i}\nabla_{k}\partial_{j},\partial_{l}) + h(\partial_{j},\nabla_{i}\nabla_{k}\partial_{l}) - (i \leftrightarrow k)
\end{align}

Note that the second and third terms can be made to vanish if we choose coordinates so that $\nabla_{k}\partial_{j}(x) = 0$.

Continuing under this choice of coordinates, we get that

\begin{center}
$0 = [\nabla_{i},\nabla_{k}]h_{jl} + h([\nabla_{i},\nabla_{k}]\partial_{j},\partial_{l}) + h(\partial_{j},[\nabla_{i},\nabla_{k}]\partial_{l})$
\end{center}

Hence

\begin{center}
$[\nabla_{i},\nabla_{k}]h_{jl} = h_{ij}h_{k}^{p}h_{pl} - h_{kj}h_{j}^{p}h_{pl} + h_{il}h_{k}^{p}h_{jp} - h_{kl}h_{i}^{p}h_{jp}$
\end{center}

Which implies after using the Codazzi equation and Simon's Identity that

\begin{center}
$\nabla_{i}\nabla_{j}H = g^{kl}\nabla_{k}\nabla_{l}h_{ij} + h_{ij}\norm{h}^{2} - Hh_{i}^{p}h_{pj}$
\end{center}

From which we finally derive

\begin{align}
\frac{\partial}{\partial t}h_{i}^{j} &= (\nabla_{i}\nabla_{k}H + Hh_{i}^{p}h_{pk})g^{kj} \nonumber \\
&= \Delta h_{i}^{j} + h_{i}^{j}\norm{h}^{2}
\end{align}

Note that this is the same evolution equation for $h$ as we derived before, except that we did not make the assumption that $M$ was a hypersurface, ie this equation is more generally applicable.

\subsection{Monotonicity Formula for the MCF and application to MCF Solitons}

Once again we use the simpler case of MCF to motivate the more technically complex analogues in RF.  It is evidently important to understand the limiting behaviour of geometric flows, and it turns out that particular self-similar limits, known as \emph{soliton solutions}, are very important.  In fact, both for the MCF and the RF, it will turn out that any limit of rescaled flows about any point will be such a solution.  To prove this, we need a \emph{monotonicity formula} for the relevant flow.  In this section I shall introduce such a formula for the MCF, and demonstrate its application to proving that all limits of the MCF are self-similar.  Finally, I will finish with mentioning a partial classification result for these creatures.

So, let $M_{t}$ be a solution of MCF in $R^{n+1}$. Then we have a monotonicity formula for the MCF, due to Huisken:

\begin{center}
$\frac{d}{dt}(\int_{M_{t}}(t_{0} - t)^{-n/2}exp(-\frac{\norm{x - x_{0}}^{2}}{4(t_{0} - t)}d\mu(g_{t})) \leq 0$
\end{center}

for $t_{0} \geq t$.  

\emph{Remarks}.
\begin{itemize}
\item[(1)]  Note that the fundamental solution of the heat equation on $R^{n+1}$ is $\rho(x,t) = ct^{-\frac{n+1}{2}}exp(-\frac{\norm{x}^{2}}{4t})$.
\item[(2)] Furthermore, if $u$ satisfies $\frac{\partial u}{\partial t} = \Delta u$ in $R^{n}$ then
\end{itemize}

\begin{align}
u(x_{0},t_{0}) &= c_{n}\int(t_{0})^{-n/2}exp(-\frac{\norm{x - x_{0}}^{2}}{4t_{0}})u(x,0)dx^{n} \text{(as can be proved using Green's functions)} \nonumber \\
&= c_{n}\int (t_{0} - t)^{-n/2}exp(-\frac{\norm{x - x_{0}}^{2}}{4(t_{0} - t)})u(x,t)dx^{n} \text{for} 0 \leq t \leq t_{0}
\end{align}

Hence 

\begin{align}
\frac{d}{dt}\int (t_{0} - t)^{-n/2}exp(-\frac{\norm{x - x_{0}}^{2}}{4(t_{0} - t)})u(x,t)dx^{n} = 0
\end{align}

Now, consider solitons of MCF, ie $M_{t} = \lambda(t)M_{0}$.  So we have $X(p,t) = \lambda(t)X(\phi(p,t),0)$, which implies $-H \mu \vert_{(p,t)} = \frac{\lambda '}{\lambda}X \vert_{(p,t)} + \lambda \frac{\partial X}{\partial x^{j}} \vert_{(\phi(p,t),0)}\frac{\partial \phi^{j}}{\partial t}$.

Taking the inner product with $\mu$, we get

\begin{center}
$H = - \frac{\lambda '}{\lambda}<X,\mu>$
\end{center}

"The MCF Soliton Equation".

We may scale $\frac{-\lambda '}{\lambda} = \frac{1}{2(T - t)}$ (Then $H_{t} = \frac{1}{\lambda}H_{0}$), so $\frac{\partial \lambda}{\partial t} = -\frac{c}{\lambda}$.

Under normal variations

\begin{center}
$\frac{\partial X}{\partial t} = f\mu$
\end{center}

So $\frac{\partial}{\partial t}g_{ij} = 2fh_{ij}$, and hence $\frac{\partial}{\partial t}(d\mu) = fH(d\mu)$.

Therefore,

\begin{center}
$\frac{\partial}{\partial t}\norm{X}^{2} = 2<X, \frac{\partial X}{\partial t}> = 2f<X, \mu>$
\end{center}

so

\begin{center}
$\frac{\partial}{\partial t}(exp(-\frac{c}{2}\norm{X}^{2})d\mu) = f(H - c<x,\mu>)exp(-\frac{c}{2}\norm{X}^{2})d\mu$
\end{center}

Choose $c = \frac{1}{2(T - t)}$.

Then we have from the relation immediately above, that a MCF soliton is a critical point of $\int_{M_{t}}exp(-\frac{\norm{X_{t}}^{2}}{4(T - t)})d\mu$, where $M_{t} = \lambda(t)M_{0}$.  We immediately see the connection with the monotonicity formula, because, after reparametrisation, we see that a MCF soliton is a critical point of $\int_{M_{0}}(t_{0} - t)^{-n/2}exp( - \frac{\norm{X_{0}}^{2}}{4(T - t)})d\mu$.

We recall the monotonicity formula for the MCF, as stated above:

$\frac{d}{dt}\int_{M}(t_{0} - t)^{-n/2}exp(- \frac{\norm{x - x_{0}}^{2}}{4(t_{0} - t)})d\mu \leq 0$.

Now I claim that we acheive equality if and only if $M_{t}$ is a MCF soliton.  Certainly if $M_{t}$ is a MCF soliton, this is the case.  So it remains to prove the converse.  In particular, I will in the process prove the monotonicity formula.

(To prove that we can always choose an appropriate $c$, we compute

\begin{center}
$-\frac{1}{\lambda}H\mu \vert_{0} = -H\mu \vert_{M_{t}} = \lambda ' X_{0} = \frac{\lambda '}{c}H\mu \vert_{0} +$ something tangential
\end{center}

This implies that $\lambda \lambda ' \vert_{t} = \lambda \lambda '_{0} = - c$ and so $\lambda^{2}(t) = 1 - 2ct = 2c(T - t)$, from which we intuit $\frac{\lambda '}{\lambda} = \frac{1}{2}\frac{(\lambda^{2})'}{\lambda^{2}} = \frac{1}{2}log(2c(T - t))' = \frac{1}{2(T - t)}$ as required.)

To prove the converse, we adopt the notation $\rho : R^{n+1} \times [0,\infty) \rightarrow R$ for the map

\begin{center}
$\rho(x,t) = (t_{0} - t)^{-\frac{n+1}{2}}exp(- \frac{\norm{x - x_{0}}^{2}}{4(t_{0} - t)})$
\end{center}

Note that $\frac{\partial}{\partial t} = - \Delta^{R^{n+1}} \rho$.

We compute

\begin{center}
$\frac{d}{dt}\int (t_{0} - t)^{1/2}\rho(X(p,t),t)d\mu$

$= - \int \frac{1}{2}(t_{0} - t)^{-1/2}\rho d\mu - \int (t_{0} - t)^{1/2}\rho H^{2}d\mu + \int (t_{0} - t)^{1/2}( - \Delta^{R^{n+1}}\rho + D \rho(-H \mu))d\mu$
\end{center}

We can write (in orthonormal coordinates with $e_{n+1} = \mu(x)$, $e_{1}, \cdots, e_{n}$ a basis for $T_{X}M$)

\begin{center}
$\Delta^{R^{n+1}}\rho = D^{2}\rho(\mu,\mu) + \frac{\partial^{2}\rho}{(\partial x^{i})^{2}}$
\end{center}

Also,

\begin{align}
\nabla \nabla \rho(e_{i},e_{i}) &= \frac{d^{2}}{ds^{2}}\rho(\gamma(s)) \nonumber \\
&= \frac{d}{ds}(D\rho(\gamma ')) \nonumber \\
&= D\rho(\gamma '') + D^{2}\rho(\gamma ', \gamma ') 
\end{align}

where $\gamma$ is a geodesic in $M$, $\gamma(0) = X$ and $\gamma '(0) = e_{i}$

Now $\gamma '' = -\kappa N = -h(\gamma ', \gamma ')\mu$ so

\begin{center}
$\nabla_{i}\nabla_{i}\rho = D^{2}\rho(e_{i},e_{i}) - h(e_{i},e_{i})D_{\mu}\rho$
\end{center}

and hence

\begin{center}
$\Delta^{R^{n+1}}\rho = D^{2}\rho(\mu,\mu) + \Delta^{M}\rho + HD_{\mu}\rho$
\end{center}

So

\begin{align}
\frac{d}{dt}\int (t_{0} - t)^{1/2}\rho d\mu &= - \int (t_{0} - t)^{1/2}[\frac{\rho}{2(t_{0} - t)} + D_{\mu}D_{\mu}\rho] \nonumber \\
&- \int (t_{0} - t)^{1/2}\rho [H^{2} - \frac{2HD_{\mu}\rho}{\rho} + \norm{\frac{D_{\mu}\rho}{\rho}}^{2}] + \int (t_{0} - t)^{1/2}\frac{(D_{\mu}\rho)^{2}}{\rho} \nonumber \\
&= -\int (t_{0} - t)^{1/2}\rho \norm{H - \frac{D_{\mu}\rho}{\rho}}^{2}d\mu \nonumber \\
&= -\int (t_{0} - t)^{1/2}[D_{\mu}D_{\mu}\rho - \frac{(D_{\mu}\rho)^{2}}{\rho} + \frac{\rho}{2(t_{0} - t)}]
\end{align}

\emph{Claim}. (1) For $\rho$ as above,

\begin{center}
$D_{i}D_{j}\rho - \frac{D_{i}\rho D_{j}\rho}{\rho} + \frac{\rho}{2(t_{0} - t)}\delta_{ij} = 0$
\end{center}

(2) For any solution $u$ of $\frac{\partial u}{\partial t} = \Delta u$ on $R^{n}$ with $u \geq 0$,

\begin{center}
$D_{i}D_{j}u - \frac{D_{i}u D_{j}u}{u} + \frac{u\delta_{ij}}{2t} \geq 0$
\end{center}

(This occurs if and only if $D_{i}D_{j}log u + \frac{\delta_{ij}}{2t} \geq 0$).

\emph{Remark}. It follows from this claim that a (smooth) limit of rescaled flows about any point is a shrinking MCF soliton.

\emph{Proof of Claim}. Recall if $\mu$ is large,

\begin{center}
$\frac{\partial}{\partial t}\mu = \Delta \mu + \norm{\nabla \mu}^{2}$
\end{center}

Hence

\begin{center}
$\frac{\partial}{\partial t}D_{i}\mu = \Delta D_{i}\mu + 2D_{k}\mu D_{k}D_{i}\mu$
\end{center}

and

\begin{center}
$\frac{\partial}{\partial t}D_{i}D_{j}\mu = \Delta D_{i}D_{j}\mu + 2D_{k}\mu D_{k}(D_{i}D_{j}\mu) + 2D_{j}D_{k}\mu D_{k}D_{i}\mu$
\end{center}

One can then apply the maximum principle to show that $tD_{i}D_{j}\mu + \delta_{ij} \geq 0$, since this is certainly true when $t = 0$.  This proves the claim, and the monotonicity formula, since we have that

\begin{center}
$\frac{d}{dt}\int_{M_{t}}(t_{0} - t)^{1/2}\rho d\mu \leq 0$
\end{center}

by the claim.  Furthermore, equality is only achieved if $H = \frac{D_{\mu}\rho}{\rho} = D_{\mu}(-\frac{\norm{X - x_{0}}^{2}}{4(t_{0} - t)} + c(t)) = -\frac{ <X - x_{0},\mu>}{2(t_{0} - t)}$- but this is just the MCF soliton equation.  In other words, this implies that a smooth limit of rescaled flows about any point is a shrinking MCF soliton, as required.

We have the following theorem, due to Huisken, which goes some way towards 
classifying these beasts:

\emph{Theorem}. A complete, shrinking soliton of MCF with $H > 0$ is either
\begin{itemize}
\item[(i)] A shrinking sphere,
\item[(ii)] $S^{k} \times R^{n-k}$, 
\item[(iii)] or (A shrinking self similar solution of CSF) $\times R^{n-1}$.
\end{itemize}

\section{Proof of the Huisken rescaling result of MCF}

\subsection{Preliminaries}

We set out to prove the result that I stated before, namely that compactness and convexity are preserved under MCF.  It is clear that compactness is preserved, since we are following a steepest descent flow of the $n$-volume and the $n$-volume is initially finite by hypothesis.  To show that under MCF the surface flows to a point will require invoking a corresponding avoidance principle as for CSF, which is again very easy.

So the hard part is demonstrating that convexity is preserved if the hypersurface is convex initially. Let me define convexity for a hypersurface more precisely:

$M_{0}$ is convex if $h_{ij} \geq \epsilon g_{ij}$, for some $\epsilon > 0$, where $h_{ij}$ is the second fundamental form of $M_{0}$ and $g_{ij}$ is its metric.

So this reduces the problem of proving that convexity is preserved to demonstrating that under MCF,

\begin{center}
$Q_{ij} = h_{ij} - \epsilon g_{ij} \geq 0$
\end{center}

is preserved.  To show this I shall invoke a Maximum Principle for $Q_{ij}$.  I will state a Maximum Principle for vectors first, and then show how this can be extended to deal with 2-tensors.  The preservation of the inequality will follow as a consequence of this principle.

\subsection{The Maximum Principle for Vectors}

Suppose $\frac{\partial}{\partial}u^{\alpha} = \Delta u^{\alpha} + V^{\alpha}(u)$, $\alpha = 1, \cdots, k$ on $R^{n}$ (with periodic $u_{0}^{\alpha}$).

Let $K$ be a closed convex set in $R^{k}$ such that $u(z,0) \in K$ for all $z \in R^{n}$ and for points $y \in \partial K$, $V(y)$ "points into $K$".  More precisely, $y + sV(y) \in K$ for $s$ small and positive. 

Then we conclude that $U(z,t) \in K$ for all $z \in R^{n}$, all $t \geq 0$.

Alternatively, $V(u)$ "points into $K$" for any $u \in \partial K$ means the solution of the ODE $\frac{d}{dt}U^{\alpha} = V^{\alpha}(U)$ with $U^{\alpha}(0) = z$ has $U^{\alpha}(t) \in K$ for $t \geq 0$.

\emph{Proof}.

\emph{Claim}.  There exists a smooth convex function $f$ on $R^{n} - \partial K$ such that $f \rightarrow 0$ on $\partial K$, and $f \sim d(\cdot,\partial K)$.

\emph{Proof of Claim}.  Given $y \in R^{n} - \partial K$, choose $z$ to be the nearest point to $y$ in $\partial K$.  Then define $f$ to be the distance between them.  Note in particular that $\nabla d(y) = \nabla d(z)$.

\emph{Continuation of Proof}.  We compute

\begin{align}
\frac{\partial}{\partial t}f(u) = \frac{\partial f}{\partial u^{\alpha}}(\Delta u^{\alpha} + V^{\alpha}(u))
\end{align}

\begin{align}
\Delta f &= g^{kl}\nabla_{k}(\frac{\partial f}{\partial u^{\alpha}}\nabla_{l}u^{\alpha}) \nonumber \\
&= \frac{\partial f}{\partial u^{\alpha}}\Delta u^{\alpha} + g^{kl}\frac{\partial^{2}f}{\partial u^{\alpha}\partial u^{\beta}}\nabla_{k}u^{\alpha}\nabla_{l}u^{\beta}
\end{align}

Hence

\begin{align}
\frac{\partial}{\partial t}f &= \Delta f - g^{kl}D^{2}f(\nabla_{k}u,\nabla_{l}u) + Df \cdot V \text{ (Note that $f > 0$).} \nonumber \\
&\leq \Delta f + Df \cdot V \text{ (since $D^{2}f$ is positive as $f$ is convex)}
\end{align}

Now, suppose we look at $u \in R^{n} - \partial K$, such that $u_{0}$ is the closest point, and $w = \nabla f(u_{0})$ with $u = u_{0} + tw$.  Note that $w \cdot V(u_{0}) < 0$.  Then

\begin{center}
$Df \cdot V = \frac{\partial f}{\partial u^{\alpha}}V^{\alpha}$

$= \frac{\partial f}{\partial u^{\alpha}}\vert_{u_{0} + tw}(V^{\alpha}(u_{0})) + \frac{\partial f}{\partial u^{\alpha}}\vert_{u_{0} + tw}(V^{\alpha}(u) - V^{\alpha}(u_{0}))$
\end{center}

Now, the first term is negative by our initial remark that $w \cdot V(u_{0}) < 0$.  Since as $V$ is smooth and for $u$, $u_{0}$ sufficiently close, $\vert V^{\alpha}(u) - V^{\alpha}(u_{0}) \vert \leq C\norm{u - u_{0}} = Cd = Cf$ for some constant $C$.  Note also that $\frac{\partial f}{\partial u^{\alpha}}\vert_{u} = \nabla f(u)$ is also bounded since $K$ is compact and hence the function $\norm{\nabla f}$ will be bounded on the boundary of $K$.  So the second term is bounded, and so we can conclude

\begin{center}
$\frac{\partial f}{\partial t} \leq \Delta f + Cf$
\end{center}

for some constant $C > 0$.

This implies by the standard maximum principle that $f \leq  0$ for $t \geq 0$, since $f \leq 0$ initially.  (Since $u(z,0) \in K$, $f(u(z,0)) \leq 0$).  So $f(u(z,t)) \leq 0$, or $u(z,t) \in K$ for all $t \geq 0$.

\subsection{Extension of Technique to 2-tensors on Hypersurfaces}

\begin{thm} Let $T$ is a symmetric $2$-tensor on $M$ satisfying

\begin{center}
$\frac{\partial}{\partial t}T_{kl} = \Delta T_{kl} + \mathcal{Q}_{kl}(T)$
\end{center}

Suppose $K$ is a $O(n)$ invariant convex set (we need it to be $O(n)$ invariant otherwise under a change of basis or representation for $T$ the following would not be true) in $Sym(n) = \{A \in GL(n) | A^{t} = A\}$.

Suppose further that $A_{ij} = T(e_{i},e_{j}) \in K$ for any orthonormal basis $\{e_{1}, \cdots, e_{n}\}$ for $T_{x}M$, all $x \in M$, at $t = 0$.  In other words, "$T \in K$".

Finally, assume that $\mathcal{A}$ points into $K$ for $A \in \partial K$. Then $T \in K$ for $t \geq 0$. \end{thm}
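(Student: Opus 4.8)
The plan is to reduce this tensor maximum principle to the vector maximum principle of the preceding subsection, following Hamilton's device of composing $T$ with an $O(n)$-invariant convex function. First I would take $f : Sym(n) \to R$ to be the signed distance to $\partial K$, normalised so that $f < 0$ on the interior of $K$, $f = 0$ on $\partial K$, and $f(A) = dist(A,\partial K) > 0$ for $A \notin K$. Since $K$ is convex, $f$ is globally convex and $1$-Lipschitz, and it is smooth away from $\partial K$ (with the usual caveat about the medial axis, addressed below). Crucially, because $K$ is $O(n)$-invariant so is $f$: if $A_{ij} = T(e_i,e_j)$ and $A'_{ij} = T(e'_i,e'_j)$ for two orthonormal frames at a point, then $A' = R^{t}AR$ for some $R \in O(n)$, whence $f(A) = f(A')$. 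Therefore $\tilde f(x,t) := f(T(x,t))$ is a well-defined scalar function on $M \times [0,T)$, and $\tilde f \le 0$ at $t = 0$ by the hypothesis that $T \in K$ initially.

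Next I would derive a scalar differential inequality for $\tilde f$. Working at a point in a parallel orthonormal frame, so that $\nabla$ acts on the components $T_{kl}$ as ordinary differentiation and the metric components are constant, the chain rule gives
\[
\partial_t \tilde f = \frac{\partial f}{\partial T_{kl}}\bigl(\Delta T_{kl} + \mathcal{Q}_{kl}(T)\bigr), \qquad \Delta \tilde f = \frac{\partial f}{\partial T_{kl}}\,\Delta T_{kl} + \sum_m \frac{\partial^2 f}{\partial T_{kl}\,\partial T_{pq}}\,(\nabla_m T)_{kl}\,(\nabla_m T)_{pq}.
\]
Since $f$ is convex, the Hessian term is nonnegative (for each $m$ it is the Hessian of $f$ evaluated on the symmetric matrix $\nabla_m T$), so wherever $\tilde f > 0$ we obtain $\partial_t \tilde f \le \Delta \tilde f + Df(T)\cdot \mathcal{Q}(T)$. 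To bound the reaction term, let $A_0 \in \partial K$ be the nearest point of $\partial K$ to $A = T(x,t)$; then $Df(A) = Df(A_0)$ is the outward unit normal to $K$ at $A_0$, and the hypothesis that $\mathcal{A}$ points into $K$ at $A_0$ is exactly the statement $Df(A_0)\cdot \mathcal{Q}(A_0) \le 0$ (for a convex set this is equivalent to the inward-flow ODE condition of the vector theorem). Writing $Df(A)\cdot \mathcal{Q}(A) = Df(A_0)\cdot \mathcal{Q}(A_0) + Df(A_0)\cdot(\mathcal{Q}(A) - \mathcal{Q}(A_0))$ and using that $\mathcal{Q}$ is polynomial (hence locally Lipschitz) in $T$, together with a bound on $T$ over the time interval, the second term is at most $C\,|A - A_0| = C\,\tilde f$. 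Thus $\partial_t \tilde f \le \Delta \tilde f + C\,\tilde f$ on $\{\tilde f > 0\}$.

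I would then conclude by the scalar maximum principle, exactly as in the proof of the vector theorem: comparing with $\tilde f_\varepsilon = \tilde f + \varepsilon(1+t)$ and using compactness of $M$, the inequality $\partial_t \tilde f \le \Delta \tilde f + C\tilde f$ on $\{\tilde f > 0\}$ together with $\tilde f \le 0$ at $t = 0$ forces $\tilde f(x,t) \le 0$ for all $x \in M$ and $t \ge 0$, i.e. $T \in K$ for $t \ge 0$. A cleaner but equivalent route is to lift the whole problem to the orthonormal frame bundle $O(M)$, which is compact when $M$ is: there $T$ becomes an equivariant $Sym(n)$-valued function satisfying a genuine vector reaction--diffusion equation, and the $O(n)$-invariance of $K$ makes the pulled-back target a fibrewise convex set into which the reaction points, so the vector maximum principle applies directly.

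The main obstacle is the regularity of $f$. The signed distance to a convex set need not be $C^2$ across $\partial K$, nor globally $C^2$ in the interior of $K$ (the medial axis), so the chain-rule computation above is literally valid only on the open set $\{\tilde f \ne 0\}$; one handles this in the standard way, either by running the maximum principle only on $\{\tilde f > 0\}$ or by smoothing $f$ to a convex $f_\delta$ and passing to the limit. A secondary point one must check is that $\mathcal{Q}$ is genuinely $O(n)$-equivariant --- automatic when $\mathcal{Q}_{kl}(T)$ is built polynomially from $T$ and $g$, as in the application $Q_{ij} = h_{ij} - \epsilon g_{ij}$ under mean curvature flow --- since otherwise neither ``$T \in K$'' nor ``$\mathcal{A}$ points into $K$'' would be frame-independent statements. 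The remaining ingredients (the convexity inequality for the Hessian term, the Lipschitz estimate on $\mathcal{Q}$, and the scalar maximum principle itself) are routine given the earlier material.
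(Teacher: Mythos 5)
Your proof is correct and follows essentially the same route as the paper: the paper's version simply instructs the reader to pass to a local orthonormal frame via Gram--Schmidt, obtain a componentwise reaction--diffusion equation for $A_{ij} = T(E_i,E_j)$, and then declares ``the remainder of the argument is completely analogous to the previous case'' (i.e.\ compose with the convex distance-to-boundary function and apply the scalar maximum principle), which is exactly the argument you spell out. Your write-up is in fact more careful than the paper's --- in particular you correctly work at a point in a parallel frame so that $\Delta$ acts on the components, you make explicit why $O(n)$-invariance of $K$ (and equivariance of $\mathcal{Q}$) is needed for frame-independence, and you flag the regularity issue with the distance function --- all of which the paper leaves implicit.
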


\begin{proof} Given a local parametrisation of $M$, we get $\{\partial_{1}, \cdots, \partial_{n}\}$ is a smooth basis of $T_{x}M$.  Perform Gram-Schmidt to get $\{ E_{1}, \cdots, E_{n} \}$ orthonormal with respect to $g$.

Then write $A_{ij} = T(E_{i}, E_{j})$.

We have that

\begin{center}
$\frac{\partial}{\partial t}A_{ij} = \Delta A_{ij} + \mathcal{Q}(A)_{ij}$
\end{center}

Then the idea is to work with this tensor.  The remainder of the argument is completely analogous to the previous case, if only a little more complicated.
\end{proof}

\subsection{Application of 2-tensor Extension to Proof of Huisken}

Define

\begin{align}
K &= \{ A | A(e,e) \geq \epsilon \text{ for all $e$ st } \norm{e} = 1 \} \nonumber \\
&= \cap_{e | \norm{e} = 1}\{A | A(e,e) \geq \epsilon \} \end{align}

Then $K$ is the intersection of convex half spaces and hence is convex.

Consider now the ODE

\begin{center}
$V_{ij}(A) = A_{ij}\norm{A}^{2}$
\end{center}

Then if $A \in K$, $A + sV(A) = A(1 + s\norm{A}^{2}) \geq \epsilon(1 + \norm{A}^{2}) > \epsilon$, $s > 0$.  ie, $V$ points into $K$. So the criteria for the 2-tensor maximum principle are met, and we conclude that $A$ remains in $K$ under mean curvature flow.  In particular, if we define $A = h$ and since $M$ is compact, $g(v,v)$ realises its maximum on $M$ and hence $h(v,v) \geq max_{(x \in M, w \in T_{x}M)}(g(w,w))\epsilon = \hat{\epsilon}$.  The relevant ODE is $V_{ij}(h) = h_{ij}\norm{h}^{2}$, which is consistent with the above, and we are done.

\subsection{Preservation of bounded curvature}

As another application of the 2-tensor maximum principle, consider

\begin{center}
$K = \{ A \vert A(v,v) \leq CA(w,w)$ for all $v,w : \norm{v} = \norm{w} = 1 \}$
\end{center}

Then once again, $K$ is convex, a cone, and preserved by the above ODE.

So if $\kappa_{max}(x,0) \leq C\kappa_{min}(x,0)$ $\forall x \in M$, then $\kappa_{max}(x,t) \leq C\kappa_{min}(x,t)$ $\forall x \in M, t \geq 0$.

\emph{Remark}.  Note if $\kappa_{max}(x,t) \leq \kappa_{min}(x,t)$ $\forall x$, ie $h_{ij} = f(x)g_{ij}$, then $f$ is constant on connected components.

\emph{Proof of Remark}. By Codazzi, $\nabla_{k}h_{ij} = \nabla_{i}h_{kj}$.  This implies that $(\nabla_{i}f)g_{kj} = (\nabla_{k}f)g_{ij}$ since $\nabla g = 0$.  Choose $i = j \neq k$, and $g = \delta$.  Then we conclude $\nabla_{k}f = 0$, or $f$ is constant.

\section{Rescaling}

Rescaling is a recurring theme in the subject of geometric evolution equations.  It is often important to understand the limiting behaviour of geometric flows, in order to get a better understanding of the singularities which may develop, and hence being able to justify surgery in certain circumstances.  This becomes particularly important in the Ricci flow, where the idea is to break a three manifold into pieces by performing successive surgeries, until the manifold has been decomposed into "prime" components.  The understanding of how these "prime" components then collapse to points is key to their classification.

\subsection{Preliminaries}

Now, MCF has a scaling symmetry: if $X : M_{0}^{n} \times [0,T) \rightarrow R^{n+1}$ satisfies MCF, then so does

\begin{center}
$X_{(\lambda,\bar{x},\bar{t})}(p,t) = \lambda(X(p,\hat{t}/\lambda^{2}) - \bar{x})$
\end{center}

where $\hat{t} = t - \bar{t}$.

\begin{dfn} (Convergence of hypersurfaces).

A sequence of closed hypersurfaces $M_{k}$ converges to $M_{\infty}$ if there exist

\begin{center}
$g_{k} \in C^{\infty}(R^{n+1})$, $k \in N \cup \{\infty\}$,

$M_{k} = g_{k}^{-1}(0)$, ie the $M_{k}$ are level sets of the $g_{k}$,
\end{center}

such that

$\norm{\nabla g_{k}(x)} \geq 1$ for all $x \in M_{k}$ (all points $x$ are regular points, and hence the $M_{k}$ are well defined submanifolds of $R^{n+1}$),

and

$g_{k} \rightarrow g_{\infty}$ in $C^{\infty}(B_{R})$ for all $R$. \end{dfn}

\subsection{A compactness theorem}

Let $M_{k}$ be any sequence of hypersurfaces

\begin{center}
$\norm{\nabla^{(j)}h}^{2}(x) \leq C(R,j)$
\end{center}

for all $x \in M_{k} \cap B_{R}$, $j \geq 0$, 

and assume we have a bound below on the "tube radius"

\begin{center}
$r_{-}(M_{k},R) = sup\{r \geq 0 \vert Y : M_{k} \cap B_{R} \times (-r,r) \rightarrow R^{n+1}, Y(x,s) = x + sN(x)$ is injective $\}$.
\end{center}

(This rules out, for instance, a sequence of manifolds $M_{k}$ embedded in $R^{n+1}$ with a gap of distance $1/k$ apart- there is a more general theorem, the varifold compactness theorem, that allows such things to occur, but that is a story for another day, or, at least, another section (see section on Geometric measure theory).)

i.e. $r_{-}(M_{k},R) \geq \epsilon > 0$, some $\epsilon$.  So $\exists R > 0$ such that $M_{k} \cap B_{R} \neq \phi$ for all $k$.

Then we conclude that there is a convergent subsequence $\{M_{p_{k}}\}_{k \in N}$.

\subsection{Existence of smooth limit flows}

\begin{dfn}  A limit of rescalings $X_{(\lambda_{k},x_{k},t_{k})}$, where $(x_{k},t_{k}) \rightarrow (\bar{x},\bar{t})$ is called a limit flow. \end{dfn}

\emph{Claim}. For $M_{0}$ convex, there exists a smooth limit flow at $(\bar{x},T)$, some $\bar{x} \in R^{n+1}$.

\begin{proof} We use results from existence theory of parabolic PDE to conclude smoothness results on smaller and smaller balls.


For $\bar{t} < T$, choose $\lambda = \frac{1}{r_{-}}$.  For $0 \leq t \leq \frac{1}{8n}$, $M_{\lambda}(t)$ is between $S^{n}(C_{0})$ and $S^{n}(\frac{1}{2})$.

On $B_{1/4}$ we can write $M_{(\lambda,t)}$ = graph$(u(.,t))$ (on $B_{1/4} \times [0,\frac{1}{8n}]$).  Then we observe that $\norm{u(.,t)} \leq c_{0}$ and $\norm{Du(.,t)} \leq c_{1}$ some constants $c_{0},c_{1}$, and

\begin{center}
$\frac{\partial u}{\partial t} = (\delta^{ij} - \frac{D_{i}uD_{j}u}{1 + \norm{Du}^{2}})D_{i}D_{j}u$
\end{center}

Now the H\"older gradient estimate holds on $B_{1/8} \times [\frac{1}{16n},\frac{1}{8n}]$.  Therefore, by the interior Schauder estimates,

\begin{center}
$\norm{u}_{C^{(k,\alpha)}(B_{1/16} \times [\frac{3}{32n},\frac{1}{8n}])} \leq C_{k}(n,c_{0})$
\end{center}

$\norm{\nabla^{(j)}h}^{2}$ is computed from $D^{(j+2)}u$.  This implies

\begin{center}
$\norm{\nabla^{(j)}h}^{2} \leq C_{j}(n,c_{0})$
\end{center}

on $M_{(\lambda,t)}$, $\frac{3}{32n} \leq t \leq \frac{1}{8n}$.

This means that the tube radius is controlled, ie $M_{(\lambda,\bar{t})} \cap B_{k} \neq \phi$, and so by the compactness theorem, there is a limit flow at $(\bar{x},T)$. \end{proof}


\section{The 2 dimensional Ricci Flow (2DRF)}

Consider the PDE

\begin{center}
$\frac{\partial}{\partial t}g_{ij} = -2R_{ij}$
\end{center}

Given $M$ a compact manifold, with riemannian metric $g_{0}$, we want to find $g(x,t)$ a smooth inner product on $T_{x}M$ satisfying this equation with $g(x,0) = g_{0}(x)$.  We then say that $g(x,t)$ is a solution to the Ricci flow of $(M,g_{0})$.

\subsection{Introduction and Existence}

Note that for $n = 2$, $R_{11} = g^{kl}R_{1k1l} = \sum_{k}R_{1k1k} = R_{1212} = sect(T_{x}M) = K$.  Hence $R_{ij} = K g_{ij}$, and

\begin{center}
$\frac{\partial}{\partial t}g_{ij} = -2Kg_{ij}$
\end{center}

But note that $R = R_{11} + R_{22} = 2K$.

Hence $g_{ij}(x,t) = e^{2u(x,t)}g_{ij}(x,0)$ for some function $u$.

Compute

\begin{center}
$R(g(t)) = R(e^{2u}g_{0}) = e^{-2u}(R(g_{0}) - 2\Delta_{g_{0}}u)$
\end{center}

\begin{align}
\nabla^{g}_{\partial_{i}}\partial_{j} &= \frac{1}{2}g^{kl}(\partial_{i}g_{jl} + \partial_{j}g_{il} - \partial_{l}g_{ij})\partial_{l} \nonumber \\
&= \frac{1}{2}e^{-2u}(g_{0})^{kl}(\partial_{i}(e^{2u}(g_{0})_{jl}) \cdots )\partial_{l} \nonumber \\
&= \nabla^{g_{0}}_{\partial_{i}}\partial_{j} + \partial_{i}u\partial_{j} + \partial_{j}u\partial_{i} - (g_{0})_{ij}\partial_{p}u(g_{0})^{pq}\partial_{q} \end{align}

From this we may compute the curvature:

\begin{center}
$2e^{2u}\frac{\partial u}{\partial t}g_{0} = - e^{-2u}(R_{0} - 2\Delta_{0}u)e^{2u}g_{0}$
\end{center}

which implies that

\begin{center}
$\frac{\partial u}{\partial t} = e^{-2u}(\Delta_{g_{0}}u - K_{0})$
\end{center}

So short time existence is ok.

\subsection{Evolution of the Scalar Curvature}

We compute $\frac{\partial}{\partial t}R$, if $g_{ij} = -fg_{ij}$.

Now recall $\Gamma_{ij}^{k} = \frac{1}{2}g^{kl}(\partial_{i}g_{jl} + \partial_{j}g_{il} - \partial_{l}g_{ij})$ is the expression for the Christoffel symbols in local coordinates.  Observe that, although $\Gamma$ is not a tensor, that $\frac{\partial}{\partial t}\Gamma$ is.  For a proof of this, let $\nabla, \bar{\nabla}$ be covariant derivatives, and consider $T(U,V) = \nabla_{U}V - \bar{\nabla}_{U}V = \nabla_{U}(V^{j}\partial_{j}) - \bar{\nabla}_{U}(V^{j}\partial_{j}) = (UV^{j})\partial_{j} + V^{j}\nabla_{U}\partial_{j} - (UV^{j})\partial_{j} - V^{j}\bar{\nabla}_{U}\partial_{j} = U^{i}V^{j}T(\partial_{i},\partial_{j})$.  Since $\frac{\partial}{\partial t}\Gamma$ is merely a limit of expressions like this, each of which are tensors, so much the limit be a tensor.

Hence we can compute $\frac{\partial}{\partial t}\Gamma$ in any local coordinate system.  Choose local coordinates such that $\Gamma(x_{0},t_{0}) = 0$.  Then

\begin{align}
\frac{\partial}{\partial t}\Gamma_{ij}^{k}\vert_{(x_{0},t_{0})} &= \frac{1}{2}g^{kl}(\partial_{i}(-fg_{jl}) + \partial_{j}(-fg_{il}) - \partial_{l}(-fg_{ij})) \nonumber \\
&= \frac{1}{2}g^{kl}(\nabla_{i}(-fg_{jl}) + \nabla_{j}(-fg_{il}) + \nabla_{l}(fg_{ij})) \nonumber \\
&= -\frac{1}{2}\nabla_{i}(f) \delta_{j}^{k} - \frac{1}{2}\nabla_{j}(f) \delta_{i}^{k} + \frac{1}{2}g_{ij}\nabla_{l}(f) g^{lk}
\end{align}

Now, recall by definition

\begin{center}
$\nabla_{i}\partial_{j} = \Gamma_{ij}^{p}\partial_{p}$
\end{center}

so

\begin{center}
$\nabla_{k}(\nabla_{i}\partial_{j}) = (\partial_{k}\Gamma_{ij}^{p})\partial_{p} + \Gamma_{ij}^{p}\Gamma_{kp}^{q}\partial_{q}$
\end{center}

In particular, recall that

\begin{center}
$R_{ikj}^{l} = \partial_{k}\Gamma_{ij}^{l} - \partial_{i}\Gamma_{kj}^{l} + \Gamma \star \Gamma$
\end{center}

so

\begin{align}
\frac{\partial}{\partial t}R_{ikj}^{l} &= \nabla_{k}(-\frac{1}{2}\nabla_{i}(f) \delta_{j}^{k} - \frac{1}{2}\nabla_{j}(f) \delta_{i}^{k} + \frac{1}{2}g_{ij}\nabla_{l}(f) g^{lk}) - (i \leftrightarrow k) \nonumber \\
&= -\frac{1}{2}\nabla_{k}\nabla_{j}f \delta_{i}^{l} + \frac{1}{2}g_{ij}\nabla_{k}\nabla_{p}f g^{pl} + \frac{1}{2}\nabla_{i}\nabla_{j}f \delta_{k}^{l} - \frac{1}{2}g_{kj}\nabla_{i}\nabla_{p}f g^{pl}
\end{align}

Then, for the Ricci tensor, we have

\begin{align}
\frac{\partial}{\partial t}R_{ij} &= -\frac{1}{2}\nabla_{i}\nabla_{j}f + \frac{1}{2}g_{ij}\Delta f + \nabla_{i}\nabla_{j}f \frac{1}{2}n - \frac{1}{2}\nabla_{i}\nabla_{j}f \nonumber \\
&= \frac{1}{2}g_{ij}\Delta f
\end{align}

But

\begin{center}
$\frac{\partial}{\partial t}R_{ij} = \frac{\partial}{\partial t}(\frac{1}{2}Rg_{ij}) = \frac{1}{2}\frac{\partial R}{\partial t}g_{ij} + \frac{1}{2}R(-fg_{ij})$
\end{center}

So

\begin{center}
$\frac{\partial R}{\partial t} = \Delta f + Rf$
\end{center}

is the evolution equation for the scalar curvature.

Specialising to 2D Ricci flow, where $f = R$, we get

\begin{center}
$\frac{\partial R}{\partial t} = \Delta R + R^{2}$
\end{center}

Consequently, we may immediately conclude that [I don't understand this]

\begin{center}
$R_{min}(t) \geq \frac{R_{min}(0)}{1 - R_{min}(0)t} \geq -\frac{1}{t}$
\end{center}

\subsection{Normalisation}

We compute the change in area $A(t)$ of the surface $M(t)$ under the 2DRF

\begin{center}
$\frac{\partial}{\partial t}g_{ij} = -Rg_{ij}$
\end{center}

Now,

\begin{center}
Area $= A = \int_{M}\sqrt{det(g)}$
\end{center}

\begin{center}
$\frac{\partial}{\partial t}det(g) = det(g)tr(g^{-1}\frac{\partial g}{\partial t}) = det(g)(-2R)$
\end{center}

\begin{center}
$\frac{\partial}{\partial t}\sqrt{det(g)} = -R\sqrt{det(g)}$
\end{center}

Hence

\begin{center}
$\frac{d}{dt}A(g(t)) = - \int_{M}Rd\mu(g(t))$
\end{center}

Out of interest, recall the Gauss-Bonnet theorem: $\int_{M} R = 4\pi \chi(M)$.

But anyway,

\begin{center}
$\int_{M}Rd\mu(g) = \int e^{-2u}(R_{0} - 2\Delta_{0}u)e^{2u}d\mu(g_{0}) = \int R_{0}d\mu(g_{0})$
\end{center}

so the rate of change in area is constant in time.

Hence if $A(0)$ is finite there will exist a $T > 0$ such that $A(T) = 0$.  So it is of some interest to normalise our flow to fix the area.

So define $\hat{g}(x,t) = g(x,t)(\frac{A(g(0))}{A(g(t))})$.  Notice that this implies that Area$(\hat{g}(t)) = A(\hat{g}(0))$.

Now,

\begin{center}
$\frac{\partial}{\partial t}\hat{g} = -R(g)g\frac{A(0)}{A(t)} - \frac{gA(0)}{A(t)^{2}}(-\int R)$
\end{center}

Define $r = \frac{\int R}{A(0)}$ (a constant).

Then

\begin{center}
$\frac{\partial}{\partial t}\hat{g} = -\frac{A(0)}{A}(Rg - r\hat{g})$
\end{center}

Note that $\hat{R}\hat{g} = Rg$, and define a time variable $\tau$ such that $\frac{\partial}{\partial \tau} = \frac{A}{A(0)}\frac{\partial}{\partial t}$.  Then

\begin{center}
$\frac{\partial}{\partial \tau}\hat{g} = - (\hat{R} - r)\hat{g}$
\end{center}

This is the normalised 2D Ricci Flow equation.

Now, if we were to substitute $f = \hat{R} - r$ in the computation before, we get the evolution equation for the normalised curvature:

\begin{center}
$\frac{\partial}{\partial \tau}\hat{R} = \Delta \hat{R} + \hat{R}(\hat{R} - r)$
\end{center}

\emph{Special Case}.  Suppose $R(g_{0}) \leq -\delta < 0$.  Then observe that since

\begin{center}
$\frac{\partial}{\partial t}(\hat{R} - r) = \Delta(\hat{R} - r) + \hat{R}(\hat{R} - r)$,
\end{center}

by the maximum principle

\begin{itemize}
\item[(i)] $\hat{R} \leq -\delta$ is preserved,
\item[(ii)] $\frac{\partial}{\partial t}(\hat{R}_{max} - r) \leq -\delta(\hat{R}_{max} - r)$, which implies that $\hat{R}_{max} - r \leq Ce^{-\delta t}$,
\item[(iii)] $\norm{\hat{R} - r} \leq Ce^{-\delta t}$ for all $\delta \in (0, \norm{r})$,
\item[(iv)] $\frac{\partial}{\partial t}\hat{u} = - (\hat{R} - r)$ implies that $\norm{\hat{u}(x,t) - \hat{u}(x,0)} \leq Ce^{-\delta t}$, which implies that $\hat{u}(x,t) \rightarrow \hat{u}_{\infty}(x)$ in the $C^{\infty}$ norm, with $R(e^{2\hat{u}_{infty}}g_{0}) = r$.
\end{itemize}

\subsection{Soliton Solutions}

"Soliton" solutions are defined to be solutions which evolve without changing geometry, (up to scaling).  In particular, for such a solution, there exist diffeomorphisms $\phi_{t} : M \rightarrow M$, $\phi_{0} = id$ such that

\begin{center}
$\hat{g}_{(\phi(x,t),t)}(\frac{\partial \phi_{t}}{\partial x^{i}}(x,t), \frac{\partial \phi_{t}}{\partial x^{j}}(x,t)) = \hat{g}_{(x,0)}(\partial_{i},\partial_{j})$
\end{center}

In other words, $(\phi_{t})_{\star}\hat{g}_{t} = \hat{g}_{0}$.

But this implies that

\begin{center}
$\frac{\partial}{\partial t} = \mathcal{L}_{\frac{\partial \phi}{\partial t}}g = \nabla_{i}V_{j} + \nabla_{j}V_{i}$
\end{center}

for some vector field $V$, where $\mathcal{L}$ is the Lie derivative.


Hence

\begin{center}
$(R - r)g_{ij} + \nabla_{i}V_{j} + \nabla_{j}V_{i} = 0$
\end{center}

A gradient soliton is one such that $V$ can be realised as the gradient of a scalar function $f$.  But then $\nabla_{i}V_{j} = \nabla_{j}V_{i} = \nabla_{i}\nabla_{j}f$, so

\begin{center}
$(R - r)g_{ij} + 2\nabla_{i}\nabla_{j}f = 0$,
\end{center}

the Gradient Soliton equation.  It turns out that the classification of solutions of this equation is precisely what is required for the classification of prime three manifolds, ie, to establish the geometrisation conjecture of Thurston.  As a final comment, note that if we contract the above, we get a related equation from which we lose no information in the case of the 2D Ricci Flow:

\begin{center}
$(R - r) + \Delta f = 0$
\end{center}

It turns out that in $n$ dimensions, the Gradient Ricci Soliton equation is

\begin{center}
$-2R_{ij} = 2\nabla_{i}\nabla_{j}f - \lambda g_{ij}$
\end{center}

\subsection{Computing the "Variational Derivative" for the 2D Gradient Ricci Soliton equation}

This computation is relevant later on when we consider the Perelman functional for the $n$ dimensional Ricci flow.

Differentiate the 2D Gradient Ricci Soliton equation:

\begin{align}
0 &= \nabla_{k}R_{ij} + \nabla_{k}\nabla_{i}\nabla_{j}f - \nabla_{i}R_{kj} - \nabla_{i}\nabla_{k}\nabla_{j}f \nonumber \\
&= \nabla_{k}R_{ij} - \nabla_{i}R_{kj} + R_{kij}^{p}\nabla_{p}f
\end{align}

The trace of this expression is

\begin{center}
$0 = \nabla_{k}R - \nabla_{p}R_{k}^{p} - R_{k}^{p}\nabla_{p}f$
\end{center}

Recall the second Bianchi identity:

\begin{center}
$\nabla_{k}R_{ijpq} + \nabla_{i}R_{jkpq} + \nabla_{j}R_{kipq} = 0$
\end{center}

Taking the trace of this, we get

\begin{center}
$0 = \nabla_{k}R_{jq} - \nabla_{j}R_{kq} + \nabla_{i}R_{iqjk}$
\end{center}

Taking the trace once more, we get

\begin{center}
$0 = \nabla_{k}R - \nabla_{j}R_{k}^{j} - \nabla_{j}R_{k}^{j}$
\end{center}

ie $\nabla_{p}R_{k}^{p} = \frac{1}{2}\nabla_{k}R$.

If dim$(M) = n = 2$, we conclude further that

\begin{align}
0 &= \frac{1}{2}\nabla_{k}R - \frac{1}{2}R\delta_{k}^{p}\nabla_{p}f \nonumber \\
&= \frac{1}{2}(\nabla_{k}R - R\nabla_{k}f),
\end{align}

or

\begin{center}
$\nabla_{k}(Re^{-f}) = 0$
\end{center}

$Re^{-f}$ is quite reminiscent of the general form of the integrand in the Perelman functional (see later).

Now,

\begin{align}
\nabla_{k}(\norm{\nabla f}^{2} + R) &= 2\nabla_{q}f \nabla_{k}\nabla_{q}f + \nabla_{k}R \nonumber \\
&= (\nabla_{k}R - R\nabla_{k}f) + r\nabla_{k}f
\end{align}

since $\nabla_{k}\nabla_{q}f = -\frac{1}{2}(R - r)g_{kq} = -(R - r)\nabla_{k}f$.

Hence $\nabla_{k}(\norm{\nabla f}^{2} + R - rf) = 0$.

\subsection{\texorpdfstring{$C^{\infty}$}{C-infinity} convergence of the 2DRF}

I prove that normalised 2DRF on the maximal interval $[0,\infty)$ converges in the $C^{\infty}$ norm to a limiting solution for the metric $g_{\infty} = e^{2u_{\infty}}g_{0}$, for $g(0,.) = g_{0}(.)$, provided that $R_{max}$ is initially finite, ie the initial scalar curvature is bounded.  This is a model of proving smooth convergence of 3DRF solitons to a sensible limit.

So let $g(t)$ be any solution of normalised 2DRF.  At $t = 0$, let $f$ be a solution of $\Delta f + (R - r) = 0$.

For $t > 0$, define $f$ to be the solution of

\begin{center}
$\frac{\partial f}{\partial t} = \Delta f + rf$
\end{center}

\emph{Claim}.  If $f$ is the solution of this equation, then $(\Delta f + (R - r))(x,t) = 0$ for all $x \in M$, $t \geq 0$.

\emph{Proof of Claim}.

Well certainly if $f$ satisfies this equation, then

\begin{center}
$\frac{\partial}{\partial t}(\Delta f + (R - r)) = \frac{\partial}{\partial t}\Delta f + \Delta R + R(R - r)$
\end{center}

Now by definition,

\begin{center}
$\frac{\partial}{\partial t}\Delta f = \frac{\partial}{\partial t}(\frac{1}{\sqrt{det(g)}}\frac{\partial}{\partial x^{i}}(\sqrt{det(g)}g^{ij}\frac{\partial}{\partial x^{i}}f))$
\end{center}

$g = e^{2u}g_{0}$, so $\sqrt{det(g)} = e^{2u}\sqrt{det(g_{0})}$, and $g^{-1} = e^{-2u}g_{0}^{-1}$.

Hence

\begin{center}
$\Delta f = e^{-2u}\Delta_{g_{0}}f$
\end{center}

So

\begin{center}
$\frac{\partial}{\partial t}\Delta f = - (-(R - r))\Delta f + \Delta \frac{\partial f}{\partial t}$
\end{center}

Then

\begin{align}
\frac{\partial}{\partial t}(\Delta f + (R - r)) &= \Delta(\Delta f + rf) + \Delta R + R(R - r) + (R - r)\Delta f \nonumber \\
&= \Delta(\Delta f + (R - r)) + R(\Delta f + (R - r))
\end{align}

But then by the maximum principle, if $(\Delta f + (R - r))(x,0) = 0$, then $(\Delta f + (R - r))(x,t) = 0$ for all $(x,t)$, which proves the claim.

I now prove the convergence of normalised 2DRF to a sensible solution in the case that $r < 0$.  Essentially what we need to show is that $R$ and all its derivatives are bounded for all $x \in M$, $t \geq 0$.  Then, by the Ascoli-Arzela theorem, we know that as $t \rightarrow \infty$, there must be a subsequence of the $(M,g_{t})_{t \in R^{+}}$, say $(M,g_{t_{k}})_{k \in N}$, that converges to a smooth limit $(M,g_{\infty})$.  But then this limit must be unique because the space of smooth metrics with bounded curvature is compact and Hausdorff.

We aim to bound $R$, since then bounds on its derivatives are automatic, by the smoothing property of geometric evolution equations of heat type (see the original discussion of the properties of the heat equation).

Since

\begin{center}
$\frac{\partial}{\partial t}f = \Delta f + rf$
\end{center}

we see by the maximum principle for parabolic PDE that $\norm{f} \leq Ce^{rt}$

Now

\begin{align}
\frac{\partial}{\partial t}\nabla_{i}f &= \nabla_{i}(\Delta f + rf) \nonumber \\
&= g^{kl}\nabla_{i}\nabla_{k}\nabla_{l}f + g^{kl}R_{ikl}^{p}\nabla_{p}f + r\nabla_{i}f \nonumber \\
&= \delta \nabla_{i}f - \frac{1}{2}R\nabla_{i}f + r\nabla_{i}f 
\intertext{Hence}
\frac{\partial}{\partial t}\norm{\nabla f}^{2} &= \frac{\partial}{\partial t}(g^{ij}\nabla_{i}f\nabla_{j}f) \nonumber \\
&= 2\nabla_{i}f(\Delta \nabla_{i}f - \frac{1}{2}R\nabla_{i}f + r\nabla_{i}f) + (R - r)\norm{\nabla f}^{2}
\end{align}

(Note that if $\frac{\partial}{\partial t}g_{ij} = \mu g_{ij}$ then $\frac{\partial}{\partial t}g^{ij} = - \mu g^{ij}$.)

Hence

\begin{align}
\frac{\partial}{\partial t}\norm{\nabla f}^{2} &= \Delta \norm{\nabla f}^{2} - 2\norm{\nabla^{2}f}^{2} - R\norm{\nabla f}^{2} + 2r\norm{\nabla f}^{2} + R\norm{\nabla f}^{2} - r\norm{\nabla f}^{2} \nonumber \\
&= \Delta \norm{\nabla f}^{2} - 2\norm{\nabla^{2}f}^{2} + r\norm{\nabla f}^{2} \nonumber \\
&\leq \Delta \norm{\nabla f}^{2} + r\norm{\nabla f}^{2} \end{align}

from which we conclude, again by the maximum principle for parabolic PDE, that

\begin{align}
\norm{\nabla f}^{2}(x,t) \leq Ce^{rt}
\end{align}

for all $x \in M$.

We wish to control $R$, however; this will require looking to higher derivatives of $f$.  So

\begin{center}
$\frac{\partial}{\partial t}(\norm{\nabla f}^{2} + R) = \Delta(\norm{\nabla f}^{2} + R) - 2\norm{\nabla^{2}f}^{2} + r\norm{\nabla f}^{2} + R(R - r)$
\end{center}

Recall that $\Delta f + (R - r) = 0$ by hypothesis, so

\begin{center}
$\norm{\nabla^{2}f}^{2} \geq \frac{1}{2}(\Delta f)^{2} = \frac{1}{2}(R - r)^{2}$
\end{center}

Therefore

\begin{center}
$\frac{\partial}{\partial t}(\norm{\nabla f}^{2} + R) \leq \Delta(\norm{\nabla f}^{2} + R) - (R - r)^{2} + r(\norm{\nabla f}^{2} + R) + R(R - r) - rR + r(R - r)$
\end{center}

Hence

\begin{center}
$\frac{\partial}{\partial t}(\norm{\nabla f}^{2} + R - r) \leq \Delta(\norm{\nabla f}^{2} + R - r) + r(\norm{\nabla f}^{2} + R - r)$
\end{center}

So, once more by the maximum principle,

\begin{center}
$\norm{\nabla f}^{2} + R - r \leq Ce^{rt}$,
\end{center}

and in particular,

\begin{center}
$R - r \leq Ce^{rt}$
\end{center}

since by our previous result $\norm{\nabla f} \leq Ce^{rt}$, and by abuse of notation I am not relabelling constants.

But we already know that $R - r \geq -C e^{rt}$, since by the evolution equation for the scalar curvature,

\begin{align}
\frac{\partial}{\partial t} &= \Delta (R - r) + R(R - r) \nonumber \\
&= \Delta(R - r) + (R - r)^{2} + r(R - r) \nonumber \\
&\geq \Delta (R - r) + r(R - r)
\end{align}

so by the maximum principle, $R - r \geq -Ce^{rt}$.  We have now established that

\begin{align}
\norm{R - r} \leq Ce^{rt}
\end{align}

which is what we needed to show.

In the case that $r = 0$, we need to be a little bit more careful, but not overly so.  Our equation for $f$ reduces to

\begin{center}
$\frac{\partial}{\partial t}f = \Delta f$
\end{center}

so $\norm{f} \leq C$.

Like before,

\begin{center}
$\frac{\partial}{\partial t}(\norm{\nabla f}^{2} + R) \leq \Delta(\norm{\nabla f}^{2} + R)$
\end{center}

so $\norm{\nabla f}^{2} + R \leq C$

Now

\begin{align}
\frac{\partial}{\partial t}(2t\norm{\nabla f}^{2} + f^{2}) &\leq 2\norm{\nabla f}^{2} + \Delta (2t \norm{\nabla f}^{2}) + \Delta f^{2} - 2\norm{\nabla f}^{2} \nonumber \\
&= \Delta(2t\norm{\nabla f}^{2} + f^{2})
\end{align}

so $2t\norm{\nabla f}^{2} + f^{2} \leq C$, and hence $\norm{\nabla f}^{2} \leq Cmin\{1,\frac{1}{t}\}$.

Now

\begin{align}
\frac{\partial}{\partial t}(2 \norm{\nabla f}^{2} + R) &\leq \Delta (2 \norm{\nabla f}^{2} + R) - 2R^{2} + R^{2} \nonumber \\
&\leq \Delta(2\norm{\nabla f}^{2} + R) - R^{2}
\end{align}

I claim that $2\norm{\nabla f}^{2} + R \leq \frac{C}{t}$, and hence that, putting together the previous information we deduced, $R \leq \frac{C}{t}$.

Now, we already know that $R \geq -\frac{1}{t}$ since $\frac{\partial R}{\partial t} = \Delta R + R^{2}$, so we get that $R$ is controlled.  We get bounds on $\norm{\nabla^{k}R}$ as before.  So normalised 2DRF will converge to a sensible limiting solution $e^{2u_{\infty}}g_{0}$ for the metric.

In fact, we can conclude more than this.  Since $\frac{\partial}{\partial t}u = -\frac{1}{2}R$, and $\frac{\partial f}{\partial t} = \Delta f = -R$, we get that $\frac{\partial}{\partial t}(2u - f) = 0$.  But since $f$ is bounded, this means that $u$ must be bounded.  But if $u$ is bounded, we must get that $u \rightarrow u_{\infty}$ such that $R(e^{2u^{\infty}}g_{0}) = 0$.

Finally, in the case that $r > 0$, we can no longer use such basic arguments and need to use more advanced tools.  Proving convergence for the cases $r < 0$ and $r = 0$ are the analogies of the classification of the solutions to normalised 3DRF that Hamilton and his contemporaries made in the 80s and early 90s.  The case $r > 0$ with $R > 0$ initially requires the introduction of an "entropy" to prove convergence, much like Perelman did for the 3DRF.  In particular, one defines the quantity

\begin{align}
\mathcal{Z} = \int_{M}R log(R)
\end{align}

and shows that it decreases with the time parameter under normalised 2DRF.  In particular, you find that $\frac{d}{dt}\mathcal{Z} \leq 0$, and $\frac{d}{dt}\mathcal{Z} = 0$ if and only if $g$ is a gradient soliton.  Also, at least for 2DRF, you need to use a related Differential Harnack Inequality.   In particular, we have

\begin{thm} (Harnack Inequality for the 2DRF). For any solution of 2DRF with $R > 0$, we have the following inequality:

\begin{align}
\frac{\partial R}{\partial t} - \frac{\norm{\nabla R}^{2}}{R} + \frac{R}{t} \geq 0
\end{align} \end{thm}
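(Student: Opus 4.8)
The plan is to follow Hamilton's trace Harnack argument on surfaces, converting the stated inequality into a scalar maximum principle for an auxiliary quantity. I work throughout with the unnormalised flow $\partial_t g_{ij} = -2R_{ij} = -Rg_{ij}$, for which the excerpt has already derived $\partial_t R = \Delta R + R^2$ and recorded that $R>0$ is preserved (via $R_{\min}(t)\ge R_{\min}(0)/(1-R_{\min}(0)t)$). Since $R>0$ I set $L=\log R$; using $\Delta L = \Delta R/R - |\nabla R|^2/R^2$ one finds
\begin{equation}
\partial_t L = \Delta L + |\nabla L|^2 + R .
\end{equation}
Dividing the claimed inequality by $R>0$ turns it into $\partial_t\log R - |\nabla\log R|^2 + 1/t \ge 0$, that is, $Q + 1/t \ge 0$ where $Q := \Delta L + R = \partial_t L - |\nabla L|^2$. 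So it suffices to prove $Q \ge -1/t$.

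The core of the proof is a parabolic differential inequality for $Q$. First I would record the commutation identity $\partial_t\Delta u = \Delta\partial_t u + R\,\Delta u$, which holds in dimension two because $g^{ij}\partial_t\Gamma^k_{ij} = \tfrac{n-2}{2}\nabla^k R$ vanishes for $n=2$ (the formula for $\partial_t\Gamma$ is already in the excerpt). Combining this with $\partial_t R = \Delta R + R^2$, the Bochner formula $\Delta|\nabla L|^2 = 2|\nabla^2 L|^2 + 2\langle\nabla L,\nabla\Delta L\rangle + 2\,\mathrm{Ric}(\nabla L,\nabla L)$, the two-dimensional identities $\mathrm{Ric} = \tfrac{R}{2}g$, $\nabla R = R\nabla L$ and $\Delta R = R(\Delta L + |\nabla L|^2)$, and the pointwise bound $|\nabla^2 L|^2 \ge \tfrac12(\Delta L)^2$ (Cauchy--Schwarz in dimension two), a chain of cancellations should collapse everything to
\begin{equation}
\partial_t Q \ge \Delta Q + 2\langle\nabla L, \nabla Q\rangle + Q^2 .
\end{equation}

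The last step is the maximum principle. Fix $\tau_0 < T$; on $[0,\tau_0]\times M$ the metric, and with it $R>0$ and $Q$, are smooth, so by compactness of $M$ there is $C_0 \ge 0$ with $Q(\cdot,0) \ge -C_0$ (if $C_0=0$ then $Q\ge 0$ already). The spatially constant $\phi(t) = -C_0/(1+C_0 t)$ solves the ODE $\dot\phi = \phi^2$ with $\phi(0) = -C_0$, hence is a subsolution of $\partial_t u = \Delta u + 2\langle\nabla L,\nabla u\rangle + u^2$; since $Q$ is a supersolution with $Q(\cdot,0) \ge \phi(0)$ and all quantities are bounded on $[0,\tau_0]$ (so the quadratic term behaves in a Lipschitz fashion), the comparison principle gives $Q \ge \phi$. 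As $\phi(t) = -1/(t + 1/C_0) > -1/t$ for $t>0$, we get $Q + 1/t > 0$ on $(0,\tau_0]$; letting $\tau_0 \uparrow T$ and multiplying back by $R$ yields $\partial_t R - |\nabla R|^2/R + R/t \ge 0$. The hard part will be the middle paragraph: assembling $\partial_t Q$ correctly --- tracking the time-dependence of $\Delta$ in $\partial_t\Delta L$, the Bochner and Ricci contributions, and the role of $\nabla R = R\nabla L$ --- and checking that the algebra leaves precisely the coefficient $1$ on $Q^2$, since it is exactly that coefficient which makes the ODE comparison reproduce the sharp barrier $-1/t$.
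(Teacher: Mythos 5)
Your approach is essentially the same as the paper's: both define the Harnack quantity $Q = \Delta\log R + R = \frac{\Delta R}{R} - \frac{|\nabla R|^2}{R^2} + R$ (the paper works with the normalised flow and so carries an extra $-r$), both reduce the statement to a Riccati-type inequality $\partial_t Q \ge \Delta Q + \langle\text{gradient term}\rangle + Q^2$, and both finish with a maximum-principle argument giving $Q \ge -1/t$. The paper asserts the Riccati inequality without deriving it, exactly where you candidly label the remaining computation as the ``hard part,'' so neither proof actually carries out the Bochner/commutation calculation; your version is marginally more careful in two places --- you spell out the ODE barrier $\phi(t)=-C_0/(1+C_0t) > -1/t$ that the paper leaves implicit, and your coefficient $2$ on the $\langle\nabla L,\nabla Q\rangle$ term is the correct one (the paper writes $\frac{\nabla R}{R}\cdot\nabla Q$, off by a factor of two, though this does not affect the maximum-principle conclusion).
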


\begin{proof} Recall the related Harnack Inequality for the heat equation:

If $\frac{\partial u}{\partial t} = \Delta u$ on $R^{n}$, $u > 0$ then $\Delta u - \frac{\norm{\nabla u}^{2}}{u} + \frac{nu}{2t} \geq 0$.

Now for a soliton:

\begin{center}
$0 = \nabla_{i}\nabla_{j}f + \frac{1}{2}(R - r)g_{ij}$
\end{center}

hence $\nabla_{k}R = R\nabla_{k}f$.  So $\nabla_{k}\nabla_{l}R = \nabla_{l}R\nabla_{k}f + R\nabla_{k}\nabla_{l}f = \frac{\nabla_{l}R\nabla_{k}R}{R} - \frac{1}{2}R(R - r)g_{kl}$.

Hence

\begin{center}
$\frac{\partial R}{\partial t} - \frac{\norm{\nabla R}^{2}}{R} = \Delta R - \frac{\norm{\nabla R}^{2}}{R} + R(R - r) = 0$
\end{center}

Compute:

\begin{center}
$\frac{\partial}{\partial t}(\frac{\Delta R}{R} - \frac{\norm{\nabla R}^{2}}{R^{2}} + R - r)$

$\geq \Delta(\frac{\Delta R}{R} - \frac{\norm{\nabla R}^{2}}{R^{2}} + R - r) + \frac{\nabla R}{R} \cdot \nabla(\frac{\Delta R}{R} - \frac{\norm{\nabla R}^{2}}{R^{2}} + R - r) + (\frac{\Delta R}{R} - \frac{\norm{\nabla R}^{2}}{R^{2}} + (R - r))^{2}$
\end{center}

So we conclude by the maximum principle that

\begin{center}
$\frac{\Delta R}{R} - \frac{\norm{\nabla R}^{2}}{R^{2}} + R - r \geq - \frac{1}{t}$
\end{center}

The Harnack inequality for the 2DRF now is an easy consequence. \end{proof}

To complete the argument of sensible convergence of the normalised 2DRF, we use this inequality and the entropy to bound $R_{t}$ above and below for all $t \geq 0$.  This gives bounds, as usual, on $\norm{\nabla^{k}R}$, and using Ascoli-Arzela we conclude that there is $C^{\infty}$ convergence of $u$ to some limit $u_{\infty}$, giving a metric $g_{\infty} = e^{2u_{\infty}}g_{0}$.  The stationarity of the entropy at this solution tells us that this will be a gradient Ricci soliton.

In particular, $R_{\infty}$ will be constant and positive, so we conclude that $M$ is diffeomorphic to $S^{2}$ or $RP^{2}$. 

Of course this argument (for $r > 0$), requires that $R$ be initially positive everywhere in $M$.  So we need a more general argument for when this is not the case, ie for when there are points in $M$ initially such that $R < 0$ at these points, in particular, that

\begin{center}
$R \geq -\frac{1}{t}$
\end{center}

This requires us to modify the above notion of entropy and the Harnack inequalities, but it is possible to make arguments analogous to the above to make this all work.

\section{The Ricci Flow}

We now consider the $n$-dimensional Ricci flow.

\subsection{Short time existence}

I establish short time existence for the $n$-dimensional Ricci flow

\begin{center}
$\frac{\partial}{\partial t}g_{ij} = -2R_{ij}$
\end{center}

Recall that

\begin{center}
$\frac{\partial}{\partial t}g_{ij} = A_{ij}^{pqrs}\partial_{r}\partial_{s}g_{pq} +$ lower order terms
\end{center}

We wish to show that this system is strongly parabolic, so that we can use existence results for such equations (just use existence theory for the heat equation, since all strongly parabolic equations are equivalent to the heat equation under an appropriate reparametrisation).  So, we wish to establish that

\begin{center}
$<A_{ij}^{pqrs}\xi_{r}\xi_{s}\eta_{pq},\eta_{ij}> \geq \epsilon \norm{\xi}^{2}\norm{\eta}^{2}$
\end{center}

for any $\xi$, $\eta$, for some $\epsilon > 0$.

Once again, remember that

\begin{center}
$\Gamma_{ij}^{k} = \frac{1}{2}g^{kl}(\partial_{i}g_{jl} + \partial_{l}g_{il} - \partial_{l}g_{ij})$
\end{center}

So

\begin{align} R_{ikj}^{l}\partial_{l} &= \nabla_{k}(\nabla_{i}\partial_{j}) - \nabla_{i}(\nabla_{k}\partial_{j}) \nonumber \\
&= \nabla_{k}(\Gamma_{ij}^{l}\partial_{l}) + \nabla_{i}(\Gamma_{kj}^{l}\partial_{l}) \nonumber \\
&= (\partial_{k}\Gamma_{ij}^{l} - \partial_{i}\Gamma_{kj}^{l})\partial_{l} + \text{lower order terms} \nonumber \\
&= \frac{1}{2}g^{lm}(\partial_{k}(\partial_{i}g_{jm} + \partial_{j}g_{im} - \partial_{m}g_{ij}) - \partial_{i}(\partial_{k}g_{jm} + \partial_{j}g_{km} - \partial_{m}g_{kj})) + \text{lower order terms} \nonumber \\
&= \frac{1}{2}g^{lm}(\partial_{k}\partial_{j}g_{im} - \partial_{k}\partial_{m}g_{ij} - \partial_{i}\partial_{j}g_{km} + \partial_{i}\partial_{m}g_{kj}) + \text{lower order terms}
\intertext{Hence}
R_{ij} &= -\frac{1}{2}g^{kl}\partial_{k}\partial_{j}g_{ij} + \frac{1}{2}\partial_{j}(g^{kl}\partial_{k}g_{il}) + \frac{1}{2}\partial_{i}(g^{kl}\partial_{k}g_{jl}) - \frac{1}{2}g^{kl}\partial_{i}\partial_{j}g_{kl} + \text{lower order terms} \nonumber \\
&= -\frac{1}{2}g^{kl}\partial_{k}\partial_{j}g_{ij} + \partial_{i}(\frac{1}{2}g^{kl}\partial_{k}g_{jl} - \frac{1}{4}g^{kl}\partial_{j}g_{kl}) + \partial_{j}(\frac{1}{2}g^{kl}\partial_{k}g_{il} - \frac{1}{4}g^{kl}\partial_{i}g_{kl}) + \text{lower order terms} \end{align}

Hence recalling that $R_{ij}$ is a tensor that our calculation is hence the same in all coordinate systems, we get that

\begin{center}
$-2R_{ij} = g^{kl}\partial_{k}\partial_{l}g_{ij} + \nabla_{i}X_{j} + \nabla_{j}X_{i} +$ lower order terms,
\end{center}

where $X_{i} = \frac{1}{2}\partial_{i}g_{kl} - g^{kl}\partial_{k}g_{il}$.

It is possible to reparametrise this expression such that $X = 0$.  Then $A_{ij}^{pqrs} = g^{rs}\delta_{ij}^{pq}$, and so

\begin{center}
$g^{ij}g^{rs}\delta_{ij}^{pq}\xi_{r}\xi_{s}\eta_{pq}\eta_{ij} = \norm{\xi}^{2}\norm{\eta}^{2}$
\end{center}

Hence this Ricci flow is \emph{weakly} parabolic, and short time existence follows.

\subsection{Evolution of the Curvature}

It is possible to determine how the Riemann curvature tensor evolves along a one parameter family of metrics on a fixed space $M$.

In particular, let us compute
\begin{align}
\frac{\partial}{\partial t}R_{ikj}^{l} &= \nabla_{k}(\partial_{t}\Gamma_{ij}^{l}) - \nabla_{i}(\partial_{t}\Gamma_{kj}^{l}) \nonumber \\
&= \nabla_{k}(\frac{1}{2}g^{lm}(\nabla_{i}\frac{\partial}{\partial t}g_{jl} + \nabla_{j}\frac{\partial}{\partial t}g_{il} - \nabla_{l}\frac{\partial}{\partial t}g_{ij})) - (i \leftrightarrow k) \nonumber \\
&= \nabla_{k}\nabla^{l}R_{ij} - \nabla_{k}\nabla_{i}R_{j}^{l} - \nabla_{k}\nabla_{j}R_{i}^{l} - \nabla_{i}\nabla^{l}R_{kj} + \nabla_{i}\nabla_{k}R_{j}^{l} + \nabla_{i}\nabla_{j}R_{k}^{l} \end{align}

(Define $\nabla_{i}\nabla_{k}R_{j}^{l} = R \star R$)

Recall the second Bianchi identity:

\begin{center}
$0 = \nabla_{i}R_{jklm} + \nabla_{j}R_{kilm} + \nabla_{k}R_{ijlm}$
\end{center}

From which follows the relation

\begin{center}
$0 = \nabla_{l}R_{ij} - \nabla_{i}R_{lj} + \nabla_{k}R_{lijk}$
\end{center}

So using this relation, we get that:
\begin{align}
\frac{\partial}{\partial t}R_{ikj}^{l} &= - \nabla_{k}\nabla_{p}R_{ljip} + \nabla_{i}\nabla_{p}R_{ljkp} + R \star R \nonumber \\
&= \nabla_{p}(\nabla_{i}R_{ljkp} - \nabla_{k}R_{ljip}) + R \star R \nonumber \\
&= \nabla_{p}(\nabla_{i}R_{kplj} + \nabla_{k}R_{pilj}) + R \star R \nonumber \\
&= -\nabla_{p}\nabla_{p}R_{iklj} + R \star R \text{(using the second Bianchi identity once more)} \nonumber \\
&= \Delta R_{ikjl} + R \star R \end{align}

\subsection{The Structural Tensor \texorpdfstring{$E$}{E}}

The structural tensor $E$ takes the same role for general $3$ manifolds undergoing Ricci Flow that the second fundamental form $A$ takes for hypersurfaces undergoing MCF.  Hence it has a great deal of importance.  In this section I will proceed to define this object and derive the evolution equation that it obeys.

Let $(M^{3},g)$ be orientable.  Let $\mu(u,v,w)$ be the signed volume of the parallelpiped generated by $u,v$, and $w$.  $\mu$ is of course the alternating tensor; in coordinates, $\mu_{ijk} = 1$ if $(i,j,k)$ is a positive permutation, $-1$ if $(i,j,k)$ is a negative permutation and $0$ otherwise.

Define $E^{ij} = \frac{1}{4}\mu^{iab}\mu^{jcd}R_{abcd}$, so $R_{abcd} = \mu_{abi}\mu_{cdj}E^{ij}$ (this follows from the identity $g^{ij}\mu_{iab}\mu_{jcd} = g_{ac}g_{bd} - g_{ad}g_{bc}$).

The eigenvalues of $E$ are called the \emph{principal sectional curvatures}.  In a frame which diagonalises $E$, we get that

\[ E  = \left( \begin{array}{ccc}
\lambda & 0 & 0 \\
0 & \mu & 0 \\
0 & 0 & \nu \end{array} \right) \]

with $R_{1212} = \nu$, $R_{1313} = \mu$, and $R_{2323} = \lambda$.

In particular, if $\norm{v} = 1$, then $E(v,v) = Sect(v^{\perp})$, which gives us some form of intuition for what $E$ is measuring.

I now proceed to determine the evolution equation for the components of the structural tensor.  Now, we know that

\begin{center}
$\frac{\partial}{\partial t}R_{ijkl} = \Delta R_{ijkl} + R \star R$
\end{center}

Furthermore, we have that

\begin{center}
$\frac{\partial}{\partial t}\mu_{ijk} = -R\mu_{ijk}$
\end{center}

and hence that

\begin{center}
$\frac{\partial}{\partial t}\mu^{ijk} = R\mu^{ijk}$
\end{center}

Hence

\begin{center}
$\frac{\partial}{\partial t}E_{i}^{j} = \Delta E_{i}^{j} + 2E_{i}^{k}E_{k}^{j} + \mu_{iab}\mu^{jcd}E_{c}^{a}E_{d}^{b}$
\end{center}

which is the evolution equation for the structural tensor.  This is very useful in the 3DRF.

\subsection{Convergence Results for Ricci Flow}

The following theorem, due to Hamilton in 1982, is a fairly significant result in the development of the Ricci flow:

\begin{thm}  If $(M^{3},g_{0})$, $M$ compact, has $Ricci(g_{0}) > 0$ then there exists a solution of Ricci flow starting from $g_{0}$ on a maximal time interval $[0,T)$.  Furthermore, $Vol(g(t)) \rightarrow 0$ as $t \rightarrow T$, and the normalised Ricci flow converges smoothly to a metric of constant positive sectional curvature, that is

\begin{center}
$\frac{(Vol(S^{3}))^{2/3}g(t)}{(Vol(g(t)))^{2/3}} \rightarrow g_{T}$
\end{center}

with Sect$(g_{T}) =$ constant $> 0$. \end{thm}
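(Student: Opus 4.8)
The plan is to follow Hamilton's original 1982 strategy, using the machinery already developed in this chapter — in particular the evolution equation for the structural tensor $E$, the tensor maximum principle, and the convergence techniques established for the 2DRF. First I would invoke short time existence (established earlier via weak parabolicity of the Ricci flow after fixing the DeTurck-type gauge $X=0$) to produce a solution $g(t)$ on a maximal interval $[0,T)$. The evolution equation for $E$ derived above,
\begin{center}
$\frac{\partial}{\partial t}E_{i}^{j} = \Delta E_{i}^{j} + 2E_{i}^{k}E_{k}^{j} + \mu_{iab}\mu^{jcd}E_{c}^{a}E_{d}^{b}$,
\end{center}
is the key object: in a frame diagonalising $E$ with eigenvalues $\lambda\le\mu\le\nu$ the reaction ODE becomes $\frac{d}{dt}(\lambda,\mu,\nu)=(\lambda^2+\mu\nu,\ \mu^2+\lambda\nu,\ \nu^2+\lambda\mu)$. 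The strategy is then to apply the 2-tensor maximum principle (the $O(n)$-invariant version proved earlier in the MCF section) to various convex $O(3)$-invariant sets in $\mathrm{Sym}(3)$ that are preserved by this ODE.

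The main steps in order: (i) show positivity of Ricci is preserved — equivalently that the cone $\{\lambda+\mu>0\}$ (expressing $\mathrm{Ric}>0$ in terms of sectional curvatures) is ODE-invariant and convex, so the tensor maximum principle applies; (ii) establish a \emph{pinching} estimate, i.e. that there is $\delta>0$ and $C$ with $\nu-\lambda \le C(\lambda+\mu+\nu)^{1-\delta}$ preserved under the flow — this is the heart of the argument and requires exhibiting the appropriate pinching sets as intersections of convex ODE-invariant regions, using that the reaction term pushes the eigenvalues together relative to their size; (iii) combine the pinching estimate with the fact that $Vol(g(t))\to0$ and $R_{\min}$ blows up (this follows as in the 2D case from $\frac{\partial R}{\partial t}=\Delta R+2|\mathrm{Ric}|^2\ge\Delta R+\frac{2}{3}R^2$, giving $T<\infty$ and $R_{\min}(t)\ge \frac{1}{1/R_{\min}(0)-\frac23 t}\to\infty$) to deduce that after normalising to fixed volume the curvatures become uniformly close to constant; (iv) use Shi-type derivative estimates (the smoothing property of heat-type geometric flows, invoked earlier) together with Arzela-Ascoli to extract a smooth limit $g_T$ of the normalised flow with $\nu-\lambda\to0$, hence $\mathrm{Sect}(g_T)\equiv\mathrm{const}>0$; (v) rescale appropriately so the limit is the round metric on $S^3$, giving the stated convergence $\frac{(Vol(S^3))^{2/3}g(t)}{(Vol(g(t)))^{2/3}}\to g_T$.

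\textbf{Main obstacle.} The hard part will be step (ii), the pinching estimate. Preservation of $\mathrm{Ric}>0$ is a relatively soft application of the maximum principle, but to get \emph{improving} pinching one must carefully design a one-parameter family of convex $O(3)$-invariant sets $K_\sigma$ (roughly of the form $\{\nu-\lambda \le \sigma (\lambda+\mu+\nu)\}$ together with an auxiliary region handling the case of small trace) and verify each is preserved by the reaction ODE by an explicit sign analysis of $\frac{d}{dt}(\nu-\lambda)$ versus $\frac{d}{dt}(\lambda+\mu+\nu)$ on the boundary — and then bootstrap to power-law pinching $\nu-\lambda\le C(\lambda+\mu+\nu)^{1-\delta}$, which additionally needs a gradient/integral estimate to control the Laplacian terms, not just the reaction terms. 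A secondary subtlety is that the tensor maximum principle as stated handles $\partial_t T=\Delta T+\mathcal Q(T)$ with $\mathcal Q$ depending only on $T$ pointwise; here $\mathcal Q$ is the quadratic $R\star R$ expressed through $E$, so one must first check the reaction term is genuinely a fibrewise polynomial in $E$ (which it is in dimension 3, precisely because $E$ determines the full curvature tensor), so that the hypotheses of that principle are met.
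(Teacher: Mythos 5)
Your proposal follows essentially the same route as the paper: short-time existence via parabolicity after gauge-fixing, the Uhlenbeck trick to reduce the curvature evolution to a fibrewise reaction--diffusion system for the structural tensor $E$, the tensor maximum principle on convex $O(3)$-invariant sets to preserve $\mathrm{Ric}>0$ and to establish the power-law pinching estimate, and a rescaling/compactness argument for the convergence. The subtlety you flag at the end --- that the reaction term must be a genuine pointwise polynomial in $E$ --- is precisely what the paper's Uhlenbeck-trick subsection is there to ensure, and the pinching step you identify as the main obstacle is handled in the paper exactly as you outline, via the preserved sets $\{\lambda - C(\mu+\nu)\le 0\}$ and $\{(\lambda-\nu)-\bar C(\mu+\nu)^{1-\delta}\le 0\}$ together with a sign analysis of $\frac{d}{dt}\log\bigl(\tfrac{\lambda-\nu}{(\mu+\nu)^{1-\delta}}\bigr)$.
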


\begin{proof} I will prove this later. \end{proof}

In fact, we have a more general result due to B\"ohm and Wilking.  Before I mention it, however, I should introduce the curvature operator. The curvature operator $\bar{R}$ is a bilinear form on $\Lambda^{2}TM$, which is defined by the relation

\begin{center}
$\bar{R}(a^{ij}\partial_{i} \wedge \partial_{j}, b^{kl}\partial_{k} \wedge \partial_{l}) = a^{ij}b^{kl}R_{ijkl}$
\end{center}

In particular, $\bar{R}(\partial_{i} \wedge \partial_{j},\partial_{i} \wedge \partial_{j}) = Sect(\partial_{i} \wedge \partial_{j})$ if $\partial_{i}$ and $\partial_{j}$ are orthonormal.

\begin{rmk} If $n \geq 4$ there are elements of $\Lambda^{2}TM$ which are not of the form $u \wedge v$ for any $u, v \in TM$.  Furthermore, if $\bar{R} \geq 0$ we may conclude $Sect \geq 0$, \emph{but not vice versa} for the aforementioned reason. \end{rmk}

So now, the result:

\begin{thm} (Bohm and Wilking, 2006). If $(M^{n},g_{0})$ is compact, and such that the sum of the smallest two eigenvalues of the curvature operator $\bar{R}(x)$ are positive, for all $x \in M$, then $M^{n}$ converges under Ricci flow to a manifold of constant curvature (modulo scaling). \end{thm}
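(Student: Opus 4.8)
The plan is to combine the maximum principle for tensor evolution equations with the pinching-cone method of B\"ohm and Wilking. First I would use the evolution of the curvature derived above: along the Ricci flow the curvature operator $\bar{R}$, regarded as a symmetric endomorphism of $\Lambda^{2}TM$ (equivalently of the skew-symmetric endomorphisms of $TM$), satisfies a reaction--diffusion equation of the schematic form
\begin{equation}
\frac{\partial}{\partial t}\bar{R} = \Delta \bar{R} + Q(\bar{R}), \qquad Q(\bar{R}) = \bar{R}^{2} + \bar{R}^{\#},
\end{equation}
where $\bar{R}^{2}$ is the operator square and $\bar{R}^{\#}$ is the quadratic term built from the Lie bracket (the intrinsic form of the $R \star R$ term appearing in the derivation of the evolution of the Riemann tensor). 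By the maximum principle for systems --- the analogue for curvature operators of the $2$-tensor maximum principle established in the discussion of mean curvature flow --- any closed, fibrewise convex, $O(n)$-invariant subset $\mathcal{C}$ of the bundle of algebraic curvature operators which is invariant under the fibrewise ODE $\dot{R} = Q(R)$ is preserved by the flow.

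Next I would construct the B\"ohm--Wilking family of cones. Decompose an algebraic curvature operator into its scalar, traceless-Ricci and Weyl parts, $R = R_{I} + R_{0} + R_{W}$, and consider the $O(n)$-equivariant linear maps $\ell_{a,b}$ that rescale the $R_{I}$ and $R_{0}$ pieces relative to $R_{W}$ by parameters $a,b$. The key algebraic lemma to prove is that if $\mathcal{C}$ is a $Q$-invariant convex cone then $\ell_{a,b}(\mathcal{C})$ is again $Q$-invariant along a suitable curve of parameters: one shows that the discrepancy vector field $\ell_{a,b}^{-1}(Q(\ell_{a,b}R)) - Q(R)$ lies in the tangent cone of $\mathcal{C}$ at every boundary point $R$, provided the coefficients are chosen so that the error terms --- expressed through $\bar{R}^{\#}$ acting on the irreducible pieces --- all carry the correct sign. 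This yields a continuous one-parameter family of cones $\mathcal{C}(s)$, $0 \le s \le 1$, with $\mathcal{C}(0)$ containing the $2$-positivity region (the set where the sum of the two smallest eigenvalues is nonnegative is convex and $O(n)$-invariant) and with $\mathcal{C}(1)$ collapsing onto the ray of constant-curvature operators $\{ c\,\mathrm{Id} : c > 0 \}$.

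Then I would run Hamilton's pinching-set argument. Using compactness of $M^{n}$, the strict $2$-positivity of $\bar{R}(x)$ at $t = 0$, and the short-time smoothing estimates coming from the heat-type character of the flow, the normalised curvature operator enters $\mathcal{C}(s_{0})$ for some $s_{0} > 0$ after a short time; the invariance lemma then keeps it inside, and a continuity/exhaustion argument shows that it is eventually trapped in $\mathcal{C}(1 - \epsilon)$ for every $\epsilon > 0$. Consequently the traceless part of $\bar{R}$ becomes negligible compared with its trace, i.e. $\bar{R}$ converges pointwise to the curvature operator of a round sphere. Finally I would invoke Hamilton's convergence criterion, exactly as in the three-dimensional theorem stated just above, to conclude that the volume-normalised Ricci flow converges smoothly to a metric of constant positive sectional curvature, so that $M^{n}$ is a spherical space form.

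The hard part will be the algebraic invariance lemma for the transformed cones: computing $\bar{R}^{\#}$ explicitly in terms of $R_{I}$, $R_{0}$, $R_{W}$ and choosing the parameter curve $(a(s), b(s))$ so that the discrepancy vector field genuinely points into each $\mathcal{C}(s)$ is the technical core of the whole argument. A secondary obstacle is showing that the initial $2$-positivity condition is itself preserved long enough to enter the family $\mathcal{C}(s)$ --- this is Chen's lemma and requires its own sign estimate for $\bar{R}^{\#}$ on the relevant faces of the cone.
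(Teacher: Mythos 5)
Your proposal takes essentially the same approach as the paper's sketch --- deriving the reaction--diffusion equation for the curvature operator $\bar{R}$ with reaction term $\bar{R}^{2} + \bar{R}^{\#}$ and then seeking $O(n)$-invariant convex subsets of the bundle of algebraic curvature operators preserved by the fibrewise ODE --- but goes considerably further, since the paper stops at the point of merely posing the problem of finding such sets, whereas you actually outline the B\"ohm--Wilking construction of the pinching family of cones via the $\ell_{a,b}$ rescalings and the closing continuity argument.
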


\begin{rmk} Note that having the smallest two eigenvalues positive is closely related to the notion of 2-convexity, which is the main notion used in the Huisken-Sinestrari result (which I will mention later on).  Essentially for a hypersurface to be 2-convex means that the sum of its smallest two principal curvatures is positive everywhere. \end{rmk}

\begin{proof} (B\"ohm and Wilking, sketch). In order to prove this, we would like an analogue of the notion of the structural tensor for $n$ dimensional manifolds.  So, given any basis $\{ \Pi_{\alpha} : \alpha = 1, \cdots, \frac{n(n-1)}{2}\}$ for $\Lambda^{2}T_{x}M$ we get (writing $\bar{R}_{\alpha \beta} = \bar{R}(\Pi_{\alpha},\Pi_{\beta})$)

\begin{center}
$\frac{\partial}{\partial t}\bar{R}_{\alpha}^{\beta} = \Delta \bar{R}_{\alpha}^{\beta} + 2(R_{\alpha}^{\gamma}R_{\gamma}^{\beta} + C_{\alpha \gamma \delta}C^{\beta \xi \tau}R_{\xi}^{\gamma}R_{\tau}^{\delta})$
\end{center}

The $C_{\alpha \beta \gamma}$ are structure constants taking the analogous role to the structural constants $E_{ij}$ as I defined before, and are defined by 

\begin{center}
$[\Pi_{\alpha},\Pi_{\beta}] = C_{\alpha \beta \gamma}G^{\gamma \delta}\Pi_{\delta}$,
\end{center}

where $G$ is the metric on $\Lambda^{2}TM$.

The problem is then, like with the Huisken rescaling result of MCF, to find $SO(n)$ invariant convex subsets $\Omega$ of $Sym(\Lambda^{2}TM)$ such that the flow lines of the ODE corresponding to

\begin{center}
$\frac{\partial}{\partial t}\bar{R} = \bar{R}^{2} + \bar{R} \star \bar{R}$
\end{center}

never leave $\Omega$. \end{proof}

\subsection{The Perelman Functional; connection with the Fisher Information}

It turns out that the Ricci Flow can be realised as the steepest descent flow of the following functional

\begin{center}
$F(g,f) = \int_{M}(R_{g} + \norm{\nabla f}^{2}_{g})e^{-f}d\mu(g)$
\end{center}

Note that

\begin{center}
$\frac{d}{dt}F = -\int (R_{ij} + \nabla_{i}\nabla_{j}f)\dot{g}_{ij}e^{-f}d\mu + \int (2\Delta f - \norm{\nabla f}^{2} + R)(\frac{1}{2}g^{ij}\dot{g}_{ij} - \dot{f})e^{-f}d\mu$
\end{center}

Fix a smooth measure $d\xi$ and given $g$, define $f(g)$ by $e^{-f}d\mu(g) = d\xi$.

Then the variation of $F_{\xi}(g) = F(g,f(g))$ is

\begin{center}
$\frac{d}{dt}F_{\xi} = -\int (R_{ij} + \nabla_{i}\nabla_{j}f)\dot{g}_{ij}d\xi$
\end{center}

ie steepest descent flow is

\begin{center}
$\frac{\partial}{\partial t}g_{ij} = -2(R_{ij} + \nabla_{i}\nabla_{j}f)$
\end{center}

which is nothing other than the Ricci flow with reparametrisation.

\emph{Note}: The following requires some reference to the section on Fisher Information and Physical Manifolds.

Now note that the Physical Information Functional for a sharp Riemannian manifold $M$, is

\begin{center}
$K = \int_{M}(R_{\hat{g}} - \norm{\bar{\psi}}^{2}_{\hat{g}})d\mu(\hat{g})$
\end{center}

Suppose $\bar{\psi}$ is an irrotational vector field, in other words,

\begin{center}
$\bar{\psi} = \nabla_{i}\hat{f}$
\end{center}

for some function $\hat{f}$.

Then

\begin{center}
$K = \int_{M}(R_{\hat{g}} - \norm{\nabla \hat{f}}^{2}_{\hat{g}})d\mu(\hat{g})$
\end{center}

Now consider a diffeomorphism of $M$ that sends $\nabla_{\hat{g}}$ to $e^{-f}\nabla_{g}$ some function $f$.  Then $R_{\hat{g}} = R_{g}e^{-2f}$, $\norm{\nabla \hat{f}}_{\hat{g}}^{2} = \norm{\nabla \hat{f}}^{2}_{g}e^{-2f}$, $d\mu(\hat(g)) = e^{f}d\mu(g)$, and finally

\begin{center}
$K = \int_{M}(R_{g} - \norm{\nabla \hat{f}}_{g}^{2})e^{-f}d\mu(g)$
\end{center}

So if we choose a natural choice of flow on $M$, ie $\hat{f} = f$, we recover the Perelman functional.  In other words, crudely speaking,

\emph{Normalised Ricci Flow can be viewed as the steepest descent flow of the "Physical" Information}.

Perhaps more intuitively, if we consider the Fisher Information Functional for a sharp Riemannian manifold, we get

\begin{center}
$I = \int_{M}R(g)d\mu(g)$
\end{center}

In particular, for an arbitrary variation of the metric,

\begin{center}
$\frac{dI}{dt} = \int_{M}R_{ij}(g)\frac{\partial}{\partial t}g^{ij}d\mu(g)$,
\end{center}

from which it is easily deduced that the steepest descent flow is

\begin{center}
$\frac{\partial}{\partial t}g_{ij} = -R_{ij}(g)$,
\end{center}

So we may conclude:

\emph{Ricci Flow is the steepest descent flow of the Fisher Information for a sharp Riemannian manifold}.

This makes sense- for it is clear that, under the Ricci flow, we are losing information about the manifold.  Moreover, later when I discuss Ricci flow with surgery, it is obvious that we are losing information, because we are losing topology.

\section{Proof of Hamilton's Theorem (1982)}

This is a typeset version of a proof of Hamilton's theorem given by Nick Sheridan at the AMSI winter school.

\subsection{The theorem, and sketch of its proof}

We have the main result of this section:

\begin{thm} (Hamilton). Let $M^{3}$ be a compact 3 manifold which admits a Riemannian metric of positive Ricci curvature.  Then $M^{3}$ also admits a metric of positive sectional curvature. \end{thm}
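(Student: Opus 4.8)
The plan is to prove this via the Ricci flow, following the program Hamilton set out in his 1982 paper "Three-manifolds with positive Ricci curvature." Starting from a metric $g_0$ with $\mathrm{Ric}(g_0) > 0$, short time existence (which we established earlier, since the Ricci flow is weakly parabolic after a DeTurck-type reparametrisation) gives a solution $g(t)$ on a maximal interval $[0,T)$. The entire argument rests on controlling the structural tensor $E$, whose evolution equation
\begin{center}
$\frac{\partial}{\partial t}E_{i}^{j} = \Delta E_{i}^{j} + 2E_{i}^{k}E_{k}^{j} + \mu_{iab}\mu^{jcd}E_{c}^{a}E_{d}^{b}$
\end{center}
we already derived. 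The first step is to show that positivity of Ricci curvature is preserved, and more: if $\lambda \geq \mu \geq \nu$ are the eigenvalues of $E$ (so the Ricci eigenvalues are $\mu+\nu$, $\lambda+\nu$, $\lambda+\mu$), then the pinching ratios improve under the flow. Concretely, I would apply the 2-tensor maximum principle proved in the MCF section (for $SO(n)$-invariant, here $SO(3)$-invariant, convex sets of symmetric tensors) to the reaction ODE $\frac{d}{dt}E = E^2 + E^\# $, checking that appropriate cones — $\{E : \nu \geq 0\}$ (nonnegative sectional curvature / a fortiori positive Ricci) and pinching cones of the form $\{\lambda - \nu \leq C(\lambda+\mu+\nu)^{1-\delta} \cdot (\text{something})\}$ — are preserved by the ODE flow lines.

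The second step is the pinching estimate itself: one shows that along the ODE, and hence along the PDE by the vector/tensor maximum principle, the quantity $\frac{\lambda - \nu}{\lambda+\mu+\nu}$ (a scale-invariant measure of how far the sectional curvatures are from being equal) decays, in fact like $(\lambda+\mu+\nu)^{-\delta}$ for some small $\delta > 0$. This requires a careful choice of the convex set and a somewhat delicate analysis of the reaction terms — the cross term $\mu_{iab}\mu^{jcd}E_c^a E_d^b$ contributes favorably. Simultaneously one shows the scalar curvature $R = 2(\lambda+\mu+\nu)$ blows up: $R_{\min}(t) \to \infty$ and $T < \infty$, via the maximum principle applied to $\frac{\partial R}{\partial t} = \Delta R + 2|\mathrm{Ric}|^2 \geq \Delta R + \frac{2}{3}R^2$.

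The third step is the rescaling and convergence argument. Normalise the flow to fix the volume (as we did for the 2D Ricci flow), getting $\tilde g(\tau)$. The pinching estimate says that as $R \to \infty$ the metric becomes more and more round, so $\frac{\lambda - \nu}{R} \to 0$; combined with derivative estimates on the curvature (Bernstein–Bando–Shi type estimates, obtained by the same maximum-principle-plus-interpolation machinery used to bound $\|\nabla^k h\|$ in the MCF section) and Myers' theorem to control the diameter, one gets uniform $C^\infty$ bounds on the rescaled metrics. Arzelà–Ascoli then produces a smooth limit $g_T$, and the vanishing of the pinching ratio forces $\mathrm{Sect}(g_T) \equiv \mathrm{const} > 0$. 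A manifold with a metric of positive sectional curvature certainly admits one (namely $g_T$, or even $g(t)$ for $t$ close to $T$ once pinching is good enough), which is exactly the conclusion claimed. I expect the main obstacle to be the second step — constructing the precise $SO(3)$-invariant convex pinching set that is preserved by the reaction ODE and yields the $(\lambda+\mu+\nu)^{-\delta}$ decay rate; this is the genuinely nontrivial computation in Hamilton's argument, whereas preservation of positivity, finite-time blowup, derivative bounds, and the final compactness extraction are comparatively routine given the tools already assembled in the MCF and 2D Ricci flow sections.
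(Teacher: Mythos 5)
Your proposal is correct and follows essentially the same route as the paper: Ricci flow from $g_0$, the Uhlenbeck-style control of the structural tensor $E$ via a tensor maximum principle applied to the reaction ODE $\frac{d}{dt}E = E^2 + E^\#$, preservation of $\lambda \le C(\mu+\nu)$, the pinching estimate $\frac{\lambda-\nu}{\lambda+\mu+\nu} \le \bar C(\lambda+\mu+\nu)^{-\delta}$, blowup of $R$, and rescaled convergence to constant curvature. The only small wrinkle you elide is that before invoking the $SO(3)$-invariant convex-set maximum principle one must first transplant $Rm$ to a fixed bundle with a time-independent metric (the Uhlenbeck trick), since the bundle $\Lambda^2 T^*M$ and its induced metric are themselves evolving under the flow; the paper makes this step explicit and it is what legitimizes the reduction to an ODE on a single fixed fibre.
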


\begin{proof} (sketch).  We shall follow the following procedure to prove this:

\begin{itemize}
\item[(1)] Prove short time existence of the Ricci Flow on $M^{3}$ starting with a metric $g_{0}$ with $Ric(g_{0}) = 0$.
\item[(2)] Show curvature blows up as $t \rightarrow T$.
\item[(3)] Show sectional curvatures get close together as the curvature gets large.
\item[(4)] Rescale time and the metric to get a solution to the equation $\frac{\partial g}{\partial t} = -2R_{ij} + \frac{2}{n}rg$, where $r = \frac{\int_{M}RdV}{\int_{M}dV}$.  Show furthermore that this solution exists for all time and converges to a metric of constant curvature.
\end{itemize}
\end{proof}

\subsection{A maximum principle}

We have the following maximum principle for sections of an arbitrary vector bundle which will prove most useful:

\begin{prop} Let $\Pi : \xi \rightarrow M^{n}$ be a vector bundle, with bundle metric $h$.  Let $\bar{\nabla}(t)$ be a family of connections on $\xi$ compact with respect to $h$.  Furthermore, suppose

\begin{center}
$F : \xi \times [0,T) \rightarrow \xi$
\end{center}

is a continuous fibre preserving Lipshitz map on each fibre.  Let $\kappa$ be a closed subspace of $\xi$ which is invariant under parallel translation by $\bar{\nabla}(t)$, and such that $\kappa_{x} = \kappa \cap \Pi^{-1}(x)$ is closed and convex in $\xi_{x}$ for all $x \in M^{n}$.  Finally, suppose $\alpha$ is a solution of

\begin{center}
$\frac{\partial}{\partial t}\alpha = \hat{\Delta}\alpha + F(\alpha)$
\end{center}

such that $\alpha(0) \in \kappa$.  Then if for each fibre $\xi_{x}$ every solution of

\begin{center}
$\frac{da}{dt} = F(a)$
\end{center}

with $a(0) \in \kappa_{x}$, remains in $\kappa_{x}$, we may conclude that the solution $\alpha(t)$ of the PDE remains in $\kappa$. \end{prop}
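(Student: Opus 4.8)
The plan is to reduce the PDE statement to the ODE hypothesis by a distance-function argument of exactly the type used in the vector maximum principle earlier in this chapter, adapted to the bundle setting. First I would exploit the convexity and fibrewise-closedness of $\kappa$ to build, for each $x$, the distance function $d_x(\cdot) = \mathrm{dist}_{h_x}(\cdot, \kappa_x)$ on $\xi_x$, and assemble these into a function $\varphi$ on the total space $\xi$ that is smooth off $\kappa$, convex on each fibre, and behaves like the fibre distance to $\kappa$. The point of $\kappa$ being invariant under parallel translation by $\bar\nabla(t)$ is that $\varphi$ interacts well with the Laplacian $\hat\Delta$: differentiating $\varphi\circ\alpha$ along the connection, the second-order term contributes $h^{kl}D^2\varphi(\hat\nabla_k\alpha,\hat\nabla_l\alpha)$, which is $\geq 0$ by fibrewise convexity, so that $\tfrac{\partial}{\partial t}\varphi(\alpha) \leq \hat\Delta\varphi(\alpha) + D\varphi(\alpha)\cdot F(\alpha)$ at points where $\varphi(\alpha)>0$.

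Next I would control the zeroth-order term $D\varphi(\alpha)\cdot F(\alpha)$. At a point $\alpha$ near $\kappa$, let $\alpha_0$ be the nearest point in $\kappa_x$ and $w=\nabla\varphi(\alpha)$; the ODE hypothesis that solutions of $\dot a = F(a)$ starting in $\kappa_x$ stay in $\kappa_x$ is equivalent, for a closed convex set, to the infinitesimal condition that $F(\alpha_0)$ points into $\kappa_x$, i.e. $\langle w, F(\alpha_0)\rangle \leq 0$. Splitting $D\varphi(\alpha)\cdot F(\alpha) = \langle w, F(\alpha_0)\rangle + \langle w, F(\alpha)-F(\alpha_0)\rangle$, the first term is $\leq 0$ and the second is bounded by $\mathrm{Lip}(F)\,\|\alpha-\alpha_0\| = \mathrm{Lip}(F)\,\varphi(\alpha)$ using that $F$ is fibrewise Lipschitz, while $\|\nabla\varphi\|\le 1$. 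Hence $\tfrac{\partial}{\partial t}\varphi(\alpha) \leq \hat\Delta\varphi(\alpha) + C\,\varphi(\alpha)$ for a constant $C$ depending on $\mathrm{Lip}(F)$ and $T$. Since $\alpha(0)\in\kappa$ gives $\varphi(\alpha(\cdot,0))\equiv 0$, the scalar maximum principle (Theorem with the $e^{ct}$ bound from the Geometric Evolution Equations section, or the parabolic strong maximum principle) forces $\varphi(\alpha(\cdot,t))\equiv 0$ for all $t\in[0,T)$, i.e. $\alpha(t)\in\kappa$, as required.

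A couple of technical points I would attend to: $\varphi$ is only Lipschitz (not $C^2$) across $\partial\kappa$ and where the nearest-point projection onto $\kappa_x$ fails to be unique, so the computation above should be run on $\{\varphi>0\}$ and the conclusion extended by continuity, or else one works with a smoothed convex exhaustion $\varphi_\epsilon$ approximating $\varphi$ from outside (as in the proof of the vector maximum principle earlier); also $M^n$ should be taken compact so that $\|\nabla\varphi\|$ and the modulus of continuity of $F$ are uniform, matching the setting in which the scalar maximum principle was stated. The main obstacle is precisely this regularity issue for the fibrewise distance function together with verifying the equivalence between the ODE-invariance hypothesis and the infinitesimal support condition $\langle w, F(\alpha_0)\rangle\le 0$; once those are in hand the argument is a routine transcription of the finite-dimensional vector case to sections of $\xi$, with parallel-translation-invariance of $\kappa$ doing the work needed to commute $\varphi$ past $\hat\Delta$.
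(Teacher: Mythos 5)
The paper does not actually supply a proof of this Proposition --- it is stated as background before the Uhlenbeck trick and then used directly. Your proposal, however, is precisely the natural transcription to the bundle setting of the proof the paper does give two sections earlier for the vector maximum principle (and sketches again for $2$-tensors): build the fibrewise distance-to-$\kappa$ function $\varphi$, use convexity to discard the second-order term so that $\partial_t \varphi(\alpha) \le \hat\Delta\varphi(\alpha) + D\varphi\cdot F(\alpha)$, split $D\varphi\cdot F(\alpha)$ into the support term $\langle w, F(\alpha_0)\rangle \le 0$ (which encodes the ODE-invariance hypothesis via the Nagumo condition) plus a Lipschitz error bounded by $C\varphi$, and finish with the scalar parabolic maximum principle. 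Your identified caveats --- the Lipschitz-but-not-$C^2$ regularity of the distance function, the need for compactness of $M$, and the role of parallel-translation-invariance of $\kappa$ in commuting $\varphi$ past $\hat\Delta$ --- are exactly the points the paper glosses over in the vector case, so the proposal is both correct and more honest about the technical gaps than the paper's own treatment of the analogous finite-dimensional statement.
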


\subsection{The Uhlenbeck Trick}

(This trick is used to simplify the evolution equation for $R_{ijkl}$.)

So suppose $g(t)$ is a solution to Ricci flow on $M^{n}$.  Let $(V, h)$ be a vector bundle over $M^{n}$ with $h$ as metric such that

\begin{center}
$U_{0} : (V, h) \rightarrow (TM^{n},g_{0})$
\end{center}

is a bundle isometry.  Evolve $U(t)$ by

\begin{center}
$\frac{\partial}{\partial t}U^{i}_{a} = R_{l}^{i}U^{l}_{a}$
\end{center}

where $U_{a}^{i}$ are components of the isometry with respect to some bases for $V$ and $TM^{n}$.  Then I claim that $U(t)$ remains an isometry from $(V, h)$ to $(T(M),g_{t})$ and we can consider the behaviour of the pullback of the Riemann tensor $Rm$ on $M$ to $V$, $U^{\star}Rm$, rather than $Rm$.

\emph{Proof of Claim}.

\begin{align}
\frac{\partial}{\partial t}(U^{\star}g_{t})_{ab} &= \frac{\partial}{\partial t}(g_{ij}U^{i}_{a}U^{j}_{b}) \nonumber \\
&= \frac{\partial}{\partial t}g_{ij}U^{i}_{a}U^{j}_{b} + g_{ij}\frac{\partial}{\partial t}(U^{i}_{a})U^{j}_{b} + g_{ij}U^{i}_{a}\frac{\partial}{\partial t}U^{j}_{b} \nonumber \\
&= -2R_{ij}U^{i}_{a}U^{j}_{b} + g_{ij}R_{p}^{i}U^{p}_{a}U^{j}_{b} + g_{ij}U^{i}_{a}R_{q}^{j}U^{q}_{b} \nonumber \\
&= 0
\end{align}

In other words, $U$ remains an isometry.

Why do we want to do this? Well, first of all, the evolution equation for $U^{\star}Rm$ is much nicer.  Computing:

\begin{align}
\frac{\partial}{\partial t}R_{abcd} &= \Delta R_{abcd} + (R \star R)_{abcd} \nonumber \\
&= \Delta R_{abcd} + 2(B_{abcd} - B_{abdc} + B_{acbd} - B_{adbc}) \end{align}

where $B_{abcd}(Q) = -Q^{e}_{fab}Q^{f}_{edc}$ for a 4-tensor $Q$.

We can in fact view $Rm$ as a section of the bundle $\xi = \Lambda^{2}T^{\star}M^{n} \otimes_{S} \Lambda^{2}T^{\star}M^{n}$, since in 3 dimensions each fibre is the same as the vector space of 3 by 3 symmetric matrices.

Our relevant ODE for this bundle is

\begin{center}
$\frac{d}{dt}Q_{abcd} = 2(B_{abcd}(Q) - B_{abdc}(Q) + B_{acbd}(Q) - B_{adbc}(Q)) = (R \star R)_{abcd}(Q)$
\end{center}

In 3 dimensions, this ODE can be written with respect to a basis $\{e_{i}\}$ of $T^{\star}M^{3}_{x}$ such that $Q$ is diagonal- in particular, since $E$ is the curvature operator (essentially) in dimension 3, we can write everything in terms of the eigenvalues of $E$ and we get

\begin{center}
\[ \frac{d}{dt}E  = \frac{d}{dt}\left( \begin{array}{ccc}
\lambda & 0 & 0 \\
0 & \mu & 0 \\
0 & 0 & \nu \end{array} \right) = \left( \begin{array}{ccc}
\lambda^{2} + \mu \nu & 0 & 0 \\
0 & \mu^{2} + \nu \lambda & 0 \\
0 & 0 & \nu^{2} + \lambda \mu \end{array} \right) \]
\end{center}

Initial values tell us all about the initial Ricci and scalar curvatures

\begin{center}
\[ Ric = \frac{1}{2}\left( \begin{array}{ccc}
\mu + \nu & 0 & 0 \\
0 & \nu + \lambda & 0 \\
0 & 0 & \lambda + \mu \end{array} \right) \]
\end{center}

and $R = \lambda + \mu + \nu$.

From now on we assume $\lambda(0) \geq \mu(0) \geq \nu(0)$.

\subsection{Core of the Argument}

We have the following

\begin{lem}  If $\exists C < \infty$ such that $\lambda < C(\mu + \nu)$ at $t = 0$, then this condition is preserved under the Ricci flow.
\end{lem}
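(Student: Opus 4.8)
The statement is a pinching estimate for the ODE system governing the eigenvalues $\lambda \geq \mu \geq \nu$ of the curvature operator $E$ in dimension three, and — by the vector-bundle maximum principle (Proposition above, the Hamilton maximum principle for sections) — it suffices to show that the set
\[
\kappa_C = \{\, E : \lambda \leq C(\mu + \nu)\,\}
\]
is convex and $SO(3)$-invariant in $\mathrm{Sym}(\Lambda^2 T_xM)$, and that it is preserved by the flow of the system
\[
\frac{d}{dt}(\lambda,\mu,\nu) = (\lambda^2 + \mu\nu,\ \mu^2 + \nu\lambda,\ \nu^2 + \lambda\mu).
\]
So the first step is to verify that $\kappa_C$ is $SO(3)$-invariant (it is a condition only on the unordered set of eigenvalues, hence invariant under change of orthonormal frame) and convex (the condition $\lambda \leq C(\mu+\nu)$ with $\lambda$ the largest and $\mu+\nu$ a sum of the two smallest is, after ordering, an intersection of linear inequalities on the cone of ordered triples; one checks that the defining inequality is preserved under convex combinations of diagonal representatives — this is the standard argument from Hamilton's 1982 paper).

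\textbf{The ODE preservation step.} This is the heart of the matter. Assume at some time that $\lambda = C(\mu+\nu)$ (the boundary of $\kappa_C$) with $\lambda \geq \mu \geq \nu$; I must show $\frac{d}{dt}\big(\lambda - C(\mu+\nu)\big) \leq 0$ there. Computing,
\[
\frac{d}{dt}\big(\lambda - C(\mu+\nu)\big) = \lambda^2 + \mu\nu - C\big(\mu^2 + \nu^2 + \lambda(\mu+\nu)\big).
\]
Substituting $\lambda = C(\mu+\nu)$ into the $\lambda(\mu+\nu)$ term gives $-C\lambda(\mu+\nu) = -\lambda^2$, so the $\lambda^2$ terms cancel and the right-hand side becomes $\mu\nu - C(\mu^2 + \nu^2)$. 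Now I must argue this is $\leq 0$. If $\mu,\nu$ have the same sign (or one is zero) then $\mu\nu \leq \frac12(\mu^2+\nu^2) \leq C(\mu^2+\nu^2)$ provided $C \geq \frac12$, which we may assume (enlarging $C$ only weakens the hypothesis and the conclusion). If $\mu\nu < 0$ then $\mu\nu - C(\mu^2+\nu^2) < 0$ trivially. Hence the boundary inequality is never violated by the ODE flow, and Hamilton's maximum principle gives the conclusion. I should also note that one needs $C$ large enough that $\kappa_C$ genuinely contains the initial data — but that is exactly the hypothesis "$\lambda < C(\mu+\nu)$ at $t=0$," and if necessary $C$ is replaced by $\max(C, \tfrac12)$.

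\textbf{Main obstacle.} The genuine content is the convexity of $\kappa_C$ as a subset of $\mathrm{Sym}(\Lambda^2 TM)$ (not just of the ordered-eigenvalue cone): one must check that if two symmetric operators $A, B$ each satisfy $\lambda_{\max} \leq C(\lambda_{\max}' + \lambda_{\max}'')$ — where the primes denote the two \emph{smallest} eigenvalues — then so does $\tfrac12(A+B)$, and this requires the standard lemma that the largest eigenvalue is a convex function of the operator while the sum of the two smallest is concave. Granting that lemma (which is where Hamilton's original argument puts its weight, via the minimax characterization of eigenvalues), the rest is the short computation above. I would therefore organize the write-up as: (1) state and invoke the eigenvalue convexity/concavity lemma to get convexity of $\kappa_C$; (2) note $SO(3)$-invariance; (3) do the boundary ODE computation, reducing to $\mu\nu \leq C(\mu^2+\nu^2)$; (4) apply the maximum principle of the preceding subsection. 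The only place care is needed is reconciling the ordering assumption $\lambda \geq \mu \geq \nu$ with the fact that the ODE does not a priori preserve the ordering — but since the defining condition of $\kappa_C$ is symmetric in the roles of "the largest" and "the two smallest," one works with the ordered representative at each time, which is legitimate.
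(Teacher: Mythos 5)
Your proof matches the paper's essentially step for step: identify the convex $SO(3)$-invariant set $\kappa_C = \{Q : \lambda(Q) - C(\mu(Q)+\nu(Q)) \leq 0\}$ (convexity from the minimax characterization: $\lambda$ is a sup of linear functionals, $\mu+\nu$ an inf), check that the reaction ODE preserves it, and invoke the Hamilton vector-bundle maximum principle. The one place you diverge is the ODE step: the paper computes the logarithmic derivative $\frac{d}{dt}\log\frac{\lambda}{\mu+\nu} = \frac{\mu^2(\nu-\lambda)+\nu^2(\mu-\lambda)}{\lambda(\mu+\nu)} \leq 0$ (using $\lambda > 0$, $\mu+\nu > 0$ from the $\operatorname{Ric}>0$ hypothesis), whereas you do the boundary tangency computation directly, reducing to $\mu\nu - C(\mu^2+\nu^2) \leq 0$ when $\lambda = C(\mu+\nu)$. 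Both are valid; your observation that $C \geq \tfrac12$ costs nothing is correct, since $\lambda \geq \mu \geq \nu$ and $\mu+\nu>0$ force $\lambda/(\mu+\nu) \geq \tfrac12$, so for $C<\tfrac12$ the set is empty and the statement vacuous.
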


\begin{proof}  We will use the maximum principle from before, for the set

\begin{center}
$\kappa = \{ Q \in \xi : \lambda(Q) - C(\mu(Q) + \nu(Q)) \leq 0 \}$
\end{center}

Note that this set is invariant under parallel translation.  To see that $\kappa$ is convex, note that

\begin{center}
$\lambda(Q) - C(\nu(Q) + \mu(Q))$

$= max_{\norm{u} = 1}Q(u,u) + max_{(V,W | <V,W> = 0)}\{ - C(Q(V,V) + Q(W,W))\}$ (from the definition of $E$)
\end{center}

But $max_{\norm{u} = 1}(\alpha Q_{1} + \beta Q_{2})(u,u) \leq \alpha max_{\norm{u} = 1}Q_{1}(u,u) + \beta max_{\norm{u} = 1}Q_{2}(u,u)$.  This establishes convexity.

It just remains to show that the solution of the associated ODE stays in $\kappa$.  This follows from the fact that

\begin{center}
$\frac{d}{dt}log(\frac{\lambda}{\mu + \nu}) = \frac{\mu^{2}(\nu - \lambda) + \nu^{2}(\mu - \lambda)}{\lambda(\mu + \nu)} \leq 0$
\end{center}

as can easily be proved from the evolution equation for $\lambda, \mu, \nu$ from before.  Thus by the maximum principle, the solution to the PDE remains in $\kappa$.
\end{proof}

The next result is

\begin{thm} If $(M^{3},g_{0})$ is a closed Riemannian 3 manifold with $Ric(g_{0}) > 0$ then $\exists 0 < \delta < 1$ and $\bar{C} > 0$ (depending only on $g_{0}$) such that

\begin{center}
$\frac{\lambda - \nu}{\lambda + \mu + \nu} \leq \frac{C}{(\lambda + \mu + \nu)^{\delta}}$
\end{center}
\end{thm}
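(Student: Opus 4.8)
The plan is to deduce this pinching-improvement estimate from the tensor maximum principle (the Proposition on sections of a vector bundle stated above), in exactly the manner of the previous lemma: exhibit a closed, fibrewise convex, parallel-invariant set $\kappa$ in the curvature bundle $\xi$ that contains the initial curvature operator and whose defining inequality is preserved by the associated system of ODEs. After the Uhlenbeck trick the relevant system is the diagonal one $\dot\lambda = \lambda^2+\mu\nu$, $\dot\mu = \mu^2+\nu\lambda$, $\dot\nu = \nu^2+\lambda\mu$ with $\lambda\ge\mu\ge\nu$, for which one records the two identities $\dot S = \lambda^2+\mu^2+\nu^2+\lambda\mu+\mu\nu+\nu\lambda$ (where $S=\lambda+\mu+\nu=R$) and $\frac{d}{dt}(\lambda-\nu) = (\lambda-\nu)(\lambda+\nu-\mu)$.

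First I would invoke the previous lemma to fix a constant $C_0$ with $\lambda\le C_0(\mu+\nu)$ preserved, and note that $\mathrm{Ric}\ge 0$ (i.e. $\mu+\nu\ge 0$) is also preserved; on the resulting cone $\Sigma=\{\lambda\ge\mu\ge\nu\}\cap\{\lambda\le C_0(\mu+\nu)\}$ one has $S>0$ and $S$ comparable to $\lambda$ away from the origin, so the projectivisation $\Sigma\cap\{S=1\}$ is compact. Then I would take
\[
\kappa \;=\; \Sigma \,\cap\, \bigl\{\, \lambda - \nu \;\le\; \bar C\, S^{\,1-\delta} \,\bigr\}.
\]
This set is parallel-invariant (it is an $O(3)$-condition on the eigenvalues of the operator), and it is fibrewise convex because $\lambda$ is a maximum of linear functionals, $-\nu$ is a maximum of linear functionals, and $-\bar C S^{1-\delta}$ is convex on $\{S>0\}$ since $x\mapsto x^{1-\delta}$ is concave and $S$ is linear; the cone $\Sigma$ is likewise cut out by convex inequalities. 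Since $(M,g_0)$ is compact with $\mathrm{Ric}(g_0)>0$, the scalar curvature $R(0)$ is bounded below by a positive constant, so after choosing $\bar C$ large enough the inequality $\lambda(0)-\nu(0)\le\bar C\,R(0)^{1-\delta}$ holds at every point; thus $\kappa$ contains the initial data.

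The substance is checking the ODE keeps solutions in $\kappa$. On the face $\{\lambda-\nu=\bar C S^{1-\delta}\}$, a short computation using the two identities above reduces $\frac{d}{dt}(\lambda-\nu-\bar C S^{1-\delta})\le 0$ to the scale-invariant inequality $S(\lambda+\nu-\mu)\le(1-\delta)\dot S$. At $\delta=0$ the left side equals $(\lambda+\nu)^2-\mu^2$, and the inequality becomes $2\mu^2+\mu(\lambda+\nu)-\lambda\nu\ge 0$, which I would verify directly on $\Sigma$ from $\lambda\ge\mu\ge\nu\ge -\mu$ (so $\mu\ge 0$), $\lambda>0$, with the quantity strictly positive off the origin; by compactness of $\Sigma\cap\{S=1\}$ there is a uniform gap $\dot S-S(\lambda+\nu-\mu)\ge c>0$ there, hence the inequality survives with the factor $(1-\delta)$ for every $\delta$ with $0<\delta\le c/\max_{\Sigma\cap\{S=1\}}\dot S$. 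On the other face $\partial\Sigma=\{\lambda=C_0(\mu+\nu)\}$ the previous lemma already shows the vector field points inward, and the usual first-exit-time argument at the corner of the two constraints lets the maximum principle for convex sets apply to the intersection. Concluding, $Q(t)\in\kappa$ for all $t\in[0,T)$, which is precisely $\dfrac{\lambda-\nu}{\lambda+\mu+\nu}\le \bar C\,(\lambda+\mu+\nu)^{-\delta}$.

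The main obstacle I anticipate is this last step: showing that the scale-invariant ODE inequality $2\mu^2+\mu(\lambda+\nu)-\lambda\nu\ge 0$ holds with a strict slack that is \emph{uniform} over the compact cone $\Sigma\cap\{S=1\}$ (so that a single $\delta$ works for all points and all times), together with handling the corner where the $\bar C S^{1-\delta}$ constraint meets $\partial\Sigma$ cleanly enough that the convex-set maximum principle applies without ad hoc smoothing of $\kappa$.
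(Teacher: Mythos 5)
Your proof is correct and follows essentially the same strategy as the paper's: apply the tensor maximum principle to the convex, parallel-invariant sublevel set of the pinching quantity, using the previous lemma's cone $\{\lambda \le C_0(\mu+\nu)\}$ to close the ODE estimate. The only cosmetic differences are that you normalize by $S=\lambda+\mu+\nu$ rather than by $\mu+\nu$ (equivalent on $\Sigma$ by that same lemma), and you obtain the admissible $\delta$ from a compactness argument on $\Sigma\cap\{S=1\}$, whereas the paper extracts it from the explicit logarithmic-derivative bound $\frac{d}{dt}\log\!\bigl(\tfrac{\lambda-\nu}{(\mu+\nu)^{1-\delta}}\bigr)\le\delta\lambda-\tfrac{1}{2}(1-\delta)(\mu+\nu)$, which yields $\delta\le(2C_0+1)^{-1}$ directly; your $2\mu^2+\mu(\lambda+\nu)-\lambda\nu\ge 0$ computation and the identification of its zero locus $\{\mu=\nu=0\}$ (excluded from $\Sigma$) are both correct, so the uniform gap exists as you claim.
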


\emph{Remark}.  Note that this theorem implies that

\begin{center}
$\frac{(\lambda - \mu)^{2} + (\mu - \nu)^{2} + (\nu - \lambda)^{2}}{(\lambda + \mu + \nu)^{2}} \leq C(\lambda + \mu + \nu)^{-\delta}$
\end{center}

In particular, as the curvature gets large, as it will under the Ricci flow (recall $R = \lambda + \mu + \nu$), this shows that eigenvalues get "pinched" together.  But the left hand side is scale invariant, so it follows that the limit of normalised Ricci flow, $g_{\infty}$, has constant sectional curvature, and we are done.

\begin{proof} Note that it suffices to show that

\begin{center}
$\frac{\lambda - \nu}{\mu + \nu} \leq \frac{C}{\mu + \nu}^{\delta}$
\end{center}

Since $\mu + \nu > 0$ everywhere at time $0$ by the $Ric > 0$ condition, we may by compactness choose such a $\bar{C}$ and a $\delta$.  We now demonstrate that this condition is preserved using the maximum principle.

Define

\begin{center}
$\kappa = \{ Q \in \xi : (\lambda(Q) - \nu(Q)) - \bar{C}(\mu(Q) - \nu(Q))^{1- \delta} \leq 0\}$
\end{center}

To show that the solution to the ODE stays in $\kappa$ compute

\begin{center}
$\frac{d}{dt}log(\frac{\lambda - \nu}{(\mu + \nu)^{1-\delta}}) \leq \delta \lambda - \frac{1}{2}(1 - \delta)(\nu + \mu)$
\end{center}

By the lemma, it is possible to choose $\delta = \delta(C)$ small so that this always is non positive, so that $\frac{\lambda - \nu}{(\mu + \nu)^{1 - \delta}}$ is non increasing, so that the inequality is preserved by the ODE.

Thus, by the maximum principle, it is also preserved by the PDE.  This completes the proof of Hamilton's theorem. \end{proof}

\section{The Huisken-Sinestrari theorem; surgery and classification of canonical singularities for MCF}

This is a typeset version, with some modifications, of a particularly instructive lecture given by Gerard Huisken at the University of Melbourne, in which he motivated much of the surgery procedure and the general considerations of the classification program for 3DRF by considering MCF.  Much of the techniques and results he and his collaborator Sinestrari used (and proved) are completely analogous to related results for the full blown Ricci Flow.  In this section I shall outline his argument, providing clarification wherever I view it appropriate.

\subsection{Preliminaries, including statement of the theorem}

Let $M$ be a hypersurface evolving according to MCF.  Let $A =\{h_{ij}\}$ be its second fundamental form, with eigenvalues (principal curvatures) $\lambda_{1} \leq \lambda_{2} \leq \cdots \leq \lambda_{n}$.  Then we say that $M^{n}$ is $2$-convex if $\lambda_{1} + \lambda_{2} > 0$ everywhere on $M^{n}$ (this is a weaker criterion than convexity, which would require $\lambda_{1} \geq 0$ everywhere).

For a couple of examples, note that $S^{n-1} \times R$ is 2-convex, but $S^{n-2} \times R^{2}$ has $\lambda_{1} + \lambda_{2} = 0$.  For instance, a thin neck with cross section $S^{n-1}$ with two bulbs on each end is allowed (see the neckpinch in Canonical Singularities) as a hypersurface, but the analogous picture with cross section $S^{n-2}$ is not.

This definition might seem somewhat arbitrary, but note that for $n = 3$, If $R = (\lambda_{1}\lambda_{2} + \cdots + \lambda_{n-1}\lambda_{n}) = \frac{1}{2}(H^{2} - \norm{A}^{2}) > 0$, this this implies 2 convexity.  If $n = 4$, then if the manifold $M$ has the positive isotropic curvature of Hamilton, this once again implies 2 convexity.  So this suggests that this might be a reasonable notion to study.  In fact, we have the following result:

\begin{thm} (Huisken-Sinestrari). If $M^{n}$, $n \geq 3$ is 2-convex, then either $M^{n}$ is diffeomorphic to $S^{n}$ or diffeomorphic to a finite connected sum of copies of $S^{n-1} \times S^{1}$.  If $M = \partial \Omega$, $\Omega \subset R^{n+1}$, then either $\Omega$ is diffeomorphic to $B^{n+1}_{1}(0)$ or to a finite connected sum of $B^{n}_{1}(0) \times S^{1}$. \end{thm}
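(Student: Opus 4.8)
The plan is to follow the now-classical surgery scheme of Hamilton for the Ricci flow, adapted to mean curvature flow, exactly as Huisken and Sinestrari carry it out; the present excerpt already supplies the two main analytic ingredients we need — the interior curvature estimates of the smoothing subsection and the tensor maximum principle (the $2$-tensor version in the subsection on extension to $2$-tensors on hypersurfaces) — and these let us run the pinching machinery. First I would establish that $2$-convexity, written as the cone condition $\lambda_1+\lambda_2 > \beta H$ for a suitable $\beta > 0$, is preserved under MCF: this is an application of the $2$-tensor maximum principle to the convex, $O(n)$-invariant set $K=\{A : A(e,e)+A(f,f) \geq \beta\,\mathrm{tr}(A)\ \text{for all orthonormal } e,f\}$, checking that the reaction term $\mathcal{Q}_{ij}(A)=A\,\|A\|^2$ (more precisely $\Delta h_i^j+h_i^j\|h\|^2$ from Simon's identity) points into $K$ on $\partial K$. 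Next I would prove the stronger pinching estimate: wherever $H$ is large, $\|A\|^2/H^2$ is close to its round value $1/n$ — i.e.\ $\|A\|^2 - \tfrac1n H^2 \leq C H^{2-\delta}$ for some $\delta>0$. This is the MCF analogue of the Hamilton pinching theorem proved for $3$DRF in the last section of the excerpt, and the argument is structurally identical: apply the tensor maximum principle to a family of pinching cones $K_\sigma$ interpolating between $2$-convexity and roundness, together with a Stampacchia iteration / integral-estimate argument to absorb the gradient terms.

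With pinching in hand, the next block of steps is the structural analysis of the high-curvature regions. I would establish a \emph{gradient estimate} $|\nabla A| \leq C\,(\|A\|^2 + 1)$ and higher-order estimates $|\nabla^k A| \leq C_k\,(\|A\|^{k+2}+1)$, again via the tensor maximum principle applied to quantities like $|\nabla A|^2/( (K H)^2 - \|A\|^2)$ in the spirit of the CSF bound on $\kappa_s$ shown earlier. Then the rescaling/compactness theorem of the excerpt (the subsection ``A compactness theorem'' together with ``Existence of smooth limit flows'') gives that, around any sequence of points and times with curvature $\to\infty$, a subsequence of rescaled flows converges smoothly to a limit flow which, by the pinching estimate being scale invariant, is \emph{weakly convex} and in fact (after a splitting-of-the-tangent-bundle / strong maximum principle argument à la Hamilton) is either a shrinking sphere, a shrinking cylinder $S^{n-1}\times\mathbb{R}$, or a (strictly convex) translating/ancient solution. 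This is the \emph{canonical neighbourhood} statement: every point of sufficiently high curvature lies either on a region diffeomorphic to a ``cap'' ($B^{n+1}$, or $B^n\times S^1$ locally) or on a ``neck'' modelled on $S^{n-1}\times\mathbb{R}$ with good curvature control along the neck.

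The surgery procedure itself then proceeds by induction on a curvature threshold: flow until $\max_M H$ reaches a large value $H_1$; by the canonical neighbourhood theorem every such high-curvature point sits on a long, precisely controlled neck or on a cap; excise the necks, cap off the resulting $S^{n-1}$ boundaries by smooth standard caps (with curvature comparable to the neck's, chosen so that $2$-convexity and all the a priori estimates are not destroyed — this requires a fairly delicate interpolation, the ``standard solution'' comparison), and discard any component that is now a small closed piece, which one recognises topologically as $S^n$ or $S^{n-1}\times S^1$. Continue the flow from the post-surgery hypersurface. One must prove the surgeries do not accumulate in finite time (each removes a definite amount of area/volume), so after finitely many surgeries the whole manifold is exhausted by discarded pieces; reassembling, $M^n$ (resp.\ $\Omega$) is a finite connected sum of $S^n$'s and $S^{n-1}\times S^1$'s (resp.\ $B^{n+1}$'s and $B^n\times S^1$'s), and a connected sum of spheres is a sphere, giving the stated dichotomy. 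I expect the genuine obstacle to be precisely the surgery step — specifically, showing that the quantitative pinching, gradient, and non-collapsing estimates are \emph{preserved} (with the same constants, up to controlled loss) across a surgery, since this forces one to construct the standard caps with great care and to prove that the canonical-neighbourhood structure persists after cutting and gluing; the purely topological bookkeeping and the finiteness of surgeries are comparatively routine once that is done, and the preservation of $2$-convexity and the compactness of rescaled limits are already within reach of the tools assembled in this chapter.
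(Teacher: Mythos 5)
Your proposal follows essentially the same route the paper sketches: run mean curvature flow, establish preservation of $2$-convexity and the pinching/cylindrical/gradient a priori estimates via tensor maximum principles, deduce that high-curvature regions are necks or caps, then perform surgery and argue finiteness to obtain the connected-sum decomposition. The paper's labelled ``proof'' is only a two-line sketch, with the estimates, canonical singularities, and surgery procedure spread over the subsequent subsections; your write-up is a substantially more detailed version of that same plan and correctly isolates the preservation of quantitative estimates across a surgery as the genuinely delicate step.
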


\begin{cor} If $n \geq 3$, $M^{n}$ 2-convex and simply connected, then $M^{n} \equiv S^{n}$ and $\bar{\Omega} \equiv \bar{B}^{n+1}_{1}(0)$. \end{cor}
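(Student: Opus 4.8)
The plan is to read off the conclusion directly from the Huisken--Sinestrari dichotomy stated just above, using the fundamental group to eliminate the connected-sum alternative. First I would record the relevant $\pi_{1}$ computations. For $n \geq 3$ the sphere $S^{n-1}$ is simply connected, so $\pi_{1}(S^{n-1} \times S^{1}) \cong \pi_{1}(S^{n-1}) \times \pi_{1}(S^{1}) \cong \mathbb{Z}$. Moreover, since $n \geq 3$ the gluing sphere $S^{n-1}$ used to form a connected sum of $n$-manifolds is itself simply connected, so van Kampen's theorem gives $\pi_{1}(N_{1} \# \cdots \# N_{k}) \cong \pi_{1}(N_{1}) * \cdots * \pi_{1}(N_{k})$. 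Applied to $k$ copies of $S^{n-1} \times S^{1}$ this yields the free product of $k$ copies of $\mathbb{Z}$, which is nontrivial whenever $k \geq 1$.

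Now I would invoke the theorem: $M^{n}$ is either diffeomorphic to $S^{n}$ or to a connected sum of $k \geq 1$ copies of $S^{n-1} \times S^{1}$. The hypothesis that $M^{n}$ is simply connected excludes the second case by the computation above, so $M^{n} \equiv S^{n}$.

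For the domain I would argue the same way. The theorem says $\Omega$ is diffeomorphic either to $B^{n+1}_{1}(0)$ or to a $k$-fold ($k \geq 1$) boundary connected sum of copies of $B^{n+1}_{1}(0) \times S^{1}$. In the latter case $M^{n} = \partial \Omega$ is a $k$-fold connected sum of copies of $S^{n} \times S^{1}$, whose fundamental group is again the free product of $k$ copies of $\mathbb{Z}$ (here van Kampen needs only $n \geq 2$), hence nontrivial; this contradicts the simple connectedness of $M^{n}$. Therefore $\Omega \cong B^{n+1}_{1}(0)$, and passing to closures gives $\bar{\Omega} \equiv \bar{B}^{n+1}_{1}(0)$.

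I do not expect any genuine obstacle here: the corollary is a fundamental-group bookkeeping exercise once the Huisken--Sinestrari classification is available. The only points deserving a little care are getting the van Kampen hypothesis exactly right --- the connecting sphere must be simply connected, which is precisely why $n \geq 3$ enters through $S^{n-1}$ --- and being precise about whether the connected sum in the boundary statement is interior or boundary, so that the induced description of $\partial \Omega$ as a connected sum of copies of $S^{n} \times S^{1}$ is correct; neither is a real difficulty.
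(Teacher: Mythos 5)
Your reasoning for the first conclusion is correct and is the obvious route: $\pi_{1}(S^{n-1}\times S^{1})\cong\mathbb{Z}$ for $n\geq 3$, van Kampen gives free products across connected sums along the simply connected $S^{n-1}$, and simple connectedness of $M^{n}$ then forces $M^{n}\equiv S^{n}$. The paper states the corollary without a proof, and this is exactly the intended argument.

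However, the second half of your write-up has a dimensional slip that should be fixed. The Huisken--Sinestrari statement as given says $\Omega$ is either $B^{n+1}_{1}(0)$ or a finite connected sum of copies of $B^{n}_{1}(0)\times S^{1}$ (an $(n+1)$-dimensional handle), not $B^{n+1}_{1}(0)\times S^{1}$ as you wrote. Consequently the boundary of the handle is $\partial\bigl(B^{n}_{1}(0)\times S^{1}\bigr)=S^{n-1}\times S^{1}$, not $S^{n}\times S^{1}$; with $S^{n}\times S^{1}$ you would be producing an $(n+1)$-manifold, which cannot equal $M^{n}=\partial\Omega$. Once corrected, your fundamental-group argument for $\Omega$ goes through exactly as for $M^{n}$ itself. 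In fact you can avoid the second computation altogether: the two alternatives in the theorem are paired, so once simple connectedness eliminates the connected-sum case for $M^{n}$, the domain statement of the theorem immediately gives $\Omega\equiv B^{n+1}_{1}(0)$, and hence $\bar{\Omega}\equiv\bar{B}^{n+1}_{1}(0)$.
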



\begin{proof} (sketch). The idea as to how to prove this is to use the mean curvature flow.  So, given $F_{0} : M^{n} \rightarrow R^{n+1}$, solve

\begin{center}
$\frac{d}{dt}F(p,t) = -H \cdot \nu(p,t) = \Delta_{t}(F(p,t))$
\end{center}

subject to the initial condition $F(p,0) = F_{0}(p)$.  This is a weakly parabolic system.

We also have importantly that 2-convexity is preserved by MCF.
\end{proof}

\subsection{Canonical Singularities}

If we perform MCF, the flow will continue until it reaches a singularity.  For instance, if $M^{n} = S^{n}_{R_{0}}$, then $R(t) = \sqrt{R_{0}^{2} - 2nt}$, with the solution degenerating to a point at $t = T = \frac{R_{0}^{2}}{2n}$.

Similarly, for the situation of starting with the manifold $S^{n-m}_{R(t)} \times R^{m}$, $R(t) = \sqrt{R_{0}^{2} - 2mt}$.  In particular, we have the following result, due to Huisken:

\emph{Positive Case}. If $M^{n}$ has $\lambda_{1} > 0$ (convex) then $M^{n}_{t}$ contracts smoothly to a round point.  Compare this with the Ricci flow result due to Hamilton: If $n = 3$ and $Ric(g) > 0$, then $g_{t}$ contracts to a round metric on $S^{3}/\Gamma$.

In an ideal world, we might hope that this is always the case.  But we run into the following

\emph{Problem}.  If the positivity condition is relaxed, then other singularities will develop.

In particular, for mean curvature flow, if we initially have 2-convexity, it can be shown that these singularities fit into one of the following categories, known as the canonical singularities:

\emph{(i) The shrinking sphere}.  Under MCF this solution will collapse to a point.

\begin{center}\scalebox{0.3}{\includegraphics{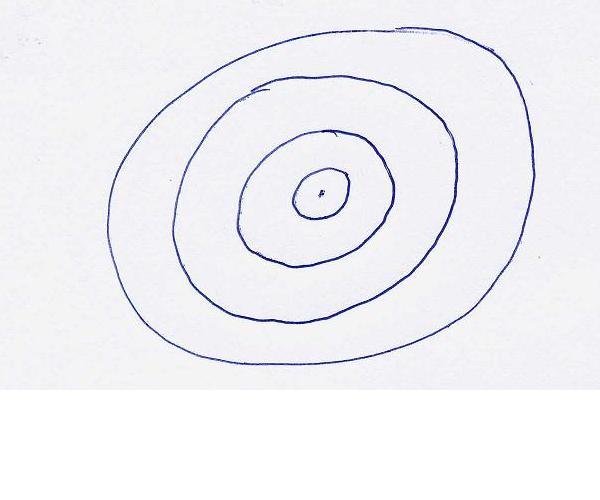}}\end{center}

\emph{(ii) The neckpinch}.  The neck will continue to become longer and thinner under the flow.  Under a microscope the singularity looks like the infinite cylinder $S^{n - 1} \times R$.

\begin{center}\scalebox{0.3}{\includegraphics{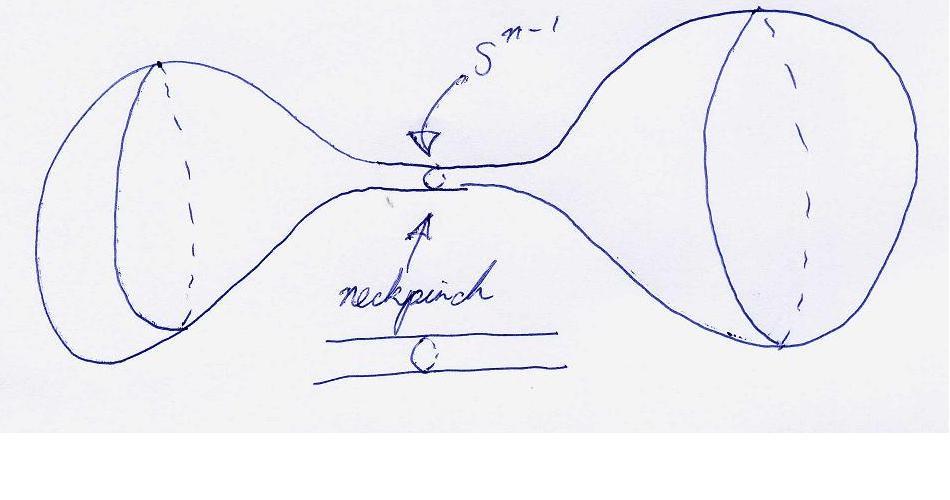}}\end{center}

\emph{(iii) The cusp, or degenerate neckpinch}.  This is a translating solution of MCF.  In general, if one has something looking like a neckpinch where one of the spheres is sufficiently large relative to the other, the smaller sphere will be "eaten" by the larger one and case (i) will apply; if both spheres decay at about the same speed a neckpinch will develop.  Hence there must be a critical point in between at which a "cusp" is formed, which is this singularity.  Under a microscope this will look like a horn.

\begin{center}\scalebox{0.3}{\includegraphics{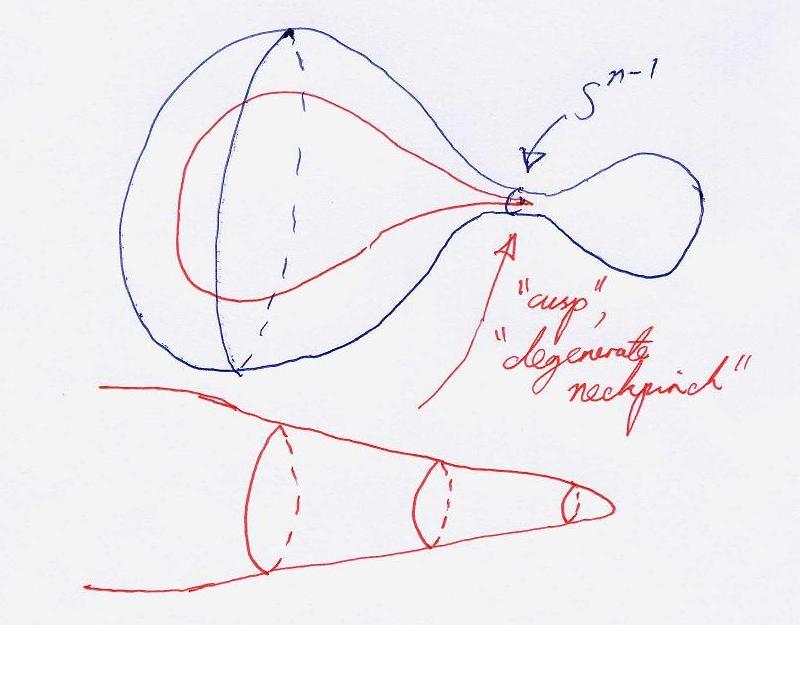}}\end{center}

\subsection{A priori Estimates}

We have the following a priori estimates for MCF.

\begin{itemize}
\item[(i)] $\lambda_{1} + \lambda_{2} > 0$ is preserved.
\item[(ii)] $\lambda_{1} + \lambda_{2} \geq \epsilon H$ is preserved, for some $\epsilon > 0$.
\item[(iii)] $\lambda_{1} \geq -\eta H - C_{\eta}$ $\forall \eta > 0$.  (Rescaling of singularities is weakly convex)
\item[(iv)] If $\lambda_{1} \leq \eta H$ (eg for a neckpinch or part of a horn \emph{not} at the tip), then $\norm{\lambda_{n} - \lambda_{2}} \leq 5 \eta H + \hat{C}_{\eta}$ ($n \geq 3$). (The cylindrical or "roundness" estimate).  Roughly what this is used for is to conclude that the curvatures $\lambda_{2}, \cdots, \lambda_{n}$ are all arbitrarily close in this situation, ie the manifold looks locally like $S^{n-1} \times R$.
\item[(v)] (Gradient Estimate)

\begin{center}
$\norm{\nabla A}^{2} \leq \eta_{0}H^{4} + C_{\eta_{0}}$
\end{center}

where $\eta_{0} = \eta_{0}(n)$, and $C_{\eta_{0}} = C(\eta_{0}, M_{0}^{n})$.  $\eta_{0}$ depends \emph{only} on the dimension, and not the initial data.
\end{itemize}

\emph{Remarks}.

\begin{itemize}
\item[(1)] (iii) is analogous to the Hamilton-Ivey estimates for the Ricci Flow, for $n = 3$, which control the structural tensor $E$:  $E_{ij} \geq - \eta R - C_{\eta}$.
\item[(2)] Note that (iii) and (iv) are only useful if $H$ is huge in the neighbourhood of a point, so that the terms with dependance on $H$ will dominate.  In particular, we would like to be able to show that if $H$ is large at a point, it is large in a neighbourhood of that point.  This motivates the last estimate,
\item[(3)] There is a natural analogue to (v) in the RF: This was established a posteriori by contradiction arguments due to Perelman, using his non-collapsing estimate, which he derived from his entropy.
\end{itemize}

\emph{Remark}.  Note that if we are not guaranteed an estimate like (v), we may get, in the RF, the following type of singularity, described as a "sheet of cigars" by Hamilton.

\begin{center}\scalebox{0.3}{\includegraphics{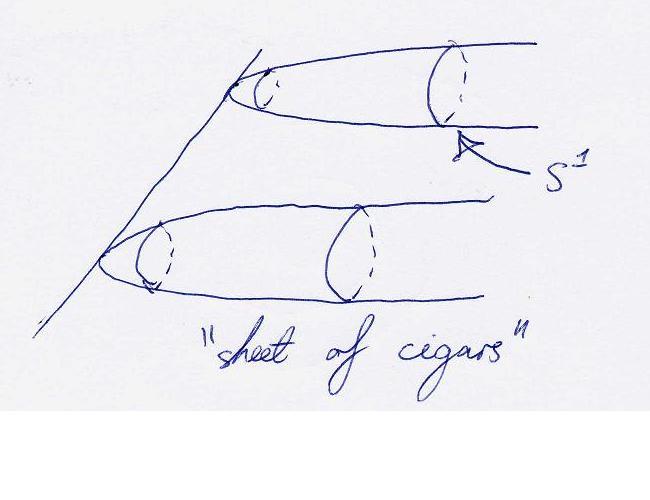}}\end{center}

If we get this type of singularity cropping up in the flow it creates problems, because the surgery procedure (which I will get around to describing for MCF) breaks down for this object.  In other words, we cannot perform surgery on this and then proceed to continue the flow, as we would like to do.  However, using the gradient estimate it is possible to eliminate this from the running, so to speak.  We then get an analogous classification of the singularities which develop under the Ricci flow as "canonical ones", namely, getting a neckpinch $S^{2} \times R$ and a horn with cross-section $S^{2}$, as well as the collapsing 3-sphere.

\subsection{Surgery}

Now that we have established that they will always develop, the idea is now to perform surgery on canonical singularities of type (ii) and type (iii) (if type (i) occurs we are done).

For the neckpinch, we cut the neck at both ends and glue in two $n$-balls, then continue the flow on the individual pieces:

\begin{center}\scalebox{0.3}{\includegraphics{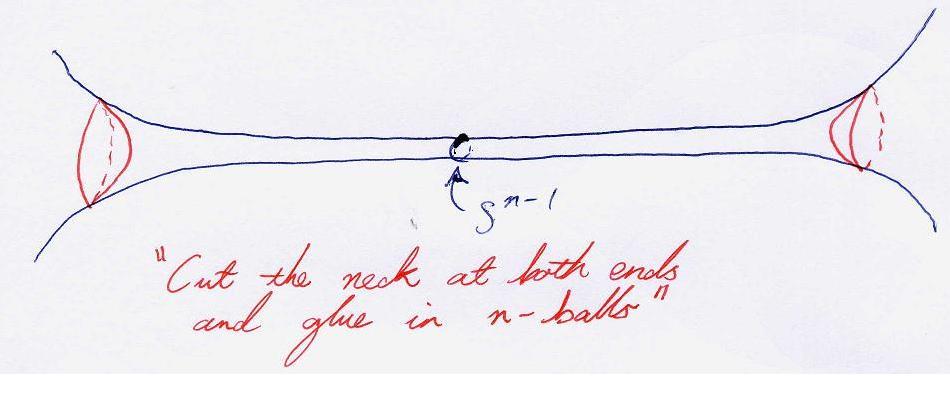}}\end{center}

For the cusp, we merely slice off the end and glue in an $n$-ball:

\begin{center}\scalebox{0.3}{\includegraphics{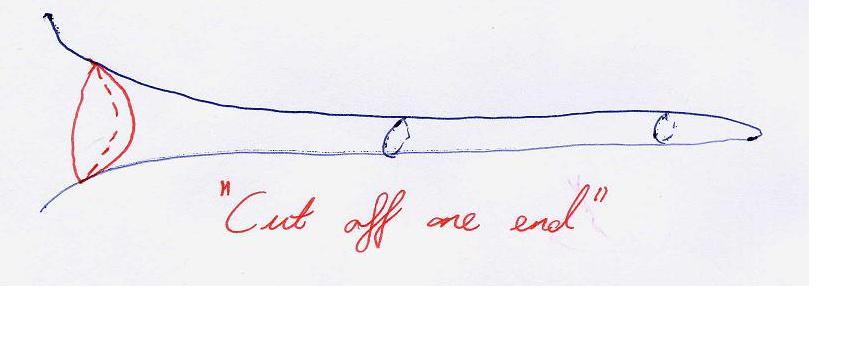}}\end{center}

This can all be made very precise.  In particular, the cusp and the neckpinch can be viewed as $\epsilon$-thin, ie with a cross-section of metric diameter $\epsilon$.  We can show that for $\epsilon$ sufficiently small, all estimates are preserved.  A big issue is whether things are actually improved after surgery; obviously if things do not improve we are not guaranteed convergence.  We also need to prove that only a finite number of surgeries are needed, ie. we do not have to perform an infinite number in an arbitrarily small interval.  Finally, we need to show that the solution "heals" sufficiently after a surgery before the next for us to be justified in using the same estimates (after surgery the solution is no longer $C^{\infty}$, so we need to be careful; we have to use the smoothing property of heat-type equations).

To make this all precise, we have the following useful result, which allows us to think of our necks as embedded in $R^{3}$.

\begin{prop} (MCF).  The interior of each neck has a canonical parametrisation close to a standard filled cylinder, ie $D^{n - 1} \times I$. \end{prop}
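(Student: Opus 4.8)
The plan is to read off the neck's geometry from the a priori estimates (iii), (iv), (v) of the previous subsection and then integrate. First I would fix the working definition of a neck: a connected region $N \subset M_{t}$ on which $H \geq H_{0}$ for a large threshold $H_{0}$, on which the pinching $\lambda_{1} \leq \eta H$ holds (so that we are away from the tip of any horn and away from a collapsing sphere), and whose cross-sectional metric diameter is $\epsilon$-small. On such a region the cylindrical estimate (iv) gives $\norm{\lambda_{n} - \lambda_{2}} \leq 5\eta H + \hat{C}_{\eta}$ and the weak convexity estimate (iii) gives $\lambda_{1} \geq -\eta H - C_{\eta}$; dividing by $H$ and choosing $H_{0}$ large compared with $C_{\eta}, \hat{C}_{\eta}$, the rescaled principal curvatures $\lambda_{i}/H$ all lie within $O(\eta)$ of the values $(0, \tfrac{1}{n-1}, \dots, \tfrac{1}{n-1})$ of the standard cylinder $S^{n-1}(\tfrac{n-1}{H}) \times R$. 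Thus, pointwise, the rescaled second fundamental form $A/H$ is $O(\eta)$-close to that of a cylinder.

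Next I would use the gradient estimate (v), $\norm{\nabla A}^{2} \leq \eta_{0}H^{4} + C_{\eta_{0}}$, to control oscillation: after parabolic rescaling by $H$ it says $\norm{\nabla(A/H)}$ is $O(\sqrt{\eta_{0}})$, so $A/H$, and in particular $H$ itself, varies slowly on the natural length scale $1/H$. Combined with the previous step this shows that the ``short'' direction (the eigendirection of $\lambda_{1}$) integrates to an almost-parallel line field whose integral curves supply an axial coordinate $s \in I$, and that the orthogonal distributions integrate to closed hypersurfaces $\Sigma_{s}$, each of which, by the near-equality of $\lambda_{2}, \dots, \lambda_{n}$ together with the contracted Gauss equation, has intrinsic sectional curvature $O(\eta)$-close to the constant $(H/(n-1))^{2}$. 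A quantitative form of the rigidity statement ``a closed hypersurface whose induced metric and second fundamental form are $C^{k}$-close to those of a round sphere is $C^{k+1}$-close to a round sphere'' then realises each $\Sigma_{s}$ as a normal graph of a small function over a metric sphere $S^{n-1}(r(s))$ with $r(s) = (n-1)/H + o(1/H)$ and, again by (v), with $|r'(s)|$ small.

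Assembling the leaves, the map sending $(\sigma, s) \in S^{n-1} \times I$ to the point of $\Sigma_{s}$ lying over $\sigma$ gives a diffeomorphism of $S^{n-1} \times I$ onto $N$ which, after the rescaling $x \mapsto (H/(n-1)) x$, is $C^{k}$-close to the inclusion of a standard right circular cylinder; extending radially over the $n$-balls glued in during surgery upgrades this to a parametrisation of the solid neck by $D^{n-1} \times I$, as asserted. The main obstacle is the middle step: promoting the pointwise curvature pinching coming from (iii)--(v) to honest $C^{k}$ geometric closeness and to a genuine diffeomorphism. This needs (a) that the estimates hold not merely at the singular time but throughout a space-time neighbourhood, so that $N$ has definite extent and the interior smoothing of MCF (the heat-type bounds $\norm{D^{k}u}^{2} \leq C_{k}^{2} t^{-k}C_{0}$ established earlier for the heat equation and for the CSF, applied to the graph function) delivers the higher derivative bounds $\norm{\nabla^{k}A}$ that the rigidity argument consumes; and (b) a treatment of the transition regions near $\partial N$, where $\lambda_{1}/H$ is no longer small, so that the axial coordinate and the foliation extend continuously to the endpoints of $I$. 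Both are technical but follow the template already used for the shrinking sphere and in Huisken's rescaling result.
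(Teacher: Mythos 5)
Your route is genuinely different from the paper's, which disposes of the whole statement in one sentence: one foliates the neck by minimal surfaces via harmonic maps, and the harmonic maps themselves supply the parametrisation. That approach is designed precisely to dodge the main obstacle in yours. In the middle step you write that ``the orthogonal distributions integrate to closed hypersurfaces $\Sigma_{s}$''; this is exactly the point at which your argument has a real gap, because the orthogonal complement of the $\lambda_{1}$-eigenline field is an $(n-1)$-plane distribution inside the hypersurface, and a codimension-one distribution need not be involutive --- nothing in (iii), (iv), (v) hands you the Frobenius condition. Near-cylindricity says the distribution is close to the tangent spaces of the cross-sectional spheres of a model cylinder, but ``close to integrable'' is not ``integrable,'' and you would still have to produce an honest foliation, for instance by perturbing to an involutive distribution, or by replacing your eigenspace leaves with solutions of an elliptic problem (CMC spheres, minimal slices, or harmonic maps, as the paper indicates). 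The harmonic-map/minimal-surface construction buys canonicity and well-posedness for the leaves from the outset: each leaf is a solution of an elliptic equation with good a priori estimates (which is where your pinching estimates (iii)--(v) re-enter, as hypotheses that make the elliptic problem solvable and its solutions graphical over round spheres), so there is no integrability issue to patch. Your analysis of the rescaled principal curvatures and the use of the gradient estimate to control oscillation is the right preparatory work in either approach, and your closing remarks about higher-derivative bounds and the transition region are on point; but as written the foliation does not exist yet, and you should either invoke an elliptic construction for the leaves as the paper does, or explicitly argue that the almost-involutive distribution can be corrected to an involutive one with quantitative control.
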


\begin{proof} (idea).  One shows that one can foliate the neck by minimal surfaces via harmonic maps.  The maps then give the required parametrisation.
\end{proof}

Finally, we have

\begin{prop} If $M^{n}$ is 2-convex, $n \geq 3$, then $M^{n} \cap \Pi$ consists of finitely many 2-convex components not nested, for any plane $\Pi$.  In other words, the surgery preserves embeddedness.  (Obviously we don't want the surface passing through the balls we are gluuing on during surgery- this result tells us that it is possible to choose the balls so this does not occur). \end{prop}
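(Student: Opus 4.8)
The plan is to reduce the statement to the regions where surgery is actually carried out --- the necks and the horns (degenerate neckpinches) --- and to exploit the preceding proposition, by which the interior of each such region has, after the natural rescaling, a parametrisation that is $C^{2}$-close to a standard filled cylinder $D^{n-1}\times I$ with round cross-section $S^{n-1}(r)$. Since the surgery construction is free to choose the cutting plane $\Pi$, we take $\Pi$ transverse to the axis of the relevant neck (respectively transverse to the axis of the horn, away from its tip); this is the only case one ever needs, and it is the sense in which ``any plane $\Pi$'' is to be read here, since a generic plane section of a $2$-convex hypersurface need not itself be $2$-convex. With this choice $\Pi$ meets such a region in a single hypersurface of $\Pi\cong R^{n}$ which is $C^{2}$-close to the round sphere $S^{n-1}(r)$, hence strictly convex, and in particular $2$-convex. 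Away from the finitely many regions where surgery occurs the curvature of $M^{n}$ stays bounded, so by compactness of $M^{n}$ together with the (separately established) fact that only finitely many surgeries are ever required, the total number of components of $M^{n}\cap\Pi$ is finite; the ones created by the surgery are $2$-convex by the above and the rest are unchanged.

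The remaining point is that these components are not nested, and here I would argue essentially topologically. Write $M^{n}=\partial\Omega$. A surgery cross-section $C\subset M^{n}\cap\Pi$ bounds, within $\Pi$, a disk-like region $D_{C}$ which is precisely the cross-section of the thin solid tube that is filled in at surgery; thus $\mathrm{int}(D_{C})$ lies in $\mathrm{int}(\Omega)$ (or in $\mathrm{int}(\Omega^{c})$, according to which side the tube sits on), and in either case $\mathrm{int}(D_{C})$ is disjoint from $M^{n}=\partial\Omega$. Consequently, if a second cross-section $C'$ were enclosed by $C$ inside $\Pi$, then $C'\subset\mathrm{int}(D_{C})$, contradicting $C'\subset M^{n}$. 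So no two surgery cross-sections are nested, and likewise none contains any other component of $M^{n}\cap\Pi$.

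Assembling these points, $M^{n}\cap\Pi$ consists of finitely many disjoint, $2$-convex, unnested components, whose bounding disk regions $D_{C}$ have pairwise disjoint, $M^{n}$-free interiors. One may therefore glue the surgery balls $B^{n}$ inside these regions without $M^{n}$ ever passing through them, so the surgery preserves embeddedness. The main obstacle in turning this into a complete proof is not the combinatorial endgame but the two inputs it leans on: the preceding proposition, i.e. the existence of a parametrisation of each neck and horn with genuine $C^{2}$ control placing the cross-section uniformly close to a round sphere --- which itself uses the full list of a priori estimates, in particular the cylindrical ``roundness'' estimate (iv) --- and the finiteness of the total number of surgeries, which is a substantial theorem in its own right.
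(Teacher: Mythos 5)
The paper itself states this proposition without proof (it is lifted from Huisken's lectures, following Huisken--Sinestrari), so there is no in-text argument to compare against; I can only judge the attempt on its own terms.

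Your attempt does not prove the proposition as stated. It is a structural statement about two-convex hypersurfaces: for \emph{any} hyperplane $\Pi$, $M^{n}\cap\Pi$ has finitely many two-convex, unnested components. You instead assert, without justification, that ``a generic plane section of a 2-convex hypersurface need not itself be 2-convex,'' conclude that the quantifier must be read as ranging only over planes transverse to neck or horn axes, and prove only that weaker claim. This inverts the logic of the result: its usefulness for the surgery construction is precisely that it holds for \emph{all} hyperplanes, which is what gives one freedom to place the cutting planes without worrying about the rest of the hypersurface. Huisken and Sinestrari prove the full statement (indeed the stronger one, that each component of the solid cross-section $\Omega\cap\Pi$, $\partial\Omega=M^{n}$, is genuinely convex), and their proof leans on two-convexity through a maximum-principle / Alexandrov-type sweeping argument applied globally, not on the approximate cylinder structure of necks. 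Cauchy interlacing shows restricting the second fundamental form of $M$ to the hyperplane $T_{p}M\cap\Pi$ can produce a negative eigenvalue \emph{locally}; the content of the proposition is that this cannot persist along a whole component, and that needs the maximum principle, not roundness estimates.

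Even within your self-imposed restricted scope the argument is incomplete. The reduction step looks only at necks and horns, but the hyperplane $\Pi$ also meets the rest of $M^{n}$, and bounded curvature there tells you nothing about the two-convexity or nesting of those components of $M^{n}\cap\Pi$. The non-nesting argument is likewise partial: you rule out one surgery cross-section being enclosed by another (using that $\mathrm{int}(D_{C})\subset\mathrm{int}(\Omega)$), but you never exclude a non-surgery component of $M^{n}\cap\Pi$ nesting inside another such component, nor a surgery cross-section being enclosed by a non-surgery one. Both gaps trace back to the same omission: your approach deliberately sidesteps the global control over arbitrary plane sections that two-convexity provides through the maximum principle, and that control is the whole content of the proposition.
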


So this is what one does- one continues to perform MCF with surgery on the manifold until all the pieces one ends up with are canonical singularities of type (i), ie. shrinking spheres.  The result then follows.

\section{Outline of the Classification Program}

Even though this section does contain some remarks from Ben's lectures, it mostly continues from where he left off.  In this section I give a relatively detailed outline of how one links the Hamilton-Perelman program to Thurston's geometrisation conjecture, demonstrating how classification of gradient solitons is sufficient.  This section of this survey draws most strongly from John Morgan and Gang Tian's notes on the Ricci Flow ([MT]).

\subsection{Injectivity Radius and Collapsing of Balls}


\begin{dfn} (Injectivity Radius). The injectivity radius at $x \in M$ is defined as the maximal diameter of a ball in the tangent space at $x$ such that it is diffeomorphic to its image via the exponential map. \end{dfn}

\begin{dfn} ($\kappa$-non collapsed).  Let $B(x,r)$ be the ball of metric radius $r$ centred at $x$.  Then if $\norm{Rm} \leq r^{-2}$ within $B(x,r)$, then we say that $B(x,r)$ is $\kappa$-non collapsed if $vol(B(x,r)) \geq \kappa r^{n}$, where $n$ is the dimension of $B(x,r)$. \end{dfn}

The importance of non-collapsing is emphasised in the following result:

\emph{Result}. If $\norm{Rm} \leq r^{-2}$ on $B(x,r)$ and $B(x,r)$ is $\kappa$-non collapsed then $inj_{x}(M) \geq C(r,\kappa)$.

In particular, this type of control over the injectivity radius will be important, for the following reason:

\begin{thm} (Cheeger-Gromov).  Any sequence $(M_{i}^{n},g_{i},p_{i})$ with $(M_{i}^{n},g_{i})$ complete, $\norm{Rm(g_{i})} \leq C$, $\norm{\nabla^{k}Rm(g_{i})} \leq C_{k}$ and $inj(g_{i}) \geq \epsilon > 0$ has a subsequence which converges in the pointed sense. \end{thm}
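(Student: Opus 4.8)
The plan is to first extract a local $C^{\infty}$ limit of the metrics in geodesic normal coordinates around the basepoints, and then to globalise by a uniform counting-and-patching argument.

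First I would fix, for each $i$, an orthonormal frame at $p_{i}$, identifying $T_{p_{i}}M_{i}$ with $R^{n}$, and use $inj(g_{i}) \geq \epsilon$ to see that $\exp_{p_{i}} : B(0,\epsilon) \subset R^{n} \to B(p_{i},\epsilon) \subset M_{i}$ is a diffeomorphism; pulling $g_{i}$ back then gives a metric $\tilde{g}_{i}$ on the \emph{fixed} ball $B(0,\epsilon)$. The crucial local estimate is that, in these normal coordinates, $\tilde{g}_{i}$ is uniformly two-sided bounded (hence uniformly elliptic) and every coordinate derivative $\partial^{\alpha}(\tilde{g}_{i})_{jk}$ is bounded, with constants depending only on $n$, $C$ and the $C_{k}$. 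This follows from comparison geometry: the components of $\tilde{g}_{i}$ are built from Jacobi fields along radial geodesics, so the bound $\norm{Rm(g_{i})} \leq C$ controls $\tilde{g}_{i}$ through the Jacobi equation, while $\norm{\nabla^{k}Rm(g_{i})} \leq C_{k}$ controls its derivatives by differentiating that equation. (Alternatively one passes to harmonic coordinates and invokes interior elliptic Schauder estimates, which is cleaner but needs more machinery.) With these $C^{\infty}$ bounds on $B(0,\epsilon/2)$, the Arzela--Ascoli theorem proved earlier yields a subsequence along which $\tilde{g}_{i} \to \tilde{g}_{\infty}$ in $C^{\infty}_{loc}$, and the uniform lower bound guarantees $\tilde{g}_{\infty}$ is a genuine Riemannian metric.

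Next I would globalise. For a fixed radius $R$, cover $B(p_{i},R)$ by balls of radius $\epsilon/2$ centred at points $q_{i}^{1} = p_{i}, q_{i}^{2}, \ldots, q_{i}^{N_{i}}$, where $N_{i}$ is bounded independently of $i$: Bishop--Gromov (using the two-sided curvature bound, which in particular forces a Ricci lower bound) gives $vol\,B(p_{i},R) \leq V(n,C,R)$, while each small ball has volume bounded below in terms of $n$, $C$ and $\epsilon$ by volume comparison within the injectivity radius. Running the normal-coordinate construction around each $q_{i}^{a}$ and recording the transition maps (compositions of exponential maps with their inverses) produces uniform $C^{\infty}$ bounds on those transition maps as well; a second application of Arzela--Ascoli together with a diagonal argument over an exhaustion $R \to \infty$ then yields limiting chart metrics and limiting transition maps. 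Gluing the charts by the limiting transition maps yields a smooth $n$-manifold $M_{\infty}$ with metric $g_{\infty}$ and basepoint $p_{\infty}$, and the limiting charts assemble into diffeomorphisms $\phi_{i}^{R} : B(p_{\infty},R) \to M_{i}$ with $(\phi_{i}^{R})^{*}g_{i} \to g_{\infty}$ in $C^{\infty}$ on $B(p_{\infty},R)$ -- that is, pointed $C^{\infty}$ convergence. Completeness of $(M_{\infty},g_{\infty})$ is then automatic, since each $B(p_{\infty},R)$ meets only finitely many charts carrying two-sided metric bounds and hence has compact closure.

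The routine parts are the local Jacobi-field estimate and the two invocations of Arzela--Ascoli. The main obstacle is the globalisation bookkeeping: keeping the number of charts finite uniformly in $i$ (the volume-counting step), verifying that the limiting transition maps satisfy the cocycle identity so that the glued object is an honest manifold, and checking that the assembled maps $\phi_{i}^{R}$ are genuine diffeomorphisms onto their images compatible with the convergence. Some care is also needed to choose the subsequences consistently across all chart centres and all $R$ simultaneously, which is exactly what the diagonal argument must be organised to do.
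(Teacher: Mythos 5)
The paper does not actually prove this theorem: the Cheeger--Gromov compactness theorem is stated as a black-box reference in the section outlining the classification program, with no argument supplied, so there is no author proof to compare against. Your sketch is therefore the only proof on the table, and as a sketch of the standard argument it is essentially sound. You correctly identify the three load-bearing components: (i) the injectivity radius bound gives well-defined exponential charts on a ball of fixed size, in which the curvature bounds and the Jacobi equation control $\tilde g_i$ and all its coordinate derivatives uniformly, so Arzela--Ascoli (which the paper does prove) gives local $C^\infty$ subconvergence; (ii) Bishop--Gromov volume comparison from above together with a lower volume bound inside the injectivity radius gives a uniform packing bound $N_i \leq N(n,C,\epsilon,R)$, so only finitely many charts are needed on each $B(p_i,R)$; and (iii) a diagonal extraction over chart centres and radii, together with convergence of the transition maps, builds the limiting pointed manifold.

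Two cautions worth flagging. First, in the classical Cheeger--Gromov theorem with only $\norm{Rm}\leq C$ and no bound on $\nabla^k Rm$, normal coordinates do \emph{not} give adequate derivative control on the metric and one is forced into harmonic coordinates and Schauder theory; you correctly note this alternative, and the reason your Jacobi-field route works here is precisely that the hypothesis includes bounds on all $\nabla^k Rm$. You should make that dependence explicit rather than presenting the two routes as interchangeable in general. Second, you describe verifying the cocycle identity for the limiting transition maps and checking that the assembled $\phi_i^R$ are genuine diffeomorphisms as ``routine bookkeeping,'' but in complete write-ups of this theorem this is where most of the labour lives: one must show the pullbacks of exponential charts converge together with their inverses, that the composites stabilise, and that the resulting topological gluing yields a Hausdorff second-countable manifold. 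These are not deep but they are not automatic either, and a full proof would need to discharge them; calling them routine is the one place where the sketch undersells the remaining work.
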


I should of course mention:

\begin{dfn} (Pointed Convergence).  We say that a sequence $(M_{i},g_{i},p_{i})$ (where $M_{i}$ are manifolds, $g_{i}$ are metrics and the $p_{i}$ are points in the $M_{i}$) converges to $(M,g,p)$ in the \emph{pointed} sense if there exists

\begin{itemize}
\item[(i)] an increasing sequence of compact $\Omega_{i} \subset M$ with $\cup_{i}\Omega_{i} = M$ $(p_{i} \in \Omega_{i} \forall i)$, and
\item[(ii)] a sequence of maps $\phi_{i} : \Omega_{i} \rightarrow M_{i}$ which are diffeomorphisms onto their image, with $\phi_{i}(p) = p_{i}$ such that

\begin{center} $\phi_{i}^{\star}g_{i} \rightarrow_{C^{\infty}} g$ \end{center}
\end{itemize} \end{dfn}

What Perelman was able to show was the following result:

\begin{thm} (Perel'man). Let $g(t)$ be a solution of Ricci flow on a compact $M^{n} \times [0,T]$. If $p \in M$, $r > 0$ s.t. $\norm{R} \leq \frac{1}{r^{2}}$ on $B_{r}(p)$ at $t = T$ then $\exists \eta = \eta(n, g(0),T)$ such that $inj(g(T),p) \geq \eta r$. \end{thm}

In particular, this means that, for the Ricci flow on compact manifolds, the Cheeger-Gromov result applies.  In other words, we have a compactness result for such flows- which allows one to identify the original manifold as a connected sum of its limiting pieces after finitely many surgeries, which essentially proves Thurston's Geometrisation Conjecture.  I will repeat many of these statements later.

\subsection{Canonical Neighbourhoods}

Canonical neighbourhoods are of paramount importance in the Ricci flow, particularly in Ricci flow with surgery.  They reflect what happens around singularities of the flow as the manifold gets "stretched", as in MCF with surgery from before.

\emph{The $\epsilon$-neck}.  An $\epsilon$-neck about a point $x \in (M,g)$ is a submanifold $N \subset M$ and a diffeomorphism $\psi : S^{2} \times (-\epsilon^{-1},\epsilon^{-1}) \rightarrow N$ such that $x \in \psi(S^{2} \times \{ 0\})$ and such that the pullback of the rescaled metric, $\psi^{\star}(R(x)g)$ is within $\epsilon$ in the $C^{\epsilon^{-1}}$ topology to the product of the round metric of scalar curvature $1$ on $S^{2}$ with the usual metric on $(-\epsilon^{-1},\epsilon^{-1})$.  This is really a roundabout way of saying that we have the following picture:

\begin{center}\scalebox{0.8}{\includegraphics{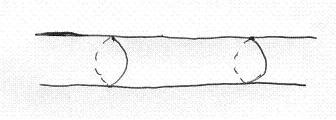}}\end{center}

\emph{Sidenote}: The $C^{\epsilon^{-1}}$ topology is the topology of maps which are $\epsilon^{-1}$ times differentiable.

\emph{The $\epsilon$-cap}.  An $\epsilon$-cap is intuitively a $\epsilon$-neck which is "rounded off" at one end, as shown:

\begin{center}\scalebox{0.8}{\includegraphics{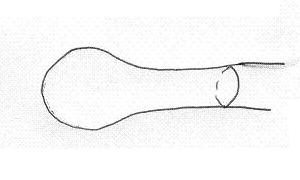}}\end{center}

These are really the only essential canonical neighbourhoods.  However, for a proper analysis, we need to include two more:

\emph{$C$-components}. A $C$-component is a compact, connected Riemannian manifold of positive sectional curvature diffeomorphic to $S^{3}$ or $RP^{3}$ such that if the metric is rescaled so that $R(x) = 1$ at some point $x$ in the component, then diam($C$-component) $\leq C$, $C^{-1} \leq Sect(x,v,w) \leq C$ any point $x$, any nonparallel vectors $v,w$, and $C^{-1} \leq vol(C-component) \leq C$.

\emph{$\epsilon$-round components}.  An $\epsilon$-round component satisfies the property that if the metric about any $x$ in it is rescaled by $R(x)$, then the metric is within $\epsilon^{-1}$ in the $C^{\epsilon^{-1}}$ topology of a round metric of scalar curvature one.

\begin{thm} (Classification of $3$-manifolds which are unions of $\epsilon$-caps and $\epsilon$-necks).  If a $3$ manifold is a union of $\epsilon$-caps and $\epsilon$-necks, and $\epsilon$ is sufficiently small, then it must be on the following list:

\begin{center}\scalebox{0.8}{\includegraphics{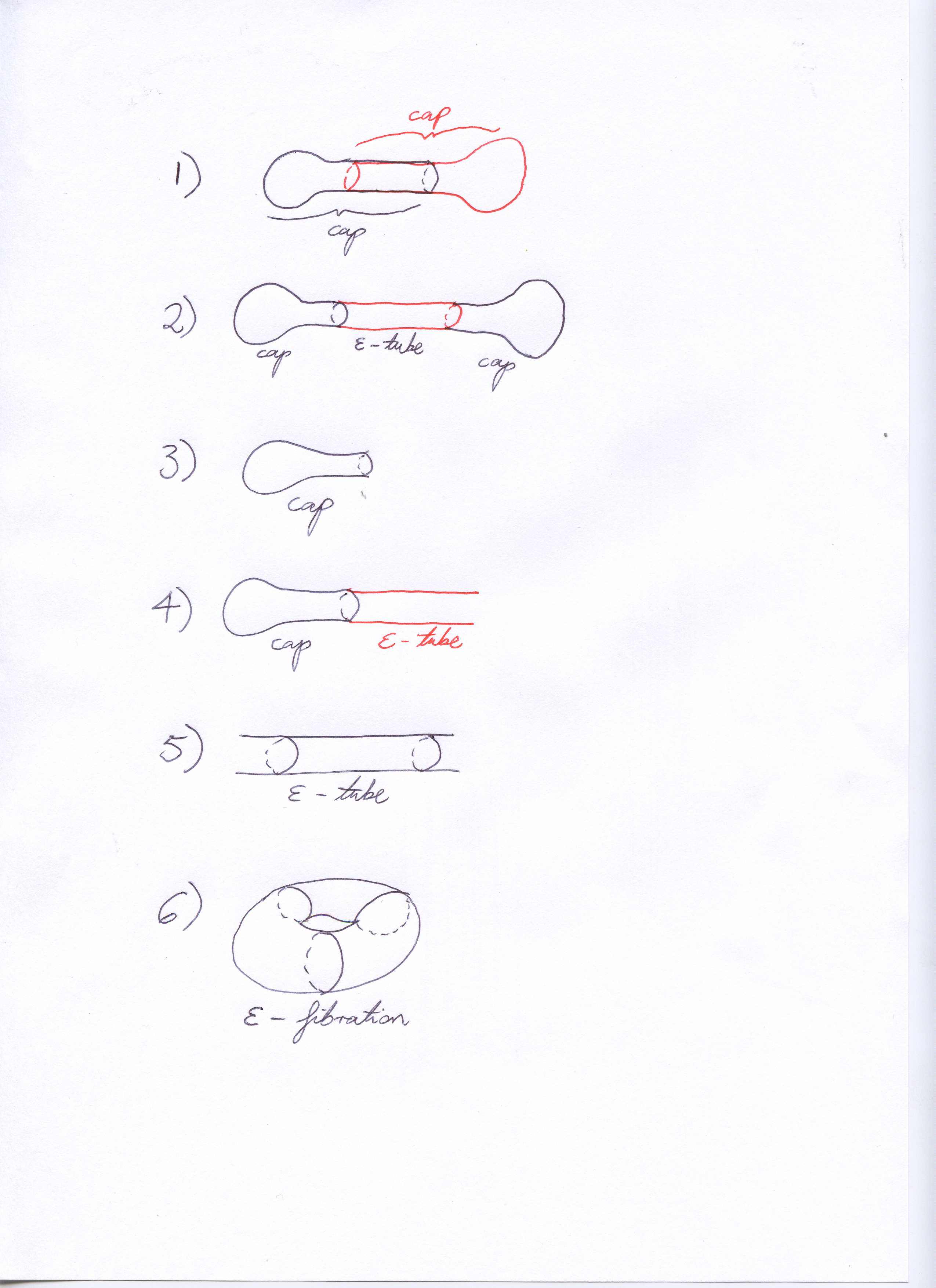}}\end{center}

Furthermore, if it is of the form (1) or (2), it is one of $S^{3}, RP^{3}$, or $RP^{3} \# RP^{3}$; if it is of the form (3) or (4), it is either $R^{3}$ or $RP^{3} - \{pt\}$; if it is of the form (5) it is $S^{2} \times R$; and finally if it is of the form (6) it is either the orientable or non-orientable $S^{2}$ bundle over $S^{1}$. \end{thm}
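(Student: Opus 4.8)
The plan is to argue combinatorially on the way the $\epsilon$-necks and $\epsilon$-caps overlap, reducing the classification to a bookkeeping problem about chains and cycles. First I would establish the basic local fact that if two $\epsilon$-necks $N_{1}$, $N_{2}$ overlap in a connected set then, for $\epsilon$ small, their intersection is again an $\epsilon'$-neck and the composite parametrisation $\psi_{1}^{-1} \circ \psi_{2}$ is $C^{1}$-close to an isometry of the standard cylinder $S^{2} \times R$; in particular the $S^{2}$-factors of the two neck structures are isotopic, so the foliations by round $2$-spheres patch together. The same statement holds when a neck overlaps the cylindrical part of an $\epsilon$-cap. This is essentially a quantitative implicit-function argument using the definition of $\epsilon$-neck (closeness in the $C^{\epsilon^{-1}}$ topology to the round product metric) together with the rigidity of the round cylinder.

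Granting this, the second step is to form the nerve of a good cover of $M$ by necks and caps. Since each neck is $S^{2} \times (-\epsilon^{-1},\epsilon^{-1})$ and overlaps of necks are again necks, this nerve is essentially one-dimensional: a graph in which neck-vertices have degree at most $2$ and cap-vertices have degree at most $1$. Connectedness of $M$ then forces the graph to be a single cycle of necks, a single path (finite or infinite) of necks possibly capped at its ends, or a single closed piece, the last occurring precisely when a $C$-component or an $\epsilon$-round component is present. The classification thus splits according to the number of caps, which is $0$, $1$, or $2$.

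Third, I would read off the cases. Zero caps with a finite cycle of necks gives an $S^{2}$-bundle over $S^{1}$, of which there are exactly two, the orientable and the non-orientable one (form (6)); zero caps with an infinite chain gives $S^{2} \times R$ (form (5)); a single cap with a finite or half-infinite neck tail gives an open manifold, either $R^{3}$ or $RP^{3} - \{pt\}$ according to the topology of the core of the cap (forms (3), (4)); two caps joined by a chain of necks glue their cores along an $S^{2}$, producing a connected sum of two copies drawn from the ball $B^{3}$ and $RP^{3}$ minus an open ball, that is $S^{3}$, $RP^{3}$, or $RP^{3} \# RP^{3}$ (forms (1), (2)); and a $C$-component or $\epsilon$-round component is by definition diffeomorphic to $S^{3}$ or $RP^{3}$. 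Matching these against the pictures (1)--(6) and the stated identifications is then routine topology.

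The hard part will be carrying out the first step uniformly. One must fix $\epsilon$ small enough, once and for all, that every overlap map is genuinely controlled, that the patched $2$-sphere foliation is globally consistent with no holonomy obstruction around a cycle beyond the $Z/2$ which distinguishes the two $S^{2}$-bundles, and that a long chain of necks can only close up in the ways listed, ruling out for instance a chain that wraps around and reattaches to itself in a non-product fashion. Controlling the error accumulated along a long composition of overlap maps, so that it never degrades below the threshold required for the rigidity argument, is the central technical point; this is where the precise quantitative content of the $\epsilon$-neck and $\epsilon$-cap definitions, together with the a priori ($2$-convexity and positive-curvature) estimates recorded earlier, has to be invoked.
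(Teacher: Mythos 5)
The paper itself offers no proof of this theorem; it explicitly defers to the appendix of Morgan and Tian's manuscript \cite{[MT]}, so there is no internal argument to compare your proposal against. What I can do is assess your sketch against the strategy of the cited source, and on that score you have the right spirit: Morgan--Tian's Appendix A does indeed work by patching the $S^{2}$-foliations of overlapping necks and reducing the global topology to a one-dimensional combinatorial picture of chains and cycles, with caps as the only permitted terminal pieces. Your third step --- reading off $S^{3}$, $RP^{3}$, $RP^{3} \# RP^{3}$ from the two-cap case, the two $S^{2}$-bundles over $S^{1}$ from the zero-cap cycle, $S^{2} \times R$ from the infinite chain, and $R^{3}$ or $RP^{3} - \{ pt \}$ from the one-cap case --- is the correct dichotomy and tracks the stated conclusion.

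The gaps are all concentrated in your second step, and they are not minor. The assertion that ``the nerve is essentially one-dimensional'' with neck-vertices of degree at most $2$ and cap-vertices of degree at most $1$ is false for an arbitrary cover by $\epsilon$-necks and $\epsilon$-caps: a single neck can overlap a large number of nearly-coincident necks, a neck can overlap another neck in two disjoint annular regions (this genuinely happens when the chain nearly closes up into a short cycle), and nothing in the definitions prevents three caps from occurring simultaneously. To get a graph that is a path or a cycle you must first pass to a carefully chosen finite subcover --- typically a ``minimal'' cover, or one built inductively by extending a chain until it either closes, hits a cap, or escapes to infinity --- and you must separately rule out the bad configurations (e.g.\ a neck meeting the rest of the cover in a disconnected set). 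Morgan--Tian spend considerable effort on exactly this point. Relatedly, your first step overstates what the overlap lemma gives: the intersection of two $\epsilon$-necks is \emph{not} in general an $\epsilon'$-neck (it may be too short, or disconnected); what one actually proves is that wherever two neck structures overlap in a region of definite length, their central $2$-spheres are isotopic through nearly-round spheres, and this suffices to patch the foliations. Finally, your remark that the only holonomy obstruction around a cycle is the $Z/2$ distinguishing the two $S^{2}$-bundles needs an argument, since a priori the accumulated error along a long composition of overlap maps could produce a more exotic gluing; it is here that the uniformity in $\epsilon$ you flag as ``the hard part'' is doing real work and cannot be waved away.

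So: the architecture of your argument matches the reference the paper cites, but the degree bounds in the nerve and the claim that overlaps are again necks are asserted rather than proved, and both require the extra structure (a canonical subcover and a precise version of the overlap lemma) that the cited source supplies. Without those, the combinatorial reduction in step two does not go through.
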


The proof of this theorem is not entirely trivial, but neither is it particularly novel.  However, it is, in some sense, one of the core results of the Hamilton-Perelman program.  It essentially involves a lot of careful analysis and reasoning.  The interested reader is advised to refer to the appendix of F. Morgan and G. Tian's extensive manuscript \cite{[MT]} for the details.

So we have eight possibilities for such manifolds: $S^{3}, RP^{3}, RP^{3} \# RP^{3}, R^{3}, RP^{3} - \{pt\}, S^{2} \times R, S^{2} \times S^{1}$ and the non-orientable $S^{2}$ bundle over $S^{1}$.  It will turn out that in fact the following result is true:

\begin{thm} (Thurston Geometrisation Conjecture).  Any $3$-manifold is diffeomorphic to a connected sum of the above eight so-called "model" geometries. \end{thm}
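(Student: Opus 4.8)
The plan is to run the Ricci flow with surgery, following the Hamilton--Perel'man program sketched in the preceding sections, starting from an arbitrary closed Riemannian $3$-manifold $(M^{3},g_{0})$, and to extract the geometric decomposition from the long-time behaviour of the flow together with the structure of its singularities. First I would invoke the short time existence result for the $3$-dimensional Ricci flow to produce a maximal smooth solution $g(t)$ on an interval $[0,T_{1})$. If $T_{1}=\infty$ we are essentially in the situation already controlled by Hamilton's theorem (and its B\"ohm--Wilking strengthening), so assume $T_{1}<\infty$. The key structural input is then the canonical neighbourhood theorem: combining Perel'man's non-collapsing estimate (stated above) with the Hamilton--Ivey pinching estimate $E_{ij}\geq -\eta R - C_{\eta}$ for the structural tensor, every point of sufficiently large scalar curvature at a time close to $T_{1}$ admits a neighbourhood which, after rescaling, is $\epsilon$-close to an $\epsilon$-neck, an $\epsilon$-cap, a $C$-component, or an $\epsilon$-round component. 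This is exactly the analogue of the classification of canonical singularities for mean curvature flow in the Huisken--Sinestrari section.

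Next I would perform surgery precisely as in that MCF discussion: at the first singular time excise the high-curvature regions along the central two-spheres of the $\epsilon$-necks, glue in standard caps, discard any component entirely covered by canonical neighbourhoods (by the classification of unions of $\epsilon$-caps and $\epsilon$-necks quoted above, each such component is diffeomorphic to $S^{3}$, $RP^{3}$, $RP^{3}\# RP^{3}$, $S^{2}\times S^{1}$, or the twisted $S^{2}$-bundle over $S^{1}$), and then restart the flow on what remains; one then repeats. The quantitative claims to verify here are (i) that surgery can be carried out with all the a priori estimates --- non-collapsing, pinching, curvature bounds --- preserved after a short ``healing'' interval, exploiting the smoothing property of heat-type equations established in the heat flow section, and (ii) that on any finite time interval only finitely many surgeries occur, which should follow from a definite drop in volume (or in the number of components) at each surgery. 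Having thus extended a controlled flow to all of $[0,\infty)$, I would carry out the thick--thin decomposition of $(M,g(t))$ as $t\to\infty$: on the thick part the rescaled metrics subconverge, by the Cheeger--Gromov compactness theorem, to complete finite-volume hyperbolic pieces with incompressible cusp tori, while the thin part is collapsed with bounded curvature and is therefore a graph manifold, hence a union of Seifert-fibred pieces. Splitting along the incompressible tori, reading off the geometry of each piece, and accounting for the $S^{2}\times S^{1}$- and lens-space summands produced by the discarded components, yields the asserted connected-sum statement.

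The hard part will be items (i) and (ii): making the surgery genuinely rigorous requires choosing the surgery parameters --- the curvature threshold, the neck length, the gluing profile --- as functions of the accumulated surgery history so that the canonical neighbourhood and non-collapsing constants do not degenerate under iteration, and ruling out accumulation of surgery times. This is exactly where heuristic or physicist's treatments (including those in my chapter on turbulent geometry) break down; a correct argument needs Perel'man's ``bounded curvature at bounded distance'' estimate and the fine analysis of $\kappa$-solutions, which I have only indicated here by analogy with MCF. I would also need to supply in full the long-time thick--thin analysis not developed in the earlier sections --- in particular the incompressibility of the cusp tori (Hamilton's minimal-surface argument) and the graph-manifold structure of the collapsed part --- both of which I regard as substantial pieces of work in their own right.
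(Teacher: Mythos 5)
Your sketch follows the same Hamilton--Perel'man outline the paper uses, but it supplies the long-time analysis that the paper's own ``proof'' omits entirely. The paper's argument stops after the chain: singularities are canonical neighbourhoods; surgery removes definite volume so occurs finitely often in finite time; Cheeger--Gromov plus non-collapsing gives geometric limits; those limits are gradient solitons, which are unions of $\epsilon$-necks and $\epsilon$-caps. That reasoning correctly handles rescaled singularity models, but it says nothing about components of the manifold that persist as $t\to\infty$, which is where all the interesting geometry lives. Your additions --- the thick--thin decomposition as $t\to\infty$, subconvergence of the thick part to complete finite-volume hyperbolic pieces, the collapsed graph-manifold structure of the thin part, Hamilton's minimal-surface argument for incompressibility of the cusp tori, and the final splitting along those tori --- are precisely what the paper skips and precisely what separates full geometrisation from elliptisation or finite-time extinction. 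The two hard items you flag are real; the paper handwaves the first (surgery-parameter bookkeeping via the analysis of $\kappa$-solutions and bounded-curvature-at-bounded-distance) and never confronts the second at all.

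You should also notice that the theorem as stated in the paper is not Thurston's geometrisation conjecture. The ``eight model geometries'' it refers to --- $S^{3}$, $RP^{3}$, $RP^{3}\# RP^{3}$, $R^{3}$, $RP^{3}-\{pt\}$, $S^{2}\times R$, $S^{2}\times S^{1}$, and the non-orientable $S^{2}$-bundle over $S^{1}$ --- are the manifolds arising as unions of $\epsilon$-caps and $\epsilon$-necks, not the eight Thurston geometries; three of them are non-compact, and the claim that every closed $3$-manifold is a connected sum of these is false (any closed hyperbolic $3$-manifold, or a lens space $L(p,q)$ with $p>2$, is already a counterexample). What your thick--thin analysis actually yields is a decomposition along \emph{tori} as well as spheres, into hyperbolic and Seifert-fibred pieces --- a different, and correct, statement. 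So your closing claim to recover ``the asserted connected-sum statement'' overreaches: your argument proves a stronger and differently-shaped theorem than the one above, and in doing so exposes the fact that the statement you were asked to prove is itself wrong as written.
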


Sketching the proof of this theorem is what I will concern myself for the rest of this section.

\subsection{Connection to the Ricci flow with surgery}

The key to proving the above result is to start off with an arbitrary manifold $(M,g_{0})$, and then flow it by Ricci flow until it looks nicer.  Singularities will develop; these can be controlled and understood in terms of the model geometries.  Precise maps can be made as $\epsilon$-necks and $\epsilon$-caps get very long and thin ($\epsilon$ gets very very small) that parametrise the manifold around these singularities.  Exactly analogous to MCF with surgery, we may then perform surgery on our manifold, and then continue the flow on the new pieces.  This process is to be performed inductively.

Issues (just as with the MCF) are:

\begin{itemize}
\item[(i)] Do things improve after the surgery?
\item[(ii)] What is to stop us from having infinitely many surgeries in a finite amount of "time"?
\item[(iii)] How can we be sure the flow is "smooth enough" after previous surgeries when we wish to perform future surgeries?
\item[(iv)] Will we always get sensible limiting behaviour as a result of this process?
\end{itemize}

The standard way to address (i) is to have a priori estimates and show that one can perform surgery in such a way that these estimates are preserved.  To show that we only have a finite number of surgeries in a finite amount of "time", the idea is, crudely speaking, to observe that the process of surgery can be arranged to remove a fixed amount of volume from the manifold, and, furthermore, that each further surgery in a bounded interval must remove a fixed amount of volume \emph{bounded below} by some constant.  We may hence conclude that an infinite number of surgeries in a finite "time" interval for a manifold starting with finite volume is impossible.  (iii) may be overcome by making careful arguments using the tools of geometric measure theory and the smoothing property of parabolic equations of heat type.  Really the core trouble is with (iv).  In fact, we have the following result, due to Cheeger, Gromov and others, which I mentioned before:

\emph{Result}.  A Geometric Limit of a sequence of manifolds $(M_{n},g_{n}(t), x_{n})$ exists if

\begin{itemize}
\item[(i)] we have uniform non-collapsing at $x_{n}$ in the time zero metric, and
\item[(ii)] for each $A < \infty$ we have uniformly bounded curvature for restriction of the flow to metric balls of radius $A$ about base points.
\end{itemize}

If the above result holds, then we can conclude that the limit of a sequence of manifolds really does look like the solution of its renormalised Ricci flow, and the question of classification of these limits comes down to classifying gradient solitons.  But we have the final result that gradient solitons are unions of canonical neighbourhoods, in particular $\epsilon$-necks and $\epsilon$-caps, and we are done.

So evidently the key is to establish that (i) and (ii) of the above result hold for geometric flows, otherwise the whole process of surgery will not work.  In particular, horrible solutions like Hamilton's "sheet of cigars" might crop up and throw a spanner in the works.  It took Perelman with his notion of Length or "Entropy" to show by contradiction arguments that (i) in fact holds (relation to the Gradient estimate from before).  (ii) is essentially a consequence of the standard a priori estimates due to Hamilton.

Before I go into some more detail about Perelman's  length and so forth, I make the following remark- this whole argument could be simplified considerably if we observe that the Ricci flow is steepest descent for the Fisher Information of a sharp Riemannian $3$ manifold \emph{without masses}, and the corresponding renormalised Ricci flow is steepest descent flow for the Physical Information of the same.  Then we get automatically that the classification of the limits of such flows is the classification of gradient solitons.  But perhaps some subtlety escapes me, most particularly because we are not only flowing manifolds smoothly, we are also performing \emph{surgery}; so I shall continue with sketching the standard treatment.

\subsection{Perelman's Length or "Entropy"}

The $W$-entropy of Perelman is defined as

\begin{center}
$W(g,f,\tau) = \int (\tau(R + \norm{\nabla f}^{2}) + f - n)(4\pi \tau)^{-\frac{n}{2}}e^{-f}d\mu(g)$
\end{center}

\begin{prop} If $\frac{\partial}{\partial t}g_{ij} = -2R_{ij}$, $\frac{\partial \tau}{\partial t} = -1$ (ie $\tau \sim (T - t)$), and $\frac{\partial f}{\partial t} = - \Delta f + \norm{\nabla f}^{2} - R + \frac{n}{2\tau}$, then

\begin{center}
$\frac{\partial}{\partial t}W(g(t),f(t),\tau(t)) = 2\tau \int \norm{R_{ij} + \nabla_{i}\nabla_{j}f - \frac{1}{2\tau}g_{ij}}^{2}(4\pi \tau)^{-\frac{n}{2}}e^{-f}d\mu(g) \geq 0$
\end{center}

Furthermore, equality is acheived if and only if one has a gradient Ricci soliton. \end{prop}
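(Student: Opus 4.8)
The plan is to follow Perelman's original argument by passing to the \emph{conjugate heat equation}. First I would set $u = (4\pi\tau)^{-n/2}e^{-f}$ and check that the prescribed evolution $\frac{\partial f}{\partial t} = -\Delta f + \|\nabla f\|^2 - R + \frac{n}{2\tau}$ together with $\frac{\partial \tau}{\partial t} = -1$ is exactly equivalent to $\Box^{*}u = 0$, where $\Box^{*} := -\frac{\partial}{\partial t} - \Delta + R$ is the conjugate heat operator for the Ricci flow $\frac{\partial}{\partial t}g_{ij} = -2R_{ij}$. The verification is a short computation: $\nabla u = -u\nabla f$ gives $\Delta u = u\|\nabla f\|^2 - u\Delta f$, while the $f$-equation gives $\partial_{t}u = u(\Delta f - \|\nabla f\|^2 + R)$, and the two combine to $\Box^{*}u = 0$. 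Since $\frac{\partial}{\partial t}d\mu(g) = -R\,d\mu(g)$ under Ricci flow, this immediately yields $\frac{d}{dt}\int_{M}u\,d\mu(g) = 0$, so the weighted measure $u\,d\mu(g)$ keeps constant total mass; normalised initially it stays a probability measure.

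Next I would rewrite the entropy integrand. Using $\nabla u = -u\nabla f$ and integrating by parts on the closed manifold, $\int_{M}(2\tau\Delta f)u\,d\mu = \int_{M}2\tau\|\nabla f\|^2 u\,d\mu$, so that
\[
W(g,f,\tau) = \int_{M} v\,d\mu(g), \qquad v := \left[\tau\left(2\Delta f - \|\nabla f\|^2 + R\right) + f - n\right]u .
\]
The whole problem then reduces to a single pointwise identity: $\Box^{*}v = -2\tau\left\|R_{ij} + \nabla_{i}\nabla_{j}f - \frac{1}{2\tau}g_{ij}\right\|^{2}u$. Granting this, I would conclude by computing $\frac{d}{dt}W = \int_{M}\left(\frac{\partial v}{\partial t} - Rv\right)d\mu = \int_{M}\left(\frac{\partial v}{\partial t} + \Delta v - Rv\right)d\mu = -\int_{M}\Box^{*}v\,d\mu = 2\tau\int_{M}\left\|R_{ij} + \nabla_{i}\nabla_{j}f - \frac{1}{2\tau}g_{ij}\right\|^{2}u\,d\mu \geq 0$, where I used $\int_{M}\Delta v\,d\mu = 0$ and pulled the space-independent factor $2\tau$ out of the spatial integral. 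The equality statement is then immediate: the integrand is a nonnegative tensor norm times the strictly positive density $u$, so $\frac{d}{dt}W = 0$ at some time forces $R_{ij} + \nabla_{i}\nabla_{j}f = \frac{1}{2\tau}g_{ij}$ everywhere, which is precisely the shrinking gradient Ricci soliton equation from the earlier section; conversely, on such a soliton the same identity shows $W$ is constant.

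The main obstacle is establishing the pointwise identity for $\Box^{*}v$. This is where the real work lies: one must apply $\Box^{*}$ to each piece, using the Ricci-flow evolution equations for $R$ (namely $\frac{\partial R}{\partial t} = \Delta R + 2\|R_{ij}\|^{2}$), for $\Delta f$ and $\|\nabla f\|^{2}$, the Bochner formula $\Delta\|\nabla f\|^{2} = 2\|\nabla^{2}f\|^{2} + 2\langle\nabla f,\nabla\Delta f\rangle + 2R_{ij}\nabla_{i}f\nabla_{j}f$, the commutator $[\Delta,\nabla]f$, and the contracted second Bianchi identity $\nabla_{i}R_{ij} = \tfrac12\nabla_{j}R$ (all available from the curvature computations earlier in this chapter). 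The standard route is to first prove the $\tau$-independent version $\Box^{*}\big((2\Delta f - \|\nabla f\|^{2} + R)u\big) = -2\|R_{ij}+\nabla_{i}\nabla_{j}f\|^{2}u$, then handle the factor $\tau$ and the linear term $f - n$ separately and assemble the completed square $\|R_{ij}+\nabla_{i}\nabla_{j}f - \frac{1}{2\tau}g_{ij}\|^{2}$; careful bookkeeping of the cross terms $-\frac{1}{\tau}(R+\Delta f)$ and the trace $-\frac{n}{2\tau^{2}}$ is the step most prone to sign errors, so I would organise the whole computation around the tensor $R_{ij}+\nabla_{i}\nabla_{j}f$ from the outset to keep those terms visible.
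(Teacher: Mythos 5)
The paper states this proposition without proof (it appears as a bare Perelman citation in the ``Outline of the Classification Program'' section), so there is no in-paper argument to compare yours against; I will evaluate your outline on its own terms.

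Your skeleton is the standard Perelman argument and the parts you actually carry out are correct. The verification that the prescribed evolution of $f$ is equivalent to $\Box^{*}u = 0$ with $u = (4\pi\tau)^{-n/2}e^{-f}$ is right (the $-1$ from $\partial_{t}\tau$ combines with the $-n/2$ power to cancel the $n/(2\tau)$ term, and $\Delta u = u(\|\nabla f\|^{2} - \Delta f)$ then kills everything else). The integration-by-parts rewriting $W = \int v\,d\mu$ with $v = [\tau(2\Delta f - \|\nabla f\|^{2} + R) + f - n]u$ is valid on a closed manifold because $\tau$ is spatially constant, and the final chain $\frac{d}{dt}W = \int(\partial_{t}v - Rv)\,d\mu = -\int\Box^{*}v\,d\mu$ is correct given $\partial_{t}\,d\mu = -R\,d\mu$ and $\int\Delta v\,d\mu = 0$. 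The equality case is also dispatched cleanly, since $u > 0$ pointwise forces $R_{ij}+\nabla_{i}\nabla_{j}f - \frac{1}{2\tau}g_{ij} \equiv 0$.

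The one genuine gap is exactly the one you flag: the pointwise identity
\begin{equation*}
\Box^{*}v \;=\; -2\tau\left\|R_{ij} + \nabla_{i}\nabla_{j}f - \tfrac{1}{2\tau}g_{ij}\right\|^{2}u
\end{equation*}
is asserted and not proved, and it is not a routine computation one can wave at. Everything else you wrote reduces the proposition to this identity, so without it you have an outline, not a proof. The tools you list (Bochner, $\partial_{t}R = \Delta R + 2\|R_{ij}\|^{2}$, the contracted second Bianchi identity, the commutator of $\Delta$ with $\nabla$) are the right ones, and your instinct to organise the bookkeeping around the tensor $R_{ij}+\nabla_{i}\nabla_{j}f$ from the start is sound, but the cross terms involving $\nabla R\cdot\nabla f$ and the $\tau$-dependent trace terms conspire in a way that is very easy to get wrong. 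Two smaller points worth making explicit if you write this up: the final inequality $\geq 0$ uses $\tau > 0$, which is built into the hypothesis $\tau \sim (T-t)$ for $t < T$ but should be said; and the claim that $\Box^{*}u = 0$ makes $\int u\,d\mu$ constant is used implicitly when you call $u\,d\mu$ a ``probability measure'' --- that normalisation is part of Perelman's definition of $\mu(g,\tau) = \inf W$ and is worth stating since it justifies treating $u$ as a weight.
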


\emph{Remarks}.
\begin{itemize}
\item[(1)] This is precisely the monotonicity formula for the rescaled Ricci flow.  Furthermore, the $W$ entropy is essentially a generalisation to rescaled Ricci flow of the Green's function result

\begin{center}
$R(x_{0},t_{0}) = \int_{M_{t}}(t - t_{0})^{-n/2}exp(-\frac{\norm{x - x_{0}}^{2}}{4(t_{0} - t)})R(x,t)d\mu(g_{t})$
\end{center}
\item[(2)] Note that $\tau(R + \norm{\nabla f}^{2}) + f - n$ is constant on solitons for the above choices for $f$ and $\tau$.
\end{itemize}

Alternatively, we may define the related $\emph{Length}$ of Perelman:

\begin{center}
$L(\gamma) = \int_{0}^{\bar{\tau}}\sqrt{\tau}(R(\gamma(\tau)) + \norm{\gamma'(\tau)}^{2})d\tau$
\end{center}

defined for a "timelike" path $\gamma : I \rightarrow M \times I$, such that $\gamma(\tau) \in M \times \{ t - \tau \}$, and $\gamma(0) = x$.  We say such a path is "parametrised by backwards time".

\begin{rmk} We immediately see a similarity between this quantity and the physical information.  I suppose one could roughly interpret $L(\gamma)$ as "the physical information of a path $\gamma$ through the manifold $M \times I$". \end{rmk}

A whole theory of $L$-geodesics can then be developed.  In particular, we can show using this functional that the Result in the previous section is true, by constructing an a posteriori gradient estimate for Ricci flow with surgery.  In particular, Perel'man used these quantities to deduce his theorem which I mentioned earlier:

\begin{thm} (Perel'man). Let $g(t)$ be a solution of Ricci flow on a compact $M^{n} \times [0,T]$. If $p \in M$, $r > 0$ s.t. $\norm{R} \leq \frac{1}{r^{2}}$ on $B_{r}(p)$ at $t = T$ then $\exists \eta = \eta(n, g(0),T)$ such that $inj(g(T),p) \geq \eta r$. \end{thm}

\begin{proof} (sketch). The key step is to show that $vol(B_{r}(p)) \geq \xi r^{n}$ for some $\xi = \xi(n, g(0), T)$.  We define $\mu(g,t) = inf \{W(g,f,\tau) \vert \int (4\pi \tau)^{-n/2}e^{-f}d\mu = 1 \}$ where $W$ is the $W$-entropy as defined above.  Note that $\mu(g,t)$ will increase strictly with $t$ unless $(M,g)$ is a soliton.

To establish the volume estimate, we construct an argument by contradiction- so assume that $vol(B_{r}(p))$ is very small.  Then we can find $f_{i}$ such that $\int (4 \pi \tau)^{-n/2}e^{-f}d\mu = 1$, but $W(g,f_{i},\tau) \rightarrow - \infty$.  But this gives a contradiction to the above observation, which completes the argument.
\end{proof}


\chapter{Statistical Geometry}

\section{Preliminary Definitions}

\subsection{Motivation}

Semi-Riemannian Geometry has an issue in dealing with particles, since it fails to treat them as objects with some degree of uncertainty as to their position; it treats them as points.  To deal with this issue, I introduce the structure on a differential manifold of a space of metrics, with a weighting function defined at each point signifying precisely how important each metric is there.

We are first and foremost interested in generalising the notion of a particle path.  For instance, consider the following picture:

\scalebox{0.8}{\includegraphics{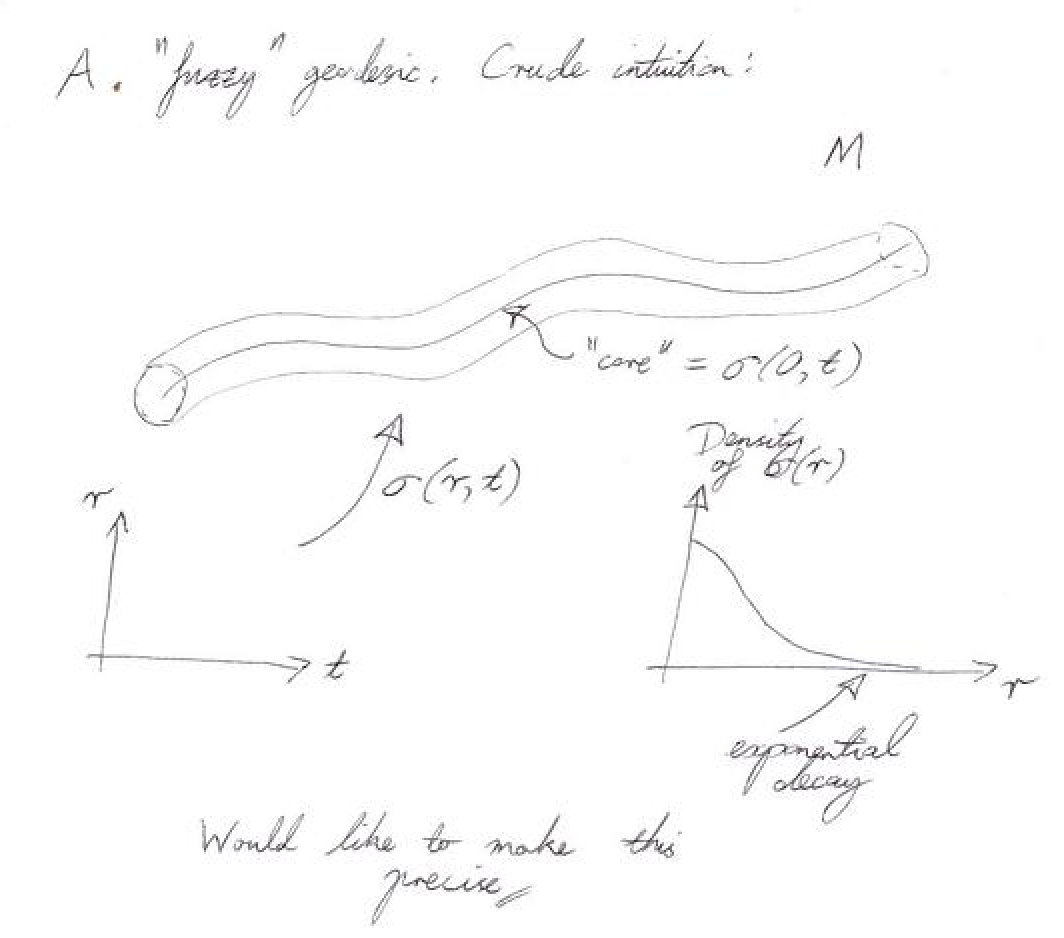}}

We would like somehow to be able to model as particles objects that have measure normalised to one across each spacelike hypersurface of our manifold, with the bulk of the measure concentrated about a world tube.  I will be more specific - we would like the measure to maximise at the centre of this tube, ie about some optimal geodesic path, but also somehow model uncertainty in position by having exponential falloff relative to the core geodesic.

It is necessary to construct some sort of machinery to handle this problem.  In order to do this, we consider the notion of a discrete statistical manifold, which is a discrete space $M$ together with a discrete distribution space $A$ such that at each $m = m_{0} \in M$ there are a finite set $a_{1},...,a_{k}$ such that there are paths from $m$ to $m_{1}(a_{1}),...,m_{1}(a_{k}) \in M$.  From these points there are paths to points $m_{2}(m_{1}(a_{1}),a_{1}), m_{2}(m_{1}(a_{1}),a_{2}),...$ etc.  The signal function now is a function from $M$ to $R$ such that, for fixed $m$, $\int_{A}f(m,a)da = 1$.  It assigns to each path from $m_{0}$ to the $m_{1}(a_{i})$ the weight, or probability, of that path being selected.

Diagrammatically:

\scalebox{0.8}{\includegraphics{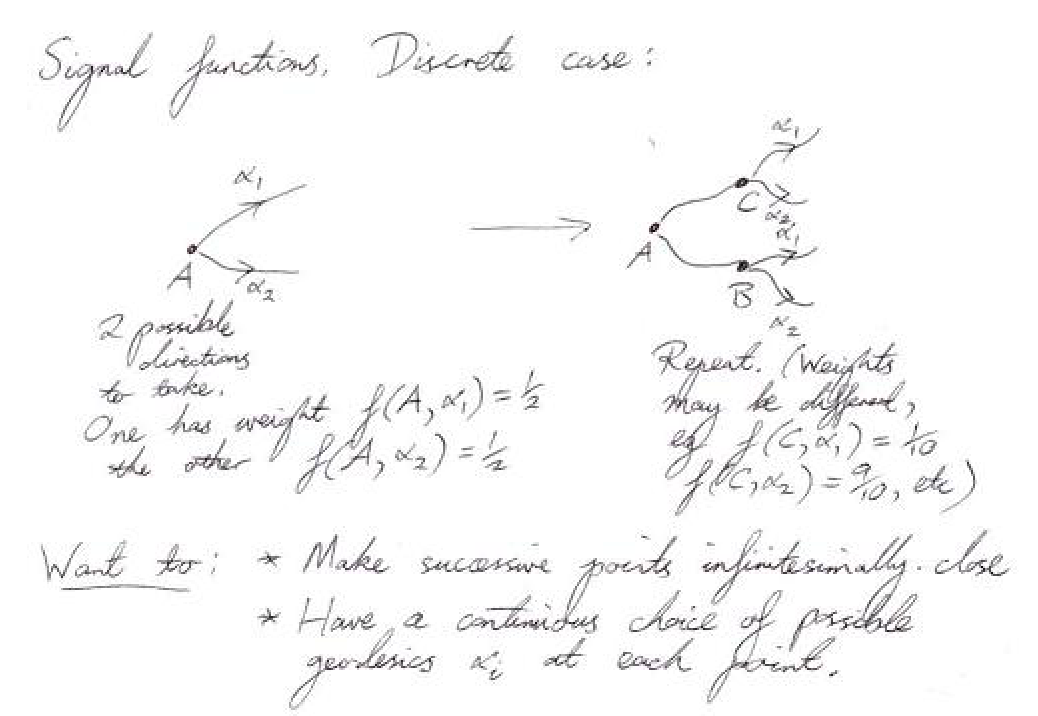}}

The trick will be to find a way to move from discrete spaces $M$ and $A$ to natural smooth manifolds, and reduce the distance between consecutive steps $m_{0}$, $m_{1}$ etc along each path to zero.  This will be the focus of the next subsection.

Also we would like to define $M$ not only to be a metric space, but a measure-metric space, with measure $\psi$.  Conservation of probability demands that the signal function and the measure be related in the following way:

\scalebox{0.8}{\includegraphics{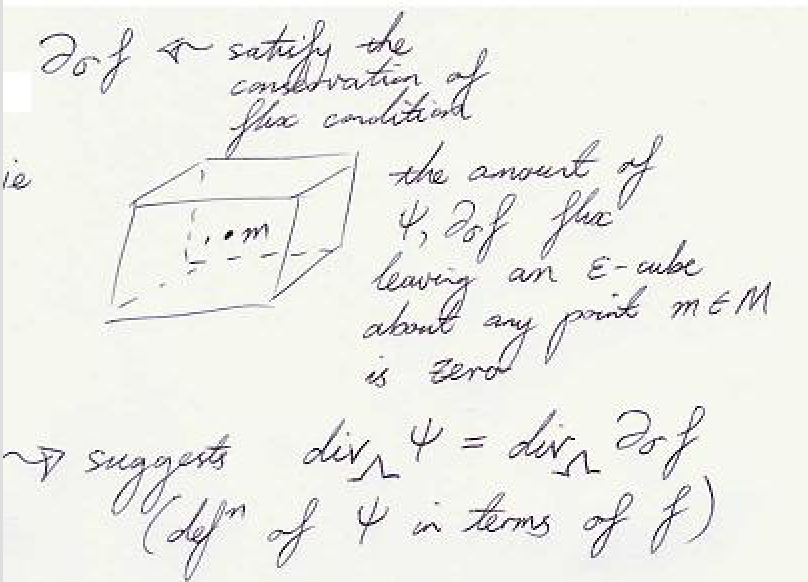}}

Ultimately we will like to be able to represent particles in a sensible way as geometric objects, intuitively as follows:

\scalebox{0.8}{\includegraphics{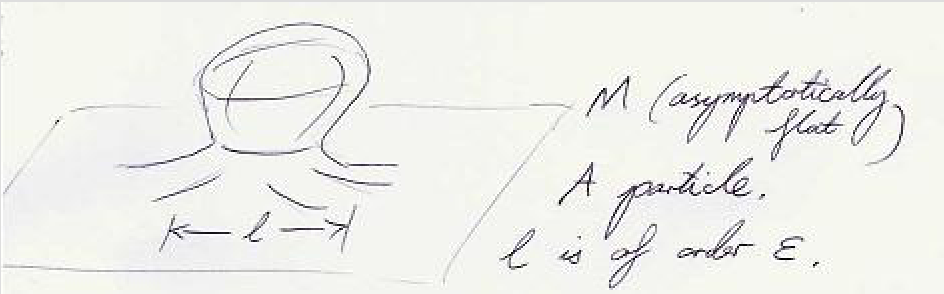}}

I shall now make this all more precise.

\subsection{Introduction}

\begin{dfn}
A fuzzy Riemannian manifold $(M,A,f)$ is a differential manifold $M$ together with a Riemannian manifold $A$, together with a smooth non-negative function $f : M \times A \rightarrow R$.

Each point in $A$ is an $n$ by $n$ matrix corresponding to an inner product on $R^{n}$.  We might make further assumptions on the structure of $A$.  For instance we will assume for now that each inner product is symmetric, and that all points in $A$ have the same index.  Finally we will assume that as we vary $m \in M$ the inner product $g(m,a)$ corresponding to $a \in A$ will vary smoothly; ie $g(.,a)$ will be a metric on $M$.  This correspondence must be \emph{smooth}, and all the metrics must be of constant index throughout $M$. 

Furthermore, the subspace of the space of metrics with fixed index corresponding to the set $A$ must be connected (this is to avoid issues in applications with spacelike directions becoming timelike under different metrics, ie to avoid pathological problems with causality).  $grad_{a}$, $div_{a}$, $curl_{a}$ and $\Delta_{a}$ shall from now on represent the usual differential operators with respect to the metric corresponding to $a$.  Evidently we will need to integrate over $A$ in order to take all of these operators into account when we are using them, and they will be weighted in importance by our distribution function $f$.

$f$ is defined so as to have the following two properties:

\textbf{The Normalisation Condition}
\begin{center}
 $\int_{A}f(m,a)\sqrt{det(h(a))}da = 1$, for all $m \in M$
\end{center}

\textbf{Divergence Free Flow Condition}
\begin{center} $\int_{A \times A \times A}f(m,b)div_{b}(f(m,c)grad_{c}(f(m,a)))\sqrt{det(h(a))det(h(b))det(h(c))}dbdcda = 0$\\ for all $m \in M$
where $h$ is the metric on $A$
\end{center}

However, more generally the second property can be relaxed.  When I go on to define the coupling condition for a fuzzy manifold the significance of this will become more apparent.

This function shall from now on be referred to as the \emph{signal function}.  It is by no means unique- there is nothing to stop one from choosing several different signal functions for the same choice of $M$ and $A$.

The role of the signal function (provided it satisfies the conditions above) is fairly straightforward. $f(m,a)$ quantifies the importance of metric $g(.,a)$ at $m \in M$ by assigning a number to it between $0$ and $1$ at that point.

\end{dfn}

For instance, a standard Riemannian manifold is a fuzzy Riemannian manifold with $A$ being a point space, $A \mapsto g$ where $g$ is a particular metric on $M$, and $f(m,A) = 1$ for all $m \in M$.

We define the differential operators on a fuzzy semi-Riemannian manifold in the following way.

\begin{dfn}

If $K : M \times A \rightarrow \mathcal{R}$ and $\psi : M \times A \rightarrow TM$ are general functions of the given type on the fuzzy semi-Riemannian manifold $\Lambda = (M,A,f)$, then

\begin{center}
$grad_{\Lambda}K(m,a) = \int_{b \in A}f(m,b)grad_{b}K(m,a)db$ \\
$div_{\Lambda}\psi(m,a) = \int_{b \in A}f(m,b)div_{b}\psi(m,a)db$ \\
$curl_{\Lambda}\psi(m,a) = \int_{b \in A}f(m,b)curl_{b}\psi(m,a)db$
\end{center}

\end{dfn}

Note now that the divergence free flow condition for the signal function is equivalent under this new notation to

\begin{center}
$\int_{A}div_{\Lambda}grad_{\Lambda}f(m,a)da = 0$ for all $m \in M$
\end{center}

\subsection{Particles and mass flux}

Clearly we want to be able to define some notion of particle on this structure and map out its behaviour.  We want the notion of particle to somehow be compatible with our signal function, in that we want its motion to also be divergence free (we want stuff to have divergence free flow because if it didn't, that would mean stuff is entering a region and not exiting or emerging from a region without stuff going in in the first place).  We also want, given some specification of values for our mass distribution on the boundary of a compact neighbourhood, a precise idea of how the mass distribution filters through that neighbourhood.  In other words, we want to define some sort of generalisation of geodesic flow/geodesics.

This leads us to define

\begin{dfn} A mass distribution $\psi : M \times A \rightarrow TM$ on a fuzzy riemannian manifold $(M,A,f)$ is a distribution that is \emph{compatible} with the signal function $f$, that is, for all $(m,a,b,c) \in M \times A \times A \times A$:

\begin{equation}
\label{fuzzy1}
f(m,c)div_{b}\psi(m,a) = div_{b}(f(m,c)grad_{c}(f(m,a)))
\end{equation}

This makes sense because we obviously want $\psi$ to also satisfy a more general divergence free condition:

\begin{center}
$\int_{A \times A}f(m,b)div_{b}\psi(m,a)\sqrt{det(h(a))det(h(b))}d(a \otimes b) = 0$ for all $m \in M$
\end{center}

which is notationally equivalent to (ignoring determinant factors) the following conservation equation

\begin{equation}
\label{conservation}
\int_{A}div_{\Lambda}\psi(m,a)da = 0,
\end{equation}
\begin{center}
for all $m \in M$.
\end{center}

In fact, as it will turn out later, this notion of compatibility is precisely what is required for $\psi$ to correspond to a notion of generalised momentum (see the section on interesting examples).

Also, a mass distribution must satisfy the property that

\begin{center}
$\norm{\psi(m,a)}^{2}_{(m,a)} \geq 0$ for all $(m,a) \in M \times A$.
\end{center}

as well as

\begin{center}
$\norm{\psi(m,a)}_{(m,a)} = 0$ iff $\psi(m,a) = 0$
\end{center}

One could think of the first of these as a "reality" or causality condition on $\psi$.  For if $\norm{\psi(m,a)}^{2}_{(m,a)} < 0$ for some $(m,a)$, then $\norm{\psi(m,a)}_{(m,a)}$ is purely imaginary, so cannot be interpreted in any physically meaningful way.  Note that in general this could happen since the space of metrics we are dealing with might have nonzero index.  This will become important later on (rather soon actually) when I define length.  The second condition is a positivity condition.
\end{dfn}

Note the conservation equation is still an extraordinarily strong condition.  Intuitively it amounts to choosing a particular choice for $\psi$ so as to make the solution unique.  Also note that, once again, the notion of compatibility can be generalised.  For instance, the compatibility equation could be written as:

\begin{equation}
\label{coupling}
div_{b}\psi(m,a) = \Delta_{b}f(m,a)
\end{equation}

For the purposes of ease of calculation I will in fact use this equation first and use this as a toy example to demonstrate how one derives the secondary coupling condition as a consequence.   I will then proceed to use the previous, more physically motivated equation and proceed along similar lines to derive its secondary coupling condition.

Let $\mathcal{F}$ be the space of functions $\{ f | f : M \times A \rightarrow TM \}$.

\subsection{Fuzzy geodesics}

Define length as follows:

\begin{dfn}

The length $L : \mathcal{F} \times \{ U | U \subset \subset M \} \rightarrow \mathcal{R}$ is defined by
\begin{center}
$L(\psi,U) = \int_{m \in U}\int_{a \in A}\parallel\psi(m,a)\parallel_{(m,a)}\sqrt{det(g(m,a))det(h(a))}dadm$
\end{center}
where $\parallel.\parallel_{(m,a)}$ denotes the norm with respect to metric $g(.,a)$ at the point $m \in M$.

\end{dfn}

It turns out that, in order for the flow to have locally extremal length, $\psi$ and $f$ must satisfy the additional coupling condition for all $(m,a) \in M \times A$:

\begin{equation}
\label{fuzzy2}
\nabla_{(\psi(m,a) - grad_{b}f(m,a);b)}\frac{\psi(m,a)}{\norm{\psi(m,a)}_{m,a}} = 0
\end{equation}

Here $\nabla_{(.,a)}$ is the Levi-Civita connection with respect to metric $g(.,a)$.

Proof.  Let $L(\phi,U)_{s} = \int_{A}\int_{U}\norm{\phi(m,a,s)}_{(m,a)}dmda$ be a variation of $L(\phi,U)$.

Then

\begin{align}
\frac{\partial}{\partial s}L(\phi,U)_{s}
&= \int_{A}\int_{U}\frac{\partial}{\partial s}\norm{\phi}_{(m,a)}dmda \nonumber \\
&= \int_{A}\int_{U}1/2 \frac{ \frac{\partial}{\partial s}\norm{\phi}^{2} } {\norm{\phi}}dmda \nonumber \\
&= \int_{A \times A}\int_{U} <(\nabla_{\frac{\partial}{\partial s};b)}\phi(m,a),\phi(m,a)>/\norm{\phi(m,a)}dmda \end{align}

The integral is over $A \times A$ now since information transfer throughout $A$ is instantaneous and so we must take into account all the different covariant derivatives for each metric.  Now since $div_{b}\phi(m,a,s) = \Delta_{b} f(m,a)$ we can write $\phi(m,a,s) = grad_{b}f(m,a) + curl_{b}K(m,a,s)$ for some vector field $K$.  Note that $f$ is fixed in the variation; only $K$ is a function of $s$.  Therefore $\frac{\partial}{\partial s}$ is a vector only in the direction of the curl of $\phi$.

So...

\begin{align}
<\nabla_{(\frac{\partial}{\partial s};b)}\phi,\phi/\norm{\phi}> &= <\nabla_{(v;b)}(\phi - grad_{b}f), \phi/\norm{\phi}> \text{for arbitrary } v \nonumber \\
&= <\nabla_{(\phi - grad_{b}f;b)}v, \phi/\norm{\phi}> \text{since the connection is symmetric } \nonumber \\
&= -< v, \nabla_{(\phi - grad_{b}f;b)}\phi/\norm{\phi} > + (\phi - grad_{b}f)< v , \phi/\norm{\phi}> \end{align}

Let us consider the second term.  Set $G = curl_{b}K$.

\begin{align}
\int_{A}\int_{U}\nabla< v, \phi/\norm{\phi} > . (\phi - grad_{b}f) &= \int_{A}\int_{U}\nabla<v , \phi/\norm{\phi}> . G \nonumber \\
&= \int_{A}\int_{U}\nabla . (G< v, \phi/\norm{\phi}>) - \int_{A}\int_{U}< v , \phi/\norm{\phi} >\nabla . G \nonumber \\
& \text{(by a vector identity).} \end{align}

Now the first term vanishes by the divergence theorem and the fact that $v$ vanishes on the boundary.  The second term vanishes since div curl is the zero operator.

Hence

\begin{center}
 $\frac{\partial}{\partial s} L (\phi(m,a,s),U)|_{s = 0} = -\int_{A}\int_{U} < v, \nabla_{(\phi - grad_{b}f;b)}\frac{\phi}{\norm{\phi}} >dmda = 0$
 \end{center}
 
 And since $v$ is arbitrary, by the fundamental theorem of the calculus of variations, $\nabla_{(\phi(m,a) - grad_{b}f(m,a);b)}\frac{\phi(m,a)}{\norm{\phi(m,a)}}_{m,a} = 0$.  Q.E.D.

Now, what about the more physically motivated compatibility equation from before, \begin{latexonly} $\eqref{fuzzy1}$? \end{latexonly}

Well certainly we must have $div_{b}\psi(m,a,s) = \frac{1}{f(m,c)}div_{b}(f(m,c)grad_{c}f(m,a))$ for some variation $\psi(m,a,s)$ of $\psi(m,a)$.  Since the right hand side is not a function of $s$, we must have $\psi(m,a,s) = curl_{b}A(m,a,s) + F(m,a)$ for some good choice of $F$ (Note also that the left hand side is not a function of $c$, so the right hand side must exhibit some form of cancellation).  Such a good choice of $F$ is any solution of the equation $div_{b}F(m,a) = \frac{1}{f(m,c)}div_{b}(f(m,c)grad_{c}f(m,a))$.  Then if we define $\gamma(m,a,b) = curl_{b}A(m,a,0)$, the resulting condition for the length being minimal that results is

\begin{equation}
\label{fuzzy3}
\nabla_{\gamma(m,a,b)}\frac{\psi(m,a)}{\norm{\psi(m,a)}_{(m,a)}} = 0
\end{equation}

as can be easily verified by following the same procedure as for the other example.
\begin{latexonly}
A pair $(f,\psi)$ satisfying  $\eqref{fuzzy1}$ and $\eqref{fuzzy3}$ I shall from now on refer to as a fuzzy geodesic.  Note that $\eqref{fuzzy1}$ and $\eqref{fuzzy3}$ can be interpreted as describing a coupling between the curvature and mass distributions on the manifold $M$.
\end{latexonly}
\section{Existence of solutions}

\subsection{Stability}

It is necessary to be careful, since we are only extremising length, not necessarily minimising it.  For a solution to be minimal, we hence require the additional condition that for a two parameter variation of $\psi$, $\Psi : [-\epsilon,\epsilon] \times [-\delta,\delta] \times M \times A \rightarrow TM$ (where $\Psi(0,0) = \psi$), that

\begin{center}
$\frac{\partial^{2}L}{\partial u \partial v}(\Psi(u,v),U)|_{u = 0, v = 0} \geq 0$.
\end{center}

But even this is beset with problems.  For if any of the second variations are in fact $0$ we cannot conclude anything about stability and must examine higher derivatives.  In particular we would need all the third order variations to be $\geq 0$.  But then if a single third order variation is $0$, we have to look further, etc.  However, since the domains we are dealing with are compact, it is inherently reasonable to expect that we should be able to stop this process after looking at a finite number of derivatives.  So we have:

\begin{conj}  In order to ascertain whether a solution to this process is stable over a compact domain we only need to look at a finite number of derivatives. \end{conj}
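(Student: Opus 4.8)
The statement to prove is the \emph{Conjecture} that, over a compact domain, stability of a fuzzy geodesic can be decided by examining only finitely many derivatives of the length functional. Let me sketch a plan.

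\textbf{Setup and strategy.} Fix a fuzzy geodesic $\psi$ over a compact domain $U \subset\subset M$, and let $\Psi : (-\epsilon,\epsilon)^{k} \times M \times A \to TM$ denote a $k$-parameter variation with $\Psi(0,\dots,0) = \psi$, subject to the divergence-free flow constraint so that each partial variation lies in the ``curl direction'' as in the proof of the coupling condition. The plan is to reduce the problem to a statement about the germ of the real-analytic function
\begin{equation}
\label{ellfunction}
\ell(u_{1},\dots,u_{k}) = L(\Psi(u_{1},\dots,u_{k}),U)
\end{equation}
at the origin. The key observation is that $L(\psi,U) = \int_{U}\int_{A} \norm{\psi}_{(m,a)}\sqrt{\det g \det h}\,da\,dm$, and $\norm{\psi}_{(m,a)} = \sqrt{g(m,a)(\psi,\psi)}$ is, away from the zero set of $\psi$ (which is excluded by the positivity axiom on mass distributions), a real-analytic function of the finite-dimensional data $(\psi, g, h)$, which in turn depends real-analytically on the variation parameters because $g(\cdot,a)$ varies smoothly (we upgrade to analytically by a density/approximation argument, or restrict attention to analytic variations which suffice to detect instability). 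Hence $\ell$ is real-analytic in a neighbourhood of $0$.

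\textbf{Key steps.} First I would establish that $\ell$ in \eqref{ellfunction} is real-analytic near the origin, using compactness of $U$ and of (the relevant connected piece of) $A$ to get uniform convergence of the relevant power series under the integral sign; this is where the compactness hypothesis is essential, since it guarantees the radius of convergence is bounded below uniformly in $(m,a)$. Second, I would invoke the fact that a real-analytic function vanishing to infinite order at a point vanishes identically on a neighbourhood: if \emph{all} partial derivatives of $\ell$ at $0$ up to every order were zero, then $\ell$ would be constant near $0$, so $\psi$ is trivially a (degenerate) local minimum and there is nothing to check. Third, in the contrary case let $N$ be the smallest order at which some partial derivative of $\ell$ at $0$ is nonzero; by the Łojasiewicz-type structure of analytic germs (or simply the Malgrange/Weierstrass preparation theorem applied to the lowest-degree homogeneous part), the sign behaviour of $\ell - \ell(0)$ near $0$ along every ray is governed by the Taylor jet of order $N$ together with finitely many higher corrections needed to resolve the rays on which the order-$N$ form degenerates. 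Fourth, I would argue that this resolution process terminates: each time the leading form vanishes on a subvariety, passing to that subvariety strictly drops the dimension of the ``undecided'' locus, and after at most $\dim$-of-parameter-space-many steps (each requiring boundedly many extra derivative orders) one reaches a stratum on which the sign is determined. Assembling, there is a finite $N' = N'(k, \dim U, \dim A)$ such that the derivatives of $\ell$ at $0$ of order $\leq N'$ decide whether $0$ is a local minimum of $\ell$; taking the supremum of $N'$ over the (compact, hence effectively finite-parameter) family of admissible variation directions gives the claimed uniform bound.

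\textbf{Main obstacle.} The hard part will be the third and fourth steps: controlling, uniformly, how many derivative orders are needed to resolve the rays along which the lowest-order Taylor form of $\ell$ is degenerate. For a single fixed variation this is classical real-analytic geometry (Łojasiewicz inequality, curve selection lemma), but here one needs the bound to be uniform across \emph{all} variations compatible with the divergence-free constraint, and across all points of $U\times A$. I expect this to require a genuine uniformity argument: parametrise the space of admissible variation jets (itself a compact set once one fixes a normalisation, using compactness of $U$ and $A$), show the ``order of decision'' is an upper-semicontinuous function on this jet space, and conclude it attains a finite maximum. A secondary technical nuisance is the analyticity upgrade — if one insists on merely $C^{\infty}$ data, the conjecture as literally stated can fail, so the honest statement one proves is for analytic (or flat-avoiding) fuzzy structures, and I would flag this caveat explicitly rather than claim more than is true.
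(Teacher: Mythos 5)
Your proposal is correct in spirit but takes a genuinely different route from the paper's. The paper's own argument (explicitly labeled ``Idea'') is a heuristic domain-diameter argument: it asserts that if all variational derivatives up to order $n$ vanish then the solution is ``flat to order $n$,'' and that satisfying nontrivial boundary conditions over such a flat solution would force $\mathrm{diam}(U)$ to exceed some strictly increasing threshold $C(n)$; compactness of $U$ then bounds the relevant $n$. No mechanism producing $C(n)$ is given. You instead reduce to the germ of the real-analytic function $\ell(u_1,\dots,u_k) = L(\Psi(u),U)$ at $0$ and invoke standard real-analytic singularity theory (analytic vanishing of infinite order forces local constancy; lowest nonzero jet plus \L{}ojasiewicz-type resolution decides sign behaviour along rays), which is a cleaner and more checkable line of attack. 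Both proposals honestly flag the same caveat — that the argument needs analyticity of the data, which the paper also flags in the accompanying remark — so you are well aligned on what the hard hypothesis is. What your route buys is an actual candidate mechanism for finiteness instead of an unjustified $C(n)$; what it does not buy is the dimensional bound $N' = N'(k,\dim U,\dim A)$ you state at the end of step four. The \L{}ojasiewicz exponent of a real-analytic germ is finite but depends on the germ itself, not merely on ambient dimensions (consider $f(x,y) = (y-x^2)^2 + x^{2k}$ with $k$ large), so the order of derivatives needed can be arbitrarily high as the fuzzy geodesic and signal function vary. For a \emph{fixed} $(M,A,f)$ and a fixed compact $U$ your compactness-of-the-jet-space argument still yields a finite $N$ — which is all the conjecture actually claims — but the claim of dependence on dimensions alone should be dropped.
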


\begin{proof} (Idea).  Intuitively if all the variational derivatives up to a certain order $n$ vanish the solution is flat to a certain extent and in order to vary sufficiently to satisfy the boundary conditions it becomes apparent that the diameter of the domain must exceed some certain parameter $C(n)$.  It is obvious that $C(n)$ must increase strictly with $n$.  Hence there will be some natural number $N$ such that for all $n \geq N$, $C(n)$ is greater than the diameter of the domain.  Hence it is only necessary to look at the variations of degree less than $N$.   \end{proof}

\begin{rmk} In order for the above proof to work it may need to be established that all the induced P.D.E.s from looking at $n^{th}$ order variations have analytic solutions.  This is something that we should expect from this problem, however, since it is motivated by physical considerations. \end{rmk}

\begin{dfn}
A domain $\Omega$ is called \emph{fundamental} if it suffices to look at only the first and second order derivatives to determine stability, i.e. if $C(3) > diam(\Omega)$.
\end{dfn}

Let me make all this more precise.

\begin{dfn}
The diameter of a set $U \subset M$ is defined as

\begin{center}
$diam(U) = sup_{a \in A}(diam_{a}(U))$
\end{center}

where $diam_{a}(U)$ is the standard diameter from semi-Riemannian geometry with respect to the metric indexed by $a$.
\end{dfn}

\begin{thm}  $U \subset M$ is topologically compact iff $diam(U)$ is finite. \end{thm}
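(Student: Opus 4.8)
The plan is to reduce this to the classical Hopf--Rinow-type equivalence by exploiting that the family of metrics indexed by $A$ is, by the standing assumptions on a fuzzy Riemannian manifold, a \emph{connected} family of metrics of fixed index, and that $A$ itself is (diffeomorphic to a subset of) a space of inner products, hence can be taken compact or at least such that its closure is compact in the relevant topology. First I would fix the (forward) direction: if $U \subset M$ is topologically compact, then for each fixed $a \in A$ the restriction of the continuous function $(p,q) \mapsto d_a(p,q)$ to $U \times U$ attains a finite maximum, so $\mathrm{diam}_a(U) < \infty$; the content is to show the supremum over $a \in A$ is also finite. For this I would use that the map $a \mapsto g(\cdot,a)$ is smooth and $A$ (or its closure) is compact, so on the compact set $\overline{U} \times \overline{U} \times \overline{A}$ the geodesic distance is jointly continuous (distances vary continuously under $C^0$-variation of the metric on a fixed compact domain), hence bounded; thus $\mathrm{diam}(U) = \sup_{a} \mathrm{diam}_a(U) < \infty$.

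For the converse — if $\mathrm{diam}(U)$ is finite then $U$ is topologically compact — I would argue as follows. Pick any $a_0 \in A$; since $\mathrm{diam}(U) \geq \mathrm{diam}_{a_0}(U)$ is finite, $U$ is bounded in the metric $d_{a_0}$. Boundedness alone does not give compactness, so the key is to upgrade to \emph{total boundedness plus completeness}, i.e. to invoke Hopf--Rinow for the metric $g(\cdot,a_0)$: if $(M, g(\cdot,a_0))$ is a complete Riemannian (or, in the fixed-index semi-Riemannian case, one restricts attention to the associated Riemannian metric obtained by flipping the sign on the negative-definite subbundle, which is legitimate because the index is constant and the relevant subspace is connected) manifold, then closed and $d_{a_0}$-bounded subsets are compact, so $\overline{U}$ is compact, and then one checks $U$ itself is compact — here one should really take $U$ closed, or state the theorem for closed $U$, since an open bounded ball need not be compact. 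I would therefore read the intended statement as applying to closed $U$, or add the hypothesis that $U$ is closed in $M$.

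The main obstacle is precisely the completeness issue together with the semi-Riemannian/indefinite nature of the metrics: Hopf--Rinow fails for indefinite metrics, and ``$d_a$'' is not literally a distance function when $g(\cdot,a)$ is indefinite. The way I would handle this is to observe that the earlier definition of $\mathrm{diam}_a$ is stated to be ``the standard diameter from semi-Riemannian geometry with respect to the metric indexed by $a$'', which for the purposes of this theorem I would interpret via the canonically associated positive-definite metric $g^+(\cdot,a)$ (flip signs on the timelike subbundle); because the index is constant on $M$ and the timelike subbundle varies smoothly, $g^+(\cdot,a)$ is a genuine Riemannian metric, and finiteness of $\mathrm{diam}_a$ in the semi-Riemannian sense is comparable (up to the fixed signature data) to $d_{g^+(\cdot,a)}$-boundedness. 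Then Hopf--Rinow applies to $g^+(\cdot,a_0)$. One should also record the tacit standing assumption that each $(M,g^+(\cdot,a))$ is complete (or that $M$ is closed, in which case it is automatic); I would state this explicitly as a hypothesis. Modulo that, the forward direction is the easy compactness-of-continuous-image argument, and the converse is ``bounded $+$ complete $\Rightarrow$ compact'' via Hopf--Rinow, with the only real work being the continuity-of-distance-under-metric-variation lemma needed to control $\sup_{a}$ uniformly.
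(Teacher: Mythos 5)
The paper does not actually prove this theorem at all. The remark that immediately follows it reads, verbatim, ``This could be more or less taken as a definition for the topology of $\Lambda = (M,A,f)$.'' So there is no hidden argument to compare your sketch against; the author is stipulating the topology rather than deriving the equivalence from prior structure. Your proposal is therefore answering a harder question than the paper addresses, and the comparison is asymmetric from the start.

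That said, your sketch is doing honest and useful work in locating exactly why the statement, read literally, is not a theorem in the usual sense. You correctly note that an open metric ball in a complete Riemannian manifold has finite diameter but is not compact, so closedness of $U$ must be assumed; that the converse direction needs completeness of $(M, g(\cdot,a))$ for a Hopf--Rinow-type argument; that Hopf--Rinow fails outright for indefinite metrics, forcing a detour through an associated Riemannian metric $g^+$; and that taking $\sup_a \mathrm{diam}_a(U)$ requires some form of compactness or properness of $A$ to be finite even when each $\mathrm{diam}_a(U)$ is. None of these hypotheses are recorded in the paper, and without them the ``iff'' is false. The one thing I would push back on in your write-up is the suggestion that $A$ ``can be taken compact'' --- the paper's definition allows $A$ to be a fairly general connected subspace of a (noncompact) space of inner products of fixed index, so uniform boundedness of $\mathrm{diam}_a(U)$ over $a \in A$ is an additional assumption, not something you get for free. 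The cleanest reading of the paper, consistent with its own remark, is that the author intends this equivalence to \emph{define} compactness (or, better, to define the topology so that it holds), in which case your list of missing hypotheses is precisely the list of consistency conditions the author would need to check to ensure the stipulated topology agrees with the manifold topology on $M$.
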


\begin{rmk}  This could be more or less taken as a definition for the topology of $\Lambda = (M,A,f)$. \end{rmk}

\begin{conj}  Suppose $\lambda = sup_{(m,a) \in M \times A}\int_{A}f(m,b)grad_{b}f(m,a)db$ is finite, and dim(M) = n.  Then, for each $R > 0$, there is a constant $K(\lambda,n,R)$ such that, for any $U$ with $diam(U) < R$, it is sufficient to determine stability to the local length minimisation problem in $U$ by looking at $q$-parameter variations of order up to $q = K(\lambda,n,R)$. \end{conj}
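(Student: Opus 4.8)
The plan is to treat this as a \emph{finite determinacy} statement for the length functional $L(\cdot,U)$ defined above, in a neighbourhood of a fuzzy geodesic $(f,\psi)$ (i.e. a pair satisfying \eqref{fuzzy1} and \eqref{fuzzy3}), and to keep track of how the determinacy order depends on $\lambda$, $n$ and $R$. First I would fix a $q$-parameter variation $\Psi:[-\epsilon,\epsilon]^{q}\times M\times A\to TM$ with $\Psi(0,\dots,0)=\psi$, constrained so that every slice still satisfies the conservation equation \eqref{conservation} and vanishes on $\partial U$; writing $\Psi=\operatorname{grad}_{b}f+\operatorname{curl}_{b}K$ exactly as in the proof of \eqref{fuzzy2}, the variation then lives purely in the curl part $K$, which plays the role of the free field. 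Taylor expanding $L(\Psi(u),U)$ in $u=(u_{1},\dots,u_{q})$ about the origin produces a hierarchy of multilinear forms $Q_{1},Q_{2},\dots,Q_{q}$, where $Q_{j}$ is an integral over $U\times A\times\cdots\times A$ (with $j+1$ copies of $A$, since information transfer across $A$ is instantaneous, as in the proof of \eqref{fuzzy2}) of a universal polynomial in $\nabla^{(\le j)}_{(\cdot,b)}K$, $\nabla^{(\le j)}\psi$, and the data $f$, $\operatorname{grad}_{b}f$. The Euler--Lagrange equation forces $Q_{1}\equiv 0$, and the stability hierarchy is the standard one: $\psi$ is stable in $U$ iff $Q_{2}\ge 0$, and on the null space of $Q_{2}$ one has $Q_{3}\ge 0$, and on the common null space of $Q_{2},Q_{3}$ one has $Q_{4}\ge 0$, and so on.

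The substance of the argument is a \emph{quantitative rigidity} step. I would show that vanishing of $Q_{2},\dots,Q_{j}$ on a field $K$ forces $K$ to solve, to order $j$, the linearised system associated with \eqref{fuzzy3}, and hence — by unique continuation for that system, which after the reparametrisation that makes its principal symbol $g^{rs}\delta_{ij}^{pq}$ (precisely as in the short-time existence discussion for the Ricci flow) is uniformly elliptic in the spatial variable — that $K$ is $L^{2}(U\times A)$-orthogonal to a subspace $V_{j}$ of ``low-frequency'' fields whose dimension $\dim V_{j}$ grows without bound in $j$ at a rate depending only on $n$. By the min--max characterisation of the Dirichlet eigenvalues of the linearised operator on a domain of diameter $<R$, any such $K\perp V_{j}$ that vanishes on the boundary carries Dirichlet energy at least $c(n)\,\mu_{\dim V_{j}}(R)$, where $\mu_{k}(R)\uparrow\infty$ in $k$ is the $k$-th Dirichlet eigenvalue of the model operator on a ball of radius $R$; whereas the lower-order corrections to $Q_{j+1}$ are bounded in absolute value by $C(n)\,(1+\lambda)^{j+1}$ times the \emph{same} energy raised to a strictly smaller power (here the finiteness of $\lambda=\sup\int_{A}f(m,b)\operatorname{grad}_{b}f(m,a)db$ and compactness of $A$ are what bound the coefficients). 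Consequently there is a first integer $K=K(\lambda,n,R)$ for which the positive leading term of $Q_{K+1}$ dominates all lower-order contributions on the relevant orthogonal complement, so $Q_{K+1}\ge 0$ there automatically; stability in $U$ is therefore decided by $Q_{2},\dots,Q_{K}$, which is the claim. To pin the constant down one sets $C(q)$ to be the infimum of $\operatorname{diam}(U)$ over domains admitting a genuinely order-$q$ variation not already detected at lower order; the estimates give $C(q)\ge c(n)\big/\bigl((1+\lambda)\sqrt{\mu_{\dim V_{q}}(1)}\bigr)\to\infty$, and $K(\lambda,n,R)$ is the least $q$ with $C(q)>R$. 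This simultaneously recovers the earlier qualitative conjecture and identifies a \emph{fundamental} domain as one with $R<C(3)$.

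The main obstacle is precisely the quantitative rigidity/unique-continuation input for the system \eqref{fuzzy3}: one must show that vanishing of the first $j$ variations really forces $K$ into an explicitly high-codimension subspace with the stated eigenvalue gap, and one must do this \emph{uniformly} over $M\times A$ using only finiteness of $\lambda$ and compactness of $A$. The $A$-averaging (integration against $f$ over several copies of $A$) degrades ordinary interior elliptic estimates, and the norm $\norm{\psi(m,a)}_{(m,a)}$ can degenerate on the zero set of $\psi$, so the Gårding and Poincaré constants entering the above must be shown not to blow up there. Establishing analyticity of the solutions of the order-$q$ variational PDEs — as already flagged in the remark following the qualitative conjecture — is what would remove the residual worry about the borderline case where some $Q_{j}$ vanishes identically on an infinite-dimensional subspace, and is the ingredient I would try to secure first.
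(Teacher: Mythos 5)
The paper does not actually prove this statement: it is left as a bare conjecture, followed only by three short informal remarks (an intuitive ``finite information in a bounded domain'' heuristic, a note that $K(\lambda,n,R)$ need not be an integer, and an interpretation of $\lambda$ as a curvature bound). The only argument the paper offers in this direction is the one-paragraph ``proof (Idea)'' attached to the \emph{preceding} qualitative conjecture, which simply asserts the existence of a strictly increasing function $C(n)$ with $C(n)\to\infty$. Your proposal is therefore not competing with a proof in the paper; it is a substantial elaboration of that informal idea into a genuine plan, and that is useful. In particular, naming the eigenvalue-gap mechanism that would make $C(n)$ increase, and tying the growth rate to the Dirichlet spectrum of the linearised operator on a domain of diameter $<R$, is exactly the sort of quantitative skeleton the paper's remark (iii) gestures at but never supplies.

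That said, two points deserve flagging. First, the stability hierarchy ``$Q_{2}\ge 0$, and on $\ker Q_{2}$ one has $Q_{3}\ge 0$, and on $\ker Q_{2}\cap\ker Q_{3}$ one has $Q_{4}\ge 0$'' is not quite the right sequence of conditions: $Q_{3}$ is an odd multilinear form on the null space, so $Q_{3}(-v)=-Q_{3}(v)$ and ``$Q_{3}\ge 0$'' forces $Q_{3}\equiv 0$; the correct condition at each odd order is vanishing, with sign control only at even orders. This is a cosmetic fix, but it changes the bookkeeping in the dominance estimate. Second, and more seriously, the central analytic input — that the linearised system associated with \eqref{fuzzy3}, restricted to the constrained class $K$ with $\Psi=\operatorname{grad}_{b}f+\operatorname{curl}_{b}K$, becomes uniformly elliptic after a DeTurck-type reparametrisation — is asserted by analogy with the Ricci flow discussion elsewhere in the paper, but the analogy is not established here. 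The admissible variations are curl fields, i.e. the constrained directions form an infinite-codimension subspace on which the second variation of a length-type functional typically has a large degenerate block (the reparametrisation/gauge directions), and the $A$-averaging against $f$ can wash out the principal part entirely on portions of $A$ where $f$ is small. Without a concrete G\aa rding inequality for the averaged operator on the constrained space, the min--max eigenvalue bound $\mu_{\dim V_{j}}(R)$ has no operator to be the spectrum of, and the dominance argument has nothing to dominate. You correctly flag the unique-continuation issue as the main obstacle; I would add that establishing uniform ellipticity (or at least a uniform spectral gap after quotienting by gauge) on the curl-constrained subspace is logically prior to it, and is the step I would expect to break first.
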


\emph{Remarks}.

\begin{itemize}
\item[(i)] This is in accordance with intuition, because if it was necessary to look at an infinite number of variations within a compact domain, somehow our solution would possess an infinite amount of information in a bounded domain, which, needless to say, is unphysical.
\item[(ii)] Note that $K(\lambda,n,R)$ need not be an integer.  In fact, in analogy with fractional differentiation as defined before, it makes perfect sense to be able to make variations of noninteger order.  Note that this level of sophistication is probably unnecessary, since physical functions tend to be $C^{\infty}$ anyway.
\item[(iii)] The global bound on the variability of $f$ is essentially a global bound on the curvature of $\Lambda = (M,A,f)$.  In other words, I am roughly saying, "provided $\Lambda$ does not curve too much, we can put a bound on the number of variations we need to make".
\end{itemize}

\subsection{Characterisation of the solution space}

There will be different locally minimal solutions for a given set $U$ with signal function $f$.  These solutions for $\psi$ will be discrete in $\mathcal{F}$ since $U$ is compact.  We could loosely think of these in analogy with eigenfunctions.  It is possible that there may be families of these solutions; if so, we could loosely think of the parameters indexing the families as eigenvalues.

We have the following conjecture, which is in two parts:
\begin{latexonly}
\begin{conj} Given a set $U$, complete knowledge of $f$ and knowledge of the boundary conditions $\psi|_{\partial U}$, $\psi$ is uniquely determined in $U$ by $\eqref{fuzzy1}$ and $\eqref{fuzzy2}$.  Furthermore, the solution $\psi$ can be written in terms of the minimal solutions for the length function $L$ on $U$. \end{conj}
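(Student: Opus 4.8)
The plan is to decouple the two conditions by means of a Helmholtz/Hodge-type decomposition on each metric slice, and then to read the remaining equation as a boundary value problem of geodesic (characteristic) type. First I would fix $m\in U$ and regard \eqref{fuzzy1} as prescribing $\mathrm{div}_b\psi(m,a)$ for every $b\in A$: since $f(m,c)>0$ wherever the signal function is supported, the equation reads $\mathrm{div}_b\psi(m,a)=\frac{1}{f(m,c)}\mathrm{div}_b\!\big(f(m,c)\,\mathrm{grad}_c f(m,a)\big)$, and consistency forces the right-hand side to be independent of $c$; I would record this as the compatibility constraint on admissible $f$ (it follows from the normalisation and divergence-free flow conditions). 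Writing $\psi(m,a)=\mathrm{grad}_b F(m,a)+\mathrm{curl}_b K(m,a)$ relative to $g(\cdot,b)$, the prescribed divergence fixes the exact part $F$ up to a harmonic field pinned down by $\psi|_{\partial U}$, and leaves the coclosed part $\mathrm{curl}_b K$ free. So the entire content of \eqref{fuzzy1} is the reduction of the unknown from an arbitrary vector field to a divergence-free field with prescribed normal trace on $\partial U$; the same bookkeeping applies verbatim if one uses the physically motivated form and \eqref{fuzzy3} in place of \eqref{fuzzy2}.

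Next I would substitute this decomposition into \eqref{fuzzy2}. Exactly as in the derivation of \eqref{fuzzy2} from the first variation of $L$, the field $\psi-\mathrm{grad}_b f$ occupying the connection slot is, modulo the already-determined piece coming from $F$, the free coclosed field; hence \eqref{fuzzy2} becomes a first-order quasilinear equation for the unit direction $\psi/\|\psi\|$ transported along the integral curves of that coclosed field. Along each such curve the equation is an ODE with initial data supplied by $\psi|_{\partial U}$, so by standard ODE uniqueness it has at most one solution, and a solution exists provided the integral curves foliate $U$ and all reach the boundary. Here I would invoke compactness of $U$ (so $\mathrm{diam}(U)<\infty$ by the earlier characterisation) together with the global curvature bound $\lambda<\infty$ from the stability conjecture to guarantee completeness of the transport curves and to preclude focal/conjugate obstructions inside $U$. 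This is the step I expect to be the main obstacle: it is genuinely a global existence-and-uniqueness statement for a nonlinear transport problem, and the "no conjugate points in $U$" input — which is really where the hypothesis that $U$ be fundamental, or at least of controlled diameter, does its work — is the crux.

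For the second assertion, that $\psi$ is expressible through the locally minimal solutions of $L(\cdot,U)$, I would argue as for an eigenfunction expansion. The locally minimal solutions are the critical points of $L(\cdot,U)$ under homogeneous variations; by the stability discussion only finitely many variational orders are relevant on a fundamental domain, so the critical set is discrete, at worst organised into finite-dimensional families indexed by the "eigenvalue" parameters mentioned after the conjecture. I would then show that the second variation of $L$ at a minimiser is a self-adjoint operator on the space $\mathcal{F}$ of admissible mass distributions carrying the given boundary trace, deduce completeness of its eigenfields, and conclude that any solution of \eqref{fuzzy1}--\eqref{fuzzy2} with the prescribed boundary data lies in the affine span of the minimal solutions, the coefficients being uniquely fixed by matching on $\partial U$ (which closes the loop with the uniqueness part). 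The self-adjointness computation and the completeness argument are routine; the delicate point is again the discreteness of the solution set, which rests on compactness of $U$ and on analyticity of the solutions of the auxiliary PDEs flagged in the earlier remark.
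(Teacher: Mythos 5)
The statement you have set out to prove is labelled a \emph{conjecture} in the paper and is supplied with no proof whatsoever; indeed the author immediately follows it with the caveat that ``it is possible that both parts of this conjecture are incorrect,'' and then explicitly identifies the obstruction: with fixed boundary data, equations \eqref{fuzzy1} and \eqref{fuzzy2} alone will no longer suffice, and one would need to derive \emph{additional} relations linking $\psi$, the boundary trace $g$, and $f$. So there is no ``paper's proof'' to compare against, and the burden is entirely on your argument to stand alone.

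Reviewing it on those terms, it does not close. Your Hodge-decomposition step is sound in spirit: \eqref{fuzzy1} is a divergence constraint, and it reduces the unknown to a coclosed field with prescribed normal trace. But the reduction of \eqref{fuzzy2} to a well-posed first-order transport problem is precisely the point at issue, and you concede it yourself when you call the foliation/global-existence step ``the main obstacle.'' Conceding it is not the same as resolving it. Two specific problems. First, the conjecture as stated does not assume $U$ is fundamental, nor does it impose the global bound $\lambda<\infty$ from the later stability conjecture; you are quietly importing hypotheses that would turn the conjecture into a different, weaker claim. Second, even granting those hypotheses, you have not shown that the integral curves of the coclosed direction field actually foliate $U$ and each meet $\partial U$; on a generic compact domain a divergence-free field can have closed orbits or limit cycles in the interior, in which case the transport ODE has no initial data to propagate from, and the boundary trace of $\psi$ underdetermines the solution. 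This is exactly the failure mode the paper's author anticipates. Your second paragraph has the same character: discreteness of the critical set and completeness of the second-variation eigenfields are asserted, not established, and ``routine'' is doing a great deal of work on an unbounded, non-elliptic operator over an infinite-dimensional $\mathcal{F}$. In short, the skeleton of the argument is a reasonable plan of attack, but every load-bearing step is left open, and the one the author singled out as likely fatal remains untouched.
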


Though it is possible that both parts of this conjecture are incorrect, and we really should be looking at the following problem:

\emph{Problem}. Given a set $U$ and complete knowledge of $f$ in $U$, compute all possible solutions $\psi$ can take in $U$ such that the length function $L(\psi,U)$ with fixed boundary conditions $\psi|_{\partial U}(m,a) = g(m,a)$ is locally minimised in $\mathcal{F}$.

The influence of boundary conditions mean that $\eqref{fuzzy1}$ and $\eqref{fuzzy2}$ will no longer suffice to determine the correct solutions inside $U$ for $\psi$, and we will have to derive new relations that $\psi$, $g$ and $f$ must jointly satisfy.
\end{latexonly}
If solutions are nonunique, I posit the further

\begin{conj} Suppose we have the conditions of the conjecture immediately above.  Suppose furthermore that we know $\psi$ precisely in some neighbourhood of $U$ in $M - U^{int}$.  Then $\psi$ is determined precisely inside $U$. \end{conj}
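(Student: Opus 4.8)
The plan is to treat the final statement as a \emph{Cauchy} problem for the coupled first-order system \eqref{fuzzy1}--\eqref{fuzzy2} (or \eqref{fuzzy1}--\eqref{fuzzy3}), rather than the boundary-value problem of the preceding conjecture, and to exploit the extra hypothesis --- that $\psi$ is known on an open collar $V$ straddling $\partial U$ --- as full Cauchy data along $\partial U$. Since $\psi$ is smooth and known outright on $V$, we know on $\partial U$ not just $\psi$ but all of its derivatives; this is exactly the additional information that distinguishes a Cauchy problem from the Dirichlet problem (where only $\psi|_{\partial U}$ is given) and that is generically needed to pin down a solution uniquely.

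First I would reduce the unknowns. Equation \eqref{fuzzy1}, equivalently \eqref{coupling}, fixes $\operatorname{div}_{b}\psi(m,a)$ as a known function of $f$ alone, so writing $\psi(m,a) = \operatorname{grad}_{b}F(m,a) + \operatorname{curl}_{b}K(m,a)$ the Poisson part is determined up to harmonic pieces and the genuine freedom sits in the solenoidal field $\operatorname{curl}_{b}K$ --- which on $V$ is known. Next I would read \eqref{fuzzy2} as a transport law: it says that $\psi(m,a)/\|\psi(m,a)\|_{m,a}$ is parallel (with respect to $\nabla_{(\cdot\,;b)}$) along the integral curves of the vector field $W_{b} := \psi(m,a) - \operatorname{grad}_{b}f(m,a)$, and \eqref{fuzzy3} says the same along $\gamma(m,a,b)$. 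Combining this with the divergence constraint carried along those same curves gives, for each fixed $a$ and each metric $b$, a genuinely nonlinear ODE system of schematic form $\dot{\gamma} = W_{b}(\gamma)$, $\nabla_{(\dot{\gamma};b)}(\psi/\|\psi\|) = 0$, $\tfrac{d}{dt}(\text{flux of }\psi) = (\operatorname{div}_{b}\psi)(\gamma)$, whose right-hand side is known once $f$ is. Starting each characteristic at a point of $V$, where the whole state is prescribed, I would flow into $U^{int}$ and reconstruct $\psi$ pointwise, then average over $b \in A$ as in the definitions of $\operatorname{grad}_{\Lambda}$, $\operatorname{div}_{\Lambda}$ to recover the fuzzy fields. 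A cleaner alternative, available if one grants analyticity of $f$ and of the collar data --- which the paper invokes elsewhere on physical grounds --- is to quote Cauchy--Kovalevskaya (or a Holmgren-type uniqueness theorem) for this first-order system on a neighbourhood of the non-characteristic parts of $\partial U$, and then continue the local uniqueness across $\bar{U}$ by a connectedness argument using compactness of $\bar{U}$.

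The hard part will be the geometric control of the characteristics. Two things can go wrong: $\partial U$ may be characteristic for $W_{b}$ at some points (where $W_{b}$ is tangent to $\partial U$), so the Cauchy data does not propagate into $U$ along the characteristic issuing from such a point; and, because the system is genuinely nonlinear, the characteristics depend on the solution one is solving for, so they may fail to sweep out all of $U$, may cross, or may run into zeros of $\psi$, where $\psi/\|\psi\|$ is undefined. I would handle this in a manner parallel to the stability conjecture above: assuming the global bound $\lambda = \sup_{(m,a)\in M\times A}\int_{A}f(m,b)\operatorname{grad}_{b}f(m,a)\,db < \infty$, the field $W_{b}$ cannot bend too wildly, so for $U$ of diameter below some $R(\lambda,n)$ a Gr\"onwall/continuation estimate keeps the characteristic foliation regular and shows it covers $U$ from $V$. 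The zero locus of $\psi$ should, by the strict positivity condition in the definition of a mass distribution, be nongeneric (codimension at least one), so one first establishes determination off that set and then extends by continuity. For domains of diameter exceeding $R(\lambda,n)$ one iterates: chain together finitely many fundamental subdomains (finitely many by compactness of $\bar{U}$) whose collars are determined in succession, each feeding Cauchy data to the next. Assembling the non-characteristic reduction on the collar, the Gr\"onwall control of the nonlinear characteristic flow under the curvature bound, the removal of the zero set, and the chaining argument yields that $\psi$ is forced throughout $U$, which is the assertion.
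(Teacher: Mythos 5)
Note first that the paper offers no proof of this statement: it is labelled and left as a conjecture, with the text immediately after it reading ``If this conjecture is correct, \dots''. So there is no argument of the author's to compare yours against; the assessment can only be of your plan on its own terms. Your central observation --- that knowing $\psi$ on an open neighbourhood of $U$ in $M - U^{int}$ upgrades Dirichlet data to full Cauchy data along $\partial U$, which is exactly the extra information a first-order quasilinear system generically needs for uniqueness --- is the right reading of the extra hypothesis, and the recasting of \eqref{fuzzy2} as a transport law for $\psi/\Vert\psi\Vert$ along $W_{b} = \psi - grad_{b}f$ is faithful to the equation.

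The gap is in the globalization, and it is a real one. Cauchy--Kovalevskaya or Holmgren give uniqueness only in a thin lens near the non-characteristic points of $\partial U$; to sweep this into $U$ you lean on the characteristic flow of $W_{b}$, but $W_{b}$ depends on the unknown $\psi$, and even with perfect regularity the integral curves issuing from $\partial U$ need not foliate $U$: they can close up, focus, or leave pockets untouched, in which case the collar data says nothing about the field there (the model to keep in mind is a recirculating eddy in steady incompressible flow, whose vorticity on the closed streamlines is undetermined by data on the outer boundary). The cure you invoke --- the curvature bound $\lambda = \sup_{(m,a)}\int_{A}f(m,b)\,grad_{b}f(m,a)\,db < \infty$ --- is itself taken from a separate \emph{unproved} conjecture of the paper, so at best this yields a conditional result; and a Gr\"onwall estimate built from that bound controls how much $W_{b}$ bends, not the topology of its orbits, so it does not by itself exclude closed characteristics inside $U$. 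Finally, \eqref{fuzzy2} degenerates on the zero set of $\psi$, so the ``codimension at least one, then extend by continuity'' step needs a quantitative non-degeneracy estimate near that set which you have not supplied. Each of these corresponds to a concrete mechanism by which the conjecture could actually be false, so they are not details to be tidied later but the substance of the problem.
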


If this conjecture is correct, this leads one on to being able to define the relative probability of a solution occuring inside $U$ in terms of random noise outside.

\begin{dfn}
The "probability that a solution $S$ will occur" is interpreted as the relative influence of random noise outside a countably infinite number of copies of $(U,f)$ in determining it.  Furthermore, if we have two solutions, $A$ and $B$, with $length(A) < length(B)$, we should reasonably expect $probability(A) > probability(B)$.  How one would go about computing these probabilities as a function of $U$, $f$ and state $S$ might be as follows.

Define $N_{U}$ to be the space of solutions for $\psi$ in $U$.

Define $L(\psi|_{U}, K)_{avg}$ to be the average of the length induced from all possible extensions of $f$ to $K \subset M$ from $U \subset K$ (since we are dealing with copies of $U$ only, so $f$ may change outside $U$ and hence be effectively random), given a particular solution $\psi|_{U}$.  (Note that if the above conjecture is true then $\psi$ will in fact extend uniquely to all of $K$ once we know $f$.) Then define the normalised length of solution $\psi|_{U}$ as

\begin{center}
 $\hat{L}(\psi|_{U}) = lim_{\alpha \rightarrow \infty}L(\psi|_{U},K_{\alpha})_{avg}/\sum_{S \in N_{U}}L(S,K_{\alpha})_{avg}$ 
\end{center}

 where $K_{\alpha}$ are a series of expanding sets that envelop all of $M$ in the limit (we are of course making the assumption that $M$ is paracompact).
 
We now want to define a notion of probability such that if $\hat{L}(S_{1}) = n\hat{L}(S_{2})$, then $P(S_{2}) = P(S_{1})/n$.  Certainly $1/\hat{L}(S_{2}) = n/\hat{L}(S_{1})$, so define the probability of solution $\psi|_{U}$ occuring as being proportional to $1/\hat{L}(\psi|_{U})$.  In particular,

\begin{center}
$P(\psi|_{U}) = \frac{1}{\hat{L}(\psi|_{U})}/\sum_{S \in N_{U}}\frac{1}{\hat{L}(S)}$
\end{center}
\end{dfn}

\begin{conj}  This corresponds to the real probability that a solution will occur. \end{conj}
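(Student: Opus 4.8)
The plan is to read ``the real probability that a solution will occur'' operationally, as a limiting frequency over an ensemble of copies of $(U,f)$, and then to check that this frequency agrees with the quantity $P(\psi|_U)$ just defined. First I would fix an exhaustion $\{K_\alpha\}$ of $M$ by compact sets with $U \subset\subset K_\alpha$, and on the space $\mathcal{E}_\alpha$ of admissible extensions of the pair $(f,\psi|_{\partial U})$ to $K_\alpha$ put a probability measure $\mu_\alpha$ --- intuitively, the ``random noise'' ensemble on the signal function outside $U$. Granting the preceding uniqueness conjecture, each extension $e \in \mathcal{E}_\alpha$ determines a unique solution on $K_\alpha$, and hence by restriction a well-defined element $S(e)$ of the solution set $N_U$, which is discrete since $U$ is compact. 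Pushing $\mu_\alpha$ forward along $e \mapsto S(e)$ yields a distribution $\nu_\alpha$ on $N_U$, and the ``real probability'' of $S$ is $\nu(S) := \lim_{\alpha \to \infty}\nu_\alpha(S)$; one must of course check this limit exists and is independent of the exhaustion.

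The second step is to relate $\nu_\alpha(S)$ to the averaged length $L(S,K_\alpha)_{\mathrm{avg}}$. The guiding heuristic is a Boltzmann-type weighting: an outside configuration of noise drives the interior toward $S$ the more readily, the shorter the total length that $S$ imposes on its extensions. Making this precise amounts to showing that $\mu_\alpha(\{e : S(e)=S\})$ is, up to an $\alpha$-independent normalisation, inversely proportional to $L(S,K_\alpha)_{\mathrm{avg}}$; here the global curvature bound $\lambda < \infty$ from the stability conjecture would be invoked to guarantee convergence of the relevant integrals over $\mathcal{E}_\alpha$ and stabilisation of the proportionality constant. Normalising by $\sum_{S' \in N_U} 1/L(S',K_\alpha)_{\mathrm{avg}}$ and letting $\alpha \to \infty$ then gives
\[
\nu(S) \;=\; \frac{1/\hat{L}(S)}{\sum_{S' \in N_U} 1/\hat{L}(S')} \;=\; P(S),
\]
which is the assertion.

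The main obstacle is the very first step: there is no canonical probability measure on the infinite-dimensional space $\mathcal{E}_\alpha$, and $\nu$ will in general depend on this choice. I expect the honest statement requires pinning down the ensemble by an independent principle --- namely taking $\mu_\alpha$ to be the Gibbs measure $d\mu_\alpha \propto e^{-L(\cdot,K_\alpha)}\,\mathcal{D}e$ for the length functional --- at which point the claimed inverse proportionality becomes a Laplace/saddle-point estimate concentrating on the locally minimal solutions. Constructing such a Gibbs measure (a problem of the same flavour as constructive quantum field theory) together with the necessary concentration bounds is the genuinely hard analytic content; by contrast, the reduction of the conjecture to it and the closing normalisation algebra are routine.
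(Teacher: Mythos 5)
The paper offers no proof of this conjecture; it is stated as a raw assertion immediately following a definition that is itself imposed by fiat (``We now want to define a notion of probability such that if $\hat{L}(S_{1}) = n\hat{L}(S_{2})$, then $P(S_{2}) = P(S_{1})/n$''), so there is no hidden argument here against which to compare your route. Your decision to operationalize ``the real probability'' as a limiting frequency under a pushforward of an ensemble on extensions is the right way to turn the informal statement into something one could in principle prove or refute, and your observation that the answer is underdetermined without an independent choice of measure on $\mathcal{E}_\alpha$ is the genuine crux.

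However, your closing step contains a real gap. You propose taking $\mu_\alpha \propto e^{-L(\cdot, K_\alpha)}\,\mathcal{D}e$ and assert that a Laplace / saddle-point estimate then yields the inverse-proportional law $\nu(S) \propto 1/\hat{L}(S)$. It does not. A Gibbs ensemble with $L$ as the Hamiltonian gives weights that decay \emph{exponentially} in length: if $S_0$ and $S_1$ have (representative) lengths $L_0 < L_1$, the ratio of pushforward masses behaves like $e^{-(L_1 - L_0)}$, not $L_0 / L_1$. More fundamentally, $\nu_\alpha(S) = Z^{-1}\int_{\{e\,:\,S(e)=S\}} e^{-L(e,K_\alpha)}\,\mathcal{D}e$ is an integral of an exponential, whereas $1/L(S,K_\alpha)_{\mathrm{avg}}$ is the reciprocal of an average; these functionals of the restricted ensemble have no general inverse relationship, and simple toy computations (e.g.\ $L$ uniform on an interval) show the equality fails even after normalising. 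So as written, your reduction would prove an exponential (Boltzmann) law, not the power law the conjecture requires. To recover $P \propto 1/\hat{L}$ you would need either a non-Gibbsian ensemble tuned to produce reciprocal weights, or a special structure on the extension space making $\int e^{-L}$ coincide asymptotically with $1/\langle L\rangle$; neither is supplied, and the first of these is precisely the kind of unmotivated choice your own framework was designed to avoid. The framework is sound; the specific closing identification is not.
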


In order to understand the types of solutions that may occur, we have the following conjecture:

\begin{conj}  Stable solutions (ie first variation of length is zero, second variation of length is positive) are characterised by geometrical symmetries.  In other words, there is a group of symmetries acting on the space that preserves the solution. \end{conj}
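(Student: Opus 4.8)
The plan is to analyse the second variation (index form) of the length functional $L(\psi,U)$ at a stable solution and to extract from its null space, together with the finiteness assumptions made above, a Lie algebra of infinitesimal symmetries of $\Lambda = (M,A,f)$ which integrates to the group action asserted in the statement. First I would fix a fundamental domain $\Omega$ and a stable solution $\psi$, and compute the second variation of $L$ by differentiating the first-variation formula a second time. Since $\psi$ is divergence-constrained, admissible variations have the form $\psi + s\,\mathrm{curl}_{b}K$, so the index form $Q(K,K) = \partial^{2}_{s}L(\psi+s\,\mathrm{curl}_{b}K,\Omega)|_{s=0}$ is a quadratic form on a space of vector potentials $K$. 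The coupling equations $\eqref{fuzzy1}$ and $\eqref{fuzzy3}$ are, after the reparametrisation absorbing the $A$-integration, weakly elliptic in $K$; combined with the stability hypothesis $Q \ge 0$ and the conjectured bound $K(\lambda,n,R)$ on the order of variations one must examine, the elliptic theory of the chapter on PDE, together with the compactness machinery of the geometric measure theory chapter, should show that the null space $\mathcal{N}$ of $Q$ is finite dimensional and that the local moduli space of stable solutions of fixed length is a smooth finite-dimensional manifold with tangent space $\mathcal{N}$ at $\psi$.

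Next I would show that $L$ is locally constant on this moduli space: along any path of solutions the first variation vanishes identically, so the length cannot change — exactly as in the smooth-torus example of the first chapter, where every waist circle is a length-minimiser. The key geometric step is then to promote a null direction $K_{0}\in\mathcal{N}$ to a genuine transformation of the whole structure: one integrates the flow on $M\times A$ generated by the field determined by $K_{0}$, built using the exponential maps $\exp_{a}$ that appeared in the proof of the statistical Stokes theorem, and checks — using the normalisation and divergence-free conditions on $f$ together with the conservation equation $\eqref{conservation}$ — that this flow preserves $f$, preserves the index and connectedness of the metric family $A$, and carries mass distributions satisfying $\eqref{fuzzy1}$ and $\eqref{fuzzy3}$ to mass distributions of the same type and equal length. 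The one-parameter groups so produced generate the symmetry group of the solution. Conversely, given such a group, averaging an arbitrary competitor over the group and exploiting convexity of $\|\cdot\|$ along the $A$-integration yields a fixed, symmetric stable solution, which would give the characterisation in both directions.

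The hard part will be the integrability step: showing that an infinitesimal null deformation $K_{0}$ actually exponentiates to a symmetry rather than merely to a formal deformation. For ordinary geodesics a Jacobi field need not arise from an ambient Killing field, so the statement as phrased is almost certainly too strong without an extra hypothesis, and the substance of the theorem is really about \emph{which} hypothesis makes the integration go through. I expect one must either (i) restrict to the sharp (Cartan--Riemannian) case, where $f$ is a delta function and $L$ reduces to a genuine Riemannian volume-type functional, so that a Bochner--Weitzenb\"ock identity forces the null deformation to be Killing; or (ii) impose analyticity of the solutions of the variational PDEs, as flagged in the remark following the first stability conjecture, and run a Cartan--K\"ahler / Frobenius argument to close up the infinitesimal action. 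Isolating the correct hypothesis under which (i) or (ii) succeeds, and verifying that the resulting group genuinely acts on the solution space compatibly with all the structure of $\Lambda = (M,A,f)$, is where essentially all the difficulty lies; the finite-dimensionality and the local constancy of length are comparatively routine once the elliptic setup is in place.
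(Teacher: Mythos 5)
The paper offers no proof of this statement at all: it is a bare conjecture in \S 6.2.2, immediately followed by a second, stronger conjecture about a $1$--$1$ correspondence between symmetry groups of $(M,A,f)$ and stable solutions, with no argument supplied for either. So there is no "paper's approach" to compare against; you are attempting something the author explicitly did not attempt, and your self-diagnosis that the integrability step is the real obstruction is the right place to locate the difficulty.

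That said, there are two further problems worth flagging before investing in the Bochner or Cartan--K\"ahler route. First, if the second variation $Q$ is \emph{strictly} positive (which is the generic reading of "stable"), then your null space $\mathcal{N}$ is trivial and the construction produces only the identity group, so the conjecture as interpreted becomes vacuous; the conjecture is presumably meant to say the solution is preserved by some nontrivial symmetry group already acting on $(M,A,f)$, which is a different and stronger claim than "the kernel of the Hessian integrates." Second, your converse direction leans on convexity of $\norm{\cdot}_{(m,a)}$ to average a competitor over the group, but the paper works throughout with metric families of fixed nonzero index (the Lorentzian case is the one of physical interest), and for an indefinite form the map $v \mapsto g(v,v)^{1/2}$ on the timelike cone is \emph{not} convex; nor is it clear that a group average of $\psi$ satisfies the nonlinear coupling condition \eqref{fuzzy3}, since that equation is not linear in $\psi$. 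Both directions of the claimed characterisation therefore need additional hypotheses beyond what the chapter provides, which is consistent with the author having left this as a conjecture.
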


Which leads us to

\begin{conj}  There is a 1-1 correspondence between symmetry groups acting on $(M,A,f)$ and stable solutions for $\psi$ in $(M,A,f)$. \end{conj}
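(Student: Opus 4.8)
The plan is to derive the claimed bijection from the penultimate conjecture of this section (stable solutions are characterised by geometrical symmetries) together with a symmetric–criticality argument, and then to promote ``each stable solution has a symmetry group'' to ``the assignment is one-to-one'' by organising the solution space by orbit type. Throughout, let $\mathcal{G} = \mathcal{G}(M,A,f)$ denote the group of pairs $(\varphi,\Phi)$, with $\varphi$ a diffeomorphism of $M$ and $\Phi$ a compatible isometry of $A$, such that $f(\varphi(m),\Phi(a)) = f(m,a)$ and $g(\varphi(m),\Phi(a)) = \varphi_{*}g(m,a)$; this is the natural ``symmetry group of $(M,A,f)$'' the statement refers to, and it acts on the space $\mathcal{F}$ of mass distributions and on the length functional $L(\cdot,U)$, preserving both. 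The two maps to be compared are $G \mapsto \psi_{G}$ (a distinguished stable solution built from $G$) and $\psi \mapsto G_{\psi}$ (the stabiliser of $\psi$ in $\mathcal{G}$), and the goal is to show they are mutually inverse once one restricts to the symmetry groups that actually occur.

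\textbf{From groups to solutions.} Given a closed $G \leq \mathcal{G}$, let $\mathcal{F}^{G} \subset \mathcal{F}$ be the $G$-invariant mass distributions compatible with $f$. First I would verify $\mathcal{F}^{G} \neq \emptyset$ and that $L(\cdot,U)$ restricted to $\mathcal{F}^{G}$ attains a minimum on a fundamental domain $U$ (using the diameter/compactness conjecture above to reduce to finitely many variational conditions, and the expected analyticity of the induced PDE system from the accompanying remark). By Palais' principle of symmetric criticality — applicable because $G$ acts by isometries of the relevant weak Riemannian structure on $\mathcal{F}$ — a critical point $\psi_{G}$ of $L|_{\mathcal{F}^{G}}$ is critical for $L$ on all of $\mathcal{F}$, hence solves the fuzzy-geodesic system \eqref{fuzzy1}, \eqref{fuzzy3}. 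For stability, note $D^{2}L(\psi_{G})$ is a $\mathcal{G}$-equivariant self-adjoint operator, so each eigenspace is a subrepresentation; enlarging $G$ to the full stabiliser $G_{\psi_{G}}$, the only invariant variations are those tangent to the $\mathcal{G}$-orbit, along which $L$ is constant, and a negative eigenvalue would produce a non-invariant destabilising variation whose orbit sweeps out a lower-length competitor, contradicting minimality within the invariant sector.

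\textbf{From solutions to groups.} To a stable $\psi$ assign $G_{\psi} = \{(\varphi,\Phi) \in \mathcal{G} : (\varphi,\Phi)\cdot\psi = \psi\}$; that this is non-trivial for a \emph{stable} solution is precisely the penultimate conjecture, which I would invoke. One then checks $G_{\psi_{G}} \supseteq G$, with equality when $G$ is maximal among occurring stabilisers, and $\psi_{G_{\psi}} = \psi$ because $\psi$ is the unique length-minimiser in its own invariant sector under its own boundary data (here one uses the uniqueness-under-boundary-data conjecture to pin the minimiser down). Restricting both assignments to maximal symmetry groups then yields mutually inverse maps, i.e. the asserted $1$–$1$ correspondence; the ``eigenvalue''-indexed families of solutions correspond to connected components (or conjugacy classes) within a fixed orbit type, which is accommodated by stating the bijection at the level of orbit-type strata rather than individual points.

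\textbf{Main obstacle.} The hard part is not the forward construction — symmetric criticality is robust — but \emph{injectivity of the stabiliser map on stable solutions}: ruling out geometrically distinct stable $\psi_{1} \neq \psi_{2}$ with $G_{\psi_{1}} = G_{\psi_{2}}$ requires uniqueness of the length-minimiser within a prescribed invariant sector and prescribed boundary data, which is itself only conjectural here and fails without a convexity hypothesis on $L$. Equally delicate is showing stability is \emph{equivalent} to, not merely implied by, possessing a symmetry — excluding both accidental unstable symmetric solutions and hypothetical stable solutions with trivial symmetry group. The expected route to the latter is a Morse-theoretic index argument: the Morse index of $L$ at $\psi$ should be computed by the non-invariant part of $D^{2}L(\psi)$ and vanish exactly when $G_{\psi}$ exhausts the transverse directions. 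Until the underlying uniqueness and regularity conjectures for the fuzzy-geodesic system are established, the correspondence can only be proved modulo them.
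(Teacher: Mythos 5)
The paper offers no proof of this statement: it is presented purely as a conjecture, immediately after the preceding conjecture that stable solutions are characterised by geometrical symmetries, and the author offers no argument for either. There is therefore no ``paper's own proof'' to compare against; what you have written is a fresh attempt, and it should be judged as such.

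Your overall roadmap --- symmetric criticality for the $G \mapsto \psi_{G}$ direction, the stabiliser map $\psi \mapsto G_{\psi}$ for the reverse, and a Morse-index argument to close the loop --- is a serious and sensible way to attack the problem, and you are right to flag at the end that it is conditional on the uniqueness-under-boundary-data and finite-order-stability conjectures stated earlier in the section. But there are further concrete gaps that you do not make fully explicit. First, Palais' principle of symmetric criticality is not free: it requires either that $G$ be compact or that $G$ act by isometries of a complete Riemannian (or at least Finsler) structure on $\mathcal{F}$ with respect to which $L$ is $C^{1}$. Here $\mathcal{G}$ is built out of $\mathrm{Diff}(M)$ and is typically far from compact, $L$ is defined via a possibly indefinite family of metrics $g(\cdot,a)$ (the paper allows index $\geq 1$, hence $\norm{\psi}_{(m,a)}$ can degenerate or fail to be real), and no weak Riemannian structure on $\mathcal{F}$ is constructed in the paper at all; so the hypothesis you lean on is not verified, and in the Lorentzian case it is not even clear it can be. Second, the map $G \mapsto \psi_{G}$ is not injective as defined: if $G_{1} \subsetneq G_{2}$ but the length minimiser in $\mathcal{F}^{G_{1}}$ happens to be $G_{2}$-invariant (which is generic, since extra symmetry tends to be energetically favourable), then $\psi_{G_{1}} = \psi_{G_{2}}$. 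Your fix --- restrict to ``maximal occurring stabilisers'' --- silently replaces the conjecture's bijection between \emph{all} symmetry groups and stable solutions with a bijection between a distinguished subcollection and stable solutions; that is a different, weaker statement, and one should say so. Third, the final step --- that the Morse index of $L$ at $\psi$ is computed by the non-$G_{\psi}$-invariant part of $D^{2}L(\psi)$ and vanishes precisely when $G_{\psi}$ ``exhausts the transverse directions'' --- is precisely the equivalence (stability $\Leftrightarrow$ symmetry) that the preceding conjecture asserts, so invoking it here is circular unless you can supply an independent argument. In short: this is a plausible programme, not a proof, and the honest conclusion is the one you already reach in your last sentence; make the Palais hypotheses, the non-injectivity of $G \mapsto \psi_{G}$, and the circularity of the index step equally explicit and the write-up would accurately reflect what is and is not established.
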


Other examples of the variation problem might be in modeling the excitation of an object by an incoming packet of energy, or modeling the possible states leading up to a particular distribution of mass/energy.  This motivates the more general idea of considering the mixed problem where part of the boundary conditions are fixed and the other values for $\psi$ on the boundary are allowed to vary, then trying to find stable minimal solutions for the length over the compact domain.

Yet another generalisation that can be made to this variational problem is by first associating to each $a \in A$ a $U(a) \subset M$ such that the correspondence $a \mapsto U(a)$ is smooth.  Denote this particular distribution of sets by $\mathcal{D}(A)$.  Then define the generalised length to be

\begin{center}
$L(\psi,\mathcal{D}(A)) = \int_{a \in A}\int_{m \in U(a)}\parallel\psi(m,a)\parallel_{(m,a)}\sqrt{det(g(m,a))det(h(a))}dmda$
\end{center}

Then solve the same problems as before while extremising this notion of length.

\section{Two classes of Signal Functions and related results}

\subsection{"Sharp" signal functions}

There are several interesting choices we can make for the signal function $f$.  For instance, if the physical behaviour of the manifold is sharp, we have $f(m,a) = \delta(\sigma(m) - a)$ for some cross section $\sigma : M \rightarrow A$.  In other words we have one, and only one choice of inner product defined on $T_{m}M$ for each $m \in M$.  Of course we then have $\psi(m,a) = \overline{\psi}(m,a)\delta(\sigma(m) - a)$ for some function $\overline{\psi}$.
\begin{latexonly}
Now, if we consider the compatibility condition $\eqref{fuzzy1}$ and substitute the above functional forms for $\psi$ and $f$, we get nothing other than
\end{latexonly}
\begin{center}
$div_{\sigma(m)}\overline{\psi}(m,\sigma(m)) = 0$,
\end{center}

precisely what we would want if $\overline{\psi}$ was to correspond to momentum!

Now, if we solve for $F$ in $\delta(\sigma(m) - c)div_{b}F(m,a) = div_{b}(\delta(\sigma(m) - c)grad_{c}\delta(\sigma(m) - a))$, we get that $F = 0$ is a solution.  So we get finally that

\begin{center}
$\nabla_{(\overline{\psi}(m,\sigma(m));\sigma(m))}\frac{\overline{\psi}(m,\sigma(m))}{\norm{\overline{\psi}(m,\sigma(m))}} = 0$
\end{center}

as our secondary equation, which is precisely the standard geodesic equation.  This is, once again, in agreement with classical physics- we have correspondence between the particle paths and the geodesics of the manifold.

One might now ask what sort of equations $T = \psi \otimes \psi$ might satisfy in this case.  Here of course we are thinking of $T$ as the stress energy tensor, inspired by [MTW].  Well certainly $div_{\sigma(m)}T = 0$ follows easily from $div_{\sigma(m)}\psi = 0$.  What about a secondary equation for $T$?  For this we need additional information about what it means to be a physical manifold- see the next section.

\subsection{Almost sharp signal functions}

Another interesting case is when the space of metrics is nontrivial but, in some sense, very narrow at each point in $M$.  To model this, first identify $T_{m}M$ with $M$, assuming, of course, that $M$ is complete.

Then consider

\begin{center}
$f(m,a) = \frac{exp_{m}(-\vert(\sigma_{ij}(m) - a_{ij})m_{i}m_{j}\vert/h)}{\int_{a' \in A}exp_{m}(-\vert(\sigma_{ij}(m)-a'_{ij})m_{i}m_{j}\vert/h)da'}$
\end{center}

So once again we have a choice of metric $\sigma(m)$ defined effectively at each point $m \in M$, except this time we have a distribution of approximate distance $h$ about it in the space of metrics, loosely speaking.

Since we are dealing with a narrow distribution, any ambiguity with our choice of compatibility condition is accurate up to order $(h/(\text{min eigenvalue of }  \sigma(m)))^{dim M}$ which is very small anyway, provided $h << 1$.

Interesting subcases are when $\sigma_{ij}$ is diagonal with signature $(1,1,1,-c^{2})$ or diagonal with signature $(1,1,1,1)$.

To solve for $\psi$ in this case we might just blindly hack away and try to solve, or we might try

$\psi(m,a) = \overline{\psi}(m)exp_{m}(-\vert(\sigma_{ij}(m)-a_{ij})m_{i}m_{j}\vert/h)$ for some choice of $\overline{\psi}$.

We might want to look at the case where the length scale, $L$, varies with position.  A natural choice is to modify the usual choice, $h$, by an invariant depending on the inner product at that point.  So it is sensible to look at a length scale $L(m) = h\sqrt{det(\sigma(m))}$, and look at signal functions $f$ proportional to $exp_{m}(-\frac{\vert(\sigma_{ij}(m) - a_{ij})m_{i}m_{j}\vert}{L(m)})$.

A very important signal function to look at is $f(m,a) = (1 + \epsilon \Delta_{\sigma})\delta(\sigma(m) - a)$, for $\epsilon << 1$, as shall become apparent later.  I shall now proceed to derive the conservation and geodesic equations for this particular object.

In particular, the conservation equation will be

\begin{center}
$(1 + \epsilon \Delta_{\sigma})div_{\sigma}(1 + \epsilon \Delta_{\sigma})\bar{\psi} = 0$
\end{center}

which reduces to

\begin{equation}
\label{1storderconservation}
(1 + 2\epsilon \Delta_{\sigma})div_{\sigma}\bar{\psi} = 0
\end{equation}

The geodesic equation will be

\begin{center}
$(1 + \epsilon \Delta_{\sigma})\nabla_{((1 + \epsilon \Delta_{\sigma})\bar{\psi},\sigma)}\frac{(1 + \epsilon \Delta_{\sigma})\bar{\psi}}{\norm{(1 + \epsilon \Delta_{\sigma})\bar{\psi}}} = 0$
\end{center}

After discarding terms of order $\epsilon^{2}$ or higher, this reduces to

\begin{center}
$\nabla_{(\bar{\psi},\sigma)}\frac{\bar{\psi}}{\norm{\bar{\psi}}} + \epsilon\{ \Delta_{\sigma}\nabla_{(\bar{\psi},\sigma)}\frac{\bar{\psi}}{\norm{\bar{\psi}}} + \nabla_{(\Delta_{\sigma}\bar{\psi},\sigma)}\frac{\bar{\psi}}{\norm{\bar{\psi}}} + \nabla_{(\bar{\psi},\sigma)}(\frac{\Delta_{\sigma}\bar{\psi}}{\norm{\bar{\psi}}} - \frac{\bar{\psi}\Delta_{\sigma}\norm{\bar{\psi}}}{\norm{\bar{\psi}}^{2}})\} = 0$
\end{center}

But this expression may be simplified, for we observe that the last four terms can be written as

\begin{center}
$2\Delta_{\sigma}\nabla_{(\bar{\psi},\sigma)}\frac{\bar{\psi}}{\norm{\bar{\psi}}}$
\end{center}

Hence, we have the expression for our geodesic equation for this choice of signal function:

\begin{equation}
\label{1stordergeodesic}
(1 + 2\epsilon \Delta_{\sigma})\nabla_{(\bar{\psi},\sigma)}\frac{\bar{\psi}}{\norm{\bar{\psi}}} = 0
\end{equation}

\subsection{Example of a Fuzzy Geodesic}
\begin{latexonly}
In this section I examine solutions of $\eqref{1stordergeodesic}$ subject to $\eqref{1storderconservation}$.
\end{latexonly}
Let $\sigma$ be the metric of standard Euclidean space.  Then I make the following
\begin{latexonly}
\emph{Claim}.  The solution to $\eqref{1stordergeodesic}$ for $\sigma = Id$ is
\end{latexonly}
\begin{center}
$\frac{\psi}{\norm{\psi}} = Aexp(i<k,x>\epsilon^{-1/2}) + \alpha$
\end{center}

where by abuse of notation I am embedding $R^{n}$ in $\mathcal{C}^{n}$.

\emph{Proof}.

\begin{center}
$(1 + 2\epsilon \Delta)\psi(\frac{\psi}{\norm{\psi}}) = 0$
\end{center}

is equivalent to 

\begin{center}
$(1 + 2\epsilon \Delta)\frac{d}{dt}\{\frac{\psi}{\norm{\psi}}(x + t \psi)\}|_{t = 0}$
\end{center}

which is the same as

\begin{center}
$(1 + 2\epsilon \Delta)\frac{\partial}{\partial x^{j}}(\hat{\psi}^{i}) \hat{\psi}^{j} \norm{\psi} = 0$
\end{center}

where $\hat{\psi} = \frac{\psi}{\norm{\psi}}$.

Rewriting we get

\begin{center}
$(1 + 2\epsilon \Delta)(\frac{\partial}{\partial x^{j}}(\hat{\psi}^{i}\hat{\psi}^{j}) - \hat{\psi}^{i}div \hat{\psi}) = 0$
\end{center}
\begin{latexonly}
Using $\eqref{1storderconservation}$ we get
\end{latexonly}
\begin{center}
$(1 + 2\epsilon \Delta)\frac{\partial}{\partial x^{j}}(\hat{\psi}^{i}\hat{\psi}^{j})= 0$
\end{center}

So we are interested in solving an equation of the form

\begin{center} $(1 + 2 \epsilon \Delta) \nabla f = 0$ \end{center}

\emph{Note}: If $\epsilon = 0$, $f$ is a constant, which is synonymous with the classical case of geodesics in flat space just being straight lines, and hence the flow being constant.

So $(1 + 2 \epsilon \Delta)g_{i} = 0$.  If we try $g_{i} = Ae^{k \cdot x}$, $\Delta g_{i} = A\norm{k}^{2}e^{k \cdot x}$ and hence $A(1 + 2\epsilon \norm{k}^{2})e^{k \cdot x} = 0$.  Hence $\norm{k} = \frac{i}{2\sqrt{\epsilon}}$.

So

\begin{center} $g_{j} = e^{i\frac{u_{j} \cdot x}{2\sqrt{\epsilon}}}$, \end{center}

where $\norm{u} = 1$.

Hence $(\nabla f)^{\mu}_{\alpha} = A_{\alpha} exp(i\frac{u^{\mu}_{nu}x_{nu}}{2\sqrt{\epsilon}})$, and therefore $f^{kl} = \sqrt{\epsilon} \hat{A}^{kl} exp(i\frac{u^{kl}_{p}x_{p}}{2\sqrt{\epsilon}}) + a$

But using symmetry $\hat{\psi}^{k}\hat{\psi}^{l} = f^{kl} = \sqrt{\epsilon}\hat{B}^{k}\hat{B}^{l} exp(i(\tau^{k} + \tau^{l})\cdot x\frac{1}{\sqrt{\epsilon}}) + \alpha^{k}\alpha^{l}$

Hence taking square roots we get that $\hat{\psi}^{k} = \sqrt{\epsilon}C^{k} exp(\frac{i\lambda^{k} \cdot x }{\sqrt{\epsilon}}) + \gamma^{k}$ for some new constants $C$, $\lambda$, and $\gamma$.

\begin{rmk} For the above solution, the overall direction of flux is in direction $\gamma^{k}$, since the oscilliatory term changes phase so rapidly with small epsilon that all contributions over an integrated region will cancel each other out.  However, we do expect to have some correction term of order $\epsilon$ or higher, which will be the focus of the next investigatory exercise. \end{rmk}

\begin{prop}  For the above solution to the fuzzy geodesic equation, consider a rectangular region $R$ with coordinates $\hat{\gamma}$,$\hat{\gamma}^{\perp}$.  Suppose further that $R$ is uniformly not small (with dimensions of order $\epsilon^{n}$, for $n < 1/2$).  Then if the volume of the region is $V$, the amount of flux through the region, $\Phi$, is

\begin{center} $\Phi(R) = V\norm{\gamma} + \sqrt{\epsilon}E(\epsilon)$ \end{center}

where the error $E(\epsilon)$ is bounded in magnitude by

\begin{center} $K\norm{\int_{N}C \cdot n exp(\frac{i\lambda \cdot x}{\sqrt{\epsilon}})}$ \end{center}

where $n$ is the unit normal to the boundary of $V$, $N$ is an $\epsilon^{n}$ neighbourhood of $\partial R$ for some $n > \frac{1}{2}$ and $K$ is some positive constant depending on $R$. \end{prop}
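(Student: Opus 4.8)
The plan is to substitute the explicit solution $\hat{\psi}^{k}(x) = \sqrt{\epsilon}\,C^{k}\exp(i\lambda^{k}\cdot x/\sqrt{\epsilon}) + \gamma^{k}$ (with $C,\lambda,\gamma$ constant, as just derived) directly into the flux and to split the resulting integral into a constant piece and a rapidly oscillating piece. Reading $\Phi(R) = \norm{\int_{R}\hat{\psi}(x)\,dm}$ as the total flux carried through the rectangular region $R$, one has $\int_{R}\hat{\psi}\,dm = \gamma\,V + \sqrt{\epsilon}\,C\int_{R}\exp(i\lambda\cdot x/\sqrt{\epsilon})\,dm$, where $V = \mathrm{vol}(R)$. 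The content of the proposition is then (i) that the constant piece contributes $V\norm{\gamma}$ up to an error absorbed into $E(\epsilon)$, and (ii) that the oscillatory piece, although naively of size $\sqrt{\epsilon}\,\norm{C}\,V$, is in fact concentrated in a thin layer near $\partial R$ and is therefore bounded by the stated boundary-layer integral over $N$.

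For (i), write $w := \sqrt{\epsilon}\,C\int_{R}\exp(i\lambda\cdot x/\sqrt{\epsilon})\,dm$; the triangle inequality gives $\norm{\gamma V + w} = V\norm{\gamma} + \theta$ with $|\theta|\le\norm{w}$, so the whole problem reduces to estimating $\norm{w}$. This step is routine and I would not dwell on it; the hypothesis that $R$ is ``uniformly not small'' is exactly what makes the $V\norm{\gamma}$ term the genuine leading order.

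For (ii) I would use the standard non-stationary phase device: write $\exp(i\lambda\cdot x/\sqrt{\epsilon}) = \mathrm{div}_{x}\!\bigl(\tfrac{\sqrt{\epsilon}}{i\norm{\lambda}^{2}}\,\lambda\,\exp(i\lambda\cdot x/\sqrt{\epsilon})\bigr)$ and apply the divergence theorem. Because $C$ and $\lambda$ are constant there is no interior remainder, and one obtains exactly $w = \tfrac{\epsilon}{i\norm{\lambda}^{2}}\,C\int_{\partial R}(\lambda\cdot n)\exp(i\lambda\cdot x/\sqrt{\epsilon})\,dA$. The phase has wavelength of order $\sqrt{\epsilon}$, so when this surface integral over $\partial R$ is fattened into a volume integral over the $\epsilon^{n}$-neighbourhood $N$ of $\partial R$ (with $n>\tfrac12$, so $N$ is thin on the scale of the oscillation) the difference is of strictly lower order, and one arrives at $\norm{w} \le K\,\sqrt{\epsilon}\,\norm{\int_{N}C\cdot n\,\exp(i\lambda\cdot x/\sqrt{\epsilon})\,dm}$ with $K$ depending only on $\norm{\lambda}$ and the shape of $R$ — which is the claimed bound for $\sqrt{\epsilon}E(\epsilon)$. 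If instead $\Phi(R)$ is interpreted as the flux through one face of $R$, the same computation applies with $\int_{\partial R}$ replaced by the integral over that face, and the approximate conservation law $(1+2\epsilon\Delta)\mathrm{div}\,\hat{\psi}=0$ satisfied by the solution ensures the two opposite faces balance up to an error of the same boundary-layer type.

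The main obstacle I expect is making the passage from the exact boundary integral to the fattened integral over $N$ honest with precisely the exponent $n>\tfrac12$, and in particular handling the corners and edges of the \emph{rectangular} region, where $\partial R$ is only piecewise smooth and the outward normal $n$ jumps; near those lower-dimensional strata one must either integrate by parts in a direction adapted to each face separately, or bound the corner contributions crudely and check that they are already of the order claimed for $E(\epsilon)$. A secondary point, only relevant if one wants a fully rigorous statement rather than the leading-order one, is iterating the integration-by-parts step to push the remainder below any prescribed power of $\epsilon$ — but since $C$ and $\lambda$ are constant the first step is already exact, so this complication arises only under the more physical compatibility condition, where $C$ would be replaced by a slowly varying amplitude.
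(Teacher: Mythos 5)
The paper gives no proof of this proposition — the statement is followed immediately by a remark, with no proof environment — so there is no argument to compare against; what follows is a standalone assessment of your proposal.

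Your overall strategy is the natural one and is carried out correctly up to the exact divergence-theorem identity for $w$: reading $\Phi(R)$ as $\norm{\int_{R}\hat{\psi}\,dm}$, peeling off the constant $\gamma$ piece with the triangle inequality, and moving the oscillatory piece to $\partial R$ via non-stationary phase. The gap is the very last step, where you ``fatten'' $\int_{\partial R}$ to $\int_{N}$ and assert that this produces the $K\sqrt{\epsilon}$ prefactor. It does not. Since $N$ has thickness $\epsilon^{n}$ with $n > \tfrac{1}{2}$, the phase $\lambda\cdot x/\sqrt{\epsilon}$ changes only by $O(\epsilon^{n - 1/2}) \to 0$ across the normal direction of $N$, so the shell integral satisfies $\int_{N} \approx \epsilon^{n}\int_{\partial R}$: fattening introduces a factor $\epsilon^{n}$, not $\sqrt{\epsilon}$. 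Combining this with your divergence-theorem identity yields $\norm{w} \approx \frac{\epsilon^{1-n}}{\norm{\lambda}^{2}}\norm{\int_{N}C\cdot n\,\exp(i\lambda\cdot x/\sqrt{\epsilon})\,dm}$, and the prefactor $\epsilon^{1-n}$ is \emph{strictly larger} than $\sqrt{\epsilon}$ precisely when $n > \tfrac{1}{2}$, i.e.\ throughout the regime the proposition assumes. A one-dimensional check makes this unambiguous: for $R = [0,L]$ both $w$ and $\int_{N}$ carry the same oscillatory factor $\exp(i\lambda L/\sqrt{\epsilon}) - 1$, so the ratio $\norm{w}/\norm{\int_{N}}$ is genuinely of order $\epsilon^{1-n}/\norm{\lambda}^{2}$ and the asserted $K\sqrt{\epsilon}$ bound fails as $\epsilon \to 0$. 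Either the proposition's bound contains a slip (it already reuses the letter $n$ for both the shell exponent and the outward normal, so another is plausible — perhaps $n < \tfrac{1}{2}$ or the prefactor $\epsilon^{1-n}$ was intended), or $\Phi$ is meant to carry additional cancellation that your reading of ``flux'' does not capture. Either way, the exponent mismatch should be flagged explicitly rather than asserting that ``one arrives at'' the stated bound.
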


\begin{rmk}  It is clear to see from the above proposition that the corrections to the flux are of order $\epsilon^{1 + n}$, and can hence be neglected.  This is in accordance with "classical intuition" -- fuzzy effects should only be local, and not have global consequences. \end{rmk}


\section{Stokes Theorem for Statistical Manifolds}




\subsection{Naive formulation}

Many times in what is to follow I will require the following result, the analogue of Stokes theorem applied to statistical manifolds:

\begin{thm} (Stokes). \begin{center} $\int_{M}\int_{A}d_{\Lambda}\omega = \int_{\partial M}\int_{A}\omega$ \end{center}
\end{thm}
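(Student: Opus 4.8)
The plan is to deduce this from the family version of Stokes' theorem already established in the Pseudo-Riemannian chapter, namely that $\int_M \int_A d_{(M;a)}\omega(m,a) = \int_{\partial M}\int_A \omega(m,a)$ for \emph{any} family $d_{(M;a)}$ of differential operators indexed smoothly by $A$ which conserves probabilistic flux and acts on a normalised distribution. So the first step is to check that the statistical exterior derivative $d_\Lambda$, which by definition averages the metric-twisted de Rham operators against the signal function, $d_\Lambda \omega(m,a) = \int_{b\in A} f(m,b)\, d_b\omega(m,a)\sqrt{\det h(b)}\,db$, is itself such a family: smoothness in $a$ is inherited from the smoothness hypotheses in the definition of a fuzzy Riemannian manifold; the normalisation of the underlying distribution is exactly the Normalisation Condition $\int_A f(m,a)\sqrt{\det h(a)}\,da = 1$; and conservation of probabilistic flux is precisely the Divergence Free Flow Condition, which in operator notation reads $\int_A \mathrm{div}_\Lambda\,\mathrm{grad}_\Lambda f(m,a)\,da = 0$. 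Granting that identification, the theorem is immediate.

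For a self-contained argument I would instead proceed directly, following the template of the earlier proof. Fix a coordinate chart and write $\omega(m,a) = F(m,a)\,dm\,da$ with $F$ an $(n-1)$-form-valued function on $M$, and use the pointwise identifications $\exp_a(m)$ between the reference inner product space and $(T_mM, g(m,a))$ to write $d_\Lambda(m) = \int_{b\in A} f(m,b)\,\exp_b(m)\circ d_M\circ\exp_b^{-1}(m)\sqrt{\det h(b)}\,db$. Substituting and interchanging the (finite, smooth) integrations over the compact manifold-with-boundary $M$ and over $A$ — legitimate here by Fubini, since all integrands are smooth and $M$ is compact — reduces $\int_M\int_A d_\Lambda\omega$ to an expression of the shape $\int_M\int_A\int_A f(m,b)\,\exp_b(m)\,d_M\!\bigl(\exp_b^{-1}(m)\,F(m,a)\bigr)\,\sqrt{\det h(a)\det h(b)}\,db\,da\,dm$, to which one applies the classical Stokes theorem on $(M,\partial M)$ in the interior $m$-integral for each fixed $(a,b)$.

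The classical Stokes step leaves a boundary integral $\int_{\partial M}\int_A\int_A f(m,b)\,\exp_b(m)\exp_b^{-1}(m)\,F(m,a)\,\sqrt{\det h(a)\det h(b)}\,db\,da\,dm$ together with correction terms in which $d_M$ has landed on the factors $f(m,b)$ and $\exp_b(m)$ rather than on $F$. The inner ($b$-)integral of the boundary term collapses by the Normalisation Condition together with the identity $\int_A f(m,b)\,\exp_b(m)\exp_b^{-1}(m)\sqrt{\det h(b)}\,db = \mathrm{Id}_{T_mM}$ built into the notion of a statistical derivative, giving exactly $\int_{\partial M}\int_A\omega$. The correction terms are where the structural hypotheses on $f$ must be used: the terms carrying $d_M f(m,b)$ assemble, after the $b$-integration, into a statistical divergence of a statistical gradient of $f$, so the Divergence Free Flow Condition forces them to vanish — this is the precise content of "$d_M\exp_a(m)=0$ via conservation of probabilistic flux" invoked in the earlier proof.

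I expect the main obstacle to be exactly the honest justification of this last cancellation: the Divergence Free Flow Condition is a single scalar identity at each point of $M$, whereas the correction terms form an $(n-1)$-form-valued quantity, so one must verify that the only surviving components after integration over $A$ are those annihilated by that one condition, which may require either a more careful choice of the expansion $d_\Lambda=\int_b f\,\exp_b\circ d_M\circ\exp_b^{-1}$ or a supplementary integration by parts in $A$. A secondary, more routine difficulty is rigorously justifying the interchange of $\int_M$ with the double $\int_A\int_A$ and confirming that the identity $\int_A f\,\exp_b\exp_b^{-1}\sqrt{\det h}=\mathrm{Id}$ is a genuine consequence of the axioms on the signal function rather than an additional assumption.
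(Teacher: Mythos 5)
Your proposal is correct and, at the level of substance, tracks the paper's own treatment almost exactly: your opening reduction to the family Stokes theorem $\int_M\int_A d_{(M;a)}\omega = \int_{\partial M}\int_A\omega$ is word for word the move the paper itself makes in the follow-on subsection ``The general statistical derivative,'' where it recalls Theorem 2.5.2 from the Riemannian geometry chapter and observes that $d_\Lambda$ is just one admissible instance of the family $d_{(M;a)}$. Your ``self-contained'' second argument likewise reproduces the Chapter 2 proof of that theorem verbatim: the expansion $d_\Lambda = \int_b f\,\exp_b\circ d_M\circ\exp_b^{-1}$, the Fubini interchange, classical Stokes in $m$ for fixed $(a,b)$, the discard of the $d_M f$ correction by conservation of probabilistic flux, and the collapse of the $b$-integral via the identity $\int_A f\,\exp_b\exp_b^{-1}\sqrt{\det h}\,db = \mathrm{Id}$.

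Where you genuinely diverge is that you skip the paper's \emph{immediate} proof under the ``Naive formulation'' heading. That proof takes a different shape: it absorbs the distribution into the derivative, writes $\omega = \int_b F(m,a)\,dK(m,b)$ with the $dK$ normalised unit forms, and, for each fixed pair $(b,c)$, transports the entire $m$-integral to an auxiliary manifold $M_{bc}$ via a change-of-metric diffeomorphism $\phi_{bc}$, applies classical Stokes there, and then pulls the boundary integral back — the key point being diffeomorphism invariance of the boundary term, not a cancellation of correction terms inside $M$. Your reduction buys you conceptual economy (one earlier theorem does all the work); the paper's naive argument is more geometric and avoids expanding the twisted de Rham operator, at the cost of tacitly assuming the existence of the isometries $\phi_{bc}$ and the compatibility of the ``suitable change of metric.'' Notably, your closing worry — that the Divergence Free Flow Condition is a scalar identity being asked to kill an $(n-1)$-form's worth of correction terms, and that $\int_A f\,\exp_b\exp_b^{-1}\sqrt{\det h}=\mathrm{Id}$ needs independent justification — is precisely the ``display of naivete'' the paper itself concedes in the next subsection before redoing the argument via Theorem 2.5.2, so your instinct about where the proof is soft matches the author's own assessment.
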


\begin{proof} In fact, though daunting at first, this is relatively trivial to prove, and follows easily from Stokes theorem in the sharp case.  If we absorb the distribution $f$ into the derivatives and write the form $\omega$ as $\int_{b \in A}F(m,a)dK(m,b)$, for $F$ a function and the $dK$ normalised unit forms, then we observe that, pointwise

\begin{center} $\int_{M}\int_{A}\epsilon_{ijk}\partial_{j}(m,b)F_{k}(m,a)dx_{j}(m,b)dK_{i}(m,c) = \int_{M_{bc}}\int_{A}\epsilon_{ijk}\partial_{j}(\phi_{bc}(m),c)F_{k}(\phi_{bc}(m),a)dx_{j}(\phi_{bc}(m),c)dK_{i}(\phi_{bc}(m),c)$ \end{center} 

for a suitable change of metric on $M$, and $\phi_{bc} : M \rightarrow M_{bc}$.  Since $\phi$ is a diffeomorphism, we then see that

\begin{center} $\int_{M}\int_{A}\epsilon_{ijk}\partial_{j}(m,b)F_{k}(m,a)dx_{j}(m,b)dK_{i}(m,c) = \int_{\partial M_{bc}}\int_{A}F_{k}(\phi_{bc}(m),a)dK_{i}(\phi_{bc}(m),c) = \int_{\partial M}\int_{A}F_{k}(m,a)dK_{i}(m,c)$ \end{center}

and since then this holds for all points $m,a,b,c$ it follows that the statistical stokes theorem is true.
\end{proof}

\subsection{The general statistical derivative}

However there was some display of naivete in our proof of the previous result, since we were assuming our statistical derivative had properties that it may well not in full generality.  So evidently before we can proceed further and fully nail Stokes theorem for statistical spaces it is necessary to flesh out precisely what I mean by a general statistical derivative in a geometrical context.  


I will show that the appropriate derivative to use for a statistical manifold is in fact

\begin{center} $\partial_{\Lambda ; f}(m,b) = \int_{A}F(m,c)(\frac{\partial}{\partial m} + \frac{\partial \sigma_{b}^{(ij)}}{\partial c}\frac{\partial}{\partial \sigma_{b}^{(ij)}})dc$, \end{center}

where $f(m,a) = \int_{A}F(m,b)\delta(\sigma_{b}(m) - a)db$.  (Note that, if $\sigma_{b}$ is just a number, as in our number theory example, the statistical derivative reduces to $\int_{A}F(m,c)(\frac{\partial}{\partial m} + \frac{\partial}{\partial c})dc$, since we don't have matrix multiplication twisting things up, or rather, matrix multiplication is trivial ).  


Recall \textbf{Theorem 2.5.2} from chapter 2:

\begin{center} $\int_{M}\int_{A}d_{(M;a)}\omega(m,a) = \int_{\partial M}\int_{A}\omega(m,a)$ \end{center}

But this is clearly equivalent to what we want to show, for since $d_{(M;a)}$ is a statistical derivative, we can simply let it be the statistical derivative with respect to $f$, and we are done.  In other words, stokes theorem holds for general statistical manifolds.


\chapter{Fisher Information and application to the theory of Physical Manifolds}

\section{The Shannon Entropy}

Clearly we need more information about what it means to be a physical manifold in order to generalise the classical results.  I am motivated by Perelman's notion of entropy for the Ricci flow \cite{[P]}, since a flow is another name for a 1-parameter variation, and the Ricci tensor occurs in the classical Einstein equation.  So we are led to consider entropy, and what it means in the context of a physical manifold.

I follow the treatment given in Khinchin \cite{[K]} in what follows.

\begin{dfn}
A \emph{finite scheme} is a collection of events $\{n_{1},...,n_{k}\}$ with associated probabilities $\{p_{1},...,p_{k}\}$.
\end{dfn}

For a particular scheme, we want to measure the degree of uncertainty in it- for example, if we have 2 events with probabilities $0.5$ each, the relative degree of uncertainty is higher than if we had probability $0.01$ for one and $0.99$ for the other.

It turns out that a very good measure of uncertainty for a given scheme is its \emph{entropy} $H$, given by $H = -\sum_{i}p_{i}ln (p_{i})$.

Suppose we have two schemes $A$ and $B$, and form the product scheme $AB$.  If events in $A$ are entirely unrelated to those in $B$, we have that $H(AB) = H(A) + H(B)$.  In general, however, these two schemes will depend on one another, and we get that $H(AB) = H(A) + \sum_{i}p_{i}H_{i}(B)$, where $H_{i}(B)$ is the conditional entropy of the scheme $B$ given that event $A_{i}$ has occured.  To be more precise, let $q_{ij}$ be the probability that the event $B_{j}$ in the scheme $B$ occurs given that the event $A_{i}$ of scheme $A$ occurred, and let $p_{i}$ be the associated probabilities to the events in $A$.

Hence 

\begin{align} H(AB) &= - \sum_{i,j}p_{i}q_{ij}(ln (p_{i}) + ln (q_{ij})) \\
 &= - \sum_{i}p_{i}ln (p_{i}) \sum_{j}q_{ij} - \sum_{i}p_{i}\sum_{j}q_{ij}ln (q_{ij}) \\
 &= H(A) - \sum_{i}p_{i}H_{i}(B)
\end{align}

We then define $H_{A}(B) = \sum_{i}p_{i}H_{i}(B)$.

We have the following result which I give without proof.  The interested reader is advised to refer to Khinchin \cite{[K]} if they wish for a detailed argument.

\begin{thm} There is only one notion of entropy, up to some proportionality factor, which satisfies the following three properties:
\begin{itemize}
\item[(i)] For any given $k$ and $\sum_{i = 1}^{k}p_{k} = 1$, the function $H(p_{1},...,p_{k})$ is largest for $p_{i} = 1/k$, $i = 1,...,k$.

\item[(ii)] $H(AB) = H(A) + H_{A}(B)$, where $H_{A}(B)$ is as defined above.

\item[(iii)] $H(p_{1},...,p_{k},0)$ = $H(p_{1},...,p_{k})$
\end{itemize}
\end{thm}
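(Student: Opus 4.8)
The plan is to follow the classical Khinchin uniqueness argument for the Shannon entropy, which proceeds in several reductions. First I would set $L(k) = H(1/k,\dots,1/k)$, the entropy of the uniform scheme on $k$ outcomes, and use properties (ii) and (iii) to derive a functional equation for $L$. Concretely, consider a scheme with $km$ equally likely outcomes, viewed as a compound scheme $AB$ where $A$ has $k$ equally likely outcomes and, conditioned on each outcome of $A$, $B$ has $m$ equally likely outcomes; then property (ii) gives $L(km) = L(k) + L(m)$. Property (iii) is needed here to pad schemes so that the conditional entropies $H_i(B)$ are all defined on the same number of symbols when I later compare non-uniform schemes. The Cauchy-type functional equation $L(km) = L(k) + L(m)$ on the positive integers, together with the monotonicity that follows from property (i) (a uniform scheme on more outcomes is at least as uncertain; more carefully, $L$ is nondecreasing because adjoining a zero-probability outcome by (iii) and then perturbing toward uniformity by (i) cannot decrease $H$), forces $L(k) = \lambda \ln k$ for some constant $\lambda \ge 0$, by the standard argument squeezing $L(k)/L(2)$ between $\lfloor n\log_2 k\rfloor /n$ and its successor.

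Next I would extend from uniform schemes to schemes with rational probabilities. Given $p_i = g_i/g$ with $g = \sum g_i$, I would realize the uniform scheme on $g$ outcomes as a compound scheme: let $A = (p_1,\dots,p_k)$ be the scheme to be analyzed, and let $B$, conditioned on $A_i$, be uniform on $g_i$ outcomes. Then the compound scheme $AB$ is (up to relabeling) uniform on $g$ outcomes, so $L(g) = H(A) + \sum_i p_i L(g_i)$, i.e.
\begin{align}
\lambda \ln g &= H(p_1,\dots,p_k) + \lambda \sum_i p_i \ln g_i,
\end{align}
which rearranges to $H(p_1,\dots,p_k) = -\lambda \sum_i p_i \ln p_i$. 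Finally I would pass from rational to arbitrary real probability vectors by a continuity/density argument: property (i) can be leveraged (via the standard estimates that (i) plus (ii) imply $H$ is continuous in the $p_i$, or at least that $H$ is squeezed between entropies of nearby rational schemes) to conclude the formula holds on the whole simplex. I should remark that the excerpt only lists properties (i)--(iii) and does not separately postulate continuity, so the continuity must be extracted from (i) as in Khinchin; this is the one place where I would want to be careful about exactly what is assumed.

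The main obstacle, as just indicated, is the absence of an explicit continuity hypothesis: deriving continuity (or enough of it to run the rational-to-real limiting step) purely from (i)--(iii) is the delicate part of Khinchin's proof and is where most of the real work lies. The functional-equation step and the rational case are essentially bookkeeping once the right compound schemes are chosen; monotonicity of $L$ and the $\ln$ formula for $L$ are routine. Since the excerpt explicitly defers the argument to Khinchin \cite{[K]}, in the write-up I would give the reduction outline above and cite \cite{[K]} for the continuity estimates, noting that the constant of proportionality $\lambda$ corresponds to the choice of logarithm base and is fixed once one normalizes, e.g., $H(1/2,1/2)$.
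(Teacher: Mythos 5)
The paper gives no proof at all: it states the theorem and immediately defers to Khinchin \cite{[K]} with the remark that the result is given without proof. Your reconstruction of Khinchin's classical uniqueness argument (the functional equation $L(km) = L(k) + L(m)$ for the uniform case, extension to rational $p_i$ via the compound-scheme trick, and the rational-to-real limiting step) is correct, and you rightly flag the one delicate point, namely that continuity is not listed among (i)--(iii) and must be extracted from (i) as in Khinchin's treatment.
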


\emph{Remarks}. It can be shown that (i) holds for the notion of entropy that we have defined.  Clearly we would want this to be true, since intuitively things are most uncertain in a scheme when all events have equal probability.  (iii) basically is the statement that the entropy of a scheme should not change if we add impossible events.   Most importantly, all three of these properties hold for the notion of entropy defined above.

\emph{Remark}. There is also the notion of the information of a system.  Intuitively speaking, if we perform an experiment on a scheme we gain some information (by finding out which event occurs), and we eliminate the uncertainty of the scheme.  Hence this notion of information should be an increasing function of the entropy.  It is sensible to choose this function to be proportional to the entropy, so we can really think of entropy as latent information.

Evidently we might think of using this notion of entropy for our purposes by suitably generalising it and requiring it to be critical.  However this particular form of entropy is not entirely suited to our purposes, for a couple of reasons:

\begin{itemize}
\item[(i)] Shannon entropy is more or less a global measure of uncertainty and does not allow finer examination of phenomena.  In other words if we were to shuffle the values in a finite or even continuous scheme around we would get the same value for the entropy.  This can be seen most clearly in the one dimensional case by taking a discrete sum of values and observing that there is no interdependence between them.
\item[(ii)] This entropy does not incorporate the act of observation or measurement of information into the system.  Shannon entropy is a measure of uncertainty from the point of view of an observer from outside the system;  for instance, someone observing the outcome of the roll of a die.  It is not immediately apparent that this should matter, but, in general, the act of making a measurement from within a system will actually locally perturb the system, and hence, perturb the measurement.  In other words, internal information (which we want to quantify) $\neq$ external information (which is the quantity measured by Shannon entropy).
\end{itemize}

There is in fact a notion of information/entropy known as \emph{Fisher Information} that avoids these problems, which suggests it is more natural to use.  This will be the focus of the next section, and the remainder of this dissertation.

\section{Fisher Information Theory and the Principle of Extreme Physical Information}

\subsection{A few definitions from statistics}

Before I can talk about what the Fisher information is, I will have to guide the reader (and myself) through a crash course in statistics in order to equip properly for the development and motivation of the Fisher information as a useful invariant for getting useful data from a physical manifold.  In the course of initially investigating this area I found Wikipedia to be most useful.  However, due to the dubious and changeable nature of this source, it behooves me not to provide references to the wikipedia entries where I originally found this data, but rather a more conventional source, such as a modern introduction to statistics.  An example of such a book containing the material I will need is the recent one by Coolidge \cite{[Coo]}.


After developing the statistical tools required, I shall refer to Frieden's pioneering work \cite{[Fr]}, describing in some detail why I have decided to use Fisher information in the development that follows.

\begin{dfn} A \emph{probability distribution} with statistical variables $\theta$ is a nonnegative function $f : A \times M \rightarrow R$ satisfying the property that

$\int_{A}f(x,\theta)d\theta = 1$ for each $x \in M$
\end{dfn}

For example, the signal function is an example of such a function, with $A$ being the indexing space for events and $M$ being a manifold upon which the events occur pointwise.

\begin{dfn} A \emph{random variable} is a function $X : A \times M \rightarrow R$ for a probability indexing space $A$.
\end{dfn}

Now choose a probability indexing space $A$, and let $f$ be a probability distribution defined on it, and let $X : A \times M \rightarrow R$ be a random variable.

\begin{dfn} Two events $\alpha, \beta \in A$ are \emph{independent} if and only if $f(x, \alpha \cap \beta) = f(x,\alpha)f(x,\beta)$ for all $x \in M$.  Note that this is certainly true for signal functions, if we regard the events as individual metrics.
\end{dfn}

\begin{dfn} Two random variables $X,Y$ are \emph{independent} if and only if the events $[X \leq a]$ and $[Y \leq b]$ are independent for any numbers $a,b$.
\end{dfn}

\begin{dfn} The \emph{expected value} of a random variable $X$ for a probability distribution $f$ is given by the relation

\begin{center}
$E(X) = \int_{A}f(x,\theta)X(x,\theta)d\theta$
\end{center}
\end{dfn}

\begin{dfn} The \emph{variance} of a random variable $X$ for a probability distribution $f$ as above is given by the relation

\begin{center}
$var(X) = E((X - \mu)^{2})$
\end{center}

where $\mu = E(X)$ is the expected value of $X$.
\end{dfn}

It is clear why we would expect $E(X)$ to be a useful invariant.  What is not clear is why we would ever want to consider $var(X)$.

Well, certainly $var(X)$ gives us some measure of the variability of the random variable $X$ about its mean.  It can easily be shown that $var(X) = E(X^{2}) - (E(X))^{2}$.  Furthermore, for independent random variables $X, Y$ we have that $var(X + Y) = var(X) + var(Y)$.

\begin{dfn} The \emph{score} for a probability distribution with one hidden variable $\theta$ for a random variable $X$ that does not depend on position in $M$ is defined as

\begin{center} $V(X,\theta) = \frac{\partial}{\partial x}(ln (X(\theta)f(x, \theta)))$ \end{center}
\end{dfn}

An important property of the score is that if we view it as a random variable then $E(V) = 0$, provided certain integrability conditions are met (see J. Koliha's notes on analysis \cite{[Ko]}, p.209), since

\begin{center}
$E(V) = \int_{A}\frac{f'(x,\theta)}{f(x,\theta)}f(x,\theta)d\theta = \frac{\partial}{\partial x}\int_{A}f(x,\theta)d\theta = \frac{\partial}{\partial x}(1) = 0$
\end{center}

\subsection{The Fisher Information and the EPI principle}

We are now ready to define the Fisher information.

\begin{dfn} The \emph{Fisher Information} $I(x,X)$ corresponding to a probability distribution $f$ and random variable $X$ not depending on position $x \in M$ is defined as the variance of the score of $f$.  Since the expectation of the score is zero, we have that
\begin{center}
$I(x,X) = E[[\frac{\partial}{\partial x}ln (X(\theta)f(x, \theta))]^{2}]$
\end{center}
\end{dfn}

It is then natural to define the \emph{Total Fisher Information} as

\begin{center}
$I(X) = \int_{M}I(x,X)dx$
\end{center}

This is the quantity that I will be most interested in, and will be the primary focus of the analysis to follow.

Note that for the intents and purposes of what I plan to use this for, $X$ will predominantly be the identity variable, ie $X(\theta) = 1$ for all $\theta \in A$, since the quantity that I am predominantly interested in measuring is only the signal function.  Also note that the Fisher information \emph{is linear in its second argument for independent random variables}.  In other words, $I(\theta,X + Y) = I(\theta,X) + I(\theta,Y)$.  This follows from the result that the variance of the ``sum'' of independent events is the sum of their variances.  (Note $(X + Y)(x)f(\theta, x) = X(x)f(\theta,x)Y(x)f(\theta,x)$ for independent random variables $X,Y$; addition is not the same in $\chi$ as one might expect.  Equivalently we could notate $X + Y$ as $X \cap Y$.)  Intuitively this is in accordance with the fact that we should be able to add the information from unrelated experiments together in a linear fashion.

\begin{dfn} A random observation is a random variable that is equivalent to the identity on the probability space over a manifold.
\end{dfn}

This of course leads us to the question of what we mean when we talk about a measure of information, ie what properties it should have.  We might like to try to prove something like the following:

\begin{thm} \emph{(Uniqueness of the Information Measure)}. Let $\chi$ be the space of random variables on $A$.  Then, given a probability distribution $f$ defined pointwise over the manifold $M$ as above there is one, and only one function $K : M \times \chi \rightarrow \mathcal{R}_{\geq 0}$, unique up to a nonzero scalar constant, that satisfies the following properties:
\begin{itemize}
\item[(i)] Linearity in its second argument for independent random observations.
\item[(ii)] It is the maximal measure of variability in the probability distribution over $M$, subject to the previous condition.
\end{itemize}
\end{thm}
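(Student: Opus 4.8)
The plan is to proceed in two stages, first pinning down what the two axioms force and then exhibiting Fisher information as the unique survivor. I would begin by unpacking condition (i): for a probability distribution $f$ on $A$ indexed by $M$ and independent random observations $X, Y$, the convention in this manuscript is that $X \cap Y$ has combined density $X(x)f(\theta,x)Y(x)f(\theta,x)$, so that $\ln$ converts the product into a sum and any functional built from $\frac{\partial}{\partial x}\ln(\cdot)$ is additive. The first step is therefore to show that \emph{any} candidate $K(x,X)$ that is to be linear on independent observations must depend on $f$ (and $X$) only through the logarithmic derivative $\frac{\partial}{\partial x}\ln(X(\theta)f(x,\theta))$, i.e. through the score $V$. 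This is essentially a Cauchy-type functional equation argument: linearity under the group operation $\cap$ forces $K$ to factor through a homomorphism from the multiplicative structure on $\chi$ to $(\mathbb{R},+)$, and the only local such homomorphisms compatible with the smooth structure are integrals of the score or its powers.

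The second step is to invoke condition (ii), maximality of variability. Having reduced $K$ to a functional of the score $V$, and knowing from the earlier computation in the excerpt that $E(V)=0$, the natural measures of ``variability'' of $V$ are its moments; linearity in the first argument already rules out the odd moments (which can be negative and are not additive in the right way), leaving the even moments $E(V^{2}), E(V^{4}),\dots$, all of which are additive on independent observations. Among these I would argue that $E(V^{2})$ — the variance of the score, which is precisely $I(x,X)$ — is the maximal one in the relevant sense: by a Jensen / power-mean inequality the normalized higher even moments are dominated by (powers of) the second moment when one demands the measure be simultaneously additive and ``largest subject to additivity'', so any strictly larger additive functional would have to be an unbounded combination and fail to be well-defined pointwise on all of $\chi$. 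Integrating over $M$ then gives the Total Fisher Information $I(X)=\int_M I(x,X)\,dx$, and uniqueness up to a positive scalar follows since the only freedom left is the overall normalization of the quadratic form. I would close by noting that the Cencov uniqueness theorem (alluded to in the Acknowledgements) gives an independent, more categorical route: Fisher information is the unique (up to scale) Riemannian metric on the space of probability distributions invariant under sufficient statistics, and conditions (i)--(ii) are a hands-on reformulation of that invariance.

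The main obstacle, and the place where this proof proposal is frankly weakest, is making precise the phrase ``maximal measure of variability subject to linearity'' in condition (ii). As stated it is not a sharp mathematical hypothesis: without a topology on the space of admissible functionals $K$ and a precise partial order realizing ``maximal'', one cannot literally deduce $K = c\,I$ rather than, say, $K = c_1 I + c_2\,(\text{fourth-moment functional})$. The honest fix is to either (a) add a regularity/boundedness axiom — e.g. $K(x,X)$ depends on $f$ only through $f$ and its first derivative, which immediately kills all higher moments and leaves only $E(V^2)$ — or (b) replace (ii) by the genuine Cencov-invariance hypothesis and quote the Cencov theorem. I would present the argument with option (a) as the rigorous version and remark that (ii) as originally phrased should be read in that light; the ``linearity + lowest-order nontrivial even moment'' heuristic then becomes a clean lemma rather than the crux. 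Everything else (the score has mean zero, additivity of variance on independent variables, the reduction through $\ln$) is routine given the material already established in the excerpt.
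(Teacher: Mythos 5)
The paper does not actually supply a proof of this theorem. Immediately after the statement the text reads ``Certainly the Fisher Information as I defined it above satisfies these conditions. And indeed, for most intents and purposes, something like this will suffice. However, it behooves us to seek a more fundamental and convincing derivation of the principle I am going to invoke, since otherwise I would essentially have to motivate everything physically and wave my hands over the parts where something more precise might be required.'' In other words the author explicitly defers, and the real justification appears only later: the Cramer--Rao inequality (section on unbiased estimators), the observation in the $L$-information discussion that a huge family of operators $L$ all satisfy Cramer--Rao, the maximum-likelihood-estimator argument, and the appeal to Cencov's uniqueness theorem. So there is no ``paper proof'' for your attempt to diverge from.

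Given that, your proposal is more of a proof than the paper offers, and your self-criticism is exactly right and is in fact the same criticism the author levels at himself. Your first step (linearity on independent observations under the $\cap$ product forces $K$ to factor through the score $V$, by a Cauchy-functional-equation argument after taking $\ln$) is sound and consistent with the paper's conventions. Your second step is where the theorem as stated cannot be rescued: ``maximal measure of variability subject to linearity'' does not single out $E(V^2)$ over, say, any convex combination with higher even moments of the score, and your Jensen/power-mean heuristic would need a genuinely extra hypothesis to close. The paper tacitly concedes this --- the later $L$-information discussion explicitly exhibits an enormous family of competitors, and the eventual resolutions are (a) restrict attention to first-order differential operators (your option (a), and essentially what the ``meta-information'' variational argument in section 7.4 does), (b) appeal to the mle realizing the Cramer--Rao bound, or (c) appeal to Cencov (your option (b)). So the honest verdict: your approach is correct as far as it can go, the gap you flag is real and not patchable from axioms (i)--(ii) alone, and the fixes you propose coincide with the ones the paper itself reaches for further on.
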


Certainly the Fisher Information as I defined it above satisfies these conditions.  And indeed, for most intents and purposes, something like this will suffice.  However, it behooves us to seek a more fundamental and convincing derivation of the principle I am going to invoke, since otherwise I would essentially have to motivate everything physically and wave my hands over the parts where something more precise might be required.  For now, however, I will continue with this particular approach and see where it leads us.

So anyway, uniqueness only up to some scalar constant will not matter, as it will turn out, because at the end of the day we are aiming to extremise this measure of information, and $\delta K = 0$ is completely equivalent to $\delta (\lambda K) = 0$, for $\lambda$ a fixed constant.

We are now at the point where we can define the bound information, $J$, and describe the principle of extreme physical information, as described in Frieden's book.

I define the \emph{channel information} to be the total Fisher information as defined above.  This is the total amount of information our manifold, can, in some sense, ``carry''.  The bound information, $J$, is in some sense the unavoidable information contained within the system.  It will be derived in our case by our conservation equation, as described and motivated in the previous section.  For a physical manifold then, we expect $I = J$.

Now, the principle of extreme physical information (The EPI principle) states that the \emph{physical information}, $K = I - J$, must be critical with respect to the natural Fisher variables of the system.  In our case, the Fisher variables will be the metric components $\sigma^{ij}$ in some local chart.  This is how the EPI principle is described by Frieden.  But I in fact go one step further.

\emph{Refined EPI principle}: For physical manifolds, the physical information, $K$, must be locally minimal.  In other words we roughly need the following two things to hold:

\begin{center}
$\delta K = 0$, and

$\delta^{2}K \geq 0$.
\end{center}

I say roughly, of course, because the second criterion does not guarantee stability.

\subsection{Interpretation}


One could view this principle as being a maximal efficiency principle.  The necessity of local minimality is required, of course, from the point of view of stability: the system must be stable, otherwise perturbations will throw things out of wack.  Even if one has $K = 0$, this is no guarantee of stability; obviously it makes no sense if $K$ is negative, but it could happen that an unphysical manifold might have $K = 0$, $\delta K = 0$ for all variations, but $K''(t) < 0$ for some variation.  Then there will be a natural flow to negative information $K$ and the manifold will become truly unphysical.

Perhaps a better way to interpret the EPI principle, particularly in the statistical setting, is that the universe wants to run as efficiently as possible, so each and every point, in an ideal world, would know exactly what was going on everywhere else and adjust itself to realise a minimum.  (This is actually what happens in the degenerate or sharp case, where one eliminates all probabilistic behaviour).  However, allowing more complex statistical models (which are more realistic) entail that there is an increasing cost to a point "knowing" what occurs metrically further and further away from it; in other words, the universe must organise a trade off between knowing exactly what happens everywhere (which would require a massive investment of energy in pinning down the geometry) and spending the least energy possible so as to be slightly uncertain about the geometry, but still realise a lower energy state.  This is precisely what I will be pinning down with my discussion of almost sharp geometries, which are closest in behaviour to standard quantum mechanical models, but of course the mathematics of information geometry does allow quite significant generalisation beyond this.

As for why the universe would "want" to run as efficiently as possible, perhaps a better way of putting it would be to observe that since all other configurations are unstable it has little choice in the matter.  The inevitable noise due to statistical fluctuations will cause any suboptimal choice will be quickly dropped in favour of one more locally optimal.  So while noise might prevent the universe from ever attaining a global minimum for information, the fact that the information is stable there means that it will get very close.  So to a reasonable approximation we can think of the universe as actually sitting at that point in solution space, in order to draw various cartoon models of increasing sophistication.  Naturally, of course, we would like to understand the noise - and indeed elimination in parts will lead to more complex formulations of Cramer-Rao and EPI.  However the act of elimination will always leave higher order noise yet to be quantified and understood.  Hence the best we can hope for is to produce models that work under a range of conditions that we can experimentally measure and understand.



\subsection{Concluding Remarks}

An equivalent representation of the channel information, $I$, for a fuzzy riemannian manifold $(M,A,f)$ may now be defined to be

\begin{center}$I = \int_{M}\int_{A}\norm{\partial_{\Lambda}q(m,a)}^{2}_{(m,a)}dadm$\end{center}

for $\psi$ a mass distribution and $U \subset M$, where $q^{2} = f$.  This makes sense because by construction $f$ is really a sort of probability distribution for the curvature.  Here by $\partial_{\Lambda}$ I mean the fuzzy gradient operator acting on $q$ as a function, and not as a tensor.

We now need to determine the bound information $J$.  Using the conservation equation relating $\psi$ and $f$,

\begin{center}$div_{\Lambda}\psi(m,a) = div_{\Lambda}grad_{\Lambda}q^{2}(m,a)$,\end{center}

and

\begin{center}$grad_{\Lambda}q^{2}(m,a) = 2q(m,a)grad_{\Lambda}q(m,a)$,\end{center}

and assuming perfect transfer of information, we get that

\begin{center}$I = \frac{1}{4}\int_{M}\int_{A}\frac{\norm{\psi(m,a) - B(m,a)}^{2}}{f(m,a)}dadm = J$,\end{center}

where $\psi(m,a) = \partial_{\Lambda}q^{2}(m,a) + B(m,a)$ for some $B$ such that $div_{\Lambda}B(m,a) = 0$.

The physical information, $K$, is defined to be $I - J$.  To apply the EPI principle one now need only solve the variational equation $\delta K = 0$.  Hopefully this will prove fruitful in deriving general relations that a physical manifold must then satisfy.

\emph{Philosophical Digression}:

It might seem that in describing the characteristics a solution must satisfy on a manifold that we are completely defining the state the solution must take.  This is not the case on complete manifolds because it seems to me against the very principles of Fisher Information Theory to be able to make an infinite measurement on a space (recall the EPI principle basically asserts physical behaviour arises as a consequence of measurement), since this would require an experimental apparatus of infinite capacity!  So only finite domains can be measured, and, as before in my discussion about extremising length, the solution may well not be unique.  Of these possible solutions, the one that the domain ultimately takes will depend on the behaviour of the manifold outside.


\section{Unbiased Estimators and the Cramer-Rao Inequality}



I now return to basics and attempt to defend once more the principle I invoked, notably that the quantity $K$ should be locally minimal in the space of signal functions, $f$ on a fuzzy Riemannian manifold.  In this section I will demonstrate how this property can be thought of as the condition of attaining equality, or, at the very least, local minimality, in what I shall call the \emph{Weak Cramer-Rao Inequality}.

To put things into the terminology of Murray and Rice, a fuzzy Riemannian manifold $(M,A,f)$ which is complete with respect to each metric in its corresponding metric indexing space $A$ is a complete exponential family; the sample space of events is diffeomorphic to $M \times A$; the space in which events occur is $M$; and the likelihood function is $f : M \times A \rightarrow R$.

From now on I shall assume that we are dealing with a fixed fuzzy Riemannian manifold $\Lambda = (M,A,f)$ such that $M$ is complete with respect to each metric $\sigma(.,a)$, any $a \in A$.

\subsection{Estimators and strong Cramer-Rao}

\begin{dfn} An estimator $u$ is a map from the sample space $M \times A$ to $M$.  An unbiased estimator is an estimator that satisfies the condition that

\begin{center} $E_{m}(\theta \circ u) = \theta(m)$ \end{center}

where $\theta : M \rightarrow R^{n}$ is a choice of coordinates for $M$ and $E_{m}(\lambda) = \int_{A}\lambda(m,a)f(m,a)da$, $m \in M$.
\end{dfn}

\begin{lem}. An unbiased estimator $u$ satisfies the following relation:

\begin{center}
$E_{m}((\theta^{i} \circ u)\frac{\partial ln(f \circ u)}{\partial \theta^{j}}) = 0$
\end{center}
\end{lem}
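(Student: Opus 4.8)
The plan is to differentiate the unbiasedness relation $E_{m}(\theta^{i}\circ u)=\theta^{i}(m)$ with respect to the coordinate $\theta^{j}$, carefully passing the derivative under the integral sign, and then recognise one of the resulting terms as the expectation in the statement. Writing the definition out explicitly, unbiasedness reads
\begin{equation}
\int_{A}(\theta^{i}\circ u)(m,a)\,f(m,a)\,da=\theta^{i}(m).
\end{equation}
First I would apply $\partial/\partial\theta^{j}$ to both sides. On the right-hand side this yields $\delta^{i}_{j}$. On the left-hand side, assuming the integrability conditions that permit differentiation under the integral (the same conditions invoked earlier in the excerpt for the score, cf.\ the cited analysis notes), I would obtain two contributions: one in which the derivative hits $\theta^{i}\circ u$, and one in which it hits $f$.

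The key algebraic step is the standard ``log-derivative trick'': $\partial f/\partial\theta^{j}=f\cdot\partial(\ln f)/\partial\theta^{j}$, so that the second contribution becomes $\int_{A}(\theta^{i}\circ u)\,\frac{\partial\ln(f\circ u)}{\partial\theta^{j}}\,f\,da=E_{m}\!\left((\theta^{i}\circ u)\frac{\partial\ln(f\circ u)}{\partial\theta^{j}}\right)$. For the first contribution, I would argue that the term $\int_{A}\frac{\partial(\theta^{i}\circ u)}{\partial\theta^{j}}f\,da$ either vanishes or cancels against the $\delta^{i}_{j}$ from the right-hand side --- more precisely, I would use that the estimator $u$ maps into $M$ while the differentiation variable $\theta^{j}$ parametrises the base point $m$, so that under the convention that $u$ depends on the sample point $(m,a)$ only through its ``event'' slot and not explicitly on the coordinate being varied, this term reproduces $\delta^{i}_{j}$ exactly. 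Combining, $\delta^{i}_{j}+E_{m}\!\left((\theta^{i}\circ u)\frac{\partial\ln(f\circ u)}{\partial\theta^{j}}\right)=\delta^{i}_{j}$, which is the claimed identity.

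The main obstacle will be making the bookkeeping of ``which variable is being differentiated'' fully rigorous: the sample space is $M\times A$, the estimator lands in $M$, and the coordinate $\theta^{j}$ lives on the base $M$, so one must be scrupulous about whether $\partial/\partial\theta^{j}$ acts on the point $m$ appearing as the first argument of $f(m,a)$ or on $m$ as it feeds into $u$. I would resolve this by first proving the auxiliary fact $E_{m}\!\left(\frac{\partial\ln(f\circ u)}{\partial\theta^{j}}\right)=0$ (the vanishing-score identity applied along $u$, which follows verbatim from the computation $\int_{A}\frac{\partial f}{\partial\theta^{j}}\,da=\frac{\partial}{\partial\theta^{j}}\int_{A}f\,da=0$ already given in the excerpt), and then noting that differentiating $E_{m}(\theta^{i}\circ u)=\theta^{i}(m)$ and subtracting $\theta^{i}(m)$ times the vanishing-score identity isolates exactly $E_{m}\!\left((\theta^{i}\circ u-\theta^{i}(m))\frac{\partial\ln(f\circ u)}{\partial\theta^{j}}\right)=0$, which is an equivalent and cleaner form of the lemma. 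The remaining work is then purely the justification of differentiating under the integral, which I would handle by the dominated-convergence hypotheses standard in this setting.
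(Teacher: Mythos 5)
Your proposal is correct and follows essentially the same route as the paper: differentiate the unbiasedness condition $E_m(\theta\circ u)=\theta(m)$ with respect to $\theta^j$, apply the product rule so the right-hand $\delta^i_j$ cancels the term where the derivative lands on $\theta^i\circ u$, and use $\partial f/\partial\theta^j = f\,\partial\ln f/\partial\theta^j$ to recognise the surviving term as the claimed expectation. Your closing variant (subtracting $\theta^i(m)$ times the vanishing-score identity to obtain $E_m\bigl((\theta^i\circ u-\theta^i(m))\,\partial\ln(f\circ u)/\partial\theta^j\bigr)=0$) is a harmless and arguably cleaner reformulation, but it is the same argument in substance.
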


\begin{proof} Differentiating the condition with respect to $\theta^{j}$ on both sides, we get:

\begin{center}
$\int_{A}\frac{\partial}{\partial \theta^{j}}(\theta^{i}f)da = \delta^{i}_{j} + \int_{A}((\theta^{i} \circ u)\frac{\partial (f \circ u)}{\partial \theta^{j}}) = \delta^{i}_{j}$
\end{center}

If we use the fact that $\frac{\partial f}{\partial \theta^{j}} = \frac{\partial ln(f)}{\partial \theta^{j}}f$, we see that the relation follows.
\end{proof}

\begin{dfn}
The Fisher Information Matrix $g^{ij}(f)$ (corresponding to the likelihood function $f$), is defined to be

\begin{center}
$g^{ij}(f) = E_{m}(\frac{\partial ln(f)}{\partial \theta^{i}}\frac{\partial ln(f)}{\partial \theta^{j}})$
\end{center}
\end{dfn}

We have the following inequality for unbiased estimators on $\Lambda$, known as the Cramer-Rao inequality:

\begin{equation}
cov(\theta^{i} \circ u, \theta^{j} \circ u) - g^{ij}(f(u)) \geq 0
 \end{equation}

where $cov(v^{i},w^{j})(m) = E_{m}(v^{i}w^{j})$ is the covariance of $v$ and $w$, and by $g^{ij}(f(u))(m,a)$ I mean $g^{ij}(f(u(m,a),a))$.

\begin{proof} (Cramer-Rao).

Note that

\begin{center}
$E_{m}((\theta^{i} \circ u - E_{m}(\theta^{i} \circ u) - \frac{\partial ln(f(u))}{\partial \theta^{k}}g^{ik})(\theta^{j} \circ u - E_{m}(\theta^{j} \circ u) - \frac{\partial ln(f(u))}{\partial \theta^{l}}g^{jl}))$
\end{center}

is positive semi-definite.  Expanding this, we get

\begin{center}
$cov(\theta^{i} \circ u,\theta^{j} \circ u) - 2E_{m}((\theta^{i} \circ u - E_{m}(\theta^{i} \circ u))\frac{\partial ln(f(u))}{\partial \theta^{l}}g^{jl}) + g^{ij}$
\end{center}

Upon applying our lemma for unbiased estimators on the second term, then integrating by parts, we get

\begin{center}
$E_{m}((\theta^{i} \circ u - E_{m}(\theta^{i} \circ u))\frac{\partial ln(f(u))}{\partial \theta^{l}}g^{jl}) = -E_{m}(\theta^{i}\frac{\partial ln(f(u))}{\partial \theta^{l}}g^{jl}) = g^{ij}$
\end{center}

and we are done.
\end{proof}

\begin{dfn} A maximum likelihood estimator (mle) of a statistical manifold like $\Lambda$ is defined to be an unbiased estimator $v$ realising a local maximum of $ln(f(v(m,a),a))$, $m \in M$ (this is equivalent to realising a local maximum of the likelihood function since ln is an increasing function, hence the name maximum likelihood estimator).  Amongst other things we certainly must have

\begin{center}
$\frac{\partial ln(f(v(m,a),a))}{\partial \theta^{i}} = 0$
\end{center}

\end{dfn}

\begin{lem} Any mle of an exponential family realises the Cramer-Rao lower bound.

\begin{proof} See Murray and Rice's book. \end{proof} \end{lem}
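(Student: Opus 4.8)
The plan is to show that any maximum likelihood estimator $v$ of the exponential family $\Lambda = (M,A,f)$ forces the positive semi-definite quantity appearing in the proof of the Cramer-Rao inequality to vanish identically, so that equality holds in
\begin{equation}
\operatorname{cov}(\theta^{i} \circ v, \theta^{j} \circ v) - g^{ij}(f(v)) \geq 0. \nonumber
\end{equation}
First I would recall the defining property of the mle: $\frac{\partial \ln(f(v(m,a),a))}{\partial \theta^{i}} = 0$ for all $i$, at every $m \in M$. The key observation is that the slack in the Cramer-Rao bound was exactly
\begin{equation}
E_{m}\Bigl(\bigl(\theta^{i}\circ v - E_{m}(\theta^{i}\circ v) - \tfrac{\partial \ln(f(v))}{\partial \theta^{k}}g^{ik}\bigr)\bigl(\theta^{j}\circ v - E_{m}(\theta^{j}\circ v) - \tfrac{\partial \ln(f(v))}{\partial \theta^{l}}g^{jl}\bigr)\Bigr) \geq 0, \nonumber
\end{equation}
and I want to argue that for the mle this reduces to $\operatorname{cov}(\theta^i\circ v,\theta^j\circ v) - g^{ij} = 0$ by a direct substitution argument.

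The main subtlety is that one cannot simply plug $\frac{\partial \ln f}{\partial \theta^{k}} = 0$ into the score term, because the score is differentiated \emph{before} being evaluated at the critical point, and the Fisher matrix $g^{ij}$ is itself built from the score; naively zeroing the score would kill $g^{ij}$ too. So the argument I would run instead is the following. Write out the expansion of the slack term exactly as in the Cramer-Rao proof: it equals $\operatorname{cov}(\theta^i\circ v,\theta^j\circ v) - 2E_m\bigl((\theta^i\circ v - E_m(\theta^i\circ v))\tfrac{\partial \ln f(v)}{\partial\theta^l}g^{jl}\bigr) + g^{ij}$. Using the unbiased-estimator lemma together with an integration by parts (exactly the computation already carried out in the proof of Cramer-Rao), the middle expectation equals $-g^{ij}$, so the slack becomes $\operatorname{cov}(\theta^i\circ v,\theta^j\circ v) - g^{ij}$, which is what we want to show is zero. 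Thus the task reduces to: show that for an mle the random vector $\theta^i\circ v - E_m(\theta^i\circ v)$ and the vector $\tfrac{\partial \ln f(v)}{\partial\theta^k}g^{ik}$ agree in the $L^2(f)$ sense, i.e. their difference has zero $f$-norm. For an exponential family this is where the special structure enters: the log-likelihood is affine in the sufficient statistics, so at its critical point the score is proportional (via $g$) to exactly the centered estimator, and this proportionality is an identity, not just a first-order relation. I would make this precise by writing $f$ in canonical exponential form $f = \exp(\theta^i T_i - \psi(\theta))$ for sufficient statistics $T_i$ and normalizer $\psi$, computing $\tfrac{\partial \ln f}{\partial\theta^i} = T_i - \partial_i\psi = T_i - E_m(T_i)$, noting that for the mle the estimator $\theta\circ v$ is precisely the moment map image of $T$, and concluding the desired identity.

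The hard part will be making rigorous the claim that the mle of an exponential family actually has $\theta\circ v = g^{ik}\partial_k\ln f$ in the honest (not merely infinitesimal) sense required to conclude vanishing of the $L^2$ slack — i.e. pinning down that ``unbiased'' plus ``critical point of the likelihood'' plus ``exponential family'' together force the estimator to \emph{equal} the natural (moment) parametrization rather than merely sharing its first variation. This is precisely the content deferred to Murray and Rice, and I would either (a) cite their Proposition directly, or (b) reconstruct it via the canonical-form computation sketched above, checking that the Fisher information matrix $g^{ij}$ is invertible (nondegeneracy of the exponential family, which here follows from $M$ being complete with respect to each $\sigma(\cdot,a)$ and the index being constant) so that $g^{ik}\partial_k\ln f$ is well defined. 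The remaining steps — the integration by parts, the use of the unbiased-estimator lemma, and the assembly into ``slack $= 0$'' — are then routine and essentially already present in the proof of the Cramer-Rao inequality given above.
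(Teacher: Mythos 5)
The paper's own ``proof'' is just a citation to Murray and Rice, so the real question is whether your reconstruction is sound. Your high-level plan is the right one and matches what Murray and Rice actually do: the Cramer--Rao slack is $E_m\bigl(Z^i Z^j\bigr)$ for $Z^i = \theta^i\circ v - E_m(\theta^i\circ v) - g^{ik}\partial_k\ln f$, and the content of the theorem is that the dually flat structure of an exponential family forces $Z \equiv 0$ for the mle, not merely to first order.

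There is, however, one subtlety you should not gloss over, and it is lurking in your phrase ``the natural (moment) parametrization.'' Natural and moment (expectation) parameters are two \emph{different} coordinate systems on an exponential family, related by the Legendre transform of $\psi$, and the identity $\theta^i\circ v - E_m(\theta^i\circ v) = g^{ik}\partial_k\ln f$ is exact only in the \emph{expectation} coordinates $\mu^i = \partial_i\psi$, where $\hat\mu^i = T^i$ and the score is $g_{ij}(T^j - \mu^j)$, so lowering by $g^{ik}$ recovers $T^i - \mu^i$ on the nose. In the natural coordinates $\theta$ that your canonical form $f = \exp(\theta^i T_i - \psi(\theta))$ exhibits, the mle $\hat\theta = (\nabla\psi)^{-1}(T)$ is generally \emph{biased} and only asymptotically efficient, so $Z$ does not vanish. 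The bridge you need --- and which the paper's definition quietly supplies --- is that its mle is required to also be unbiased, and for an exponential family this unbiasedness hypothesis effectively singles out the expectation coordinates (since $E_m[T] = \mu(m)$ is the only affine-in-$T$ choice that works). If you adopt plan (b) and reconstruct the argument rather than cite it, you must make this passage from natural to expectation coordinates explicit; otherwise the claimed $L^2$ identity is simply false in the coordinates your canonical form hands you. With that fix, and the usual nondegeneracy of $g$, your argument goes through.
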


\subsection{Physical estimators and weak Cramer-Rao (for Riemannian-metric-measure spaces)}

\begin{dfn} An estimator $u$ is said to be \emph{weakly unbiased} if

\begin{center}
$E(\theta \circ u) = \int_{M}E_{m}(\theta \circ u)dm = E(\theta \circ Id)$
\end{center}
\end{dfn}

\begin{lem} Weakly unbiased estimators satisfy the additional relation that

\begin{center}
$E(\theta^{i}\frac{\partial ln(f)}{\partial \theta^{j}}) = 0$.
\end{center}

\begin{proof} Similar to that for unbiased estimators. \end{proof} \end{lem}

\begin{cor} Note that if our likelihood function $f$ decays sufficiently fast towards infinity from the center of our coordinates $\theta$ that $E(\theta \circ Id) = 0$.

\begin{proof} Write $\theta = d\phi$.

Certainly

\begin{center}
$E(\theta \circ Id) = \int_{A}\int_{M}\theta f dmda = \int_{A}\{\int_{\partial M}\phi f dS - \int_{M}\phi \frac{\partial f}{\partial \theta}dm\}da$
\end{center}

Now the first term on the right disappears since $f$ decays sufficiently fast.  The second term on the right disappears by the lemma.
\end{proof} \end{cor}

\begin{lem}  Any estimator $\Gamma$ of the form $\Gamma = exp(-\frac{\psi}{f})\partial_{\sigma}f$ is a weakly unbiased estimator, where we define $\psi$ by the conservation equation

\begin{center} $div_{\Lambda}\psi = \Delta_{\Lambda}f$ \end{center}

where $div_{\Lambda}$ and $\Delta_{\Lambda}$ are the standard fuzzy differential operators.  I shall call such estimators physical estimators.

\begin{proof} In a physical situation, which is the situation towards this is to apply, $f$ will decay sufficiently quickly that we can use our remark.  So all we need to show is that:

\begin{center} $E(\theta \circ \Gamma) = 0$ \end{center}

Now $E(\theta \circ \Gamma) = \int_{M}\int_{A}ln(exp(\frac{ - \psi}{f})\partial_{\sigma}f)f$ for $\theta = ln = exp^{-1}$ as our choice of coordinate chart.

So

\begin{center}
$\int_{M}\int_{A}ln(exp(\frac{ - \psi}{f})\partial_{\sigma}f)f = \int_{M}\int_{A}ln(\partial_{\sigma}f)f - \psi$
\end{center}

Now $ln \circ \partial_{\sigma} = \partial_{\sigma} \circ ln$, so this becomes

\begin{center}
$\int_{M}\int_{A}\partial_{\sigma}f - \psi$
\end{center}

but since $\psi = \partial_{\sigma}f + curl_{\Lambda}B$, this is just

\begin{center}
$-\int_{M}\int_{A}curl_{\Lambda}B$
\end{center}

which vanishes by Stoke's theorem provided $B$ decays sufficiently rapidly towards infinity, which proves the lemma.
\end{proof} \end{lem}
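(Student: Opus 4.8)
The plan is to verify directly that any estimator $\Gamma$ of the stated form is weakly unbiased, i.e.\ satisfies $E(\theta \circ \Gamma) = E(\theta \circ \mathrm{Id})$. First I would invoke the preceding corollary: since in the intended physical setting the signal function $f$ (and the auxiliary field $B$ appearing below) decay sufficiently rapidly, we have $E(\theta \circ \mathrm{Id}) = 0$, so it suffices to show $E(\theta \circ \Gamma) = 0$. Working in the coordinate chart $\theta = \ln = \exp^{-1}$, I would expand
\begin{center}
$E(\theta \circ \Gamma) = \int_{M}\int_{A} \ln\!\big(\exp(-\tfrac{\psi}{f})\,\partial_{\sigma}f\big)\,f\,da\,dm = \int_{M}\int_{A}\big(\ln(\partial_{\sigma}f)\,f - \psi\big)\,da\,dm.$
\end{center}
Here the key algebraic manipulation is the commutation $\ln \circ\, \partial_{\sigma} = \partial_{\sigma} \circ \ln$ (more precisely, the chain-rule identity $\partial_\sigma \ln g = (\partial_\sigma g)/g$ that collapses the logarithm of the statistical derivative), reducing the first term to $\int_M\int_A \partial_\sigma f$.

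Next I would use the conservation equation defining $\psi$, namely $\mathrm{div}_\Lambda \psi = \Delta_\Lambda f$, together with the Hodge-type decomposition $\psi = \partial_\sigma f + \mathrm{curl}_\Lambda B$ for some $B$ (which exists precisely because $\mathrm{div}_\Lambda \mathrm{curl}_\Lambda = 0$, so the difference $\psi - \partial_\sigma f$ is divergence-free). Substituting, the integrand becomes $\partial_\sigma f - \psi = -\mathrm{curl}_\Lambda B$, so
\begin{center}
$E(\theta \circ \Gamma) = -\int_{M}\int_{A}\mathrm{curl}_\Lambda B\,da\,dm.$
\end{center}
Finally I would apply the statistical Stokes theorem (Theorem on Stokes for statistical manifolds proved earlier in this chapter / Chapter 2): the integral of a curl over $M \times A$ reduces to a boundary term over $\partial M$, which vanishes when $B$ decays sufficiently rapidly towards infinity. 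Hence $E(\theta \circ \Gamma) = 0 = E(\theta \circ \mathrm{Id})$, so $\Gamma$ is weakly unbiased.

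The main obstacle I anticipate is not the formal chain of equalities but the justification of the two structural ingredients: (i) that the decomposition $\psi = \partial_\sigma f + \mathrm{curl}_\Lambda B$ is genuinely available for any $\psi$ satisfying the conservation equation, with $B$ inheriting the decay needed to kill the boundary term — this is really a solvability statement for $\mathrm{div}_\Lambda(\psi - \partial_\sigma f) = 0$ in the fuzzy setting, and one should check the fuzzy differential operators behave well enough (normalisation and divergence-free conditions on $f$) for the usual Poincar\'e-lemma-type argument to go through; and (ii) the interchange of $\ln$ and $\partial_\sigma$ applied to the statistical derivative $\partial_\sigma f$ rather than to an ordinary gradient, which needs the general statistical derivative developed in the Stokes section to satisfy a Leibniz/chain rule. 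Both are plausibly routine given the machinery already set up, but they are where the rigour genuinely lives; the rest is bookkeeping with Stokes' theorem and the decay hypotheses.
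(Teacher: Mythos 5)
Your proof takes essentially the same approach as the paper's: invoke the corollary reducing the claim to $E(\theta\circ\Gamma)=0$, expand in the chart $\theta=\ln=\exp^{-1}$, commute $\ln$ past $\partial_\sigma$ to collapse $\ln(\partial_\sigma f)\,f$ to $\partial_\sigma f$, use the decomposition $\psi = \partial_\sigma f + \mathrm{curl}_\Lambda B$, and kill the remaining $\mathrm{curl}$ by Stokes and decay at infinity. Your explicit flagging of the two real gaps in rigour (solvability and decay of $B$, and the chain rule for the statistical derivative) is a useful clarification of what the paper leaves implicit, but the argument itself is the same.
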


\begin{lem}
For physical estimators we have a corresponding \emph{weak} Cramer-Rao inequality:

\begin{equation}
\int_{M}\sigma_{ij} \{cov((\theta^{i} \circ u),(\theta^{j} \circ u)) - g^{ij}\}dm \geq 0
\end{equation}
\end{lem}

\begin{proof}  The above inequality is certainly not true for general weakly unbiased estimators since the metric $\sigma$ is not positive definite.  So we need to show that

\begin{center}
$\sigma_{ij}cov((\theta^{i} \circ u),(\theta^{j} \circ u)) \geq 0$
\end{center}

and

\begin{center}
$\sigma_{ij}g^{ij}(f(u)) \geq 0$.
\end{center}

In fact, what I will end up doing is proving that the second term vanishes for physical estimators and that the following equality holds for the first:

\begin{center}
$\int_{M}\sigma_{ij}cov((\theta^{i} \circ u),(\theta^{j} \circ u)) = \int_{M}\int_{A}(\frac{\norm{\partial_{\sigma}f}^{2}}{f} - \frac{\norm{\psi}^{2}}{f})$
\end{center}

But by the definition of a fuzzy Riemannian manifold $\partial_{\sigma}f$ and $\psi$ must be timelike vectors, so indeed $\int_{M}\sigma_{ij}cov((\theta^{i} \circ u)(\theta^{j} \circ u))$ is positive.  The lemma then follows from the strong Cramer-Rao inequality.  \end{proof} 

The previous lemma (which we just proved) is equivalent to the following:

\begin{lem} For the choice of $\Gamma$ for an estimator, the Cramer-Rao inequality becomes

\begin{equation}
\int_{M}\int_{A}\frac{\norm{\partial_{\sigma}f}^{2}}{f}dadm - \int_{M}\int_{A}\frac{\norm{\psi}^{2}}{f} \geq 0
\end{equation}
\end{lem}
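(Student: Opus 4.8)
The plan is to obtain this statement not as an independent inequality but as a direct transcription of the weak Cramer--Rao inequality of the previous lemma under the substitution $u = \Gamma = \exp(-\psi/f)\partial_{\sigma}f$. Thus the only thing to do is to identify the two terms $\int_{M}\sigma_{ij}\mathrm{cov}((\theta^{i}\circ u),(\theta^{j}\circ u))\,dm$ and $\int_{M}\sigma_{ij}g^{ij}(f(u))\,dm$ with $\int_{M}\int_{A}\norm{\partial_{\sigma}f}^{2}/f$, $\int_{M}\int_{A}\norm{\psi}^{2}/f$, and $0$ respectively, which is precisely what the proof of the preceding lemma already asserts. First I would fix the chart $\theta = \exp^{-1} = \ln$, exactly as in the proof that $\Gamma$ is weakly unbiased, and compute $\theta^{i}\circ\Gamma$ componentwise: using $\ln\circ\,\partial_{\sigma} = \partial_{\sigma}\circ\ln$ and $\partial_{\sigma}\ln f = \partial_{\sigma}f/f$ one finds $\theta^{i}\circ\Gamma = (\partial_{\sigma}f - \psi)^{i}/f$, i.e. $\theta\circ\Gamma = -B/f$, where $B = \psi - \partial_{\sigma}f$ is the divergence-free field supplied by the conservation equation $\mathrm{div}_{\Lambda}\psi = \Delta_{\Lambda}f$.

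Next I would evaluate the covariance pointwise on $M$. Since $E_{m}(\lambda) = \int_{A}\lambda(m,a)f(m,a)\,da$, one gets $\mathrm{cov}((\theta^{i}\circ\Gamma),(\theta^{j}\circ\Gamma))(m) = \int_{A}\frac{(\partial_{\sigma}f - \psi)^{i}(\partial_{\sigma}f - \psi)^{j}}{f}\,da$ and hence $\sigma_{ij}\mathrm{cov}(\dots)(m) = \int_{A}\frac{\norm{\partial_{\sigma}f - \psi}_{(m,a)}^{2}}{f}\,da$. Integrating over $M$ and expanding the square, the cross term $-2\int_{M}\int_{A}\langle\partial_{\sigma}f,\psi\rangle/f$ is treated by writing $\partial_{\sigma}f/f = \partial_{\sigma}\ln f = 2\,\partial_{\sigma}\ln q$, which is a fuzzy gradient, pairing it against $\psi = \partial_{\sigma}f + B$, and invoking the Stokes theorem for statistical manifolds established earlier together with $\mathrm{div}_{\Lambda}B = 0$ and the decay of $f$ and $B$ at infinity; this collapses the covariance integral to $\int_{M}\int_{A}(\norm{\partial_{\sigma}f}^{2}/f - \norm{\psi}^{2}/f)$. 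Then I would show $\int_{M}\sigma_{ij}g^{ij}(f(\Gamma))\,dm = 0$: from $g^{ij}(f) = E_{m}(\partial_{i}\ln f\,\partial_{j}\ln f)$, the fact that $\Gamma$ is of maximum-likelihood type so that $\partial_{i}\ln(f\circ\Gamma) = 0$, and the lemma that a maximum likelihood estimator realises the Cramer--Rao bound, the trace $\sigma_{ij}g^{ij}$ integrates to zero after one integration by parts, again using the decay of $f$. Substituting these three evaluations into $\int_{M}\sigma_{ij}\{\mathrm{cov}((\theta^{i}\circ u),(\theta^{j}\circ u)) - g^{ij}\}\,dm \geq 0$ yields exactly $\int_{M}\int_{A}\norm{\partial_{\sigma}f}^{2}/f\,da\,dm - \int_{M}\int_{A}\norm{\psi}^{2}/f \geq 0$.

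The main obstacle I anticipate is the cross-term cancellation in the middle step. It relies on freely commuting $\ln$ with the fuzzy operator $\partial_{\sigma}$, on the validity of the statistical Stokes theorem for the (not compactly supported) integrands that actually occur here, and on the sign bookkeeping forced by the indefinite signature of $\sigma$ — in particular the assertion that $\partial_{\sigma}f$ and $\psi$ are ``timelike'', so that the relevant norm-squares enter with the sign needed to keep $\int_{M}\sigma_{ij}\mathrm{cov}(\dots)$ of one sign. Stating precise decay hypotheses on $\psi$, $B$ and $f$, and verifying that $\Gamma$ genuinely lies in the class of estimators for which the weak Cramer--Rao inequality was proved, is where the remaining care is needed; granted those analytic points, the algebra reducing the strong Cramer--Rao inequality to the displayed form is routine.
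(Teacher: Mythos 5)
Your steps for reducing the covariance term agree with what the paper does: set $\theta=\ln$, write $\theta\circ\Gamma = \partial_{\sigma}\ln f - \psi/f = -B/f$, expand the quadratic, and dispose of the cross term via the statistical Stokes theorem together with $\mathrm{div}_{\Lambda}B=0$. The paper is no more careful than you are here (it simply cites ``Stoke's theorem'' for the identity $\int_{M}\int_{A}\sigma_{ij}\frac{\partial_{i}f\,\psi_{j}}{f} = \int_{M}\int_{A}\sigma_{ij}\frac{\psi_{i}\psi_{j}}{f}$), so your concern about the timelike/decay bookkeeping applies equally to both and is not a defect of your plan relative to the paper's.

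Where your plan genuinely departs from the paper, and where I think it fails, is the vanishing of $\int_{M}\sigma_{ij}\,g^{ij}(f(\Gamma))\,dm$. You attribute this to ``$\Gamma$ being of maximum-likelihood type so that $\partial_{i}\ln(f\circ\Gamma)=0$,'' and then also invoke the mle-realises-Cramer--Rao lemma and an integration by parts. These three ingredients do not fit together: if $\partial_{i}\ln(f\circ\Gamma)=0$ held pointwise there would be nothing left to integrate by parts, and the other lemma would be irrelevant. More importantly, the premise is not established anywhere — the paper never claims $\Gamma = \exp(-\psi/f)\,\partial_{\sigma}f$ is an mle, and if it were, the Cramer--Rao bound would be attained with \emph{equality}, forcing $\int_{M}\int_{A}\bigl(\norm{\partial_{\sigma}f}^{2}/f - \norm{\psi}^{2}/f\bigr) = 0$ exactly, which is strictly stronger than the stated inequality and not what the argument is supposed to yield. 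The paper's actual mechanism is different: it applies the chain rule, writing $g^{ij}(f(\Gamma)) = \int_{A}\frac{\partial\ln f(\Gamma)}{\partial\Gamma^{k}}\frac{\partial\Gamma^{k}}{\partial\theta^{i}}\frac{\partial\ln f(\Gamma)}{\partial\Gamma^{l}}\frac{\partial\Gamma^{l}}{\partial\theta^{j}}f$, and then observes that since $B=\mathrm{curl}\,V$, the Jacobian factor $\partial\Gamma^{k}/\partial\theta^{i}$ reduces to $\partial(\mathrm{curl}\,V)^{k}/\partial\theta^{i}$, which vanishes because the gradient of a curl is zero. It is the Jacobian factor that dies, not $\partial_{i}\ln(f\circ\Gamma)$. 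Replacing your step with this curl-cancellation argument closes the gap and brings the proof into line with the paper.
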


\emph{Remarks}.  Note that the first term of the above inequality is the channel information as I defined it before, and the second term is the bound information.  So what this inequality is saying is that the difference of these, which I likewise defined as the physical information, must be greater than or equal to zero.  In particular, since $\Lambda$ is an exponential family, any mle of it will realise the inequality \emph{critically}.  To choose such a critical mle amounts to computing the variation of $K$ with respect to the signal function $f$.  This provides the sort of backing we were looking for before to motivate the variational analysis to follow.

\begin{proof} (Completion of Proof).  I prove that $\sigma_{ij}cov((\theta^{i} \circ \Gamma),(\theta^{j} \circ \Gamma)) = \frac{\norm{\partial_{\sigma}f}^{2}}{f} - \frac{\norm{\psi}^{2}}{f}$.

So

\begin{center}
$cov((\theta^{i} \circ \Gamma),(\theta^{j} \circ \Gamma)) = \int_{A}(\partial_{\sigma}ln(f) - \frac{\psi}{f})(\partial_{\sigma}ln(f) - \frac{\psi}{f})f$
\end{center}

Expanding, we get

\begin{center}
$\int_A(\frac{\partial_{i}f\partial_{j}f}{f} + \frac{\psi_{i}\psi_{j}}{f}) - 2\int_{A}\frac{\partial_{i}f}{f}\frac{\psi_{j}}{f}f$
\end{center}

Now

\begin{center}
$\int_{M}\int_{A}\sigma_{ij}\frac{\partial_{i}f}{f}\frac{\psi_{j}}{f}f = \int_{M}\int_{A}\sigma_{ij}\psi_{i}\psi_{j}\frac{1}{f}$
\end{center}

by Stoke's theorem.

It remains to show that $\int_{M}\sigma_{ij}g^{ij}(f(\Gamma))dm = 0$.

Now

\begin{center}
$g^{ij}(f(\Gamma)) = \int_{A}\frac{\partial ln(f(\Gamma))}{\partial \Gamma^{k}}\frac{\partial \Gamma^{k}}{\partial \theta^{i}}\frac{\partial ln(f(\Gamma))}{\partial \Gamma^{l}}\frac{\partial \Gamma^{l}}{\partial \theta^{j}}f$

$= \int_{A}\frac{\partial ln(f(\Gamma))}{\partial \Gamma^{k}}\frac{\partial (curl V)^{k}}{\partial \theta^{i}}\frac{\partial ln(f(\Gamma))}{\partial \Gamma^{l}}\frac{\partial (curl V)^{l}}{\partial \theta^{j}}f$
\end{center}

But this is $0$ since $grad curl V = 0$.  This completes the proof.
\end{proof}

\begin{lem} Any weak mle of an exponential family realises the weak Cramer-Rao lower bound. \end{lem}

\emph{Further Remarks}.  Now it may seem that it is a bit restrictive to only consider physical estimators, but I claim that we can transform any map $u : M \times A \rightarrow M$ into a physical estimator.  All one needs to do is choose an appropriate likelihood function $f$ s.t. $E(\theta \circ u) = 0$ and $u = exp(-\frac{\psi}{f})(\partial_{\sigma}f)$ for some appropriate $\psi$ that will itself depend on $f$.  This transforms our problem into one of varying likelihood functions, which makes good physical sense.

Furthermore, the analysis is this section has rested often on the fact that various objects vanish at infinity.  In certain situations of interest (for instance, modelling the event horizons of black holes, or of the behaviour of charged plasmas contained in a finite vessel), this is not enough, and it is necessary to consider finite domains, or at least domains with boundary.  One is led to the following conjecture, motivated by similar work done by physicists working on the physics of black holes:

\begin{conj} The physical information, $K_{M}$ of the manifold and the physical information, $K_{\partial M}$ of its boundary are related by the following inequality:

\begin{center}
$K_{M} - K_{\partial M} \geq 0$
\end{center}

which becomes an equality for a critical choice of signal function $f$.  Here by $K_{\partial M}$ I mean of course

\begin{center}
$K_{\partial M} = \int_{\partial M}\int_{A} \{\frac{\norm{\partial_{\tau}f}^{2}_{\tau}}{f} - \frac{\norm{\psi}^{2}_{\tau}}{f}\}$
\end{center}

where $\tau$ is the restriction of $\sigma$ to the boundary of $M$.
\end{conj}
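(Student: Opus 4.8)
The plan is to mimic the strategy used in the proof of the (weak) Cramer-Rao inequality for the bulk manifold, but now carried out on the manifold-with-boundary, keeping track of the boundary terms that were previously discarded as "vanishing at infinity". First I would set up the analogue of a physical estimator adapted to a compact domain: given a map $u : M \times A \to M$, choose a likelihood function $f$ (decaying suitably, but now only in the interior) and write $u = \exp(-\tfrac{\psi}{f})(\partial_{\sigma}f)$ with $\psi$ determined by the conservation equation $div_{\Lambda}\psi = \Delta_{\Lambda}f$. The key point is that in the earlier proof the relation $E(\theta\circ\Gamma)=0$ was established by writing $\theta = d\phi$ and applying Stokes' theorem, which produced a boundary integral $\int_{\partial M}\int_{A}\phi f\, dS$ that was then thrown away. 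Here I would \emph{not} throw it away; instead I would track that term and show that it reassembles into exactly $K_{\partial M}$ as defined in the statement, with $\tau$ the restriction of $\sigma$ to $\partial M$.

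The key steps, in order, would be: (1) Rerun the lemma that a physical estimator $\Gamma = \exp(-\tfrac{\psi}{f})\partial_{\sigma}f$ satisfies $E(\theta\circ\Gamma) = -\int_{M}\int_{A}curl_{\Lambda}B$ (where $\psi = \partial_{\sigma}f + curl_{\Lambda}B$), but now applying the statistical Stokes theorem (Theorem in the "Stokes Theorem for Statistical Manifolds" section, which holds for general statistical manifolds) \emph{with boundary}, so that $\int_{M}\int_{A}curl_{\Lambda}B = \int_{\partial M}\int_{A} B$. (2) Expand $\int_{M}\sigma_{ij}\,cov((\theta^{i}\circ\Gamma),(\theta^{j}\circ\Gamma))\,dm$ exactly as in the bulk proof; the cross term $2\int_{M}\int_{A}\sigma_{ij}\frac{\partial_{i}f}{f}\frac{\psi_{j}}{f}f$ was previously simplified to $2\int_{M}\int_{A}\sigma_{ij}\psi_{i}\psi_{j}\frac{1}{f}$ via an integration by parts / Stokes step, which again silently dropped a boundary contribution; I would reinstate it and identify the surviving piece as $\int_{\partial M}\int_{A}\frac{\norm{\partial_{\tau}f}^{2}_{\tau}}{f} - \frac{\norm{\psi}^{2}_{\tau}}{f}$, i.e. $K_{\partial M}$. (3) Show the Fisher information matrix term $\int_{M}\sigma_{ij}g^{ij}(f(\Gamma))\,dm$ still vanishes: the argument is purely pointwise and algebraic ($grad\,curl\,V = 0$), so no boundary term is generated and nothing changes. (4) Assemble: the strong (pointwise) Cramer-Rao inequality, which is unaffected, gives the non-negativity of the integrand, and collecting the boundary contributions from steps (1) and (2) yields $K_{M} - K_{\partial M}\geq 0$. (5) For the equality case, observe that $\Lambda$ is a complete exponential family in the sense of Murray-Rice, so a weak maximum-likelihood estimator realizes the pointwise Cramer-Rao bound with equality; choosing such a critical $f$ forces equality throughout, which after integration gives $K_{M} = K_{\partial M}$.

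The main obstacle I expect is step (2): being genuinely careful about which Stokes/integration-by-parts applications in the original (admittedly "rushed", per the preface) argument generate boundary terms, and checking that every such term really does recombine into the specific quadratic expression $\frac{\norm{\partial_{\tau}f}^{2}_{\tau}}{f} - \frac{\norm{\psi}^{2}_{\tau}}{f}$ with the \emph{induced} metric $\tau$ rather than some other restriction of $\sigma$ or some mixed normal/tangential object. In particular the fuzzy operators $div_{\Lambda}$, $grad_{\Lambda}$, $\Delta_{\Lambda}$ each carry an integration over $A$, so the boundary term is a double integral over $\partial M \times A$, and one must verify that the decomposition $\psi = \partial_{\sigma}f + curl_{\Lambda}B$ restricts compatibly to $\partial M$ — this is where a genuine normal-derivative term could sneak in and spoil the clean form. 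A secondary concern is that the "reality"/causality condition ($\norm{\psi}^{2}\geq 0$, $\norm{\partial_{\sigma}f}^{2}\geq 0$) needs to continue to hold for the restricted metric $\tau$ on the boundary, which should follow because $\partial M$ is assumed (as everywhere in this theory) to be a spacelike hypersurface, but this hypothesis ought to be stated explicitly in the conjecture.
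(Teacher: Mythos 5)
The paper does not prove this statement; it is stated explicitly as a \emph{conjecture}, ``motivated by similar work done by physicists working on the physics of black holes,'' and the text immediately following it treats the result as an aspirational holographic principle that would \emph{justify} discarding boundary terms, not as something that falls out of the existing Cramer--Rao bookkeeping. So there is no argument in the paper for you to match; you are proposing to fill a gap the author deliberately left open.

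That said, your proposal has a structural problem that you half-notice in your final paragraph but do not resolve, and it is fatal rather than a detail to be checked. The boundary contributions produced by Stokes' theorem or integration by parts in steps (1) and (2) are normal-flux terms: they have the schematic form $\int_{\partial M}\int_{A}\langle V, n\rangle$ for some vector field $V$ built out of $f$, $\psi$, $B$ and their first derivatives, where $n$ is the outward unit normal. The quantity $K_{\partial M}$ in the conjecture is of a completely different kind: it is the \emph{intrinsic} physical information of $\partial M$ equipped with the induced metric $\tau$, involving $\norm{\partial_{\tau}f}^{2}_{\tau}$, the squared \emph{tangential} gradient of the restriction of $f$, and the corresponding restricted $\psi$. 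There is no identity that turns a normal flux into a tangential Dirichlet energy; one sees this already in the simplest model ($\int_{\Omega}\norm{\nabla u}^{2}$ on a Euclidean domain, whose integration-by-parts boundary term $\int_{\partial\Omega}u\,\partial_{n}u$ bears no canonical relation to $\int_{\partial\Omega}\norm{\nabla_{\tau}u}^{2}$). So step (2), which carries the whole weight of the argument, does not go through as stated, and no amount of care with the fuzzy operators will make the two objects coincide. If the conjecture is true it will require an argument of a genuinely different flavour — e.g.\ relating the bulk minimization problem to an induced minimization problem on the boundary (a trace-type or Dirichlet-to-Neumann argument), or proving it directly from the statistical structure as a monotonicity of information under restriction — not from reinstating discarded flux terms. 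A secondary issue you should also confront before investing further: $K_{M}$ and $K_{\partial M}$ are integrals over manifolds of different dimensions, so the raw inequality $K_{M}-K_{\partial M}\geq 0$ is not scale-invariant and presumably needs a length-scale or a normalization that neither the conjecture nor your proposal supplies.
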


We can think of this as a kind of "holographic principle" for the physical information $K$.  Note that our earlier result can be considered a special case of the above, for $\partial M = \phi$.  Note that if this conjecture is true, as a particular consequence it suffices in applications to discard all boundary terms that crop up in a calculation, or, in other words, to consider problems with Dirichlet boundary conditions.

\emph{Final Remark}.  All of this suggests that we define ourselves an information functional

\begin{center}
$K(f) = \int_{M}\int_{A}\frac{\norm{\partial_{\sigma}f}^{2}}{f}dadm - \int_{M}\int_{A}\frac{\norm{\psi}^{2}}{f}$
\end{center}

and play the following game- choose a family of signal functions $f$ and minimise this functional within this family.  The best we could hope for of course would be to zero the above functional.  However, in most circumstances, all we can hope for is to make it as small as possible.  This is the game that I shall play in what is to follow.

\subsection{Physical estimators and weak Cramer-Rao (for Riemann-Cartan manifolds)}

The notation in the previous subsection is unfortunately a bit too cumbersome and too particular for our tastes.  Ultimately we would like to express things more succinctly and in full generality.  More precisely we would like to deal with the case of our metrics being general nondegenerate bilinear forms instead of merely symmetric.

As before, recall the following definition and its initial consequences:

\begin{dfn} An estimator $u$ is said to be \emph{weakly unbiased} if

\begin{center}
$E(\theta \circ u) = \int_{M}E_{m}(\theta \circ u)dm = E(\theta \circ Id)$
\end{center}
\end{dfn}

\begin{lem} Weakly unbiased estimators satisfy the additional relation that

\begin{center}
$E(\theta^{i}\frac{\partial ln(f)}{\partial \theta^{j}}) = 0$.
\end{center}

\begin{proof} Similar to that for unbiased estimators. \end{proof} \end{lem}

\begin{cor} Note that if our likelihood function $f$ decays sufficiently fast towards infinity from the center of our coordinates $\theta$ that $E(\theta \circ Id) = 0$.

\begin{proof} Write $\theta = d\phi$.

Certainly

\begin{center}
$E(\theta \circ Id) = \int_{A}\int_{M}\theta f dmda = \int_{A}\{\int_{\partial M}\phi f dS - \int_{M}\phi \frac{\partial f}{\partial \theta}dm\}da$
\end{center}

Now the first term on the right disappears since $f$ decays sufficiently fast.  The second term on the right disappears by the lemma.
\end{proof} \end{cor}

\begin{lem}  Any estimator $\Gamma$ of the form $\Gamma = \partial_{\Lambda}f$ is a weakly unbiased estimator, where $f$ satisfies the conservation equation

\begin{center} $0 = \Delta_{\Lambda}f$ \end{center}

where $\Delta_{\Lambda}$ is the standard fuzzy differential operator for the Laplacian, and $\partial_{\Lambda}$ is the gradient operator associated to the space.  I shall call such estimators physical estimators.

\begin{rmk} Note that a physical estimator corresponds precisely to a geometry with signal function where probability flux is conserved.  So physics occurs in spaces that "make sense". \end{rmk}

\begin{proof} In a physical situation, which is the situation towards this is to apply, $f$ will decay sufficiently quickly that we can use our remark.  So all we need to show is that:

\begin{center} $E(\theta \circ \Gamma) = 0$ \end{center}

Now $E(\theta \circ \Gamma) = \int_{M}\int_{A}ln(\partial_{\sigma}f)f$ for $\theta = ln = exp^{-1}$ as our choice of coordinate chart.

So

\begin{center}
$\int_{M}\int_{A}ln(\partial_{\sigma}f)f = \int_{M}\int_{A}ln(\partial_{\sigma}f)f - \partial_{\sigma}f$
\end{center}

Now $ln \circ \partial_{\sigma} = \partial_{\sigma} \circ ln$, so this becomes

\begin{center}
$\int_{M}\int_{A}\partial_{\sigma}f $
\end{center}

But this is zero by conservation of probability.

\end{proof} \end{lem}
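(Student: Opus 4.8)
The plan is to follow the template of the preceding lemma on physical estimators of the form $\Gamma = \exp(-\psi/f)\,\partial_{\sigma}f$, specialised to the case in which the divergence-free correction $B$ (hence $\psi - \partial_{\sigma}f$) vanishes. First I would recall that, by the definition of weak unbiasedness, it is enough to establish $E(\theta \circ \Gamma) = E(\theta \circ \mathrm{Id})$, and then invoke the corollary above: in the situations of physical interest $f$ decays rapidly towards infinity, so the boundary term in the integration-by-parts expression for $E(\theta \circ \mathrm{Id})$ drops out while the interior term vanishes by the relation $E(\theta^{i}\,\partial \ln f / \partial \theta^{j}) = 0$ valid for weakly unbiased estimators; hence $E(\theta \circ \mathrm{Id}) = 0$ and the lemma reduces to showing $E(\theta \circ \Gamma) = 0$.

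Second, I would work in the logarithmic chart $\theta = \ln = \exp^{-1}$ and expand $E(\theta \circ \Gamma) = \int_{M}\int_{A} \ln(\partial_{\sigma}f)\, f\, da\, dm$. Using that the logarithm commutes with the gradient operator, $\ln \circ \partial_{\sigma} = \partial_{\sigma}\circ \ln$, together with the chain rule $f\,\partial_{\sigma}(\ln f) = \partial_{\sigma}f$, the integrand simplifies and one is left with $E(\theta \circ \Gamma) = \int_{M}\int_{A}\partial_{\Lambda}f\, da\, dm$.

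Third, I would dispatch this remaining integral in either of two equivalent ways. One option is to apply the statistical Stokes theorem established earlier to rewrite $\int_{M}\int_{A}\partial_{\Lambda}f$ as $\int_{\partial M}\int_{A}f$, which vanishes because $f$ decays at infinity (or, in the closed case, because $\partial M = \emptyset$). The other is to invoke directly the hypothesis $\Delta_{\Lambda}f = 0$, i.e. conservation of probabilistic flux, which says precisely that the net fuzzy-gradient flux of $f$ through $M$ is zero. Either route closes the argument; moreover the accompanying remark — that a physical estimator corresponds exactly to a signal function with conserved probability flux — then falls out immediately from the role played by $\Delta_{\Lambda}f = 0$.

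The hard part will be the second step. The manipulations with the fuzzy gradient $\partial_{\Lambda} = \int_{A}f(m,b)\,\mathrm{grad}_{b}(\cdot)\,db$ are only formal as written: one must make sense of "$\ln$" applied to a $TM$-valued object, justify the commutation $\ln \circ \partial_{\Lambda} = \partial_{\Lambda}\circ \ln$ and the interchange of the $A$- and $M$-integrations, and — most importantly — use the general statistical derivative introduced earlier in place of the naive $\partial_{\sigma}$, tracking the extra $\partial \sigma_{b}/\partial c$ terms. One should also quantify the decay of $f$ (and of any auxiliary field) sharply enough that every boundary term genuinely vanishes, and check that $\Delta_{\Lambda}f = 0$ is consistent with the normalisation $\int_{A}f\sqrt{\det h}\,da = 1$, so that $\Gamma = \partial_{\Lambda}f$ is a bona fide estimator $M \times A \to M$ to begin with.
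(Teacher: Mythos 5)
Your proposal follows essentially the same approach as the paper: reduce to $E(\theta\circ\Gamma)=0$ via the decay of $f$, expand in the logarithmic chart, commute $\ln$ and the gradient, and then dispatch $\int_M\int_A\partial_{\Lambda}f$ by conservation of probability. You spell out the chain-rule step $f\,\partial_{\sigma}(\ln f)=\partial_{\sigma}f$ that the paper compresses into a rather cryptic subtraction, and your cautions about the formal status of $\ln$ applied to a $TM$-valued object and the unglossed interchange of integrals are well taken; but the underlying argument is the paper's.
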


\begin{lem}
For physical estimators we have a corresponding \emph{weak} Cramer-Rao inequality:

\begin{equation}
\int_{M}\sigma_{ij} \{cov((\theta^{i} \circ u),(\theta^{j} \circ u)) - g^{ij}\}dm \geq 0
\end{equation}
\end{lem}

\begin{proof}  Observing that as before we have that 

\begin{center}
$\int_{M}\sigma_{ij}E_{m}((\theta^{i} \circ u - E_{m}(\theta^{i} \circ u) - \frac{\partial ln(f(u))}{\partial \theta^{k}}g^{ik})(\theta^{j} \circ u - E_{m}(\theta^{j} \circ u) - \frac{\partial ln(f(u))}{\partial \theta^{l}}g^{jl}))$
\end{center}

is positive semi-definite, since $u := \partial_{\Lambda}f$ is timelike, we conclude the lemma.  
\end{proof}

\begin{lem} Any weak mle of an exponential family realises the weak Cramer-Rao lower bound. \end{lem}

\begin{thm} (The EPI principle (for Riemann-Cartan manifolds)).  With the understanding that a physical estimator is to be used, the Cramer-Rao inequality is equivalent to the nonnegativity of the Fisher Information. In other words, given a distribution $f$ over a space of $n$-dimensional nondegenerate bilinear forms of constant index $\Lambda$, we have that

\begin{center} $I(f) := \int_{M}\int_{A}\frac{\norm{\partial_{\Lambda}f}^{2}}{f} \geq 0$ \end{center} \end{thm}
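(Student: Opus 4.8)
The plan is to deduce this ``EPI principle'' directly from the weak Cramer-Rao inequality for physical estimators, which is the immediately preceding lemma. Recall that the final statement asserts $I(f) := \int_{M}\int_{A}\norm{\partial_{\Lambda}f}^{2}/f \geq 0$ for a distribution $f$ over a space $\Lambda$ of $n$-dimensional nondegenerate bilinear forms of constant index, with the understanding that a physical estimator $u := \partial_{\Lambda}f$ (satisfying $\Delta_{\Lambda}f = 0$) is in play. First I would set up the weak Cramer-Rao inequality in the form established above,
\begin{equation}
\int_{M}\sigma_{ij}\{cov((\theta^{i}\circ u),(\theta^{j}\circ u)) - g^{ij}(f(u))\}dm \geq 0,
\end{equation}
and then carry out the same expansion of the covariance term used in the Riemannian-metric-measure case: writing $u = \partial_{\Lambda}f$ and $\theta = \ln = \exp^{-1}$, one has $\theta\circ u = \partial_{\sigma}\ln f$, so that $cov((\theta^{i}\circ u),(\theta^{j}\circ u)) = \int_{A}(\partial_{i}\ln f)(\partial_{j}\ln f)f = \int_{A}\partial_{i}f\,\partial_{j}f/f$. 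Contracting with $\sigma_{ij}$ and integrating over $M$ gives exactly $\int_{M}\int_{A}\norm{\partial_{\Lambda}f}^{2}/f$ up to the $\partial_{\Lambda}$-versus-$\partial_{\sigma}$ bookkeeping that is absorbed into the definition of the statistical operators.

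The second step is to show the Fisher information matrix term $\int_{M}\sigma_{ij}g^{ij}(f(u))dm$ vanishes for physical estimators, reproducing the argument from the symmetric case: since $u = \partial_{\Lambda}f$ and (by conservation of probabilistic flux) $u$ differs from a pure gradient only by a divergence-free piece which can be written as a curl, one gets $g^{ij}(f(u)) = \int_{A}\frac{\partial\ln f(u)}{\partial u^{k}}\frac{\partial(\mathrm{curl}\,V)^{k}}{\partial\theta^{i}}\frac{\partial\ln f(u)}{\partial u^{l}}\frac{\partial(\mathrm{curl}\,V)^{l}}{\partial\theta^{j}}f = 0$ because $\mathrm{grad}\,\mathrm{curl}\,V = 0$. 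With the $g^{ij}$ term gone, the weak Cramer-Rao inequality collapses to precisely $I(f)\geq 0$, which is the claim. A final bookkeeping point: one must invoke that $\partial_{\Lambda}f$ is timelike (a consequence of the defining causality condition on a fuzzy Riemannian manifold, i.e. the positivity requirement $\norm{\psi(m,a)}^{2}_{(m,a)}\geq 0$ that forces the relevant vectors into the positive cone), so that $\sigma_{ij}(\partial^{i}f)(\partial^{j}f)\geq 0$ pointwise even though $\sigma$ itself is not positive definite; this is what legitimises passing from ``positive semi-definiteness of the quadratic form'' to ``nonnegativity of the contracted integral.''

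The main obstacle I anticipate is not the algebra but justifying the integrations by parts / Stokes-type manipulations in the general Riemann-Cartan setting: the vanishing of the cross term $\int_{M}\int_{A}\sigma_{ij}\frac{\partial_{i}f}{f}\frac{\psi_{j}}{f}f$ and the vanishing of $\int_{M}\int_{A}\partial_{\sigma}f$ both rely on the statistical Stokes theorem (Theorem 2.5.2 / the statistical Stokes theorem of Chapter 6) together with sufficiently rapid decay of $f$ and of the curl-potential $B$ at infinity, or, on a manifold with boundary, on discarding boundary terms. Making the decay hypotheses precise, and checking that the statistical exterior derivative $d_{\Lambda}$ genuinely satisfies the hypotheses of that Stokes theorem for the non-symmetric metrics in $\Lambda$ (the Cartan connection exists and is unique by the Remark following the fundamental theorem of Riemannian geometry, so the operators $\mathrm{div}_{\Lambda}$, $\Delta_{\Lambda}$ are well-defined), is the delicate part; everything downstream is then a routine repetition of the symmetric-case computation with $\sigma$ allowed to be a general nondegenerate form of fixed index.
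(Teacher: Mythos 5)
Your proposal follows essentially the same route as the paper's own proof: express the contracted covariance as $\int_{M}\int_{A}\norm{\partial_{\Lambda}f}^{2}/f$, kill the Fisher-information-matrix term $\int_{M}\sigma_{ij}g^{ij}(f(\partial_{\Lambda}f))$ via the observation that a divergence-free $u$ is a curl, whose gradient vanishes, and then invoke the preceding weak Cramer-Rao lemma (which already used timelikeness of $\partial_{\Lambda}f$ for the pointwise positivity). The paper's version is more terse; your additional discussion of the decay/Stokes bookkeeping is a reasonable elaboration but not a different argument.
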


\begin{proof} First of all, it is clear that 

\begin{center} $\int_{M}\sigma_{ij}cov_{m}(\theta^{i} \circ \partial_{\Lambda}f,\theta^{j} \circ \partial_{\Lambda}f) = \int_{M}\int_{A}\frac{\norm{\partial_{\Lambda}f}^{2}}{f}$ \end{center}

so it remains to show that $\int_{M}\sigma_{ij}g^{ij}(f(\partial_{\Lambda}f)) = 0$.

Now

\begin{align} \int_{M}\sigma_{ij}g^{ij}(f(u)) &= \int_{M}\int_{A}\sigma_{ij}\frac{\partial ln(f(u))}{\partial u^{k}}\frac{\partial u^{k}}{\partial \theta^{i}}\frac{\partial ln (f(u))}{\partial u^{l}}\frac{\partial u^{l}}{\partial \theta^{j}}f \nonumber \\
&=  \int_{M}\int_{A}\sigma_{ij}\frac{\partial ln(f(u))}{\partial u^{k}}\frac{\partial curl(V)^{k}}{\partial \theta^{i}}\frac{\partial ln (f(u))}{\partial u^{l}}\frac{\partial curl(V)^{l}}{\partial \theta^{j}}f \nonumber \\
& \text{ (for some V, since the divergence of u is zero)} \nonumber \\
&= 0 \nonumber \\
& \text{ (since grad(curl) is the zero operator)} \end{align}

This proves the EPI principle in the form we would like to use it.

\end{proof}

\section{Fisher Information is the optimal sharp information measure}

\subsection{Introduction}

There were a number of criticisms leveled at Frieden's program of using Fisher information to recover physical principles.  But there are two in particular that require to be addressed before I go any further:

\begin{itemize}
\item[(i)] What is the mathematical justification for using the EPI principle?
\item[(ii)] Why the Fisher information? Why not some other arbitrary information measure?
\end{itemize}

I believe that I have provided a rather thorough explanation of (i), grounding it on the fact that the Fisher information satisfies the Cramer-Rao inequality for statistical manifolds.  (ii) however is not so clear cut.  In particular, there are many, many other information measures that also satisfy the Cramer-Rao inequality.  Many, many more.  To have an idea of just how many, I will point out one general result:

The \emph{L - information}, 

\begin{center}
$I(L) = \int_{M}\int_{A}\frac{\norm{Lf}^{2}}{f}$
\end{center}

where $L$ is a differential operator, satisfies the Cramer-Rao inequality.

For example, $L_{0} = (\partial_{1},\partial_{2},...,\partial_{n})$ is the standard Fisher operator.  Restricting to the case of a $3$-dimensional chart, note that we could also have used $(\partial_{3},\partial_{1},\partial_{2})$, $(\partial_{2}^{2},\partial_{1},\partial_{1}^{3})$, $(\partial_{1} + \partial_{2} + 4\partial_{3},\partial_{2},\partial_{3} + 2\partial_{2}^{2})$, $(x_{2}\partial_{1} + x_{1}\partial_{2},\partial_{2},\partial_{3})$ etc, to have an idea of how many degrees of freedom there are here.

So I shall now state the main result of this section.

\begin{thm}  Over all possible information measures, the $L_{0}$-information, or Fisher Information, is \emph{critical}.  In other words, it is an \emph{optimal} measure of information. \end{thm}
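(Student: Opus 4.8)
The plan is to set up a genuine variational problem on the space of information measures and then show that the Fisher ($L_0$) information is a critical point of it, in exactly the same sense that the EPI principle demands $K$ be critical in the signal function $f$. First I would make precise the space over which one varies: following the $L$-information construction, an admissible information measure is determined by a (matrix-valued, first-order for the present purposes) differential operator $L$, and the functional to be extremised is $\mathcal{I}(L) = \int_M \int_A \frac{\norm{Lf}^2}{f}$, with $f$ held fixed as the given signal function on $\Lambda = (M,A,f)$. The operator $L_0 = (\partial_1,\dots,\partial_n)$ is the claimed critical point. So I would introduce a one-parameter family $L_\varepsilon = L_0 + \varepsilon B$, where $B$ ranges over a suitable class of zeroth- plus first-order perturbations (respecting whatever covariance/naturality constraints one wants the measure to obey — this constraint class is where the ``optimality'' really lives, and must be pinned down carefully), and compute $\frac{d}{d\varepsilon}\mathcal{I}(L_\varepsilon)\big|_{\varepsilon=0}$.

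The key computation is then: $\frac{d}{d\varepsilon}\mathcal{I}(L_\varepsilon)\big|_{\varepsilon = 0} = 2\int_M\int_A \frac{\langle L_0 f, Bf\rangle}{f}$. I would integrate by parts (using the Stokes theorem for statistical manifolds proved earlier in the excerpt, which lets me discard the boundary terms provided $f$ and its derivatives decay appropriately) to move $B$ onto the other factor, obtaining an expression of the form $\int_M\int_A \big(B^\ast(\tfrac{L_0 f}{f})\big)\cdot f$ plus correction terms coming from the $1/f$ weight. The claim of criticality amounts to showing this vanishes for all admissible $B$, which I expect to follow from the conservation/divergence-free conditions imposed on $f$ (the Divergence Free Flow Condition, $\int_A \Delta_\Lambda f\, da = 0$, and the normalisation $\int_A f = 1$): these are precisely the identities that kill the first-order term, just as $E(V) = 0$ for the score kills the analogous term in the classical Cramér--Rao derivation. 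In effect the Fisher operator is singled out because it is the unique (up to the scalar constant already noted to be irrelevant for $\delta K = 0$) first-order operator for which the Euler--Lagrange expression of $\mathcal{I}$ is annihilated by the flux-conservation constraint.

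I would then package this as: the normalisation and divergence-free conditions on $f$ are exactly the constraints under which $\delta\mathcal{I}(L)\big|_{L = L_0} = 0$, so among all $L$-informations the Fisher information is stationary. As a final remark I would note that this dovetails with the Cencov-type uniqueness circle of ideas mentioned in the acknowledgements: the first-order stationarity here is the infinitesimal shadow of the rigidity that makes $L_0$ the canonical choice.

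The main obstacle, I expect, is not the integration by parts itself but specifying the correct admissible class of perturbations $B$ and the correct side constraints on $f$ so that the first variation genuinely vanishes; with no constraints at all the functional $\mathcal{I}(L)$ has no interesting critical points (one can always decrease $\norm{Lf}^2$ pointwise), so the content of the theorem is entirely in choosing the constraint manifold so that $L_0$ sits at a critical point of the restricted functional. Getting the interplay between the $1/f$ weight, the matrix structure of the metric indexing space $A$, and the conservation law to conspire correctly in the boundary-free integration by parts is the delicate part; everything else is bookkeeping with the statistical Stokes theorem.
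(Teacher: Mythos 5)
Your proposal takes a genuinely different route from the paper, and it contains a gap that the paper's route is designed to avoid. The paper does not compute a Gateaux derivative $L_\varepsilon = L_0 + \varepsilon B$. Instead it introduces a \emph{meta-information} in which the information operator itself is made a statistical object: to each $(m,a)$ one attaches a probability density $\lambda(m,a,h)$ over the coefficient space of operators $L_{ijk}f = h_{ijk}(\partial_j)^k f$, subject to the same normalisation ($\int_B \lambda\,dh = 1$) and conservation conditions that the signal function itself obeys. Specialising to the sharp case $\lambda = \delta(b - \kappa(m,a))$ gives
\[ I = \int_M\int_A \frac{\norm{\kappa_{jk}(m,a)(\partial_j)^k f}^2}{f}, \]
and the first variation with respect to $\kappa$ under those constraints is argued to force $\kappa$ to be constant in $m$, $a$, and $k$; the constant matrix is then absorbed into a redefined metric $\bar{\sigma}$, reducing the problem to the one-parameter family of $k$-informations, and the paper closes with a separate (and explicitly hand-waving) over/under-specification argument in the following discussion to prefer $k = 1$.

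Your plan is to vary $L$ directly and argue that $\delta\mathcal{I} = 2\int\langle L_0 f, Bf\rangle/f$ vanishes by the normalisation and divergence-free conditions on $f$ together with the statistical Stokes theorem. You already flag the first obstacle (no critical points without a constraint class on $B$; indeed $B = L_0$ gives $\delta\mathcal{I} = 2\mathcal{I}(L_0) \neq 0$), but the deeper problem is that you place the killing constraint on $f$ where the paper places it on the distribution $\lambda$ over operators. The divergence-free condition on $f$ is a single global scalar identity ($\int_A \Delta_\Lambda f = 0$); it cannot cancel $\int\langle L_0 f, Bf\rangle/f$ for an arbitrary coefficient field $B(m,a)$, which carries a matrix's worth of free functions. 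The score-average argument $E(V) = 0$ you invoke from the Cramer--Rao derivation kills exactly one scalar term; here you would need to kill a whole field of terms. That is the gap, and it is not closable by more care with the integration by parts: the admissible constraint manifold genuinely has to be discovered, and the paper's answer is to constrain $\lambda$ rather than $f$.

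There is also a structural mismatch: a first-order perturbation $B$ leaves the differential order of the operator fixed, so your computation can never probe the derivative-order parameter $k$, which is where most of the paper's residual ``optimality'' discussion actually lives (the paper ends with the weaker claim that every $k$-information is optimal, then argues philosophically for $k=1$). If your approach could be completed it would give a cleaner stationarity statement within the $k=1$ stratum than the paper's does; but it would leave the selection between strata entirely untouched.
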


\begin{rmk}  This does not preclude the fact that there may be other (locally) optimal measures.  However it is a good starting point for further philosophical discussion. \end{rmk}

For now I will restrict myself to the case that the signal function is sharp.

Let $S(M)$ be the space of functionals $F : (M \rightarrow R) \rightarrow R$ on our manifold $M$.  We would like to come up with some natural parametrisation of these.


We expect such a parametrisation to be compatible with the metric such that our functionals $F$ are locally diagonalisable with signature identical to the metric.  So locally our space has the dimension of the space of positive matrices on $R^{n}$.

\subsection{Variation of the meta information}

To gather the appropriate tools to prove the theorem, we need to define a new functional which I shall call the \emph{meta-information}.  First I shall assemble the ingredients:

Essentially we wish to associate a probability distribution of information operators $L(m,a)$ to each pair $(m,a)$ in our statistical manifold.  Such information operators are characterised by the way they act on vectors.  In particular, we define the information $h_{ijk}$ to be the coefficient of the $i$th component of a general operator $L$, being the coefficient of the $j$th directional vector field in that component raised to the $k$th power.  In particular,

\begin{center}
$L_{ijk}f = h_{ijk}(\partial_{j})^{k}f$
\end{center}

Just as with our inner products for statistical manifolds as before, we play the same game and define a probability density functional $\lambda(m,a,h)$ such that for a fixed $(m,a)$, $\int_{B}\lambda(m,a,h)dh = 1$, and also, as before, $\int_{B}\Lambda(\lambda(m,a,h))dh = 0$ (with the appropriate conventions for fuzzy derivatives).

Then, we may define the meta-information to be

\begin{center}
$I = \int_{M}\int_{A}\int_{B}\lambda(m,a,b)\frac{\norm{L_{(m,a,b)}
f(m,a)}^{2}_{(m,a)}}{f(m,a)}$
\end{center}

Make the assumption now that the information distribution functional is \emph{sharp}, that is, a particular information is preferred. Then $\lambda(m,a,b) = \delta(b - \kappa(m,a))$, and

\begin{center}
$I = \int_{M}\int_{A}\frac{\norm{\kappa_{jk}(m,a)(\partial_{j})^{k}f(m,a)}^{2}}{f(m,a)}$
\end{center}

Take the first variation with respect to $\kappa$ and require that it be zero; we get immediately that $\kappa$ is constant in $m$ and $a$ if it is critical.  Furthermore, it is constant in $k$, if we use the notion of generalised derivative (to real numbers).  So $\kappa$ is essentially just $\kappa_{ij}(\partial_{j})^{k}$ for some fixed $k$ where $\kappa_{ij}$ is a constant matrix. One may then make the observation that it is possible to find a new metric $\bar{\sigma}$ such that $\norm{\kappa_{j}(\partial_{j})^{k}f}^{2}_{\sigma} = \norm{\partial^{k}f}^{2}_{\bar{\sigma}}$, ie it is possible to absorb $\kappa$ completely into the metric $\sigma$.

So we have reduced the problem to one of examining the so called $k$-information.  Why then should the $1$-information be preferred?

I guess then the ultimate conclusion is this:

\begin{thm}  For each $k \in R$, the $k$-information is optimal.  \end{thm}

However it shall turn out that this is relatively irrelevant when it comes to matters of physics, leaving one with a (potentially infinite) number of integration constants.  I suppose if there was any way to say that the one information is preferred it would be simply that there is no guide as to how to determine these integration constants; therefore assuming they exist is a nonsense (overspecification), which rules out criticality for all informations except one or less.  But if one considers the $k$-information for $k < 1$ one gets nothing at all (underspecification).  So $k = 1$ is the only possibility.

This shall all hopefully become clearer to the reader after perusing the section to come.

\subsection{Alternative perspectives}

However, we can make the following important observation, that what we have been doing has been slightly unnecessary and perhaps a little silly.

Recall the idea of maximal likelihood estimator (mle) from before.

From Murray and Rice's book on the subject \cite{[Mu]} we have that

\begin{thm} For any exponential family (statistical structure) the associated mle is unique. \end{thm}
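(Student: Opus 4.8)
The plan is to reduce the statement to the elementary fact that a strictly concave function on a convex domain has at most one critical point, and to identify that concavity with positivity of the Fisher information matrix $g_{ij}(f)$ introduced above. First I would put the likelihood function into its natural parametrisation: since $\Lambda=(M,A,f)$ is an exponential family in the sense used here, there is a chart $\theta=(\theta^1,\dots,\theta^n)$ on $M$ and sufficient statistics $T_i$ on the sample space so that, suppressing the $a$-dependence,
\begin{equation}
\ln f(x,\theta) = \theta^i T_i(x) - \psi(\theta) + c(x),
\end{equation}
with $\psi$ the cumulant (log-partition) function. Completeness of $M$ with respect to each metric $\sigma(\cdot,a)$ is what guarantees that the natural-parameter region is an open convex set, so the framework for a concavity argument is in place.

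Next I would differentiate $\ell_x(\theta):=\ln f(x,\theta)$ twice in $\theta$. One gets $\partial_i\ell_x = T_i(x)-\partial_i\psi(\theta)$ and $\partial_i\partial_j\ell_x = -\partial_i\partial_j\psi(\theta)$, which is independent of $x$, and the crucial identity
\begin{equation}
\partial_i\partial_j\psi(\theta) = g_{ij}(f) = E\!\left(\partial_i\ln f\,\partial_j\ln f\right).
\end{equation}
The right-hand side is a Gram matrix, hence positive semi-definite, and positive definite precisely when the family is minimal (the $T_i$ affinely independent), which we may assume after discarding redundant coordinates. Thus $\ell_x$ is strictly concave in $\theta$ on a convex set, so it has at most one stationary point. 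Since any maximum likelihood estimator $v$ is defined pointwise by $\partial_i\ln f(v(m,a),a)=0$ together with the requirement that this be a local maximum, strict concavity forces that point to be unique (and globally maximal), giving uniqueness of the mle.

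The main obstacle is the degeneracy inherited from the semi-Riemannian setting: the quadratic form that is manifestly positive is $g_{ij}(f)$, built from the probabilistic structure of $f$, not the index-$k$ metric $\sigma$ on $M$; so I must phrase concavity entirely in terms of $g$ and never invoke positivity of $\sigma$. The second delicate point is convexity of the parameter domain — I would need the connectedness hypothesis on the slice of the space of bilinear forms corresponding to $A$ (built into the definition of a fuzzy Riemannian manifold) to propagate to convexity of the natural-parameter region, possibly after a coordinate change; this is where a careful argument, rather than a formal one, is required. A minor matter to record is that without minimality one obtains uniqueness only up to the obvious affine reparametrisation, so the clean statement presupposes passing to a minimal representation of the family.
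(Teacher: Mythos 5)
The paper offers no proof of this theorem at all: it is quoted verbatim as a result from Murray and Rice \cite{[Mu]} and used as a black box, so there is no in-paper argument to compare against. Your argument — pass to natural parameters so that $\partial_i\partial_j \ln f(x,\theta) = -\partial_i\partial_j\psi(\theta) = -g_{ij}(f)$ is $x$-independent and negative definite (for a minimal representation), then invoke strict concavity of the log-likelihood on a convex natural-parameter domain to get at most one stationary point — is precisely the standard proof given in the cited reference, and your caveats about passing to a minimal representation and verifying convexity of the parameter region in the degenerate semi-Riemannian setting are exactly the points that need care. The proposal is correct and supplies the argument the paper delegated to Murray and Rice.
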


\begin{rmk} In particular via my construction this means that for every geometric structure there is a unique mle. \end{rmk}

Also from \cite{[Mu]} we have that

\begin{thm} The unique mle realises the Cramer-Rao lower bound for the Fisher Information. \end{thm}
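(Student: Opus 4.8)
The plan is to reduce this to the classical Cramér--Rao theorem, which is cited earlier in the paper as available from Murray and Rice. First I would recall exactly what an mle is in this setting: it is an unbiased estimator $v : M \times A \to M$ realising a local maximum of $\ln f(v(m,a),a)$, so in particular it satisfies the critical point equation $\frac{\partial \ln f(v(m,a),a)}{\partial \theta^{i}} = 0$ for every $i$. The goal is to show that for such a $v$ the Cramér--Rao inequality
\begin{equation*}
\mathrm{cov}(\theta^{i} \circ v, \theta^{j} \circ v) - g^{ij}(f(v)) \geq 0
\end{equation*}
holds with equality as a matrix identity (equivalently, the positive semidefinite quadratic form whose expectation appears in the proof of Cramér--Rao vanishes identically when evaluated at $v$).

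The key steps, in order, would be: (1) recall from the proof of the strong Cramér--Rao inequality given earlier in the excerpt that the nonnegativity comes from the fact that
\begin{equation*}
E_{m}\Big(\big(\theta^{i}\circ u - E_{m}(\theta^{i}\circ u) - \tfrac{\partial \ln f(u)}{\partial \theta^{k}}g^{ik}\big)\big(\theta^{j}\circ u - E_{m}(\theta^{j}\circ u) - \tfrac{\partial \ln f(u)}{\partial \theta^{l}}g^{jl}\big)\Big)
\end{equation*}
is a Gram-type expression, hence positive semidefinite for any unbiased estimator $u$; (2) observe that a quadratic form of this shape is zero precisely when the random vectors $\theta^{i}\circ u - E_{m}(\theta^{i}\circ u)$ and $g^{ik}\,\partial_{\theta^{k}}\ln f(u)$ agree $f$-almost everywhere; (3) exploit the exponential family structure --- writing the likelihood in canonical form $f = \exp(\theta^{i} T_{i} - \psi(\theta))$ so that $\partial_{\theta^{k}} \ln f = T_{k} - E_{m}(T_{k})$ and the Fisher metric $g$ is the Hessian of $\psi$ --- to see that the sufficient statistic $T$ and the mle are affinely related, which is exactly the content of the mle critical-point equation solved against the canonical parametrisation; (4) conclude that the two random vectors in (2) coincide, so the Gram form degenerates and equality holds. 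Then I would note that the weak version for physical estimators follows immediately: integrate the matrix identity against $\sigma_{ij}$ over $M$, using that the $\sigma_{ij} g^{ij}$ term was already shown to vanish for physical estimators in the earlier lemmas, so the weak Cramér--Rao bound is also attained critically.

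I would also remark that this is essentially a packaging result: the substantive work (existence and uniqueness of the mle, and the computation identifying the Fisher metric with $\mathrm{Hess}\,\psi$) is quoted from Murray and Rice, and what remains is the linear-algebra observation that a positive semidefinite Gram form vanishes on the span where its generating vectors coincide. The main obstacle I anticipate is not the algebra but making precise the claim that the mle is \emph{affinely} tied to the canonical sufficient statistic in the generality of a fuzzy Riemannian manifold --- one needs the sample space $M \times A$ and the likelihood $f : M\times A \to R$ genuinely to form a (curved-free, i.e. full) exponential family in Murray--Rice's sense, which was asserted earlier but would have to be checked carefully, in particular that $M$ being complete with respect to each metric $\sigma(\cdot,a)$ really does give completeness of the family. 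Once that structural point is granted, the equality in Cramér--Rao is forced, and the statement follows.
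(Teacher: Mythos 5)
The paper does not prove this statement: both here and for the parallel Lemma a few pages earlier (``Any mle of an exponential family realises the Cramer-Rao lower bound''), the body of the ``proof'' is simply the sentence \emph{See Murray and Rice's book}, so there is no in-paper argument to compare against. What you have done is reconstruct the argument the paper delegates to the reference, and your reconstruction follows the standard route: isolate the equality case of the Gram-form semidefiniteness that drives Cram\'er--Rao, then use the canonical form $f = \exp(\theta^i T_i - \psi(\theta))$ to show the centred estimator and the information-contracted score agree, forcing the form to degenerate. That is correct in outline and is indeed what Murray and Rice do.

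One point deserves more care than you give it. The estimator that exactly saturates Cram\'er--Rao in an exponential family is the sufficient statistic $T$, and $T$ is the mle of the \emph{expectation} (dual) parameter $\mu = \nabla\psi(\theta)$, not of the canonical parameter $\theta$; the mle of $\theta$ is $(\nabla\psi)^{-1}(T)$, whose covariance is not $g^{-1}$ in general. So your phrase ``affinely tied to the canonical sufficient statistic'' should really read ``equal to the sufficient statistic in the dual chart.'' The paper's theorem is stated too loosely to notice this (it says only ``the unique mle'' with no parametrisation specified), and since the paper never writes out the proof the slippage goes unresolved there as well. Your closing worry --- that one must actually verify the fuzzy-Riemannian sample space forms a full exponential family in Murray--Rice's sense before any of this applies --- is exactly the right one to have, and it is precisely the structural claim the paper asserts earlier (``a fuzzy Riemannian manifold $\ldots$ is a complete exponential family'') without verification.
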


So the Fisher information is the best information in the regard that it is the unique functional which is zeroed by the estimator of maximal likelihood.  This is in accord with our common sense as to how an information should behave.

For yet another perspective, consider the Cencov Uniqueness Theorem \cite{[Pe1]},\cite{[Pe2]},\cite{[Ce]}.  This essentially states that for any coarse graining (averaging over the geometry of a space), the Fisher information of the new geometry is less than that of the old geometry.  This means that the Fisher information is a special information, in that it directly captures what we would expect from the 2nd law of thermodynamics - as the disorder of the system decreases, so does the Fisher information.

In other words, the information of our system is in monotone correspondence with the Fisher information.  Which gives us another reason to prefer the Fisher information as a measure of such.






\chapter{Physics from Fisher Information}

We are now finally ready to put theory to application.  This brings me back to the signal functions that I introduced before in my discussion of statistical manifolds.  There was in fact a very good reason for studying these things, since it is these particular classes of objects (sharp and almost sharp manifolds) that will be the basis of my study in this section.  It will turn out that by assuming that a manifold is sharp and applying our variational principle that we shall recover the equations of general relativity; furthermore, if we relax our restrictions a bit and assume that our space is slightly fuzzy, then rinse and repeat our variational calculation, we shall be able to derive the underlying equations on which one can base the standard model.

\section{Physics for Sharp Manifolds}

\subsection{Introduction}
Let us now suppose $f(m,a) = \delta(\sigma(m) - a)$ again.  Our aim is to derive the classical equations for a physical manifold.  The first step is to reexpress $I$ and $J$ using this signal function.  Clearly for this choice, $\psi(m,a) = \overline{\psi}(m,a)\delta(\sigma(m) - a)$.

\emph{Remark}: Note that, when all is said and done, we will get a value for the physical information $K \geq 0$.  In fact it is extremely unlikely that we will be able to get $K(\sigma) = 0$ for any metric $\sigma$.  However the point of this exercise is to find the conditions on $\sigma$ for it to minimise, or optimise, our information functional as defined before.

Substituting into $I$, and observing that $grad_{\Lambda}\delta(\sigma(m) - a) = grad_{\sigma}\sigma_{jk}\frac{\partial}{\partial \sigma_{jk}}\delta(\sigma(m) - a)$ we get that

\begin{center}
$I = I(\sigma) = \int_{M}\int_{A}\frac{1}{4\delta}<\partial_{\sigma}\sigma_{jk}\frac{\partial}{\partial \sigma_{jk}}\delta(\sigma(m) - a),\partial_{\sigma}\sigma_{mn}\frac{\partial}{\partial \sigma_{mn}}\delta(\sigma(m) - a)>(det \sigma)^{1/2}dadm$

$= \int_{M}\int_{A}\frac{1}{4\delta}\frac{\partial \delta}{\partial \sigma_{jk}}\frac{\partial \delta}{\partial \sigma_{mn}}<\partial_{\sigma}\sigma_{jk},\partial_{\sigma}\sigma_{mn}>(det\sigma)^{1/2}dadm$
\end{center}

Now, before I proceed any further, I shall assume that in the general expression for $\psi$ deduced from the conservation equation, $\psi(m,a) = grad_{\Lambda}f(m,a) + B(m,a)$ that $B = 0$.  Later I shall relax this restriction, and show that, at least in the classical case, $B$ corresponds to the presence of electromagnetic effects.

\subsection{Opening statements}

I now make a series of claims:

\emph{Claim 1}:

\begin{center}
$\int_{M}\int_{A}\frac{1}{\delta}\partial_{i}(\delta)\partial_{j}(\delta)\Sigma dadm = \int_{M}\int_{A}\partial_{i}\partial_{j}(\Sigma) \delta dadm$,

provided that $\Sigma(1/\sqrt{a}) \sim 1/a^{p}$ for some $p > 1$, $a >> 1$.
\end{center}

\emph{Claim 2}:

$<\partial_{\sigma}x_{jk},\partial_{\sigma}x_{mn}>$ satisfies the asymptotic property in Claim 1, for very small values in the entries of the matrix $y = (x_{jk} - \sigma_{jk})_{j,k = 1}^{n}$.

\emph{Claim 3}:

\begin{center}
$I = I(\sigma) = \frac{1}{4}\int_{M}R(det\sigma)^{1/2}dm$
\end{center}

Examining $J$, we find that, upon assuming $B = 0$, that

\begin{center}
$J = J(\sigma) = \int_{M \times A}\frac{\norm{\psi(m,a)}^{2}}{4f(m,a)}dadm$

$ = \int_{M}\frac{\norm{\overline{\psi}}^{2}}{4}dm$
\end{center}

\emph{Claim 4}: 

Assuming boundary terms are negligible, we get that $\delta(4I) = \int_{M}(R_{ij} - \frac{1}{2}\sigma_{ij}R)(det\sigma)^{1/2}\delta \sigma^{-1}$ and $\delta (4J) = \int_{M}(\bar{\psi}_{i}\bar{\psi}_{j} - \frac{1}{2}\sigma_{ij}\norm{\bar{\psi}}^{2})(det\sigma)^{1/2}\delta \sigma^{-1}$, where $\delta$ is now the variational derivative. 

Substituting the expressions from the final claim into the equation $\delta K = 0$ we get

\begin{center}
$G_{ij} = \bar{\psi}_{i}\bar{\psi}_{j} - \frac{1}{2}\sigma_{ij}\norm{\bar{\psi}}^{2}$
\end{center}

This is remarkably similar to the Einstein equation, but with the additional term $\frac{1}{2}\sigma_{ij}\norm{\bar{\psi}}^{2}$.  This term might explain why cosmologists using the standard Einstein equation to describe large objects like galaxies get puzzling results, and are forced to invoke the presence of large amounts of (as yet unobserved) dark matter/energy.  It might also explain the Pioneer anomaly (the discrepancy between the predicted flight path of the now famous probe out of the solar system and its actual journey).

\emph{Note:} It can be shown that $\bar{\psi}$ corresponds to the four momentum if we consider toy examples like Minkowski space.

Also observe that this equation can be simplified, for if we contract it on both sides with respect to the metric we obtain

\begin{center}
$R - \frac{n}{2}R = \norm{\bar{\psi}}^{2} - \frac{n}{2}\norm{\bar{\psi}}^{2}$
\end{center}

where we are assuming $n =$ dim$M > 2$ (in applications, $n$ will usually be $4$).

As a consequence of the above, the equation of state simplifies to

\begin{center}
$R_{ij} = \bar{\psi}_{i}\bar{\psi}_{j}$
\end{center}

or, in coordinate invariant form

\begin{equation}
Ric - \bar{\psi} \otimes \bar{\psi} = 0
\end{equation}

Together with the geodesic condition

\begin{center}
$\nabla_{(\bar{\psi},\sigma)}\frac{\bar{\psi}}{\norm{\bar{\psi}}} = 0$
\end{center}

and the conservation equation

\begin{center}
$div_{\sigma}\bar{\psi} = 0$
\end{center}

we have now completely described the dynamics of a physical manifold without boundary for the zeroth order perturbation $f(m,a) = \delta(\sigma(m) - a)$.

\subsection{Proof of the first Claim}

I prove this for the one dimensional case, but this example is easily extended to the general case.  First observe that $\delta(x) = lim_{a \rightarrow \infty}f(a,x)$, where $f(a,x) = \sqrt{\frac{a}{\pi}}exp(-ax^{2})$.  Of course this definition is not unique, but by a deep theorem in functional analysis it does not matter what limiting functions we use, as long as they are smooth, since I am going to take derivatives.

Then


\begin{align} \int (\delta'(x))^{2}(1/\delta(x)) \Sigma(x) dx &= lim_{a \rightarrow \infty}\int (\frac{\partial f(a,x)}{\partial x})^{2}/f(a,x) \Sigma dx  \nonumber \\
&= lim_{a}\int \sqrt{\frac{a}{\pi}}4a^{2}x^{2}e^{-ax^{2}}\Sigma(x)dx \nonumber \\
&= lim_{a}4a^{5/2}/\sqrt{\pi} \frac{\partial}{\partial a} (a^{-1/2}\int e^{-u^{2}}\Sigma(u/\sqrt{a}) du), u = \sqrt{a}x \nonumber \\
&= lim_{a}4a^{5/2}/\sqrt{\pi}(-\frac{1}{2a^{3/2}}\int e^{-u^{2}}\Sigma(u/\sqrt{a})du \nonumber \\
&+ a^{-1/2}\int \frac{-1}{2a^{3/2}}ue^{-u^{2}}\Sigma'(u/\sqrt{a})du)  \end{align}

The first term vanishes by our asymptotic assumption on $\Sigma$.  We are left with

\begin{align} lim_{a}\frac{4a^{2}}{2\sqrt{\pi}a^{3/2}}\int e^{-v}\Sigma''(\sqrt{v/a})\frac{1}{2\sqrt{va}}dv, (v = u^{2}), \nonumber \\
&= lim_{a}\frac{4}{2\sqrt{\pi}2}\int e^{-u^{2}}\frac{u}{u}\Sigma''(u/\sqrt{a})du \nonumber \\
&= lim_{a}\frac{1}{\sqrt{\pi}}\int e^{-u^{2}}\Sigma''(u/\sqrt{a})du \nonumber \\
&= \Sigma''(0) \nonumber \\
&= \int \delta''(x) \Sigma(x) dx, \end{align}

which proves the claim.

\subsection{Proof of the second Claim}

From the conservation equation for a subset $U$ of $M$, we have that

\begin{align}
0 &= \int_{U \times A}div_{\sigma}(\partial_{\sigma}\sigma_{jk}\frac{\partial \delta}{\partial \sigma_{jk}})dadm \nonumber \\
&= \int_{\partial U \times A}<\partial_{\sigma}\sigma_{jk}\frac{\partial \delta}{\partial \sigma_{jk}},\hat{n}>dSda \end{align}

Now consider the one dimensional case for simplicity:

\begin{align}
0 = \int \delta'\Sigma \nonumber
\end{align}

and

\begin{center} $\delta = lim_{a \rightarrow \infty}\sqrt{\frac{a}{\pi}}e^{-ax^{2}} = lim_{a \rightarrow \infty}g(a,x)$ \end{center}

which implies that

\begin{center} $\delta' = -lim_{a \rightarrow \infty}2\sqrt{\frac{a}{\pi}}axe^{-ax^{2}}$. \end{center}

So

\begin{align}
0 &= -lim_{a \rightarrow \infty}\int_{-\infty}^{\infty}2\sqrt{\frac{a}{\pi}}axe^{-ax^{2}}\Sigma(x)dx \nonumber \\
&= - lim_{a \rightarrow \infty}2\frac{a}{\pi^{1/2}}\int_{-\infty}^{\infty}ue^{-u^{2}}\Sigma(\frac{u}{\sqrt{a}})du \nonumber \\
&= -lim_{a \rightarrow \infty}\frac{a}{\sqrt{\pi}}\int_{0}^{\infty}e^{-t}\Sigma(\sqrt{t/a})dt \nonumber \\
&= -lim_{a \rightarrow \infty}\frac{a}{\sqrt{\pi}} \Sigma(1/\sqrt{a}) \end{align}

and the only way this can be zero is if $\Sigma(1/\sqrt{a}) \sim \frac{1}{a^{p}}$, $p > 1$, for large $a$.

Translating back to our situation, this means that $<\partial_{\sigma}\sigma_{jk},\hat{n}> = 0$

Now, consider the level sets of $\sigma_{mn}$ in $M$.  These sets will be codimension one, and have a normal $\hat{n}$ that is parallel to $\partial_{\sigma}\sigma_{mn}$.  Hence on these sets $<\partial_{\sigma}\sigma_{jk},\partial_{\sigma}\sigma_{mn}> = 0$.  Now sweep out all of $M$ by following the vector field $\partial_{\sigma}\sigma_{mn}$ to create a family of sets covering all of $M$ with this property.

Hence the second Claim is proven.

\subsection{Proof of the third Claim}

From the first and second claims, we establish quickly that

\begin{align}
I(\sigma) &= \int_{M \times A}\frac{1}{4\delta}\frac{\partial \delta}{\partial \sigma_{jk}}\frac{\partial \delta}{\partial \sigma_{mn}}<\partial_{\sigma}\sigma_{jk},\partial_{\sigma}\sigma_{mn}>(det\sigma)^{1/2}dadm \nonumber \\
&= \int_{M \times A}\frac{1}{4}\delta(\sigma(m) - a)\frac{\partial}{\partial \sigma_{jk}}\frac{\partial}{\partial \sigma_{mn}}(<\partial_{\sigma}\sigma_{jk},\partial_{\sigma}\sigma_{mn}>(det\sigma)^{1/2})dadm \nonumber \\
&= \frac{1}{4}\int_{M}\frac{\partial}{\partial \sigma_{jk}}\frac{\partial}{\partial \sigma_{mn}}(<\partial_{\sigma}\sigma_{jk},\partial_{\sigma}\sigma_{mn}>(det\sigma)^{1/2})dm \end{align}

I now demonstrate that this new expression is equivalent to one quarter the integral of the scalar curvature.

First observe that the scalar curvature is the contraction of the Ricci curvature with respect to the metric:

\begin{center}
$R = \sigma^{ij}Ric_{ij}$.
\end{center}

The Ricci tensor has the following expression in coordinates:

\begin{center}
$Ric_{ij} = \partial_{l}\Gamma^{l}_{ik} - \partial_{k}\Gamma^{l}_{il} + \Gamma^{l}_{ik}\Gamma^{m}_{lm} - \Gamma^{m}_{il}\Gamma^{l}_{km}$
\end{center}

\begin{lem}

I claim that the first three terms evaluate to zero after contraction, that is, that the scalar curvature can be written in terms of coordinates as:

\begin{center}
$R = -\sigma^{i\alpha}\Gamma^{m}_{il}\Gamma^{l}_{\alpha m}$
\end{center}

From this point on, without loss of generality, I will making the convenient assumption that the coordinate vectors $\partial_{i}$ have been chosen so that $[\partial_{i},\partial_{j}] = 0$.  This has the natural consequence that the Christoffel symbols are antisymmetric in their lower indices, that is,

\begin{center}
$\Gamma^{k}_{ij} = -\Gamma^{k}_{ji}$
\end{center}

I will also make use of the following two identities:

\begin{equation}
\label{identityone}
\Gamma_{ij}^{m} = \frac{1}{2}(\partial_{i}\sigma_{jk} + \partial_{j}\sigma_{ki} - \partial_{k}\sigma_{ij})\sigma^{km}
\end{equation}

\begin{equation}
\label{identitytwo}
\partial_{i}\sigma_{jk} = \Gamma_{ki}^{l}\sigma_{lj} + \Gamma_{ij}^{l}\sigma_{lk}
\end{equation}
\end{lem}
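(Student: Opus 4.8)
The claim is that, after contraction with the inverse metric $\sigma^{ij}$, the scalar curvature $R = \sigma^{ij}\mathrm{Ric}_{ij}$ reduces to $R = -\sigma^{i\alpha}\Gamma^{m}_{il}\Gamma^{l}_{\alpha m}$, i.e.\ the three terms $\partial_{l}\Gamma^{l}_{ik}$, $-\partial_{k}\Gamma^{l}_{il}$, and $\Gamma^{l}_{ik}\Gamma^{m}_{lm}$ all die under the contraction, and of the two remaining quadratic terms only $-\Gamma^{m}_{il}\Gamma^{l}_{km}$ survives. The plan is to work in a coordinate system with $[\partial_i,\partial_j]=0$ (so the $\Gamma$'s are antisymmetric in their lower indices, as stated in the Lemma) and then to show the vanishing term by term. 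First I would contract the third term: $\sigma^{ik}\Gamma^{l}_{ik}\Gamma^{m}_{lm}$ involves $\sigma^{ik}\Gamma^{l}_{ik}$, which is symmetric in $i,k$ while $\Gamma^{l}_{ik}$ is antisymmetric in $i,k$ under our convention, so this contraction is $0$. The same antisymmetry argument handles any term in which a $\Gamma$ appears contracted symmetrically on its two lower slots.

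Next I would treat the two derivative terms, $\sigma^{ik}\partial_{l}\Gamma^{l}_{ik}$ and $-\sigma^{ik}\partial_{k}\Gamma^{l}_{il}$. For the first, $\sigma^{ik}\Gamma^{l}_{ik}$ vanishes identically by the antisymmetry of $\Gamma^l_{ik}$ in $i,k$, so its derivative vanishes too (after accounting for the fact that $\partial_l$ does not hit $\sigma^{ik}$ in the curvature expression — one must be careful here and instead expand $\partial_l(\sigma^{ik}\Gamma^l_{ik})$ and note the correction term $-\Gamma^l_{ik}\partial_l\sigma^{ik}$, which can be absorbed into the quadratic part using identity \eqref{identitytwo}). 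For the second, $\Gamma^{l}_{il} = \frac{1}{2}\sigma^{lm}\partial_i\sigma_{lm} = \partial_i\log\sqrt{\det\sigma}$, so $\sigma^{ik}\partial_k\Gamma^l_{il}$ is a second-derivative-of-$\log\det\sigma$ type term; I would show that after the substitutions using \eqref{identityone} and \eqref{identitytwo} this combines with the surviving pieces of the first derivative term and the term $-\Gamma^m_{il}\Gamma^l_{km}\sigma^{ik}$ to leave precisely $R = -\sigma^{i\alpha}\Gamma^{m}_{il}\Gamma^{l}_{\alpha m}$. The mechanism is the standard one: the $\partial\Gamma$ terms in $R$ are total-derivative-like and, under the chosen coordinates and after using the metric-compatibility identity \eqref{identitytwo} to trade $\partial\sigma$ for $\Gamma\sigma$, reorganize into quadratic-in-$\Gamma$ terms that cancel against each other, leaving a single term.

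\textbf{Anticipated main obstacle.} The delicate point is bookkeeping: in the raw coordinate expression $\mathrm{Ric}_{ij} = \partial_{l}\Gamma^{l}_{ij} - \partial_{j}\Gamma^{l}_{il} + \Gamma^{l}_{ij}\Gamma^{m}_{lm} - \Gamma^{m}_{il}\Gamma^{l}_{jm}$, the derivatives $\partial_l$, $\partial_j$ act only on the $\Gamma$'s, but once one contracts with $\sigma^{ij}$ one is tempted to move $\sigma^{ij}$ inside the derivative, which introduces spurious $\partial\sigma$ terms that must be tracked and eliminated via \eqref{identitytwo}. Getting all the index symmetrizations right — in particular correctly exploiting $\Gamma^k_{ij}=-\Gamma^k_{ji}$ together with the symmetry $\sigma^{ij}=\sigma^{ji}$ — without dropping or double-counting a term is where the real care is needed; the algebra itself is elementary but unforgiving. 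A secondary subtlety is that the antisymmetry of $\Gamma$ in its lower indices is a genuine extra hypothesis coming from the choice $[\partial_i,\partial_j]=0$, so I would make explicit at the outset that this coordinate choice is legitimate (it is, by the standard existence of coordinate frames) and that the final covariant statement $R = -\sigma^{i\alpha}\Gamma^{m}_{il}\Gamma^{l}_{\alpha m}$ is then understood to hold in any such frame, which is all that is needed for the subsequent variational computation of $I(\sigma)$.
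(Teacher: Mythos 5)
Your proposal is not the same as the paper's proof, although the two share the antisymmetry argument for killing $\sigma^{ik}\Gamma^{l}_{ik}\Gamma^{m}_{lm}$. The paper attacks the derivative terms by substituting identity \eqref{identityone} directly into $(\partial_{l}\Gamma^{l}_{i\alpha}-\partial_{\alpha}\Gamma^{l}_{il})\sigma^{i\alpha}$, distributing $\partial_{l}$ and $\partial_{\alpha}$ across the resulting products, and arguing separately that the pure second-derivative piece vanishes and the first-derivative pieces reorganize; your route instead exploits $\sigma^{ik}\Gamma^{l}_{ik}=0$ to reduce $\sigma^{ik}\partial_{l}\Gamma^{l}_{ik}$ to a single correction term $-(\partial_{l}\sigma^{ik})\Gamma^{l}_{ik}$, and then treats $-\sigma^{ik}\partial_{k}\Gamma^{l}_{il}$ via the log-determinant contraction. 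That is a genuinely different decomposition: yours is shorter and conceptually cleaner if it closes, at the cost of having to carefully track the correction term; the paper's is heavier but more mechanical.

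There is, however, a real tension in your step (which you should resolve or at least flag). You invoke $\Gamma^{l}_{il}=\tfrac{1}{2}\sigma^{lm}\partial_{i}\sigma_{lm}=\partial_{i}\log\sqrt{\det\sigma}$, which is the standard identity derived under the usual \emph{symmetric} Christoffel convention. But under the antisymmetry convention $\Gamma^{k}_{ij}=-\Gamma^{k}_{ji}$ that the Lemma adopts, the same trace satisfies $\Gamma^{l}_{il}=-\Gamma^{l}_{li}$, while identity \eqref{identityone} with symmetric $\sigma$ yields $\Gamma^{l}_{il}=\Gamma^{l}_{li}$; together these force $\Gamma^{l}_{il}=0$, so the log-determinant formula degenerates to a triviality and your treatment of the second derivative term collapses in a different way than you describe. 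In other words, the working assumptions of the Lemma are mutually constrained in a way that undermines uncritical import of the standard Riemannian trace identity, and you need to decide whether you are computing consistently in the Lemma's stated convention or silently reverting to the standard one. The paper avoids this by never invoking the log-det formula and expanding everything through \eqref{identityone} alone, which is why it does not trip the same wire — so if you retain your route, you should either prove the log-det formula afresh under the Lemma's convention (and observe it gives zero), or rephrase the second derivative term in a way that does not appeal to it.
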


\begin{proof} (of lemma)
\begin{latexonly}
By $\eqref{identityone}$,
\end{latexonly}

\begin{align}
(\partial_{l}\Gamma_{i\alpha}^{l} - \partial_{\alpha}\Gamma_{il}^{l})\sigma^{i\alpha}
&= \frac{1}{2}\{ \partial_{l}[(\partial_{i}\sigma_{\alpha k} + \partial_{\alpha}\sigma_{ki} - \partial_{k}\sigma_{i\alpha})\sigma^{kl}] - \partial_{\alpha}[(\partial_{i}\sigma_{lk} + \partial_{l}\sigma_{ki} - \partial_{k}\sigma_{li})\sigma^{kl}]\}\sigma^{i\alpha} \nonumber \\
&= \frac{1}{2}\{(\partial_{i}\sigma_{\alpha k} + \partial_{\alpha}\sigma_{ki} - \partial_{k}\sigma_{i\alpha})\sigma^{kl}(-\sigma^{kl}\partial_{l}\sigma_{kl}) \nonumber \\
&- (\partial_{i}\sigma_{lk} + \partial_{l}\sigma_{ki} - \partial_{k}\sigma_{li})\sigma^{kl}(-\sigma^{kl}\partial_{\alpha}\sigma_{kl})\}\sigma^{i\alpha} \nonumber \\
&+ \frac{1}{2}\{\partial_{l}(\partial_{i}\sigma_{\alpha k} + \partial_{\alpha}\sigma_{ki} - \partial_{k}\sigma_{i\alpha}) - \partial_{\alpha}(\partial_{i}\sigma_{lk} + \partial_{l}\sigma_{ki} - \partial_{k}\sigma_{li})\}\sigma^{kl}\sigma^{i\alpha} \nonumber \\
&= -\frac{1}{2}[(\partial_{i}\sigma_{\alpha k})(\partial_{l}\sigma_{kl})(\sigma^{kl})^{2}\sigma^{i\alpha} + (\partial_{\alpha}\sigma_{ki})(\partial_{l}\sigma_{kl})(\sigma^{kl})^{2}\sigma^{i\alpha} \nonumber \\
&- (\partial_{k}\sigma_{i\alpha})(\partial_{l}\sigma_{kl})(\sigma^{kl})^{2}\sigma^{i\alpha}] -\frac{1}{2}[(\partial_{l}\sigma_{ik})(\partial_{\alpha}\sigma_{kl})(\sigma^{kl})^{2}\sigma^{i\alpha}  \nonumber \\
&+ (\partial_{i}\sigma_{kl})(\partial_{\alpha}\sigma_{kl})(\sigma^{kl})^{2}\sigma^{i\alpha} - (\partial_{k}\sigma_{li})(\partial_{\alpha}\sigma_{li})(\sigma^{kl})^{2}\sigma^{i\alpha}] \end{align}

Now
\begin{align}
\frac{1}{2}\{\partial_{l}\partial_{i}\sigma_{\alpha k} + \partial_{l}\partial_{\alpha}\sigma_{ki} - \partial_{l}\partial_{k}\sigma_{i\alpha} - \partial_{\alpha}\partial_{l}\sigma_{ik} - \partial_{\alpha}\partial_{i}\sigma_{kl} + \partial_{\alpha}\partial_{k}\sigma_{li}\}\sigma^{kl}\sigma^{i\alpha} = 0 \end{align}

after one observes that $\sigma^{kl}\sigma_{i\alpha} = \delta_{ki}\delta_{l\alpha}$ and plugs things through.

The remaining problem term, $\sigma^{ik}\Gamma^{l}_{ik}\Gamma^{m}_{lm}$, is zero because $\sigma^{ik} = \sigma^{ki}$ and $\Gamma^{l}_{ik} = -\Gamma^{l}_{ki}$.  A simple argument using these facts plus the judicious replacement of dummy indices will then do the job.

This proves the lemma.
\end{proof}

So it remains to demonstrate that

\begin{center}
$\frac{\partial}{\partial \sigma_{jk}}\frac{\partial}{\partial \sigma_{mn}}(<\partial_{\sigma}\sigma_{jk},\partial_{\sigma}\sigma_{mn}>(det\sigma)^{1/2}) = -\sigma^{i\alpha}\Gamma^{m}_{il}\Gamma^{l}_{\alpha m}(det\sigma)^{1/2}$
\end{center}

Now, under application of the second claim to remove a few unnecessary terms, we get that

\begin{align}
\label{mainsharp}
\frac{\partial}{\partial \sigma_{jk}}\frac{\partial}{\partial \sigma_{mn}}(<\partial_{\sigma}\sigma_{jk},\partial_{\sigma}\sigma_{mn}>(det\sigma)^{1/2}) \nonumber \\
&= \sigma^{i\alpha}[\frac{\partial}{\partial \sigma_{jk}}(\partial_{i}\sigma_{jk})\frac{\partial}{\partial \sigma_{mn}}(\partial_{\alpha}\sigma_{mn}) \nonumber \\
&+ \frac{\partial}{\partial \sigma_{mn}}(\partial_{i}\sigma_{jk})\frac{\partial}{\partial \sigma_{jk}}(\partial_{\alpha}\sigma_{mn})](det \sigma)^{1/2} \nonumber \\
&+ \frac{\partial}{\partial \sigma_{jk}}(\sigma^{i\alpha})[\frac{\partial}{\partial \sigma_{mn}}(\partial_{i}\sigma_{jk})\partial_{\alpha}\sigma_{mn} \nonumber \\
&+ \partial_{i}\sigma_{jk}\frac{\partial}{\partial \sigma_{mn}}(\partial_{\alpha}\sigma_{mn})](det \sigma)^{1/2} \nonumber \\
&+ \frac{\partial}{\partial \sigma_{mn}}(\sigma^{i\alpha})[\frac{\partial}{\partial \sigma_{jk}}(\partial_{i}\sigma_{jk})\partial_{\alpha}\sigma_{mn} \nonumber \\
&+ \partial_{i}\sigma_{jk}\frac{\partial}{\partial \sigma_{jk}}(\partial_{\alpha}\sigma_{mn})](det \sigma)^{1/2} \nonumber \\
&+ \sigma^{i\alpha}\frac{\partial}{\partial \sigma_{mn}}[(\partial_{i}\sigma_{jk})(\partial_{\alpha}\sigma_{mn})]\frac{\partial}{\partial \sigma_{jk}}(det \sigma)^{1/2} \nonumber \\
&+ \sigma^{i\alpha}\frac{\partial}{\partial \sigma_{jk}}[(\partial_{i}\sigma_{jk})(\partial_{\alpha}\sigma_{mn})]\frac{\partial}{\partial \sigma_{mn}}(det \sigma)^{1/2} \end{align}

\begin{latexonly}
Consider the first term.  By repeated application of $\eqref{identitytwo}$ we get:
\end{latexonly}

\begin{align}
\sigma^{i\alpha}[\frac{\partial}{\partial \sigma_{jk}}(\partial_{i}\sigma_{jk})\frac{\partial}{\partial \sigma_{mn}}(\partial_{\alpha}\sigma_{mn})
&+ \frac{\partial}{\partial \sigma_{mn}}(\partial_{i}\sigma_{jk})\frac{\partial}{\partial \sigma_{jk}}(\partial_{\alpha}\sigma_{mn})](det \sigma)^{1/2} \nonumber \\
&= (det \sigma)^{1/2}\sigma^{i\alpha}[(\Gamma_{ki}^{l}\delta_{jl}\delta_{kj} + \Gamma_{ij}^{l}\delta_{jl}\delta_{kk})(\Gamma_{n\alpha}^{\bar{l}}\delta_{m\bar{l}}\delta_{nm} \nonumber \\
&+ \Gamma_{\alpha m}^{\bar{l}}\delta_{m \bar{l}}\delta_{nn}) + (\Gamma_{ki}^{l}\delta_{ml}\delta_{nj} + \Gamma_{ij}^{l}\delta_{ml}\delta_{nk})(\Gamma_{n\alpha}^{\bar{l}}\delta_{j\bar{l}}\delta_{km} + \Gamma_{\alpha m}^{\bar{l}}\delta_{j \bar{l}}\delta_{kn})] \nonumber \\
&= (det \sigma)^{1/2}\sigma^{i\alpha}[\Gamma_{ki}^{k}\Gamma_{n\alpha}^{n} + \Gamma_{ji}^{j}\Gamma_{m\alpha}^{m} + \Gamma_{il}^{l}\Gamma_{m\alpha}^{m} \nonumber \\
&+ \Gamma_{il}^{l}\Gamma_{\alpha m}^{m} + \Gamma_{mi}^{m}\Gamma_{n\alpha}^{n} + \Gamma_{ji}^{m}\Gamma_{\alpha m}^{j} + \Gamma_{ij}^{m}\Gamma_{m\alpha}^{j} + \Gamma_{ij}^{m}\Gamma_{\alpha m}^{j}] \nonumber \\
&= \sigma^{i\alpha}(\Gamma_{mi}^{m}\Gamma_{n\alpha}^{n} - \Gamma_{in}^{m}\Gamma_{\alpha m}^{n})(det \sigma)^{1/2} \end{align}

the last equality obtained using the fact that $\Gamma_{ij}^{k} = -\Gamma_{ji}^{k}$ several times.

Now it might appear that we have neglected terms like $\frac{\partial}{\partial \sigma_{jk}}\Gamma_{ki}^{l}$ but these all vanish, because, for instance,

\begin{align}
\frac{\partial}{\partial \sigma_{jk}}(\partial_{i}\sigma_{jk})
&= \Gamma_{ki}^{l}\delta_{jl}\delta_{kj} + \Gamma_{ij}^{l}\delta_{jl}\delta_{kk} + \frac{\partial}{\partial \sigma_{jk}}(\Gamma_{ki}^{l})\sigma_{lj} + \frac{\partial}{\partial \sigma_{jk}}(\Gamma_{ij}^{l})\sigma_{lk} \nonumber \\
&= \Gamma_{ki}^{l}\delta_{jl}\delta_{kj} + \Gamma_{ij}^{l}\delta_{jl}\delta_{kk} \end{align}

 after we observe that the last two terms cancel after the relabelling of $j \mapsto k$, $k \mapsto j$ in the second last term and using the antisymmetry of the Christoffel symbols in their lower indices.  Similar results hold for all the other terms.

Examining the second and third terms, we see that they are the same after relabelling of dummy indices.  Similarly for the fourth and fifth terms.  Furthermore, we observe that the second and fourth terms are related by the relation (second term) = -2(fourth term), since we know that $\frac{\partial}{\partial \sigma_{jk}}(det \sigma)^{1/2} = \frac{1}{2}\sigma^{jk}(det \sigma)^{1/2}$.

So the last three terms of \begin{latexonly} \eqref{mainsharp} \end{latexonly} cancel, leaving us with only the second term to evaluate:

\begin{align}
\frac{\partial}{\partial \sigma_{jk}}(\sigma^{i\alpha})[\frac{\partial}{\partial \sigma_{mn}}(\partial_{i}\sigma_{jk})\partial_{\alpha}\sigma_{mn} &+ \partial_{i}\sigma_{jk}\frac{\partial}{\partial \sigma_{mn}}(\partial_{\alpha}\sigma_{mn})](det \sigma)^{1/2} \nonumber \\
&= -(\sigma^{i\alpha})^{2}\delta_{ji}\delta_{k\alpha}[(\Gamma_{ki}^{l}\delta_{ml}\delta_{nj} + \Gamma_{ij}^{l}\delta_{ml}\delta_{nk})\partial_{\alpha}\sigma_{mn} \nonumber \\
&+ (\partial_{i}\sigma_{jk})(\Gamma_{n\alpha}^{l}\delta_{ml}\delta_{nm} + \Gamma_{\alpha m}^{l}\delta_{ml}\delta_{nn})](det \sigma)^{1/2} \nonumber \\
&= -(\sigma^{jk})^{2}[(\Gamma_{kj}^{l}\delta_{ml}\delta_{nj} + \Gamma_{jj}^{l}\delta_{ml}\delta_{nk})\partial_{k}\sigma_{mn} + (\Gamma_{nk}^{l}\delta_{mn}\delta_{nm} \nonumber \\
&+ \Gamma_{km}^{l}\delta_{ml})\partial_{j}\sigma_{jk}](det \sigma)^{1/2} \nonumber \\
&= -(\sigma^{jk})^{2}[(\Gamma_{kj}^{m})\partial_{k}\sigma_{mj} + (\Gamma_{mk}^{m} + \Gamma_{km}^{m})\partial_{j}\sigma_{jk}](det \sigma)^{1/2} \nonumber \\
&= -(\sigma^{jk})^{2}\Gamma_{kj}^{m}\partial_{k}\sigma_{mj}(det \sigma)^{1/2}  \end{align}

by antisymmetry of $\Gamma$ in its lower indices.

Continuing:

\begin{align}
-(\sigma^{jk})^{2}\Gamma_{kj}^{m}\partial_{k}\sigma_{mj}(det \sigma)^{1/2} 
&= -(\sigma^{i\alpha})^{2}\Gamma_{\alpha i}^{m}(\Gamma_{i \alpha}^{l}\sigma_{lm} + \Gamma_{\alpha m}^{l}\sigma_{li})(det \sigma)^{1/2} \nonumber \\
&= -\sigma^{i\alpha}(\Gamma_{\alpha i}^{m}\Gamma_{i\alpha}^{l}\delta_{il}\delta_{\alpha m} + \Gamma_{\alpha m}^{l}\Gamma_{\alpha i}^{m}\delta_{il}\delta_{\alpha i})(det \sigma)^{1/2} \nonumber \\
&= -\sigma^{i\alpha}(\Gamma_{mi}^{m}\Gamma_{l\alpha}^{l} + \Gamma_{\alpha m}^{\alpha}\Gamma_{\alpha \alpha}^{m}\delta_{\alpha i})(det \sigma)^{1/2} \nonumber \\
&= -\sigma^{i\alpha}\Gamma_{mi}^{m}\Gamma_{l\alpha}^{l}(det \sigma)^{1/2} \end{align}

Combining these computations together, we get that

\begin{align}
\frac{\partial}{\partial \sigma_{jk}}\frac{\partial}{\partial \sigma_{mn}}(<\partial_{\sigma}\sigma_{jk},\partial_{\sigma}\sigma_{mn}>(det\sigma)^{1/2}) &= \sigma^{i\alpha}(\Gamma_{mi}^{m}\Gamma_{n\alpha}^{n} - \Gamma_{in}^{m}\Gamma_{\alpha m}^{n})(det \sigma)^{1/2} - \sigma^{i\alpha}\Gamma_{mi}^{m}\Gamma_{l \alpha}^{l}(det \sigma)^{1/2} \nonumber \\
&= -\sigma^{i\alpha}\Gamma_{in}^{m}\Gamma_{\alpha m}^{n}(det \sigma)^{1/2} \nonumber \\
&= R(det \sigma)^{1/2} \end{align}

which completes the proof of Claim 3.

\begin{rmk} Note that it is possible to avoid this painful calculation, if we observe that the metric expansion about a point $m \in M$ is

\begin{center} $\sigma_{ij}(m) = \delta_{ij} + R_{(1)ijkl}(m)x_{k}x_{l} + R_{(2)ijklts}(m)x_{k}x_{l}x_{t}x_{s} + O(x^{6})$ \end{center}

and

\begin{center} $\Delta_{\sigma}(m)\sqrt(det(\sigma)) = \sigma_{ij}\sigma_{kl}(m)\partial_{k}\partial_{l}(\delta_{ij} + R_{(1)ijkl}(m)x_{k}x_{l} + ...)\vert_{x = 0}(det(\sigma))^{1/2} = R_{(1)}(det(\sigma))^{1/2}$ \end{center}

Hence from claims 1 and 2 we have 

\begin{center} $I(\sigma) = \int_{M}\Delta_{\sigma}(m)(det(\sigma))^{1/2}dm = \int_{M}R_{(1)}(m)(det(\sigma))^{1/2}dm$ \end{center}

which again completes the proof of Claim 3.

\end{rmk}

\begin{rmk} Note that this may be generalised.  Suppose the $2n^{th}$ term of the metric expansion for $\sigma$ is $R_{(n)i_{1}j_{1}...i_{n}j_{n}}x_{i_{1}}x_{j_{1}}...x_{i_{n}}x_{j_{n}}$.  Then we can similarly show that $\Delta^{n}_{\sigma}(det(\sigma))^{1/2} = R_{(n)}(det(\sigma))^{1/2}$.  We will in fact need this later when we start to look at almost sharp geometries.  \end{rmk}

\subsection{Proof of Claim 4}

$I = \int_{M}R det(\sigma)^{1/2}dm$ is the standard Einstein Hilbert action.  Under our assumption that boundary terms are negligible, it is well known that the variation of $I$ is as claimed.  So it remains to establish the expression for $J$.

Now,

\begin{center} $\delta J = \int_{M}\frac{\partial}{\partial \sigma^{kl}}(\sigma^{ij}\psi_{i}\psi_{j}(det\sigma)^{1/2})\frac{\partial \sigma^{kl}(s)}{\partial s}dp$ \end{center}

for a variation $\sigma^{kl}(s)$ of $\sigma^{kl}$.

Hence

\begin{center} $\delta J = \int_{M}(\bar{\psi}_{i}\bar{\psi}_{j} - \frac{1}{2}\sigma_{ij}\norm{\bar{\psi}}^{2})(det\sigma)^{1/2}\frac{\partial \sigma^{ij}(s)}{\partial s}dp + 2\int_{M}\int_{A}\sigma^{kl}\frac{\partial}{\partial \sigma^{ij}}(\bar{\psi}_{k})\bar{\psi}_{l}\delta (det \sigma)^{1/2}\frac{\partial \sigma^{ij}(s)}{\partial s}dp$ \end{center}

It remains to show that the last term is zero.  Denote $\frac{\partial \sigma^{ij}(s)}{\partial s}\vert_{s = 0}$ as $F$.  

So, with the integral over the manifold $M$ understood,

\begin{align}
\label{mainsharpbound}
2\sigma^{ij}\bar{\psi}_{i}\frac{\partial \bar{\psi}_{j}}{\partial \sigma^{kl}}(det \sigma)^{1/2}F &= -2\frac{\partial}{\partial \sigma_{mn}}(\sigma^{ij}\frac{\partial \bar{\psi}_{j}}{\partial \sigma^{kl}}\partial_{i}\sigma_{mn}(det \sigma)^{1/2}F) \nonumber \\
&\propto \delta_{i \alpha}\delta_{j \beta}(-\sigma^{ij})^{2}\frac{\partial \bar{\psi}_{j}}{\partial \sigma^{kl}}\partial_{i}\sigma_{\alpha \beta}(det \sigma)^{1/2}F \nonumber \\
&+ \frac{\partial^{2}\bar{\psi}_{j}}{\partial \sigma_{\alpha \beta}\partial \sigma^{kl}}\partial_{i}\sigma_{\alpha \beta}(det \sigma)^{1/2}F\sigma^{ij} \nonumber \\
&+ \frac{\partial}{\partial \sigma_{\alpha \beta}}(\partial_{i}\sigma_{\alpha \beta})\sigma^{ij}\frac{\partial \bar{\psi}_{j}}{\partial \sigma^{kl}}(det \sigma)^{1/2}F \nonumber \\
&+ \frac{1}{2}(-\sigma^{\alpha \beta}(det \sigma)^{1/2})\sigma^{ij}(\partial_{i}\sigma_{\alpha \beta})\frac{\partial \bar{\psi}_{j}}{\partial \sigma^{kl}}F \nonumber \\
&+ \frac{\partial F}{\partial \sigma_{\alpha \beta}}\sigma^{ij}\frac{\partial \bar{\psi}_{j}}{\partial \sigma^{kl}}\partial_{i}\sigma_{\alpha \beta}(det \sigma)^{1/2}. \end{align}

We immediately notice that the second term vanishes since

\begin{center} $0 = div_{\sigma}\bar{\psi} = \frac{\partial \bar{\psi}_{j}}{\partial \sigma_{\alpha \beta}}\partial_{i}\sigma_{\alpha \beta}\sigma^{ij} = \sigma^{ij}\partial_{i}\bar{\psi}_{j}$. \end{center}

Also observe that since $\partial_{i}\sigma_{\alpha \beta}\frac{\partial F}{\partial \sigma_{\alpha \beta}} = \partial_{i}F$, we may integrate the last term by parts to obtain

\begin{center} $- F \partial_{i}(\sigma^{ij}\frac{\partial \bar{\psi}_{j}}{\partial \sigma^{kl}}\partial_{i}\sigma_{\alpha \beta}(det \sigma)^{1/2})$ \end{center} 
  
(neglecting boundary terms, of course, since we are assuming that they are negligible anyway)

\begin{align}
&= -F(\sigma^{ij}\frac{\partial \bar{\psi}_{j}}{\partial \sigma^{kl}}(-\sigma^{\alpha \beta}\partial_{i}\sigma_{\alpha \beta}(det \sigma)^{1/2}\frac{1}{2}) \nonumber 
\end{align}

(since $\partial_{i}(det \sigma)^{1/2}$)

\begin{align}
&= \frac{- \sigma^{\alpha \beta}\partial_{i}\sigma_{\alpha \beta}}{2}(det \sigma)^{1/2}) \nonumber \\
&-F\partial_{i}\sigma^{ij}\frac{\partial \bar{\psi}_{j}}{\partial \sigma^{kl}}(det \sigma)^{1/2} -F\sigma^{ij}\partial_{i}\frac{\partial \bar{\psi}_{j}}{\partial \sigma^{kl}}(det \sigma)^{1/2} \end{align}

Now it can be seen that the first and second terms here cancel with the fourth and first terms in our previous expression \begin{latexonly} \eqref{mainsharpbound} \end{latexonly} respectively.

So our original expression reduces to

\begin{align} \{\frac{\partial}{\partial \sigma_{\alpha \beta}}(\partial_{i}\sigma_{\alpha \beta})\sigma^{ij}\frac{\partial \bar{\psi}_{j}}{\partial \sigma^{kl}}(det \sigma)^{1/2} &- \sigma^{ij}\partial_{i}\frac{\partial \bar{\psi}_{j}}{\partial \sigma^{kl}}(det \sigma)^{1/2}\}F \nonumber \\
&= \{\frac{\partial}{\partial \sigma_{\alpha \beta}}(\Gamma_{\beta i}^{l}\sigma_{l \alpha} + \Gamma_{i \alpha}^{l}\sigma_{l \beta})\frac{\partial \bar{\psi}_{j}}{\partial \sigma^{kl}} - \partial_{i}\frac{\partial \bar{\psi}_{j}}{\partial \sigma^{kl}}\}\sigma^{ij}(det \sigma)^{1/2}F \nonumber \\
&=\{(\Gamma_{\beta i}^{l}\delta_{\alpha l}\delta_{\beta \alpha} + \Gamma_{i \alpha}^{l}\delta_{\alpha l}\delta_{\beta \beta})\frac{\partial \bar{\psi}_{j}}{\partial \sigma^{kl}}\sigma^{ij} \nonumber \\
&- \frac{\partial}{\partial \sigma^{kl}}(\sigma^{ij}\partial_{i}\bar{\psi}_{j}) + \frac{\partial \sigma^{ij}}{\partial \sigma^{kl}}\partial_{i}\bar{\psi}_{j}\}(det \sigma)^{1/2}F \nonumber \\
&= 0, \end{align}

 since the first term is zero by the antisymmetry of $\Gamma$ in its lower indices, and the remaining two terms vanish since $div_{\sigma}\bar{\psi} = 0$.

But this was precisely what we needed to show, completing the proof of Claim 4.

\subsection{Further Work}

To deal with the zeroth order case properly I still need to look at a couple of things:

(i) Suppose $M$ has boundary.  What is the impact of the boundary on the resulting equations, ie what boundary terms does one get?

(ii) What happens if $B \neq 0$?  Moreover, can we determine from the resulting equations how $B$ is related to classical electromagnetic effects?

Note that if our conjecture from before, our "holographic principle" is true, (i) is an irrelevant question, i.e. it was completely general for us to consider this variational problem with Dirichlet boundary conditions in order to derive the relevant equations.

Point (ii) will be addressed in a later section.

\subsection{Addendum - simplification of the equations of geometrodynamics}

Thinking a little more about these ideas for deriving physics from sharp manifolds leads one to realise that they lack full generality.  Namely, one is unable to use all natural degrees of freedom.  In fact, it would appear that we have been led astray by the misleading coincidence that the number of
degrees of freedom for this simpler approach is sufficient to describe physics in a four geometry, and also that it is closer intuitively to standard notions of physics, eg 4-vectors and so on.

However, the crucial objects of study are not vectors, they are $(1,1)$ tensors.  So this suggests we should drop the assumption of symmetry in our choice of bilinear form for describing geometrical dynamics.  Then one can split the form into its symmetric and antisymmetric components.  The symmetric
component would correspond to the channel information - and would give rise to the action for the curvature.  The antisymmetric component corresponds to the bound information.  This is clearly the correct picture to bear in mind since it is well known that when one converts the equations of electrodynamics into tensor form one obtains an antisymmetric $(1,1)$ tensor, called the field strength tensor $F^{\alpha\beta}$ (Jackson, "Classical Electrodynamics", page 556 \cite{[J]}).

For the former the tools of Riemannian geometry are valid.  However, for the latter, which I might as well call "Cartan geometry", since all that is occuring is a minor change of axiom, all the theory for Riemannian geometry goes through - so for instance we have the analogue of a unique antisymmetric Levi-Civita connection - and we get a natural curvature tensor, with a corresponding scalar curvature induced for the bound information.

But the Levi-Civita theorem also applies if we have a metric with both symmetric and antisymmetric components, so as to produce a combined connection subject not to assumption of symmetry or antisymmetry.  This in turn induces a natural notion of curvature for our space.

So instead of considering $grad_{\sigma}f = \psi - curl_{\sigma}B$ as we did before in the sharp case for a symmetric $\sigma$, absorb $\psi$ into the dynamics of $\sigma$ and rewrite the above as $grad_{\hat{\sigma}}\delta(\hat{\sigma}) = -curl_{\hat{\sigma}}B$.  Since the right hand side lacks generality (as I have pointed out), replace it naturally with $- grad_{\tau}\delta(\tau)$.  I claim this is a valid choice for any antisymmetric $\tau$.  Then we have the rather simple expression $grad_{\hat{\sigma} + \tau}\delta(\hat{\sigma})\delta(\tau) = 0$, or, if we
use the fact that $\hat{\sigma}$ and $\tau$ are arbitrary,

\begin{center} $grad_{\sigma}\delta(\sigma) = 0$ for any arbitrary $\sigma : M \rightarrow GL(n)$. \end{center}

This leads naturally of course, after mirroring the above arguments and calculations, to a generalised Einstein-Hilbert action

\begin{center} $\int_{M}R_{\sigma}dm = 0$ \end{center}

which we would equate to zero to determine physical solutions (criticality of the information).  Here of course I stress $\sigma : M \rightarrow GL(n)$ may not be symmetric; if it is of course the solutions are the standard Ricci flat or Einstein metrics.  Certainly there is a fair bit to check here, but, as you can see, the advantage of this extra work is a considerable simplification of the theory, particularly in its later incarnations and generalisations.





\section{Physics for almost sharp manifolds}

\subsection{Introduction}

In this section I will attempt to develop, in an analogous manner to before, the equations describing the behaviour of the curvature and mass distributions within a physical manifold for a slightly more sophisticated choice of signal function.  One could think of this as the next term in a perturbative expansion about an ideal metric, so whereas before I was looking at the "zeroth order case", I am now looking at the "first order case".  In particular, our physical information $K$ will now depend on two parameters; the metric parameter $\sigma$ and the expansion parameter $\epsilon$.  Hence it will turn out that in order to optimise $K$ we will have to solve two coupled partial differential equations for these parameters.

Throughout this section I will assume that $B = 0$.

First I will demonstrate that a good choice of signal function is

\begin{center}
$(1 + \Delta_{\sigma}\epsilon + \Delta^{2}_{\sigma}\frac{\epsilon^{2}}{2!})\delta(\sigma(m) - a)$
\end{center}


where $\Delta_{\sigma} = \sigma^{kl}\nabla_{k}\partial_{l}$.


This signal function is motivated by considering the very reasonable signal function

\begin{center}
$f(m,a) = (\frac{1}{k \sqrt{\pi}})^{dim(M)}exp(-\vert (\sigma^{ij}(m) - a^{ij})m_{i}m_{j}\vert/k^{2})$
\end{center}

that I constructed previously.

For $k << 1$ this can be expanded in terms of $\delta$-functions to get the expression which I provided above up to quadratic order in $k$.  I shall now provide a proof of this claim, working, as usual, with the one dimensional case.

Let $f : R \rightarrow R$ be an arbitrary smooth function.

\begin{align}
\frac{1}{k\sqrt{\pi}}\int e^{-(\frac{x}{k})^{2}}f(x)dx &= \frac{1}{\sqrt{\pi}}\int e^{-v^{2}}f(kv)dv \nonumber \\
&= \frac{1}{\sqrt{\pi}}\int e^{-v^{2}}(f(0) + kvf'(0) + \frac{k^{2}}{2}v^{2}f''(0))dv + \text{terms of order } k^{3} \nonumber \\
&= f(0) + 0 + \frac{k^{2}}{2\sqrt{\pi}}\frac{d}{dt}(\sqrt{\frac{\pi}{t}})|_{t = 1}f''(0) + \Theta(k^{3}) \nonumber \\
&= f(0) + (\frac{k}{2})^{2}f''(0) + \Theta(k^{3}) \nonumber \\
&= \int(\delta(x) + (\frac{k}{2})^{2}\delta''(x))f(x)dx + \Theta(k^{3}) \end{align}

But $f$ was arbitrary, so $\frac{1}{k\sqrt{\pi}}e^{-(\frac{x}{k})^{2}} \sim \delta(x) + (\frac{k}{2})^{2}\delta''(x)$.

This naturally extends to the general case as

\begin{center}
$(\frac{1}{k \sqrt{\pi}})^{dim(M)}exp(-\vert (\sigma^{ij}(m) - a^{ij})m_{i}m_{j}\vert/k^{2}) \sim (1 + (\frac{k}{2})^{2}\Delta_{\sigma})\delta(\sigma(m) - a)$
\end{center}

Now $k$ is a dimensionless parameter, albeit very small, so we may define $\epsilon = (\frac{k}{2})^{2}$ as a new dimensionless parameter.



Note from my concluding remarks from the previous section that we may make the simplifying assumption of working with a Riemann-Cartan metric, in which case we do not need worry about torsion or a mass distribution, since they are absorbed into the metric tensor.




A few further preliminary remarks.  Let $k : \sigma \mapsto \Delta_{\sigma}$ be the Laplacian map.  Then observe that $\Delta_{\sigma} \circ \epsilon = k(\sigma \times k^{-1} \circ \epsilon)$, where we are using the property $\Delta_{\sigma \times \tau} = \Delta_{\sigma}\Delta_{\tau}$ of $k$.  Then we get that

\begin{center} $\int_{M}\Delta_{(\sigma ; \epsilon)}(det (\sigma ; \epsilon))^{1/2} = \int_{M}\Delta_{\sigma}(det\sigma)^{1/2}$ \end{center}

for small $\epsilon$, with error $O(\epsilon^{2})$.  So in what follows it makes sense to consider the deformed metric $\bar{\sigma} := \sigma \times k^{-1} \circ \epsilon$ instead of $\sigma$.  In order to make sense of the argument to follow, however, for transparency write $\bar{\sigma} := \hat{\epsilon}\frac{\hat{\bar{\sigma}}}{\hat{\epsilon}}$, where $\hat{\epsilon} = sup_{(x \in M)}(\norm{k^{-1} \circ \epsilon}_{\sigma})$ is the maximal variation and hence constant.

Finally for the argument to follow map $\hat{\epsilon} \mapsto \epsilon$, and $\hat{\bar{\sigma}} \mapsto \sigma$.  The point of doing all of this is essentially so we can treat our perturbative parameter as constant and concentrate wholly on the metric.  We will unravel afterwards according to this key.

\subsection{Result outline}

Now that these fundamentals have been covered we are ready for the main results.

\emph{Claim 1}: Under the assumption that boundary terms are negligible, for the above choice of signal function the total information can be written as

\begin{center}
$4K = \int_{M}(\Delta_{\sigma} + \Delta^{2}_{\sigma}\epsilon + \Delta^{3}_{\sigma}\epsilon^{2})(det(\sigma))^{1/2}dm$.
\end{center}



Note that the condition that boundary terms have negligible contribution seems eminently reasonable in light of our assumptions if the boundary is at infinity, since we assumed a finite mass distribution $E < \infty$ (the equations trivialise to the previous case if $E \rightarrow \infty$ (admittedly with boundary terms)).

\emph{Claim 2}: The total information $K$ can be rewritten as

\begin{center}
$4K = \int_{M}(R + R_{(2)}\epsilon + R_{(3)}\epsilon^{2})dm$
\end{center}



where $R_{(2)}$ is the fourth order geometric invariant defined by the contraction of $CurvRiem$ three times, where $Curv_{ij} = \nabla_{i}\nabla_{j} - \nabla_{j}\nabla_{i} - \nabla_{[i,j]}$ is the curvature operator.  Similarly $R_{(3)}$ is the sixth order geometric invariant.  More generally, we define $R_{(k)}$ to be the $2k^{th}$ order geometric invariant defined by the appropriate number of contractions of $Curv^{k - 1}Riem$.

\emph{Corollary}: Untangling the above via the key, we obtain that the information is

\begin{center} $4K(\sigma,\epsilon) = \int_{M}(R_{\hat{(\sigma;\epsilon)}}\hat{\epsilon} + R_{(2)\hat{(\sigma ; \epsilon)}}\hat{\epsilon}^{2} + R_{(3)\hat{(\sigma ; \epsilon)}}\hat{\epsilon}^{3} + ... )(det \hat{(\sigma;\epsilon)})^{1/2}dm$ \end{center}





\emph{Remarks}.

\begin{itemize}
\item[(1)] Requiring now that $\delta K = 0$ allows us to solve simultaneously not only for the optimal metric $\sigma$, but for the optimal expansion parameter $\epsilon$.  In particular this tells us that $\epsilon$ may vary wildly depending on the nature of our solution.  Furthermore, the requirement that two equations must hold - $\frac{\partial K}{\partial \sigma^{-1}} = 0$ and $\frac{\partial K}{\partial \epsilon} = 0$ - serves as an obstruction to the space of solutions one might expect given one or the other to hold by itself (see the later section on classification).


\item[(2)]  We have neglected the contribution to the information from boundary terms, which we assumed to be negligible.  In fact, due to the nature of the model we are using, this is a necessity.  However it is clear that there is physics that does not fall under this general umbrella.  This will be taken up again in a later chapter, when I talk about statistical stacks and condensed matter physics.
\end{itemize}














\subsection{Proof of Claims 1 and 2}


By definition,

\begin{center}
$4I = \int_{M}\int_{A}\frac{1}{(1 +\Delta_{\sigma}\epsilon + ...)\delta}\norm{(1 + \Delta_{\sigma} \epsilon + \Delta^{2}\frac{\epsilon^{2}}{2})\partial(1 + \Delta_{\sigma}\epsilon + ...)\delta}^{2}$
\end{center}

Expanding this expression and throwing away terms of order $\epsilon^{3}$ or higher, we are left with

\begin{center}
$4I = \int_{M}\int_{A}(\delta^{-1}\norm{\partial_{\sigma}\delta}^{2} + \epsilon\{-\frac{\Delta_{\sigma}\delta}{\delta}\delta^{-1} \norm{\partial_{\sigma}\delta}^{2} + 2\delta^{-1}\Delta\norm{\partial \delta}^{2}\}) + \epsilon^{2}\{ \Delta^{2}(\delta^{-1}\norm{\partial_{\sigma}\delta}^{2}) + 2\delta^{-1}\Delta^{2}\norm{\partial \delta}^{2} - 2\Delta_{\sigma}(\delta^{-1}\Delta\norm{\partial \delta}^{2}\}  + B(\epsilon)$
\end{center}

where $B(\epsilon)$ are boundary terms depending on derivatives of $\epsilon$, which can therefore be neglected.  We are also using the identity

\begin{center} $\frac{1}{1 + \epsilon f + \epsilon^{2}f^{2}/2} = 1 - \epsilon f + \epsilon^{2}f^{2}/2 + ...$ \end{center}

Once more throwing away terms of order $\epsilon^{3}$ and above, we rewrite the above expression as

\begin{center}
$4I = \int_{M}\int_{A}\delta^{-1}\norm{\partial_{\sigma}\delta}^{2} + \int_{M}\int_{A}\epsilon\{(\sigma^{\alpha \beta}\partial_{\alpha}\sigma_{jk}\partial_{\beta}\sigma_{mn} (det \sigma)^{1/2}) \frac{\partial^{2}}{\partial \sigma_{jk} \partial \sigma_{mn}}\Delta_{\sigma}\delta\} + \int_{M}\int_{A}\epsilon^{2}\Delta^{3}(det \sigma)^{1/2}\delta$
\end{center}

This proves the first claim.

But $\Delta^{n}(det \sigma)^{1/2} = R_{(n)}(det \sigma)^{1/2}$, so this is nothing other than

\begin{center} $\int_{M}\int_{A}(R_{(1)} + \epsilon R_{(2)} + \epsilon^{2} R_{(3)} + ...)\delta$ \end{center}

which proves the second claim.

\section{Recovery of standard results}

Here I will demonstrate how standard results in classical physics follow from the equations developed above.  In particular, I will show that Maxwell's equations arise as a special case for the physics of sharp manifolds, and I will show how the Klein-Gordan equation follows as a consequence of the physics of almost sharp manifolds.

First, write the stress energy tensor in terms of the metric as $T(\sigma) = curl_{\sigma}\sigma = curl_{\sigma_{s}}\sigma_{s} + curl_{\sigma_{as}}\sigma_{as}$ where $\sigma_{s}$ is the symmetric part of the Riemann-Cartan metric $\sigma$ and $\sigma_{as}$ is the antisymmetric part.

Then $div_{\sigma}T(\sigma)$ is certainly zero, as required.

We can also rewrite the action

\begin{center} $K = \int_{M}\int_{A}\norm{\partial \delta(\sigma)}^{2}/\delta(\sigma)$ \end{center}

as

\begin{center} $K = I(\sigma) - J(\sigma)$ \end{center}

where

\begin{center} $I(\sigma) = K(\sigma_{s})$ \end{center}

and

\begin{center} $J(\sigma) = \int_{M}\int_{A}\norm{T(\sigma_{s})}^{2}\delta$ \end{center}

In particular there exists a vector field $\phi$ such that $\norm{T}^{2} = \norm{\phi}^{2}$ over $M$.

\emph{Claim 1}.  $\partial f = \phi$.  So if we define a new vector field $\psi$, with $div \psi = 0$, then $\phi = \psi - curl_{\sigma}B$, for some vector field $B$.

\emph{Claim 2}. 
\begin{center} $J = \int_{M}\norm{\bar{\psi} - curl_{\sigma}\bar{B}}^{2}$, \end{center}

where $\psi = \bar{\psi}\delta$ and $curl_{\sigma}\bar{B}\delta = B$.

\emph{Claim 3}. 

\begin{center} $J = \int_{M}(\norm{\bar{\psi}}^{2} - \norm{curl_{\sigma}\bar{B}}^{2})$ \end{center}

So since we already understand the variation of the first term above from before, we need only focus on the second term.

\emph{Claim 4}. We have:

\begin{center} $\int_{M}\norm{curl_{\sigma}\bar{B}}^{2} = \int_{M}(\norm{\nabla \bar{B}}^{2} + (div_{\sigma}\bar{B})^{2})$
\end{center}

provided that we are assuming Dirichlet boundary conditions.

\emph{Claim 5}. There exists $\hat{f}$ such that

\begin{center} $\norm{\nabla \hat{f}}^{2} = \norm{\nabla \bar{B}}^{2} + (div_{\sigma}\bar{B})^{2}$ \end{center}

\emph{Claim 6}. The physical information is

\begin{center} $K(\sigma) = \int_{M}(R - \norm{\bar{\psi}}^{2} + \norm{\partial \hat{f}}^{2})$ \end{center}

\emph{Remark}. If $\bar{\psi} = 0$, then the physical information for a sharp manifold corresponds to the Perel'man functional after a simple change of coordinates.

\emph{Claim 7}. Assuming Dirichlet boundary conditions, the constitutive equation for a sharp manifold with nonzero electromagnetic term is then

\begin{equation}
R_{ij} = \Gamma_{ij} \end{equation}

\emph{Remark}. In the case that $\bar{\psi} = 0$, the first equation is very closely related to the equation for a gradient soliton.  In particular there is a close relation between the information functional for a sharp manifold and the Perelman functional for the Ricci flow.

The first claim is self-evident; it follows from the definition of signal function.  I prove Claims 2 through to 7 for the sharp case.  The perturbative case is analogous.

\subsection{Proof of Claim 2}

Write $B = \hat{B}\delta$.  Then we know that

\begin{center}
$div_{\sigma}(\hat{B}\delta) = 0$
\end{center}

since $B$ is the curl of some fuzzy vector field on $\Lambda$.

So

\begin{align}0 &= (div \hat{B})\delta + <\hat{B},\partial_{\sigma}\delta>(det \sigma)^{1/2} \nonumber \\
&= \{(det \sigma)^{1/2}div \hat{B} - \frac{\partial}{\partial \sigma_{jk}}(\sigma^{i \alpha}\hat{B}_{i}\partial_{\alpha}\sigma_{jk}(det \sigma)^{1/2})\}\delta \nonumber \\
&= \{(det \sigma)^{1/2}div \hat{B} - \hat{B}_{i}\frac{\partial}{\partial \sigma_{jk}}(\sigma^{i \alpha}\partial_{\alpha}\sigma_{jk})(det \sigma)^{1/2}\}\delta \nonumber \\
&= \{div \hat{B} - \hat{B}_{i}\frac{\partial}{\partial \sigma_{jk}}(\sigma^{i \alpha}(\Gamma^{l}_{k \alpha}\sigma_{lj} + \Gamma^{l}_{\alpha j}\sigma_{lk}))\}(det \sigma)^{1/2}\delta \nonumber \\
&= div \hat{B} (det \sigma)^{1/2}\delta
\end{align}

Hence

\begin{center}
$div_{\sigma}\hat{B} = 0$
\end{center}

So $\hat{B}$ is the curl of some vector field, say $\bar{B}$.

This concludes the proof of Claim 2.

\subsection{Proof of Claim 3}

Now, $J = \int_{M}\norm{\psi - B}^{2}/f = \int_{M}(\norm{\psi}^{2} + \norm{B}^{2} - 2<\psi,B>)/f = \int_{M}(\norm{\psi}^{2} + \norm{B}^{2} - 2\norm{B}^{2})/f$

So Claim 3 follows immediately from Claim 2.

\subsection{Proof of Claims 4 to 7}

\begin{proof} (Claim 4).

This part follows from integration by parts:

\begin{center}
$\int_{M}\norm{curl_{\sigma}\bar{B}}^{2} = -\int_{M}<\bar{B},curl_{\sigma}curl_{\sigma}\bar{B}>$
\end{center}

using the assumption of Dirichlet boundary conditions, followed by application of the identity $\epsilon_{ijk}\epsilon_{klm} = \delta_{il}\delta_{jm} - \delta_{im}\delta_{jl}$.
\end{proof}

\begin{proof} (Claim 5).

Now first observe that $\norm{\nabla \bar{B}}^{2} + (div_{\sigma}\bar{B})^{2}$ is positive, since as $\bar{\psi}$ is timelike we must have that $curl_{\sigma}\bar{B}$ is timelike, which implies that $\bar{B}$ must be timelike.  Hence there is a vector $V$ such that $\norm{V}^{2}$ is the above quantity.

Consider $\norm{curl_{\sigma}V}^{2} = \norm{\nabla curl_{\sigma}\bar{B}}^{2} + (div_{\sigma}curl_{\sigma}\bar{B})^{2}$.  This quantity is zero since $\nabla curl_{\sigma}$ and $div_{\sigma}curl_{\sigma}$ are zero operators.  Hence $curl_{\sigma}V = 0$, which implies $V = \nabla \hat{f}$ for some function $\hat{f}$.

This completes the proof of claim 5.
\end{proof}

\begin{proof} (Claim 6).  This claim is an easy consequence of the previous claims and the results of previous sections. \end{proof}

\begin{proof} (Claim 7).  Follows by a simple first variation argument. \end{proof}

\subsection{Classical electrodynamics}

Now, starting with the dynamical equation for a critical sharp manifold, assume $R_{(i)}$ is $O(\epsilon^{i})$, $\norm{\bar{\psi}}^{2}$ is $O(\epsilon)$.  Then $\partial_{i}\hat{f}\partial_{j}\hat{f} = 0$, in other words, $\hat{f}$ is constant.  We interpret $\hat{f}$ then to be the "frequency".

Also recall $0 = \norm{\partial \hat{f}}^{2} = \norm{\nabla \bar{B}}^{2} + (div \bar{B})^{2}$.  Then $div \bar{B} = 0$ and $\norm{\nabla \bar{B}}^{2} = 0$ since $\bar{B}$ is timelike.  So if we interpret $\bar{B}$ as the four-potential, $\bar{B} = (A,\phi)$ we get that

\begin{center} $\nabla \cdot A - \frac{\partial \phi}{\partial t} = 0$ \end{center}

using the Minkowski metric.  This is nothing other than the Coulomb Gauge.

I also claim that $\norm{\nabla \bar{B}}^{2} = 0$ is equivalent to the statement $\Delta \bar{B} = 0$, since we observe then that

\begin{center} $\norm{\nabla curl_{\sigma}\bar{B}}^{2} = \norm{\nabla^{2} \hat{f}}^{2} = \norm{\nabla^{2}\bar{B}}^{2}$ \end{center}

But the expression on the left hand side is zero by a basic operator identity and hence since $\bar{B}$ is timelike the conclusion readily follows.  So $\Delta \bar{B} = 0$, or

\begin{center} $(\nabla^{2} - \frac{\partial^{2}}{\partial t^{2}})A_{x} = 0$

$(\nabla^{2} - \frac{\partial^{2} }{\partial t^{2}})A_{y} = 0$
 
$(\nabla^{2} - \frac{\partial^{2} }{\partial t^{2}})A_{z} = 0$
  
$(\nabla^{2} - \frac{\partial^{2}}{\partial t^{2}})\phi = 0$
\end{center}

But these are the homogeneous electromagnetic field equations.

\subsection{(Classical) quantum mechanics}

If we do a bit of fiddling with the results from the beginning, it is clear that the action for the perturbative case should look something like

\begin{center} $K(\sigma) = \int_{M}(1 + \hat{\epsilon} \Delta + \hat{\epsilon}^{2}\Delta^{2} + ...)(R - \norm{\bar{\psi}}^{2} + \norm{\partial \hat{f}}^{2})$ \end{center}

Let $\partial \hat{f} = 0$.  Interpret $\bar{\psi}$ as the wave function, as intended.  Then a simple variation argument gives the constitutive equations

\begin{center} $(1 + \hat{\epsilon} \Delta + \hat{\epsilon}^{2}\Delta^{2} + ... )R = (1 + \hat{\epsilon} \Delta + \hat{\epsilon}^{2}\Delta^{2} + ...)\norm{\bar{\psi}}^{2}$ \end{center}

\begin{center} $(\Delta + 2\hat{\epsilon}\Delta^{2} + ...)R = (\Delta + 2\hat{\epsilon}\Delta^{2} + ...)\norm{\bar{\psi}}^{2}$ \end{center}

Make the further simplifying assumption that curvature is negligible.  Then to first order we have

\begin{center} $(1 + 2\hat{\epsilon}\Delta)\bar{\psi} = 0$ \end{center}

and

\begin{center} $\Delta(1 + 4\hat{\epsilon}\Delta + ...)\bar{\psi} = 0$ \end{center}

This second can be rewritten as

\begin{center} $(1 + 4\hat{\epsilon}\Delta + ...)\bar{\psi} = V$ \end{center}

for some potential function $V$, from which we conclude after substitution the Klein-Gordon equation for the wave function:

\begin{center} $2\hat{\epsilon}\Delta \bar{\psi}(m) = V(m)$ \end{center}

For flat Lorentz manifolds, in the non-relativistic limit, where $v/c << 1$, we may rescale time in terms of $v/c$ to remove $\hat{\epsilon}$ and the equation degenerates to a parabolic PDE, the Schr\"odinger equation:

\begin{center} $(2\hat{\epsilon}\Delta - \partial_{t})\bar{\psi}(x,t) = V(x,t)$ \end{center}

\section{Classification}

\subsection{Motivation}

Now the classification of all $n$-manifolds for $n \geq 4$ is well known to be impossible, since the word problem on the fundamental group of such objects is known to be in general insoluble.  However, since we have some control of the full Riemann curvature tensor for stable physical manifolds, and since, by their very nature, these manifolds are specially constructed, we might expect some form of classification for them.  Hence we are led to the following

\emph{Conjecture}: There is a classification of all stable, physical Lorentz $n$-manifolds for each $n$ for both the zeroth and first order cases treated previously.  In other words, there are a finite number of families of eigensolutions to these equations, which may be described precisely.

These fundamental solutions may well correspond, at least in relation to the first order equations, to well known ``particles'' which have been observed in experiment.  This leads us to the further

\emph{Conjecture}: There is a one to one correspondence between eigenfamilies of stable, physical Lorentz $4$-manifolds with Dirichlet boundary conditions satisfying the full blown 1st order equation with nonzero electromagnetic term, and the ``particles'' of the Standard model of particle physics.

Clearly in order to analyse the solutions of our PDEs we need to have a general set of tools at our disposal to identify symmetries in the equations.  So one might expect that Lie Group theory would play a pivotal role.

\emph{Motivating Example}: Consider the Dirac equation (with all constants set to one- except for $h$ and $m$):

\begin{center}
$\sigma^{ij}(\frac{h}{m}\frac{\partial}{\partial x_{j}} + A_{j})Q = iQ$
\end{center}

where $\sigma$ is an almost flat Lorentz metric, $Q$ is the probability amplitude and $A$ is the four-current.

This equation has two families of eigensolutions, one corresponding to the electron of particle physics, the other to the positron.  $\mathcal{Z}_{2}$ acts naturally on these solutions as a symmetry of the Dirac equation, suggesting that we might expect more generally for more complicated Lie Groups to allow us to classify the solution space of more sophisticated PDEs.

\subsection{Speculation}

Certainly for small particles, we would expect a natural splitting in local charts of the Lorentz metric into approximately a Riemannian metric for the space coordinates, and the Riemannian metric for the time coordinate.  To be more precise, we expect there to be a coordinate system such that the off block entries in the expression for the metric in the chart are of order $\epsilon^{2}$ and hence can be neglected.  Then, in other words, fundamental solutions would be classified by

\begin{center}
$\{$Prime 3-manifolds$\} \times \{$Prime 1-manifolds$\}$\end{center}


Then, we may observe that from the geometrisation conjecture it is now well known that there are only eight possible choices for the prime components of a three manifold.  One manifolds have, of course, a trivial classification: they can be either $S^{1}$ or $R$.

To get fermions, we need to write the PDEs $L_{1}R_{(1)} := R_{(1)} + \epsilon R_{(2)} = 0$, $L_{2}R_{(2)} := R_{(2)} + 2\epsilon R_{(3)} = 0$ as $K_{i}R_{(i)} = 0$ where "$\bar{K_{i}}K_{i} = L_{i}$" or, more precisely, $<K_{i},K_{i}>_{\hat{\sigma}} = L_{i}$  where $\hat{\sigma}$ is the complexification of $\sigma$ (the complex splitting of our original PDE).  To be more precise, we should write $R_{(i)}(\sigma) = \norm{R_{(i)}(\hat{\sigma})}^{2}_{\hat{\sigma}}$ for the corresponding complexifications of $R_{(i)}$.  Then we would say we get fermions if $K_{i}(R_{(i)}) = 0$.

Anyway, we then get eight fundamental solutions, according to our classification above, times either a circle or a line.  Solutions of the form $X \times S^{1}$ correspond to \emph{virtual} fermions; solutions of the form $X \times R$ we expect to be \emph{real} fermions.  This fits in nicely with the standard model, which lists off eight different types of fermions per ``generation''.

But there is also the other complex factorisation of our orginal PDE, where $L_{i} = \norm{\bar{K_{i}}}^{2}_{\hat{\sigma}}$, $\bar{K_{i}}$ being the complex conjugate to the operator $K_{i}$.  Then we get corresponding solutions to the equation $\bar{K_{i}}\Gamma = 0$, where $\Gamma = Ric - \phi \otimes \phi$.  This leads us to introduce an additional quantum number, which we shall call \emph{spin}.  Fermions with spin \emph{up} are solutions to the equation $K_{i}\Gamma = 0$; fermions with spin \emph{down} are solutions to the equation $\bar{K_{i}}\Gamma = 0$.

What about bosons?  This is a slightly tricky thing to get precisely right, but we can roughly say that we would expect bosons to be the pairing of a spin up fermion with a spin down that otherwise have very similar quantum numbers, via the factorisation of the equation we wrote above.  Virtual bosons will correspond to gluons, of course, of which there will be eight - and the real bosons will correspond to the set $\{$photon, W, Z, Higgs$\}$.  There are of course particle and antiparticle equivalents of these last four (under the symmetry of charge conjugation, $\lambda \mapsto \lambda^{T}$ where $\lambda$ is the antisymmetric component of $\sigma$.


Now, we do not have nearly enough information to talk about the different "generations" of the standard model fermions, but within the scope of this model we might guess that they correspond to the ground and excited states of each of the eight fundamental families of eigensolutions to the PDEs $K_{i}R_{(i)} = 0$ (up to spin).  It does not seem immediately obvious as to why there should be only three eigensolutions per family, since the PDE suggests there should be an infinite quantity.

One can make handwavy arguments saying that the perturbation expansion is no longer valid beyond three but I suspect that the deeper reason is that it is the particle physics version of the Lorentz problem.  To recall, the Lorentz problem is the mystery of why Lorentzian geometry should be the preferred geometry of low energy physics.  The reason that this is suggestive is because one can pair the triple of the three generations of a type of fermion (spacelike directions) with the corresponding force carrier, or boson (timelike direction).  I do attempt to sketch a solution to the Lorentz problem in the next chapter - perhaps the proof can be adapted to shed some light upon this particular mystery too.

As a final throwaway remark, we might naively guess that maybe fermions are somehow more fundamental than bosons (although this is in direct contradiction to what I have just said); that bosons can be decomposed into fermion "conjugate pairs".  Maybe there is a connection here to twistor theory?

In fact, what we are doing is performing a kind of matched asymptotic expansion with gluuing- we are taking a complete model of the particle ($S^{3} \times R$, for instance), and then gluuing it to a very small region in $R^{4}_{1}$ which of course is modelled after a space in which there is no matter (see diagram).  So in this instance, these particles I am describing are a perturbation on an otherwise boring landscape (there are more metrics than simply that for $R^{4}_{1}$ allowable on the outer solution, however; in fact, once again, we have eight alternatives).

I must stress that this whole approximation is only valid when

\begin{itemize}
\item[(a)] Our particles are small, i.e. we are dealing with the lower eigenvalues in this model of the governing equation, and
\item[(b)] $\epsilon$ is very small.
\end{itemize}

So evidently if we are finding solutions of our equations with values of $\epsilon$ which are not $<< 1$, then these solutions are unphysical, because they contradict the assumptions made to get to them.

Later on in the next chapter I will develop tools which might be usable to overcome these limitations, and create better even better models for particle physics.  However assumption (a) still stands by necessity.



\subsection{Extensions}

\begin{rmk}
Here are some thoughts that I had in 2007 which serve to motivate the following chapter.  They should not be taken too seriously, but rather as an informal discussion from which one can launch oneself as a point of departure.
\end{rmk}

Various extensions that could be made might be to consider an "expansion metric" $\tau$, which corresponds to the signal function $(1 + (\tau \cdot \nabla) \cdot (\tau \cdot \nabla))\delta(\sigma(m) - a)$, in order to take into account a thickening or thinning of tubes in the space of metrics approximating a sharp manifold.  Other work that could be done might be to try to examine more closely the link between the particular PDEs that crop up in these problems, the existence of local but possibly not global solutions to them, and the connection to chaos.  A journey into chaos, with its connections to fractal geometry, is worthy of another treatise in its own right.  Various people such as Smale have made great contributions to this area.  Perhaps there is a connection to Morse Theory here?

Furthermore, one might ask, why Lorentzian geometry?  Why should reality seem to be most plausibly described by a four-geometry with three spacelike and one timelike dimensions?  This leads naturally to the idea of considering a potential geometry with possibly infinite dimensions, and performing the same sort of analysis on this new geometry as I have done here.  The potential for "fractional" dimensions or a Cantor dust on various levels should be also allowable.  Then I make the following conjecture:

\emph{Conjecture}: On such an infinite dimensional space, an optimal distribution of information should have support almost everywhere on a finite dimensional submanifold- with three spacelike and one timelike dimensions. In other words, a minimal and stable configuration for the information is for reality structures with these dimensional characteristics.  The remainder of the information for a stable (or almost stable) system should inhabit a set of measure zero.

Why should this be true?  Well, there is a natural ring structure on such a space, as one can see from the observation that there are local charts into the quaternions

\begin{center} $a + bi + cj + dk$ \end{center}

where the real part is associated to the timelike dimension and the "imaginary" part to the spacelike dimensions.  

Of course this is all very artificial and may not really mean anything at all.  Still it suggests that quaternions might have a certain significance.  This does slightly fly in the face of intuition, however, because it is actually the complex numbers that have the most structure and one might therefore expect them to be somewhat more natural.

What perhaps is a more plausible way of looking at why perhaps the support of information in the universe is almost entirely contained on a Lorentz 4-manifold can perhaps be seen by examining the first order perturbative PDE for the equations of physics.  For it to be a nontrivial generalisation of the nonperturbative case, in other words, for there to be any "quantum mechanical effects", one requires that the second degree curvature term, $R_{(2)}$, be nonzero.  But this is only possible if there are at $\emph{least}$ 4-dimensions in which to work, because otherwise this term will vanish.  So then one can again invoke the principle of maximal laziness, or Occam's razor, or your favourite equivalent principle, and say then, well, $\emph{clearly}$, four dimensions is the simplest structure in which to work at low perturbations, so that is how the information in the universe will naturally be structured.

That does not mean that there will not be small (though usually negligible) traces of information in other, "higher" dimensions.

Why would the index be one?  It may be that the best way to organise a four dimensional system is for it to have as much structure as possible.  So we come back to quaternions again, which is clearly equivalent to a certain class of Lorentz manifolds.

The natural things to consider for such statistical manifolds are of course infinite dimensional random matrices, or, alternately, the continuous geometries of von Neumann could be adapted to help flesh out an appropriate model here to prove this conjecture properly.

Interestingly, this conjecture may not be true in certain circumstances, which could lead to some quite weird behaviour indeed.

Another research direction to consider is to look at "higher statistical moments", that is, to build a statistical geometry on top of an existing statistical geometry.  To make myself a bit clearer, we have been considering, so far, a distribution on the space of metrics, $A_{1}$, of a given differentiable manifold, $M$.  It is then a simple process of abstraction to consider different metrics over the space $A_{1}$, so we are considering a distribution of inner products in a space $A_{2}$ over the space of inner products $A_{1}$ of a manifold $M^{n}$.  We could even associate a differential structure to $A_{1}$ by building and gluing charts of appropriate dimension together from the underlying Euclidean structure to make $A_{1}$ essentially a general $n(n - 1)/2$ manifold, ie make it only locally euclidean for the purposes of construction of metrics over it.

In this way, one can potentially build an infinite ladder of spaces,
 $(M = A_{0},A_{1},A_{2},...,A_{i},...)$.  Relatively speculative though the thought is, I believe that for assuming that all distributions are almost sharp that the associated Banach or Hilbert spaces corresponding to the "Quantum Mechanics" of such systems form the pattern $B_{0}$, $B_{1} =$ dual of $B_{0}$, $B_{2} =$ dual of $B_{1}$, etc, if we view much of modern analysis as the study of the structure of almost sharp statistical geometries.

Perhaps the construction of higher statistical moments, together with ideas from continuous geometry, consideration of more general signal functions and also taking into account non-equilibrium dynamics, that is, dynamics that does not involve realising the lower bound of the Cramer-Rao inequality (the last a very strong maybe), would lead to interesting and significant physics beyond the standard model.  As we shall see in the material to follow, this is in a certain way indeed the case.


\chapter{Turbulent Geometry}

Fractals are generally wily and unruly beasts.  There are many unresolved questions about the ones that are known, and even the simplest (such as the Mandelbrot set) as notorious for their complexity and depth of structure.  Is it useful to model the atoms of reality to some degree as fractal structures?  Certainly there are many phenomena in nature that exhibit scale free behaviour, so it would appear that the answer to this question might be yes.  Certainly in cosmology, it is well known that the large scale structure of galaxies and interstellar dust appears to exhibit characteristic properties of being fractal, at least between certain distance scales - a self-similarity across many orders of magnitude.

Turbulence is another mystery that defies explanation.  From waves crashing down onto a beach and foaming onward, to even fairly simple systems where water is pushed through a pipe at high pressure which contains discrete jumps in tube diameter, turbulence is ubiquitous, but is not very well understood.  It is often confused as a chaotic phenomenon, but it does not have to be.  For instance, a flow can converge to one with turbulent, stable eddies that have predictable and reproducible properties.  The flow over the surface of a dimpled golf ball is also turbulent, but, if anything, it allows an experienced golfer more control over its trajectory.  Obviously, it would be marvelous to have some model that at least gave us some method of understanding such phenomena, at least while they are stable!

In this chapter I attempt to develop a mathematics that may go towards helping to understand these objects and processes in more detail.

\section{Preliminaries}

\subsection{Fractional Calculus}

Before I go any further, I will need to describe the fractional calculus.

A fairly natural question to ask is, is there a way, for any real number $\alpha > 0$, to take the $\alpha^{th}$ derivative of a function, or the $\alpha^{th}$ integral?

It turns out that there is such a way, however, whereas for integral $\alpha$ differentiation is purely a local operation, in general boundary information is required.

For some motivation, define $(Jf)(x) = \int_{0}^{x}f(t)dt$, $(J^{2}f) = \int_{0}^{x}(Jf)(t)dt$ etc.

Then clearly $(J^{n}f)(x) = \frac{1}{(n-1)!}\int_{0}^{x}(x-t)^{n-1}f(t)dt$ for natural numbers $n$

and this definition extends naturally to all positive real numbers $\alpha$:

\begin{center}
$(J^{\alpha}f)(x) = \frac{1}{\Gamma(\alpha)}\int_{0}^{x}(x-t)^{\alpha - 1}f(t)dt$
\end{center}

We then define the $\beta^{th}$ fractional derivative of a function $f$ as

\begin{center}
$(D^{\beta}f)(x) = ((D^{n} \circ J^{n-\beta})f)(x)$
\end{center}

for any integer $n > \beta$.

These definitions of course extend naturally to multivariable calculus and calculus on manifolds.

\subsection{Basic tools and motivation}

Here I shall define the necessary tools and the objects of my study henceforth.  The aim is to be able to model spaces of generalised fractal dimension, in a sense which I will soon make more precise.  The motivation was to attempt to fabricate a machinery that would, once and for all, provide the means to validate rigorously our previous educated guess as to why a stable physical geometry should be Lorentzian.  In other words, we should be able to say why physics as we usually understand it should take place in Lorentzian space.  Furthermore, I am motivated by the need to remove myself of dimensional dependence, since if we can model our base space as an infinite dimensional manifold with a certain specific fractal measure, we will have that if we take a statistical manifold on top of our statistical manifold (for the base space) instead of having a quadratic growth in the size of the dimensions, we will go from $\infty$ to $\infty$.  So, with such a theory, we should be able to treat such stacked spaces in a more streamlined, natural, and elegant fashion.

Of course, it seems quite logical that once we have determined and fleshed out the necessary concepts, it should then be possible to use them to determine new physics, after hitting the relevant information with the Cramer-Rao inequality and evaluating the critical points.  This shall become the ultimate aim of this section.

Before I mention the main ideas here, I should talk first of fractal geometry. By this I do not mean fractal geometry in the standard sense, where one deals with Cantor dust like objects.  Rather, I am interested in considering a variety of a continuum of objects where the dimension at each point in the set takes a value on the real line and varies from point to point.  More generally, in turbulent geometry, I am interested in extending the notion of the dimension of an infinitesimal piece of a set to $R^{n}$, or, more precisely, to a Riemannian $n$-manifold $N$, that depends somehow on the base space $M$.   

The key idea is to observe that, if $\psi$ is some distribution over the real line, we can measure the $r^{th}$ fractal dimension of $\psi$ by computing $\int_{R}\int_{R}\psi(x)\partial^{r}_{(a)}\delta(\sigma(x) - a)dadx$, where $\sigma$ is an inner product on the tangent space of $R$, and $\partial^{r}$ is the usual extension of the derivative to the real line.  This can easily be checked. We immediately observe that there is a key connection to statistical geometry here.  Pursuing this analogy, we would like to be able to sensibly define $\partial^{r}$ where $r = (r_{1},...,r_{n})$ is the local expression of an element of some manifold $N$.

First define $g$ to be a point in $R^{r}$ if it is a map

\begin{center} $ g : (...((R^{r_{1}} \rightarrow R^{r_{2}}) \rightarrow R^{r_{3}}) \rightarrow ... ) \rightarrow R^{r_{n}}$ \end{center}

Now, define $\partial^{r}(f)$ to be the function $g$ in $R^{r}$ which maps $\partial^{r_{1}}f \mapsto \partial^{r_{2}}f \mapsto ... \mapsto \partial^{r_{n}}f$.

Then we can make sense of the $r$-measure of a $\psi$ distribution over a manifold $M$ in an analogous fashion:

\begin{center}
$\int_{M}\int_{A}\psi(m)\partial^{r}_{(a)}\delta(\sigma(m) - a)dadm$
\end{center}

However certain issues remain unresolved, such as how to develop dynamics from this language.  That will be the focus of the next section.

\subsection{Construction of the information}

We seek to compute the information of a statistical manifold with a completely general signal function, subject to almost sharp turbulence.  This should serve as a useful model for at least certain physical phenomena that lie beyond the scope of the standard model.

First I will have to determine how to make sense of the information for this problem.  In fact, a two tier approach is necessary.  Suppose now instead of having one definite dimension of each point in our space we have a statistical distribution.  Furthermore, suppose we define the turbulent information as follows:

\begin{center} $I_{(turb)} = \int_{M}\int_{A}\rho_{(\vec{g})}dadm$ \end{center}

where $\rho_{(\vec{g})}$ is the density of the Fisher information with respect to a vector of signal functions $\vec{g}$.  Then this is a natural interpretation of the fractal dimension, given in terms of the standard language of statistical geometry from before, such that the physical information is:

\begin{center} $I_{(phys)} = \int_{M}\int_{A}(\partial f^{*}\rho_{(f)}; \rho_{(\vec{g})})dadm$ \end{center}

where $\partial f^{*}$ is the derivative with respect to the signal function $f$.

I will need to go into some more detail about $\partial f^{*}$.  If we look at the motivational example of the signal function $\delta (\sigma(m) - a)$, we would like to somehow have some sensible notion compatible with the idea that $(\frac{\partial}{\partial a} \delta ; \kappa)$ for some function $\kappa$ will give an effective dimension of $\kappa(m,a)$ at $(m,a)$, but does not interfere with our previous notion of statistical derivative over the domain $M \times A$ for plain statistical geometries.

In particular we are interested in a derivative with respect to the domain $\{ M \times A \} \rightarrow R$, the space of signal functions.  Consider

\begin{center} $\frac{\partial}{\partial \delta^{-1}} = \frac{\partial}{\partial \sigma^{-1}}\frac{\partial \sigma^{-1}}{\partial \delta^{-1}} = \frac{\partial}{\partial \sigma^{-1}}\frac{\partial \delta}{\partial \sigma} = \frac{\partial \delta}{\partial \sigma}\frac{\partial}{\partial \sigma^{-1}}$ \end{center}

with the last equality following because $\frac{\partial}{\partial \sigma}\frac{\partial}{\partial \sigma^{-1}} = \frac{\partial}{\partial Id}$ is the zero operator.

If we now use this as a prototype for our idea of turbulent derivative, we wish to look at

\begin{center} $(\frac{\partial}{\partial \delta}\delta ; \kappa)^{1/2} = ((\frac{\partial}{\partial \sigma}\delta ; \kappa)(\frac{\partial}{\partial \sigma^{-1}}\delta ; \kappa))^{1/2}$ \end{center}

which we may notate shorthand as

\begin{center} $(\partial \delta^{*} \delta ; \kappa) := (\frac{\partial}{\partial \delta}\delta ; \kappa)^{1/2}$ \end{center}

This will provide the desired effect of providing dimension $\kappa(m,a)$ at the location $(m,a)$.  This notation also naturally extends to more general signal functions.

We need to check that this is well defined, of course - namely that $I_{(phys)}$ is invariant under alteration of $\rho_{(g)}$ by a boundary term.  This may require a further condition to be imposed, such as criticality or some form of conservation law, more likely.

Regardless, it now seems clear that the natural space to use as our "dimension manifold" at $p \in M$ for the $a^{th}$ inner product is $T_{(p,a)}M$.

A special case of the above, when $f$ and $g$ are both sharp, with $f(m,a) = \delta(\sigma(m) - a)$ and $\vec{g}(m,a) = \delta(\vec{\tau}(m) - a)$, causes the information $I_{(phys)}$ to reduce to the rather elegant expression

\begin{center} $I_{(phys)} = \int_{M}\partial^{R_{\vec{\tau}}}R_{\sigma}$ \end{center}

and of course the turbulent information to be

\begin{center} $I_{(turb)} = \int_{M}R_{\vec{\tau}}$ \end{center}

What still is not clear, however, is whether we should merely require that $I_{(phys)}$ be critical - the most likely bet - or whether both informations should be critical.

The information that we are interested in studying is for almost sharp turbulence:

\begin{center} $I_{(turb)} = \int_{M}(1 + \Delta_{\vec{\tau}}\epsilon + \Delta^{2}_{\vec{\tau}}\frac{\epsilon^{2}}{2!} + ... )R_{\vec{\tau}}$ \end{center}

and a completely general signal function for the base space.  If we evaluate things by brute force this leads to expressions which are bulky and difficult to deal with, so I will not do so here.  Instead it will turn out that with a bit more work and development of appropriate notation these expressions can be reduced to something more manageable.

Note that a completely general signal function is of the form $f(m,a) = \int_{A}F(m,b)\delta(\sigma_{b}(m) - a)db$, and, without turbulence, the information would be $\int_{M}\int_{A}e^{h}\Delta_{\sigma_{b}}(h)dbdm$ where $F = e^{h}$.


\subsection{The Correspondence Principle}

In the previous discussion, I was interested in signal functions of the form

\begin{center} $f(m,a) = \int_{A}F(m,b)\delta(\sigma_{b}(m) - a)db$ \end{center}

with sharp, or almost sharp turbulence defined on them, given by a signal function of the form

\begin{center} $\vec{g}(m,a) = (1 + \epsilon \Delta_{\vec{\tau}})\delta(\vec{\tau}(m) - a)$ \end{center}

in order to perhaps gain an understanding of entanglement physics.

The base signal function is an example of the most general signal function one can get over a statistical manifold without additional structure.  However, there is a key problem with looking at objects of this type, because it is not at all clear what their connection to real world physics is; there are few obvious clues as to how to connect the statistical dynamics predicted by these things with experiment, and certainly no clear way to connect to practical application.

However, we might make the observation that sharp turbulent manifolds share similar properties to general signal functions over statistical geometries.  In particular there is integration twice over the space of inner products, and once over the manifold itself.  This suggests in turn the following question:

\begin{quest} Is there a correspondence between sharp turbulent manifolds and general statistical geometries? \end{quest}

It turns out that the answer to this question is \emph{yes}:

\begin{thm} (Correspondence principle).  Let $f(m,a) = \delta(\sigma(m) - a)$ and $g(m,a) = \delta(\vec{\tau}(m) - a)$ be sharp signal functions.  Then there is a correspondence $(f,g) \mapsto (F,\sigma)$ such that the signal function volume element associated to $(f,g)$, $(f;g)dV = (\partial f^{*}f ; g)dV(f;g)$, corresponds to the signal function volume element associated to $(F,\sigma)$, $\bar{f}(m,a)dm = \int_{A}F(m,b)\delta(\sigma_{b}(m) - a)(det(\sigma_{b}))^{1/2}dbdm$. \end{thm}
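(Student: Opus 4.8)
The plan is to establish the claimed bijection by pure bookkeeping: unwind both sides into their concrete integral representations over $M \times A \times A$ and match the three integrations one-to-one. First I would fix notation. On the turbulent side we have a pair $(f,g)$ of sharp signal functions $f(m,a) = \delta(\sigma(m)-a)$, $g(m,a) = \delta(\vec\tau(m)-a)$, and the turbulent volume element $(f;g)\,dV = (\partial f^{*}f;g)\,dV(f;g)$. Using the definition $\partial f^{*}f = (\tfrac{\partial}{\partial\sigma}\delta)(\tfrac{\partial}{\partial\sigma^{-1}}\delta)^{1/2}$ pairing against $g$, and recalling from the sharp-manifold computations that pairing $\tfrac{\partial}{\partial a}\delta(\sigma(m)-a)$ against a sharp turbulence function of effective dimension $R_{\vec\tau}$ produces $\partial^{R_{\vec\tau}}R_{\sigma}$ (this is exactly the identity already recorded just before the statement, $I_{(phys)} = \int_M \partial^{R_{\vec\tau}}R_\sigma$), the turbulent volume element localises to data consisting of: (i) a point $m \in M$, (ii) the inner product $\sigma(m)$, which we may let play the role of the ``outer'' metric, and (iii) the second inner product $\vec\tau(m)$, which enters only through its scalar-curvature-type invariant and therefore contributes a weight. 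The key move is to read this weight as a density $F(m,b)$ over a second copy of $A$, indexed by $b$, precisely as in the completely general base signal function $\bar f(m,a) = \int_A F(m,b)\delta(\sigma_b(m)-a)(\det\sigma_b)^{1/2}\,db$.

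The correspondence $(f,g)\mapsto (F,\sigma)$ I would then define explicitly as follows: take $\sigma_b := \sigma$ deformed by the element $b \in A$ in the same way that the Laplacian map $k:\sigma\mapsto\Delta_\sigma$ was used in the almost-sharp section to pass between $\sigma$ and $\bar\sigma = \sigma\times k^{-1}\circ\epsilon$, and take $F(m,b)$ to be the weight extracted from $\partial^{R_{\vec\tau(m)}}$ acting at $m$, normalised by $(\det\sigma_b)^{-1/2}$ so that the two volume factors agree. Concretely I would verify the identity
\begin{equation}
\int_M (\partial f^{*}f;g)\,dV(f;g) = \int_M \int_A F(m,b)\,\delta(\sigma_b(m)-a)\,(\det\sigma_b)^{1/2}\,db\,dm
\end{equation}
by expanding the left side via the delta-function limiting-function trick (the same $\sqrt{a/\pi}\,e^{-ax^2}$ device used in Claims 1 and 2 of the sharp case), performing the inner $a$-integration against $g$ to collapse one delta, and then recognising the remaining $m$-integral of $\partial^{R_{\vec\tau}}R_\sigma$ as an $A$-integral of a density once the fractional-derivative operator $\partial^{r}$ is rewritten (using the construction $\partial^{r}(h)$ as the chain of maps $\partial^{r_1}h\mapsto\cdots\mapsto\partial^{r_n}h$) as an integral kernel over the dimension manifold $T_{(m,a)}M \cong A$. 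Injectivity of the map follows since $\sigma$ is recovered as the argument of $f$ and $\vec\tau$ is recovered from $F$ via inverting the effective-dimension relation $b\mapsto R_{\vec\tau}$; surjectivity follows since any $(F,\sigma)$ arises this way by running the construction backwards, choosing $\vec\tau$ to realise the prescribed $F$ pointwise.

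I would carry out the steps in this order: (1) write down both volume elements in full integral form; (2) prove the reduction lemma that pairing the turbulent derivative $\partial f^{*}\delta$ against a sharp $g$ yields the fractional power $\partial^{R_{\vec\tau}}$ applied to $R_\sigma$ — invoking the earlier identity $\Delta^n_\sigma(\det\sigma)^{1/2} = R_{(n)}(\det\sigma)^{1/2}$ and its generalisation to noninteger order, which the excerpt already asserts; (3) rewrite the fractional operator as an $A$-indexed kernel, identifying the index $b$ and the density $F(m,b)$; (4) check the Jacobian/determinant factors match, which is where the $(\det\sigma_b)^{1/2}$ normalisation is pinned down; (5) confirm bijectivity. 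The main obstacle is step (2)–(3): making rigorous sense of the noninteger-order operator $\partial^{r}$ for $r$ a vector-valued (indeed $N$-valued) quantity and showing that pairing it against a delta-type turbulence signal function really does integrate to an honest, positive density $F(m,b)\,db$ rather than a distribution with no pointwise meaning. This is precisely the well-definedness issue flagged in the ``Construction of the information'' subsection, and I expect it to require either an additional conservation/criticality hypothesis on $\rho_{(\vec g)}$ or a careful choice of the limiting functions defining $\partial^{r}\delta$; absent that, the proof only yields the correspondence at the level of formal volume elements, which is in fact all the statement claims.
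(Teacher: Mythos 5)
Your proposal captures the right intuition — that the turbulent derivative $(\partial f^{*}f;g)$ should, after unwinding, produce an $A$-indexed density $F(m,b)$ multiplying a sharp delta — and you correctly flag the noninteger-order operator as the main source of non-rigor. But the route you take diverges from the paper's in a way that creates a genuine gap. The paper's proof works entirely at the level of the density $(f;g)(m,a,b)$: it integrates by parts to transfer $\partial f^{*}$ onto the volume element, reads the result (by ``general nonsense'') as an eigenfunction $\rho(m,b)$ of a volume element for a new family of metrics $\sigma_b$, and then absorbs $\rho$ into the delta to produce $F(m,b)$. No information functional is invoked, no curvature appears, and at every stage the object in hand is a function of $(m,a)$ (or $(m,a,b)$), not an integrated scalar.

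Your central identity (the displayed equation (1)) is instead integrated over all of $M$, and your step (2) passes through the Fisher-information formula $I_{(phys)} = \int_M \partial^{R_{\vec\tau}} R_\sigma$, which integrates over $A$ as well. That loses exactly the pointwise content the theorem asserts: the statement is about an equality (or ``correspondence'') of signal-function volume elements, i.e.\ of densities on $M \times A$, and two unequal densities can certainly have equal integrals. So even granting all the formal manipulations in steps (2)--(4), the equation you would establish is strictly weaker than the claim. You actually acknowledge at the end that the theorem only requires a correspondence of formal volume elements — but your verification scheme drops down to the totally integrated level, so that remark does not rescue the plan. To match the paper you would need to carry out the pairing $\partial f^{*}f(m,a)\,;\,g(m,b)$ without integrating over $m$ or $a$ at all, which is what the paper's integration-by-parts step followed by the eigenfunction identification accomplishes (at the level of rigor it accomplishes anything). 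Your invocation of the curvature identity $\Delta^n_\sigma(\det\sigma)^{1/2}=R_{(n)}(\det\sigma)^{1/2}$ is likewise a detour: that identity is a consequence used elsewhere to compute informations, not part of establishing the signal-function correspondence itself. The bijectivity discussion in your final paragraph is extra and not part of the paper's proof; given that the theorem only states ``there is a correspondence,'' it is defensible to include, but it rests on the same not-yet-rigorous fractional-operator inversion, so you should not present it as secured.
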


\begin{proof}
First of all,

\begin{center} $(f;g)(m,a,b) = (\partial f^{*}f(m,a) ; g(m,b))$ \end{center}





Integrate by parts so that $(\partial f^{*} ; \delta(\tau(m) - b))$ is acting on the volume element.  By general nonsense we may then realise this as an eigenfunction $\rho(m,b)$ of a volume element induced by a new family of metrics $\sigma_{b}$.  Hence

\begin{center} $(f;g) = \int_{A}\rho(m,b)\delta(\sigma(m) - a)(det(\sigma_{b}))^{1/2}db$ \end{center}

By absorbing $\rho$ partially into $\delta$ we find a new function $F(m,b)$ such that

\begin{center} $(f;g) = \int_{A}F(m,b)\delta(\sigma_{b}(m) - a)(det(\sigma_{b}))^{1/2}db$ \end{center}

which completes the proof.
\end{proof}

\begin{rmk} It is clear as corollary to this result then that entanglement physics may well be described to a first approximation by second order turbulence.  That is, geometries with actions of the form

\begin{center} $I(\sigma,\tau,\phi) = \int_{M}(\partial \sigma^{*} R(\sigma) ; \partial \tau^{*} R(\tau) ; R(\phi))dm$  \end{center}

would seem to be the key models to look at.
\end{rmk}

\begin{rmk} Another key point to mention is that we are not looking at turbulent geometry in full generality here to realise the correspondence; in particular we are looking only at fractal geometry.  Our signal functions are both sharp.  $g$ is not a vector; it is merely a scalar functional. \end{rmk}

\subsection{Further generalisations}

It is clear that merely looking at a vector of signal functions is not enough for full generality.  For instance,  we might be interested in transformations of this vector, such as if it were only a representation from a coordinate chart of an enveloping space $N$.  In particular, we are interested in signal functions of the form

\begin{center}
$h(m,n,a,b) = \partial^{g(m,n,b)}f(m,a)$.
\end{center}

where $g((m,n),b)$ is a signal function on $M \times N$, for instance of the form $\delta(\tau(m,n) - b)$ if $g$ is sharp.  Of course $M \times N$ induces a new topology on $M$, since $\tau$ induces a new metric for each $n \in N$, $\bar{\tau}_{n}(m) := \tau(m,n_{1}) \rightarrow \tau(m,n_{2}) \rightarrow ... \rightarrow \tau(m,n_{k})$.

But this is not entirely general enough; we would like to have a signal function of the form

\begin{center} $h(m,a,b,c) = (f(m,a) ; g(m,b,c))$ \end{center}

where $g(m,b,c) = G(m,c)\delta(\sigma_{c}(m) - b)$.  But by the correspondence principle this is equivalent to

\begin{center} $(f(m,a) ; \alpha(m,b) ; \beta(m,c))$ \end{center}

where $\alpha$ and $\beta$ are sharp.  So we can see that turbulent geometry is just a special case of fractal fractal geometry, which we are interested in for its potential applications to entanglement physics.  By abuse of terminology I will also call the latter \emph{second order turbulent geometry}, and geometry with fractal measure \emph{first order turbulent geometry}.




\section{Cramer-Rao for turbulent statistical manifolds}

\subsection{Introduction}

What is absolutely necessary for us to show before we can proceed, in order to justify the calculation in our final section, is to show that the physical information satisfies some form of generalised Cramer-Rao inequality for the class of objects of this section which I choose to call turbulent statistical manifolds.

In other words, we would like $I_{(phys)} = I_{(f,\vec{g})} := \int_{M}\int_{A}\partial_{f}(\rho_{f} ; \rho_{\vec{g}})dadm \geq 0$ for all $f$ and $\vec{g}$, subject possibly to some sort of proviso - a "conservation relation" linking the two.  Here of course $\rho_{k} = \frac{\norm{\partial_{k}k}^{2}}{k}$ is the usual information density for a standard statistical manifold.

I shall hazard a guess and suggest that the condition which we need is that $I_{(turb)} = I_{g} := \int_{M}\int_{A}\rho_{\vec{g}}dadm$ is critical; that is, it is zero and $\delta I_{\vec{g}} = 0$.

In fact, it turns out we do not need this additional condition.  In fact, it turns out that merely the following is true:

\begin{thm} (Turbulent EPI Principle).  Let $\Lambda = (M,A,f,\vec{g})$ be a statistical turbulent manifold.  Then

\begin{center} $I_{(phys)} = \int_{M}\int_{A}\partial_{f}(\rho_{f} ; \rho_{\vec{g}}) \geq 0$ \end{center}

where $\rho_{f}$, $\rho_{\vec{g}}$ are the Fisher information densities associated to the signal functions $f$ and $\vec{g}$.
\end{thm}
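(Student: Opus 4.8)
The plan is to reduce the statement to the ordinary (Riemann--Cartan) EPI principle already established in Chapter 7, using the correspondence principle of this chapter as the main bridge, and to fall back on a direct unbiased-estimator argument for the part that the correspondence principle does not immediately cover. Throughout I would impose the same decay-at-infinity hypotheses used in Chapter 7 (or Dirichlet conditions when $\partial M \neq \emptyset$), which are exactly what is needed to kill boundary terms via Stokes' theorem for statistical manifolds.

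First I would treat the sharp case, $f(m,a) = \delta(\sigma(m)-a)$ and $\vec g(m,a) = \delta(\vec\tau(m)-a)$, where $I_{(phys)} = \int_{M}\partial^{R_{\vec\tau}}R_{\sigma}$. Here the correspondence principle, together with its ``further generalisations'', identifies the signal-function volume element $(\partial f^{*}f;\,g)\,dV$ with the volume element $\int_{A}F(m,b)\delta(\sigma_{b}(m)-a)(\det\sigma_{b})^{1/2}\,db\,dm$ of an ordinary statistical manifold $(F,\sigma_{b})$ over an enlarged index space. Under this identification $I_{(phys)}$ becomes precisely a Fisher information $\int_{M}\int_{A}\frac{\|\partial_{\Lambda}\bar f\|^{2}}{\bar f}$ of a Riemann--Cartan statistical manifold, to which the theorem $I(f)\ge 0$ (the EPI principle for Riemann--Cartan manifolds) applies verbatim, yielding $I_{(phys)}\ge 0$. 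The one genuine check at this stage is that $I_{(phys)}$ is well defined, i.e.\ invariant under modification of $\rho_{\vec g}$ by a boundary term; this is where the decay/Dirichlet hypothesis is used, via the statistical Stokes theorem.

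Second, for a general signal function one writes $f(m,a) = \int_{A}F(m,b)\delta(\sigma_{b}(m)-a)\,db$ and likewise $\vec g(m,a) = \int_{A}G(m,c)\delta(\vec\tau_{c}(m)-a)\,dc$, so that the compound density $(\rho_{f};\rho_{\vec g})$ is a nonnegative superposition of sharp pieces; since $\partial_{f}$ and the bracket $(\cdot\,;\cdot)$ are linear in the relevant slots, $I_{(phys)}(f,\vec g)$ is an $F,G$-weighted average of sharp-case integrands and nonnegativity follows termwise. Alternatively, and more in the spirit of Chapter 7, one argues directly: define the turbulent physical estimator $\Gamma = \partial_{\Lambda}(f;\vec g)$, show it is weakly unbiased (after integration by parts the turbulent derivative of the compound signal function is a total derivative plus a curl, so $E(\theta\circ\Gamma)=0$ by Stokes exactly as in the lemma on physical estimators), apply the strong Cramer--Rao inequality weighted by $\sigma_{ij}$ using that the velocity $\partial_{\Lambda}(f;\vec g)$ is timelike (the reality/causality condition, now imposed on the turbulent data), and observe that $\int_{M}\sigma_{ij}g^{ij}(f(\Gamma))$ vanishes because $\Gamma$ is curl-like and $\mathrm{grad}\circ\mathrm{curl}=0$. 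Either route exhibits the weak Cramer--Rao inequality for $\Lambda=(M,A,f,\vec g)$ as equivalent to the asserted $I_{(phys)}\ge 0$.

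The main obstacle is the turbulent derivative $\partial_{f}^{*}$ itself. Unlike the ordinary statistical derivative it acts on the infinite-dimensional space of signal functions, and when the dimension datum $\vec g$ is vector- or fractional-valued the operator $\partial^{r}$ carries boundary information, so the ``divergence-free plus curl'' decomposition underpinning the Riemann--Cartan proof is no longer automatic. Establishing that decomposition — equivalently, that $\Gamma=\partial_{\Lambda}(f;\vec g)$ is annihilated by $\mathrm{grad}\circ\mathrm{curl}$ and that the Fisher-matrix term vanishes in the turbulent setting — together with the positivity (timelike character) of the turbulent velocity, is where the real work lies. The correspondence principle is the tool I would use to sidestep most of it, but verifying that the correspondence intertwines $\partial_{f}^{*}$ with the ordinary statistical gradient is itself the delicate point, and I expect that is the step most in need of careful inspection.
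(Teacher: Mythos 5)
Your Route~2 --- define a turbulent physical estimator $\Gamma = \partial_{\Lambda}(f;\vec g)$, prove weak unbiasedness via a turbulent Stokes theorem, run a weak Cramer--Rao argument, and show the Fisher-matrix term vanishes --- is essentially the paper's approach. The paper builds exactly that scaffolding: unbiased turbulent estimators, a turbulent Cramer--Rao inequality, weak tmles, and a Turbulent Stokes theorem. The gap you correctly flag at the end (whether the decomposition and positivity survive the passage to $\partial_f^{*}$) is the one the paper closes by a dedicated \emph{factorisation lemma}, $(\partial f^{*}f ; \vec g)(\partial f^{*}\tau ; \sigma) = (\partial f^{*}(f\tau) ; (\vec g\,\sigma))$, which is what lets the covariance of $\Gamma$ be identified with $(\partial f^{*}\rho_f ; \rho_{\vec g})$ and the cross-term killed; Route~2 should invoke that lemma explicitly rather than reason by analogy with Chapter~7.

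Your Route~1, however, has two genuine gaps. First, the correspondence principle as stated in the paper is proved only for sharp signal functions with a \emph{scalar} turbulent factor $g$ --- the remark following the theorem is explicit that ``$g$ is not a vector; it is merely a scalar functional'' --- whereas the Turbulent EPI Principle is stated for a vector $\vec g$. So the correspondence does not directly carry the turbulent $I_{(\mathrm{phys})}$ onto a Riemann--Cartan Fisher information in the generality required. Second, the ``$F,G$-weighted superposition'' step is not sound: the Fisher information density $\rho_k = \Vert\partial_{\Lambda}k\Vert^{2}/k$ is nonlinear in $k$, and the bracket $(f;g) = f^{g}$ is nonlinear in both slots, so writing $f = \int_A F(m,b)\,\delta(\sigma_b(m)-a)\,db$ does \emph{not} express $(\rho_f ; \rho_{\vec g})$ as an $F,G$-weighted sum of sharp-case integrands, and nonnegativity cannot be transferred termwise. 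The paper avoids this entirely by working with the general signal functions throughout and letting the factorisation lemma separate the $f$- and $\vec g$-dependence inside the turbulent expectation, rather than decomposing the signal functions up front.
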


To prove this result I will need to play the same game as before and develop an appropriate idea of weak and strong turbulent estimators.  However, due to the complexity of the issues involved, my treatment will be at best a sketch of what a properly rigourous treatment would require.


\subsection{Turbulent estimators and the Cramer-Rao inequality}

\begin{dfn} Let $\Lambda = (M,A,f,\vec{g})$ be a statistical turbulent manifold, where $f$ is the signal function on the base and $\vec{g}$ is the signal function vector on the turbulent superstructure.  (In this case signal functions take the form $h(m,a,b) = (\partial f^{*} f(m,a) ; \vec{g}(m,b))$.)  A turbulent estimator on this structure is a tuple $(u,\vec{v})$ where $u$, $v_{i}$ are maps from the sample space $M \times A$ to $M$.  An unbiased turbulent estimator is an estimator that satisfies the condition that

\begin{center} $E_{m}^{f \times \vec{g}}(\partial f^{*} \theta \circ u ; \vec{\phi} \circ \vec{v}) = (\partial f^{*} \theta(m) ; \vec{\phi}(m))$ \end{center}


where $\partial f^{*}$ is the turbulent statistical derivative with respect to $f$,  $E_{m}^{f \times \vec{g}}(\kappa) = \int_{A}\int_{A}(\partial f^{*}f(m,a) ; \vec{g}(m,b))\kappa(m,a,b)dbda$, and $\phi_{i}, \theta : M \rightarrow R^{n}$ are coordinate charts on $M$.
\end{dfn}


\begin{lem} (Factorisation). $E_{m}^{(f \times \vec{g})}(\partial f^{*} \tau ; \sigma) = \int_{A}(\partial f^{*}( f\tau );  (\vec{g} \sigma))$.  Equivalently, $(\partial f^{*}f ; \vec{g})(\partial f^{*} \tau ;  \sigma) = (\partial f^{*} (f\tau) ; ( \vec{g} \sigma))$.  \end{lem}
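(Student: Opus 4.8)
The plan is to reduce the integral identity to the pointwise "equivalent" form displayed in the statement, prove that pointwise identity, and then recover the integral version by inserting it into the definition of $E_{m}^{(f\times\vec g)}$ and carrying out one of the two $A$-integrations.

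First I would establish the pointwise identity $(\partial f^{*}f ; \vec{g})(\partial f^{*}\tau ; \sigma) = (\partial f^{*}(f\tau) ; (\vec{g}\sigma))$. The key observation is that the turbulent statistical derivative $\partial f^{*}$ obeys a Leibniz-type rule relative to the pairing $(\,\cdot\,;\,\cdot\,)$: using the prototype computation from the sharp case, $\partial \delta^{*} = (\tfrac{\partial}{\partial\sigma}\delta)(\tfrac{\partial}{\partial\sigma^{-1}})$, together with the fact that $(\partial f^{*}\delta ; \kappa)$ returns ``dimension $\kappa$ at $(m,a)$'', one checks that applying $\partial f^{*}$ to a product $f\tau$ and then pairing with $\kappa$ splits as the pairing of $\partial f^{*}f$ against $\kappa$ composed with the pairing of $\partial f^{*}\tau$ against $\kappa$. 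Passing from the sharp signal function to a general $f(m,a) = \int_{A}F(m,b)\delta(\sigma_{b}(m)-a)\,db$ is then a matter of linearity in the $b$-integral, and the identity in the form above is obtained by collecting the two pairing factors into a single pairing against the product $\vec{g}\sigma$, which is the natural multiplicative combination of the two superstructure data.

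Next I would substitute this pointwise identity into the definition $E_{m}^{(f\times\vec g)}(\kappa) = \int_{A}\int_{A}(\partial f^{*}f(m,a);\vec g(m,b))\,\kappa(m,a,b)\,db\,da$ with $\kappa = (\partial f^{*}\tau;\sigma)$. The integrand becomes $(\partial f^{*}(f\tau)(m,a);(\vec g\sigma)(m,b))$, and performing the $b$-integration absorbs the $\vec g\sigma$ slot exactly as in the construction of signal-function volume elements, leaving $\int_{A}(\partial f^{*}(f\tau);(\vec g\sigma))$, which is the claimed right-hand side.

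The main obstacle is the first step: making precise both the sense in which $\partial f^{*}$ is ``multiplicative'' against the pairing and the meaning of the product $\vec g\sigma$ of two superstructure distributions --- these are exactly the points where the notation of this chapter is least settled. A secondary technical point is that the pairing $(\,\cdot\,;\,\cdot\,)$ itself encodes an integration, so interchanging it with the outer $A$-integrals, and controlling the boundary contributions inherent to fractional differentiation, needs justification; I expect this to work under the same decay-at-infinity hypotheses used for the earlier (non-turbulent) Cramer--Rao arguments, but a fully rigorous treatment is deferred.
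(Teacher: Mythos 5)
The paper in fact supplies no proof of this lemma at all: the statement sits bare in the subsection on turbulent estimators, and the text moves directly on to the next lemma without a proof environment. The only clue the paper provides is the proof of the analogous factorisation lemma in the later subsection on statistical stacks, which reads in its entirety ``We use the fact that $f$ and $g$ are normalised distribution functions over $A$, which conserve probability.'' That stack-level lemma contains no $\partial f^{*}$, and its one-line ``proof'' operates at the level of the integrated identity, not pointwise. So your attempt is, by necessity, your own construction, and it has to be judged on its own merits.

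Your plan does two things the paper never does: it tries to establish a pointwise identity first, and it tries to ground that identity in a ``Leibniz-type rule'' for $\partial f^{*}$. The second step is where the genuine gap lies. From the paper's own prototype, $(\partial \delta^{*}\delta;\kappa) := (\tfrac{\partial}{\partial\delta}\delta;\kappa)^{1/2} = ((\tfrac{\partial}{\partial\sigma}\delta;\kappa)(\tfrac{\partial}{\partial\sigma^{-1}}\delta;\kappa))^{1/2}$, so $\partial f^{*}$ is a geometric mean of two first-order derivatives. A first-order derivative applied to a product $f\tau$ produces a \emph{sum} $(\partial f)\tau + f(\partial\tau)$, not a product, and a geometric mean of such does not obviously produce $(\partial f^{*}f)(\partial f^{*}\tau)$ either. ``Linearity in the $b$-integral'' buys you additivity, not multiplicativity of the operator across a product of functions, so your appeal to the sharp prototype together with linearity does not bridge the step. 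You flag this as the main obstacle and you are right to; as written it does not go through, and nothing in the paper's definitions closes it.

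A route closer to what the paper does for the stack-level version would be to treat the displayed ``Equivalently'' form not as a literal pointwise identity but as shorthand for something true only under the double $A$-integral, and then attack the integrated identity directly via normalisation ($\int_{A}f\,da = 1$, $\int_{A}\vec g\,db = 1$), conservation of probability, and decay at infinity to absorb boundary terms. Even that requires one to pin down how $\partial f^{*}$ interacts with the $A$-measure inside $E_{m}^{f\times\vec g}$, which the paper never specifies; so any complete proof of this lemma necessarily supplies definitions the paper itself omits.
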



\begin{lem}  Unbiased turbulent estimators satisfy the following relation:

\begin{center} $E_{m}^{f \times \vec{g}}\{ (\partial f^{*} \theta^{i} \circ u ; \vec{\phi}^{k} \circ \vec{v})ln_{f(u)}\partial_{f}ln_{\vec{g}(\vec{v})}\partial_{\vec{g}}(\partial f^{*} f(u) ; \vec{g}(\vec{v})) \} = 0 $ \end{center} \end{lem}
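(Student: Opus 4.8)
The plan is to mirror, step for step, the proof of the corresponding relation for ordinary unbiased estimators given in the section on strong Cramer--Rao, but now carrying the turbulent bracket $(\,\cdot\,;\,\cdot\,)$ and the turbulent derivative $\partial f^{*}$ along for the ride. First I would start from the defining unbiasedness condition
$$E_{m}^{f \times \vec{g}}(\partial f^{*}\theta^{i} \circ u ; \vec{\phi}^{k} \circ \vec{v}) = (\partial f^{*}\theta^{i}(m) ; \vec{\phi}^{k}(m)),$$
and use the Factorisation Lemma to rewrite the left-hand side as a single double integral over $A \times A$ in which the whole integrand is a product of the joint ``density'' $(\partial f^{*}f(u);\vec{g}(\vec{v}))$ with $(\partial f^{*}\theta^{i}\circ u;\vec{\phi}^{k}\circ\vec{v})$, i.e.\ bringing the $\theta^{i}\circ u$ and $\vec{\phi}^{k}\circ\vec{v}$ factors inside the signal-function volume element.

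Next I would differentiate this identity with respect to the chart components $\theta^{j}$ and $\phi^{l}$ (the statistical variables of $M$), commuting the two derivatives past the integral signs and past $\partial f^{*}$ — this interchange, together with the decay of $f$ and $\vec{g}$ at infinity, is what legitimises the manipulation. On the right-hand side the derivative of $(\partial f^{*}\theta^{i};\vec{\phi}^{k})$ in the parameters produces the constant $\delta^{i}_{j}\delta^{k}_{l}$, exactly as $\partial\theta^{i}/\partial\theta^{j} = \delta^{i}_{j}$ in the classical argument. On the left-hand side the Leibniz rule splits the result into two pieces: the piece where the derivatives hit $(\partial f^{*}\theta^{i}\circ u;\vec{\phi}^{k}\circ\vec{v})$ reproduces the same constant $\delta^{i}_{j}\delta^{k}_{l}$, after applying the Factorisation Lemma and the Normalisation Condition $\int_{A}f = 1 = \int_{A}\vec{g}$; the remaining piece is the derivative hitting the joint density, namely $\partial_{f}\partial_{\vec{g}}(\partial f^{*}f(u);\vec{g}(\vec{v}))$.

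Then I would invoke the turbulent analogue of the log-derivative identity $\partial_{f}f = f\,\partial_{f}\ln f$ (and likewise for $\vec{g}$) to rewrite that surviving piece as $\ln_{f(u)}\partial_{f}\,\ln_{\vec{g}(\vec{v})}\partial_{\vec{g}}\,(\partial f^{*}f(u);\vec{g}(\vec{v}))$, i.e.\ the two score factors multiplied back by the density; after cancelling the two copies of $\delta^{i}_{j}\delta^{k}_{l}$, what is left is precisely
$$E_{m}^{f\times\vec{g}}\{(\partial f^{*}\theta^{i}\circ u;\vec{\phi}^{k}\circ\vec{v})\,\ln_{f(u)}\partial_{f}\,\ln_{\vec{g}(\vec{v})}\partial_{\vec{g}}\,(\partial f^{*}f(u);\vec{g}(\vec{v}))\} = 0,$$
which is the asserted relation.

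The main obstacle will be making the turbulent statistical derivative $\partial f^{*}$ behave properly under these operations: because $(\partial f^{*}\,\cdot\,;\,\cdot\,)$ is defined earlier in the chapter as a square root of a product of two partial derivatives, the naive Leibniz rule does not apply termwise, so one must check that $\partial f^{*}$ genuinely commutes with differentiation in the $M$-parameters and with the $A$-integrations, and that the Factorisation Lemma is consistent with this square-root structure (this is really where a fully rigorous treatment, rather than the sketch given here, would be needed). A secondary, more routine subtlety is the discarding of boundary terms at infinity: one needs $f$, $\vec{g}$ and the relevant bracket quantities to decay fast enough that the integrated-by-parts contributions vanish, exactly as in the sharp case — I would simply fold this into the standing ``physical'' hypotheses used throughout the chapter.
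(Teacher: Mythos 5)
Your proposal is correct and takes essentially the same approach as the paper. The paper's own proof is a one-sentence pointer — "differentiate the definition on both sides and integrate by parts, as with the simpler case" — and your write-up is a faithful and more honest unpacking of exactly that argument (differentiate the unbiasedness identity, Leibniz-split the integrand, use the log-derivative rewrite $\partial_{f}f = f\,\partial_{f}\ln f$ and its $\vec{g}$ analogue, invoke the Factorisation Lemma and normalisation to extract the $\delta^{i}_{j}\delta^{k}_{l}$ pieces, and cancel). Your closing caveat about whether $\partial f^{*}$ — defined as a square-root of a product of two first derivatives — genuinely commutes with the $M$-parameter differentiation, the $A$-integrations, and a termwise Leibniz rule is a real gap that the paper's proof does not address at all; you are right to flag it, since in the sharp, non-turbulent version (the lemma you are mirroring) the whole argument rests on an innocuous Leibniz step that is no longer obviously available once the bracket and $\partial f^{*}$ are present.
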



\begin{proof} The result follows by differentiating the definition of an unbiased turbulent estimator on both sides and then integrating by parts, as with the simpler case.
\end{proof}

\begin{dfn} The Fisher information tensor $h^{kl}_{ij}(f,g)$ (corresponding to the likelihood functions $f,g$), is defined to be

\begin{center} $h^{kl}_{ij}(f,\vec{g}) := E_{m}^{f \times \vec{g}} \{ \Gamma^{ik}(f,\vec{g})\Gamma^{jl}(f,\vec{g}) \}$ \end{center}

where

\begin{center} $\Gamma^{ik}(f,\vec{g}) = (ln_{f} \circ \partial_{f}^{i})(ln_{g} \circ \partial_{g}^{k})(\partial f^{*} f ; g)$ \end{center}


\end{dfn}

\begin{thm} (Turbulent Cramer-Rao Inequality).

\begin{center} $cov^{f \times \vec{g}}\{ (\partial f^{*} \theta^{i} \circ u ; \vec{\phi}^{k} \circ \vec{v}), (\partial f^{*} \theta^{j} \circ u ; \vec{\phi}^{l} \circ \vec{v}) \} - h^{kl}_{ij}(f(u),\vec{g}(\vec{v})) \geq 0$
\end{center}


where $cov^{f \times \vec{g}}(v^{ik},w^{jl}) := E_{m}^{f \times \vec{g}}(v^{ik}w^{jl})$.
\end{thm}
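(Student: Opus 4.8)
The plan is to mirror the argument used for the ordinary (strong) Cramer-Rao inequality earlier in the excerpt, now carried out at the level of turbulent estimators. First I would form the obvious positive semi-definite quantity: for fixed $m$,
\begin{align}
0 &\leq E_{m}^{f \times \vec{g}}\Big\{ \big( (\partial f^{*}\theta^{i} \circ u ; \vec{\phi}^{k} \circ \vec{v}) - E_{m}^{f \times \vec{g}}(\partial f^{*}\theta^{i} \circ u ; \vec{\phi}^{k}\circ \vec{v}) - \Gamma^{ip}(f(u),\vec{g}(\vec{v}))\,h^{kq}_{\ \,pr}\ \cdots \big) \nonumber \\
&\qquad\qquad\times \big( (\partial f^{*}\theta^{j} \circ u ; \vec{\phi}^{l}\circ \vec{v}) - E_{m}^{f \times \vec{g}}(\partial f^{*}\theta^{j}\circ u ; \vec{\phi}^{l}\circ \vec{v}) - \Gamma^{js}(f(u),\vec{g}(\vec{v}))\,h^{lt}_{\ \,su}\ \cdots \big) \Big\} \nonumber
\end{align}
where the subtracted ``score'' terms are built from $\Gamma^{ik}(f,\vec g)$ contracted against the inverse of the Fisher information tensor $h^{kl}_{ij}$, exactly as $\frac{\partial \ln f(u)}{\partial \theta^k} g^{ik}$ appeared in the earlier proof. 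Expanding this square produces three families of terms: the covariance $cov^{f\times\vec g}$, a cross term, and the Fisher information tensor term.

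The second step is to kill the cross term using the turbulent unbiasedness lemma. The cross term is, up to the $h$-contraction, $E_{m}^{f \times \vec{g}}\{ (\partial f^{*}\theta^{i}\circ u;\vec\phi^{k}\circ\vec v)\,\Gamma^{jl}(f(u),\vec g(\vec v)) \}$, and the lemma
\[
E_{m}^{f \times \vec{g}}\{ (\partial f^{*} \theta^{i} \circ u ; \vec{\phi}^{k} \circ \vec{v})\,\ln_{f(u)}\partial_{f}\ln_{\vec{g}(\vec{v})}\partial_{\vec{g}}(\partial f^{*} f(u) ; \vec{g}(\vec{v})) \} = 0
\]
together with an integration by parts in the $M\times A$ sample space (exactly as ``$-E_m(\theta^i \partial\ln f(u)/\partial\theta^l)g^{jl} = g^{ij}$'' in the strong case) identifies this cross term with $h^{kl}_{ij}(f(u),\vec g(\vec v))$ itself, so that the square collapses to $cov^{f\times\vec g}\{\cdots\} - h^{kl}_{ij}(f(u),\vec g(\vec v)) \geq 0$, which is the claim. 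Here the Factorisation Lemma is what lets me move freely between the ``bracket'' notation $(\partial f^{*}f;\vec g)$ and an honest product measure, so that ordinary integration by parts and the Fubini-type identities of Chapter 2 (the statistical Stokes theorem) apply term by term.

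The main obstacle, and the step that genuinely needs care rather than bookkeeping, is making the semi-definiteness of the initial quadratic form legitimate in the turbulent setting. In the strong Cramer-Rao case positivity was automatic because one was integrating $|X|^2$ against a genuine probability density $f \geq 0$; here the ``density'' is the signal-function volume element $(\partial f^{*}f ; \vec g)$, which involves the turbulent derivative $\partial f^{*}$ and a vector $\vec g$ of signal functions, and is not obviously nonnegative pointwise. One must show that the causality/reality condition built into a fuzzy Riemannian manifold (the $\norm{\cdot}^2 \geq 0$ hypothesis on mass distributions, and the constant-index, connectedness assumptions on $A$) propagates to $(\partial f^{*}f;\vec g)$, so that $E_m^{f\times\vec g}$ really is a positive functional on squares. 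I expect this to require either an auxiliary ``conservation relation'' coupling $f$ and $\vec g$ (the analogue of $\mathrm{div}_\Lambda \psi = \Delta_\Lambda f$) or a direct argument via the Correspondence Principle, rewriting $(f;\vec g)$ as a genuine statistical-manifold volume element $\int_A F(m,b)\delta(\sigma_b(m)-a)(\det\sigma_b)^{1/2}db$ for which positivity is manifest, and then transporting the inequality back. Once positivity is secured, the remaining manipulations are routine extensions of the earlier proofs, and the specialization $\vec g$ sharp (or $\vec g \equiv 1$) recovers the weak Cramer-Rao inequality and hence the EPI principle of the previous chapter.
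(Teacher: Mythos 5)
Your proposal follows essentially the same route as the paper: form the positive semi-definite quantity $E_{m}^{f\times\vec{g}}(T^{ik}_{\alpha\beta}T^{jl}_{\gamma\delta})$ with $T$ built from the centered estimator minus the $h$-contracted score, expand into covariance, cross term, and information-tensor term, and then invoke the turbulent unbiasedness lemma, the factorisation lemma, and integration by parts to reduce the cross term to $h^{kl}_{ij}$. The positivity obstacle you flag at the end is genuine, but the paper handles it no more carefully than you do --- it simply asserts that $E_{m}^{f\times\vec{g}}(T\cdot T)$ is ``certainly nonnegative'' --- so your proposed remedies (an auxiliary conservation relation, or transport through the Correspondence Principle to a manifestly positive volume element) actually go beyond the source.
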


\begin{proof}  Note that $E_{m}^{f \times \vec{g}}(T^{ik}_{\alpha \beta}T^{jl}_{\gamma \delta})$ is certainly nonnegative, for

\begin{center} $T^{ik}_{\alpha \beta} := (\partial f^{*}(\theta^{i} \circ u);(\vec{\phi}^{\alpha} \circ \vec{v})) - E_{m}^{f \times \vec{g}}\{ \partial f^{*}(\theta^{i} \circ u) ; (\vec{\phi}^{\alpha} \circ \vec{v}) \} - \{ \partial f^{*}\frac{\partial}{\partial \theta^{k}}ln(f(u)) ; \frac{\partial}{\partial \vec{\phi}^{\beta}}ln(g(v)) \} h_{ik}^{\alpha \beta}(f(u),\vec{g}(\vec{v}))$ \end{center}

Expanding, we get

\begin{center} $cov^{(f \times \vec{g})}( \{\partial f^{*}(\theta^{i} \circ u); (\vec{\phi}^{k} \circ \vec{v})\},\{ \partial f^{*}(\theta^{j} \circ u) ; (\vec{\phi}^{l} \circ \vec{v})\}) - 2E_{m}^{(f \times \vec{g})}( \{ (\partial f^{*}(\theta^{i} \circ u) ; (\vec{\phi}^{k} \circ \vec{v})) - E_{m}^{(f \times \vec{g})}( \partial f^{*}(\theta^{i} \circ u); (\vec{\phi}^{k} \circ \vec{v})) (\partial f^{*} \frac{\partial}{\partial \theta^{\alpha}}ln(f(u)) ; \frac{\partial}{\partial \vec{\phi}^{\beta}}ln(\vec{g}(\vec{v})))\}h_{j \alpha}^{l \beta}(f(u),\vec{g}(\vec{v}))) + h_{ij}^{kl}(f(u),\vec{g}(\vec{v})) \geq 0$ \end{center}

But, observing first of all that $h$ is statistically independent, then by the lemma the second term reduces to 

\begin{center} $2E_{m}^{(f \times \vec{g})}( \{ - E_{m}^{(f \times \vec{g})}( \partial f^{*}(\theta^{i} \circ u); (\vec{\phi}^{k} \circ \vec{v}))(\partial f^{*} \frac{\partial}{\partial \theta^{\alpha}}ln(f(u)) ; \frac{\partial}{\partial \vec{\phi}^{\beta}}ln(\vec{g}(\vec{v}))) \}h_{j \alpha}^{l \beta}(f(u),\vec{g}(\vec{v})))$ \end{center}

using $E_{m}(- v) = -E_{m}(v)$, together with the definition of unbiased turbulent estimator, we get

\begin{center} $-2E_{m}^{(f \times \vec{g})}(  (\partial f^{*}(\theta^{i} \circ u); (\vec{\phi}^{k} \circ \vec{v}))(\partial f^{*} \frac{\partial}{\partial \theta^{\alpha}}ln(f(u)) ; \frac{\partial}{\partial \vec{\phi}^{\beta}}ln(\vec{g}(\vec{v}))) h_{j \alpha}^{l \beta}(f(u),\vec{g}(\vec{v})))$ \end{center}

Note that $\frac{\partial f}{\partial \theta^{j}} = \frac{\partial ln(f)}{\partial \theta^{j}}f$, and also $(\partial f^{*} \frac{\partial ln(f)}{\partial \theta^{i}}f; \frac{\partial ln(\vec{g})}{\partial \vec{\phi}^{j}}\vec{g}) = (\partial f^{*} \frac{\partial ln(f)}{\partial \theta^{i}}; \frac{\partial ln(\vec{g})}{\partial \vec{\phi}^{j}})(\partial f^{*}f; \vec{g})$ by the factorisation lemma.

One concludes that

\begin{center}  $(\partial f^{*} \frac{\partial ln(f)}{\partial \theta^{i}}; \frac{\partial ln(\vec{g})}{\partial \vec{\phi}^{j}})(\partial f^{*}f; \vec{g}) = (\partial f^{*} \frac{\partial f}{\partial \theta^{i}} ; \frac{\partial \vec{g}}{\partial \vec{\phi}^{j}})$ \end{center}

from which follows, after integration by parts that the second term in the original expansion simplifies to

\begin{center} $-2E_{m}^{(f \times \vec{g})}(h_{ij}^{kl}) = -2h_{ij}^{kl}$ \end{center}

completing the proof.

\end{proof}

\begin{dfn} A turbulent maximum likelihood estimator (tmle) is an unbiased estimator $(u,v)$ that realises a local maximum of $(\partial f^{*}ln(f \circ u); ln(\vec{g} \circ \vec{v}))$. \end{dfn}

\begin{lem} Any tmle realises the Cramer-Rao lower bound. \end{lem}

\subsection{Application to physical estimators}

Similarly to our previous experience with statistical manifolds, we have a notion of weakly unbiased turbulent estimator, and also of weak Cramer-Rao inequality, where in this case we integrate over $M$ in order to obtain our result.

In particular we have

\begin{thm} (Weak turbulent Cramer-Rao inequality).  Write $f(m,a) = \int_{A} F(m,b)\delta(\sigma_{b}(m) - a)db$, $\vec{g}(m,a) = \int_{A} G(m,b)\delta(\tau_{b}(m) - a)db$.  Let $f_{ij} = \int_{A}F(m,b)\sigma_{ij}(m,b)db$, similarly for $\vec{g}$.  Note that we need not worry about the role of derivatives of $F$ and $\vec{G}$ since as $f$,$\vec{g}$ are probability distributions such contributions will vanish at infinity.  Then

\begin{center} $\int_{M}\int_{A}(\partial f^{*} f_{ij} ; \vec{g}_{kl})(cov_{a}^{(f \times \vec{g})}( \{\partial f^{*}(\theta^{i} \circ u) ; (\vec{\phi}^{k} \circ \vec{v}) \} , \{\partial f^{*}(\theta^{j} \circ u) ; (\vec{\phi}^{l} \circ v) \}) - h_{ij}^{kl}(a)(f(u),\vec{g}(\vec{v})))dadm \geq 0$ \end{center}

where $cov_{a}$ is the covariance density function and $h_{ij}^{kl}(a)$ is the Fisher information tensor density.

\end{thm}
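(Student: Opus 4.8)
The plan is to follow the template established for the weak Cramer--Rao inequality in the Riemann--Cartan case, upgrading each ingredient to the turbulent setting. First I would restrict attention to \emph{turbulent physical estimators}, i.e.\ tuples $(u,\vec{v})$ with $u = \partial_{\Lambda}f$ and each component $v_{i} = \partial_{\Lambda}g_{i}$, where the base and superstructure signal functions satisfy the conservation laws $\Delta_{\Lambda}f = 0$ and $\Delta_{\Lambda}\vec{g} = 0$. As in the earlier sections, any map $M \times A \to M$ can be converted into such an estimator by a suitable choice of signal functions, so this is no genuine loss of generality; the point of the restriction is that $\partial_{\Lambda}f$ and $\partial_{\Lambda}\vec{g}$ are then divergence free and hence expressible as curls of fuzzy vector fields. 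I would also record at the outset that, since $f$ and $\vec{g}$ are probability distributions, all contributions involving derivatives of the amplitudes $F$, $\vec{G}$ are boundary terms that vanish at infinity, exactly as in the cited corollary.

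Next I would split the integrand into its two natural pieces and handle them separately. For the Fisher-tensor-density term I claim $\int_{M}\int_{A}(\partial f^{*}f_{ij};\vec{g}_{kl})\,h_{ij}^{kl}(a)(f(u),\vec{g}(\vec{v}))\,da\,dm = 0$. This should follow exactly as in the proof of the EPI principle for Riemann--Cartan manifolds: expanding $h_{ij}^{kl}$ via the chain rule introduces factors of the form $\partial(\mathrm{curl}\,V)^{k}/\partial\theta^{i}$ coming from $u = \partial_{\Lambda}f = \mathrm{curl}(\cdots)$, and the whole expression collapses because $\mathrm{grad}\circ\mathrm{curl}$ is the zero operator; the Factorisation Lemma is precisely what lets one peel the $(\partial f^{*}f;\vec{g})$ weighting off cleanly so that this cancellation is visible. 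For the covariance-density term I would compute, using the Factorisation Lemma and integration by parts (the turbulent Stokes theorem, together with the decay of $F$, $\vec{G}$ and of the auxiliary potentials so that boundary terms vanish), that
\begin{center}
$\int_{M}\int_{A}(\partial f^{*}f_{ij};\vec{g}_{kl})\,\mathrm{cov}_{a}^{(f\times\vec{g})}(\theta^{i}\circ u,\theta^{j}\circ u\,;\,\phi^{k}\circ\vec{v},\phi^{l}\circ\vec{v}) = \int_{M}\int_{A}\partial_{f}\!\left(\frac{\|\partial_{\sigma}f\|^{2}}{f};\frac{\|\partial_{\tau}\vec{g}\|^{2}}{\vec{g}}\right),$
\end{center}
which is exactly $I_{(phys)}$. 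By the definition of a turbulent fuzzy Riemannian manifold both $\partial_{\Lambda}f$ and $\partial_{\Lambda}\vec{g}$ are timelike, so the pairing under the integral is nonnegative pointwise, and hence the integral is $\geq 0$. Combining the two pieces yields the stated inequality, and equality for a turbulent mle then follows from the strong turbulent Cramer--Rao inequality already established.

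The hard part will be the covariance computation above -- showing that the weighted covariance density integrates \emph{exactly} to the turbulent physical information. Unlike the Riemann--Cartan case, here one must commute the turbulent derivative $\partial f^{*}$ (itself a composite, $\tfrac{\partial\delta}{\partial\sigma}\tfrac{\partial}{\partial\sigma^{-1}}$-type operator acting on the signal-function volume element) past the pairing $(\cdot\,;\cdot)$ and past the integration over the inner-product space, and verify that the only surviving term is the one quoted. This demands careful bookkeeping of which contributions are boundary terms in $A$ (killed by the normalisation condition and the decay of $F$, $\vec{G}$) versus genuine bulk terms, and it is where the argument is least rigorous; a fully careful treatment would need the turbulent analogues of Claims~1--2 from the sharp case, establishing that the $A$-integral of a total $\sigma$-derivative against the volume element vanishes. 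I would also need to check that the Factorisation Lemma interacts correctly with the timelike/positivity conditions so that pointwise nonnegativity genuinely holds on all of $M\times A$ and not merely generically, and that the statistical independence of $h_{ij}^{kl}$ used in the strong version survives the passage to densities.
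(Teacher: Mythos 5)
The paper offers essentially no proof of this theorem, stating only that it ``follows from appropriate generalisations of the prior results.'' Judging from the proof of the Riemann--Cartan weak Cramer--Rao lemma earlier in the manuscript, the implied argument is short and direct: pair the positive-semi-definite expectation $E_{m}^{f\times\vec{g}}(T^{ik}_{\alpha\beta}T^{jl}_{\gamma\delta})$ appearing in the proof of the strong turbulent Cramer--Rao inequality with the weight $(\partial f^{*}f_{ij};\vec{g}_{kl})$, observe that the result is still nonnegative because the physical estimator is timelike, and then expand the square exactly as in the strong case to recover the weighted $\mathrm{cov}-h$ integrand. Your proposal is a genuinely different route: you split the weighted integrand into a covariance piece and a Fisher-tensor piece, claim the former is exactly $I_{(\mathrm{phys})}$ and the latter is zero, and conclude. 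That is really the content of the turbulent EPI principle that comes downstream of the weak inequality, not the proof of the weak inequality itself, which needs only nonnegativity of the weighted quadratic form, not the exact identification of each piece.

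There is also a substantive discrepancy between your decomposition and the paper's. You mirror the Riemann--Cartan EPI proof, where $\int_{M}\sigma_{ij}\,\mathrm{cov}(\ldots)=I$ and $\int_{M}\sigma_{ij}\,g^{ij}(\ldots)=0$ via the $\mathrm{grad}\circ\mathrm{curl}=0$ argument, and you carry this assignment over unchanged. But the paper's own subsequent Turbulent EPI principle lemma asserts the opposite: it defines $h=(\partial f^{*}f_{ij};\vec{g}_{kl})h_{ij}^{kl}(a)$ and states $\int_{M}\int_{A}h=\int_{M}\int_{A}(\partial f^{*}\rho_{f};\rho_{\vec{g}})$, so it is the weighted Fisher tensor, not the covariance, that gets identified with $I_{(\mathrm{phys})}$; meanwhile the text immediately after the theorem argues that the weighted covariance vanishes for a weak tmle. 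Your claim that $\int_{M}\int_{A}(\partial f^{*}f_{ij};\vec{g}_{kl})h_{ij}^{kl}(a)\,da\,dm=0$ therefore contradicts the paper's own follow-up lemma. Given that the author explicitly flags this chapter as imprecise I cannot say with certainty which assignment is right, but the Riemann--Cartan decomposition evidently does not transfer unchanged, and the $\mathrm{grad}\circ\mathrm{curl}=0$ cancellation you invoke would need to be rechecked once the turbulent operators $\partial f^{*}$ and the pairing $(\cdot\,;\cdot)$ are written out. A smaller mismatch: the paper's lemma on weakly unbiased turbulent estimators writes them as $(u,v)=(\partial f^{*}f,\vec{g})$, where $\partial f^{*}$ is the turbulent derivative with respect to the signal function and the second component is just $\vec{g}$, whereas you write $u=\partial_{\Lambda}f$ and $v_{i}=\partial_{\Lambda}g_{i}$. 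These are different operators, and the mismatch propagates into your later manipulations.
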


which follows from appropriate generalisations of the prior results.

\begin{lem} Any weak turbulent unbiased estimator may be written as

\begin{center} $(u,v) = (\partial f^{*}f , \vec{g})$ \end{center}

\end{lem}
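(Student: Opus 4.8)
The plan is to mirror, at the two-tier level, the argument already used for physical estimators on Riemann--Cartan manifolds, the new ingredients being the Correspondence Principle and the Factorisation Lemma. First I would recall the one-tier mechanism: given a weakly unbiased estimator $u : M \times A \to M$, one absorbs it into the choice of likelihood function, selecting an $f$ for which $E(\theta \circ u) = 0$ (using that in the physical regime $f$ decays rapidly enough at infinity that the boundary term in $E(\theta \circ \mathrm{Id})$ vanishes) and then writing $u = \partial_\Lambda f$, with the residual divergence-free piece pinned down by the conservation equation $\Delta_\Lambda f = 0$. The content of the present lemma is that this absorption can be carried out simultaneously for the base estimator $u$ and the turbulent vector $\vec{v}$, so that $(u,\vec{v})$ becomes the ``physical'' pair $(\partial f^{*} f, \vec{g})$.

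Second, I would invoke the Correspondence Principle to replace the turbulent signal function $h(m,a,b) = (\partial f^{*} f(m,a) ; \vec{g}(m,b))$ by an equivalent triple of sharp (or almost-sharp) data, so that every turbulent derivative $\partial f^{*}$ reduces to an ordinary statistical derivative on an enlarged sample space. On that enlarged space, differentiating the weak turbulent unbiasedness condition $E_m^{f \times \vec{g}}(\partial f^{*} \theta \circ u ; \vec{\phi} \circ \vec{v}) = (\partial f^{*}\theta(m) ; \vec{\phi}(m))$ and integrating by parts — exactly as in the proof of the Turbulent Cram\'er--Rao Inequality — yields a relation of the form $E_m^{f\times\vec{g}}\{(\partial f^{*}\theta^i \circ u ; \vec{\phi}^k \circ \vec{v})\,\Gamma^{ik}(f(u),\vec{g}(\vec{v}))\} = 0$. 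The Factorisation Lemma is precisely what lets this joint expectation split so that the base and turbulent parts can be matched separately against $\partial f^{*} f$ and $\vec{g}$; unwound, it says that $(u,\vec{v})$ is the gradient pair of some signal data.

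Third, to actually produce the data I would set up the defining system: choose the turbulent superstructure metric $\vec{\tau}$ so that its induced $\vec{g}$ realises $\vec{v}$, and then solve for $f$ from $u = \partial f^{*} f$ together with $E(\theta \circ u) = 0$, the remaining divergence-free ambiguity being fixed by $\Delta_\Lambda f = 0$. Because the momentum-like object here depends on $f$, and because $\partial f^{*}$ couples the base and turbulent signal functions, this is genuinely a coupled nonlinear problem rather than a direct quotation of the Riemann--Cartan construction; I would close it by a fixed-point / continuity argument on the space of admissible signal functions, as flagged in the remark after the physical-estimator lemma, which needs the induced elliptic/parabolic systems to have at least local analytic solutions (reasonable given their physical origin). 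Weak unbiasedness of the constructed $(\partial f^{*}f, \vec{g})$ is then verified as in the sharp case: $E(\theta \circ \partial f^{*}f)$ collapses, via $\ln \circ \partial = \partial \circ \ln$ and the Factorisation Lemma, to the integral of a pure curl, which vanishes by the statistical Stokes theorem once the turbulent analogue of $B$ decays at infinity.

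The hard part will be the third step — establishing existence of $(f,\vec{g})$ realising a prescribed $(u,\vec{v})$. One must show the fixed-point iteration on $(f,\vec{g})$ converges and that the decay hypotheses and conservation relations are propagated by the iteration; the expectation-vanishing and Stokes-theorem verifications, by contrast, are routine once the Factorisation Lemma and the Correspondence Principle are available.
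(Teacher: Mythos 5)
The paper's own ``proof'' of this lemma is a single remark: ``To prove this we need the turbulent version of Stoke's theorem. However after this the rest is easy.'' No argument is given. Your proposal therefore cannot be ``taking the same route'' or ``a different route'' in the usual sense; you are supplying a proof sketch where the paper supplies essentially none. With that caveat in mind, your plan is a sensible reconstruction of what the author probably had in mind, and it is more honest than the paper about where the difficulty lies.

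Two points of comparison are worth recording. First, the only ingredient the paper cites --- the turbulent Stokes theorem --- maps onto just one step of your plan, namely the final verification that the constructed $(\partial f^{*}f, \vec{g})$ is in fact weakly unbiased (the collapse of $E(\theta\circ\partial f^{*}f)$ to a boundary term). That is the ``forward'' direction, and it is indeed the easy part, analogous to the Riemann--Cartan computation with $\theta = \ln$ and $\ln\circ\partial = \partial\circ\ln$. Second, the statement as written is a \emph{surjectivity} claim: \emph{every} weak turbulent unbiased estimator has this form. The Stokes argument alone does not touch that direction; one must actually produce a signal pair $(f,\vec{g})$ from a prescribed $(u,\vec{v})$. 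You are right to isolate this as the hard step, and your step three --- setting up the coupled nonlinear system $u = \partial f^{*}f$, $\Delta_\Lambda f = 0$, with $\vec{\tau}$ chosen to realise $\vec{v}$, closed by a fixed-point argument --- is a reasonable plan for it, and is consistent in spirit with the paper's earlier informal remark that ``we can transform any map $u : M \times A \rightarrow M$ into a physical estimator'' by varying the likelihood function. The Correspondence Principle and Factorisation Lemma are the right levers for decoupling the base and turbulent parts of that system. But you should be clear that the fixed-point step is entirely yours: the paper never addresses existence, never states what function space the iteration lives in, and never establishes that the conservation relation and decay hypotheses are preserved. In a paper that were being careful, this would be the content of the lemma, and as it stands neither the paper nor your sketch actually closes it; your sketch has the virtue of saying so.
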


\begin{proof} To prove this we need the turbulent version of Stoke's theorem. However after this the rest is easy.  \end{proof}

\begin{thm} (Turbulent Stokes).  Let $\omega$ be an $n-1$ forms on an $n$ manifold $M$, and $\kappa$ a vector valued function on $M$.  Then

\begin{center}
$\int_{M}d(\partial \omega^{*} \omega ; \kappa) = \int_{\partial M}(\partial \omega^{*} \omega; \kappa)$ 
\end{center}

\begin{proof}
The key observation is that the problem may be converted into a repeated application of Stoke's theorem in the standard case, from which the result quickly follows.
\end{proof}

\end{thm}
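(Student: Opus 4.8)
The plan is to reduce the turbulent Stokes identity to a finite (iterated) application of the ordinary Stokes theorem, in exact analogy with the proof of the naive statistical Stokes theorem and of the generalised Stokes theorem~\ref{Stokes} proved earlier. First I would pass to local coordinates via a partition of unity subordinate to an atlas, so that it suffices to treat $\omega = F(x)\,dx_{1}\wedge\cdots\wedge dx_{n-1}$ for a single smooth coefficient $F$, with $\kappa=(\kappa_{1},\dots,\kappa_{n})$ represented in the chart. By the definition of the turbulent derivative from the fractional calculus subsection, $(\partial\omega^{*}\omega;\kappa)$ is then the $(n-1)$-form whose coefficient is obtained from $F$ by the nested fractional differentiation $\partial^{\kappa_{1}}\mapsto\partial^{\kappa_{2}}\mapsto\cdots\mapsto\partial^{\kappa_{n}}$, with $\partial\omega^{*}$ acting only on the coefficient function and leaving the coordinate one-forms $dx_{i}$ untouched.

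Next I would invoke the Riemann--Liouville representation $D^{\beta}=D^{m}\circ J^{m-\beta}$ to write each level $\partial^{\kappa_{j}}$ as an ordinary integer-order differential operator $D^{m_{j}}$ composed with a smoothing integral operator $J^{m_{j}-\kappa_{j}}$ whose kernel is a power of the coordinate difference. Since each $J$ acts by integration against an $L^{1}_{\mathrm{loc}}$ kernel, Fubini's theorem lets me pull every $J$ outside the integral over $M$ (and over $\partial M$), leaving a finite composition of honest differential operators acting on $F$, wedged with the $dx_{i}$. For that genuinely smooth $(n-1)$-form the classical Stokes theorem applies verbatim; reassembling the pulled-out integral operators and summing over the partition of unity yields $\int_{M}d(\partial\omega^{*}\omega;\kappa)=\int_{\partial M}(\partial\omega^{*}\omega;\kappa)$. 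Alternatively, if $F$ admits a Gaussian or exponential eigenfunction expansion, one may instead expand $(\partial\omega^{*}\omega;\kappa)$ as a series of smooth forms, apply ordinary Stokes termwise, and recombine, exactly as in the proof of~\ref{Stokes}; I would keep whichever version is cleaner once the decay hypotheses are pinned down. Those hypotheses --- decay of $F$ and its derivatives at infinity, or compact support of $\omega$ in $M\setminus\partial M$ together with the usual boundary behaviour --- are precisely the ``physical'' conditions already used throughout Chapters~6--8, so they cost nothing in the intended applications, and they are exactly what is needed to make the Fubini interchange and any series rearrangement legitimate.

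The main obstacle I expect is justifying that $\partial\omega^{*}$ genuinely commutes with the base exterior derivative $d_{M}$. By construction $\partial\omega^{*}$ is differentiation in the space of signal functions, and under the identification of that space with metric data on $M$ (the correspondence principle) it acts both on the coefficient $F$ and, implicitly, on the metric $\sigma$ that enters the identification. One must check that commuting $\partial\omega^{*}$ past $d_{M}$ produces only the naive term $(\partial\omega^{*}d_{M}\omega;\kappa)$, with all ``cross'' contributions involving derivatives of $\sigma$ cancelling --- by the same antisymmetry-and-conservation mechanism that kills the $B$-type and Christoffel-type terms in the sharp Einstein--Hilbert calculation of Chapter~8. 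Verifying this cancellation at each of the $n$ nested levels of $\kappa$, and controlling the error terms it generates, is where the real work lies; once it is in hand, everything else is bookkeeping on top of the classical Stokes theorem.
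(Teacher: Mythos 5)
Your proposal follows essentially the same route the paper intends: the paper's own proof is the single sentence that the identity reduces to repeated applications of the classical Stokes theorem, and you have simply supplied the machinery for that reduction (Riemann--Liouville factorisation plus Fubini in local charts, or the eigenfunction-expansion alternative parallel to the earlier statistical Stokes arguments in Chapters~2 and~6). The commutation of $\partial\omega^{*}$ with $d_{M}$ that you flag as the real obstacle is indeed left unaddressed by the paper's one-line proof, so your version is at least more explicit about where the remaining gap lies.
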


\begin{rmk}
The statistical version of the above follows by merging this result with the statistical stokes theorem (see chapter 2 or chapter 7).  I will not go into this here.
\end{rmk}

Suppose now $(u,\vec{v})$ is a weak tmle.  Then for this choice, the above inequality is an equality, and, furthermore, we have that $cov^{(f \times \vec{g})}( \{\partial f^{*}(\theta^{i} \circ u) ; (\vec{\phi}^{k} \circ \vec{v}) \} , \{\partial f^{*}(\theta^{j} \circ u) ; (\vec{\phi}^{l} \circ \vec{v}) \}) = 0$, since we may use $\theta = ln = exp^{-1}$ as our coordinate chart, and then

\begin{center} $cov^{(f \times \vec{g})}( \{\partial f^{*}(\theta^{i} \circ u) ; (\vec{\phi}^{k} \circ \vec{v}) \} , \{\partial f^{*}(\theta^{j} \circ u) ; (\vec{\phi}^{l} \circ \vec{v}) \}) = \int_{A}ln((\partial f^{*}f;\vec{g}))(\partial f^{*}f; \vec{g})$ \end{center}

But $ln \circ f^{*} = f^{*} \circ ln$, so this becomes

\begin{center} $\int_{A}(\partial f^{*}ln(f);ln(\vec{g}))(\partial f^{*}f;\vec{g})$ \end{center}

but by the definition of weak tmle this is zero.

\begin{cor} For a weak tmle $(u,v)$, we have that $\int_{M}h^{kl}_{ij}(f(u),\vec{g}(\vec{v})) = 0$. \end{cor}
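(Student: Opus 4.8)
The plan is to deduce the corollary directly from the equality case of the weak turbulent Cramer-Rao inequality, combined with the vanishing of the covariance term that was just established for a weak tmle. First I would invoke the preceding lemma, which asserts that any weak tmle $(u,\vec{v})$ realises the weak turbulent Cramer-Rao lower bound; hence for such a pair the inequality of the Theorem (Weak turbulent Cramer-Rao inequality) is in fact an equality:
\[
\int_{M}\int_{A}(\partial f^{*} f_{ij} ; \vec{g}_{kl})\Big(cov_{a}^{(f \times \vec{g})}(\cdots) - h_{ij}^{kl}(a)(f(u),\vec{g}(\vec{v}))\Big)\,da\,dm = 0 .
\]

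Second, I would feed in the computation carried out immediately above the corollary: taking $\theta = \ln = \exp^{-1}$ as the coordinate chart and using $\ln \circ f^{*} = f^{*} \circ \ln$ together with the defining property of a weak tmle (which itself rests on the turbulent Stokes theorem proved just before), the covariance density $cov_{a}^{(f\times\vec{g})}(\cdots)$ contributes nothing, and indeed the full covariance $cov^{(f\times\vec{g})}(\cdots)$ vanishes. Substituting this into the equality above leaves $\int_{M}\int_{A}(\partial f^{*} f_{ij} ; \vec{g}_{kl})\,h_{ij}^{kl}(a)(f(u),\vec{g}(\vec{v}))\,da\,dm = 0$, and it remains to upgrade this weighted identity to the unweighted statement $\int_{M} h_{ij}^{kl}(f(u),\vec{g}(\vec{v})) = 0$.

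Third — and this is where the genuine work lies — I would argue that the integrand $h_{ij}^{kl}$ vanishes \emph{pointwise}, mirroring the argument used for the Riemann-Cartan weak Cramer-Rao lemma earlier in the chapter on Fisher information. The key observation is that for a physical (weak tmle) estimator $u = \partial f^{*} f$ and $\vec{v} = \vec{g}$, so that in coordinates the Jacobians $\partial u^{k}/\partial\theta^{i}$ and $\partial v^{l}/\partial\phi^{j}$ are the turbulent analogues of curls, hence divergence-free. Since $h_{ij}^{kl}$ is assembled (via the Fisher information tensor definition) from the quantities $\Gamma^{ik}(f(u),\vec{g}(\vec{v})) = (\ln_{f}\circ\partial_{f}^{i})(\ln_{\vec{g}}\circ\partial_{\vec{g}}^{k})(\partial f^{*} f ; \vec{g})$ contracted against these Jacobians, and $\mathrm{grad}\circ\mathrm{curl}$ is the zero operator, the tensor $h_{ij}^{kl}(f(u),\vec{g}(\vec{v}))$ is identically zero; integrating over $M$ then gives the corollary. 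This step also makes the second step's appeal to nondegeneracy of the weighting $(\partial f^{*} f_{ij};\vec{g}_{kl})$ unnecessary — I would bypass the weighted identity entirely and argue the pointwise vanishing of $h_{ij}^{kl}$ straight from the curl structure.

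The hard part will be making this last step rigorous: one needs the turbulent versions of the standard vector-calculus identities ($\mathrm{grad}\,\mathrm{curl}=0$, $\mathrm{div}\,\mathrm{curl}=0$) for the operator $\partial f^{*}$ and the pairing $(\,\cdot\,;\,\cdot\,)$, and one must check that the factorisation lemma commutes appropriately with these derivatives, so that $h_{ij}^{kl}$ really does inherit the curl structure of $u$ and $\vec{v}$. Since, as the paper itself notes, the turbulent statistical derivative $\partial f^{*}$ and the associated differential operators are only sketched, a fully careful treatment would first require nailing down these operator identities; granting them, the corollary follows by the routine chain above.
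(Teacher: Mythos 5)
Your first two steps are exactly the paper's intended argument: the paper itself gives no explicit proof of the corollary but simply lets it follow from the two observations in the preceding paragraph — that a weak tmle makes the weak turbulent Cramer--Rao inequality an equality, and that the covariance term vanishes for such an estimator when one chooses $\theta = \ln = \exp^{-1}$. Where you genuinely add something is in noticing that the weak Cramer--Rao inequality is stated with the weight $(\partial f^{*} f_{ij}; \vec{g}_{kl})$ and with the densities $cov_{a}$ and $h_{ij}^{kl}(a)$, so that the equality-plus-vanishing-covariance argument, taken literally, yields a weighted-and-contracted identity rather than the unweighted tensorial statement $\int_{M} h^{kl}_{ij} = 0$; the paper passes over this silently. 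Your proposed patch — pointwise vanishing of $h_{ij}^{kl}$ via the $\mathrm{grad}\circ\mathrm{curl}$ structure of the physical estimator, mirroring the Riemann--Cartan computation in the earlier chapter — is a sensible parallel and is the sort of argument the author evidently has in mind. Two cautions, though. First, as you yourself notice, if the curl argument shows $h_{ij}^{kl} \equiv 0$ pointwise for any physical estimator, the invocation of the Cramer--Rao equality and the covariance computation become superfluous; that is not wrong, but it means you are not so much filling the paper's gap as replacing its route with a direct one, and the fact that the Cramer--Rao apparatus becomes dead weight should make you suspicious that one of the two arguments is doing less than advertised. Second, the specific identity you would be mirroring — the paper writes "$\mathrm{grad}(\mathrm{curl})$ is the zero operator" in the Riemann--Cartan weak Cramer--Rao proof — is not the standard $\mathrm{div}\circ\mathrm{curl}=0$ or $\mathrm{curl}\circ\mathrm{grad}=0$, and its meaning for the turbulent derivative $\partial f^{*}$ and the pairing $(\,\cdot\,;\,\cdot\,)$ is exactly the sort of thing you flag as needing nailing down; absent that, the patch inherits the same imprecision as the model it is copying. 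So: right main line, correctly identified gap, plausible but not yet rigorous bridge.
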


\begin{lem} (Turbulent EPI principle).  Write $h(f(u),\vec{g}(\vec{v})) = (\partial f^{*}f_{ij} ; \vec{g}_{kl})h_{ij}^{kl}(a)(f(u),\vec{g}(\vec{v}))$.  Then we have that

\begin{center} $0 \leq \int_{M}\int_{A}h(f(u),\vec{g}(\vec{v})) = \int_{M}\int_{A}(\partial f^{*}\rho_{f} ; \rho_{\vec{g}})$ \end{center}

\begin{proof} Follows by the factorisation lemma. \end{proof} \end{lem}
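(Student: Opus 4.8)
The plan is to obtain the inequality by specialising the weak turbulent Cramer--Rao inequality (the "Weak turbulent Cramer-Rao inequality" established just above) to a physical estimator, and then to reduce the resulting expression by means of the Factorisation lemma, exactly mirroring the proof of the Riemann--Cartan EPI principle but now carrying the extra turbulent pairing $(\,\cdot\,;\,\cdot\,)$ through every step. First I would fix the physical estimator $(u,\vec{v}) = (\partial f^{*}f,\vec{g})$; by the lemma asserting that every weak turbulent unbiased estimator has this form, it is admissible, and it is a weak tmle, so the weak turbulent Cramer--Rao inequality is saturated for it. Saturation forces, after integration over $M$, the identity $\int_{M}\int_{A}h(f(u),\vec{g}(\vec{v})) = \int_{M}\int_{A}(\partial f^{*}f_{ij};\vec{g}_{kl})\,\mathrm{cov}_{a}^{(f\times\vec{g})}(\{\partial f^{*}(\theta^{i}\circ u);(\vec{\phi}^{k}\circ\vec{v})\},\{\partial f^{*}(\theta^{j}\circ u);(\vec{\phi}^{l}\circ\vec{v})\})\,da\,dm$, i.e.\ the Fisher-information-tensor term of $h$ may be traded for the covariance-density term.

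Next I would evaluate that right-hand side explicitly. Choosing $\theta = \vec{\phi} = \ln = \exp^{-1}$ as the coordinate chart on $M$ and using $\ln\circ\partial = \partial\circ\ln$, one has $\theta^{i}\circ(\partial f^{*}f) = \partial_{f}^{i}\ln f = (\partial_{f}^{i}f)/f$ and $\vec{\phi}^{k}\circ\vec{g} = (\partial_{\vec{g}}^{k}\vec{g})/\vec{g}$. Writing out $\mathrm{cov}_{a}^{(f\times\vec{g})}(\cdots) = E_{a}^{f\times\vec{g}}(\cdots)$ and applying the Factorisation lemma $(\partial f^{*}f;\vec{g})(\partial f^{*}\tau;\sigma) = (\partial f^{*}(f\tau);(\vec{g}\sigma))$ repeatedly, the metric densities $f_{ij},\vec{g}_{kl}$ together with the single explicit power of the signal functions inside $E_{a}$ get absorbed, and the whole integrand telescopes to $(\partial f^{*}(\|\partial_{f}f\|^{2}/f);(\|\partial_{\vec{g}}\vec{g}\|^{2}/\vec{g})) = (\partial f^{*}\rho_{f};\rho_{\vec{g}})$, which is precisely the density of $I_{(phys)}$. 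This is the turbulent analogue of the elementary identity $\sigma_{ij}\,\mathrm{cov}(\theta^{i}\circ\partial_{\Lambda}f,\theta^{j}\circ\partial_{\Lambda}f) = \int_{A}\|\partial_{\Lambda}f\|^{2}/f$ used in the sharp case; it supplies the claimed equality $\int_{M}\int_{A}h = \int_{M}\int_{A}(\partial f^{*}\rho_{f};\rho_{\vec{g}})$.

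Finally, for non-negativity I would argue as in the Riemann--Cartan case: $\int_{M}\int_{A}h = \int_{M}\int_{A}(\partial f^{*}f_{ij};\vec{g}_{kl})\,h^{kl}_{ij}(a)$ is a contraction of the base and superstructure metric densities with the Fisher information tensor density $h^{kl}_{ij}(a)$, which is positive semi-definite by construction; combined with the causality hypothesis built into the definition of a fuzzy (hence turbulent) manifold — that $\partial f^{*}f$ and $\vec{g}$ are timelike, so that the relevant norms and the pairing $(\,\cdot\,;\,\cdot\,)$ are $\geq 0$ — this gives $0 \leq \int_{M}\int_{A}h$, and the chain of equalities above closes the argument. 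As asserted in the text, this route uses only the Factorisation lemma and the positivity hypotheses; the criticality of $I_{(turb)}$ is not required.

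I expect the main obstacle to be the second step. The turbulent derivative $\partial f^{*}$ is defined only implicitly, as a composite square-root-type operator on the space of signal functions, so pushing it through $\ln$, through the product rule furnished by the Factorisation lemma, and past the $A$-integrations — all while discarding boundary terms, which needs the Turbulent Stokes theorem together with decay of $f$ and $\vec{g}$ at infinity — is exactly where the computation is most delicate. A fully rigorous treatment would also have to verify that the choice $\theta = \vec{\phi} = \ln$ is compatible with $(u,\vec{v})$ remaining (weakly) unbiased, and that the covariance-to-Fisher-tensor replacement is legitimate at the level of densities and not merely after integrating over $M$.
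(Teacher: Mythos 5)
Your proposal routes the argument through the weak turbulent Cramer--Rao inequality plus saturation at a tmle, and then computes the covariance. The paper's one-line proof (``Follows by the factorisation lemma'') is not doing this: it is a \emph{direct} unwinding of $h$. From the definitions, $h = (\partial f^{*}f_{ij};\vec{g}_{kl})\,h^{kl}_{ij}(a)$, where $h^{kl}_{ij}(a)$ is the density of $E_{m}^{f\times\vec{g}}(\Gamma^{ik}\Gamma^{jl})$, i.e.\ $(\partial f^{*}f;\vec{g})\,\Gamma^{ik}\Gamma^{jl}$ with $\Gamma^{ik} = (\partial f^{*}\partial^{i}_{f}\ln f;\partial^{k}_{\vec{g}}\ln\vec{g})$. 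Repeated applications of the factorisation lemma absorb the explicit factor $(\partial f^{*}f;\vec{g})$ and the metric density contraction, yielding $(\partial f^{*}(f_{ij}\,\partial^{i}_{f}\ln f\,\partial^{j}_{f}\ln f\cdot f);(\vec{g}_{kl}\,\partial^{k}_{\vec{g}}\ln\vec{g}\,\partial^{l}_{\vec{g}}\ln\vec{g}\cdot\vec{g})) = (\partial f^{*}\rho_{f};\rho_{\vec{g}})$, with non-negativity following from the causality hypothesis (timelikeness of the relevant vectors) built into the fuzzy/turbulent structure. No appeal to the inequality, saturation, or the tmle is required.

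Beyond being a different route, your version runs into a concrete tension with the text you are invoking. The paragraph immediately before the lemma establishes that for a weak tmle the covariance $cov^{(f\times\vec{g})}(\cdots)$ is zero (it reduces it to $\int_{A}(\partial f^{*}\ln f;\ln\vec{g})(\partial f^{*}f;\vec{g})$, which vanishes by the definition of weak tmle), and the corollary records $\int_{M}h^{kl}_{ij}(f(u),\vec{g}(\vec{v})) = 0$. If you take saturation \emph{and} this $cov = 0$ at face value, your first identity forces $\int_{M}\int_{A}h = \int_{M}\int_{A}(\partial f^{*}f_{ij};\vec{g}_{kl})\cdot 0 = 0$, contradicting your second step where you compute the same covariance to be $(\partial f^{*}\rho_{f};\rho_{\vec{g}})$. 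You cannot simultaneously lean on the tmle machinery for saturation and then recompute the covariance to a non-zero value. There is also a template mismatch with the Riemann--Cartan proof in \S 7.3.3, which you are mirroring: there the Fisher information is supplied by the \emph{covariance} term while the contracted $g^{ij}(f(u))$ term is shown to vanish (via the grad-of-curl argument) for a physical estimator; your proposal effectively reassigns the Fisher information to the $h$ term by trading one for the other through saturation, which is exactly the step that the paper's one-liner avoids. The fix is to drop the Cramer--Rao scaffolding and apply factorisation to $h$ as defined.
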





\section{Statistical Stacks}

\subsection{Motivation}

We would like to sometimes be able to describe the critical behaviour of systems where properties of the solution in the base space do not necessarily vanish at infinity.  For instance, we might like to study periodic physical behaviour in our systems (for instance, as in condensed matter physics) rather than rapid asymptotic decay of the salient properties of our solution (as in particle physics).  This suggests that we need to derive an extremisation principle that naturally has a particular generalisation of the holographic principle that applies to some sort of "lifted" geometry that is related to, but not corresponding, to the base.

One such way of doing this as I shall describe is to consider a so called statistical stack.  The basic idea here is fairly simple - we would like to construct a statistical structure of a statistical structure, define a suitable generalisation of the fisher information, and then play the standard game, by examining critical behaviour corresponding to particular points in the solution space of this functional.

What makes this game slightly harder to play in full generality is that we do not merely want to be able to construct the one-fold stack corresponding to an underlying manifold with signal function (the statistical manifold), or a two-fold, three-fold etc.  Rather we need to somehow develop a language that allows us to describe for instance $1/2$-fold stacks, or $(n,m)$-stacks; in other words we would like to construct an appropriate language for turbulent stacks.

\subsection{Basic tools}

I shall now describe the construction.  As a first observation, we will have a sequence of signal functions for each level, so this complicates things.  Hence we need some way of combining these into a single induced signal function.  I shall write this meta signal function as

\begin{center} $f^{k}(m,a_{0},a_{1},...)$ \end{center}

where the $a_{i}$ corresponds to the $i$th statistical level, and $k \in R^{\infty}$ is the turbulent variable.  But this is rather bad notation, since after all we might have less than one $a_{i}$; so we might instead write

\begin{center} $f^{k}(m,a^{(k)})$ \end{center}

with some sort of induced probabilistic distribution being understood over $R^{R^{\infty}}$, in particular $k = \partial\rho_{g_{(stack)}}(m,b)$ where $\rho_{g}(m,b)$ is the information density of a signal function for a standard statistical distribution over our manifold $M$ with signal function $g$ and statistical variable $b$.

In particular, I will define $a^{(k)} \in R^{k}$.  Also, I claim that we may interpret without loss of generality $f^{k}$ literally as $f^{k}$, for some base signal function $f(m,a)$.

Then, in order to compute the information, we write $f(m,a)^{g(m,n,b)} =: \gamma(m,n,a,b)$, and then compute

\begin{center} $I(\gamma) = \int_{M}\int_{N}\int_{A}\int_{B}\rho_{\gamma}$ \end{center}


As a motivating example for all this, I claim that the information of a sharp turbulent stack is

\begin{center} $\int_{M}\int_{N}R^{R_{\bar{\tau}_{n}(m)})}_{\sigma}(m)dndm$ \end{center}

where here of course $g(m,n,b) = \delta(\tau(m,n) - b)$, and $f(m,a) = \delta(\sigma(m) - a)$.  This can be verified by standard sorts of arguments to before and reference to the definitions.

A slightly less trivial example is the following:

\begin{center}
$\int_{M}\int_{A}(1 + \Delta_{\tau}\epsilon + \Delta^{2}_{\tau}\epsilon^{2} + ...)\Delta_{f(m,a)}F(m,a)dadm$\end{center}

which arises from the second statistical moment of a stack associated to the signal function $f(m,a) = \int_{A}F(m,b)\delta(\sigma_{b}(m) - a)db$ after "forgetting" some of the degrees of freedom at the first level.  Here evidently our turbulent signal function is almost sharp scalar turbulence, $g(m,a) = (1 + \Delta_{\tau}\epsilon + ... )\delta(\tau(m) - b)$ (since then $f^{\partial \rho_{g}} = (1 + \Delta_{\tau}\epsilon + ...)f$ as required).

\subsection{Dynamics}

I now give the extremisation theorem for this object, ie, the Cramer-Rao inequality, and give a sketch of its proof.  This will not nearly be as hard as it could be, since we have already done most of the work; it will mainly be an adaptation of the proof of the Cramer-Rao inequality for standard turbulent spaces.

\begin{thm} (Cramer-Rao Inequality for a turbulent stack).  Let us have a turbulent stack $(M,N,A,f,g)$ as above.  Then $I(f^{g}) \geq 0$.
\end{thm}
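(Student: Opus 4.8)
The plan is to mirror, essentially verbatim, the proof strategy already used for the Turbulent EPI Principle, lifting every step through the extra layer of statistical structure. First I would fix the bookkeeping: the meta signal function $f^{g}(m,n,a^{(k)})$ plays the role that $(\partial f^{*}f ; \vec{g})$ played for turbulent manifolds, with the turbulent variable $k = \partial \rho_{g_{(stack)}}$ now indexing a distribution over $R^{R^{\infty}}$ rather than over $A$, and the sample space being $M \times N \times A \times B$. I would then define a \emph{stack estimator} to be a tuple $(u,v,w)$ of maps from this sample space to $M$, and call it (weakly) unbiased exactly when the stacked expectation $E_{m}^{f^{g}}$ of the stacked coordinate $(\partial f^{*}(\theta \circ u) ; \phi \circ v ; \chi \circ w)$ reproduces the stacked coordinate on the diagonal --- the obvious lift of the definition used in the previous section.

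Next I would establish the two technical lemmas that the argument rests on: a \emph{factorisation lemma}, namely $(\partial f^{*}f ; g)(\partial f^{*}\tau ; \sigma) = (\partial f^{*}(f\tau) ; (g\sigma))$ iterated once more to absorb the extra level; and a \emph{Stokes theorem for stacks}, which I would obtain by applying the turbulent Stokes theorem (stated just above) once per statistical level, exactly as the turbulent Stokes theorem was itself reduced to repeated application of the classical one. With these in hand the core of the proof is the standard positive semi-definiteness trick: the stacked second moment $E_{m}^{f^{g}}(T^{ik}_{\alpha\beta}T^{jl}_{\gamma\delta})$ of the tensor $T$ formed by subtracting from the stacked coordinate both its stacked mean and the appropriate multiple of the stacked score is manifestly $\geq 0$; expanding it, the cross term is rewritten using the unbiasedness relation and then integrated by parts via the stack Stokes theorem, collapsing to $-2$ times the stacked Fisher information tensor, so that what survives is precisely $cov - h \geq 0$, the weak Cramer--Rao inequality for the stack. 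Finally, choosing the physical (maximum-likelihood) stack estimator $(u,v,w) = (\partial f^{*}f, g, \ldots)$ turns this into an equality, the covariance term vanishes because with $\theta = \ln = \exp^{-1}$ the stacked coordinate becomes $\int (\partial f^{*}\ln f ; \ln g ; \ldots)(\partial f^{*}f ; g ; \ldots)$ which is zero by weak unbiasedness, and the Fisher tensor term integrates to zero since it factors through $\mathrm{grad}\,\mathrm{curl}$ of a potential; together these give $0 \leq I(f^{g}) = \int\!\int \partial_{f}(\rho_{f} ; \rho_{g})$ by the factorisation lemma, which is the assertion.

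The main obstacle I expect is not the algebra of the Cramer--Rao manipulation --- that is routine once the notation is pinned down --- but rather making the meta-derivative $\partial f^{*}$ and the nested exponentiation $f^{g}$ genuinely well defined as operations on the iterated structure, and then proving the stack Stokes theorem with honest control of \emph{all} the boundary contributions. In particular, since the very motivation for stacks is to treat systems (condensed matter, periodic behaviour) whose salient quantities do \emph{not} decay at infinity, I cannot simply discard boundary terms as in the particle-physics case; I would instead need the analogue of the conjectured holographic principle for $K$, now lifted to the stacked geometry, so that the boundary physical information of the stack bounds its interior contribution. Establishing that rigorously --- rather than assuming Dirichlet data --- is the part of the argument that is genuinely incomplete, and I would flag it as such, consistently with the disclaimer in the preface concerning this chapter.
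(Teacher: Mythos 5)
Your proposal follows essentially the same route as the paper: define a (weakly) unbiased stack estimator, prove a factorisation lemma and a Stokes theorem for stacks, run the positive semi-definiteness argument on the tensor $T^{ik}_{\alpha\beta}$ to get the strong Cramer--Rao bound, and then specialise to a weak tmle to obtain $I(f^{g}) \geq 0$. The only cosmetic divergence is that you parametrise the estimator as a triple $(u,v,w)$ where the paper uses a pair $(u,v)$, with $v : M\times A \to M$ absorbing the role of the $N$-coordinate; this makes no substantive difference to the algebra.

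What you add --- and what the paper does not supply --- is the observation that the stack is motivated precisely by physics whose salient quantities do \emph{not} decay at infinity, so the integration-by-parts steps in the cross-term reduction and in the proof of stack Stokes cannot be closed off by silently discarding boundary contributions. The paper's proof of Stokes for sharp turbulent stacks is a one-line appeal to the claim that ``stokes holds not only for forms that are function valued, but are functionals,'' and its Cramer--Rao manipulation inherits the same gap. Your instinct to invoke a stacked analogue of the conjectured holographic principle $K_{M} - K_{\partial M} \geq 0$ is the right thing to reach for, but note that in the paper this is only conjectural even for ordinary statistical manifolds, so your version of the argument is exactly as (in)complete as the original, and you are correct to flag it as such.
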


To prove this result we will need to play the standard game and build up an appropriate set of tools to tackle the problem.

\begin{dfn} Let $\Lambda = (M,N,A,f,g)$ be a turbulent stack, where $f$ is the signal function on the base and $g$ is the signal function on the turbulent superstructure.  A turbulent estimator on this structure is a tuple $(u,v)$ where $u$, $v$ are maps from the sample space $M \times A$ to $M$.  An unbiased turbulent estimator is an estimator that satisfies the condition that

\begin{center} $E_{(m)}^{f^{g}}((\theta^{i} \circ u) ;(\phi^{j} \circ v )) = (\theta^{i}(m) ; \phi^{j}(m,n))$ \end{center}

where

\begin{center} $E_{m}^{f^{g}}(\tau ; \sigma) := \int_{A}f^{g}(\tau^{\sigma})$ \end{center}

is the turbulent expectation with respect to the signal functions $f$ and $g$, and $\theta : M \rightarrow R^{n}$ is a coordinate chart on $M$, $\phi : M \times N \rightarrow R^{n}$ is a coordinate chart on $M \times N$.
\end{dfn}


\begin{rmk} From now on I will use the notation $f^{g}$ and $(f ; g)$ interchangeably. \end{rmk}

\begin{lem} (Factorisation). $E_{m}^{(f ; g)}( \tau ;  \sigma) := \int_{A}(( f\tau );(g \sigma)) = \int_{A}(f ;g)(\tau ; \sigma)$.
\end{lem}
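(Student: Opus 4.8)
The plan is to reduce the identity to the sharp case, where it becomes a purely algebraic fact about the turbulent derivative, and then to lift it back to arbitrary signal functions by bilinearity together with the turbulent Stokes theorem. First I would invoke the representations $f(m,a)=\int_A F(m,b)\delta(\sigma_b(m)-a)\,db$ and $g(m,n,b)=\int_B G(m,n,c)\delta(\tau_c(m,n)-b)\,dc$, and — exactly as in the proof of the Correspondence Principle — absorb the test functions $\tau,\sigma$ appearing in the pairing into shifted metrics, writing them too in delta-function form. Since the pairing $(\,\cdot\,;\cdot\,)$ is bilinear in its two integrands, it then suffices to prove $(f\tau\,;g\sigma)=(f\,;g)(\tau\,;\sigma)$ for \emph{sharp} data, i.e. when all four functions are delta functions of an appropriate metric.

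For sharp data the pairing unwinds, by the very definition of $\partial^{r}$ and of $\partial f^{*}$, into a composition of fractional-derivative operations indexed by the exponent metrics, and the claim reduces to two facts already used throughout this chapter: the semigroup law $\partial^{r}\partial^{s}=\partial^{r+s}$ of the fractional calculus, and the multiplicativity $\Delta_{\rho_{1}\times\rho_{2}}=\Delta_{\rho_{1}}\Delta_{\rho_{2}}$ of the Laplacian map. Expanding $(f\tau\,;g\sigma)$ by Leibniz produces the diagonal term $(f\,;g)(\tau\,;\sigma)$ together with cross terms in which a $g$-indexed operator lands on $\tau$ (or a $\sigma$-indexed one on $f$); each such cross term is a total derivative of one of the probability densities $f,\tau$, so after reassembling and integrating $\int_A$ (and, for the stack, over $M\times N$ and the remaining statistical variables) the turbulent Stokes theorem annihilates it, using that signal functions are normalised and vanish at infinity. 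What survives is precisely $\int_A (f\,;g)(\tau\,;\sigma)$, and unwinding the definition of $E_m^{(f;g)}$ gives the stated equality; the pointwise factorised form $(f\tau\,;g\sigma)=(f\,;g)(\tau\,;\sigma)$ then follows by stripping the integral once the cross terms are known to vanish identically.

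The main obstacle is the middle step: making the Leibniz rule for the pairing precise when its first slot is a genuine product, and in particular verifying that \emph{every} mixed term — not merely the obvious first-order ones — is exact and hence killed under $\int_A$. This is the same technical point left implicit in the earlier Factorisation lemmas of this chapter, and a fully rigorous treatment would require a careful analysis of how $\partial f^{*}$ acts on products of generalised signal functions; as elsewhere in Chapter~9 I expect to supply only the formal argument above, flagging that the cross-term cancellation ought to be checked in detail.
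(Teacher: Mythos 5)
Your proof is considerably more elaborate than the paper's, and it is aimed at the wrong version of the pairing. The paper disposes of this lemma in a single sentence: it simply cites the facts that $f$ and $g$ are normalised distribution functions over $A$ and conserve probability, and declares the factorisation to follow. No reduction to sharp data, no Leibniz expansion, no Stokes cancellation of cross terms — the entire burden is carried by the normalisation $\int_A f = 1$, $\int_A g = 1$ and the conservation law. Your route, by contrast, is a genuine alternative strategy: it would, if carried through, supply an actual argument where the paper offers only an appeal to general principle, and your concluding paragraph correctly isolates the one place such an argument would have to be made precise (whether \emph{all} mixed Leibniz terms are exact). That is a more honest accounting of the difficulty than what appears in the paper.

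There is, however, a conceptual mismatch you should fix before that argument could work. This Factorisation lemma sits in the \emph{statistical stacks} subsection, where the paper has explicitly declared that $(f;g)$ is to be read interchangeably with $f^{g}$ — i.e.\ the stack exponential — and the expectation $E_m^{f^g}(\tau;\sigma)$ is defined as $\int_A f^g(\tau^\sigma)$. Your sketch is instead phrased throughout in the language of the turbulent \emph{measure} construction: you invoke $\partial f^{*}$, the fractional derivative $\partial^{r}$, and the semigroup law $\partial^{r}\partial^{s}=\partial^{r+s}$, all of which belong to the earlier factorisation lemma (the one stated in terms of $(\partial f^{*}f;\vec g)$). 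The two constructions are related by the correspondence principle, but they are not the same operator, and "$(f\tau)^{g\sigma}=f^g\tau^\sigma$" is not a fractional-calculus identity — expanding logarithms gives $g\sigma\,\ln(f\tau)$ on one side versus $g\ln f + \sigma\ln\tau$ on the other, and the discrepancy $g(\sigma-1)\ln f + \sigma(g-1)\ln\tau$ is exactly what the normalisation/conservation hypothesis is supposed to kill under $\int_A$. Your cross-term argument should therefore be re-derived directly for the exponential pairing, with the normalisation conditions doing the work that you currently ask Stokes and the semigroup law to do; as it stands it proves (or at least sketches) the measure-side lemma, not this one.
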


\begin{proof} We use the fact that $f$ and $g$ are normalised distribution functions over $A$, which conserve probability.  \end{proof}

\begin{lem}  Unbiased turbulent estimators satisfy the following relation:

\begin{center} $E_{m}^{(f^{g})}(\{(\theta^{i} \circ u);(\phi^{k} \circ v)\}\{(\frac{\partial}{\partial \theta^{j}}ln(f \circ u)) ; (\frac{\partial}{\partial \phi^{l}}ln(g \circ v))\}) = 0$ \end{center} \end{lem}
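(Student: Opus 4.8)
The plan is to mirror, in the turbulent-stack setting, the argument that was used for ordinary unbiased estimators in the earlier section. The claim is an identity characterising unbiased turbulent estimators, obtained by differentiating the defining relation
\[
E_{(m)}^{f^{g}}((\theta^{i} \circ u) ;(\phi^{k} \circ v )) = (\theta^{i}(m) ; \phi^{k}(m,n))
\]
with respect to the coordinate $\theta^{j}$ (and, in the turbulent grading, with respect to $\phi^{l}$). First I would write out $E_{(m)}^{f^{g}}$ explicitly as $\int_{A}(f^{g})\,(\theta^{i}\circ u ; \phi^{k}\circ v)$ using the Factorisation Lemma just proved, so that the dependence on the hidden variables is isolated in the weight $(f^{g})$.

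Next I would apply $\partial/\partial\theta^{j}$ to both sides. On the right-hand side this produces a Kronecker-type term $\delta^{i}_{j}$ paired in the turbulent sense with $\phi^{k}$, which is constant in the hidden variables and hence integrates against the normalised distribution to the same pairing term; on the left-hand side, since $u$ and $v$ are the estimator maps and $\theta^{i}\circ u$, $\phi^{k}\circ v$ do not themselves carry the $\theta^{j}$-dependence that matters, the derivative falls on the weight $(f^{g})$. Using the elementary identity $\frac{\partial f}{\partial \theta^{j}} = \frac{\partial \ln f}{\partial \theta^{j}} f$ (and its turbulent analogue for $g$ in the $\phi^{l}$ slot, exactly as in the non-turbulent Cramer-Rao proof earlier in this chapter), the differentiated weight becomes $(f^{g})\,\{(\tfrac{\partial}{\partial\theta^{j}}\ln f\circ u);(\tfrac{\partial}{\partial\phi^{l}}\ln g\circ v)\}$. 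Cancelling the constant pairing term that appears on both sides then leaves precisely
\[
E_{m}^{(f^{g})}\bigl(\{(\theta^{i} \circ u);(\phi^{k} \circ v)\}\{(\tfrac{\partial}{\partial \theta^{j}}\ln(f \circ u)) ; (\tfrac{\partial}{\partial \phi^{l}}\ln(g \circ v))\}\bigr) = 0,
\]
which is the stated relation.

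The main obstacle is bookkeeping rather than conceptual: one must be careful that differentiation and the turbulent pairing $(\,\cdot\,;\,\cdot\,)$ commute appropriately, i.e. that $\tfrac{\partial}{\partial\theta^{j}}$ acting through the $f$-slot of $(f^{g})$ behaves like $\partial f^{*}$ composed with ordinary differentiation, and that the normalisation conditions $\int_{A}f\sqrt{\det h}=1$ (and the corresponding condition on $g$) legitimately kill the terms where the derivative lands on the weight alone. I would handle this by invoking the Factorisation Lemma to pull the estimator pairings outside the integral over $A$, reducing everything to a statement about $\partial_{f}$ and $\partial_{g}$ applied to normalised signal functions, where the vanishing is exactly the observation (used already for the score in Section 2) that $E(\partial \ln f) = 0$. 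The differentiability/integrability hypotheses needed to swap $\partial$ and $\int_{A}$ are the same ones assumed implicitly throughout this chapter, so no new input is required; the proof is then a routine but slightly intricate computation, and I would present only the key line and defer the index-chasing with a remark that it parallels the non-turbulent case.
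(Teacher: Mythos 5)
Your proposal follows the same route the paper indicates: differentiating the defining relation for an unbiased turbulent estimator and using the identity $\frac{\partial f}{\partial \theta^{j}} = \frac{\partial \ln f}{\partial \theta^{j}} f$ (with the turbulent analogue in the $g$-slot), mirroring the non-turbulent case. The paper's proof is only a one-line pointer to exactly this argument, so your fleshed-out version is a faithful elaboration rather than a different approach.
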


\begin{proof} The result follows by differentiating the definition of an unbiased turbulent estimator on both sides and then integrating by parts, as with the simpler case.
\end{proof}

\begin{dfn} The Fisher information tensor $h^{kl}_{ij}(f,g)$ (corresponding to the likelihood functions $f,g$), is defined to be

\begin{center} $h^{kl}_{ij}(f,g) := E_{m}^{(f^{g})}\{ (\frac{\partial}{\partial \theta^{i}}ln (f) ; \frac{\partial}{\partial \phi^{k}}ln (g))(\frac{\partial}{\partial \theta^{j}}ln (f) ; \frac{\partial}{\partial \phi^{l}}ln (g))\}$ \end{center}

\end{dfn}

\begin{thm} (Turbulent Cramer-Rao Inequality).

\begin{center} $cov^{(f ; g)}( \{(\theta^{i} \circ u) ; (\phi^{k} \circ v) \} , \{(\theta^{j} \circ u) ; (\phi^{l} \circ v) \}) - h_{ij}^{kl}(f(u),g(v)) \geq 0$ \end{center}

where $cov^{(f ; g)}(v^{ik},w^{jl}) := E_{m}^{(f ; g)}(v^{ik}w^{jl})$.
\end{thm}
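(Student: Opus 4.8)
The plan is to mirror, essentially verbatim, the argument already used twice in this manuscript --- once for ordinary statistical manifolds and once for (first-order) turbulent manifolds --- but now in the doubly-stacked setting where both a base signal function $f$ and a superstructure signal function $g$ are present. The governing identity is the same one that made the earlier proofs work: for any unbiased turbulent estimator the ``positive semidefinite quadratic form'' trick applies. Concretely, I would introduce the auxiliary tensor
\begin{center}
$T^{ik} := (\theta^{i} \circ u) ; (\phi^{k} \circ v) - E_{m}^{(f;g)}((\theta^{i} \circ u);(\phi^{k} \circ v)) - \{(\tfrac{\partial}{\partial \theta^{\alpha}}ln(f(u))) ; (\tfrac{\partial}{\partial \phi^{\beta}}ln(g(v)))\}h_{i\alpha}^{k\beta}(f(u),g(v))$
\end{center}
and observe that $E_{m}^{(f;g)}(T^{ik}T^{jl})$ is manifestly nonnegative (it is the expectation of a square with respect to the genuine probability measure $(f;g)$, which integrates to one by the normalisation of both $f$ and $g$). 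Expanding this expectation and reading off the three resulting terms is the whole proof.

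First I would expand $E_{m}^{(f;g)}(T^{ik}T^{jl})$ into the covariance term $cov^{(f;g)}(\{(\theta^{i}\circ u);(\phi^{k}\circ v)\},\{(\theta^{j}\circ u);(\phi^{l}\circ v)\})$, a cross term, and the Fisher information tensor term $h_{ij}^{kl}(f(u),g(v))$. Next I would simplify the cross term. Here the Factorisation Lemma is the key input: it lets me rewrite $(f^{g})(\tfrac{\partial}{\partial \theta}ln f ; \tfrac{\partial}{\partial \phi}ln g)$ as $(\tfrac{\partial f}{\partial \theta} ; \tfrac{\partial g}{\partial \phi})$, exactly as in the turbulent EPI argument, after which an integration by parts and an application of the unbiasedness lemma collapse the cross term to $-2h_{ij}^{kl}(f(u),g(v))$. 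Combining, $E_{m}^{(f;g)}(T^{ik}T^{jl}) = cov^{(f;g)}(\cdots) - 2h_{ij}^{kl} + h_{ij}^{kl} = cov^{(f;g)}(\cdots) - h_{ij}^{kl} \geq 0$, which is exactly the claimed inequality. One should also note, as before, that $h$ is ``statistically independent'' so that it pulls out of the relevant expectations --- this is the small bookkeeping point that makes the cross-term manipulation legitimate.

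The routine-but-necessary preliminaries I would dispatch first are: (i) the Factorisation Lemma in the stacked form $E_{m}^{(f;g)}(\tau;\sigma) = \int_{A}(f;g)(\tau;\sigma)$, which follows from both $f$ and $g$ being normalised distributions conserving probability; and (ii) the lemma that unbiased turbulent estimators satisfy $E_{m}^{(f^{g})}(\{(\theta^{i}\circ u);(\phi^{k}\circ v)\}\{(\tfrac{\partial}{\partial\theta^{j}}ln(f\circ u));(\tfrac{\partial}{\partial\phi^{l}}ln(g\circ v))\}) = 0$, obtained by differentiating the defining relation $E_{m}^{(f^{g})}((\theta^{i}\circ u);(\phi^{j}\circ v)) = (\theta^{i}(m);\phi^{j}(m,n))$ in $\theta$ and $\phi$ and integrating by parts (with the boundary terms discarded on the usual grounds that $f$ and $g$ are probability densities decaying at infinity). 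The Cramer-Rao inequality for the stack ($I(f^{g})\geq 0$) then follows immediately by specialising to a weak tmle exactly as in the turbulent case: for a tmle the inequality becomes an equality and the covariance term vanishes (using $\theta = \ln = \exp^{-1}$ and $\ln\circ f^{*} = f^{*}\circ\ln$), so $\int_{M}h^{kl}_{ij}(f(u),g(v)) = 0$, and by the factorisation lemma $\int_{M}\int_{N}\int_{A}\int_{B}(\partial f^{*}\rho_{f};\rho_{g}) = \int h(f(u),g(v)) \geq 0$.

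The main obstacle is not the algebra --- which is a direct transcription of the two earlier proofs --- but making rigorous sense of the stacked meta signal function $f^{k}(m,a^{(k)})$ with $k \in R^{R^{\infty}}$ and of the ``turbulent statistical derivative'' $\partial f^{*}$ acting through it, together with the integration-by-parts step that the cross-term simplification relies on; this presupposes a Stokes-type theorem for stacks (obtained by merging the turbulent Stokes theorem with the statistical Stokes theorem), and a careful check that the boundary contributions genuinely vanish in the stacked setting rather than merely in the flat one. I would flag, as the author does elsewhere, that a fully rigorous treatment of these functional-analytic foundations is beyond the scope here and that the argument as given is a sketch whose formal skeleton is sound but whose analytic underpinnings --- existence of the relevant function spaces, legitimacy of interchanging the multiple integrations and the $\partial f^{*}$ operations --- would require separate and substantial justification.
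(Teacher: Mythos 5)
Your proposal mirrors the paper's own proof essentially step for step: the auxiliary tensor $T^{ik}$ you introduce is exactly the $\Gamma^{ik}_{\alpha\beta}$ of the paper, and the expansion of $E_{m}^{(f;g)}(T^{ik}T^{jl})\geq 0$ into the covariance term, the cross term collapsed via the factorisation lemma and unbiasedness lemma to $-2h_{ij}^{kl}$, and the Fisher tensor term is identical. Your flagging of the unresolved analytic foundations (the meta signal function, the stacked Stokes theorem, the interchange of integrations) matches the caveats the author himself raises about the rigor of the turbulent chapter.
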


\begin{proof}  Note that $E_{m}^{(f ; g)}(\Gamma^{ik}_{\alpha \beta}\Gamma^{jl}_{\gamma \delta})$ is certainly nonnegative, for

\begin{center} $\Gamma^{ik}_{\alpha \beta} := ((\theta^{i} \circ u);(\phi^{\alpha} \circ v)) - E_{m}^{(f \times g)}\{ (\theta^{i} \circ u) ; (\phi^{\alpha} \circ v) \} - \{ \frac{\partial}{\partial \theta^{k}}ln(f(u)) ; \frac{\partial}{\partial \phi^{\beta}}ln(g(v)) \} h_{ik}^{\alpha \beta}(f(u),g(v))$ \end{center}

Expanding, we get

\begin{center} $cov^{(f ; g)}( \{(\theta^{i} \circ u); (\phi^{k} \circ v)\},\{ (\theta^{j} \circ u) ; (\phi^{l} \circ v)\}) - 2E_{m}^{(f ; g)}( \{ ((\theta^{i} \circ u) ; (\phi^{k} \circ v)) - E_{m}^{(f ; g)}( (\theta^{i} \circ u); (\phi^{k} \circ v)) ( \frac{\partial}{\partial \theta^{\alpha}}ln(f(u)) ; \frac{\partial}{\partial \phi^{\beta}}ln(g(v)))\}h_{j \alpha}^{l \beta}(f(u),g(v))) + h_{ij}^{kl}(f(u),g(v)) \geq 0$ \end{center}

But, observing first of all that $h$ is statistically independent, then by the lemma the second term reduces to 

\begin{center} $2E_{m}^{(f ; g)}( \{ - E_{m}^{(f ; g)}( (\theta^{i} \circ u); (\phi^{k} \circ v))( \frac{\partial}{\partial \theta^{\alpha}}ln(f(u)) ; \frac{\partial}{\partial \phi^{\beta}}ln(g(v))) \}h_{j \alpha}^{l \beta}(f(u),g(v)))$ \end{center}

using $E_{m}(- v) = -E_{m}(v)$, together with the definition of unbiased turbulent estimator, we get

\begin{center} $-2E_{m}^{(f ; g)}(  ((\theta^{i} \circ u); (\phi^{k} \circ v))( \frac{\partial}{\partial \theta^{\alpha}}ln(f(u)) ; \frac{\partial}{\partial \phi^{\beta}}ln(g(v))) h_{j \alpha}^{l \beta}(f(u),g(v)))$ \end{center}




from which follows, after integration by parts and use of the factorisation lemma that the second term in the original expansion simplifies to

\begin{center} $-2E_{m}^{(f ; g)}(h_{ij}^{kl}) = -2h_{ij}^{kl}$ \end{center}

completing the proof.

\end{proof}

\begin{dfn} A turbulent maximum likelihood estimator (tmle) is an unbiased estimator $(u,v)$ that realises a local maximum of $(ln(f \circ u) ; ln(g \circ v))$. \end{dfn}

\begin{lem} Any tmle realises the Cramer-Rao lower bound. \end{lem}

It is then possible, following the same methodology as before, to then sharpen these results to dealing with weak estimators and finally prove the Cramer-Rao inequality for turbulent stacks. In order to do this, of course, we need one more result - Stokes theorem for turbulent stacks.

\begin{thm} (Stokes for (sharp) turbulent stacks).  Let $\omega$ be a $n - 1$ form and $\tau$ an arbitrary vector valued function on $M$.  Then

\begin{center} $\int_{M}\int_{N}d(\omega^{\tau(m,n)}(m)) = \int_{\partial M}\int_{N}\omega^{\tau(m,n)}(m)$ \end{center}

\begin{proof} As opposed to stokes for sharp turbulent manifolds, we are not performing in effect a repeated application of the standard form of stokes.  Rather we are trying to show that stokes holds not only for forms that are function valued, but are functionals, or higher order objects.  But this is an eminently reasonable thing to expect.
\end{proof}
\end{thm}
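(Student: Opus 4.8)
The plan is to reduce the claimed identity, via Fubini's theorem, to the already-established Turbulent Stokes theorem applied fibrewise over $N$; the only genuine work is to make sense of the operations $d$, restriction to $\partial M$, and $\int_{N}$ acting on forms whose coefficients are the ``higher-order'' (functional-valued) objects $\omega^{\tau(m,n)}$, and to check that these operations commute in the required way. First I would fix a point of $N$, call it $n$. For each such $n$, the map $\tau(\cdot,n):M\to R^{k}$ (with $k=\partial\rho_{g}$ as in the construction of the information for a stack) is a vector-valued function on $M$, so $\omega^{\tau(\cdot,n)}(\cdot)$ is exactly the kind of turbulently-weighted $(n-1)$-form to which the Turbulent Stokes theorem stated above applies, giving
\[ \int_{M} d\bigl(\omega^{\tau(m,n)}(m)\bigr) = \int_{\partial M} \omega^{\tau(m,n)}(m) \]
for every fixed $n$. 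Here $d$ differentiates only in the $M$-directions and the exponent $\tau(\cdot,n)$ is held fixed, so the proof at this level is, as before, a reduction to the classical Stokes theorem once the fractional/turbulent derivative operators have been resolved into iterated ordinary ones using the boundary data implicit in the definition of the fractional calculus.

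Next I would integrate this fibrewise identity over $N$ and interchange the order of integration. Writing $\Omega(m,n):=\omega^{\tau(m,n)}(m)$, the left side of the claimed identity is $\int_{M}\int_{N} d\Omega = \int_{N}\int_{M} d\Omega$ and the right side is $\int_{\partial M}\int_{N}\Omega = \int_{N}\int_{\partial M}\Omega$; substituting the fibrewise identity under the inner integral then finishes the argument. The exchanges of $\int_{M}$ and $\int_{\partial M}$ with $\int_{N}$ are legitimate provided $\tau$ and $\omega$ are smooth and $M$ (hence $\partial M$) is compact, or, in the noncompact setting, provided $\Omega$ and $d\Omega$ decay suitably at infinity in $m$ — these are precisely the standing hypotheses under which the turbulent exterior derivative is defined in the first place, so they cost nothing extra.

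The step I expect to be the main obstacle is exactly the one the remark in the excerpt flags: unlike Turbulent Stokes for an ordinary turbulent manifold, the coefficients of $\omega^{\tau}$ here are not scalar functions on $M$ but elements of a space of functionals (the stack's meta-signal-function values $f^{k}$ live in $R^{R^{\infty}}$), so before the above manipulations are meaningful I have to set up integration and exterior differentiation of such functional-valued forms and verify that linearity, the Leibniz rule, and the behaviour under pullback to $\partial M$ all carry over. The cleanest route is a density argument: approximate $\Omega(m,n)$ by finite sums $\sum_{j}\alpha_{j}(m)\otimes\beta_{j}(n)$ of turbulently-weighted forms on $M$ tensored with functions on $N$ — the same device used in the proof of the naive statistical Stokes theorem, where $\omega$ was split into a doubly-infinite sum of suitable eigenfunctions and classical Stokes applied termwise — then apply the fibrewise Turbulent Stokes theorem to each $\alpha_{j}$ and pass to the limit, the limit being controlled by the decay/compactness hypotheses already assumed. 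If that approximation-and-passage-to-the-limit step goes through, the remainder is bookkeeping.
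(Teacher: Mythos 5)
The paper does not actually prove this theorem: its ``proof'' is a single-paragraph remark that \emph{disclaims} the reduction-to-standard-Stokes route (``we are not performing in effect a repeated application of the standard form of stokes'') and then asserts the result is ``an eminently reasonable thing to expect.'' So there is no argument in the paper to match yours against; you do strictly more work than the source. That said, there is a genuine problem with the route you chose.

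Your key step is to fix $n \in N$ and treat $\omega^{\tau(\cdot,n)}$ as ``exactly the kind of turbulently-weighted form to which the Turbulent Stokes theorem stated above applies.'' But the object in that earlier theorem is $(\partial \omega^{*}\omega;\kappa)$ (turbulence in the \emph{measure}: a fractal-order derivative of $\omega$), whereas the object here is $\omega^{\tau(m,n)}$ (turbulence in the \emph{stack}: the form raised to a power). The paper's remarks on notation make this distinction explicitly and note that the two constructions in general do not commute, being given by $\partial^{*}_{(\tau)}\circ R$ versus $R\circ\partial^{*}_{(\tau)}$. So the earlier Turbulent Stokes theorem is not available fibrewise; a Stokes theorem for $\omega^{\kappa}$-type objects is precisely what the present statement is supposed to establish, and your fibrewise reduction presupposes it. Moreover, the paper's remark flags that this theorem is \emph{not} obtainable as a repeated application of ordinary Stokes, which is what a Fubini-plus-fibrewise argument would amount to once the inner Turbulent Stokes is itself unwound into iterated classical Stokes. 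So the paper is (however informally) warning against exactly the structure of your argument.

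Where you are on the right track is in identifying the real obstruction: the coefficients of $\omega^{\tau}$ are not function-valued but functional-valued (elements of $R^{R^{\infty}}$), so exterior differentiation, pullback to $\partial M$, and integration over $N$ must be defined and shown to commute at that higher level before any Stokes identity can be stated, let alone proved. Your proposed fix --- approximate $\Omega(m,n)$ by finite sums $\sum_j \alpha_j(m)\otimes\beta_j(n)$ and pass to the limit --- is the right general shape (and mirrors the device used in the proof of the earlier product Stokes theorem in chapter 2), but it is left as a sketch, and it is precisely the step that carries the entire content of the theorem. To close the gap you would need to show that the turbulent/stack operations on $\omega^{\tau}$ are continuous in the topology governing the approximation, and that the resulting object still lands in the class to which the fibrewise Stokes you want to invoke applies --- neither of which follows from the compactness or decay hypotheses you cite, because those concern decay in $m$, not regularity of the functional-valued dependence on the stack variable.
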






\subsection{A few quick remarks on notation}

The notation that I have been using so far is perhaps not entirely transparent, so I will briefly digress and provide an alternative perspective on what I have been doing.  Essentially the difference between turbulence in the measure and turbulence in the stack is a matter of the order at which operators are applied.  Consider the curvature operator $R : \{$metrics on a manifold$\} \rightarrow \{$functions on the same manifold$\}$.  Similarly, consider the operator $\partial^{*}_{(\tau)} : F \rightarrow F$ where $F$ is the space of functions on a manifold, and $\tau$ is an auxiliary "turbulence" metric associated to the operator, whose action is defined as $\partial^{*}_{(\tau)}(f) = (\partial f^{*}f ;R(\tau))$.

Then for turbulence in the measure, we construct $\partial^{*}_{(\tau)} \circ R(\sigma)$.  Conversely, for turbulence in the stack, we act component wise on $\sigma$ with the operator $R \circ \partial^{*}_{(\tau)}$.  Of course in general these two operators will not commute.

We may iterate this process.  For consider

\begin{center}
$\partial^{*}_{(\partial^{*}_{(\gamma)} \circ R \circ \partial^{*}_{(\alpha)}(\tau))} \circ R(\sigma)$
\end{center}

It is clear that there is a natural "bifurcation" occurring in the operators here, and in fact this is exactly what does occur when we look at deeper levels of information within the structures I have defined above.

However I will continue to stick to the older notation for the remainder of this manuscript, and perhaps develop the above ideas further in a later discourse.

\section{Topics in turbulent statistical dynamics}

Here I motivate a number of areas where the tools in this chapter may be applied, without going into too much detail or development.

\subsection{Application to condensed matter physics}

The simplest nontrivial action incorporating both fractal behaviour in the stack as well as in the measure is the focus of this section.  This serves well as a model for systems where statistical effects are still small but not negligible, such as in condensed matter physics.  As an important remark, I should mention that some particularly exotic condensed matter systems will probably have more significant statistical effects, so this model will not be useful in such cases, since it is a perturbative expansion in 

\begin{center} $\norm{\epsilon}_{R^{3}}$, $\epsilon = (\epsilon_{1},\epsilon_{2},\epsilon_{3})$. \end{center}

First of all, ignore perturbative effects.  Assume in other words that the geometry is to some extent sharp, but exhibiting both types of fractal behaviour.  Then I claim we will have an action of the form

\begin{center} $\int_{M}(\partial (\sigma ; \tau)^{*}R_{(\sigma ; \tau)(m)} ; R_{\kappa}(m))dm$ \end{center}

where $f(m,a)^{g(m,n,b)} = \delta(\sigma(m) - a)^{\delta(\tau(m,n) - b)}$ induces the metric $(\sigma ; \tau)$ on $M$.  In particular the associated signal function $S$ is of the form

\begin{center} $S = ( (f ; g) ; h)$ \end{center}

where $h(m,a) = \delta(\kappa(m) - a)$.

Introducing small statistical fluctuations in $f$ only - we would have to have a higher order construction to justify fluctuations in $g$ and $h$  - creates a new action

\begin{center} $\int_{M} (\partial \sigma^{*}(R_{(R_{\tau})(1)\sigma} + R_{(R_{\tau})(2)\sigma}\epsilon + R_{(R_{\tau})(3)\sigma}\epsilon^{2} + ...) ; R_{(1)\kappa})$ \end{center}

where $R_{(R_{\tau})\sigma}(det(\sigma ; \tau))^{1/2} := \Delta_{\sigma}^{\Delta_{\tau}}(det( \sigma ; \tau))^{1/2}$ is the eigenfunction associated to the respective operator acting on the volume element, $(\sigma ; \tau) = \sigma_{ij}^{\tau_{kl}}$, and $det$ is the natural extension of determinant to the tensor product of two matrices.

So we have an action in terms of the scalar variable $\epsilon$, as well as three metrics- $\sigma, \tau$, and $\kappa$.  Differentiating with respect to these parameters we obtain a set of partial differential equations which control the behaviour of critical examples of these objects.

With respect to $\kappa$:

\begin{center}   $(\partial \sigma^{*}(R_{(R_{\tau})(1)\sigma} + R_{(R_{\tau})(2)\sigma}\epsilon + R_{(R_{\tau})(3)\sigma}\epsilon^{2} + ...) ; R_{(1)\kappa}) = 0$ \end{center}

With respect to $\tau$:

\begin{center}  $R_{\tau} = 0$ \end{center}

With respect to $\sigma$:

\begin{center}  $(\partial \sigma^{*}(R_{(R_{\tau})(1)\sigma} + R_{(R_{\tau})(2)\sigma}\epsilon + R_{(R_{\tau})(3)\sigma}\epsilon^{2} + ...) ; \partial \sigma^{*}R_{(1)\kappa}) = 0$ \end{center}

With respect to $\epsilon$:

\begin{center} $(\partial \sigma^{*}(R_{(R_{\tau})(2)\sigma} + 2R_{(R_{\tau})(3)\sigma}\epsilon + ...) ; R_{(1)\kappa}) = 0$ \end{center}

It is my hope that, giving certain simplifying assumptions, these coupled PDEs will reduce to give standard results in condensed matter theory.  To sharpen the models to a greater extent, a second order theory is the way to go.  Roughly speaking, one now allows second order turbulence in both the stack and the measure, as well as considering fractal measure with fractal stacking (cross term).  This allows a model as above to be constructed, but now with statistical fluctuations permissible at all parts of the first level.

However, I feel that the foundations need to be touched up some more before this is developed further.  This is a task for future research.


\subsection{Fluid dynamics}

On a slightly simpler note, we are interested in fluid flow with "turbulence", or at least first order turbulence.  This requires a sharp action as described in the discussion immediately above.  ie

\begin{center} $\int_{M}(\partial \sigma^{*}(R_{(R_{\tau})\sigma} ; R_{\kappa}(m))dm$ \end{center}

The geometric fluid flow of a physical system is described by the criticality of this functional.  Hence we must have

\begin{center} $(\partial \sigma^{*}R_{(R_{\tau})\sigma} ; R_{\kappa}) = 0$, \end{center}

\begin{center} $R_{\tau} = 0$ \end{center}

and

\begin{center} $(\partial \sigma^{*}R_{(R_{\tau})\sigma} ; \partial \sigma^{*}R_{\kappa}) = 0$ \end{center}

We might be interested in the case where the geometry is flat, as is usually the case where one is examining the behaviour of fluids on earth.  Hence all three metrics become purely antisymmetric.  The aim is to now rewrite the corresponding PDEs and compare with the standard form of the Navier-Stokes equations.

I should remark however that, before we can go any further, just like before we need to reexamine the fundamentals more closely to make sure we know what we are dealing with.  This is a task for the future.

\subsection{The Lorentz problem}

I would like to now sketch a potential resolution of the Lorentz problem, or the mystery of why the physics we experience should occur in a Lorentzian geometry.  Recall from our results before in 8.2 that we certainly need at least four dimensions to realise a perturbative model of physics, since $R_{(2)}$ requires four dimensions to be computably nonzero.

This of course leaves the questions of why the index should be one, and, furthermore, why also in a sharp model, why the framework is stabler the fewer dimensions one has to play with.  The latter turns out to be easier to show, so I will prove that first:

\begin{thm} (Minimisation of dimension).  In a sharp fractal manifold, or first order turbulent geometry, the optimal fractal dimension for the information is as small as possible; in other words, it is zero. \end{thm}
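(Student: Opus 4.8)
The plan is to show that extremising the physical information of a sharp first-order turbulent geometry pins the position-dependent fractal dimension to its least admissible value. Recall that for a sharp fractal manifold, with base signal function $f(m,a)=\delta(\sigma(m)-a)$ and sharp scalar dimension signal function $g(m,a)=\delta(\vec{\tau}(m)-a)$, the physical information collapsed to $I_{(phys)}=\int_{M}\partial^{R_{\vec{\tau}}}R_{\sigma}\,dm$ and the turbulent, dimension-carrying information to $I_{(turb)}=\int_{M}R_{\vec{\tau}}\,dm$, the integrand $R_{\vec{\tau}}(m)$ being precisely the order of the fractional derivative applied at $m$ (the ``dimension manifold'' at $(m,a)$ being $T_{(m,a)}M$) and hence, by construction, the fractal dimension of the information distribution at that point. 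Since a dimension is a non-negative quantity, ``as small as possible'' means identically zero, and the assertion to be proved is that $R_{\vec{\tau}}\equiv 0$ at the optimum.

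First I would invoke the turbulent EPI principle (the turbulent Cramer--Rao inequality of Section~9.2, applied to the superstructure signal function $g$): this gives $I_{(turb)}=\int_{M}\int_{A}\rho_{g}\ge 0$, and, since $\rho_{g}$ is a Fisher information density and $\partial_{g}g$ is subject to the same reality condition imposed earlier on a mass distribution, the pointwise statement $R_{\vec{\tau}}(m)\ge 0$ — the fractal dimension is everywhere non-negative — with equality in the global inequality exactly when $R_{\vec{\tau}}\equiv 0$. Thus any information-minimising, stable configuration must drive $I_{(turb)}$ to its floor, i.e. to $0$, i.e. to identically vanishing fractal dimension. As an independent confirmation via the variational route: the superstructure $g$ carries no bound information (there is no mass distribution coupled to it, so the analogue of the term $B$ of Chapter~8 is zero), whence the variation of $I_{(turb)}$ with respect to $\vec{\tau}^{-1}$ is, exactly as in the computation behind Claims~3 and~4 of Chapter~8, $\delta I_{(turb)}=\int_{M}\bigl(R_{ij}(\vec{\tau})-\tfrac12\vec{\tau}_{ij}R_{\vec{\tau}}\bigr)(\det\vec{\tau})^{1/2}\,\delta\vec{\tau}^{-1}$, so that criticality forces $G_{ij}(\vec{\tau})=0$; contracting with $\vec{\tau}^{ij}$ and writing $n=\dim M$ gives $\bigl(1-\tfrac n2\bigr)R_{\vec{\tau}}=0$, hence $R_{\vec{\tau}}\equiv 0$ for $n\ge 3$ — in particular for the high- or infinite-dimensional base spaces we have in mind. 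The degenerate cases $n=1,2$, where this contraction collapses, are handled directly by the minimality statement $I_{(turb)}\ge 0$, $I_{(turb)}=0\iff R_{\vec{\tau}}\equiv0$, which needs no contraction.

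The hard part will be the foundational bookkeeping rather than the variational computation. One must make rigorous the identification of $R_{\vec{\tau}}(m)$ with the fractal dimension — that is, give a precise meaning to $\partial^{R_{\vec{\tau}}}$ when the order $R_{\vec{\tau}}$ is a position-dependent real field rather than a constant — and justify the attendant fractional-calculus manipulations: smooth dependence of $\partial^{r}=e^{r\log\partial}$ on $r$, well-definedness in the presence of the boundary data that fractional integration inevitably drags in, and the interchange of the order-variation with the base integral. A second gap is the discarding of boundary terms in $\delta I_{(turb)}$, which is precisely the content of the holographic-principle conjecture of Chapter~7, assumed but not proved there. Once those points are granted, the remainder is a routine adaptation of the sharp-manifold calculations already carried out in Chapter~8, together with the positivity built into the Cramer--Rao bound, which is what upgrades ``critical'' to ``genuinely minimal'' and so justifies the phrase ``as small as possible''.
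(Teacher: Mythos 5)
Your argument extremizes the \emph{wrong functional}. The theorem is about a critical configuration of the physical information $I_{(\mathrm{phys})} = \int_M (\partial\sigma^* R_\sigma ; R_\tau)\,dm$, which depends jointly on $\sigma$ and $\tau$. The paper's proof varies this action with respect to $\tau$ and then (via the half-derivative operator) with respect to $\sigma^*$, obtaining the two conditions $(\partial\sigma^* R_\sigma ; R_\tau) = 0$ and $(\partial\sigma^* R_\sigma ; \partial\sigma^* R_\tau) = 0$, and then argues on fractional-calculus grounds that both can hold simultaneously only if $\partial\sigma^* R_\tau = R_\tau$, i.e. $\partial\sigma^*$ is the identity. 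You instead argue that the turbulent information $I_{(\mathrm{turb})} = \int_M R_\tau$ must be driven to its floor and, separately, that $\delta I_{(\mathrm{turb})} = 0$ gives vacuum Einstein equations for $\tau$, hence $R_\tau = 0$. But nothing in the extremal problem forces $I_{(\mathrm{turb})}$ to be critical or minimal: in Section~9.2 the paper explicitly raised $I_{(\mathrm{turb})}$-criticality as a conjectured extra hypothesis and then withdrew it (``In fact, it turns out we do not need this additional condition''). Minimizing $I_{(\mathrm{phys})}$ over $(\sigma,\tau)$ jointly does not automatically minimize the separate functional $I_{(\mathrm{turb})}$ over $\tau$ alone, so your step ``thus any information-minimising configuration must drive $I_{(\mathrm{turb})}$ to its floor'' is a non-sequitur.

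There is also a smaller problem with your pointwise bound: the turbulent Cramer--Rao inequality in the text is a global inequality $\int_M\int_A \rho \ge 0$; it does not upgrade to a pointwise statement $R_{\vec\tau}(m) \ge 0$, and indeed for the Riemann--Cartan or pseudo-Riemannian metrics admitted here the scalar curvature $R_{\vec\tau}$ has no sign constraint. What you do get right is the identification of $R_{\vec\tau}$ with the order of the fractional derivative, and the observation that a dimension should be non-negative so ``as small as possible'' means zero; but to complete the argument you would have to reproduce the paper's manipulation of both Euler--Lagrange conditions for $I_{(\mathrm{phys})}$, rather than importing an independent extremality of $I_{(\mathrm{turb})}$ that the framework does not supply.
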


\begin{proof} (sketch). Recall that the action is of the form

\begin{center} $\int_{M}(\partial \sigma^{*}R_{\sigma} ; R_{\tau})$ \end{center}

For this to be critical we require both the derivatives wrt $\tau$ and $\sigma$ to vanish.  Recalling the definition of derivative, we have

\begin{center} $\partial^{f_{\tau}} := \Sigma_{n=0}^{\infty}\frac{f^{(n)}_{\tau}(x)\partial^{n}}{n!}$ \end{center}

hence

\begin{center} $\frac{\partial}{\partial \tau}\partial^{f_{\tau}} := \Sigma_{n=0}^{\infty}\frac{\frac{\partial f^{(n)}_{\tau}(x)}{\partial \tau}\partial^{n}}{n!} = \partial^{\frac{\partial f}{\partial \tau}}$ \end{center}

Consequently requiring the derivative wrt $\tau$ to vanish is equivalent to requiring

\begin{center} $(\partial \sigma^{*}R_{\sigma} ; R_{\tau}) = 0$ \end{center}

Differentiating wrt $\sigma$ is not entirely a good idea, but differentiating half with respect to $\sigma$, then half with respect to $\sigma^{-1}$, ie applying the operator $\partial \sigma^{*}$, is a better idea and clearly equivalent.

Via the same trick we have

\begin{center} $\partial \partial^{f}g = \Sigma \frac{f^{(n)}\partial^{n + 1}g}{n!} = \partial^{\partial f}g$ \end{center}

So differentiating wrt $\sigma^{*}$ and setting to zero produces the consequential relation

\begin{center} $(\partial \sigma^{*}R_{\sigma} ; \partial \sigma^{*}R_{\tau}) = 0$ \end{center}

But the fractional derivatives of a function can only both be zero if the orders of the derivative are both the same.  So

\begin{center} $\partial \sigma^{*} R_{\tau} = R_{\tau}$ \end{center}

But this is impossible unless $\partial \sigma^{*}$ is the identity operator, which will occur only if $\sigma = 0$.  So the optimal dimension is zero as required.
\end{proof}

Now, in a sharp first order turbulent geometry, it turns out that the concept of index generalises in a nice way:

\begin{lem} (Turbulent index). The index of a sharp turbulent geometry $(\partial \sigma^{*}\delta(\sigma(m) - a) ; \delta(\tau(m) - b))$ takes the form

\begin{center} $ind(\partial \sigma^{*}\sigma ; \tau) = ind(\sigma)^{ind(\tau)}$ \end{center}

where $ind(\sigma)$ is the weighted index of the metric $\sigma$.
\end{lem}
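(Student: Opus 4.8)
The plan is to reduce the statement to the corresponding multiplicative law for the \emph{dimension} of a sharp turbulent geometry, namely $dim(\partial\sigma^{*}\sigma ; \tau) = dim(\sigma)^{dim(\tau)}$, and then to carry that law out eigenvalue by eigenvalue, restricting attention at each stage to the negative-definite part. First I would fix a point $m \in M$ and choose a local frame simultaneously diagonalising $\sigma(m)$ and $\tau(m)$; since each is a symmetric bilinear form on $T_{m}M$ and the geometry is sharp there is no averaging over $A$ to obstruct this, so the frame is genuinely pointwise. Write $\sigma(m) = diag(\lambda_{1},\dots,\lambda_{n})$ and $\tau(m) = diag(\mu_{1},\dots,\mu_{n})$, and let $p = ind(\sigma)$, $q = ind(\tau)$ count the negative entries, with the weighting reducing to the ordinary count in the integer sharp case.

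Next I would unpack $\partial\sigma^{*}$ acting on $\sigma$ in the turbulent sense, using the construction of $\partial^{r}$ from the preliminaries: when the order of differentiation is itself a point $r = (r_{1},\dots,r_{n})$ of an iterated function space $R^{r_{1}}\!\to\!R^{r_{2}}\!\to\!\cdots\!\to\!R^{r_{n}}$, the resulting object lives in $R^{r}$, and such iterated function spaces carry cardinality $r_{n}^{\,r_{n-1}^{\,\cdots}}$. Feeding the base order from $\sigma$ and the stacking order from $\tau$, the turbulent metric $(\partial\sigma^{*}\sigma ; \tau)$ is fibrewise a $\tau$-indexed family of copies of the $\sigma$-structure, its directions parametrised by maps from the eigenbasis of $\tau$ into the eigenbasis of $\sigma$; this already gives $dim(\partial\sigma^{*}\sigma;\tau) = n^{n}$. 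The content of the lemma is the refinement: a direction of $(\partial\sigma^{*}\sigma;\tau)$ is negative (turbulently timelike) precisely when the parametrising map sends every negative eigendirection of $\tau$ into a negative eigendirection of $\sigma$ — equivalently, when the exponentiated eigenvalue $\lambda^{\mu}$ has negative real part under the branch fixed by the correspondence principle. Counting such maps yields $p^{q}$, i.e. $ind(\sigma)^{ind(\tau)}$.

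I would then dispatch the two compatibility checks that make this count well posed. First, the negative part of $(\partial\sigma^{*}\sigma;\tau)$ must itself be a sharp turbulent geometry, with base the negative part of $\sigma$ and stacking the negative part of $\tau$; here I would invoke the correspondence principle to identify the turbulent geometry with an ordinary complexified statistical geometry whose index is already defined, and Turbulent Stokes (for sharp turbulent manifolds) to show the count does not depend on the diagonalising frame. Second, the \emph{weighted} version: when the index is allowed to be non-integer via the fractional extension of $\partial^{r}$ to real $r$, the same argument goes through with the relevant sums replaced by their $\Gamma$-function regularisations, so that $ind(\sigma)^{ind(\tau)}$ is read as a genuine real power.

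The hard part will be the first compatibility check: pinning down a frame-independent notion of "the negative subspace" for these iterated-function-space-valued bilinear forms, so that the complex powers $\lambda^{\mu}$ are unambiguous and no direction crosses the null cone under a change of coarse-graining. This rests on the correspondence principle to import an already-defined index, plus a short monotonicity argument in the spirit of the Cencov-type inequality mentioned earlier showing that coarse-graining does not move directions across the cone; once that is in place, everything else is bookkeeping with the exponential cardinality law for function spaces.
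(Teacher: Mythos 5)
Your proposal and the paper's own argument diverge in a substantial way, and yours has a gap that the paper's does not (though, to be clear, the paper's argument is itself only a sketch and not especially rigorous). The paper does not count negative eigendirections or enumerate maps between eigenbases at all. Instead it works at the level of matrix functional calculus: it writes $A = \ln\hat A$, so $\hat A = e^A$, and observes that $\hat A^B = e^{A\otimes B}$. It then passes to a logarithmic index $\widehat{ind} := \log\circ ind\circ\exp$, uses the multiplicativity $\widehat{ind}(A\otimes B) = \widehat{ind}(A)\cdot ind(B)$ inherited from $ind(A\otimes B) = ind(A)\cdot ind(B)$, and exponentiates back to get $ind(\hat A^B) = ind(\hat A)^{ind(B)}$. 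The turbulent exponentiation is thus reduced to a tensor-product computation via $\exp$ and $\log$, and the ``weighted index'' is exactly the quantity that makes this conjugation well defined.

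The gap in your version is the combinatorial claim that a direction of $(\partial\sigma^*\sigma;\tau)$ is negative exactly when the parametrising map sends every negative eigendirection of $\tau$ to a negative eigendirection of $\sigma$, yielding $p^q$ such maps. This does not follow from the sign structure of $\lambda^\mu$: for real $\lambda<0$ and real $\mu$, the value $\lambda^\mu$ is generically complex and its real part oscillates in sign with $\mu$; and for $\lambda>0$, $\mu<0$ one still gets $\lambda^\mu>0$, so ``all negatives to negatives'' is neither necessary nor sufficient for negativity under any fixed branch. The paper sidesteps this entirely by never branching the scalar power — it stays with $e^{A\otimes B}$, whose index is controlled by $A\otimes B$, and the sign bookkeeping is done once, at the level of the tensor product, where $ind(A\otimes B)=ind(A)\cdot ind(B)$ is an honest linear-algebra fact. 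Your count of maps gives the correct final number $p^q$, but the justification offered for why exactly those directions are negative would need to be replaced by something like the paper's $\exp$–$\log$ conjugation (or a case analysis restricted to a principal branch, which you would then have to show is the one picked out by the correspondence principle). Your invocation of a Cencov-type monotonicity argument for frame independence is also not in the paper and would itself require proof.

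Finally, note a smaller mismatch: you also assert $dim(\partial\sigma^*\sigma;\tau)=n^n$ as a warm-up. The paper makes no such dimension claim, and it does not follow from the lemma; the exponential law here is specifically for the index, and is proven through $\hat A^B=e^{A\otimes B}$, not through a cardinality count of an iterated function space.
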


\begin{proof} (sketch). This basically follows from the fact that the weighted index of $A^{B}$, where $A$, $B$ are both matrices, is equal to the index of $A$ raised to the index of $B$.  For if we denote $A = ln \hat{A}$ to be the logarithm of $\hat{A}$, ie such that $\hat{A} = e^{A} = \Sigma \frac{A^{n}}{n!}$, we get $\hat{A}^{B} = e^{A \otimes B} = \Sigma \frac{(A \otimes B)^{n}}{n!}$.  Now the index of $A \otimes B$ is the index of $A$ times the index of $B$.  Write $\hat{ind} := log \circ ind \circ exp$. Then similarly $\hat{ind}(A \otimes B) = \hat{ind}(A)ind(B)$.  Hence the index of $e^{A \otimes B}$ will be the exponential of this index, ie $e^{\hat{ind}(A)ind(B)}$.  But the index of $\hat{A}$ is $e^{\hat{ind}(A)}$ ie $ind(e^{A}) = e^{\hat{ind}(A)}$; hence the index of $\hat{A}^{B}$ is $ind(\hat{A})^{ind(B)}$. \end{proof}

\begin{thm} (Index Lemma).  In a nontrivial turbulent geometry with almost sharp geometry on the base, and sharp geometry on the first and second levels, the Cramer-Rao inequality implies the turbulent index must be greater than or equal to one; in particular, when the information is critical, the index must be one. \end{thm}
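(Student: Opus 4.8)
The plan is to reduce the assertion to the turbulent Cramer--Rao inequality of the present chapter together with the Turbulent index lemma above and the dimension--minimisation theorem of the previous one. First I would fix the three--tier data: let $\sigma$ be the almost sharp base metric, carrying the signal function $(1 + \epsilon\Delta_{\sigma} + \epsilon^{2}\Delta_{\sigma}^{2}/2! + \cdots)\delta(\sigma(m) - a)$, let $\tau$ carry the sharp first--level signal function $\delta(\tau(m,n) - b)$, and let $\kappa$ carry the sharp second--level signal function $\delta(\kappa(m,n,p) - c)$. Using the correspondence principle and the stacking construction of the section on statistical stacks, the induced meta signal function $S = (((1 + \epsilon\Delta_{\sigma} + \cdots)\delta(\sigma - a)) ; \delta(\tau - b)) ; \delta(\kappa - c)$ produces a physical information of the schematic form $I_{\mathrm{phys}} = \int_{M}(1 + \epsilon\Delta + \cdots)(\partial\sigma^{*}R_{(\sigma;\tau;\kappa)} ; \cdots)$, whose expansion in $\epsilon$ has leading term built from $R_{\sigma}$ stacked over $\tau$ and $\kappa$ and whose next term is built from the fourth--order invariant $R_{(2)}$. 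Since the geometry is \emph{nontrivial} this $R_{(2)}$ term does not vanish, and by the dimension--minimisation theorem this already forces the base to carry at least four dimensions.

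Next I would invoke the Turbulent EPI Principle proven above (equivalently, the Cramer--Rao inequality for a turbulent stack), which gives $I_{\mathrm{phys}} \geq 0$, with equality precisely when the estimator is a weak turbulent maximum likelihood estimator, i.e. precisely when the information is critical. The content of the lemma is then extracted by writing $I_{\mathrm{phys}}$ as an integral of the turbulent information density $\norm{\partial_{\Lambda}S}^{2}_{\Lambda}/S$, where the norm $\norm{\cdot}_{\Lambda}$ is taken with respect to the combined bilinear form on the stack. By the Turbulent index lemma applied twice, the weighted index of that combined form is $ind(\sigma)^{\,ind(\tau)^{\,ind(\kappa)}}$. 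For the density to be real and nonnegative pointwise --- the reality, or causality, condition demanded of every mass distribution and every physical estimator --- the estimator $\partial_{\Lambda}S$ must be timelike for the combined form; and such a bilinear form admits a consistently timelike estimator compatible with a nonvanishing $R_{(2)}$ term only when its index is at least one. This yields $ind(\sigma)^{\,ind(\tau)^{\,ind(\kappa)}} \geq 1$, the first half of the statement.

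For the criticality half I would argue as in the proof of the weak Cramer--Rao inequality: at a critical point the maximum likelihood estimator realises the lower bound, so the Fisher information tensor contribution integrates to zero (via $\mathrm{grad}\circ\mathrm{curl} = 0$) and $I_{\mathrm{phys}} = 0$. Feeding $I_{\mathrm{phys}} = 0$ back in, the timelike estimator is forced to be null in the limiting sense, which squeezes the admissible index down to its smallest nontrivial value; combined with the dimension--minimisation principle --- the geometry organises itself with as few dimensions, and as small an index, as the nonvanishing of $R_{(2)}$ permits --- this forces $ind(\sigma)^{\,ind(\tau)^{\,ind(\kappa)}} = 1$, and hence, the tower being nontrivial, $ind(\sigma) = 1$ with the exponent acting as a unit.

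The step I expect to be the main obstacle is the passage from ``$I_{\mathrm{phys}} \geq 0$ as a functional'' to ``the combined bilinear form has index at least one'': this requires showing that no index--zero (purely Riemannian) stack can simultaneously support a nontrivial perturbative solution with $R_{(2)} \neq 0$ and keep the bound information $J$ from collapsing, which is exactly where the reality condition on the estimator and the four--dimensional lower bound must be made to interact quantitatively rather than heuristically. A secondary difficulty is handling the iterated exponential $ind(\sigma)^{\,ind(\tau)^{\,ind(\kappa)}}$ carefully enough to separate the genuinely nontrivial towers from the degenerate ones in which some upper level has index zero, where the turbulent index collapses to $1$ for trivial reasons and the lemma would be vacuous.
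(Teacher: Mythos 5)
Your proposal diverges substantially from the paper's argument, and the divergence is not merely a matter of style: you end up with a gap that the paper's route closes cleanly. The paper's proof has essentially two moves. First, it invokes the minimisation-of-dimension theorem to conclude that at a critical point the first-level turbulence metric $\tau$ is driven to zero, so $ind(\tau) = 0$. Second, it feeds this into the turbulent index lemma $ind(\partial\sigma^{*}\sigma;\tau) = ind(\sigma)^{ind(\tau)}$ and observes that since the base is nontrivial ($\sigma \neq 0$, because the perturbative terms obstruct full collapse) the exponential $ind(\sigma)^{0}$ is well defined and equals $1$. That is the whole argument: the index is \emph{computed} to be $1$ at criticality by the algebra of exponentiation, not bounded below by a causality condition and then squeezed.

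Your proposal replaces this with a two-stage causal argument: (a) the reality condition on the estimator forces the combined form to admit a timelike direction, hence index $\geq 1$; and (b) at criticality the estimator becomes ``null in the limiting sense'' which squeezes the index to exactly $1$. Both steps have trouble. For (a), you yourself flag the gap --- you would need to show quantitatively that a purely Riemannian stack cannot support a nontrivial solution with $R_{(2)} \neq 0$ while keeping $J$ finite, and nothing in the chapter supplies that estimate. For (b), ``null in the limit'' does not pin the index to $1$ --- an index-$2$ form also admits null vectors, so this step as stated is not a squeeze at all. You also write the turbulent index as the triple tower $ind(\sigma)^{ind(\tau)^{ind(\kappa)}}$ for the three tiers, but the paper uses the single exponential $ind(\sigma)^{ind(\tau)}$; the role of the sharp second level is only to make the dimension-minimisation theorem bite at the first level (forcing $\tau = 0$), not to contribute another factor to the index formula. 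What you are missing, in short, is the observation that the dimension-minimisation step does the real work: it collapses $ind(\tau)$ to zero, after which $ind(\sigma)^{0} = 1$ is immediate, with no need for the causality machinery at all.
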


\begin{proof} (sketch).  Via the proof of minimisation of dimension, in a sharp second order turbulent geometry we must have that the dimension for the base and first levels must be as small as possible.  In particular at the first level the geometry of the fractal measure will be trivial when optimised.

Now, by the lemma above, we then must have that the index of the geometry to first order is $ind(\sigma)^{ind(\tau)}$.  But by the above remarks $\tau = 0$, and so $ind(\tau) = 0$.  Since we are assuming that the geometry of the base is nontrivial (in particular, since we are assuming perturbative effects, which pose an obstruction to minimisation), we have that $\sigma \neq 0$.  Hence the index is well defined and must be one.  \end{proof}

It is then relatively easy to see that these results should generalise; in particular it seems clear that as a consequence of these two results that an optimal and stable almost sharp geometry must be Lorentzian.

\subsection{Entanglement physics}

Recall from before I mentioned that the simplest cartoon model for entanglement physics, or action at a distance in the base geometry $M$, is probably described by criticality of the action

\begin{center} $\int_{M}(\partial \sigma^{*}R_{\sigma} ; \partial \tau^{*}R_{\tau} ; R_{\kappa})$ \end{center}

Here I am assuming "scalar turbulence", or merely fractal dynamics.

Now, it is of interest to discuss how this all relates to the issues raised in the EPR paper of the mid 1930s \cite{[EPR]}, which were discussed further in Bell's paper of the mid 1960s \cite{[Be]}.  I will make an attempt to do this now.

In \cite{[EPR]}, it was demonstrated that the separate assumptions that quantum mechanics was a physically complete theory, together with the idea that separate noncommuting observables could not have simultaneous existence, were incompatible.  This was achieved by examining the states of two particles which interacted only from time $0$ to time $T$, and showing that, after this time, the observation of the state of one could arbitrarily determine the state of the other.  In particular this demonstrated that a contradiction could be constructed wherein it was possible to violate the second assumption, that two noncommuting observables could have independent reality in the second state.

This motivated the development of hidden variable models, such as the Bohmian mechanics of Aharonov and Bohm \cite{[Bo1]}, \cite{[Bo2]} in the 1950s, in an effort to restore to quantum mechanics causality and locality.  But these theories all suffered from various problems.  In \cite{[Be]}, Bell argued that it was impossible to construct a theory, consistent with the wave function interpretation of quantum mechanics that did not involve entanglement, or nonlocal effects.  In order to show this he constructed a famous inequality, now known (unsurprisingly) as the Bell Inequality.

Of course, the theory which I have laboriously constructed is, in and of itself, riddled through and through with hidden variables.  So in order to continue it becomes necessary to address Bell's arguments, and examine the assumptions upon which he reached his conclusions.

Certainly in the course of this chapter and indeed over the course of this treatise I have demonstrated that it is possible to construct higher order geometric objects, which transcend limiting oneself to examining the geometry of the base "physical" space.  This allows mechanisms via which information may transit smoothly between points which in a simpler geometry would be far apart.  For instance, consider naively the example of a manifold, and consider two points which with respect to one metric $\sigma_{0}$ are a distance $L$ apart.  But we may consider a family of metrics $\sigma_{t}$ such that the distance between these two points with respect to $\sigma_{t}$ is $Le^{-t}$.  Then clearly in the limit as $t$ becomes large and positive the distance will drop to zero.

We may now construct as in chapter 7 a distribution over this one parameter family of metrics.  So we have a probabilistic theory with hidden variables.  Via Bell's paper our theory is still nonlocal.  But we may make a further extension of the class of objects we are dealing with, to consider what I describe in this chapter as  turbulent geometry.  Then via the correspondence principle our statistical geometry becomes a sharp first order turbulent geometry, a hidden variable model which nonetheless provides us with a local description, and furthermore wherein at least part of the information - the part corresponding to the limit of the sequence of metrics $\sigma_{t}$ - is mediated between the two points instantaneously.

Furthermore, if one considers the geometry to be \emph{almost sharp}, in the sense I defined earlier in chapter 7, then one recovers standard quantum mechanical effects (as in chapter 8, section 3.5).

This certainly seems to provide a constructive contradiction with the thesis of \cite{[Be]}.  So some discussion is definitely warranted.

The crux of the matter, it seems to me, is what definition Bell used or had in mind for what he thought of as a hidden variable model.  Looking through his paper, and reading between the lines, I extract the following definition by implication:

\begin{dfn} A hidden variable model (in the sense of Bell) is a model that

\begin{itemize}
\item[(i)] is consistent with the wave function interpretation of quantum mechanics,
\item[(ii)] that makes use of variables that are "hidden" in the sense that although they impact on the process of measurement of physical parameters, they are not necessarily measurable themselves.
\end{itemize}

Furthermore it is a model that satisfies the standard wave function interpretation, ie, one has a wave function $\psi(m,n)$, where $m$ are the coordinates in a chart corresponding to the standard format of quantum mechanics, and $n$ are the coordinates in a chart corresponding to the hidden variables.

An observable in this model is an operator $A$ that acts on this wave function in the following manner:

\begin{center}
$A \vert \psi > := \int_{N}A(n,m)\psi(m,n)dn$
\end{center}

The expectation of $A$ is then

\begin{center} $E(A) := \int_{M}A \vert \psi >(m)$ \end{center}

There are no observables for the space $N$; it is $M$ that in this interpretation is the space in which physics occurs.
\end{dfn}

This is certainly quite restrictive!  And, indeed by inspection it does not capture the full generality of the mathematics which I have developed in the course of this project; indeed, it fails to even capture the spirit of the statistical geometry; it is similar to trying to understand the full generality of statistical geometry by merely looking at the behaviour of almost sharp signal functions.  Indeed, it is clear the following is true:

\begin{lem} A hidden variable model (in the sense of Bell) is equivalent to the standard product of an almost sharp geometry $K$ with some other form of topological space $T$.  Furthermore, this is a product of the simplest type: if $m \in K$ and $n \in T$, then the corresponding hidden variable model (in the sense of Bell) will have corresponding coordinates $(m,n)$.  \end{lem}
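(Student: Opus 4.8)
The plan is to establish the equivalence by exhibiting explicit maps in both directions between Bell-type hidden variable models and pairs $(K,T)$, where $K$ is an almost sharp geometry in the sense of the section on almost sharp signal functions and $T$ is an arbitrary topological space, and then checking that these maps are mutually inverse and that they respect the relevant structure (wave functions, observables, expectations). First I would unpack the two definitions side by side. A Bell hidden variable model, as defined immediately above, consists of a configuration space split as $M \times N$, a wave function $\psi(m,n)$, a class of observables $A$ acting by $A|\psi\rangle(m) = \int_N A(n,m)\psi(m,n)\,dn$, and the expectation $E(A) = \int_M A|\psi\rangle(m)$, with the stipulation that $N$ carries no observables. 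An almost sharp geometry is a statistical manifold $(K,\mathcal{A},f)$ whose signal function is concentrated in a narrow tube about a cross section $\sigma:K\to\mathcal{A}$, i.e. $f(m,a) = (1 + \epsilon\Delta_\sigma + \tfrac{\epsilon^2}{2!}\Delta_\sigma^2 + \cdots)\delta(\sigma(m)-a)$ to the relevant order, with associated mass distribution $\bar\psi$. The claim is that the data of a Bell model is precisely the data $(K,\mathcal{A},f)\times T$ with the trivial product structure $(m,n)$.

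Then I would construct the forward map: given a Bell model on $M\times N$, set $K = M$ with the almost sharp structure whose mass distribution $\bar\psi$ is the given wave function read fibrewise (or its fibre integral over $N$), identify the hidden coordinates $n$ with points of $T := N$ equipped with its given topology and nothing more, and observe that the Bell observable $A(n,m)$ together with its expectation coincides with the almost sharp expectation operator $E_m$ from the Cramer--Rao discussion once $\int_N dn$ is read as integration over the probability indexing space. Conversely, given $(K,T)$, the almost sharp signal function $f$ furnishes a wave function $\bar\psi$ on $K$, and since $T$ carries no geometric structure it contributes only as a bare index set $N := T$; the product is then of the simplest type because neither factor induces a metric coupling on the other — this is exactly where the almost sharp hypothesis (narrowness, so off-block and cross terms are $O(\epsilon^2)$ and droppable) and the ``no observables on $N$'' hypothesis are both used.

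The main obstacle I expect is the step showing that the product must be \emph{trivial} in coordinates, with no nontrivial fibration or twisting. In general a statistical stack over $K$ indexed by $T$, in the sense of the statistical stacks section, induces for each $n\in T$ a reparametrised metric $\bar\tau_n$ on $K$, so the honest coordinate-wise product $K\times T$ is only a special case. I would argue that Bell's definition, by forbidding observables on $N$ and by writing the wave function simply as $\psi(m,n)$ with a plain integral $\int_N dn$ in the observable action, amounts precisely to demanding that this induced family be constant in $n$ — equivalently that the stack is the trivial stack — whence the only models Bell's framework can express are the coordinate-wise products. A secondary point requiring care is matching the quantum mechanical content: I would invoke the recovery of the Klein--Gordon and Schr\"odinger equations from almost sharp manifolds (the section ``Physics for almost sharp manifolds'' and its subsection on classical quantum mechanics) to confirm that $K$ genuinely realises ``the wave function interpretation of quantum mechanics'', so that condition (i) of Bell's definition is satisfied by $K$ itself and not merely by some larger statistical object. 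Once these two points are settled, the mutual inverseness of the two maps, and hence the asserted equivalence, follows by a routine bookkeeping check of the coordinates.
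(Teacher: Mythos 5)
Your proposal is consistent with the paper's argument but is considerably more detailed than it, because the paper's ``proof'' here is a one-sentence sketch: it simply asserts that the result follows from the use of a wave function description together with the definition of measurement in a Bell-type model. You take that same starting point and actually try to do the bookkeeping: you build explicit forward and backward correspondences between Bell models $(M\times N,\psi,A,E)$ and pairs $(K,T)$, you isolate the genuine content of the claim (that the product must be the \emph{trivial} coordinate-wise product, not a stack or fibration over $K$), and you identify the specific hypotheses that force it — the prohibition on observables over $N$ and the bare $\int_N dn$ integration in the observable action, which together preclude any $n$-dependent deformation $\bar\tau_n$ of the geometry on $K$. You also notice the need to verify that $K$ alone carries the wave function interpretation, and you invoke the recovery of the Klein--Gordon and Schr\"odinger equations from almost sharp manifolds for this. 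All of that is faithful to what a real proof of the lemma would require, and none of it contradicts the paper.

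One small caution worth flagging: in the forward direction you hedge between reading $\psi(m,n)$ ``fibrewise'' over $N$ and taking its fibre integral when defining the mass distribution $\bar\psi$ on $K$, and more importantly you do not address whether an arbitrary Bell wave function on $M$ actually arises from an almost sharp signal function of the prescribed form $(1+\epsilon\Delta_\sigma+\tfrac{\epsilon^2}{2}\Delta_\sigma^2+\cdots)\delta(\sigma(m)-a)$ for some $\sigma$ and $\epsilon$. The lemma as stated asserts an equivalence, so surjectivity of the backward map onto all Bell models (up to the relevant identifications) is part of the claim, and this step is not fully discharged by your sketch. Given that the paper itself offers no more than a one-line assertion, this is a gap in the source as much as in your proposal; but if you wanted your argument to stand alone you would need either to argue that every $L^2$ wave function on $M$ can be realised this way in the almost sharp regime, or to weaken the equivalence to an embedding of the Bell models into the class of products $K\times T$.
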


\begin{proof} (sketch).  This follows from the fact that one is still using a wave function description, and the definition of measurement of an observable in a hidden variable model.
\end{proof}

An example of such a model would be for instance a differentiable manifold $M$ producted with the space of all possible metrics $\sigma$ on $M$.  However, this is quite a different thing from what I have been advocating in terms of the statistical geometry; whereas I was dealing with a similar sort of product on a local level, the statistics was imbedded implicitly in the computation of geodesic paths induced by local descriptors I called signal functions, rather than explicitly via a naive probabilistic distribution akin to the idea of many "parallel universes".

This distinction is crucial, for it is absolutely necessary that different inner products should be able to interact locally, rather than artifically enjoying a disjoint and distinct existence from their peers.  Another way of putting this, is that the naive model I described above, of $M$ producted with the space of all possible metrics, has a \emph{trivial}, and \emph{maximally disconnected} topology, whereas the model I have been advocating has a \emph{maximally connected} topology.  The hidden variable models (in the sense of Bell) are I believe of the first type.

In particular, there is hence no reason why the Bell inequality should extend to the more general objects I have constructed - since they are not hidden variable theories in the sense above, and, in certain respects, are quite the opposite.

This (hopefully) suggests that the EPR paradox is satisfactorially resolved by the above theoretical considerations.  Furthermore, since the resolution is constructive, we now have an action (as above) which may be studied to provide further insights into the mediation of so-called entanglement effects.  This is a promising direction for further research.





\chapter{Number theory from information geometry}

In this chapter I will give a number of results from number theory which can be derived using the methods I have outlined in preceding chapters.  In a sense, many of the results I will sketch here serve as "toy examples" for the physics associated to the same mathematical models.  The first section requires nothing more than a detailed understanding of the statistical calculus for statistical manifolds.  The second needs a basic understanding of statistical stacks and of course of statistical manifolds.  As for the final result, it requires the tools of turbulent geometry.

As a minor remark, for the special case of restricting to criticality of subsets of the real numbers, and considering their analytic extensions, it turns out that the theory of turbulent stacks and the theory of turbulent geometry become more or less equivalent.  This is mainly because the statistical derivative becomes trivial through experiencing cancellation; the sections over the space of inner products do not contribute to the dynamics.  However if one were to consider more general results pertaining to tuples of complex numbers, these techniques split apart again and develop their own individualistic character.

Of course there are many, many more results and variations on such that can be derived using these tools, methods, and general philosophy.  Such a treatment is beyond the scope of this work - the main purpose of this chapter is not to catalogue all potential results that can be demonstrated using the machinery of information geometry, but rather to demonstrate its power and capacity.  Indeed, it is my suspicion that it would be impossible to succeed in the former task, since I am of the opinion that the well of truth accessible after this fashion to be inexhaustible.

\section{The first statistical moment of the complex numbers}

This is an adaptation of a paper which I wrote on this subject, titled "On the criticality of the prime numbers and a conjecture of Riemann".

\subsection{Introduction}

Broadly speaking, the prime numbers are defined to be the minimal set that contains maximal information about the natural numbers, ie the minimal set that generates the natural numbers via multiplication.

I have already introduced the concept of a statistical manifold and developed a variational principle which essentially amounts to computing critical points of an information like quantity, called the Fisher information, on the statistical manifold.

Motivated by these two similar properties, it seems natural that many famous conjectures and problems in number theory should be translatable into this language.  In this short note I indicate how it is possible to translate a famous conjecture of Riemann's into this context, and sketch how it might be possible to prove it using these new tools.

This conjecture is:

\begin{conj} (Conjecture A). The class of functions $g_{\{f_{n}\}_{n \in N}}(z) = \sum_{n \in N}\frac{f_{n}}{n^{z}}$, where $\{ f_{n}\} \in l^{\infty}(C)$  have no poles to the right of the critical line $Re(z) = 1/2$.  
\end{conj}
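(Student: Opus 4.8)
The plan is to realise the Dirichlet series $g_{\{f_{n}\}}$ as the signal-function amplitude of a statistical manifold built over a vertical strip of $C$, and then to read off pole-freeness from the non-negativity of the associated Fisher information --- that is, from the Cramer--Rao inequality of the preceding chapter. Since the base is one-dimensional the space of inner products $A$ is just $R_{\geq 0}$, and, as observed in the discussion of the general statistical derivative, $\partial_{\Lambda}$ degenerates to $\int_{A}F(m,c)\bigl(\tfrac{\partial}{\partial m}+\tfrac{\partial}{\partial c}\bigr)dc$; the sections over $A$ then carry no dynamics, which is exactly why criticality problems restricted to $C$ become manageable. To $\{f_{n}\}\in l^{\infty}(C)$ I would attach the sharp signal function $f(z,a)=\int_{A}F(z,b)\,\delta(\sigma_{b}(z)-a)\,db$ with scalar ``metric'' components $\sigma_{n}(z)=n^{-z}$ and the weight $F$ normalised so that, writing $q^{2}=f$ as in the definition of the channel information $I$, the amplitude $q$ reproduces $g_{\{f_{n}\}}(z)=\sum_{n}f_{n}n^{-z}$ on its half-plane of absolute convergence $\mathrm{Re}(z)>1$.

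First I would compute $I$ for this structure, using the sharp-case fact established earlier that $\Delta_{\sigma}$ applied to the volume element returns the scalar curvature (more generally $\Delta^{k}_{\sigma}(\det\sigma)^{1/2}=R_{(k)}(\det\sigma)^{1/2}$) together with the observation that here the relevant second derivative is $\partial^{2}(n^{-z})=(\log n)^{2}n^{-z}$. The information density then comes out comparable, up to logarithmic factors, to $\sum_{n}|f_{n}|^{2}n^{-2\,\mathrm{Re}(z)}$, which is finite precisely when $2\,\mathrm{Re}(z)>1$, i.e. strictly to the right of the line $\mathrm{Re}(z)=1/2$. This is the structural source of the number $1/2$: it is the abscissa at which the square-rooted object $q=\sqrt{f}$ ceases to have finite Fisher information, the exponent being halved because $I$ is quadratic in $q$ rather than in $f$. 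The ``first statistical moment'' of the section title is precisely this passage from $f$ to $q$.

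Next I would invoke the EPI principle in the Riemann--Cartan form proved earlier: for a physical estimator the Fisher-information-matrix term vanishes by $\mathrm{grad}\circ\mathrm{curl}=0$, so $I(f)=\int_{M}\int_{A}\norm{\partial_{\Lambda}f}^{2}/f\geq 0$, with equality attained critically at a maximum-likelihood estimator. The payoff is that finiteness of $I$ on the open half-plane $\mathrm{Re}(z)>1/2$ forces the amplitude $q$, hence $g_{\{f_{n}\}}$, to extend holomorphically there: a pole of $g$ at $z_{0}$ with $\mathrm{Re}(z_{0})>1/2$ would render $|q|^{2}$, and hence the information density, non-integrable on every neighbourhood of $z_{0}$, contradicting the bound. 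The Cencov uniqueness theorem is needed at this point to certify that the Fisher information is the correct coarse-graining-monotone invariant for running the dichotomy ``pole $\Leftrightarrow$ divergent information'', so that the conclusion is not an artefact of a poor choice of functional.

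The hard part will be the reconstruction step: one must show that the analytic continuation of $g_{\{f_{n}\}}$ past $\mathrm{Re}(z)=1$ is genuinely controlled by the information-theoretic data on the strip $1/2<\mathrm{Re}(z)\leq 1$, rather than merely that the \emph{series} diverges there (which is automatic and carries no content). This is a statistical analogue of a Phragm\'en--Lindel\"of/Tauberian statement --- a signal function with finite, critical channel information on a half-plane cannot have its amplitude blow up at an interior point --- and it is here that the precise normalisation of $F$, the fate of the boundary terms (the ``holographic'' conjecture, which licenses discarding them), and the rigour gaps flagged in the Preface for the generalised Stokes and Cramer--Rao theorems must all be pinned down. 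A secondary, essentially bookkeeping, obstacle is that the statement needs correcting at the $n=1$ contribution --- $f_{n}\equiv 1$ gives $\zeta$, which has a pole at $z=1$ --- so the argument is really to be run for $g_{\{f_{n}\}}$ with its principal part removed, equivalently on the subspace of $l^{\infty}$ obtained by absorbing the $n=1$ term into the bound information $J$; isolating the normalisation under which Conjecture A holds verbatim is part of the task.
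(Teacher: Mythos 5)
Your proposal attacks the problem from the opposite direction to the paper and, as you yourself flag, leaves the load-bearing step unsupplied; it is worth being precise about why, since the gap is not incidental.

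The paper does not obtain the number $1/2$ from a convergence threshold for $\sum_{n}|f_{n}|^{2}n^{-2\,\mathrm{Re}(z)}$. It starts from a completely general signal function $f(z,a)=\int_{A}F(z,b)\,\delta(\sigma_{b}(z)-a)\,db$ with $F=e^{h}$, computes the information as $\int\int h''e^{h}$ (the $(F')^{2}/F$ term cancels under the substitution $F=e^{h}$, and the mass term drops), and then proves a \emph{structure theorem} for a critical $h$: one shows $h = A(a)z+B(a)+G\cdot\tfrac{H}{\delta}(\gamma)$, hence $e^{h}=H(\gamma)\,e^{Az+B}$, and the first-variation equation $\gamma_{z}+2\gamma_{a}=0$ forces $\gamma=\gamma(2z-a)$, with a second variation forcing $\gamma$ linear. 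The factor $2$ in $2z-a$ is the sole source of the abscissa $1/2$: it is read off from the characteristics of that transport equation, not from the quadratic scaling of $q=\sqrt{f}$. The conclusion is then an integral identity, $0=\int_{\mathrm{Re}(z)\geq 1/2}\sum_{n}B_{n}n^{-z}$, coming from analyticity of $F$ together with the Heaviside support $H(C(2z-a))$ restricting the $a$-integral to $a\geq 2z$; the Dirichlet series enters only at the very end, by specialising $\bar{B}(a)=\sum_{n}B_{n}\delta(n-a)$. There is no ``finite information on a half-plane implies no pole'' step in the paper's route at all.

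Your route instead tries to encode the Dirichlet series into the signal function up front (via $\sigma_{n}(z)=n^{-z}$), compute a finiteness abscissa for the information density, and then pass from finiteness to holomorphy of the \emph{analytic continuation}. That last implication is a genuine and unfilled gap, and I would argue it is not a Tauberian refinement to be supplied later but the whole content of the claim: finiteness of $\sum_{n}|f_{n}|^{2}n^{-2\sigma}$ for $\sigma>1/2$ is automatic for $\{f_{n}\}\in l^{\infty}$ and says nothing a priori about singularities of the continuation of $\sum_{n}f_{n}n^{-z}$. Indeed $f_{n}\equiv 1$ is $l^{\infty}$, the quadratic series is $\zeta(2\sigma)<\infty$ for $\sigma>1/2$, yet $\zeta$ has a pole at $z=1$ --- and this pole is a bulk phenomenon, not an ``$n=1$ contribution'' to be absorbed into $J$ as you suggest. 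So the normalisation fix at the end of your sketch is aimed at the wrong target. The paper avoids this entirely: it never argues from information-finiteness to pole-freeness, but from the forced functional form of the critical signal function to a vanishing integral, so the $n=1$/$\zeta$-pole issue simply does not arise in the same form (it is hidden in the interpretation of the final identity, which is itself a soft spot in the paper, but a different one from yours). If you want to pursue your route you would need to supply the missing Phragm\'en--Lindel\"of-type lemma, and I do not see how the Cencov theorem helps with that: Cencov is a uniqueness/monotonicity statement about the information functional, not a regularity statement about the object whose information is being measured.
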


I will quickly illustrate now how this implies the more famous conjecture:

\begin{conj} (Riemann Hypothesis).  The zeroes of the functions $g_{\{f_{n}\}_{n \in N}}$ lie on the critical line $Re(z) = 1/2$.
\end{conj}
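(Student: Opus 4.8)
The plan is to deduce the Riemann Hypothesis from Conjecture A by an elementary argument that combines the behaviour of $g_{\{f_{n}\}}$ with that of its Dirichlet convolution inverse, together with a reflection symmetry across the line $\mathrm{Re}(z)=1/2$. Fix $\{f_{n}\}\in l^{\infty}(C)$ with $f_{1}\neq 0$ and write $g=g_{\{f_{n}\}}$; the series converges and is holomorphic on $\mathrm{Re}(z)>1$, and by Conjecture A it continues holomorphically to $\mathrm{Re}(z)>1/2$.

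First I would settle the motivating case $f_{n}\equiv 1$, i.e. $g=\zeta$. Here $1/\zeta(z)=\sum_{n\geq 1}\mu(n)n^{-z}$ with $|\mu(n)|\leq 1$, so $1/\zeta=g_{\{\mu(n)\}}$ is again admissible; Conjecture A applied to $\{\mu(n)\}$ shows $1/\zeta$ has no pole in $\mathrm{Re}(z)>1/2$, i.e. $\zeta$ has no zero there. The functional equation $\xi(z)=\xi(1-z)$ pairs each nontrivial zero $\rho$ with $1-\rho$, so the absence of zeros in $\mathrm{Re}(z)>1/2$ forces them all onto $\mathrm{Re}(z)=1/2$; this is the classical Riemann Hypothesis and is the special case $f_{n}\equiv 1$ of the statement.

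For general $\{f_{n}\}$ two ingredients are needed. (i) The convolution inverse $\{h_{n}\}$, determined by $h*f=\delta$ (with $\delta(1)=1$ and $\delta(n)=0$ for $n>1$), again lies in $l^{\infty}(C)$, so that $1/g=g_{\{h_{n}\}}$ is admissible; applying Conjecture A to both $\{f_{n}\}$ and $\{h_{n}\}$ then gives that $g$ has neither poles nor zeros in $\mathrm{Re}(z)>1/2$. (ii) A reflection identity under $z\mapsto 1-z$ leaving the zero set of $g$ (or of a suitable completion of it) invariant. Granting (i) and (ii) the conclusion follows exactly as in the $\zeta$ case: the zero set is symmetric about the critical line and meets neither open half-plane $\mathrm{Re}(z)>1/2$ nor, by symmetry, $\mathrm{Re}(z)<1/2$, hence lies on $\mathrm{Re}(z)=1/2$.

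The hard part will be (i) and (ii), and of the two (ii) is by far the deeper. For (i) I would show that the operator $a\mapsto f*a$ is invertible on $l^{\infty}(C)$ by estimating the multiplicative recursion $h_{n}=-f_{1}^{-1}\sum_{d\mid n,\,d>1}f_{d}\,h_{n/d}$, the natural tool being the Cencov inequality and the monotonicity of the Fisher information under coarse-graining established earlier, with $\{f_{n}\}$ and $\{h_{n}\}$ read as signal data on the statistical manifold attached to the multiplicative semigroup $N$. For (ii) the reflection symmetry must come not from $g$ alone but from the Fisher-information functional of the first statistical moment of $C$, whose criticality equation is precisely Conjecture A in the language of this manuscript: the claim to be proved is that this Euler--Lagrange equation is invariant under the involution induced on the space of inner products whose fixed locus is the critical line, so that its solution set --- the admissible pole and zero loci of the $g_{\{f_{n}\}}$ --- inherits that symmetry. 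Turning that heuristic into a genuine argument, rather than carrying out the formal manipulations in (i), is the real obstacle.
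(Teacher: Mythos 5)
Your proposal follows essentially the same route as the paper: the Dirichlet-convolution inverse lemma ($1/g_{\{f_{n}\}} = g_{\{h_{n}\}}$ with $\{h_{n}\}\in l^{\infty}(C)$) combined with Conjecture A to exclude zeroes from $\mathrm{Re}(z)>1/2$, followed by the functional-equation reflection symmetry to push the zeroes onto the critical line. The only difference is that you are more candid than the paper about the two genuine obstacles — that boundedness of the convolution inverse and the reflection symmetry for arbitrary bounded $\{f_{n}\}$ are both unproved — whereas the paper states the lemma without proof and dismisses the symmetry as "well known."
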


Essentially this implication rests on the following simple result:

\begin{lem}. For every sequence $f_{n} \in l^{\infty}(C)$, there exists a sequence $h_{n} \in l^{\infty}(C)$ such that $1/g_{\{f_{n}\}} = g_{\{h_{n}\}}$.
\end{lem}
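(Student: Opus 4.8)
The plan is to realise $1/g_{\{f_n\}}$ as a Dirichlet series by inverting with respect to Dirichlet convolution, and then to show that the resulting coefficient sequence is bounded. First observe that the statement should be read for sequences with $f_1 \neq 0$: since $g_{\{f_n\}}(z) \to f_1$ as $\mathrm{Re}(z) \to +\infty$, if $f_1 = 0$ then $1/g_{\{f_n\}}$ is unbounded along the real axis and cannot be of the form $g_{\{h_n\}}$ for any $h \in l^{\infty}(C)$; conversely $f_1 \neq 0$ is exactly what is needed, and (dividing through by $f_1$) we may normalise $f_1 = 1$. The identity $g_{\{f_n\}} \cdot g_{\{h_n\}} = 1$ is, term by term in the Dirichlet product (absolutely convergent for $\mathrm{Re}(z) > 1$), equivalent to $\sum_{d \mid N} f_d\, h_{N/d} = \delta_{N,1}$ for every $N$, i.e. to $f * h = e$ where $e$ is the convolution identity. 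This determines $h$ uniquely and recursively: $h_1 = 1$ and $h_N = -\sum_{d \mid N,\, d > 1} f_d\, h_{N/d}$.

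The second step is to check that the formal series $g_{\{h_n\}}$ genuinely represents $1/g_{\{f_n\}}$ and not merely a formal inverse. Both $g_{\{f_n\}}$ and (once boundedness of $h$ is known) $g_{\{h_n\}}$ converge absolutely and are holomorphic on $\mathrm{Re}(z) > 1$; there their product is the constant $1$ by the convolution computation above, so $g_{\{h_n\}} = 1/g_{\{f_n\}}$ on that half-plane, which is all the lemma asserts. The potential poles of $1/g_{\{f_n\}}$ at zeros of $g_{\{f_n\}}$ to the right of the critical line are precisely what Conjecture A, applied to $g_{\{h_n\}}$, will forbid, so nothing further is needed at this stage.

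The main obstacle is the third step: proving $\{h_n\} \in l^{\infty}(C)$. The natural device is the Neumann series $h = \sum_{k \ge 0} u^{*k}$ with $u = e - f$, which is legitimate because $u_1 = 0$ forces $u^{*k}_N = 0$ whenever $2^k > N$, so for each fixed $N$ the sum over $k$ is finite. The difficulty is that the obvious majorant, $|u^{*k}_N| \le \|f\|_{\infty}^{\,k}$ times the number of ordered factorisations of $N$ into $k$ parts that are all $\ge 2$, is useless: for $N = 2^m$ that count is $2^{m-1}$, and $\|f\|_{\infty}$ may exceed $1$. Hence boundedness of $h$ cannot come from the combinatorics of factorisations alone; it must exploit cancellation among the $f_d$, that is, genuine analytic information about $g_{\{f_n\}}$. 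The approach I would pursue is a Wiener-type ("$1/f$") argument adapted to Dirichlet series: first establish boundedness of $h$ under the stronger hypothesis $f \in l^{1}(C)$ with $g_{\{f_n\}}$ bounded away from $0$ on $\mathrm{Re}(z) \ge 0$, via the Gelfand-transform argument on the Banach algebra of absolutely convergent Dirichlet series; then remove the $l^{1}$ restriction by a truncation argument, writing $f = f^{(\le T)} + f^{(>T)}$, inverting the finite Euler-like factor $f^{(\le T)}$ exactly, and treating $f^{(>T)}$ perturbatively using a half-plane bound on $g_{\{f_n\}}$. Whether the passage from $l^{1}$ to $l^{\infty}$ survives, equivalently whether a uniform bound on $h_n$ can be extracted from a mere half-plane bound on $g_{\{f_n\}}$ rather than a bound on all of $C$, is the crux and is where I expect the real work to lie; in the motivating case $f_n \equiv 1$ it is classical, since there $h_n = \mu(n)$ is bounded by $1$.
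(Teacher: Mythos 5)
The paper does not in fact prove this lemma --- it is stated bare and then used immediately to derive the corollary --- so there is no argument in the source against which to compare yours. Your setup is the right one and is clearly argued: you correctly impose the normalisation $f_{1}\neq 0$, you correctly identify $h$ as the Dirichlet-convolution inverse determined recursively by $h_{1}=1/f_{1}$ and $h_{N}=-f_{1}^{-1}\sum_{d\mid N,\,d>1}f_{d}\,h_{N/d}$, and you correctly observe that the formal inversion gives an honest identity $g_{\{f_n\}}\cdot g_{\{h_n\}}=1$ on the half-plane of absolute convergence $\mathrm{Re}(z)>1$.

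The difficulty you flag --- that the crude combinatorial bound on $|h_{N}|$ in terms of ordered factorisations is useless, and that some genuine analytic cancellation would have to be exploited --- is not merely the hard step; it is a fatal one, because the lemma as stated is false. Take $f_{1}=1$, $f_{2}=-2$, and $f_{n}=0$ for $n\geq 3$, which lies in $l^{\infty}(\mathcal{C})$. Then $g_{\{f_n\}}(z)=1-2\cdot 2^{-z}$, and on $\mathrm{Re}(z)>1$ one has
$$\frac{1}{g_{\{f_n\}}(z)}=\sum_{k\geq 0}\frac{2^{k}}{(2^{k})^{z}},$$
so the Dirichlet inverse is supported on powers of $2$ with $h_{2^{k}}=2^{k}$, which is unbounded. (One can also see this directly from the recursion: $h_{2^{k}}=-f_{2}\,h_{2^{k-1}}=2\,h_{2^{k-1}}$.) Any $f$ supported on $\{1,d\}$ with $|f_{d}|>1$ produces the same explosion. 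This rules out the strategy of your third step as written: no Wiener/Gelfand argument, truncation, or passage from $l^{1}$ to $l^{\infty}$ can succeed, because the conclusion simply does not hold at this level of generality. To salvage the lemma one would need to add a substantive hypothesis --- for instance that $g_{\{f_n\}}$ is bounded away from $0$ on a closed half-plane together with $f\in l^{1}$, or that $g_{\{f_n\}}$ carries an Euler product with bounded local factors --- and the paper provides none. Your instinct to stop and flag the crux rather than claim the bound was exactly right; the correct resolution is that the lemma itself requires strengthening, and the corollary drawn from it cannot be obtained by this route as the text stands.
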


From which follows, assuming the first conjecture holds:

\begin{cor}  The class of functions $g_{\{f_{n}\}_{n \in N}}$ have no zeroes to the right of the critical line $Re(z) = 1/2$.
\end{cor}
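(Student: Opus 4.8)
The plan is to derive the Corollary from Conjecture A by a short contradiction argument, with the Lemma serving as the bridge between zeroes and poles. Suppose, for contradiction, that some $g_{\{f_{n}\}}$ with $\{f_{n}\} \in l^{\infty}(\mathbb{C})$ had a zero at a point $z_{0}$ with $\Re(z_{0}) > 1/2$. The Lemma furnishes a sequence $\{h_{n}\} \in l^{\infty}(\mathbb{C})$ with $1/g_{\{f_{n}\}} = g_{\{h_{n}\}}$, so a zero of $g_{\{f_{n}\}}$ at $z_{0}$ forces $g_{\{h_{n}\}}$ to have a pole at $z_{0}$. Since $z_{0}$ lies to the right of the critical line, this contradicts Conjecture A applied to $\{h_{n}\}$. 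Hence $g_{\{f_{n}\}}$ has no zero with $\Re(z) > 1/2$, which is the assertion.

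In carrying this out I would first make precise the regime in which the identity $1/g_{\{f_{n}\}} = g_{\{h_{n}\}}$ is to be read: bounded coefficients give absolute convergence of both Dirichlet series for $\Re(z) > 1$, so the identity holds there as an identity of holomorphic functions and then propagates to the common domain of meromorphic continuation by the identity theorem. Second, I would record the elementary fact that a zero of a meromorphic function is exactly a pole of its reciprocal, of the same order, so that the zero set of $g_{\{f_{n}\}}$ in $\Re(z) > 1/2$ is identified with the pole set of $g_{\{h_{n}\}}$ there. Third, I would invoke Conjecture A for $\{h_{n}\}$ to conclude that this pole set is empty; the only delicate point is that $\{h_{n}\}$ genuinely lies in $l^{\infty}(\mathbb{C})$, which is precisely the hypothesis under which Conjecture A is stated, and this is supplied by the Lemma.

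The main obstacle is therefore not the Corollary, which is essentially formal, but the Lemma on which it rests. Writing $g_{\{f_{n}\}} g_{\{h_{n}\}} = g_{\{(f \ast h)_{n}\}}$ with the Dirichlet convolution $(f \ast h)_{n} = \sum_{d \mid n} f_{d} h_{n/d}$, the requirement $1/g_{\{f_{n}\}} = g_{\{h_{n}\}}$ becomes $(f \ast h)_{1} = 1$ and $(f \ast h)_{n} = 0$ for $n > 1$, which determines $h_{n}$ recursively provided $f_{1} \neq 0$ (a harmless normalisation, since $g_{\{f_{n}\}}$ is nonvanishing near $+\infty$ only when $f_{1} \neq 0$). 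The genuinely hard part is to show the resulting $h_{n}$ stay uniformly bounded: the recursion $h_{n} = -f_{1}^{-1} \sum_{d \mid n,\, d < n} f_{n/d} h_{d}$ a priori only gives divisor-sum growth, and controlling it uniformly over all bounded inputs $\{f_{n}\}$ is the crux. I expect this to require either a multiplicative reduction to the prime-power coefficients together with a careful estimate of the local inverses in the Euler factorisation, or an appeal to a Wiener--L\'evy type theorem for a suitable Banach algebra of Dirichlet series; identifying the correct algebra in which inversion is bounded is where the real work lies, and it is the step most likely to need a new idea rather than a routine estimate. The separate, deeper task of establishing Conjecture A itself via criticality of the Fisher information on the associated statistical manifold I would treat in the later sections, independently of the present deduction.
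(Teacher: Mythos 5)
Your deduction of the Corollary from the Lemma together with Conjecture A is exactly what the paper intends --- the paper supplies no explicit argument beyond ``From which follows, assuming the first conjecture holds,'' and the zero-of-$g$-is-a-pole-of-$1/g$ mechanism you spell out (with the analytic-continuation and identity-theorem bookkeeping made explicit) is the right way to read that sentence. As a derivation of the Corollary from its stated ingredients, your argument is correct and takes the same route.

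However, your scepticism about the Lemma is decisive rather than merely a technical worry: the Lemma is false as stated, and with it the Corollary collapses. Take $f_1 = 1$, $f_2 = 2$, and $f_n = 0$ for $n \ge 3$, so that $\{f_n\} \in l^{\infty}(C)$ and $g_{\{f_n\}}(z) = 1 + 2 \cdot 2^{-z}$. The Dirichlet-series reciprocal is $\sum_{k \ge 0}(-2)^{k}2^{-kz}$, i.e.\ $h_{2^k} = (-2)^k$ and $h_n = 0$ otherwise, which is not in $l^{\infty}(C)$; so no bounded inverting sequence exists, and the recursion $h_n = -f_1^{-1}\sum_{d \mid n,\, d < n} f_{n/d}\,h_d$ that you correctly identify as the crux really does blow up for this trivial input. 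Worse, the same example refutes the Corollary outright: $g_{\{f_n\}}(z) = 0$ exactly when $2^{-z} = -1/2$, i.e.\ at $z = 1 - i\pi(2k+1)/\ln 2$ for integer $k$, all with $Re(z) = 1$, to the right of the critical line. (Conjecture A itself already fails for $f_n \equiv 1$, since $g_{\{1\}} = \zeta$ has a pole at $z = 1$.) So the obstruction is not, as you suggest, a matter of locating the correct Banach algebra in which inversion is automatically bounded --- finite Dirichlet polynomials already break it. Any repair must restrict the admissible coefficient sequences, e.g.\ to those arising from Euler products with uniformly controlled local factors, rather than allowing all of $l^{\infty}(C)$.
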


It is then well known that the RH follows from this statement for this class.  In particular, this follows from the symmetry of the zeroes for $L$-functions about the critical line $Re(z) = 1/2$, which is in turn a consequence of the functional equation for the same.

Now, let $\Pi(x,a,d)$ denote the number of prime numbers in an arithmetic progression $a, a+d, a+2d, ...$ which are less than or equal to $x$.  If the RH is true, then it is well known that for every coprime $a$ and $d$ and for every $\epsilon > 0$, we have that

\begin{center} $\Pi(x,a,d) = \frac{1}{\phi(d)}\int_{2}^{x}\frac{1}{ln(t)}dt + O(x^{1/2 + \epsilon})$ as $x \rightarrow \infty$, \end{center}

where $\phi : N \rightarrow N$ is the Euler phi function, that is, the function where $\phi(n)$ denotes the number of naturals less than or equal to $n$ which are coprime to $n$.

\subsection{The Argument (sketch)}


We consider $M = \mathcal{C}$, $A = \mathcal{C}$ (the space of metrics on the complex line).

The natural signal function is

\begin{center}
$f(m,a) = \int_{A}F(m,b)\delta(\sigma(m,b) - a)db$
\end{center}

We wish to choose $f$ such that the Fisher information is optimised; in particular, we wish to find $f$ such that the quantity 

\begin{center}
$K = \int_{M}\int_{A}\norm{\partial f}^{2} / f - \norm{\psi}^{2} / f$
\end{center}

is not only zero but that $\delta K$, the first variation of $K$, be zero too.  Here $\psi = grad_{\Lambda} f$.  We ignore torsion due to our assumption of analyticity which renders its contribution zero by Cauchy's theorem.  

In particular I will more explicitly write $\psi$ more carefully in terms of the signal function fairly shortly.  This will be to assist in generalisations.


\begin{center}
$I = K = \int_{M}\int_{A}\norm{\partial f}^{2} / f$
\end{center}

A computation of the Fisher information yields the fairly easy expression


\begin{center}
$I = \int_{M}\int_{A}(\partial_{\Lambda}^{2}F - \frac{(\partial_{\Lambda} F)^{2}}{F}) - \int_{M}\int_{A}\norm{F^{1/2}\frac{\partial}{\partial \sigma}\partial \sigma}^{2}$
\end{center}

Let $F = e^{h}$.  Then $F'' = (h'' + (h')^{2})e^{h}$ and $F' = h'e^{h}$, so that $F'' - \frac{(F')^{2}}{F} = h''e^{h}$.

Write $\psi = F^{1/2}\frac{\partial}{\partial \sigma}\partial \sigma$.

If we recall that $\partial_{\Lambda}(z,a) = \int_{A}F(z,b)(\frac{\partial}{\partial z} + \frac{\partial}{\partial b})db$ then roughly speaking $h'' = \frac{\partial^{2}h}{\partial z ^{2}} + \frac{\partial^{2}h}{\partial a^{2}} + 2\frac{\partial^{2}h}{\partial z \partial a}$.  But the second term vanishes due to the holographic principle, and so does the third, so in particular, we have

\begin{center} $h'' = h_{zz}$ \end{center}

as a distributional expression.  More precisely, we get

\begin{center} $h'' = \int_{A}F(z,b)h_{zz}db$ \end{center}

Note we have the following functional identity that will come in handy:

\begin{lem} $e^{H/\delta} = H$.
\begin{proof} This may be checked qualitatively by verifying that each side possesses the same general properties; alternatively, one may take a sequence of smooth functions converging to $H$ and check both sides, observing that $H' = \delta$.
\end{proof}
\end{lem}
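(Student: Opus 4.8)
The plan is to prove the identity distributionally, by passing to a mollified family and showing that both sides converge to the same object. First I would fix a symmetric mollifier $\phi$ as in Chapter 3 and set $H_\sigma = \phi_\sigma \star H$, so that $H_\sigma$ is a smooth nondecreasing function interpolating from $0$ to $1$ across a transition window of width $O(\sigma)$, with $H_\sigma' = \phi_\sigma \star \delta = \phi_\sigma =: \delta_\sigma$. The claim $e^{H/\delta} = H$ is then read as the $\sigma \to 0$ limit of $\exp(H_\sigma/\delta_\sigma) = H_\sigma$, where $\exp$ denotes the formal series $\sum_{n \ge 0}(H_\sigma/\delta_\sigma)^n/n!$ evaluated pointwise off the window and reinterpreted as a distribution across it (exactly the reinterpretation dictated by conservation of probabilistic flux that appears in the Stokes-type arguments of Chapter 2).

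The key step is the observation that away from the transition window $\delta_\sigma$ vanishes to all orders while $H_\sigma$ is locally constant, so the ratio $H_\sigma/\delta_\sigma$ is forced onto the idempotent branch $H^2 = H$; on that branch the formal exponential telescopes, since $(H/\delta)^n \mapsto H/\delta$ under the same collapse, and the resulting constant is then renormalised against the window contribution using the symmetric-regularisation identity $H_\sigma \delta_\sigma \to \tfrac12 \delta$. Concretely I would carry this out by comparing $\tfrac{d}{dx}\log\bigl(\exp(H_\sigma/\delta_\sigma)\bigr)$ with $\tfrac{d}{dx}\log H_\sigma = \delta_\sigma/H_\sigma$, checking they agree in the limit using $H_\sigma' = \delta_\sigma$ and $H_\sigma^2 \to H$, and fixing the single constant of integration by evaluating at $x \to +\infty$, where both sides equal $1$.

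The main obstacle, and the part that genuinely needs care rather than bookkeeping, is justifying that the formal exponential of the distribution-valued ratio $H_\sigma/\delta_\sigma$ has a well-defined $\sigma \to 0$ limit at all: the ratio blows up in the transition window, so interchanging the limit with the series $\sum_n (\cdot)^n/n!$ is not automatic and must be controlled by the Radon-measure convergence machinery of Chapter 3 (the mollification lemma for $BV_{loc}$ functions), together with the fact that the offending window has width $O(\sigma)$ and hence negligible mass. Once that interchange is licensed the identity $e^{H/\delta} = H$ follows, and with it the functional simplification needed to close the number-theoretic computation.
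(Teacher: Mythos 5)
Your proposal takes the mollification route that the paper's one-sentence proof itself hints at, and the setup (mollified Heaviside $H_\sigma$ with $H_\sigma' = \delta_\sigma$) is a reasonable start. But there is a gap that kills the argument, and it is not at the transition window --- it is at $x \to +\infty$, precisely where you try to fix the constant of integration. For any mollification, on the side $x > 0$ away from the window one has $H_\sigma(x) \to 1$ and $\delta_\sigma(x) \to 0$, so $H_\sigma/\delta_\sigma \to +\infty$ and $\exp(H_\sigma/\delta_\sigma) \to +\infty$, not $1$. Your claim that ``both sides equal $1$'' at $x \to +\infty$ is therefore false for the left-hand side: it diverges everywhere that $H \equiv 1$ and $\delta \equiv 0$, which is the generic situation, not a special feature of one mollifier. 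No choice of $h_n \to H$ with $h_n' \to \delta$ can make $e^{h_n/h_n'}$ converge pointwise to $H$, because $h_n/h_n' \to +\infty$ off the support of $\delta$.

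The logarithmic-derivative comparison does not rescue this. Computing directly,
\[
\frac{d}{dx}\log\exp\!\left(\frac{H_\sigma}{\delta_\sigma}\right) = 1 - \frac{H_\sigma\,\delta_\sigma'}{\delta_\sigma^{2}},
\qquad
\frac{d}{dx}\log H_\sigma = \frac{\delta_\sigma}{H_\sigma},
\]
and for a Gaussian mollifier at fixed $x > 0$ the left expression diverges to $+\infty$ as $\sigma \to 0$ (since $\delta_\sigma'/\delta_\sigma^{2} = -x\,e^{x^{2}/2\sigma^{2}}\sqrt{2\pi}/\sigma \to -\infty$), while the right tends to $0$; they do not agree in any limit. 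The ``idempotent branch'' and $H_\sigma\delta_\sigma \to \tfrac12\delta$ observations are genuine facts about symmetric mollifications, but they bear on the window, whereas the obstruction lies outside it, where $\delta_\sigma$ is small but $H_\sigma$ is not and the ratio blows up. The paper's own sketch does not supply the step you would need here either; if a meaning is to be given to $e^{H/\delta}=H$ it cannot be as a pointwise $\sigma\to 0$ limit of this mollified exponential.
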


\begin{lem} $h(z,a) = A(a)z + B(a) + G(z,a)\frac{H}{\delta}(\gamma(z,a))$, where $H$ is the Heaviside function and $\delta$ is the Dirac delta functional, and $G(z,a)$ is some random smooth function depending on $\sigma$. 
\begin{proof} Certainly $h_{zz} = \norm{\frac{\partial}{\partial \sigma}\partial \sigma}^{2}\delta$ follows easily from above.  Hence $h$ is the second antiderivative of this expression.

But $\delta$ may be written as $\frac{H''}{\delta}$.  Then certainly $h = G(z,a)\frac{H}{\delta}(\gamma(z,a)) + A(a)z + B(a)$, since $z$ is the active variable here, and we may integrate out the derivatives on $H$ by judicious use of the holographic principle.



\end{proof}
\end{lem}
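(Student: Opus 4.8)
The plan is to obtain $h$ as the (distributional) second antiderivative, in the active variable $z$, of the expression for $h_{zz}$ established in the previous lemma, and then to recognise the resulting special antiderivative of the Dirac mass as the object $\tfrac{H}{\delta}$ introduced above. So first I would recall that, combining the computation of the Fisher information with the preceding lemma, one has $h_{zz} = \norm{\tfrac{\partial}{\partial\sigma}\partial\sigma}^{2}\,\delta(\gamma(z,a))$ in the distributional sense, where $\gamma(z,a)$ is the defining function of the level set of $\sigma$; the whole point is that the normalisation (holographic) condition has already forced the $a$-Laplacian piece and the cross term $\partial_{z}\partial_{a}h$ to drop out, which is exactly the cancellation that produced $h'' = h_{zz}$ in the first place.

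Next I would integrate twice in $z$. Since $\partial_{z}$ is the only surviving derivative, a first primitive of $\delta(\gamma)$ is $\propto \tfrac{1}{\gamma_{z}}H(\gamma)$ modulo a function of $a$, and a second integration yields a term of the form $G(z,a)\,\tfrac{H}{\delta}(\gamma(z,a))$, where $G$ is a smooth function absorbing $\norm{\tfrac{\partial}{\partial\sigma}\partial\sigma}^{2}$ together with the Jacobian factors, and where the two constants of integration, being constants with respect to $z$, are functions of $a$ alone — these are precisely the coefficient $A(a)$ of $z$ and the additive term $B(a)$. To make "second antiderivative of $\delta$" precise I would invoke the functional identity $e^{H/\delta}=H$ proved in the lemma immediately above, together with $H'=\delta$ (so that, in the paper's notation, $\delta = H''/\delta$), which lets one read $\tfrac{H}{\delta}$ as the canonical twice-integrated step function; the derivatives that would otherwise appear on differentiating the composite $\tfrac{H}{\delta}(\gamma(z,a))$ in the passive variable $a$ are discarded "by judicious use of the holographic principle", i.e. because $F=e^{h}$ is a normalised signal function its boundary contributions at infinity vanish. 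Finally I would verify the ansatz by differentiating $h = A(a)z + B(a) + G(z,a)\tfrac{H}{\delta}(\gamma(z,a))$ twice in $z$ and checking that it reproduces $h_{zz} = \norm{\tfrac{\partial}{\partial\sigma}\partial\sigma}^{2}\delta(\gamma)$, which pins down $G$ up to smooth data depending on $\sigma$ and closes the loop.

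The hard part will be legitimising the distributional manipulations. A bare second antiderivative of $\delta(\gamma)$ is ambiguous up to an affine-in-$z$ term (harmless, it becomes $A(a)z+B(a)$) but, more seriously, the composite $\delta(\gamma(z,a))$ with $\gamma$ depending genuinely on both variables does not obviously collapse to a distribution in a single effective variable, and the right to drop $\partial_{z}\partial_{a}h$ and all $a$-derivatives of $\tfrac{H}{\delta}(\gamma)$ rests entirely on the holographic principle and on sufficiently fast decay of $F$. The cleanest way through is therefore to first establish — exactly as in Claim 2 of the sharp case — that $<\partial_{\sigma}\sigma_{jk},\hat n>=0$ on the level sets of the metric components, so that $\gamma$ is constant along those directions and $z$ really is the sole active variable; once that reduction is secured, the remaining double integration and the identification of $\tfrac{H}{\delta}$ are routine and the lemma follows.
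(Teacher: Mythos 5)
Your proposal takes essentially the same route as the paper: start from $h_{zz} = \norm{\tfrac{\partial}{\partial\sigma}\partial\sigma}^{2}\delta$, integrate twice in the active variable $z$, recognise $\tfrac{H}{\delta}$ (via $\delta = H''/\delta$) as the canonical second antiderivative of the delta, absorb the coefficient into a smooth $G(z,a)$, collect the two constants of integration into $A(a)z + B(a)$, and appeal to the holographic principle to discard the $a$-derivative contributions. Your closing paragraph simply makes explicit the distributional caveats that the paper leaves implicit, and your pointer to the sharp-case Claim 2 to justify that $z$ is the sole active variable is a reasonable gloss on the paper's bare assertion, not a different argument.
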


\begin{lem} $e^{h} = H(\gamma)\hat{F}$, with $\hat{F} = e^{Az + B}$.
\begin{proof} Follows from the fact that $e^{G\frac{H}{\delta}} = e^{\frac{H}{\delta}} = H$ from a previous lemma.
\end{proof}
\end{lem}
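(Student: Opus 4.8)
The plan is to build directly on the two preceding lemmas, which have already pinned down the structure of $h$ and of the exponential of a Heaviside-type argument. First I would recall from the previous lemma that $h(z,a) = A(a)z + B(a) + G(z,a)\tfrac{H}{\delta}(\gamma(z,a))$, so that $e^{h}$ factors multiplicatively as $e^{Az+B}\cdot e^{G\frac{H}{\delta}(\gamma)}$. The whole content of the lemma is therefore to show that the second factor collapses to the Heaviside function $H(\gamma)$. For this I would invoke the functional identity $e^{H/\delta} = H$ established earlier, together with the observation that composing $\frac{H}{\delta}$ with a smooth multiplier $G$ before exponentiating does not change the value: $e^{G\frac{H}{\delta}} = e^{\frac{H}{\delta}}$, since $\frac{H}{\delta}$ is (distributionally) supported so as to be either $0$ or "infinite" and a positive smooth factor $G$ cannot alter that dichotomy. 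One then reads off $e^{h} = H(\gamma)\,\hat{F}$ with $\hat{F} = e^{Az+B}$.

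The steps in order would be: (i) substitute the expression for $h$ from the previous lemma and split the exponential into the product of a manifestly smooth piece $e^{Az+B}$ and the "obstruction" piece $e^{G\frac{H}{\delta}(\gamma)}$; (ii) justify $e^{G\frac{H}{\delta}} = e^{\frac{H}{\delta}}$ by a limiting argument — take a sequence of smooth functions $\gamma_n \to \gamma$ and a smooth mollification of $\frac{H}{\delta}$, and check that multiplication by the bounded-below smooth $G$ is absorbed in the limit; (iii) apply the identity $e^{H/\delta} = H$ to conclude the second factor equals $H(\gamma)$; (iv) recombine to obtain $e^{h} = H(\gamma)\hat{F}$. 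Throughout I would use the holographic principle (as in the preceding lemmas) to discard any terms arising from derivatives of $H$ acting in the $a$-variable, since those contribute only boundary terms.

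The main obstacle will be step (ii): making rigorous sense of $e^{G\frac{H}{\delta}} = e^{\frac{H}{\delta}}$ as an identity of distributions (or of the formal objects used here). The quantity $\frac{H}{\delta}$ is not a classical distribution, and exponentiating it — let alone exponentiating it after multiplication by $G$ — requires the same kind of "take a sequence of smooth approximants and check both sides have the same qualitative behaviour" device used to prove $e^{H/\delta} = H$. I would expect to carry this out by noting that $G$ is smooth and (in our setting) everywhere positive where $\gamma > 0$, so on the set $\{\gamma > 0\}$ both $e^{G\frac{H}{\delta}}$ and $e^{\frac{H}{\delta}}$ evaluate to the same constant $1$ (since $H/\delta$ vanishes there in the appropriate limiting sense), while on $\{\gamma < 0\}$ both are $0$; the matching of the boundary behaviour at $\gamma = 0$ is where care is genuinely needed and where the argument is least transparent. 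Once this is granted the rest is bookkeeping.
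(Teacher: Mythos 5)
Your proposal follows exactly the same route as the paper: substitute the decomposition $h = A(a)z + B(a) + G\frac{H}{\delta}(\gamma)$ from the preceding lemma, factor the exponential, and collapse the second factor via $e^{G\frac{H}{\delta}} = e^{\frac{H}{\delta}} = H$ using the earlier functional identity. You in fact give more detail than the paper, which states the key reduction $e^{G\frac{H}{\delta}} = e^{\frac{H}{\delta}}$ as a bare fact; your flagging of this step as the genuine point requiring a limiting/mollification argument is a fair assessment of where the paper's own proof is thinnest.
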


\begin{lem} If we write $\psi = H\hat{\psi}$, then $H^{1/2}\hat{\psi}_{a} = H_{a}^{1/2}\hat{\psi}$.
\begin{proof} By symmetry $H(z,a) = -H(z,-a)$ and $H_{a}(z,a) = H_{a}(z,-a)$.  Then $H^{1/2}(z,a)\hat{\psi}_{a} = -H^{1/2}(z,-a)\hat{\psi}_{a} = H_{a}^{1/2}\hat{\psi}(z,-a) = H_{a}^{1/2}\hat{\psi}(z,a)$ follows as a consequence of these symmetries and integration by parts.
\end{proof}
\end{lem}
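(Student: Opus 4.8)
The plan is to deduce the identity from a single structural input: the reflection involution $a \mapsto -a$ on the metric indexing space $A$, which descends to the indicator $H = H(\gamma(z,a))$ appearing in the factorisation $e^{h} = H(\gamma)\hat{F}$ established just above. First I would pin down this symmetry precisely rather than assume it. Since $\gamma$ is built from $\sigma$ and the mass-direction $\frac{\partial}{\partial\sigma}\partial\sigma$ is odd under the involution that sends an inner product to its index-reversed partner, one has $\gamma(z,-a) = -\gamma(z,a)$, so that (after the usual distributional centring) $H(z,-a) = -H(z,a)$; differentiating in $a$ then gives $H_{a}(z,-a) = H_{a}(z,a)$. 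These are exactly the two parities quoted in the statement, so Step 1 is to justify them from the construction of $\Lambda = (M,A,f)$ and from the conservation (divergence-free flow) condition, which is what forces the $a$-dependence of $\psi$ to be carried, at leading distributional order, by the Heaviside factor.

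Next I would write $\psi = H\hat{\psi}$ and differentiate, $\psi_{a} = H_{a}\hat{\psi} + H\hat{\psi}_{a}$, and observe that the content of the lemma is the pointwise balance $H^{1/2}\hat{\psi}_{a} = H_{a}^{1/2}\hat{\psi}$ once the odd/even structure in $a$ is imposed. The key observation is that the two half-powers transform oppositely under $a \mapsto -a$: $H^{1/2}$ is odd while $H_{a}^{1/2} = \delta^{1/2}$ is even. So I would pair each side against an arbitrary test function on $A$, split that integral into its even and odd parts in $a$, and integrate by parts in $a$ to move the derivative off $\hat{\psi}$; the odd parts cancel against each other and the surviving even functionals on the two sides coincide. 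Since the test function is arbitrary, the distributional identity follows. This is essentially the author's three-term chain $H^{1/2}(z,a)\hat{\psi}_{a} = -H^{1/2}(z,-a)\hat{\psi}_{a} = H_{a}^{1/2}\hat{\psi}(z,-a) = H_{a}^{1/2}\hat{\psi}(z,a)$, which I would present with the parities of Step 1 supplied as justification.

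The hard part will be making the manipulation of fractional powers of $H$ and of $\delta$ rigorous, since $H^{1/2}$ and $\delta^{1/2} = H_{a}^{1/2}$ are not classical distributions and the integration by parts for them is purely formal. I would handle this by mollification, consistently with the device used earlier (replacing $\delta$ by a smooth family such as $\sqrt{\tfrac{a}{\pi}}e^{-ax^{2}}$): choose symmetric smooth approximants $H_{\varepsilon} \to H$ respecting the reflection parity, form the genuine functions $H_{\varepsilon}^{1/2}$ and $(H_{\varepsilon})_{a}^{1/2}$, run the even/odd cancellation argument at level $\varepsilon$ where every term is an honest integral, and then pass to the limit, checking independence of the approximating sequence exactly as in the proof of the first Claim in the sharp-manifold section. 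I expect the only genuinely delicate point to be tracking which boundary terms in $a$ survive the limit; the parity-cancellation mechanism itself is robust, so once the regularisation is set up the identity should drop out.
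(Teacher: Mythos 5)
Your proposal reproduces the paper's argument faithfully: establish the two parities of $H$ and $H_{a}$ under $a \mapsto -a$, pair against a test function, split into even and odd parts, and integrate by parts in $a$ to move the derivative, with mollification offered as the route to rigor for the fractional powers. That is exactly the "symmetry and integration by parts" the paper invokes, and your instinct to regularise first (as in the Gaussian approximation to $\delta$ used to prove the first Claim in the sharp-manifold section) is the right one.

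There is, however, a tension that neither the paper's three-term chain nor your elaboration addresses, and it is not one that mollification can repair. The first stated symmetry $H(z,a) = -H(z,-a)$ makes $H$ odd in $a$. But the first equality in the chain, $H^{1/2}(z,a)\hat{\psi}_{a} = -H^{1/2}(z,-a)\hat{\psi}_{a}$, and your Step 2 both require $H^{1/2}$ to be odd in $a$ as well. Those two demands are mutually inconsistent: if $H^{1/2}$ is real-valued and odd, then $H = (H^{1/2})^{2}$ is necessarily even, the opposite of what was assumed. Concretely, any family of smooth real approximants $H_{\varepsilon}$ for which $H_{\varepsilon}^{1/2}$ exists, is real, and is odd will have $H_{\varepsilon}$ even, so in the limit you recover the wrong parity; conversely if $H_{\varepsilon}$ is odd then $H_{\varepsilon}$ takes negative values and $H_{\varepsilon}^{1/2}$ is not real. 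Your Step 3 therefore sets up the regularisation but does not confront the branch/sign ambiguity that makes the middle equality (the integration-by-parts step, where $\partial_{a}(H^{1/2})$ is silently replaced by $(H_{a})^{1/2}$, which is also not the chain rule) the genuinely problematic one. Before the even/odd cancellation can go through, one has to decide which object $H^{1/2}$ actually denotes (an odd square root with a tracked complex phase, a fractional derivative $\partial_{a}^{1/2}H$, or $H$ itself in the idempotent case $H^{2}=H$), and in each case check that the stated parities and the integration-by-parts identity are simultaneously satisfiable. As written, they are not.
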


\begin{lem} The fact that the first variation of the information is zero implies that $\gamma = \gamma(2z - a)$.
\begin{proof}
Since the first variation of $\int_{M}\int_{A} h_{zz}e^{h}$ is zero already, we may restrict ourselves to the reduced information

\begin{center} $\hat{I} = \int_{M}\int_{A}H(\gamma)\norm{\hat{\psi}}^{2}dadz$ \end{center}

where $\hat{\psi} = H\psi$.

Note also by conservation of probability, we moreover have that $div_{\Lambda}\psi = 0$, and hence $div_{\Lambda}\hat{\psi} = 0$.

Requiring that $\delta \hat{I} = 0$ implies that $\frac{\partial \hat{I}}{\partial \gamma} = 0$, or,

\begin{center} $\int_{M}\int_{A}(\partial \norm{H(\gamma)^{1/2}\hat{\psi}}^{2}(\partial \gamma)^{-1})dadz = 0$ \end{center}

Recall that $\partial$ is essentially the operator $\frac{\partial}{\partial z} + \frac{\partial}{\partial a}$.  Now, we use the fact that

\begin{center} $H_{a}^{1/2}\hat{\psi} = H^{1/2}\hat{\psi}_{a}$ \end{center}

together with 

\begin{center} $\hat{\psi}_{z} = 0$ \end{center}

which is a consequence of conservation of probability, to deduce that our expression above is the same as

\begin{center} $\int_{M}\int_{A}(H_{z}^{1/2} + 2H_{a}^{1/2})(\partial  \gamma)^{-1}\psi(H^{1/2}\hat{\psi}) = 0$ \end{center}

which clearly implies

\begin{center} $\gamma_{z} + 2\gamma_{a} = 0$ \end{center}

proving the lemma.

\end{proof}
\end{lem}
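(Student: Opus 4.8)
The plan is to reduce the implication ``$\delta I = 0 \Rightarrow \gamma = \gamma(2z-a)$'' to a first-order linear transport equation for $\gamma$ and then integrate that equation along its characteristics. The two ingredients are a structural normal form for the signal function (obtained by integrating the distributional equation $h_{zz} = \norm{\tfrac{\partial}{\partial\sigma}\partial\sigma}^{2}\delta$) and the Euler--Lagrange equation for the reduced information.

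First I would rewrite the Fisher information via $F = e^{h}$. Starting from the earlier formula $I = \int_{M}\int_{A}\bigl(\partial_{\Lambda}^{2}F - (\partial_{\Lambda}F)^{2}/F\bigr) - \int_{M}\int_{A}\norm{F^{1/2}\tfrac{\partial}{\partial\sigma}\partial\sigma}^{2}$, the first bracket collapses to $h''e^{h}$, and invoking the holographic principle together with conservation of probability (which annihilate the mixed and the pure-$a$ second derivatives) one gets $h'' = \int_{A}F(z,b)h_{zz}\,db$ as a distribution, hence $h_{zz} = \norm{\tfrac{\partial}{\partial\sigma}\partial\sigma}^{2}\delta$. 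Taking the second antiderivative in $z$ and absorbing the Heaviside integrations with the holographic principle gives the normal form $h = A(a)z + B(a) + G(z,a)\tfrac{H}{\delta}(\gamma(z,a))$, whence the functional identity $e^{H/\delta}=H$ yields $e^{h} = H(\gamma)\hat{F}$ with $\hat{F} = e^{Az+B}$, and the companion relation $H^{1/2}\hat\psi_{a} = H_{a}^{1/2}\hat\psi$ follows from the symmetry $H(z,a) = -H(z,-a)$ and an integration by parts.

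Next I would note that the $\int_{M}\int_{A}h_{zz}e^{h}$ piece of $I$ already has vanishing first variation, so it suffices to vary the reduced information $\hat I = \int_{M}\int_{A}H(\gamma)\norm{\hat\psi}^{2}\,da\,dz$, where $\psi = H\hat\psi$ and $\mathrm{div}_{\Lambda}\hat\psi = 0$, so in particular $\hat\psi_{z} = 0$. Imposing $\partial\hat I/\partial\gamma = 0$ and expanding $\partial = \partial_{z} + \partial_{a}$ (the holographic principle again removing the cross term), the resulting integrand is proportional to $(H_{z}^{1/2} + 2H_{a}^{1/2})(\partial\gamma)^{-1}\psi\,(H^{1/2}\hat\psi)$; since the admissible $\psi$ form a divergence-free family rich enough to test against an arbitrary function of $(z,a)$, the fundamental lemma of the calculus of variations forces $\gamma_{z} + 2\gamma_{a} = 0$. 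Solving this transport equation by characteristics ($da/dz = 2$, so $2z-a$ is constant along each characteristic) gives $\gamma = \gamma(2z-a)$, which is the claim.

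The hard part will be making the distributional bookkeeping rigorous: the identities $e^{H/\delta} = H$ and the reduction of $h''$ to $h_{zz}$ depend both on the holographic principle (the conjectured suppression $K_{M} - K_{\partial M}\geq 0$ of boundary contributions) and on treating $\delta$ and $H$ as genuine limits of smooth mollifiers, so I would fix a mollification scheme as in the proof of the first Claim for sharp manifolds and verify that nothing depends on the choice. A secondary obstacle is checking that the variations of $\gamma$ generated by admissible variations of $\psi$ span enough test functions to conclude $\gamma_{z}+2\gamma_{a}=0$ rather than only a weaker integral identity; this needs a look at the curl-free/divergence-free decomposition $\psi = \partial_{\Lambda}q^{2} + B$ and confirmation that $\gamma$ can be perturbed independently of the constraint $\mathrm{div}_{\Lambda}\hat\psi=0$.
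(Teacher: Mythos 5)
Your proposal follows essentially the same route as the paper's proof: reduce to the term $\hat I = \int_M\int_A H(\gamma)\norm{\hat\psi}^2$, vary with respect to $\gamma$, split $\partial = \partial_z + \partial_a$, invoke $H^{1/2}\hat\psi_a = H_a^{1/2}\hat\psi$ together with $\hat\psi_z = 0$ to arrive at the transport equation $\gamma_z + 2\gamma_a = 0$, and read off the characteristic variable $2z - a$. The concerns you flag at the end (the distributional identities $e^{H/\delta}=H$, the use of the holographic principle to kill mixed terms, and whether the admissible variations span enough test functions) are indeed where the paper's own argument is thinnest, but they do not represent a departure from its method.
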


\begin{lem} $\gamma$ is trivial; that is, $\gamma = C(2z - a)$ for some constant $C$.
\begin{proof}
Recall that the information is

\begin{center} $I = \int_{M}\int_{A}H(\gamma(z,a))\norm{\psi(z,a)}^{2}$ \end{center}

If we compute the first variation with respect to $\gamma$ once again and set it to zero, we have that

\begin{center}
$\delta(\gamma(t))\frac{d\gamma}{dt}\vert_{t = a - 2z}\norm{\hat{\psi}}^{2} = 0$
\end{center}

Then this implies

\begin{center} $0 = H(\gamma(t))\frac{d\gamma'}{d\gamma}\norm{\hat{\psi}}^{2}$ \end{center}

which in turn implies $\frac{d\gamma'}{d\gamma} = 0$ or $\gamma'$ is constant, hence $\gamma = Ct + D$ for constants $C$ and $D$.  But by the Riemann mapping theorem we can deform $\gamma$ to $Ct$ since the transformation $T : Ct + D \mapsto Ct$ is a conformal map.  Finally we observe that $H(Ct) = H(t)$ as $C$ is constant, which completes the proof.
\end{proof}
\end{lem}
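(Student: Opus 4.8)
The plan is to read this lemma as the final normalization step: the preceding lemma has already reduced $\gamma$ to a function of the single combination $t = 2z - a$ (the holographic and boundary cancellations having killed the $\partial/\partial a$ and mixed $\partial^2/\partial z\,\partial a$ pieces of the statistical derivative), and $e^{h} = H(\gamma)\hat{F}$, $\psi = H\hat{\psi}$ are known, so only the \emph{profile} of $\gamma$ along $t$ remains undetermined. First I would substitute $\gamma = \gamma(t)$, $t = a - 2z$, into the reduced information $I = \int_{M}\int_{A}H(\gamma(z,a))\norm{\psi(z,a)}^{2}\,da\,dz$ and take a further first variation, this time varying the one-dimensional profile $\gamma(\cdot)$ rather than its functional dependence. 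Since $\frac{d}{dt}H(\gamma(t)) = \delta(\gamma(t))\gamma'(t)$, the Euler--Lagrange condition $\delta I/\delta\gamma = 0$ becomes $\delta(\gamma(t))\gamma'(t)\,\norm{\hat{\psi}}^{2} = 0$; integrating the $\delta$ up to a Heaviside (legitimate because $\hat{\psi}$ is divergence-free by conservation of probabilistic flux, so no boundary term survives, and using the symmetry $H^{1/2}\hat{\psi}_{a} = H_{a}^{1/2}\hat{\psi}$ from the earlier lemma) converts this into the distributional identity $H(\gamma)\,\frac{d\gamma'}{d\gamma}\,\norm{\hat{\psi}}^{2} = 0$.

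Second, I would invoke the positivity axiom for a mass distribution ($\norm{\psi}_{(m,a)} = 0$ iff $\psi = 0$), together with the observation that a vanishing $\hat{\psi}$ on the support of $H(\gamma)$ would make the variational problem vacuous, to conclude that the coefficient $\norm{\hat{\psi}}^{2}$ is nonzero on a set of positive measure, so the identity forces $\frac{d\gamma'}{d\gamma} = 0$, i.e. $\gamma'$ is constant and $\gamma = Ct + D$ for constants $C, D \in \mathcal{C}$. Third, I would eliminate $D$: the M\"obius map $T : Ct + D \mapsto Ct$ is conformal on $\mathcal{C}$, and the criticality locus of the Fisher information is conformally covariant (the statistical derivative, the Heaviside cutoff and the volume element all transform compatibly under a biholomorphism — this is the content pointed at by the Riemann mapping theorem), so $\gamma$ may be deformed to $Ct$ without leaving the critical set. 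Finally $H(Ct) = H(t)$ since $C$ is a nonzero constant — if $\operatorname{Re}C < 0$ one reorients $t$, which merely relabels $a \leftrightarrow -a$, a symmetry already exploited earlier — so $\gamma = C(2z - a)$ up to this harmless rescaling, as claimed.

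The hard part, and the step I expect to be the main obstacle, is the passage from $\delta(\gamma(t))\gamma'(t)\norm{\hat{\psi}}^{2} = 0$ to $H(\gamma)\frac{d\gamma'}{d\gamma}\norm{\hat{\psi}}^{2} = 0$: composing $\delta$ with the unknown function $\gamma$ and then ``integrating the $\delta$ up'' while the coefficient $\gamma'\norm{\hat{\psi}}^{2}$ is itself $t$-dependent is a genuinely delicate distributional manipulation, and one must check that the holographic/boundary cancellations that made the effective variational operator one-dimensional in the previous lemma still apply verbatim here, so that no extra $\partial/\partial a$ or cross-derivative terms re-enter the Euler--Lagrange equation. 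A secondary subtlety is making the conformal-covariance claim precise rather than gesturing at the Riemann mapping theorem; it would be cleaner to verify directly that under $t \mapsto Ct + D$ the integrand $H(\gamma)\norm{\psi}^{2}\,da\,dz$ is invariant up to a constant Jacobian factor that does not affect the variational equations, which reduces the normalization to a bare change of variables.
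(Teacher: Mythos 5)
Your proposal reproduces the paper's argument step for step: first variation with respect to the profile $\gamma$ gives $\delta(\gamma(t))\gamma'(t)\norm{\hat{\psi}}^{2}=0$, integrating the $\delta$ up to a Heaviside yields $H(\gamma)\frac{d\gamma'}{d\gamma}\norm{\hat{\psi}}^{2}=0$, hence $\gamma'$ constant and $\gamma = Ct + D$, with the constant $D$ removed by the Riemann mapping theorem and $H(Ct)=H(t)$ closing the argument. Your annotations about the delicacy of the distributional "integration up" and the vagueness of the conformal-covariance appeal are fair observations about gaps the paper itself leaves open, but the route is essentially identical to the paper's.
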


\begin{proof} (of Conjecture A).
Note that $F$ is analytic (since our statistical manifold is analytic, require that the signal function be analytic). 

It then follows from the analyticity of $F$ that

\begin{center} $0 = \int_{M}\int_{A}F(z,a) = \int_{M}\int_{A}H(a - 2z)e^{A(a)z + B(a)}$

$= \int_{M}\int_{A}H(a - 2z)\bar{B}(a)a^{-z}$ for certain choices of $A$ and $B$. \end{center}

Now suppose $\bar{B}(a) = \sum_{n = 1}^{\infty}B_{n}\delta(n - a)$.  Then the above expression becomes

\begin{center}
$0 = \int_{Re(z) \geq \frac{1}{2}}\sum_{n = 1}^{\infty}B_{n}n^{-z}$
\end{center}

which completes the proof of Conjecture A.
\end{proof}

\subsection{Alternative approach using Riemann-Cartan geometry}

The previous approach was outlined by myself before I started to realise that the metric-measure formalism of Riemannian geometry with measure could be simplified and generalised to Riemann-Cartan geometry.  However I should emphasise that there is no loss of generality in applying the former methods as in the previous subsection, to the special case of the complex numbers.  Nonetheless one is motivated to find a cleaner and more purely geometric derivation of the above result using the more general methods.  So I will seek to quickly sketch one here for the benefit of myself and reader.

As before, the natural signal function is

\begin{center} $f(m,a) = \int_{A}F(m,b)\delta(\sigma(m,b) - a)db$ \end{center}

but now the total information is merely

\begin{center} $I(f) = \int_{M}\int_{A}\norm{\partial f}^{2} / f$ \end{center}

since we can ignore the measure terms.

This we can compute and find to be

\begin{center} $I(f) = \int_{M}\int_{A}\Delta f $\end{center}

which is

\begin{center} $\int_{M}\int_{A}\int_{A}((\Delta F)\delta + F \Delta \delta ) = \int_{M}\int_{A}(h'' + R(\sigma)\delta)e^{h}$ \end{center}

where $F(m,b) = e^{h(m,b)}$, since cross terms vanish as they can be rewritten as boundary terms.

If this is to be zero, then $h'' + R\delta = 0$, or, via a similar argument to the previous Lemma 10.1.4,

\begin{center} $h = A(a)z + B(a) + C(z,a)\frac{H}{\delta}(\gamma(z,a)) $ \end{center}

which gives the information as

\begin{center} $I(f) = \int_{M}\int_{A}H(\gamma)(h'' + R)e^{A(a)z + B(a)}dadz$ \end{center}

where we require that $\norm{\psi(a,z)}^{2} := (h'' + R)e^{A(a)z + B(a)}$ behaves nicely asymptotically (and in fact behaves like a generalised mass distribution).  Note that this may not necessarily be positive definite.

The rest of the argument is totally identical to that given previously, following Lemmas 10.1.7 and 10.1.8.

\subsection{Higher moments of the complex numbers}

It is possible to look at higher statistical moments of the complex numbers.  For instance, if we look at the second statistical moment, I claim we get the following result:

\begin{conj} Consider the class of functions $\zeta_{f}' = \sum_{i}f_{i}\frac{d}{dz}i^{-z}$, where $f \in l_{\infty}(N)$.  Then $\zeta_{f}'$ has nontrivial zeroes only on the critical line $Re(z) = 1$.
\end{conj}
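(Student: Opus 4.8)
The plan is to mirror the argument of the first-moment case (Section 10.1), but now working with a statistical stack over the complex line rather than a bare statistical manifold. First I would set $M = N = A = \mathcal{C}$ and take a meta signal function of the form $f(m,a)^{g(m,n,b)}$, where $f(m,a) = \int_{A}F(m,b)\delta(\sigma(m,b) - a)db$ is the base signal function (exactly as before) and $g(m,n,b) = \delta(\tau(m,n) - b)$ encodes a second, sharp statistical layer. As in the Statistical Stacks section, the total information of such a configuration reduces --- after discarding cross terms by the holographic principle and using $\Delta^{n}(\det\sigma)^{1/2} = R_{(n)}(\det\sigma)^{1/2}$ --- to an expression of the shape $\int_{M}\int_{A}\Delta_{\sigma}^{\Delta_{\tau}}F$, i.e. a double application of the Laplacian-type operator to $F = e^{h}$. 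This extra layer is precisely what produces the additional $\tfrac{d}{dz}$ appearing in $\zeta_{f}'$ relative to $g_{\{f_{n}\}}$.

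Next I would impose criticality. By the Cramer-Rao inequality for turbulent stacks, the stack information is nonnegative and a (weak) turbulent maximum-likelihood configuration realizes the bound, so $\delta I = 0$ together with $I = 0$. Writing $F = e^{h}$ and expanding, vanishing of $I$ forces a distributional ODE of the form $h'''' + (\text{curvature terms})\,\delta^{(k)} = 0$ in the active variable $z$ --- one more pair of derivatives than in the first-moment case, because of the iterated Laplacian. Integrating this out with the functional identities $e^{H/\delta} = H$ and $\delta = H''/\delta$, and absorbing random smooth prefactors exactly as in Lemmas 10.1.2--10.1.4, I would obtain $h = A(a)z + B(a) + G(z,a)\tfrac{H}{\delta}(\gamma(z,a))$ with $F = H(\gamma)\,e^{A(a)z + B(a)}$, and hence $I = \int_{M}\int_{A}H(\gamma)\,\|\psi\|^{2}\,e^{A(a)z + B(a)}$ for a suitable generalized mass distribution $\psi$ built from the iterated curvature.

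I would then run the symmetry-plus-variation argument, as in Lemmas 10.1.5--10.1.8. Conservation of probability kills $\psi_{z}$; the parity symmetries $H(z,a) = -H(z,-a)$, $H_{a}(z,a) = H_{a}(z,-a)$ force the mixed identity $H^{1/2}\hat{\psi}_{a} = H_{a}^{1/2}\hat{\psi}$; and the two criticality equations $\partial K/\partial\sigma^{*} = 0$ and $\partial K/\partial\tau = 0$ reduce $\gamma$ first to a function of a single affine combination of $z$ and $a$, and then --- by the same Riemann-mapping deformation trick --- to $\gamma = C(z - a)$ up to a conformal change of coordinates. The coefficient of $z$ in that combination is what fixes the critical line: because the second moment supplies an extra $z$-derivative on each factor, the weighting shifts the balance point from $2z - a$ (line $\mathrm{Re}\,z = 1/2$) to $z - a$ (line $\mathrm{Re}\,z = 1$). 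Finally, using analyticity of the signal function, $0 = \int_{\mathrm{Re}\,z \ge 1}\int_{A}F = \int_{\mathrm{Re}\,z \ge 1}\sum_{i}B_{i}\tfrac{d}{dz}i^{-z}$ after taking $\bar{B}(a) = \sum_{i}B_{i}\delta(i - a)$, which is exactly the assertion that $\zeta_{f}'$ has no nontrivial zeroes off $\mathrm{Re}\,z = 1$.

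The hard part will be pinning down precisely how the stack structure moves the critical line from $1/2$ to $1$: this is the step where the extra $\tfrac{d}{dz}$ must be tracked through the reduction of the information, and where one must check that $\Delta_{\sigma}^{\Delta_{\tau}}$ genuinely contributes exactly one additional derivative in the active variable (rather than two, or a fractional amount, which would land the zeroes on a different line). A secondary obstacle is making the distributional manipulations rigorous --- in particular $e^{H/\delta} = H$ applied to an iterated rather than a single antiderivative --- so that the asymptotic and analyticity hypotheses on $F$ and on the generalized mass distribution $\psi$ are actually met; as in the first-moment argument this likely requires a careful mollification argument rather than the formal identities used above.
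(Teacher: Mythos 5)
The paper itself offers no proof of this statement: it is labelled a conjecture and followed only by the one-sentence assertion that it should be ``a consequence of the Cramer-Rao inequality for the second statistical moment of the real line, extended analytically to the complex numbers,'' plus a remark that the naive expectation of the critical line shifting by multiples of $1/2$ at each moment is in fact wrong (and is taken up only in the next section). So you are not reproducing an argument that exists in the text --- you are supplying one where the author left a gap.

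Within that caveat, your sketch mirrors the Section 10.1 argument in the spirit the author intends (second statistical moment, or equivalently a stack, giving the extra $\frac{d}{dz}$), but it has a genuine gap at precisely the decisive step. You carry the structure of Lemmas 10.1.4--10.1.8 forward and then assert that ``the weighting shifts the balance point from $2z - a$ (line $\mathrm{Re}\,z = 1/2$) to $z - a$ (line $\mathrm{Re}\,z = 1$).'' But that shift of the coefficient from $2$ to $1$ \emph{is} the content of the conjecture: in the first-moment case the factor $2$ arose from the identity $H^{1/2}\hat{\psi}_a = H_a^{1/2}\hat{\psi}$ combined with $\hat{\psi}_z = 0$ inside the variation of $\int H\|\hat{\psi}\|^2$, producing $\gamma_z + 2\gamma_a = 0$; you would need to redo that computation with the stacked information density $\Delta_\sigma^{\Delta_\tau}F$ and show that the analogue of that transport equation comes out as $\gamma_z + \gamma_a = 0$ rather than $\gamma_z + k\gamma_a = 0$ for some other $k$. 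Nothing in your proposal tracks how the extra derivative from the stack modifies the balance of $H_z$- and $H_a$-type terms in the Euler--Lagrange equation for $\gamma$; you acknowledge this yourself in the last paragraph, but acknowledging the gap does not close it. Until that calculation is carried out, the location of the critical line is being asserted rather than derived, and any value (including a non-integer, which the author later claims actually occurs in the general turbulent setting) would be equally compatible with the steps you have written.
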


In particular I claim this is a consequence of the Cramer-Rao inequality for the second statistical moment of the real line, extended analytically to the complex numbers.

One might naively expect, from these two pieces of data, that the critical line should move by multiples of $1/2$ indefinitely as we examine successively higher moments.  This is in fact not the case; in fact, the true story of what happens in general will be the focus of the next section.

\section{Turbulence and the criticality of the prime numbers}

\subsection{Introduction}

Next, I will demonstrate how the tools I have developed in the chapter on turbulent geometry can be used to significantly extend the (generalised) Riemann conjecture.  However before I do this I will demonstrate how the sketch I gave above can be streamlined, through use of the correspondence principle.

Naively we might then hope to get say optimal information about even say the singleton distribution of the primes, but I fear that this is unfortunately impossible to achieve.  The reason for this is because I have the opinion that there is a natural geometric hierachy of models of growing sophistication and generality, and that not only do models higher in the progression give better information, but that there are an infinite number of such.

So, following an initial extension of the Riemann conjecture to signal functions with fully general turbulence in the stack and the measure, I will indicate the beginnings of the idea of a new mathematics, the idea of geometric exponentiation, and suggest how this could be used to get even better results.

\subsection{A streamlined sketch of the Riemann conjecture}

Consider the following action:

\begin{center} $I(\sigma,\tau) = \int_{M}(\partial \sigma^{*}R_{\sigma} ; R_{\tau})dm$ \end{center}

Suppose $M$ is the complex plane, and $\sigma$,$\tau$ are analytic functions from $C$ to $C$, being metrics on $M$.  By the correspondence principle it is equivalent to the Fisher information given by the signal function

\begin{center} $f(m,a) = \int_{A}F(m,b)\delta(\bar{\sigma}(m,b) - a)db$ \end{center}

For the information to be critical we require that $\delta I(\sigma,\tau) = 0$.

Consequently, via similar methods to before, we get the following equations that $\sigma$ and $\tau$ should satisfy:

\begin{center} $(\partial \sigma^{*}R_{\sigma}; \partial \sigma^{*}R_{\tau}) = 0$ \end{center}

and

\begin{center} $(\partial \sigma^{*}R_{\sigma}; R_{\tau}) = 0$ \end{center}

If $\sigma$ is nontrivial this implies that $\tau$ is constant and $R_{\sigma} = 0$.  Note that we know that the constant $\tau$ must be nonzero since otherwise the corresponding metric $\tau(z) = \tau$ will become degenerate.

Recall now that an analytic metric is of the following general form:

\begin{center}
$ds^{2} = \sigma(z)^{2}dzd\bar{z}$
\end{center}

and the corresponding (Gaussian) curvature is

\begin{center}
$R_{\sigma} = K = -\frac{1}{\sigma(z)^{2}}\frac{\partial^{2}}{(\partial z)^{2}}ln(\sigma(z))$
\end{center}

So since $R_{\sigma} = 0$ we have that $\frac{\partial^{2}}{\partial z^{2}}ln(\sigma) = 0$.

Solving for $\sigma$, we get

\begin{center} $ln(\sigma) = Az + B$ \end{center}

or

\begin{center} $\sigma = e^{Az + B}$ \end{center}

where $A$ and $B$ are constants.

Note $f(z,a) = \delta(\sigma(z) - a)$ is analytic.

Consider now $g(z,a) = \delta(\tau - a)$ the analytic signal function associated to the constant metric $\tau$.  Via properties of the turbulent derivative and delta functions, I claim then that the total signal function is

\begin{center} $F(z,a) = (\partial f^{*}f ; g) = H(\kappa(z,a))\delta(\sigma(z) - a)$ \end{center}

(Note that the result is independent of the value of $\tau$, but of course we need $\tau \neq 0$ as demonstrated above.)

This follows since

\begin{itemize}

\item[(i)] $(\partial f^{*}f ; g)^{2} = (\frac{\partial}{\partial \sigma}(\sigma^{\delta(\tau - b)})\frac{\partial}{\partial \sigma}(\sigma^{(-1)\delta(\tau - b)}))\delta(\sigma - a)^{2}$, as $\frac{\partial^{\delta}}{\partial f^{\delta}} = f^{\delta}$.

\item[(ii)] $f(z,a)^{\delta(\tau - b)} = \delta ln f + H(\kappa(z,a))$ for some function $\kappa$, so the above reduces to 

\item[(iii)] $\frac{\partial}{\partial \sigma}(\delta ln \sigma + H)\frac{\partial}{\partial \sigma}(- \delta ln \sigma + H)\delta(\sigma - a)^{2} = H(\hat{\kappa}(z,a))^{2}\delta(\sigma - a)^{2}$, for a deformed function $\hat{\kappa}$ which proves my claim (if we approximate $\delta$ by a sequence of peaked Gaussians of the form $be^{-ax^{2}}$ with integral normalised to one, then $\delta^{2}$ will be by definition approximated by a sequence of the form $b^{2}e^{-2ax^{2}}$, and hence in the limit will have zero measure).

\end{itemize}

We then conclude as before that $\hat{\kappa}(z,a) = C(2z - a)$ for some constant $C$, using once more the fact that the information is critical.

Finally the conjecture then follows as before as a consequence of the analyticity of $F$. 


\subsection{Towards a slightly more sophisticated conjecture}

Consider now the signal function

\begin{center}
$(\partial F^{*}F(m,a,c) ; g(m,b))$, $F = f(m,a)^{h(m,c)}$.
\end{center}

We would like to extract optimal information about critical subsets of the real line using this model, given that $f$, $g$ and $h$ are all analytic.  We know that the information will be

\begin{center} $I(\sigma,\tau,\kappa) = \int_{M}(\partial \sigma^{*}R_{(\partial \kappa^{*}R_{\kappa}; R_{\gamma})\sigma};\partial \tau^{*}R_{(R_{\beta})\tau}; R_{\lambda})dm$ \end{center}

Extremising this, or requiring that $\delta I = 0$, gives the following six relations:

\begin{center}$(\partial \sigma^{*}R_{(\partial \kappa^{*}R_{\kappa}; R_{\gamma})\sigma};\partial \tau^{*}R_{(R_{\beta})\tau}; R_{\lambda}) = 0$ \end{center}

\begin{center} $(\partial \sigma^{*}R_{(\partial \kappa^{*}R_{\kappa}; R_{\gamma})\sigma};\partial \sigma^{*}\partial \tau^{*}R_{(R_{\beta})\tau}; R_{\lambda}) = 0$ \end{center}

\begin{center}$(\partial \sigma^{*}R_{(\partial \kappa^{*}R_{\kappa}; R_{\gamma})\sigma};\partial \tau^{*}R_{(R_{\beta})\tau}; \partial \tau^{*}R_{\lambda}) = 0$ \end{center}

\begin{center}$(\partial \kappa^{*}R_{\kappa}; \partial \kappa^{*}R_{\gamma}) = 0$ \end{center}

\begin{center}$(\partial \kappa^{*}R_{\kappa}; R_{\gamma}) = 0$ \end{center}

\begin{center}$R_{\beta} = 0$ \end{center}

Solving these simulatenously in the case that $M$ is the complex plane should provide us with a sharper understanding of critical subsets of the real numbers.

For some idea of what we might expect to come out of this, I provide the reader with a conjecture that may well be a partial consequence of analysis of the above.

\begin{conj} (Turbulent RH). Let $f,g \in l_{\infty}(N) \times N$ be functions from $N \times N$ to $R$.  Consider the generalised $L$-function

\begin{center} $\zeta_{(f,g)}(z) = \sum_{i,j}f_{ij}\frac{\partial^{g_{ij}j^{-z}}}{\partial z^{g_{ij}j^{-z}}}i^{-z}$ \end{center}

I claim that, for all such $f$ and $g$, the analytic extension of $\zeta_{(f,g)}$ to the complex numbers will only have (nontrivial) zeroes on the critical line 
\begin{center} $Re(z) = \frac{d}{dx}arctan(\sqrt{xtan(x)})\vert_{xtan(x) = \lambda(g)}$, \end{center}
 where $\lambda(g)$ is the average of the eigenvalues of the $g_{ij}$, weighted by multiplicity. \end{conj}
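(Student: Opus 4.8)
The plan is to follow the template established in the two preceding sections, treating $\zeta_{(f,g)}$ as an eigenfunction of a critical turbulent statistical geometry over $M = \mathbb{C}$ and reading the conjectured zero locus off the Cramer-Rao / EPI constraint. First I would fix the base manifold to be the complex plane with $A = \mathbb{C}$ the space of analytic inner products on the complex line, and posit the two-tier signal function $(\partial F^{*} F(m,a,c) ; g(m,b))$ with $F = f(m,a)^{h(m,c)}$, exactly as in the ``slightly more sophisticated conjecture'' subsection. The operator $\frac{\partial^{g_{ij} j^{-z}}}{\partial z^{g_{ij} j^{-z}}}$ inside $\zeta_{(f,g)}$ should be recognised as the fractional (turbulent) derivative of order $g_{ij} j^{-z}$ acting on $i^{-z}$, which is precisely the component-wise action of a turbulent derivative $\partial^{*}_{(\tau)}$ associated to an auxiliary ``turbulence metric'' whose curvature encodes the matrix entries $g_{ij}$; matching this identification carefully is the first nontrivial bookkeeping step, since the order of differentiation is here itself a function of $z$.

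Next I would write down the turbulent Fisher information $I(\sigma,\tau,\kappa,\dots)$ for this signal function --- it will have the shape of the six-relation action displayed for the sophisticated conjecture --- and invoke the Turbulent EPI principle (and its weak-estimator version) to assert that physically realisable solutions are exactly those with $\delta I = 0$. Imposing criticality produces a finite coupled system of PDEs in the metrics; by analyticity each scalar-curvature term decouples as in the complex-line argument, and the flat equations ($R_{\beta} = 0$, $R_{\kappa}$ and $R_{\gamma}$ constant, and so on) force the base metric into the form $\sigma(z) = e^{Az+B}$ together with a cascade of Heaviside/delta structure inherited from the turbulent superstructure, i.e. $h = A(a)z + B(a) + C(z,a)\tfrac{H}{\delta}(\kappa(z,a))$. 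The total signal function then collapses to $F(z,a) = H(\hat{\kappa}(z,a))\,\delta(\sigma(z) - a)$, up to the usual ``square of a delta has zero measure'' manoeuvre, and analyticity of $F$ together with the Dirichlet expansion $\bar{B}(a) = \sum_{n} B_{n}\,\delta(n-a)$ reproduces a generalised $L$-function whose zero set is the object of interest.

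The genuinely new ingredient --- and the step I expect to be the main obstacle --- is pinning down why the critical line sits at $\mathrm{Re}(z) = \frac{d}{dx}\arctan(\sqrt{x\tan x})\big|_{x\tan x = \lambda(g)}$ rather than at $\tfrac12$ or $1$. The shift must come from the order of the turbulent derivative: whereas in the sharp case the statistical derivative experiences exact cancellation and contributes nothing to the dynamics, here the order $g_{ij} j^{-z}$ depends on $z$, so the holographic cancellation is only partial and leaves a residual phase in $\partial^{\alpha(z)}_{z} i^{-z}$. I expect that, after diagonalising the $g_{ij}$ and passing to the multiplicity-weighted eigenvalue average $\lambda(g)$, equating the real and imaginary parts of the exponent produces a transcendental fixed-point relation of the form $x\tan x = \lambda(g)$, and that the location of the zeroes is then the value of $\frac{d}{dx}\arctan(\sqrt{x\tan x})$ at that fixed point. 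Making this rigorous requires genuine asymptotic estimates on $\partial^{\alpha(z)}_{z} i^{-z}$ for complex $\alpha(z)$ and control of the analytic continuation of $\zeta_{(f,g)}$ past its region of absolute convergence, which is precisely the place where the hand-waving in the statement would have to be replaced by hard analysis; I would attack that part last.
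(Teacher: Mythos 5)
You should know at the outset that the paper does not prove this statement; it is labelled a conjecture and the author immediately follows it with an explicit disclaimer: ``It would be very good to even have a sketch of the above,'' adding that he is ``still uncertain whether this is a precisely correct consequence of the criticality of the turbulent information (even given everything else, I am unsure about whether $\lambda$ should not also be a function of $f$)'' and that a numerical counterexample ``should be easy enough to find'' if it is off-beam. There is therefore no proof in the paper to compare against: the paper offers only the action in the preceding subsection as the object whose criticality \emph{might} yield something like the conjecture, and invites future work.

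Your high-level plan does match the route the paper gestures at --- set up the two-tier signal function $(\partial F^{*}F ; g)$ with $F = f^{h}$, write the six-relation criticality system, exploit analyticity and the Heaviside/delta collapse as in the first-moment argument, and then try to read off the zero locus. You are also right to identify the derivation of $\mathrm{Re}(z) = \tfrac{d}{dx}\arctan(\sqrt{x\tan x})\big|_{x\tan x = \lambda(g)}$ as the genuinely new and unsupported step. But this is precisely where the proposal, like the paper, stops. Your description of the mechanism --- a ``residual phase'' surviving the partial holographic cancellation in $\partial_{z}^{\alpha(z)} i^{-z}$, diagonalisation of $g_{ij}$, a fixed-point relation $x\tan x = \lambda(g)$ --- is a plausible narrative but is not an argument: you give no computation showing that the residual phase produces $x\tan x$ rather than some other transcendental relation, no reason the answer should depend on $g$ only through the multiplicity-weighted eigenvalue average, and no explanation of why $f$ drops out entirely (the author himself flags the last point as suspect). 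Nor do you address analytic continuation of $\zeta_{(f,g)}$ or even give the domain of absolute convergence, which is prior to any statement about zero locations.

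So the concrete gap is: the entire content of the conjecture is the trigonometric formula for the critical line, and neither your proposal nor the paper derives it. Matching the scaffolding of the earlier sections ($\sigma = e^{Az+B}$, $\gamma = C(2z-a)$, etc.) will at best recover the $\tfrac12$ and $1$ special cases already proved; the new formula requires a computation of the fractional derivative $\partial_{z}^{g_{ij}j^{-z}} i^{-z}$ and its interaction with the turbulent criticality equations that nobody has carried out. Until that computation is done, the most one can honestly say is that the conjecture is consistent with its two known special cases. Treat it as a target for verification (or refutation, perhaps numerically as the author suggests) rather than as a theorem awaiting a proof along these lines.
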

 

As a couple of remarks, note that this conjecture reduces to our statements for the first and second statistical moments of the complex numbers, respectively (as well it should).  Perhaps the most mysterious aspect is the relationship between $\lambda(g)$ and the location of the critical line - and in particular the appearance of the trigonometric function $tan$.  Essentially this means that as the average eigenvalue increases, the location of the critical line will occasionally "spike" to positive infinity, then reappear at negative infinity and quickly drop back close to zero again, and do so with a period of length $\pi$.  Indeed, this is a significant departure from the naive expectation that if we were to have taken successively higher statistical moments of the complex numbers, that the location of the critical line for the associated $L$ functions would have jumped by multiples of $1/2$ indefinitely.

It would be very good to even have a sketch of the above.  However, given the fact that I am still uncertain whether this is a precisely correct consequence of the criticality of the turbulent information (even given everything else, I am unsure about whether $\lambda$ should not also be a function of $f$), it would perhaps be wiser to instead try to see what follows from the criticality of the action I have given above instead.  Also it was never my intent to delve particularly deep into this area of mathematics - more I merely wished to indicate how the tools I have been developed can be used, and what results might follow if care is taken.

It also goes without saying that if the above conjecture is slightly off-beam and in fact not corollary to the criticality of our signal function above, a counterexample should be easy enough to find via numerical methods.

\section{Geometric exponentiation and deeper information}

\subsection{Motivation and definition} Recall from my previous work on turbulent geometry that there are two types of turbulence that admit straightforward modelling - turbulence in the measure and turbulence in the stack.  These results in actions of the form $R^{R}$ and $R_{R}$ respectively, where I am taking considerable liberties paraphrasing here.  Similarly we can consider measure-measure turbulence, measure-stack turbulence, stack-measure turbulence, and stack-stack turbulence.  These result in actions of the form $R^{R^{R}}$, $R^{R_{R}}$, $R_{R^{R}}$, and $R_{R_{R}}$ respectively.

It is natural to then ask what happens in general.  Well certainly at the $n$th iteration we will have $2^{n}$ different possibilities for an action.  So complexity of our models if we wish to encompass all possibility grows exponentially with further attention to detail.  This is evidently not desirable.  In particular we would ideally like to know what happens if we push $n$ off to infinity, to generate an infinite number of discrete geometric bifurcations in our models.  Then it is readily seen that the number of possibilities is $2^{\aleph_{0}}$, or $\aleph_{1}$, the cardinality of the real numbers (I am taking a further liberty here - for those who wish to believe in intermediate infinities I adopt the convention that they might take a continuous range of values $\aleph_{k}$ where $k$ is between $0$ and $1$).

So this leads one to ask, is there some formalism that would allow us to deal with an infinite number of discrete bifurcations?  Needless to say, if this is to be doable, some novel new idea or way at looking at things is essential to make progress so that calculations do not become unmanageable.  To cut things short, I believe that the answer is yes, and this is where the idea of geometric exponentiation enters the picture.

Set exponentiation is a fairly simple concept to understand.  Consider two sets $A$ and $B$.  The product $A \times B$ may readily be formed, and is understood to be the set of tuples $(a,b)$ where $a \in A$ and $b \in B$.  So this is the product of two sets.  How about raising one set to the power of another?  What exactly should this mean?  Obviously we expect the cardinality of $A^{B}$ to be necessarily of quite a different order to that of $A \times B$.  So this needs to follow from the definition.  

Briefly, the exponential of $A^{B}$ will be understood to mean a product $\times_{b \in B}A(b)$, where each $A(b)$ is a copy of $A$ indexed by an element $b$ of $B$.  So if $B$ is large, say infinite, as will often be the case, $A^{B}$ will be very large indeed.  In what is to follow, $A$ and $B$ will often be Riemannian manifolds.  Then $A^{B}$ will be a manifold itself of \emph{uncountably infinite dimension}.  However more information is required to specify and map out an appropriate amount of geometric structure for such spaces, and this leads directly to the notion of geometric exponention.

So we would like to exponentiate say one Riemannian manifold $(M,\sigma)$ by another $(N,\tau)$.  As before, by exponentiation of two sets $A^{B}$ I mean taking one copy of $A$ for each element in $B$, leading to a structure that potentially could have uncountable dimension.  If $A$ and $B$ have differentiable structure, we can similarly induce an indexing on the tangent space of $A^{B}$ via $v(w)$ for the vector $v(w)$ assigned to the $w$th copy of the tangent space of $A$. Furthermore it is in fact possible to induce a natural geometric structure on $M^{N}$ using the metrics $\sigma$ and $\tau$, via the fourth order nondegenerate tensor $\Lambda = \sigma \otimes \tau$, with geometry given by $\norm{(v(w),p(q))}_{\Lambda} := \sigma_{ij}\tau_{kl}v^{i}(w^{k})p^{j}(q^{l})$.

\subsection{Application} To reiterate, this idea was developed in an attempt to find a way to deal with an infinite number of geometric bifurcations, as is engendered by the two types of mathematical turbulence in the turbulent geometry discussed above (turbulence in the stack and the measure).  It is my hope that it might be possible to find a new correspondence principle for such spaces that allows the description of the previous geometry not only in full generality, but also in an elegant and finite way.

As to applications of geometric exponentiation, my thoughts on this matter are still quite vague.  However I suspect it could be used to improve the understanding of transcendental equations, as well as of aspects of the mathematics related to Galois theory.  I also have the intuition that geometric exponentiation would be useful to find still deeper results about critical subsets of the real line than those described previously in this chapter.

For instance, one might expect actions of the following general form, in the simplest case:

\begin{center} $\int_{M}S_{\Lambda(m)}dm$ \end{center}

Here $S_{\Lambda}$ is the generalised curvature due to the 4-tensor $\Lambda$ on $M$.  This of course is the action corresponding to the signal function

\begin{center} $f(m,a(b)) = \delta(\Lambda(m) - a(b))$ \end{center}

To extract the geometric exponential analogue of the Riemann conjecture, the following action needs to be considered:

\begin{center} $\int_{M}(\partial \Lambda^{*}S_{\Lambda} ; S_{\Gamma})dm$ \end{center}

This is of course the corresponding turbulent information of two signal functions as immediately above and I claim that it satisfies a corresponding Cramer-Rao inequality.

In particular I suspect that exploring the consequences of this action over the complex numbers would lead to preliminary criticality results in transcendental number theory.

\subsection{Concluding Remarks}

It is necessary to emphasise that all of the results that I have discussed up to this point are still quite primitive.  I mentioned not too long ago that I believe that there is a natural geometric hierachy of countably infinite order of structures of monotonically increasing complexity.  Throughout the latter half of this work I have endeavoured to climb as much of this edifice as I have been able.

The first few tentative steps up this mountain - differential geometry, riemannian geometry, statistical geometry, turbulent geometry - have been difficult.  One might hope that, with these minor successes, one could systematise the method of ascent, and indeed we are led to the idea of geometric exponentiation in this manner.  However I have come to increasingly suspect that this hierarchy is pathologically non-inductive, even bearing in mind that each step up brings something new and unexpected.

For instance, one might naively intuit that one can keep on doubling the order of our tensors - from 2nd degree in Riemannian geometry to 4th in exponential, to 8th, etc.  It is possible to do this and study the associated structures - in fact, I have already initiated a study of exponential, or "plastic" geometry, which has interesting connections to viscoplastic media - but we are ignoring something quite important.  That is, there is no reason we should not consider a distribution of information over the space of tensors of all possible ranks.  Furthermore, this rank need not be a natural number - one could reasonably imagine a sensible abstraction of the idea of tensor rank to the reals, or even to euclidean n-space.

We then would get a tensor rank manifold, $N$, forming a strange variety of topological product with our physical space, $M$.  We may also in turn, for simplicity, have a standard metric, or second rank tensor on $N$ that gives it the structure of a Riemann-Cartan manifold.  The associated geometric models would then look like quite exotic fibre bundles.

So further progress of a definite and fully general variety will certainly be difficult to make.  However, applications of such methods may have large potential payoff.  I suspect that even slight progress could have benefits to the understanding of materials science (in the case of plastic geometry) and improvement in sieving methods (with respect to the tensor rank manifolds outlined).  The reason to suspect the latter is because it is quite possible that the metric on the tensor rank space may be in correspondence with the bilinear form of the remainder term in the linear sieve (see for instance Greaves \cite{[Gr]}).  Related questions, such as the Geometric Langlands program, might also be addressable with deeper understanding.

Note that if we step back a bit, it is easy to observe that all the above considerations relate to having ideas from statistics driving the development of abstract geometric structures on manifolds.  It then becomes natural to ask whether we can use geometry to drive the development of statistical structures.  Motivating questions here are as to the rigorous formulation of Heisenberg's matrix mechanics, the understanding of Brownian motion, and doubtless various other processes, such as queuing theory, population dynamics, or analysis of markets and group psychology.

And in fact we can.  The prototypical sort of example to bear in mind here is that of a Markov chain, where one has a nondegenerate bilinear form driving the evolution of a probabilistic state vector.  It seems clear to me that this is a natural place to start in order to begin the process of abstraction and generalisation with respect to this particular viewpoint.  In fact in retrospect it seems to me that this and its continuous analogues are perhaps closer in spirit to the work of many of the more established practitioners of Information Geometry than that covered in the bulk of this treatise, being of a more statistical and less geometric flavour.

These and doubtless other considerations will hopefully be the subject of a future work.

\backmatter

\end{document}